\newcommand{\+}{\protect\nobreakdash-}
\renewcommand{\:}{\colon}
\renewcommand{\;}{,\>}
\renewcommand{\.}{\mskip0.5\thinmuskip}
\let\le\undefined
\DeclareMathSymbol{\le}{\mathrel}{AMSa}{"36}         
\let\ge\undefined
\DeclareMathSymbol{\ge}{\mathrel}{AMSa}{"3E}         
\DeclareMathSymbol{\birarrow}{\mathrel}{AMSa}{"13}   
\DeclareMathSymbol{\varkappa}{\mathalpha}{AMSb}{"7B} 
\DeclareMathSymbol{\varnothing}{\mathord}{AMSb}{"3F} 
\DeclareMathOperator{\Hom}{Hom}
\DeclareMathOperator{\Cohom}{Cohom}
\DeclareMathOperator{\Ctrhom}{Ctrhom}
\DeclareMathOperator{\Tor}{Tor}
\DeclareMathOperator{\Ext}{Ext}
\DeclareMathOperator{\Cotor}{Cotor}
\DeclareMathOperator{\Coext}{Coext}
\DeclareMathOperator{\Ctrtor}{Ctrtor}
\DeclareMathOperator{\coker}{coker}
\DeclareMathOperator{\cone}{cone}
\DeclareMathOperator{\Cb}{Cob}
\DeclareMathOperator{\Br}{Bar}
\DeclareMathOperator{\id}{id}
\newcommand{\gr}{\mathrm{gr}}
\newcommand{\rarrow}{\longrightarrow}
\newcommand{\larrow}{\longleftarrow}
\newcommand{\mpsto}{\longmapsto}
\newcommand{\ot}{\otimes}
\renewcommand{\d}{\partial}
\newcommand{\limpr}{\varprojlim}
\newcommand{\limin}{\varinjlim}
\newcommand{\om}[1]{\!\.\overline{\.{\m}^#1\!\.}\.}
\newcommand{\DG}{\mathsf{DG}}
\newcommand{\Pro}{\mathsf{Pro}}
\renewcommand{\mod}{{\operatorname{\mathsf{--mod}}}}
\newcommand{\comod}{{\operatorname{\mathsf{--comod}}}}
\newcommand{\contra}{{\operatorname{\mathsf{--contra}}}}
\newcommand{\discr}{{\operatorname{\mathsf{--discr}}}}
\newcommand{\topol}{{\operatorname{\mathsf{--topol}}}}
\newcommand{\separ}{{\operatorname{\mathsf{--separ}}}}
\newcommand{\secm}{{\operatorname{\mathsf{--secm}}}}
\newcommand{\Rfr}{^{\operatorname{\R\mathsf{--fr}}}}
\newcommand{\Rcof}{^{\operatorname{\R\mathsf{--cof}}}}
\newcommand{\Rctr}{^{\operatorname{\R\mathsf{--ctr}}}}
\newcommand{\Rco}{^{\operatorname{\R\mathsf{--co}}}}
\newcommand{\Sfr}{^{\operatorname{\S\mathsf{--fr}}}}
\newcommand{\Scof}{^{\operatorname{\S\mathsf{--cof}}}}
\newcommand{\Sctr}{^{\operatorname{\S\mathsf{--ctr}}}}
\newcommand{\Sco}{^{\operatorname{\S\mathsf{--co}}}}
\newcommand{\modrRfr}{{\operatorname{\mathsf{mod}^
  {\R\mathsf{--fr}}\mathsf{--}}}}
\newcommand{\modrRfrproj}{{\operatorname{\mathsf{mod}^
  {\R\mathsf{--fr}}_{\mathsf{proj}}\mathsf{--}}}}
\newcommand{\modrRcof}{{\operatorname{\mathsf{mod}^
  {\R\mathsf{--cof}}\mathsf{--}}}}
\newcommand{\modrRcofproj}{{\operatorname{\mathsf{mod}^
  {\R\mathsf{--cof}}_{\mathsf{proj}}\mathsf{--}}}}
\newcommand{\modrRctr}{{\operatorname{\mathsf{mod}^
  {\R\mathsf{--ctr}}\mathsf{--}}}}
\newcommand{\modrRco}{{\operatorname{\mathsf{mod}^
  {\R\mathsf{--co}}\mathsf{--}}}}
\newcommand{\comodrRfr}{{\operatorname{\mathsf{comod}^
  {\R\mathsf{--fr}}\mathsf{--}}}}
\newcommand{\comodrRfrinj}{{\operatorname{\mathsf{comod}^
  {\R\mathsf{--fr}}_{\mathsf{inj}}\mathsf{--}}}}
\newcommand{\comodrRcof}{{\operatorname{\mathsf{comod}^
  {\R\mathsf{--cof}}\mathsf{--}}}}
\newcommand{\comodrRco}{{\operatorname{\mathsf{comod}^
  {\R\mathsf{--co}}\mathsf{--}}}}
\newcommand{\comodrRcofinj}{{\operatorname{\mathsf{comod}^
  {\R\mathsf{--cof}}_{\mathsf{inj}}\mathsf{--}}}}
\newcommand{\alg}{{\operatorname{\mathsf{--alg}}}}
\newcommand{\coalg}{{\operatorname{\mathsf{--coalg}}}}
\newcommand{\op}{{\mathrm{op}}}
\newcommand{\sop}{{\mathsf{op}}}
\newcommand{\abs}{{\mathsf{abs}}}
\newcommand{\co}{{\mathsf{co}}}
\newcommand{\ctr}{{\mathsf{ctr}}}
\newcommand{\si}{{\mathsf{si}}}
\newcommand{\proj}{{\mathsf{proj}}}
\newcommand{\inj}{{\mathsf{inj}}}
\newcommand{\free}{{\mathsf{free}}}
\newcommand{\cofr}{{\mathsf{cofr}}}
\newcommand{\fin}{{\mathsf{fin}}}
\newcommand{\wcdg}{{\mathsf{wcdg}}}
\newcommand{\cdg}{{\mathsf{cdg}}}
\newcommand{\cnlp}{{\operatorname{\mathit{k}\mathsf{--cnlp}}}}
\newcommand{\Seis}{{\mathsf{Seis}}}
\newcommand{\FSeis}{{\mathsf{FSeis}}}
\newcommand{\R}{{\mathfrak R}}
\newcommand{\m}{{\mathfrak m}}
\newcommand{\I}{{\mathfrak I}}
\newcommand{\J}{{\mathfrak J}}
\newcommand{\F}{{\mathfrak F}}
\renewcommand{\P}{{\mathfrak P}}
\newcommand{\Q}{{\mathfrak Q}}
\newcommand{\A}{{\mathfrak A}}
\newcommand{\B}{{\mathfrak B}}
\newcommand{\M}{{\mathfrak M}}
\newcommand{\N}{{\mathfrak N}}
\newcommand{\K}{{\mathfrak K}}
\renewcommand{\L}{{\mathfrak L}}
\renewcommand{\S}{{\mathfrak S}}
\newcommand{\T}{{\mathfrak T}}
\newcommand{\U}{{\mathfrak U}}
\newcommand{\V}{{\mathfrak V}}
\newcommand{\W}{{\mathfrak W}}
\newcommand{\gC}{{\mathfrak C}}
\newcommand{\C}{{\mathfrak C}}
\newcommand{\D}{{\mathfrak D}}
\newcommand{\cM}{{\mathcal M}}
\newcommand{\cN}{{\mathcal N}}
\newcommand{\cK}{{\mathcal K}}
\newcommand{\cL}{{\mathcal L}}
\newcommand{\cU}{{\mathcal U}}
\newcommand{\cV}{{\mathcal V}}
\newcommand{\cP}{{\mathcal P}}
\newcommand{\cQ}{{\mathcal Q}}
\newcommand{\cJ}{{\mathcal J}}
\newcommand{\cC}{{\mathcal C}}
\newcommand{\cD}{{\mathcal D}}
\newcommand{\sD}{{\mathsf D}}
\newcommand{\boI}{{\mathbb I}}
\newcommand{\boR}{{\mathbb R}}
\newcommand{\boL}{{\mathbb L}}
\newcommand{\boZ}{{\mathbb Z}}
\newcommand{\bu}{{\text{\smaller\smaller$\scriptstyle\bullet$}}}
\newcommand{\oc}{\mathbin{\text{\smaller$\square$}}}
\newcommand{\ocn}{\odot}
\newcommand{\lrarrow}{\.\relbar\joinrel\relbar\joinrel\rightarrow\.}
\newcommand{\comp}{\sphat\.}
\newcommand{\Ainfty}{$\mathrm{A}_\infty$}
\theoremstyle{plain}
\newtheorem{thm}{Theorem}[subsection]
\newtheorem{lem}[thm]{Lemma}
\newtheorem{prop}[thm]{Proposition}
\newtheorem{cor}[thm]{Corollary}
\theoremstyle{definition}
\newtheorem{rem}[thm]{Remark}
\newtheorem{ex}[thm]{Example}
\newcommand{\Section}[1]{\bigskip\section{#1}\medskip}
\begin{document}

\title{Weakly curved \Ainfty-algebras over \\ a topological local ring}
\author{Leonid Positselski}

\address{Institute of Mathematics, Czech Academy of Sciences,
\v Zitn\'a~25, 115~67 Prague~1, Czech Republic; and
\newline\indent  Laboratory of Algebraic Geometry, National Research
University Higher School of Economics, Moscow 119048, Russia; and
\newline\indent Sector of Algebra and Number Theory, Institute
for Information Transmission Problems, Moscow 127051, Russia; and
\newline\indent Department of Mathematics, Faculty of Natural Sciences,
University of Haifa, Mount Carmel, Haifa 31905, Israel
\newline\indent}

\email{posic@mccme.ru}

\begin{abstract}
 We define and study the derived categories of the first kind
for curved DG- and \Ainfty\+algebras complete over
a pro-Artinian local ring with the curvature elements divisible
by the maximal ideal of the local ring.
 We develop the Koszul duality theory in this setting and deduce
the generalizations of the conventional results about
\Ainfty\+modules to the weakly curved case.
 The formalism of contramodules and comodules over pro-Artinian
topological rings is used throughout the paper.
 Our motivation comes from the Floer--Fukaya theory.
\end{abstract}

\maketitle

\tableofcontents

\setcounter{section}{-1}
\section{Introduction}
\medskip

\subsection{{}}
 The conventional definition of the derived category involves
localizing the homotopy category (or alternatively, the category
of complexes and closed morphisms between them) by the class of
quasi-isomorphisms.
 In fact, one needs more than just such a definition in order
to do homological algebra in a derived category.
 Constructing derived functors and computing the Ext groups
requires having appropriate classes of resulutions.

 The \emph{classical homological algebra} can be roughly described
as the study of derived categories that can be equivalently
defined as the localizations of the homotopy categories by
quasi-isomorphisms and the full subcategories in the homotopy
categories formed by complexes of projective or injective objects,
or DG\+modules that are projective/injective as graded modules.

 This is true, e.~g., for appropriately bounded complexes over
an abelian or exact category with enough projectives or injectives,
or for bounded DG\+modules over a DG\+algebra with nonpositive
cohomological grading, or over a connected simply connected
DG\+algebra with nonnegative cohomological grading.

\subsection{{}}
 It is known since the pioneering work of Spaltenstein~\cite{Spal}
that one can work with the unbounded derived categories of modules
and sheaves using resolutions satisfying stronger conditions than
termwise projectivity, flatness or injectivity.
 These classes of resolutions, now known as \emph{homotopy projective},
\emph{homotopy flat}, etc., complexes, are defined by conditions
imposed on the complex as a whole and depending on the differential
in the complex, rather than only on its terms.
 This approach was extended to unbounded DG\+modules over
unbounded DG\+rings by Keller~\cite{Kel} and
Bernstein--Lunts~\cite{BL}.

 Another point of view, first introduced in Hinich's paper~\cite{Hin}
in the case of cocommutative DG\+coalgebras, involves strengthening
the conditions imposed on quasi-isomophisms rather than
the conditions on resolutions.
 Extended to DG\+comodules by Lef\`evre-Hasegawa~\cite{Lef} and
others, this theory took its fully developed form in the present
author's monograph~\cite{Psemi} and memoir~\cite{Pkoszul}, where
the general definitions of the \emph{derived categories of
the second kind} were given.

 The terminology of \emph{two kinds of derived categories} goes back
to the classical paper of Husemoller, Moore, and Stasheff~\cite{HMS},
where the distinction between \emph{differential derived functors of
the first} and \emph{the second kind} was introduced.
 The conventional unbounded derived category, studied by Spaltenstein,
Keller, et al., is called \emph{the derived category of the first kind}.
 The definition of the derived category of the second kind has
several versions, called the \emph{absolute derived}, \emph{complete
derived}, \emph{coderived}, and
\emph{contraderived category}~\cite{Pkoszul,PP2}.
 Here the ``coderived category'' terminology comes from Keller's
brief exposition~\cite{Kel2} of the related results from
Lef\`evre-Hasegawa's thesis.

\subsection{{}}
 As the latter terminological system suggests, the coderived categories
are more suitable for comodules, and similarly the contraderived
categories more suitable for contramodules~\cite{EM,Pcon}, than
for modules.
 The philosophy of ``taking the derived categories of the first kind
for modules, and the derived categories of the second kind for
comodules or contramodules'' was used in the author's monograph
on semi-infinite homological algebra~\cite{Psemi}.
 It works well in Koszul duality, too~\cite{Pkoszul}, although
the derived categories of the second kind for DG\+modules also have
their uses in the case of DG\+algebras whose underlying graded
algebras have finite homological dimension.

 Unlike the conventional quasi-isomorphism, the equivalence relations
used to define the derived categories of the second kind are not
reflected by forgetful functors; so one cannot tell whether, say,
a DG\+comodule is trivial in the coderived category (\emph{coacyclic})
or not just by looking on its underlying complex of vector spaces.
 Thus the forgetful functors between derived categories of
the second kind are generally \emph{not} conservative (which stands in
the way of possible application of the presently popular techniques
of the $\infty$\+categorical Barr--Beck theorem~\cite{Lur}).

 On the other hand, derived categories of the second kind make
perfect sense for curved DG\+modules or DG\+comodules~\cite{Pkoszul},
to which the conventional definition of the derived category
(of the first kind) is not applicable, as curved structures have no
cohomology groups.
 This sometimes forces one to consider derived categories of
the second kind for modules, including modules over rings of
infinite homological dimension, inspite of all the arising
technical complications~\cite{PP2,Psing}.

 The aim of this paper is to show how the derived category of
the first kind can be defined for curved DG-modules and
curved \Ainfty\+modules, if only in a rather special situation of
algebras over a complete local ring with the curvature element
divisible by the maximal ideal of the local ring.

\subsection{{}}
 Let us explain the distinction between the derived categories
of the first and the second kind in some more detail
(see also~\cite[Sections~0.1\+-0.3]{Pkoszul}
and~\cite[Preface and Section~0.2.9]{Psemi}).
 The classical situation, when there is no difference between
the two kinds of derived categories, is special in that no
infinite summation occurs when one totalizes a resolution of
a complex or a DG\+module.
 When the need to use the infinite summation arises, however, one is
forced to choose between taking direct sums or direct products.

 Informally, this means specifying the direction along the diagonals
of a bicomplex (or a similar double-indexed family of groups with
several differentials) in which the terms ``increase'' or
``decrease'' in the order of magnitude.
 Using the appropriate kind of completion is presumed, in which
one takes infinite direct sums in the ``increasing'' direction
and infinite products in the ``decreasing'' one.
 The components of the total differential of the bicomplex-like
structure are accordingly ordered; and the spectral sequence in
which one first passes to the cohomology with respect to
the dominating component of the differential converges (at
least in the weak sense of~\cite{EM0}) to the cohomology of
the related totalization.
 
 In the \Ainfty\+algebra situation, the conventional theory of
the first kind presumes that the operations $m_i$, \ $i\ge1$,
are ordered so that $m_1\gg m_2\gg m_3\gg\dotsb$ in the order of
magnitude, i.~e., the component $m_1$ dominates.
 So, in particular, if the differential $d=m_1$ is acyclic on
a given \Ainfty\+algebra or \Ainfty\+module, then such an algebra
or module vanishes in the homotopy category and the higher
operations $m_2$, $m_3$, etc.\ on it do not matter from the point
of view of the theory of the first kind.

 On the other hand, considering the derived category of
the second kind, e.~g., for CDG\+modules over a CDG\+algebra,
means choosing the multiplication as the dominating component,
i.~e., setting $m_2\gg m_1\gg m_0$.
 Hence the importance of the underlying graded algebras or modules
of CDG\+algebras or CDG\+modules, with the differentials and
the curvature elements in them forgotten, in the study of
the coderived, contraderived, and absolute derived categories.

 Of course, the above vague wording should be taken with a grain
of salt, and the notation is symbolic: it is not any particular
maps~$m_i$ (some of which may well happen to vanish for some
particular algebras) but the whole vector spaces of such maps
that are ordered in the ``order of magnitude''.

\subsection{{}}
 A theory of the second kind for curved \Ainfty\ structures
can also be developed.
 Essentially, it would mean that no ``divergent'' infinite sequences
of the higher operations should be allowed to occur.
 As usually, it is technically easier to do that for coalgebras,
where the ``convergence condition'' on the higher
comultilpications~$\mu_i$ appears naturally.
 This theory is reasonably
well-behaved~\cite[Sections~7.4\+-7.6]{Pkoszul}.

 It may be possible to have a theory of the second kind for
curved \Ainfty\+algebras, too, e.~g., by restricting oneself to those
curved \Ainfty\+algebras and \Ainfty\+modules in each of which there
is a finite number of nonvanishing higher operations $m_i$
only~\cite[final sentences of Remark~7.3]{Pkoszul}.
 However, proving theorems about such \Ainfty\+modules would involve
working with topological coalgebras, which seems to be technically
quite unpleasant (infinite operations on modules would be
problematic, etc.)

 A more delicate approach might involve imposing the convergence
condition according to which the operations~$m_i$ eventually vanish
in the restriction to the tensor powers of every fixed
finite-dimensional subspace of the curved \Ainfty\+algebra, and
similarly for the higher components~$f_i$ of the curved
\Ainfty\+morphisms, modules, etc.
 The bar-construction of such an \Ainfty\+algebra would be defined
as a DG\+coalgebra object in the tensor category of
ind-pro-finite-dimensional vector spaces.

 Perhaps the most reasonable way to deal with the forementioned problem
would require replacing the ground category of vector spaces with that
of pro-vector spaces, with the approximate effect of interchanging
the roles of algebras and coalgebras~\cite[Remark~2.7]{Psemi}.
 This is also what one may wish to do should the need to develop
a theory of the first kind for \Ainfty\+coalgebras
arise (cf.~\cite[Remark~7.6]{Pkoszul}).

\subsection{{}}
 On the other hand, having a theory of the first kind for curved
\Ainfty\+algebras would essentially mean setting $m_0\gg m_1\gg
m_2\gg\dotsb$, i.~e., designating the curvature element~$m_0$ as
the dominant term.
 The problem is that $m_0$, being just an element of a curved
\Ainfty\+algebra, is too silly a structure to be allowed
to dominate unrestrictedly.
 When nondegenerate enough (and it is all too easy for an element of
a vector space to be nondegenerate enough) and made dominating,
it would kill all the other structure of such a curved
\Ainfty\+algebra or \Ainfty\+module.

 That is why every curved \Ainfty\+algebra over a field which is
either considered as nonunital and has a nonzero curvature element,
or has a curvature element not proportional to the unit, is
\Ainfty\+isomorphic to a curved \Ainfty\+algebra with $m_i=0$
for all $i\ge1$ \cite[Remark~7.3]{Pkoszul}.
 Moreover, every \Ainfty\+module over a (unital or not) curved
\Ainfty\+algebra with a nonzero curvature element over a field
is contractible.

\subsection{{}}
 Hence the alternative of developing a theory of the first kind for
curved \Ainfty\+algebras over a local ring $\R$, with the curvature
element being required to be divisible by the maximal ideal
$\m\subset\R$.
 Let us first discuss this idea in the simplest case of the ring of
formal power series $\R=k[[\epsilon]]$, where $k$~is a field.

 In terms of the above ordering metaphor, this means having
two scales of orders of magnitude at the same time.
 On the one hand, the $\epsilon$\+adic topology is presumed, i.~e.,
$1\gg \epsilon\gg \epsilon^2\gg\dotsb$
 On the other hand, it is a theory of the first kind, so
$m_0\gg m_1\gg m_2\gg\dotsb$
 Given that $m_0$~is assumed to be divisible by~$\epsilon$ and
$m_1$~isn't, the question which of the two scales has the higher
priority arises immediately.

 If we want to make our theory as far from trivial as possible,
the natural answer is to designate the $\epsilon$\+adic scale
as the more important one.
 In the theory developed in this paper, this is achieved by having
the topology of a complete local ring~$\R$ built into the tensor
categories of $\R$\+modules in which our \Ainfty\+algebras
and \Ainfty\+modules live.
 That is where $\R$\+contramodules (and also $\R$\+comodules) come
into play.

\subsection{{}}
 In the conventional setting of (uncurved) DG- and
\Ainfty\+algebras over a field, the notion of \Ainfty\+morphisms
can be used to define the derived category of DG\+modules.
 Indeed, the homotopy category of \Ainfty\+modules over
an \Ainfty\+algebra coincides with their derived category, and
the derived category of \Ainfty\+modules over a DG\+algebra
is equivalent to the derived category of DG\+modules.
 So the complex of \Ainfty\+morphisms between two DG\+modules
over a DG\+algebra computes the $\Hom$ between them in
the derived category of DG\+modules.

 A similar definition of the derived category of curved
DG\+modules over a curved DG\+algebra was suggested in~\cite{Nic}.
 Then it was shown in the subsequent paper~\cite{KLN} that
the ``derived category of CDG\+modules'' defined in this way
vanishes entirely whenever the curvature element of
the CDG\+algebra is nonzero and one is working over a field
(as it follows from the above discussion).

 One of the results of this paper is the demonstration of a setting
in which this kind of definition of the derived category of
curved DG\+modules is nontrivial and reasonably well-behaved.

\subsection{{}}
 Before we start explaining what $\R$\+contramodules and
$\R$\+comodules are, let us have a look on the situation
from another angle.
 
 The passage from uncurved to curved algebras is supposed not
only to expand the class of algebras being considered, but also
enlarge the sets of morphisms between them.
 In fact, the natural functor from DG\+algebras to CDG\+algebras
is faithful, but not fully faithful~\cite{Pcurv}.
 Together with the curvature elements in algebras,
\emph{change-of-connection} elements in morphisms between algebras
are naturally supposed to come.

 One of the consequences of the existence of the change-of-connection
morphisms in the category of CDG\+algebras is the impossibility of
extending to CDG\+modules the conventional definition of the
derived category (of the first kind) of DG\+modules over
a DG\+algebra.
 Quite simply, CDG\+isomorphic DG\+algebras may have entirely
different derived categories of DG\+modules.
 Moreover, the derived category of DG\+modules over a DG\+algebra
is invariant under quasi-isomorphisms of DG\+algebras; and this
already is incompatible with the functoriality with respect to
change-of-connection morphisms.
 Indeed, \emph{any} two DG\+algebras over a field can be connected by
a chain of transformations, some of which are quasi-isomorphisms,
while the other ones are CDG\+isomorphisms of
DG\+algebras~\cite[Examples~9.4]{Pkoszul}.

\subsection{{}}
 So another problem with the na\"\i ve attempt to develop a theory
of the first kind for curved \Ainfty\+algebras over a field is that
one cannot have change-of-connection morphisms in it.
 One can say that the \Ainfty\+morphisms between such
\Ainfty\+algebras are too numerous, in that all the operations
$m_1$, $m_2$,~\dots\ can be killed by \Ainfty\+isomorphisms if only
the curvature element $m_0$ is not proportional to the unit, and
still they are too few, in that morphisms with nonvanishing
change-of-connection components~$f_0$ cannot be considered.

 To be more precise, recall that an \emph{\Ainfty\+morphism}
$f\:A\rarrow B$ is defined a sequence of maps $f_i\:A^{\ot i}
\rarrow B$, where $i\ge1$ \cite{Lef}.
 For curved \Ainfty\+algebras, one would like to define curved
\Ainfty\+morphisms as similar sequences of maps~$f_i$ starting
with $i=0$, the component $f_0\in B^1$ being
the change-of-connection element.
 The problem is that the compatibility equations on
the maps~$f_i$ in terms of the \Ainfty\+operations
$m_i^A\:A^{\ot i}\rarrow A$ and $m_i^B\:B^{\ot i}\rarrow B$ contain
a meaningless infinite summation when $f_0\ne0$  and one is working,
e.~g., over a field of coefficients (unless $m_i^B=0$ for $i\gg0$).

 The explanation is that while the maps $m_i^A$ are interpreted as
the components of a coderivation of the tensor coalgebra cogenerated
by the graded vector space $A$, the maps $f_i$ are the components
of a morphism between such tensor coalgebras.
 And while coderivations may not preserve coaugmentations of
conilpotent coalgebras over fields, coalgebra morphisms always do.

\subsection{{}}
 The latter problem can be solved by having $f_0$ divisible by
the maximal ideal~$\m$ of a complete local ring~$\R$ and
the components of the graded $\R$\+modules $A$ and $B$ complete
in the $\m$\+adic topology, to make the relevant infinite sums
convergent.
 One also wants the components of one's \Ainfty\+algebras over $\R$
to be free (complete) $\R$\+modules, so that their completed tensor
product over $\R$ is an exact functor.

 This is a good definition of the category of curved \Ainfty\+algebras
to work with; but when dealing with \Ainfty\+modules, it is useful to
have an abelian category to which their components may belong.
 And the category of (infinitely generated) $\m$\+adically complete
$\R$\+modules is \emph{not} an abelian already for $\R=k[[\epsilon]]$.
 The natural abelian category into which complete $\R$\+modules
are embedded is that of \emph{$\R$\+contramodules}.

 In particular, when $\R=\mathbb Z_l$ is the ring of $l$\+adic
integers, the abelian category of $\R$\+contramodules is that
of the \emph{weakly $l$\+complete abelian groups} of
Jannsen~\cite{Jan}, known also as the \emph{Ext-$p$-complete
abelian groups} of Bousfield--Kan~\cite{BK} (where $p=l$).
 Contramodules over $k[[\epsilon]]$ are very
similar~\cite[Remarks~A.1.1 and~A.3]{Psemi}.
 Another name for contramodules over the adic completion of a Noetherian
ring by an ideal is the \emph{cohomologically complete modules}
of Yekutieli et al.~\cite{PSY,PSY2,Yek}.

 Contramodules over a topological ring $\R$, particularly,
a topological ring which does not contain any field (such as
$\R=\mathbb Z_l$), are defined as modules/algebras over a certain
\emph{monad on the category of sets} associated with~$\R$.
 Notice that a systematic study of a class of such monads, called
the ``algebraic'' monads, was undertaken by Durov in~\cite{Dur};
however, the monads that appear in connection with contramodules
are not algebraic, as they do not preserve filtered inductive
limits of sets.
 We refer to the introduction to~\cite{PR} for a detailed discussion
and further references.

\subsection{{}}
 Generally, contramodules are modules with infinite summation
operations~\cite{Pcon}.
 Among other things, they provide a way of having an abelian
category of nontopological modules with some completeness
properties over a coring or a topological ring.
 Defined originally by Eilenberg and Moore~\cite{EM} as natural
counterparts of comodules over coalgebras over commutative rings,
contramodules were studied and used in the present author's
monograph~\cite{Psemi} for the purposes of the semi-infinite
cohomology theory and the comodule-contramodule correspondence.

 In particular, $k[[\epsilon]]$\+contramodules form a full subcategory
of the category of $k[[\epsilon]]$\+modules (and even a full
subcategory of the category of $k[\epsilon]$\+modules).
 This subcategory contains all the $k[[\epsilon]]$\+modules $M$
such that $M\simeq\limpr_n M/\epsilon^n M$, and also some other
$k[[\epsilon]]$\+modules (hence the ``weakly complete'' terminology).
 The natural map $\M\rarrow\limpr_n\M/\epsilon^n\M$ is surjective
for every $k[[\epsilon]]$\+contramodule $\M$, but it may not be
injective~\cite[Section~A.1.1 and Lemma~A.2.3]{Psemi}.

 The more familiar \emph{comodules}, on the other hand, are basically
discrete or torsion modules.
 For a pro-Artinian topological ring, they are defined as
the opposite category to that of Gabriel's \emph{pseudo-compact
modules}~\cite{Gab}.
 Notice that our $\R$\+comodules are \emph{not} literally discrete
modules over a topological ring~$\R$, although any choice of
an injective hull of the irreducible discrete module over
a pro-Artinian commutative local ring $\R$ provides an equivalence
between these two abelian categories (and there is even
a \emph{natural} equivalence when $\R$ is a profinite-dimensional
algebra over a field or a profinite ring).
 So the $k[[\epsilon]]$\+comodules are just $k[[\epsilon]]$\+modules
with a locally nilpotent action of~$\epsilon$.

 More specifically, comodules over a pro-Artinian topological ring $\R$
are defined in this paper as ind-objects in the abelian category
opposite to the category of discrete $\R$\+modules of finite length.
 We refer to the book~\cite{KS} for a background discussion of
ind-objects.
 Let us mention the similarity between this definition of ours
and the notion of ind-coherent sheaves on ind-schemes and
ind-inf-schemes, studied by Gaitsgory and Rozenblyum~\cite{Gai,GR}.
 One difference between the two approaches is that the ind-coherent
sheaves in the sense of~\cite{Gai,GR} are \emph{complexes} of
sheaves; so they form a DG\+category or a stable $(\infty,1)$\+category,
i.~e., a refined version of a triangulated category.
 Our $\R$\+comodules, on the other hand, form an abelian category.

 The conventional formalism of tensor operations (i.~e.,
the tensor product and Hom) on modules or bimodules
over rings can be extended to comodules and contramodules over
noncocommutative corings, where the natural operations are in much
greater abundance and variety (there are five of them to be found
in~\cite{Psemi,Pkoszul}).
 For contramodules and comodules over a pro-Artinian commutative ring,
we define the total of \emph{seven} such operations in this paper.

 This allows to consider, in particular, curved \Ainfty\+modules
over $\R$\+free $\R$\+complete curved \Ainfty\+algebras with,
alternatively, either $\R$\+contramodule (``weakly complete'')
or $\R$\+comodule (``torsion'', ``discrete'') coefficients.
 By another instance of the derived comodule-contramodule
correspondence, the corresponding two homotopy categories are
naturally equivalent.

\subsection{{}}
 Yet another reason to work with complete modules or contramodules
rather than just conventional modules over a local ring is the need
to use Nakayama's lemma as the basic technical tool.
 The point is, the conventional version of Nakayama's lemma for
modules over local rings only holds for finitely generated modules.
 Of course, one does not want to restrict oneself to
finite-dimensional vector spaces or finitely generated modules
over the coefficient ring when doing the homological algebra of
\Ainfty\+algebras and \Ainfty\+modules.

 So we want to have an abelian category of modules with infinite
direct sums and products where Nakayama's lemma holds.
 Notice that Nakayama's lemma holds for infinitely generated
modules over an Artinian local ring.
 As a natural generalization of this obvious observation,
the appropriate version of Nakayama's lemma for infinitely
generated contramodules over a topological ring with
a topologically nilpotent maximal ideal was obtained
in~\cite[Section~A.2 and Remark~A.3]{Psemi}.

 For comodules over a pro-Artinian local ring, we use the dual
version of Nakayama's lemma, which is clearly true.

\subsection{{}} 
 We call curved DG\+algebras in the tensor category of free
$\R$\+contramodules with the curvature element divisible
by the maximal ideal $\m\subset\R$ \emph{weakly curved
DG\+algebras}, or \emph{wcDG\+algebras} over~$\R$.
 Morphisms of wcDG\+algebras are CDG\+al\-gebra morphisms 
with the change-of-connection elements divisible by~$\m$.

 CDG\+modules over a wcDG\+algebra are referred to as
wcDG\+modules.
 The similar terminology is used for \Ainfty\+algebras:
a \emph{weakly curved \Ainfty\+algebra}, or
a \emph{wc~\Ainfty\+al\-gebra}, is a curved \Ainfty\+algebra
in the tensor category of free $\R$\+contramodules with
the curvature element divisible by~$\m$.

 So we can summarize much of the preceding discussion by saying
that \emph{theories} (i.~e., derived categories and derived functors)
\emph{of the first kind make sense in the weakly curved,
but not in the strongly curved case}.

\subsection{{}}
 Let us explain how we define the equivalence relation on
wcDG\+modules and wc~\Ainfty\+modules.
 First assume that the underlying graded $\R$\+module of our
weakly curved module $\M$ over $\A$ is a free graded
$\R$\+contramodule.

 In this case we simply apply the functor of reduction modulo~$\m$
to obtain an uncurved DG- or \Ainfty\+module $\M/\m\M$ over
an uncurved algebra $\A/\m\A$.
 The weakly curved module $\M$ is viewed as a trivial object of our
triangulated category of modules if the complex $\M/\m\M$
is acyclic.
 In particular, when the curvature element of $\A$ in fact vanishes,
this condition means that the complex of free $\R$\+contramodules
$\M$ should be contractible (rather than just acyclic).

 So the triangulated category of wcDG- or wc~\Ainfty\+modules
that we construct is not actually their derived category of
the first kind, but rather a mixed, or
\emph{semiderived category}~\cite{Psemi}.
 It behaves as the derived category of the first kind
in the direction of $\A$ relative to $\R$, and the derived
category of the second kind, or more precisely the contraderived
category, along the variables from~$\R$.

 For this reason we call the weakly curved modules $\M$ such that
$\M/\m\M$ is an acyclic complex of vector spaces \emph{semiacyclic}.
 The prefix ``semi'' here means roughly ``halfway between acyclic
and contraacyclic'' or even ``halfway between acyclic and
contractible''; so it should be thought of as a condition
\emph{stronger} than the acyclicity.

 As to wcDG- or wc~\Ainfty\+modules $\N$ over $\A$ whose underlying
graded $\R$\+modules are $\R$\+contramodules that are not necessarily
free, we replace them with $\R$\+free wc $\A$\+modules $\M$ isomorphic
to $\N$ in the contraderived category of weakly curved $\A$\+modules
before reducing modulo~$\m$ to check for semiacyclicity.
 So our semiderived category of arbitrary $\R$\+contramodule
wcDG- or wc~\Ainfty\+modules over $\A$ is the quotient category
of their contraderived category by the kernel of the derived
reduction functor.
 To construct such an $\R$\+free resolution $\M$ of a given
weakly curved $\A$\+module $\N$, it suffices to find an $\R$\+free
left resolution for $\N$ in the abelian category of
$\R$\+contramodule weakly curved $\A$\+modules and totalize it
by taking infinite products along the diagonals.

 The definition of the equivalence relation on wcDG\+modules
or wc~\Ainfty\+modules over $\A$ whose underlying graded
$\R$\+modules are cofree $\R$\+comodules is similar, except
that functor $\cM\mpsto{}_\m\cM$ of passage to the maximal
submodule annihilated by~$\m$ is used to obtain a complex of
vector spaces from a curved $\A$\+module in this case.
 And for arbitrary $\R$\+comodule weakly curved $\A$\+modules,
the semiderived category is constructed as the quotient
category of the coderived category of such modules by
the kernel of the derived $\m$\+annihilated submodule functor.

\subsection{{}}
 In particular, when the categories of $\R$\+contramodules and
$\R$\+comodules have finite homological dimensions (e.~g.,
$\R$ is a regular complete Noetherian local ring), any acyclic
complex of free $\R$\+contramodules or cofree $\R$\+modules
is contractible.
 In this case, our semiderived category of wcDG- or
wc~\Ainfty\+modules can be viewed as a true derived category of
the first kind and called simply the \emph{derived category}.

 On the other hand, it is instructive to consider the case of
the ring of dual numbers $R=k[\epsilon]/\epsilon^2$.
 In this case, there is no difference between $R$\+contramodules
and $R$\+comodules, which are both just $R$\+modules; and
accordingly no difference between $R$\+contramodule and
$R$\+comodule curved $A$\+modules.

 Still, their semiderived categories are different, in the sense
that the two equivalence relations on weakly curved $A$\+modules
(in other words, the two classes of semiacyclic curved modules)
are different in the case of weakly curved modules that are not
(co)free over~$R$.
 Indeed, they are different already for $A=R$, as the classes
of coacyclic and contraacyclic complexes of $R$\+modules are
different~\cite[Examples~3.3]{Pkoszul}.

 The two semiderived categories of weakly curved $A$\+modules are
equivalent (as they are for any weakly curved algebra $\A$ over any
pro-Artinian local ring~$\R$), but the equivalence is a nontrivial
construction when applied to modules that are not $R$\+(co)free.

\subsection{{}}
 The most striking aspect of the theory of (semi)derived categories
of weakly curved modules developed in this paper is just how
nontrivial these are.
 On the one hand, there is a general tendency of the curvature
to trivialize the categories of modules, well-known to
the specialists now (see, e.~g., \cite{KLN}).

 One reason for this is that apparently no curved modules over
a given curved algebra can be pointed out \emph{a priori} that
would not be known to vanish in the homotopy category already.
 In particular, a curved algebra has \emph{no} natural structure
of a curved module over itself.
 Some explicit constructions of CDG\+modules are used in
the proofs of the general theorems about them in~\cite{Pkoszul,
PP2,Psing}, but these always produce contractible CDG\+modules.
 There are lots of examples of nontrivial CDG\+modules, but these
are CDG\+modules over CDG\+algebras of some special types
(CDG\+bimodules~\cite{PP2}, Koszul CDG\+algebras~\cite{Pcurv},
change-of-connection transformations of DG\+algebras, etc.)

 In particular, an example from~\cite{KLN} shows that our
(semi)derived category of weakly curved modules over a wcDG- or
wc~\Ainfty\+algebra $\A$ may vanish entirely even when
the DG\+algebra $\A/\m\A$ has a nonzero cohomology algebra.
 This can already happen over $\R=k[[\epsilon]]$, or indeed,
over $\R=k[\epsilon]/\epsilon^2$.

 Specifically, let $\A=\R[x,x^{-1}]$ be the graded algebra over~$\R$
generated by an element~$x$ of degree~$2$ and its inverse element
$x^{-1}$, with the only relation saying that these two elements
should be inverse to each other, endowed with the zero differential,
vanishing higher operations $m_i=0$ for $i\ge3$, and the curvature
element $h=\epsilon x$.
 Then the homotopy categories of $\R$\+free and $\R$\+cofree
wcDG\+modules over $\A$ already vanish, as consequently do
the (semi)derived categories of wcDG- and
wc~\Ainfty\+modules over~$\A$ (see Example~\ref{kln-counterex}).

 For a similar wcDG\+algebra $\A'=\R[x]$ with $\deg x=2$, \ $d=0$,
and $h=\epsilon x$, one obtains a nonvanishing (semi)derived category
of wcDG- or wc~\Ainfty\+modules in which all
the $\R$\+contramodules of morphisms are annihilated by~$\epsilon$
(see Example~\ref{kln-counterex2}).

\subsection{{}}
 On the other hand, the obvious expectation that the
$\R$\+(contra)modules of morphisms in the triangulated category of
weakly curved $\A$\+modules are always torsion modules is most
emphatically \emph{not true}.
 The reason for the obvious expectation is, of course, that
the homotopy category of curved \Ainfty\+modules is trivial
over a field.

 The explanation is that one cannot quite localize contramodules.
 The functor assigning to an $\R$\+contramodule the tensor product
of its underlying $\R$\+module with the field of quotients of~$\R$
does \emph{not} preserve either the tensor product or the internal
$\Hom$ of contramodules; nor, indeed, does it preserve even infinite
direct sums.

\subsection{{}}
 Our computations of the $\R$\+contramodules $\Hom$ in certain
(semi)derived categories of wcDG- and wc~\Ainfty\+modules are
based on the Koszul duality theorems generalizing those
in~\cite{Pkoszul}.
 The semiderived category of wcDG\+modules over
a wcDG\+algebra $\A$ is equivalent to the coderived category of
CDG\+comodules and the contraderived category of CDG\+contramodules
over the CDG\+coalgebra $\gC=\Br(\A)$ obtained by applying
the bar construction to~$\A$.
 Similarly, the co/contraderived category of CDG\+co/contramodules
over an $\R$\+free CDG\+coalgebra $\gC$ that is conilpotent
modulo~$\m$ is equivalent to the semiderived category of
wcDG\+modules over the cobar construction $\Cb(\gC)$.

 In particular, it follows that the semiderived category of
wcDG\+modules over a wcDG\+algebra $\A$ is equivalent to
the semiderived category of wc~\Ainfty\+modules over $\A$
considered as a wc~\Ainfty\+algebra, so our lumping
together of the wcDG- and wc~\Ainfty\+modules in the preceding
discussion is justified.
 On the other hand, the semiderived category of
wc~\Ainfty\+modules over a wc~\Ainfty\+algebra $\A$ is equivalent
to the semiderived category of CDG\+modules over the enveloping
wcDG\+algebra of~$\A$.

\subsection{{}}
 To obtain a specific example of a nontrivial $\Hom$ computation
in a (semi)de\-rived category of wcDG- or wc~\Ainfty\+modules,
one can start with an ungraded $\R$\+free coalgebra $\gC$ considered
as a CDG\+coalgebra concentrated in degree~$0$ with a zero
differential and a zero curvature function.
 Then the co/contraderived category of, say, $\R$\+free
CDG\+co/contramodules over $\gC$ is just the co/contraderived
category of the exact category of $\R$\+free $\gC$\+co/contramodules.
 In particular, the exact category of $\R$\+free comodules
over $\gC$ embeds into its coderived category (and similarly
for contramodules).

 So considering $\gC$ as a comodule over itself we obtain
an example of an object in the coderived category whose
endomorphism ring is a nonvanishing \emph{free} $\R$\+contramodule.
 On the other hand, whenever the $k$\+coalgebra $\gC/\m\gC$ is
conilpotent, by the Koszul duality theorems mentioned
above the coderived category of $\R$\+free comodules
over $\gC$ is equivalent to the semiderived category of
wcDG- or wc~\Ainfty\+modules over $\Cb(\gC)$.
 It remains to pick an $\R$\+free coalgebra $\gC$ that is
conilpotent modulo~$\m$ but has no coaugmentation over~$\R$
in order produce an example of a semiderived category of
wcDG- or wc~\Ainfty\+modules with a nonzero $\R$\+free
$\Hom$ contramodule.

 In the simplest case of the coalgebra dual to the $\R$\+free
algebra of finite rank $\R[y]/(y^2=\epsilon)$
(with $\R=k[[\epsilon]]$ or $k[\epsilon]/\epsilon^2$, as above)
we obtain the wcDG\+algebra $\A=\Cb(\gC)$ that is freely generated
over $\R$ by an element $x$ of degree~$1$, with the zero
differential and the curvature element $h=\epsilon x^2$.
 The cobar construction assigns to the cofree comodule $\gC$
over $\gC$ a wcDG\+module $\M$ over $\A$ with an underlying
graded $\A$\+module freely generated by two elements.
 The algebra of endomorphisms of $\M$ in the (semi)derived
category of wcDG- or wc~\Ainfty\+modules over $\A$ is isomorphic
to $\R[y]/(y^2=\epsilon)$, so it is a free $\R$\+contramodule
of rank two (see Example~\ref{clifford-ex}).

\subsection{{}}
 One of our most important results in this paper is that
the semiderived category wcDG- or wc~\Ainfty\+modules over
a wcDG- or wc~\Ainfty\+algebra $\A$ over~$\R$ is compactly generated.
 So are the coderived category of CDG\+comodules and
the contraderived category of CDG\+contramodules over an $\R$\+free
CDG\+coalgebra~$\gC$.

 In the case case of wcDG- or wc~\Ainfty\+modules we present
a \emph{single}, if not quite explicit, compact generator.
 In order to construct this generator, one has to consider
$\R$\+comodule coefficients.
 In the semiderived category of $\R$\+comodule wcDG- or
wc~\Ainfty\+modules over $\A$, the weakly curved module $\A/\m\A$
is the desired compactly generating object.
 As we have already mentioned, the semiderived categories of
$\R$\+contramodule and $\R$\+comodule weakly curved modules
over $\A$ are equivalent, but the equivalence is a somewhat
complicated construction.

 In the case of the coderived category of CDG\+comodules over $\gC$,
we also consider $\R$\+comodule coefficients, and have CDG\+comodules
whose underlying graded $\R$\+comodules have finite length form
a triangulated subcategory of compact generators.
 For the contraderived category of CDG\+contramodules over $\gC$,
there is, once again, no explicit construction: one just has to
identify this category with the coderived category of
CDG\+comodules.

 While mainly interested in the curved modules and co/contramodules
in the tensor category of $\R$\+contramodules, it is chiefly for
the purposes of these constructions of compact generators that
we pay so much attention to the $\R$\+comodule coefficients and
the $\R$\+comodule-contramodule correspondence in our exposition.

\subsection{{}}
 Mixing contramodules with comodules is a tricky business, though.
 We have already discussed $\R$\+contramodule weakly curved
$\A$\+modules and $\R$\+comodule weakly curved $\A$\+modules in
this introduction; and now we have just mentioned $\R$\+comodule
curved $\C$\+comodules.
 So let us use the occasion to \emph{warn} the reader that, apparently,
it makes little sense to consider arbitrary $\R$\+contramodule
$\C$\+comodules or $\R$\+comodule $\C$\+contramodules, as such
categories of graded modules are not even abelian, the relevant
functors on the categories of $\R$\+contramodules and $\R$\+comodules
not having the required exactness properties.

 The (exotic derived) categories of $\R$\+free curved $\C$\+comodules
and $\R$\+cofree curved $\C$\+contramodules make perfect sense and
are well-behaved; and so are the categories of arbitrary
$\R$\+contramodule (or just $\R$\+free) curved $\C$\+contramodules
and arbitrary $\R$\+comodule (or just $\R$\+cofree) curved
$\C$\+comodules.
 The (exotic derived) categories of $\R$\+contramodule or
$\R$\+comodule (weakly) curved $\A$\+modules are also well-behaved,
as are the categories of $\R$\+free or $\R$\+cofree (weakly)
curved $\A$\+modules.
 But arbitrary (other than just $\R$\+free or $\R$\+cofree)
$\R$\+contramodule $\C$\+comodules or $\R$\+comodule
$\C$\+contramodules are generally problematic.

\subsection{{}}
 To end, let us briefly discuss the motivation and possible
applications.
 We are not in the position to suggest here any specific ways in
which the techniques we are developing could be applied in Fukaya's
Lagrangian Floer theory.
 In fact, the Novikov ring, which is the coefficient ring of
the Floer--Fukaya theory, is \emph{not} pro-Artinian
(\emph{nor} is it a topological local ring in our definition); so our
results do not seem to be at present directly applicable.

 Thus we restrict ourselves to stating that curved \Ainfty\+algebras
\emph{do} seem to appear in the Floer--Fukaya business, and that
their curvature (and change-of-connection) elements \emph{do} seem
to be, by the definition, divisible by appropriate maximal ideal(s)
of the coefficient ring(s)~\cite{Fu,FOOO,Cho}.
 Our study \emph{does} imply that, generally speaking, quite
nontrivial derived categories of modules can be associated with
curved algebras of this kind.
 Working over a complete local ring, rather than over a field, is
the price one has to pay for being able to obtain these
derived categories of modules.

 Furthermore, the semiderived categories of weakly curved DG-
and \Ainfty\+modules have all the usual properties of the derived
categories of DG- and \Ainfty\+modules over algebras over fields.
 The only caveat is that the nontriviality is \emph{not guaranteed}:
the triangulated categories of weakly curved modules may sometimes
vanish when, on the basis of the experience with uncurved modules
over algebras over fields, one would not expect them to
(as it was noticed in~\cite{KLN}).

\subsection{{}}
 Another possible application has to do with the deformation
theory of DG\+algebras.
 As pointed out in~\cite{KLN}, if one presumes that the deformations
of DG\+algebras should be controlled by their Hochschild 
cohomology complexes, one discovers that deformations in the class
of CDG\+algebras are to be considered on par with the conventional
DG\+algebra deformations.

 A curved infinitesimal or formal deformation of a DG\+algebra $A$
over a field $k$ is a wcDG\+algebra $\A$ over the ring
$R=k[\epsilon]/\epsilon^2$ or $\R=k[[\epsilon]]$, respectively.
 The problem of constructing deformations of the derived categories
of DG\+modules corresponding to curved deformations of DG\+algebras
was discussed in~\cite{KLN} (cf.\ the recent paper~\cite{DL}).

 Without delving into the implications of the deformation theory
viewpoint, let us simply state that what seems to be a reasonable
definition of the conventional derived category of wcDG\+modules
in the case of a pro-Artinian topological local ring $\R$ of
finite homological dimension is developed in this paper.
 So, at least, the case of a formal deformation may be (in some way)
covered by our theory.

\subsection{{}}
 I am grateful to all the people who have been telling me about
the curved \Ainfty\+algebras appearing in the Floer--Fukaya
theory throughout the recent years, and particularly Tony Pantev,
Maxim Kontsevich, Alexander Kuznetsov, Dmitri Orlov,
Anton Kapustin, and Andrei Losev.
 It would have never occured to me to consider curved algebras
over local rings without their influence.
 I~also wish to thank Bernhard Keller, Pedro Nicol\'as, and
particularly Wendy Lowen for an interesting discussion of
infinitesimal curved deformations of DG\+algebras.
 I would like to thank Sergey Arkhipov for suggesting that
tensor products of contramodules should be defined, B.~Keller for
directing my attention to Gabriel's pseudo-compact modules,
Alexander Efimov for answering my numerous questions about
the Fukaya theory, Ed Segal for a conversation about curved
\Ainfty\+algebras over a field, and Pierre Deligne for suggesting
the most natural definition of the tensor product of contramodules.
 Last but not least, I~am grateful to all the participants of
the informal student ``contraseminar'', which worked for several
weeks in Moscow in January--March 2014, for their participation,
interest, questions, and doubts.
 I~also wish to thank the anonymous referee for his comments,
which helped to improve the exposition.
 The author was partially supported by the Simons Foundation grant
and RFBR grants in Moscow, by a fellowship from the Lady Davis
Foundation at the Technion, by ISF grant~\#\,446/15 at
the University of Haifa, and by research plan RVO:~67985840 at
the Institute of Mathematics of the Czech Academy of Sciences
in Prague.

\Section{$\R$-Contramodules and $\R$-Comodules}

\subsection{Topological rings}  \label{topological-rings}
 Unless specified otherwise, all \emph{rings} in this paper are
assumed to be associative, commutative and unital.
 Some of the most basic of our results will be equally
applicable to noncommutative rings.

 Let $\R$ be a topological ring in which open ideals form
a base of neighborhoods of zero.
 We will always assume that $\R$ is separated and complete;
in other words, the natural map $\R\rarrow\limpr_\I\R/\I$,
where the projective limit is taken over all open ideals
$\I\subset\R$, is an isomorphism.

 A \emph{topological local ring} $\R$ is a topological ring
with a topologically nilpotent open ideal $\m$ such that
the quotient ring $\R/\m$ is a field.
 Here ``topologically nilpotent'' means that for any open
ideal $\I\subset\m$ there exists an integer $n\ge1$ such that
$\m^n\subset\I$.
 Clearly, a topological local ring is also local as
an abstract ring.
 For example, the completion of any (discrete) ring by the powers
of any of its maximal ideals is a topological local ring in
the projective limit topology.

 A topological ring is called \emph{pro-Artinian} if its
discrete quotient rings are Artinian rings.
 The projective limit of any filtered diagram of Artinian rings
and surjective morphisms between them is a pro-Artinian ring
(see~Corollary~\ref{appx-pro-rings-cor1}).
 For example, a complete Noetherian local ring $\R$ with the maximal
ideal $\m$ is a pro-Artinian topological ring in the $\m$\+adic
topology (cf.\ Appendix~\ref{noetherian-local-appx}).
 The dual vector space to any coassociative, cocommutative and
counital coalgebra over a field is a pro-Artinian topological ring
in its linearly compact topology.

\subsection{$\R$-contramodules}  \label{contramodules-sect}
 In this section we do not need to assume our rings to be
commutative yet.
 Given an (abstract) ring $R$, one can define (left) $R$\+modules in
the following fancy way.
 For any set $X$, let $R[X]$ denote the set of all finite formal
linear combinations of the elements of $X$ with coefficients in~$R$.
 The embedding $X\rarrow R[X]$ defined in terms of the zero and
unit elements of $R$ and the ``opening of parentheses'' map
$R[R[X]]\rarrow R[X]$ make the functor $X\mpsto R[X]$
a monad on the category of sets.
 The $R$\+modules are the algebras/modules over this monad
(cf.~\cite{Dur}).

 Now let $\R$ be a topological ring.
 For any set $X$, let $\R[[X]]$ be the set of all (infinite)
formal linear combinations $\sum_{x\in X} r_x x$ of the elements
of~$X$ with coefficients in $\R$ such that the family of
coefficients $r_x\in\R$ converges to zero as ``$x$~goes to infinity''.
 This means that for any open ideal $\I\subset\R$ the set of all
$x\in X$ such that $r_x\notin\I$ is finite.
 There is the obvious embedding $\varepsilon_X\:X\rarrow \R[[X]]$.
 The ``opening of parentheses'' map $\rho_X\:\R[[\R[[X]]]]\rarrow
\R[[X]]$ is defined by the rule
$$\textstyle
 \sum_{y\in\R[[X]]} r_y y \mpsto \sum_{x\in X}(\sum_y r_y r_{yx}) x,
 \quad\text{where $y=\sum_{x\in\R}r_{yx}x$}.
$$
 Here the infinite sum $\sum_{y\in\R[[X]]} r_y r_{yx}$ converges
in $\R$, since the family $r_y$ converges to zero and $\R$
is complete.
 In fact, this construction is applicable to any topological
associative ring $\R$ where open right ideals form a base of
neighborhoods of zero~\cite[Remark~A.3]{Psemi}.

 The above natural transformations $\varepsilon_X\:X\rarrow \R[[X]]$
and $\rho_X\:\R[[\R[[X]]]]\rarrow\R[[X]]$ define the structure of
an (associative and unital) monad on the functor $X\mpsto\R[[X]]$.
 An \emph{$\R$\+contramodule} is a module over this monad.
 In other words, an $\R$\+contramodule $\P$ is a set endowed
with a map of sets $\pi_\P\:\R[[\P]]\rarrow\P$ satisfying
the conventional associativity and unitality axioms
$\pi_\P\circ\R[[\pi_\P]] = \pi_\P\circ\rho_\P$ and
$\pi_\P\circ\varepsilon_\P=\id_\P$ for an algebra/module over
a monad $X\mpsto \R[[X]]$.
 We call the map $\pi_\P$ the \emph{contraaction map} and the above
associativity equation the \emph{contraassociativity equation}.
 The category of $\R$\+contramodules is denoted by $\R\contra$.

 The morphism of monads $\R[X]\rarrow\R[[X]]$ defines
the (nontopological) $\R$\+module structure on the underlying
set of every $\R$\+contramodule, so we have the forgetful functor
$\R\contra\rarrow\R\mod$ from the category of $\R$\+contramodules
to the abelian category of $\R$\+modules.

 Equivalently, an $\R$\+contramodule can be defined as a set endowed
with the following ``infinite summation'' operations.
 For any family of elements $r_\alpha$ converging to zero in $\R$
and any family of elements $p_\alpha\in\P$ (where $\alpha$~runs
over some index set) there is a well-defined element
$\sum_\alpha r_\alpha p_\alpha\in\P$.
 These operations must satisfy the idenitities of unitality:
$$\textstyle
 \sum_\alpha r_\alpha p_\alpha = p_{\alpha_0}
 \quad\text{ if the set $\{\alpha\}$ consists of
 one element $\alpha_0$ and $r_{\alpha_0}=1$},
$$
associativity:
$$\textstyle
 \sum_\alpha r_\alpha\sum_\beta r_{\alpha\beta}p_{\alpha\beta}
 = \sum_{\alpha,\beta}(r_\alpha r_{\alpha\beta})p_{\alpha\beta}
 \quad \text{ if $r_\alpha\to0$ and $\forall\alpha$
 $r_{\alpha\beta}\to0$ in $\R$,}
$$
and distributivity:
$$ \textstyle
 \sum_{\alpha,\beta} r_{\alpha\beta} p_\alpha =
 \sum_\alpha (\sum_\beta r_{\alpha\beta}) p_\alpha
 \quad \text{ if $r_{\alpha\beta}\to 0$ in $\R$}.
$$
 Here the summation over $\alpha,\beta$ presumes a set of
pairs $\{(\alpha,\beta)\}$ mapping into another set by
$(\alpha,\beta)\mpsto\alpha$ (i.~e., the range of possible~$\beta$'s
may depend on a chosen~$\alpha$).

 The finite and infinite operations are compatible in the sense
of the equations
$$\textstyle
 \sum_\alpha r_\alpha(p_\alpha+q_\alpha)
 = \sum_\alpha r_\alpha p_\alpha + \sum_\alpha r_\alpha q_\alpha,
 \quad
 \sum_\alpha (r'_\alpha + r''_\alpha) p_\alpha
 = \sum_\alpha r'_\alpha p_\alpha + \sum_\alpha r''_\alpha p_\alpha,
$$
and 
$$\textstyle
 \sum_\alpha r_\alpha (s_\alpha p_\alpha) =
 \sum_\alpha (r_\alpha s_\alpha)p_\alpha, \quad
 \sum_\alpha (s r_\alpha) p_\alpha = s\sum_\alpha r_\alpha p_\alpha
$$
for $p_\alpha$, $q_\alpha\in\P$ and $r_\alpha$, $r'_\alpha$,
$r''_\alpha$, $s$, $s_\alpha\in\R$ with
$r_\alpha$, $r'_\alpha$, $r''_\alpha$ converging to zero.
 Using these identities, one can define the $\R$\+contramodule
structures on the kernel and cokernel of an $\R$\+contramodule
morphism $\P\rarrow\Q$ taken in the category of $\R$\+modules.

 Hence $\R\contra$ is an abelian category and $\R\contra\rarrow
\R\mod$ is an exact functor.
 One also easily checks that infinite products exist in
$\R\contra$ and the forgetful functor to $\R\mod$ preserves them.

 For the reasons common to all monads, for any set $X$ the set
$\R[[X]]$ has a natural $\R$\+contramodule structure.
 The functor taking a set $X$ to the $\R$\+contramodule $\R[[X]]$
is left adjoint to the forgetful functor from $\R\contra$ to
the category of sets.
 We call contramodules of the form $\R[[X]]$ \emph{free\/
$\R$\+contramodules}.
 Free contramodules are projective objects in the abelian
category of $\R$\+contramodules, there are enough of them,
and hence every projective $\R$\+contramodule is a direct
summand of a free $\R$\+contramodule.

 For any collection of sets $X_\alpha$, the free contramodule
$\R[[\coprod_\alpha X_\alpha]]$ generated by the disjoint
union of $X_\alpha$ is the direct sum of the free contramodules
$\R[[X_\alpha]]$ in the category $\R\contra$.
 This allows to compute, at least in principle, the direct
sum of every collection of $\R$\+contramodules, by presenting
them as cokernels of morphisms of free contramodules and
using the fact that infinite direct sums commute with cokernels.
 So infinite direct sums exist in $\R\contra$.

\begin{rem} \label{dim-1-contramodules}
 The functors of infinite direct sum are not exact in $\R\contra$
in general; in fact, one can check that they are not exact already
for the ring $\R=k[[z,t]]$ of formal power series in two variables
over a field~$k$.
 However, when the abelian category of $\R$\+contramodules has
homological dimension not exceeding~$1$, the infinite direct sums in
$\R\contra$ \emph{are} exact.

 Indeed, the derived functor of infinite direct sum in $\R\contra$
can be defined and computed using left projective resolutions.
 Now if every subcontramodule of a projective contramodule is
projective, it remains to check that the direct sum of a family of
injective morphisms of projective contramodules is an injective
morphism.
 This follows from the fact that the natural map from the direct
sum of a family of projective contramodules to their direct product
is injective.

 The latter assertion holds for contramodules over any topological
ring~$\R$.
 It suffices to prove it for free contramodules, for which it can be
checked in terms of the explicit constructions of all the objects
involved. 
\end{rem}

\begin{rem} \label{adically-separated-complete}
 The following three remarks contain a collection of results connecting
the concept of an $\R$\+contramodule, as defined in this section, with
the more familiar notion of a (separated and) complete
topological $\R$\+module.

 Let $R$ be a Noetherian commutative ring, $m\subset R$ be an ideal,
$\R=\varprojlim_{n\ge1}R/m^nR$ be the $m$\+adic completion of
the ring $R$, viewed as a topological ring in the projective limit
topology (which coincides with the $m$\+adic topology in these
assumptions), and $\m=\varprojlim_{n\ge1}m/m^n=\R m$ be the extension
of the ideal~$m$ in the ring~$\R$.
 Then, according to Theorem~\ref{noetherian-r-contramodules-thm} from
Appendix~\ref{noetherian-local-appx} to this paper, the forgetful
functor $\R\contra\rarrow R\mod$ is fully faithful, i.~e.,
the infinite summation operations of an $\R$\+contramodule are
uniquely determined by its underlying $R$\+module structure.
 The $R$\+modules in the essential image of this forgetful functor are
called \emph{cohomologically $m$\+adically complete} in the terminology
of Porta--Shaul--Yekutieli~\cite{PSY,PSY2,Yek} or~\emph{$m$\+contramodule
$R$\+modules} in the terminology of our paper~\cite{Pcta}.
 The full subcategory of $m$\+contramodule $R$\+modules
$\R\contra\subset R\mod$ is closed under kernels, cokernels,
extensions, and infinite products.

 Let us say that an $R$\+module $P$ is \emph{$m$\+adically separated}
if the natural map $P\rarrow\varprojlim_{n\ge1}P/m^nP$ is injective,
and \emph{$m$\+adically complete} if this map is surjective.
 Then any $\R$\+contramodule (\,$=$~cohomologically $m$\+adically
complete $R$\+module) is $m$\+adically complete; and any $m$\+adically
separated and complete $R$\+module is an $\R$\+contramodule; but
the converse implications are not true~\cite[Section~2, Theorem~5.6,
and Lemma~5.7]{Pcta}.
 So the category $R\mod_{m\secm}$ of $m$\+adically separated and complete
$R$\+modules is a full subcategory in $\R\contra$.

 The $m$\+adic completion functor $M\longmapsto\Lambda_m(M)=
\varprojlim_{n\ge1}M/m^nM$ takes an arbitrary $R$\+module $M$ to
an $m$\+adically separated and complete
$R$\+module~\cite[Corollary~3.6]{Yek0}.
 The completion functor $\Lambda_m\:R\mod\rarrow R\mod_{m\secm}$ is left
adjoint to the embedding functor $R\mod_{m\secm}\rarrow R\mod$
\cite[Theorem~5.8]{Pcta}.
 In particular, the restriction $\Lambda_m\:\R\contra\rarrow
R\mod_{m\secm}$ of the functor $\Lambda_m$ to the full subcategory
$\R\contra\subset R\mod$ is left adjoint to the embedding functor
$R\mod_{m\secm}\rarrow\R\contra$.
 The category of $m$\+adically separated and complete $R$\+modules
is \emph{not} as well-behaved as the category of $\R$\+contramodules;
for instance, the category $R\mod_{m\secm}$ is not
abelian~\cite[Example~1]{Yek}, \cite[Example~2.7(1)]{Pcta}.
 The full subcategory $R\mod_{m\secm}$ is not closed under cokernels
in $R\mod$ or $\R\contra$, \emph{nor} it is closed under
extensions~\cite[Example~2.5]{Sim}.
\end{rem}

\begin{rem} \label{countable-base-separated}
 More generally, let $\R$ be a topological associative ring with
a countable base of neighborhoods of zero consisting of
open right ideals.
 For any left $\R$\+contramodule $\P$, set $\Lambda_\R(\P)=
\varprojlim_{\J\subset\R}\P/\J\P$, where $\J$~ranges over all the open
right ideals in $\R$ and $\J\P$ denotes the image of the contraaction
map $\J[[\P]]\rarrow\P$ (cf.\ the next Section~\ref{nakayama-sect}).
 Let us emphasize that the abelian group $\Lambda_\R(\P)$ depends on
the $\R$\+contramodule structure on $\P$, and not only on the underlying
$\R$\+module structure.
 For any left $\R$\+contramodule $\P$, the abelian group
$\Lambda_\R(\P)$ has a natural structure of left
$\R$\+contramodule~\cite[Lemma~6.2(a)]{PR}.
 The underlying $\R$\+module of $\Lambda_\R(\P)$ is endowed with
a natural (projective limit) topology, making it a complete, separated
topological left $\R$\+module.

 The natural $\R$\+contramodule morphism $\lambda_{\R,\P}\:\P\rarrow
\Lambda_\R(\P)$ is always surjective (see~\cite[Lemma~A.2.3]{Psemi},
\cite[Lemma~D.1.1]{Pcosh}, or~\cite[Lemma~6.3(b)]{PR}).
 So one has $\Lambda_\R(\P)\simeq\P/\bigcap_{\J\subset\R}\J\P$.
 A left $\R$\+contramodule $\P$ is said to be \emph{separated} if
the map~$\lambda_{\R,\P}$ is an isomorphism.
 Thus one can say that \emph{every left\/ $\R$\+contramodule is
complete, but it does not have to be separated}.
 For any $\R$\+contramodule $\P$, the $\R$\+contramodule
$\Lambda_\R(\P)$ is separated~\cite[Lemma~6.2(b)]{PR}.
 All projective left $\R$\+contramodules are
separated~\cite[Lemma~6.9]{PR}.

 Denote the full subcategory of separated left $\R$\+contramodules by
$\R\separ\subset\R\contra$.
 Then the completion functor $\Lambda_\R\:\R\contra\rarrow\R\separ$
is left adjoint to the embedding functor $\R\separ\rarrow\R\contra$.
 The same counterexamples mentioned in the previous
Remark~\ref{adically-separated-complete} show that the category
$\R\separ$ is \emph{not} abelian in general; \emph{nor} it is closed
under cokernels or extensions in $\R\contra$ \cite[Remark~6.5]{PR}.
\end{rem}

\begin{rem}
 Generally speaking, a complete separated topological $\R$\+module
does \emph{not} have an underlying $\R$\+contramodule structure.
 In suffices to consider the topological module of rational
$p$\+adic numbers $\mathbb Q_p$ over the topological rings
of $p$\+adic integers $\R=\boZ_p$.
 The $\boZ_p$\+module $\mathbb Q_p$ does \emph{not} have
a $\boZ_p$\+contramodule structure (as it is clear from
Lemma~\ref{nakayama-lemma} below).

 However, given a topological ring $\R$ with a base of neighborhoods
of zero consisting of open right ideals, one can consider
the category $\R\topol$ of all complete separated topological left
$\R$\+modules $\T$ satisfying the following condition.
 For any neighborhood of zero $\U\subset\T$ there should exist an open
right ideal $\J\subset\R$ such that $rt\in\U$ for all $r\in\J$ and
$t\in\T$.
 Then every topological left $\R$\+module from the category
$\R\topol$ has a natural underlying left $\R$\+contramodule structure,
with the sum $\sum_\alpha r_\alpha t_\alpha\in\T$, for any family of
elements $r_\alpha$ converging to zero in $\R$ and any family of
elements $t_\alpha\in\T$, defined as the limit of finite partial sums
in the topology of~$\T$.

 Thus we have a forgetful functor $\R\topol\rarrow\R\contra$.
 This functor is far from being fully faithful: for example, when
$\R=k$ is a field with the discrete topology, $\R\topol$ is
the category of all complete separated topological $k$\+vector spaces,
$\R\contra$ is the category of (abstract or discrete) $k$\+vector
spaces, and the functor $\R\topol\rarrow\R\contra$ forgets all
the information about the topology.
 This example also shows that the category $\R\topol$ is not abelian.

 Conversely, given a left $\R$\+contramodule $\P$, consider
the projective limit $\Lambda_\R(\P)=\varprojlim_{\J\subset\P}\P/\J\P$,
where, once again, $\J$ ranges over all the open right ideals in $\R$
and $\J\P\subset\P$ is the image of the contraaction map
$\J[[\P]]\rarrow\P$.
 Then $\Lambda_\R(\P)$ is a complete separated topological abelian
group in the projective limit topology.
 The topological group $\Lambda_\R(\P)$ has a natural structure of
topological left $\R$\+module, making it an object of the category
$\R\topol$.
 The functor $\Lambda_\R\:\R\contra\rarrow\R\topol$ is left adjoint
to the forgetful functor $\R\topol\rarrow\R\contra$.

 In the particular case when $\R$ has a countable base of neighborhoods
of zero, the functor $\Lambda_\R$ was discussed in the previous
Remark~\ref{countable-base-separated} (in particular, the discussion
there shows that this functor is not fully faithful, either).
\end{rem}

\subsection{Nakayama's lemma}  \label{nakayama-sect}
 Recall that a closed ideal $\m$ in a topological ring $\R$ is said
to be \emph{topologically nilpotent} if any neighborhood of zero in $\R$
contains a large enough power $\m^n$, \ $n\ge1$, of the ideal~$\m$.
 The following result is a generalization of~\cite[Lemma~4.11]{Jan}
and~\cite[Corollary~0.3]{PSY2}
(cf.~Appendix~\ref{noetherian-local-appx}).

\begin{lem} \label{nakayama-lemma} 
 Let\/ $\m$ be a topologically nilpotent closed ideal in a topological
ring\/~$\R$ and\/ $\P$ be a nonzero\/ $\R$\+contramodule.
 Then the image of the contraaction map\/ $\pi\:\m[[\P]]\rarrow\P$
differs from\/~$\P$.
\end{lem}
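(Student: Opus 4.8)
The plan is to argue by contradiction: assume that $\pi_\P$ is surjective and deduce that \emph{every} element of $\P$ vanishes, contradicting $\P\ne0$. Since $\pi_\P\:\m[[\P]]\rarrow\P$ is then a surjection of sets, I would choose a set-theoretic section $s\:\P\rarrow\m[[\P]]$, so that $\pi_\P\circ s=\id_\P$. Applying the functor $\m[[{-}]]$ to the map of sets $s$ produces a morphism of free contramodules $\m[[s]]\:\m[[\P]]\rarrow\m[[\m[[\P]]]]$, and composing with the monad multiplication gives a contramodule morphism $T=\rho_\P\circ\m[[s]]\:\m[[\P]]\rarrow\m[[\P]]$. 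The contraassociativity equation $\pi_\P\circ\rho_\P=\pi_\P\circ\m[[\pi_\P]]$ then yields $\pi_\P\circ T=\pi_\P\circ\m[[\pi_\P\circ s]]=\pi_\P$, since $\pi_\P\circ s=\id_\P$. The key feature of $T$ is that it raises the order of divisibility of coefficients: if $\xi$ has coefficients in $\overline{\m^{\.n}}$, then the coefficients of $T\xi$, being convergent sums of products of a coefficient of $\xi$ with a coefficient of some $s(q)$, lie in $\overline{\m^{\.n+1}}$.

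Fixing an arbitrary $p\in\P$ and setting $\xi_0=s(p)$, I would then assemble the infinite iteration into a single element. The coefficients of $T^n\xi_0$ lie in $\overline{\m^{\.n+1}}$, so for every open ideal $\I\subset\m$ topological nilpotence provides an $N$ with $\overline{\m^{\.N}}\subset\I$, whence the coefficients of $T^n\xi_0$ all lie in $\I$ once $n\ge N$. Combined with the convergence to zero of the coefficients of each individual $T^n\xi_0$, this shows that the doubly-indexed family of all coefficients of all the $T^n\xi_0$ converges to zero; grouping it by the elements of $\P$ defines a genuine element $\sigma=\sum_{n\ge0}T^n\xi_0\in\m[[\P]]$. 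Making this construction precise — that the nested, infinitely iterated substitution really does converge to a well-defined contramodule element — is the step where the topological nilpotence hypothesis is indispensable, and I expect it to be the main technical obstacle.

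Once $\sigma$ is in hand, the conclusion is immediate. As a contramodule morphism, $T$ commutes with the infinite summation operations, so $T\sigma=\sum_{n\ge0}T^{n+1}\xi_0=\sigma-\xi_0$, that is, $\xi_0=\sigma-T\sigma$. Applying $\pi_\P$ and using its additivity together with the identity $\pi_\P\circ T=\pi_\P$ established above gives
$$
 p=\pi_\P(\xi_0)=\pi_\P(\sigma)-\pi_\P(T\sigma)=\pi_\P(\sigma)-\pi_\P(\sigma)=0.
$$
As $p\in\P$ was arbitrary, this forces $\P=0$, the desired contradiction, so the image of $\pi_\P$ must differ from $\P$.

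Finally, I would point out that the argument never invokes a unit in $\m$: only the existence of the section $s$, the contraassociativity equation, and the topological nilpotence used to build $\sigma$ enter into it, which is precisely what the non-unital hypothesis of the lemma permits. If a more hands-on proof were preferred, the same mechanism can be unwound into the explicit iterated formula $\pi_\P(\xi_0)=\pi_\P(\xi_n)=p$ with the coefficients of $\xi_n=T^n\xi_0$ divisible by higher and higher powers of $\m$; the operator-and-section packaging is simply a way to perform the infinite descent in one convergent step rather than passing to a limit of the $\xi_n$.
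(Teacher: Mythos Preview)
Your argument is correct and follows essentially the same approach as the paper's proof: assume $\pi_\P$ surjective, iterate the lifting, use topological nilpotence to make an infinite sum of the iterated lifts converge, and invoke contraassociativity to obtain a telescoping identity forcing $p=0$. The packaging differs only cosmetically --- you fix a global section $s$ and iterate a single operator $T=\rho_\P\circ\m[[s]]$ on $\m[[\P]]$, while the paper lifts through the tower $\m^{(n)}[[\P]]$ and sums the flattened lifts $q_n$ in $\m[[\m[[\P]]]]$ --- but the mechanism is identical.

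One imprecision worth flagging: your justification ``as a contramodule morphism, $T$ commutes with the infinite summation operations'' is not quite the right reason for the step $T\sigma=\sum_{n\ge1}T^n\xi_0$. The sum $\sigma=\sum_n T^n\xi_0$ is not an $\m$\+contramodule operation (its ``coefficients'' would be~$1\notin\m$); it is a limit of partial sums in the natural topology on $\m[[\P]]$ with base of neighborhoods of zero $\I[[\P]]$ for open ideals $\I\subset\m$. What you need, and what the paper invokes explicitly for~$\rho_\P$, is that $T$ is additive and \emph{continuous} in this topology, so that it commutes with such limits. Since both $\m[[s]]$ and $\rho_\P$ clearly take $\I[[\,\cdot\,]]$ into $\I[[\,\cdot\,]]$, this is immediate.
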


\begin{proof}
 The following proof does not depend on the commutativity assumption
on~$\R$ (cf.~\cite[Lemma~A.2.1 and Remark~A.3]{Psemi}).
 Assume that the map $\pi\:\m[[\P]]\rarrow\P$ is surjective; let
$p\in\P$ be an element.
 Notice that for any surjective map of sets $f\:X\rarrow Y$, the induced
map $\m[[f]]\:\m[[X]]\rarrow\m[[Y]]$ is surjective.

 Define inductively $\m^{(0)}[[X]]=X$ and $\m^{(n)}[[X]]=
\m[[\m^{(n-1)}[[X]]]]$ for $n\ge1$.
 Let $p_1\in\m[[\P]]$ be a preimage of~$p$ under the map~$\pi$.
 Furthermore, let $p_n\in\m^{(n)}[[\P]]$ be a preimage of~$p_{n-1}$
under the map $\m^{(n-1)}[[\pi]]$.

 For any set $X$, let $\rho_X^{(n)}\:\m^{(n)}[[X]]\rarrow\m[[X]]$
denote the iterated monad multiplication/contraaction map.
 The abelian group $\m[[X]]$ is complete in its natural topology
with the base of neighborhoods of zero formed by the subgroups
$I[[X]]$, where $I\subset\m$ are open ideals.
 Besides, the map $\rho_X\:\m[[\m[[X]]]]\rarrow\m[[X]]$ is
continuous.

 Set $q_n=\rho^{(n-1)}_{\m[[\P]]}(p_n)\in\m^{(2)}[[\P]]$ for all
$n\ge2$.
 Since $\m$~is topologically nilpotent, the sum $\sum_n q_n$
converges in the topology of $\m[[\m[[\P]]]]$.
 Now we have $\m[[\pi]](q_n)=\rho_\P(q_{n-1})$ for all $n\ge3$
and $\m[[\pi]](q_2)=p_1$.
 Hence
$$\textstyle
 \m[[\pi]](\sum_{n=2}^\infty q_n) - \rho_\P(\sum_{n=2}^\infty q_n)
 = p_1
$$
and $p=\pi(p_1)=0$ by the contraassociativity equation.
\end{proof}

 Let $\R$ be a topological local ring with the maximal ideal~$\m$.
 Denote by $k=\R/\m$ the residue field.
 Assign to any $\R$\+contramodule $\P$ its quotient group
$\P/\m\P$ by the image $\m\P$ of the contraaction map $\m[[\P]]
\rarrow\P$.
 Then $\P/\m\P$ is a vector space over~$k$.
 In particular, for a free $\R$\+contramodule $\R[[X]]$ we obtain
the vector space $\R[[X]]/\m(\R[[X]])=k[X]$ with the basis $X$
over~$k$.

\begin{lem} \label{nakayama-proj-free}
 Over a topological local ring\/~$\R$, the classes of free and
projective contramodules coincide.
\end{lem}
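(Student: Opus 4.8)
The plan is to prove the only nontrivial inclusion, namely that every projective $\R$\+contramodule $\P$ is free; that free contramodules are projective was already recorded above. First I would reduce modulo~$\m$. The quotient $\P/\m\P$ is a vector space over the residue field $k=\R/\m$, so I choose a $k$\+basis, index it by a set~$X$, and pick for each $x\in X$ an element $p_x\in\P$ lifting the corresponding basis vector. By the universal property of the free contramodule (the adjunction between $X\mpsto\R[[X]]$ and the forgetful functor to sets), there is a unique morphism of $\R$\+contramodules $g\:\R[[X]]\rarrow\P$ with $g(x)=p_x$ for all $x\in X$. Since $\R[[X]]/\m(\R[[X]])=k[X]$ has basis~$X$, the reduction $\bar g\:k[X]\rarrow\P/\m\P$ is, by the choice of the $p_x$, an isomorphism.

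Next I would show that $g$ is surjective. Write $\Q=\coker g$. The reduction functor $\Q\mpsto\Q/\m\Q$ sends surjections to surjections, so $\P/\m\P\rarrow\Q/\m\Q$ is surjective. On the other hand, the composite $\R[[X]]\rarrow\P\rarrow\Q$ vanishes, hence so does its reduction, which is the composite $k[X]\rarrow\P/\m\P\rarrow\Q/\m\Q$ whose first arrow is the isomorphism~$\bar g$; as $\bar g$ is surjective, the map $\P/\m\P\rarrow\Q/\m\Q$ is therefore zero. Being both zero and surjective, this map forces $\Q/\m\Q=0$, and Lemma~\ref{nakayama-lemma}, applied to $\Q$ regarded as an $\m$\+contramodule, yields $\Q=0$. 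Thus $g$ is surjective.

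Since $\P$ is projective, the surjection $g\:\R[[X]]\rarrow\P$ splits, giving a direct sum decomposition $\R[[X]]\cong\P\oplus\K$ with $\K=\ker g$. The reduction functor is additive, so it preserves this finite biproduct, producing an isomorphism $k[X]\cong\P/\m\P\oplus\K/\m\K$ under which the projection onto the first summand is the isomorphism~$\bar g$. Comparing the two sides forces $\K/\m\K=0$, and a second application of Lemma~\ref{nakayama-lemma} gives $\K=0$. Therefore $g$ is an isomorphism and $\P\cong\R[[X]]$ is free.

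The steps that require care, though essentially routine, are the elementary properties of the reduction functor $\Q\mpsto\Q/\m\Q$---that it carries surjections to surjections and is additive---and the observation that $\coker g$ and $\ker g$ inherit the $\m$\+contramodule structure to which Lemma~\ref{nakayama-lemma} applies. The real content of the argument is concentrated in the two appeals to Nakayama's lemma: the first shows that a free contramodule on a basis of $\P/\m\P$ surjects onto~$\P$, and the second shows that this surjection, whose reduction is an isomorphism, is automatically injective.
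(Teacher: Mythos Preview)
Your proof is correct and follows essentially the same approach as the paper: choose a basis of $\P/\m\P$, lift it to a map $\R[[X]]\rarrow\P$, apply Lemma~\ref{nakayama-lemma} to the cokernel to obtain surjectivity, split using projectivity of~$\P$, and apply Lemma~\ref{nakayama-lemma} once more to the kernel. Your write-up is somewhat more detailed in justifying why the reductions of the cokernel and kernel vanish, but the argument is the same.
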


\begin{proof}
 Let $\P$ be a projective $\R$\+contramodule.
 Picking a basis $X$ in the vector space $\P/\m\P$, we obtain
an $\R$\+contramodule morphism $\R[[X]]\rarrow\P/\m\P$, where
$\P/\m\P$ is endowed with an $\R$\+contramodule structure
induced by its $k$\+vector space structure.
 Since the $\R$\+contramodule $\R[[X]]$ is projective,
this morphism can be lifted to an $\R$\+contramodule morphism
$f\:\R[[X]]\rarrow\P$ (it suffices to choose preimages in $\P$
for all elements of~$X$).
 By Lemma~\ref{nakayama-lemma}, the cokernel of the morphism~$f$
is a zero contramodule, so $f$~is surjective.
 Since $\P$ is projective, it follows that $f$~is a projection onto
a direct summand in the abelian category of $\R$\+contramodules,
so $\R[[X]]=\P\oplus\Q$.
 Then $\Q/\m\Q=0$ and it remains to apply Lemma~\ref{nakayama-lemma}
again in order to conclude that $\Q=0$.
\end{proof}

\begin{lem} \label{nakayama-acycl-contract}
 Let\/ $\K^\bu$ be a (possibly unbounded) complex of free contramodules
over a topological local ring\/ $\R$ with the maximal ideal\/~$\m$.
 Then the complex\/ $\K^\bu$ is contractible if and only if
the complex of vector spaces\/ $\K^\bu/\m\K^\bu$ is contractible
(i.~e., acyclic).
\end{lem}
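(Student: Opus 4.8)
The plan is to prove both implications, the forward one being immediate and the reverse one carrying all the substance. For the ``only if'' direction, I would observe that $\P\mpsto\P/\m\P$ is an additive functor from $\R\contra$ to $k$\+vector spaces, so it carries a contracting homotopy for $\K^\bu$ to a contracting homotopy for $\K^\bu/\m\K^\bu$; and a complex of $k$\+vector spaces is contractible precisely when it is acyclic. The whole difficulty lies in the ``if'' direction, where the idea is to lift a contracting homotopy from the reduction $\K^\bu/\m\K^\bu$ and then correct it to an honest homotopy by a convergent geometric series.

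So suppose $\K^\bu/\m\K^\bu$ is acyclic, hence contractible, with contracting homotopy $\overline s$, a collection of degree~$-1$ maps satisfying $\overline d\,\overline s+\overline s\,\overline d=\id$. Since each $\K^i$ is a free, hence projective (Lemma~\ref{nakayama-proj-free}), contramodule and each reduction map $\K^{i-1}\rarrow\K^{i-1}/\m\K^{i-1}$ is surjective, I would lift $\overline s$ componentwise to contramodule morphisms $s_0^i\:\K^i\rarrow\K^{i-1}$. Set $g=\id-(ds_0+s_0d)$. As $ds_0+s_0d$ commutes with~$d$, so does~$g$; and since $g$ reduces to $\id-(\overline d\,\overline s+\overline s\,\overline d)=0$ modulo~$\m$, the endomorphism $g$ carries $\K^\bu$ into $\m\K^\bu$. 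Being an $\R$\+linear contramodule morphism that commutes with the infinite summation, $g$ then satisfies $g(\m^j\K^\bu)\subseteq\m^{j+1}\K^\bu$, so that $g^j$ lands in $\m^j\K^\bu$ for every $j\ge0$.

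Next I would define the corrected homotopy $s=\sum_{j\ge0}g^js_0$. For a fixed $p$, the terms $g^js_0(p)$ lie in $\m^j\K^\bu$, and writing each as an $\m^j$\+coefficient contramodule combination and merging them into a single family indexed by the pairs $(j,\alpha)$, I would check that the combined family of coefficients converges to zero: for any open ideal $\I$ topological nilpotence gives $\m^N\subseteq\I$, so only the finitely many terms with $j<N$ can contribute coefficients outside~$\I$, and each of those is itself a convergent family. Thus $s(p)$ is a well-defined element of $\K^\bu$ by the contramodule summation operations, and interchanging the double summation via contraassociativity shows that $s$ is again a contramodule morphism. This is precisely the step where the contramodule formalism, rather than mere $\m$\+adic completeness, is essential.

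Finally I would verify $ds+sd=\id$. Using that $d$ commutes with each $g^j$ and with the infinite sum, and that $ds_0+s_0d=\id-g$, one obtains $(ds+sd)(p)=\sum_{j\ge0}g^j(\id-g)(p)=\sum_{j\ge0}\bigl(g^j(p)-g^{j+1}(p)\bigr)$. Setting $U=\sum_{j\ge0}g^j(p)$, which converges for the same reason as above, and using that $g$ commutes with infinite sums to get $\sum_{j\ge0}g^{j+1}(p)=g(U)$, the additivity axioms let me split the telescoping series as $U-g(U)=(\id-g)(U)=p$. Hence $ds+sd=\id$ and $\K^\bu$ is contractible. I expect the main obstacle to be the careful bookkeeping in the two convergence arguments and the legitimacy of these regroupings, all of which must be justified strictly in terms of the unitality, associativity, and distributivity axioms for the contramodule infinite summation, since these sums are \emph{not} limits of partial sums in any topology on the terms.
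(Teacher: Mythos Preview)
Your proof is correct in outline, and the convergence and regrouping arguments can indeed be made rigorous using the contramodule axioms together with the fact that each $\K^i$ is \emph{free}, so that everything reduces to coefficient-wise computations in~$\R$ where genuine topological convergence is available. Your closing caveat about the bookkeeping is well-placed: the well-definedness of $s(p)$ independent of the chosen $\m^j$\+coefficient presentations of $g^js_0(p)$, and the commutation of~$d$ with the infinite sum, both ultimately rest on passing to generator coefficients in $\R[[X]]$ and regrouping convergent series there.

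However, the paper's proof takes a much shorter and more modular route. It too lifts a contracting homotopy~$h$ and observes that $dh+hd$ reduces to the identity modulo~$\m$; but then, rather than inverting $dh+hd$ by an explicit geometric series, it simply invokes the assertion that \emph{any morphism of free\/ $\R$\+contramodules which is an isomorphism modulo\/~$\m$ is an isomorphism}. That assertion is proved exactly as in Lemma~\ref{nakayama-proj-free}: surjectivity follows from Nakayama's Lemma~\ref{nakayama-lemma} applied to the cokernel, the target being projective gives a splitting, and Nakayama applied to the kernel finishes. Once $dh+hd$ is known to be invertible and to commute with~$d$, the map $(dh+hd)^{-1}h$ is a contracting homotopy. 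So the paper packages the entire infinite-series argument inside the single invocation of Lemma~\ref{nakayama-lemma}, whereas you unpack that argument by hand in this specific instance; your approach yields an explicit formula for the homotopy at the cost of considerably more bookkeeping.
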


\begin{proof}
 Choose a contracting homotopy for $\K^\bu/\m\K^\bu$ and lift it
to a homotopy map $h$ on $\K^\bu$ using projectivity of
the contramodules $\K^i$.
 Then the endomorphism $dh+hd$ of the component $\K^i$ is
an identity map modulo $\m\K^i$.
 It remains to show that any morphism~$f$ of free $\R$\+contramodules
such that the induced map $f/\m f$ is an isomorphism of vector
spaces is an isomorphism of $\R$\+contramodules.
 The proof of this assertion repeats the above proof of
Lemma~\ref{nakayama-proj-free}.
\end{proof}

\begin{rem}  \label{assume-local}
 The results below in this section are applicable to any noncommutative
right pro-Artinian topological ring (i.~e., a filtered projective limit
of noncommutative rings with surjective morphisms between them).
 However, we will only prove them for a pro-Artinian topological local
ring here, using the above version of Nakayama's lemma as the main
technical tool.
 The general case of a pro-Artinian commutative ring can be deduced by
decomposing such a ring $\R$ as an infinite product of topological
local rings $\R_\alpha$ and any contramodule over $\R$ as the infinite
product of contramodules over~$\R_\alpha$
(see~\cite[Lemma~A.1.2]{Psemi}).
 In the noncommutative case, one proceeds by lifting the primitive
idempotents of the maximal prosemisimple quotient ring of~$\R$ to
a converging family of idempotents in~$\R$, etc.
 We do not go into that here, because only local rings are important
for our purposes.
\end{rem}

 Let $\R$ be a pro-Artinian (topological local) ring and
$\J\subset\R$ be a closed ideal.
 By Corollary~\ref{appx-pro-rings-cor2}, the topological ring $\R/\J$
is complete.
 Moreover, for any set $X$ the natural map $\R[[X]]\rarrow
\R/\J[[X]]$ is surjective, since it is a morphism of
$\R$\+contramodules that is surjective modulo~$\m$.
 Hence we have the exact sequence $0\rarrow\J[[X]]\rarrow\R[[X]]
\rarrow\R/\J[[X]]\rarrow0$.

 To any $\R$\+contramodule $\P$, one can assign its quotient group
$\P/\J\P$ by the image $\J\P$ of the contraaction map
$\J[[\P]]\rarrow\P$.
 Then $\P/\J\P$ is a contramodule over the quotient ring $\R/\J$;
it is the maximal quotient contramodule of $\P$ that is
a contramodule over $\R/\J$.
 In other words, the functor $\P\mpsto\P/\J\P$ is left adjoint to
the functor of contrarestriction of scalars $\R/\J\contra\rarrow
\R\contra$ (sending an $\R/\J$\+contramodule $\Q$ to the same set
$\Q$ considered as an $\R$\+contramodule).

 Hence the functor $\P\mpsto\P/\J\P$ is right exact and commutes
with infinite direct sums.
 One easily checks that it takes the free contramodule $\R[[X]]$
to the free contramodule $\R/\J[[X]]$.

 In particular, for an open ideal $\I\subset\R$ the functor
$\P\mpsto\P/\I\P$ takes values in the category of
$\R/\I$\+modules.
 This functor is well-defined for any topological ring~$\R$.

\begin{lem} \label{nakayama-reduct-proj}
 Let\/ $\R$ be a topological local ring.
 Then an\/ $\R$\+contramodule $\P$ is projective if and only if
the\/ $\R/\I$\+module\/ $\P/\I\P$ is projective for every
open ideal\/ $\I\subset\R$.
\end{lem}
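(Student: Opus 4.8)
First I would dispose of the ``only if'' direction, which is immediate. If $\P$ is projective, then it is a direct summand of a free contramodule, say $\P\oplus\Q\cong\R[[X]]$. The reduction functor $\P\mpsto\P/\I\P$ is additive and preserves direct sums, and it takes $\R[[X]]$ to $\R/\I[X]$; so applying it yields $\P/\I\P\oplus\Q/\I\Q\cong\R/\I[X]$. Hence $\P/\I\P$ is a direct summand of a free $\R/\I$\+module, and therefore projective, for every open ideal $\I\subset\R$.

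For the ``if'' direction, the plan is to exhibit a free contramodule isomorphic to~$\P$. I would pick a $k$\+basis $X$ of the vector space $\P/\m\P$ and lift it to a morphism $f\:\R[[X]]\rarrow\P$ by choosing preimages of the basis elements; by Nakayama (Lemma~\ref{nakayama-lemma}) the map $f$ is then surjective, and it only remains to prove that $f$ is injective. Here I would use that a topological local ring is complete, so that $\R[[X]]=\limpr_\I(\R/\I[X])$, the projective limit being taken over the open ideals $\I\subset\m$ (which are cofinal among all open ideals of~$\R$). Writing $\bar f_\I\:\R/\I[X]\rarrow\P/\I\P$ for the reduction of~$f$, the composite of $f$ with the canonical map $\P\rarrow\limpr_\I(\P/\I\P)$ is identified with $\limpr_\I\bar f_\I$ under the above description of $\R[[X]]$. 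Thus it suffices to show that each $\bar f_\I$ is an isomorphism, as then $\limpr_\I\bar f_\I$ is injective and hence so is~$f$.

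The whole matter therefore reduces to the claim that $\bar f_\I$ is an isomorphism for every open $\I\subset\m$, and this is where the hypothesis enters. Since reduction is right exact, $\bar f_\I$ is surjective; since $\P/\I\P$ is projective over $\R/\I$ by assumption, this surjection splits, so $\R/\I[X]\cong\P/\I\P\oplus N_\I$ with $N_\I=\ker\bar f_\I$. The key point is that $\R/\I$ is a local ring whose maximal ideal $\m/\I$ is \emph{nilpotent}: because $\m$ is topologically nilpotent and $\I$ is open, one has $\m^n\subset\I$ for some~$n$. Reducing the split decomposition further modulo $\m/\I$ gives $k[X]\cong\P/\m\P\oplus N_\I/(\m/\I)N_\I$, in which the projection onto the first summand is exactly $\bar f_\m$, an isomorphism by the choice of~$X$. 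Hence $N_\I/(\m/\I)N_\I=0$, that is $N_\I=(\m/\I)N_\I$, and the nilpotence $\m^n\subset\I$ forces $N_\I=(\m/\I)^nN_\I=0$. So $\bar f_\I$ is an isomorphism, as required.

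The main obstacle is precisely the injectivity of~$f$. One cannot simply invoke $\P\cong\limpr_\I(\P/\I\P)$, since for contramodules the natural map $\P\rarrow\limpr_\I(\P/\I\P)$ is in general only surjective. The trick is to push the comparison through the identity $\R[[X]]=\limpr_\I(\R/\I[X])$ instead, so that the limit is computed on the free (hence well-behaved) side, and to obtain the termwise isomorphisms $\bar f_\I$ by the elementary nilpotent form of Nakayama's lemma rather than by any limiting argument on the $\P$\+side.
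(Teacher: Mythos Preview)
Your proof is correct and follows essentially the same approach as the paper's. Both construct the comparison map $f\:\R[[X]]\rarrow\P$ from a lifted basis, use Lemma~\ref{nakayama-lemma} for surjectivity, and establish that each reduced map $\bar f_\I$ is an isomorphism via the nilpotent Nakayama argument; the only cosmetic difference is that for injectivity you phrase the conclusion via the identification $\R[[X]]=\limpr_\I\R/\I[X]$, whereas the paper phrases it as $\K\subset\bigcap_\I\I\F=0$ for free~$\F$---these are the same fact.
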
 

\begin{proof}
 The ``only if'' assertion is obvious; let us prove the ``if''.
 Pick a basis $X$ in the $k$\+vector space $\P/\m\P$ and consider
the free $\R$\+contramodule $\F=\R[[X]]$.
 Lifting the elements of $X$ into $\P$, we construct a morphism
of $\R$\+contramodules $\F\rarrow\P$ such that the induced morphism
of $k$\+vector spaces $\F/\m\F\rarrow\P/\m\P$ is an isomorphism.

 By Lemma~\ref{nakayama-lemma}, the morphism $\F\rarrow\P$ is
surjective.
 For any open ideal $\I\subset\R$, consider the morphism of
projective/free $\R/\I$\+modules $\F/\I\F\rarrow\P/\I\P$.
 This morphism is split, so for its kernel $K$ we have
$K/(\m/\I)K=0$, hence $K=0$.

 Now consider the kernel $\K$ of the morphism $\F\rarrow\P$.
 We have shown that $\K$ is contained in $\I\F$ as
a subcontramodule of $\F$, for any open ideal $\I\subset\R$.
 The $\R$\+contramodule $\F$ being free and the intersection of
all open ideals $\I$ in $\R$ being zero, the intersection of all
subcontramodules $\I\F$ in $\F$ is also zero; thus $\K=0$.
 (Cf.~\cite[Lemma~A.3 and Remark~A.3]{Psemi}.)
\end{proof}

\begin{lem} \label{nakayama-reduct-products}
 For any closed ideal\/ $\J$ in a pro-Artinian ring\/ $\R$,
the functor\/ $\P\mpsto\P/\J\P$ from the category of\/
$\R$\+contramodules to the category of\/ $\R/\J$\+contramodules
preserves infinite products.
\end{lem}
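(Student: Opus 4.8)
The plan is to reduce the statement to the case of free $\R$\+contramodules and then verify it directly. First I would record two exactness properties. Because the forgetful functor $\R\contra\rarrow\R\mod$ is exact and faithful and preserves infinite products, and products of modules are exact, infinite products are exact in $\R\contra$; the same holds in $\R/\J\contra$, since $\R/\J$ is again a pro-Artinian topological ring. Secondly, the reduction functor $T\:\P\mpsto\P/\J\P$ is left adjoint to contrarestriction of scalars, hence right exact. Now, for a family $(\P_\alpha)$ I would pick for each index a presentation $\F'_\alpha\rarrow\F_\alpha\rarrow\P_\alpha\rarrow0$ by free $\R$\+contramodules. Applying $T$ to the product of these exact sequences, and separately taking the product of the sequences obtained by applying $T$ termwise, yields two right-exact sequences joined by the natural comparison morphisms; by the five lemma it then suffices to prove that the comparison morphism is an isomorphism for families of \emph{free} contramodules (which covers both $(\F_\alpha)$ and $(\F'_\alpha)$).

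So let $\F_\alpha=\R[[X_\alpha]]$ and put $P=\prod_\alpha\F_\alpha$. By the exact sequence $0\rarrow\J[[X_\alpha]]\rarrow\R[[X_\alpha]]\rarrow\R/\J[[X_\alpha]]\rarrow0$ recorded above, each reduction $\F_\alpha\rarrow\R/\J[[X_\alpha]]$ is surjective with kernel $\J[[X_\alpha]]$. Products of surjections are surjective and products preserve kernels, so the product map $P\rarrow\prod_\alpha\R/\J[[X_\alpha]]$ is surjective with kernel $K:=\prod_\alpha\J[[X_\alpha]]$, and it factors through the comparison morphism $P/\J P\rarrow\prod_\alpha\R/\J[[X_\alpha]]$. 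Hence the comparison morphism is surjective, and it is an isomorphism exactly when $\J P=K$. One inclusion is formal: every projection $P\rarrow\F_\alpha$ is a morphism of $\R$\+contramodules and therefore carries the image $\J P$ of the contraaction into $\J\F_\alpha=\J[[X_\alpha]]$, so that $\J P\subseteq K$.

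The reverse inclusion $K\subseteq\J P$ is the heart of the matter, and the only step where the pro-Artinian hypothesis is essential. Given $(\eta_\alpha)\in K$, say $\eta_\alpha=\sum_{x\in X_\alpha}r^\alpha_x x$ with all $r^\alpha_x\in\J$ and $r^\alpha_x\to0$ for fixed $\alpha$, I must exhibit $(\eta_\alpha)$ as the contraaction $\pi_P(w)$ of some $w\in\J[[P]]$; the obstacle is that the family $(r^\alpha_x)$ need not tend to zero over all pairs $(\alpha,x)$ at once, so one cannot simply use the $r^\alpha_x$ as coefficients. My strategy would be to choose, using the linearly compact structure of the closed ideal $\J$, a family $(s_j)_{j\in J_0}$ of elements of $\J$ tending to zero such that every $r\in\J$ has a convergent expansion $r=\sum_j s_j u_j(r)$ with $u_j(r)\in\R$, the coefficient functionals $r\mapsto u_j(r)$ being continuous. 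Writing $r^\alpha_x=\sum_j s_j u_j(r^\alpha_x)$ and reorganizing the double sum gives $\eta_\alpha=\sum_j s_j g^\alpha_j$ with $g^\alpha_j=\sum_x u_j(r^\alpha_x)\,x\in\F_\alpha$, the inner sums converging because, for fixed $j$, continuity and $r^\alpha_x\to0$ force $u_j(r^\alpha_x)\to0$ in $x$. Putting $q_j=(g^\alpha_j)_\alpha\in P$ and $w=\sum_j s_j q_j$, the condition $s_j\to0$ makes $w$ a legitimate element of $\J[[P]]$, and $\pi_P(w)=(\eta_\alpha)$ by construction. I expect the technical crux to be exactly the existence of such a ``coordinate-like'' null family together with the verification that all the rearrangements converge --- the kind of check that, as in Remark~\ref{dim-1-contramodules}, is carried out in terms of the explicit constructions of the objects involved.
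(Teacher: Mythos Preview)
Your reduction to the case of free contramodules via a two-term free presentation and the five lemma is correct and is exactly how the paper begins. Where you diverge from the paper is at the crux. The paper does \emph{not} try to show directly that $\J P=K$ for $P=\prod_\alpha\R[[X_\alpha]]$. Instead, it proves the stronger structural fact that the product of free $\R$\+contramodules is itself free: choosing a $k$\+basis $X$ of $\prod_\alpha k[X_\alpha]$, it lifts the elements of $X$ to $P$ and shows that the resulting map $\R[[X]]\rarrow\prod_\alpha\R[[X_\alpha]]$ is an isomorphism. This is checked by reducing modulo each open ideal $\I$, where it amounts to the statement that a product of free modules over the Artinian ring $\R/\I$ is free; the paper invokes the Bass--Chase theorem that infinite products of projectives over a right Artinian ring are projective, together with Nakayama. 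The same argument with $\J$ in place of $0$ gives $\R/\J[[X]]\simeq\prod_\alpha\R/\J[[X_\alpha]]$, and the commuting square of natural surjections finishes the proof. A byproduct is Lemma~\ref{nakayama-proj-products}: projective $\R$\+contramodules are closed under products.

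Your approach instead hinges on the existence of a null family $(s_j)$ in $\J$ together with continuous ``coefficient functionals'' $u_j\:\J\rarrow\R$ with $u_j(0)=0$ and $r=\sum_j s_j u_j(r)$. You correctly identify this as the technical crux, and the subsequent rearrangement does go through once this is granted (the contraassociativity of the free contramodule supplies the needed identity). However, the existence of such a family with continuous sections is a genuine lemma that you have not proven, and it is not obviously lighter than the paper's route. For $\R$ with a countable base of open ideals one can plausibly build the $s_j$ layer by layer inside $\J\cap\I_{n-1}$ and obtain continuous sections from the linearly compact structure, but the paper makes no countability assumption, and in that generality the construction (and the existence of the continuous $u_j$) needs a careful argument that you have not supplied. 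By contrast, the paper's passage to Artinian quotients and appeal to Bass--Chase handles the general pro-Artinian case uniformly. So: your outline is coherent and the reduction step matches the paper, but the heart of your argument rests on an unproven claim whose justification is at least as delicate as what the paper actually does.
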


\begin{proof}
 As mentioned above (see Remark~\ref{assume-local}), we will assume
that $\R$ is a topological local ring.
 Presenting arbitrary contramodules $\P_\alpha$ as the cokernels
of morphisms of free $\R$\+contramodules, we reduce the question
to the case of the product of free $\R$\+contramodules
$\P_\alpha=\R[[X_\alpha]]$.
 Set $\Q=\prod_\alpha\R[[X_\alpha]]$.

 Pick a basis $X$ in the product of vector spaces $\prod_\alpha
k[X_\alpha]$.
 The morphism of $\R$\+contramodules $\R[[X]]\rarrow k[X]
\rarrow \prod k[[X_\alpha]]$ can be lifted to a morphism
of $\R$\+contramodules $\R[[X]]\rarrow\prod_\alpha\R[[X_\alpha]]$.
 For any open ideal $\I\subset\R$, the composition
$\R[[X]]\rarrow\prod_\alpha\R[[X_\alpha]]\rarrow\prod_\alpha
\R/\I[X_\alpha]$ factorizes through $\R/\I[X]$.
 The map $\R[[X]]\rarrow\prod_\alpha\R[[X_\alpha]]$ is
the projective limit of the maps $\R/\I[X]\rarrow
\prod_\alpha\R/\I[X_\alpha]$ taken over the open ideals~$\I$.

 For any Artinian local ring $R$ with the maximal ideal~$m$
and any collection of $R$\+modules $P_\alpha$ one has
$(\prod_\alpha P_\alpha)/m(\prod_\alpha P_\alpha)\simeq
\prod_\alpha P_\alpha/mP_\alpha$, since the ideal $m$ is
finitely generated.
 Moreover, the product of projective modules over a right
Artinian ring is projective~\cite{Bas,Ch}.
 Hence the morphism of free $\R/\I$\+modules $\R/\I[X]\rarrow
\prod_\alpha\R/\I[X_\alpha]$, being an isomorphism modulo~$m$,
is itself an isomorphism.

 Passing to the projective limit over all open ideals $\I\subset\R$,
we conclude that the map $\R[[X]]\rarrow\prod_\alpha\R[[X_\alpha]]$
is an isomorphism.
 Similarly, passing to the projective limit over the open ideals $\I$
containing $\J$, we have the isomorphism $\R/\J[[X]]\simeq
\prod_\alpha\R/\J[[X_\alpha]]$.
 The two isomorphisms form a commutative square with the natural
surjections.
 It remains to recall that $\R[[X]]/\J(\R[[X]])=\R/\J[[X]]$.
\end{proof}

 Along the way, we have also proven the next lemma.

\begin{lem} \label{nakayama-proj-products}
 The class of projective contramodules over a pro-Artinian ring\/
$\R$ is closed under infinite products. \qed
\end{lem}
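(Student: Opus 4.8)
The plan is to derive this lemma directly from the computation already carried out in the proof of Lemma~\ref{nakayama-reduct-products}, which is what the phrase ``along the way'' is signalling. First I would invoke Remark~\ref{assume-local} to reduce to the case where $\R$ is a topological local ring: for a general pro-Artinian commutative ring one decomposes $\R$ as a product of local rings $\R_\alpha$ and every contramodule as the corresponding product of contramodules over the~$\R_\alpha$, so that the local statement applied to each factor assembles into the statement for~$\R$.

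Assuming now that $\R$ is local, I would appeal to Lemma~\ref{nakayama-proj-free}, according to which the projective $\R$\+contramodules over such a ring are exactly the free ones. It therefore suffices to show that an infinite product $\prod_\alpha\R[[X_\alpha]]$ of \emph{free} contramodules is again free. But this is precisely the isomorphism $\R[[X]]\simeq\prod_\alpha\R[[X_\alpha]]$ that was established in the course of proving Lemma~\ref{nakayama-reduct-products}, with $X$ chosen to be a basis of the $k$\+vector space $\prod_\alpha k[X_\alpha]$. Chaining the three observations---projective implies free, a product of free contramodules is free, and free implies projective---then gives the conclusion for an arbitrary family of projective $\R$\+contramodules.

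The single substantive ingredient here is the middle link, that a product of free contramodules remains free, and this is exactly the part that has already been done. It rests on the fact, used in the proof of Lemma~\ref{nakayama-reduct-products}, that a product of projective modules over a right Artinian ring is projective, combined with the Nakayama-type argument of Lemmas~\ref{nakayama-lemma} and~\ref{nakayama-proj-free} promoting the evident isomorphism modulo~$\m$ to a genuine isomorphism of free contramodules. Were that computation not already available, reproving it would be the main obstacle; as matters stand, nothing remains except to collect the pieces, which is why the lemma can be recorded with no separate proof.
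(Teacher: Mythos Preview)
Your proposal is correct and is exactly what the paper intends: the sentence ``Along the way, we have also proven the next lemma'' refers to the isomorphism $\R[[X]]\simeq\prod_\alpha\R[[X_\alpha]]$ established inside the proof of Lemma~\ref{nakayama-reduct-products}, and together with Lemma~\ref{nakayama-proj-free} and the reduction of Remark~\ref{assume-local} this gives the result just as you describe.
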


 Notice that any projective $\R$\+contramodule is a direct summand
of an infinite product of copies of the $\R$\+contramodule $\R$,
as one can immediately see from the above proof.

\begin{rem}  \label{contrarestriction-direct-sums}
 The functor of contrarestriction of scalars $\R/\J\contra\rarrow
\R\contra$ does \emph{not} preserve infinite direct sums in general.
 It suffices to consider the case when $\m^2=0$ and $\m=\J$ is
an infinite-dimensional (linearly compact) $k$\+vector space.
 However, when $\R$ is a Noetherian local ring with
the $\m$\+adic topology, the functor of contrarestriction of scalars
from $\R/\J$ to $\R$ does preserve infinite direct sums, and
moreover, has a right adjoint functor, constructed as follows.
 Given an $\R$\+contramodule $\P$, one can consider the subset
${}_\J\P=\Hom^\R(\R/\J,\P)\subset\P$ of all elements annihilated
by $\J$ acting in $\P$ viewed as an $\R$\+module.
 This is clearly an $\R$\+subcontramodule of $\P$; in the above
Noetherian assumption, one can check that the contraaction
map $\J[[{}_\J\P]]\rarrow {}_\J\P$ vanishes.
\end{rem}

\subsection{$\R$-comodules}  \label{r-comodules-sect}
 Let $\R$ be a pro-Artinian topological ring.
 A \emph{comodule} over $\R$ is an ind-object of the abelian
category opposite to the category of discrete $\R$\+modules
of finite length.
 We refer to~\cite[Chapter~6 and Section~8.6]{KS} for background
material on ind-objects.
 By the definition, $\R$\+comodules form a locally Noetherian
(and even locally finite) Grothendieck abelian category.
 We denote the category of $\R$\+comodules by $\R\comod$.

 Comodules over pro-Artinian rings have no underlying groups or sets.
 However, by the definition, there is an anti-equivalence of categories
$$
 ({-})^\op\:\R\comod\lrarrow\Pro(\R\discr_\fin)
$$
connecting the category of $\R$\+comodules with the category of
pro-objects in the abelian category of discrete $\R$\+modules of
finite length $\R\discr_\fin$.
 The functor $\cM\mpsto\cM^\op$ formally inverts the arrows, assigning
a downwards directed diagram in $\R\discr_\fin$ to an upwards directed
diagram in the category opposite to $\R\discr_\fin$.

 In the terminology of Gabriel's
dissertation~\cite[Sections~IV.3--IV.4]{Gab}, the category
$\Pro(\R\discr_\fin)$ is interpreted as the category of pseudo-compact
topological modules over a pseudo-compact topological ring~$\R$.
 An exact, conservative functor of projective limit acts
from the category of pro-objects $\Pro(\R\discr_\fin)$ to the category of
abelian groups (see Corollary~\ref{appx-pro-modules-cor}).
 Moreover, for any $\R$\+comodule $\cM$ the abelian group $\cM^\op$
has a natural $\R$\+contramodule structure, because discrete
$\R$\+modules of finite length, being modules over discrete quotient
rings of $\R$, can be viewed as $\R$\+contramodules, and the projective
limits taken in $\R\contra$ are preserved by the forgetful functor
from $\R$\+contramodules to abelian groups.

 The ring $\R$ itself is the projective limit of its discrete quotient
rings, which are discrete $\R$\+modules of finite length.
 So $\R$ can be viewed as an object of $\Pro(\R\discr_\fin)$.
 Hence there is a distinguished object $\cC=\cC(\R)$ in $\R\comod$
such that $\cC^\op=\R$.
 For any $\R$\+comodule $\cM$, the $\R$\+comodule morphisms
$\cM\rarrow\cC$ correspond bijectively to morphisms
$\R\rarrow\cM^\op$ in $\Pro(\R\discr_\fin)$, that is, to elements
of the underlying abelian group (\,$=$~projective limit) of $\cM^\op$.
 The functor $\cM\longmapsto\cM^\op$ is exact and faithful; thus
$\cC(\R)$ is an injective cogenerator of the abelian category
$\R\comod$.
 Since this category is locally Noetherian, direct sums of copies
of the object $\cC$ are also injective.
 We will call these \emph{cofree} $\R$\+comodules.
 One can easily see that there are enough of them, so any
injective $\R$\+comodule is a direct summand of a cofree one.

 As any Grothendieck abelian category, the category $\R\comod$
has arbitrary infinite products.
 One can describe them in the following way.

 Given a closed ideal $\J\subset\R$, we assign to any $\R$\+comodule
$\cM$ its maximal subcomodule ${}_\J\cM$ that is a comodule over
$\R/\J$.
 In other words, the functor $\cM\mpsto{}_\J\cM$ is right adjoint
to the functor of corestriction of scalars $\R/\J\comod\rarrow
\R\comod$ (induced on the ind-objects by the functor sending
a discrete $\R/\J$\+module of finite length to the same abelian 
group considered as an $\R$\+module).

 Hence the functor $\cM\mpsto{}_\J\cM$ is left exact and preserves
infinite products.
 It follows that $\prod_\alpha \cM_\alpha = \limin_\I\prod_\alpha
{}_\I\cM_\alpha$ for any family of objects $\cM_\alpha\in\R\comod$,
where the inductive limit is taken over all open ideals
$\I\subset\R$.
 As to the functors of infinite product in the abelian
category of comodules over an Artinian ring $R=\R/\I$, these are
exact, since the category $R\comod$ has enough projectives
(because the category of $R$\+modules of finite length has
enough injectives).
 
 Clearly, the functor $\cM\mpsto{}_\J\cM$ also commutes with all
filtered inductive limits.
 For any closed ideal $\J\subset\R$, one has ${}_\J\cC(\R)=\cC(\R/\J)$.
 Conversely, an $\R$\+comodule $\cJ$ is injective if and only if
the $\R/\I$\+comodule ${}_\I\cJ$ is injective for every open ideal
$\I\subset\R$.
 We use the conventional duality on the category of finite-dimensional
$k$\+vector spaces in order to identify comodules over the field~$k$
(considered as a discrete topological ring) with
(possibly infinite-dimensional) $k$\+vector spaces. 

\begin{lem} \label{comodule-nakayama}
 Let\/ $\R$ be a pro-Artinian topological local ring with the maximal
ideal\/~$\m$ and the residue field $k=\R/\m$, and let\/
$\cM$ be a nonzero\/ $\R$\+comodule.
 Then the $k$\+vector space\/ ${}_\m\cM$ is nonzero.
\end{lem}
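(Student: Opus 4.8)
The plan is to reduce to the case of a simple subcomodule of $\cM$ and then exploit the locality of~$\R$. First I would invoke the fact, recorded above, that $\R\comod$ is a locally finite Grothendieck category: every comodule is the filtered union of its subcomodules of finite length, so a nonzero $\cM$ contains a nonzero subcomodule of finite length, and hence (taking the bottom term of a composition series) a \emph{simple} subcomodule $\cN\subset\cM$. Since the functor $\cM\mapsto{}_\m\cM$ is right adjoint to corestriction of scalars $k\comod\rarrow\R\comod$, it is left exact, and in particular preserves monomorphisms; thus the inclusion $\cN\hookrightarrow\cM$ induces an inclusion ${}_\m\cN\hookrightarrow{}_\m\cM$. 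It therefore suffices to prove that ${}_\m\cN\ne0$ for a simple $\R$\+comodule~$\cN$.

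For this I would pass through the duality $\cM\mapsto\cM^\op$ between $\R$\+comodules and pro-objects in the category of finite-length discrete $\R$\+modules: under it the finite-length comodules correspond to finite-length discrete $\R$\+modules, and the simple comodules to the simple such modules. As $\R$ is local with maximal ideal~$\m$, the only simple discrete $\R$\+module is the residue field $k=\R/\m$, on which $\m$ acts by zero. Hence every simple $\R$\+comodule is annihilated by~$\m$, i.e.\ is already a comodule over $\R/\m=k$, so that ${}_\m\cN=\cN$. Since $\cN$ is nonzero, so is the $k$\+vector space ${}_\m\cN$, and the previous paragraph then gives ${}_\m\cM\ne0$.

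The only genuine content here is this socle-theoretic step --- the existence of a simple subobject together with the identification of the simple comodules as the $\m$\+torsion objects --- which is exactly the dual of the standard Nakayama lemma promised in the introduction, and is indeed ``clearly true''; so I do not anticipate a real obstacle. If one prefers to avoid naming the duality, the same conclusion follows formally from the adjunction: writing $\cN=\jmath(k)$ for the image under corestriction $\jmath\:k\comod\rarrow\R\comod$ of the one-dimensional $k$\+vector space, one obtains a natural isomorphism ${}_\m\cM\cong\Hom_{\R\comod}(\cN,\cM)$, whereupon it remains to check that the simple comodule $\cN$ embeds into every nonzero~$\cM$ --- which is again the local-finiteness argument of the first paragraph.
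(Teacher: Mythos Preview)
Your proof is correct and follows essentially the same route as the paper's: both use the local finiteness of $\R\comod$ to pass to a finite-length subobject, invoke left exactness of ${}_\m$ to reduce to that subobject, and then appeal to the duality with finite-length discrete $\R$\+modules together with the locality of~$\R$. The only cosmetic difference is that the paper stops at an arbitrary finite-length subcomodule $\cM_\alpha$ and cites the classical Nakayama statement $M/mM\ne0$ for the dual Artinian module, whereas you push one step further to a simple subcomodule and identify it directly with the residue field; this is the same argument with the composition-series step absorbed.
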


\begin{proof}
 One can present the ind-object $\cM$ as the inductive limit of
a filtered diagram of $\R$\+comodules $\cM_\alpha$ of finite length
and injective morphisms $\cM_\alpha\rarrow\cM_\beta$ between them.
 Then the maps of $k$\+vector spaces ${}_\m(\cM_\alpha)\rarrow
{}_\m(\cM_\beta)$ are also injective, hence so are the maps
${}_\m(\cM_\alpha)\rarrow{}_\m\cM$.
 It remains to use the fact that for any nonzero module (of finite
length) $M$ over an Artinian local ring $R$ with the maximal ideal~$m$,
the quotient module/vector space $M/mM$ is nonzero.
\end{proof}

\begin{lem} \label{comodule-inj-cofree}
 Over a pro-Artinian topological local ring\/ $\R$, the classes of
cofree and injective comodules coincide.
\end{lem}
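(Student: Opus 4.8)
The plan is to dualize the proof of Lemma~\ref{nakayama-proj-free}, replacing the reduction functor $\P\mpsto\P/\m\P$ by the $\m$\+annihilated submodule functor $\cM\mpsto{}_\m\cM$ and reversing the direction of all the arrows. Since cofree comodules are injective, it suffices to show that every injective $\R$\+comodule $\cJ$ is cofree, i.e., to construct an isomorphism between $\cJ$ and a cofree comodule.

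First I would form the $k$\+vector space ${}_\m\cJ$, choose a basis $X$ for it, and set $\cC[X]=\bigoplus_{x\in X}\cC(\R)$, which is cofree by definition. Because the functor ${}_\m$ is additive and commutes with filtered inductive limits, and ${}_\m\cC(\R)=\cC(k)$, one obtains ${}_\m(\cC[X])=k[X]$, so the chosen basis yields an isomorphism of $k$\+comodules ${}_\m\cJ\cong{}_\m(\cC[X])$. Composing it with the inclusion ${}_\m(\cC[X])\hookrightarrow\cC[X]$ gives a morphism $\phi\:{}_\m\cJ\rarrow\cC[X]$, which I would extend along the monomorphism ${}_\m\cJ\hookrightarrow\cJ$, using the injectivity of $\cC[X]$, to a morphism $g\:\cJ\rarrow\cC[X]$. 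By construction $g$ carries the subcomodule ${}_\m\cJ$ isomorphically onto ${}_\m(\cC[X])$, so ${}_\m g$ is an isomorphism.

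Two applications of Nakayama (Lemma~\ref{comodule-nakayama}) then finish the argument. Since ${}_\m$ is left exact and ${}_\m g$ is injective, the kernel of $g$ satisfies ${}_\m(\ker g)=0$, whence $\ker g=0$; thus $g$ is a monomorphism. As $\cJ$ is injective, $g$ splits, giving $\cC[X]=\cJ\oplus\cK$ for some comodule $\cK$. Applying the additive functor ${}_\m$ to this decomposition and using that ${}_\m g$ is an isomorphism shows ${}_\m\cK=0$, so $\cK=0$ by Lemma~\ref{comodule-nakayama} again. Hence $g$ is an isomorphism and $\cJ\cong\cC[X]$ is cofree.

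The one point requiring care is the bookkeeping around the functor ${}_\m$: I need it to be additive (so that it respects the splitting and sends ${}_\m\cJ$ into ${}_\m(\cC[X])$), left exact (for the kernel computation), and compatible with arbitrary direct sums (to identify ${}_\m(\cC[X])$ with $k[X]$). The last property is the only nonformal ingredient, but it follows from the commutation of ${}_\m$ with filtered inductive limits together with its additivity, since an arbitrary direct sum is the filtered colimit of its finite subsums. Everything else is the straightforward dualization of the free-equals-projective argument for contramodules.
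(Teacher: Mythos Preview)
Your proof is correct and follows exactly the approach the paper indicates: it is the straightforward dualization of the proof of Lemma~\ref{nakayama-proj-free}, with Lemma~\ref{comodule-nakayama} playing the role of Lemma~\ref{nakayama-lemma}. The paper's own proof is just a one-line reference to this dualization, so your write-up is in fact a faithful expansion of what the paper sketches.
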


\begin{lem} \label{comodule-acycl-contract}
 Let\/ $\cK^\bu$ be a (possibly unbounded) complex of cofree comodules
over a pro-Artinian local ring\/ $\R$ with the maximal ideal\/~$\m$
and the residue field $k=\R/\m$.
 Then the complex\/ $\cK^\bu$ is contractible if and only if
the complex of $k$\+vector spaces\/ ${}_\m\cK^\bu$ is contractible
(i.~e., acyclic).
\end{lem}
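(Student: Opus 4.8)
The plan is to dualize the proof of Lemma~\ref{nakayama-acycl-contract} throughout, replacing projective lifting by injective extension and the functor $\P\mapsto\P/\m\P$ by the functor $\cM\mapsto{}_\m\cM$. The ``only if'' direction is immediate: since $\cM\mapsto{}_\m\cM$ is an additive functor and every comodule morphism carries ${}_\m\cM$ into ${}_\m\cN$, a contracting homotopy $t$ on $\cK^\bu$ restricts to the subcomplex ${}_\m\cK^\bu\subseteq\cK^\bu$ and yields a contracting homotopy ${}_\m t$ there; and a complex of $k$\+vector spaces is contractible precisely when it is acyclic.

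For the ``if'' direction I would start from a contracting homotopy $s$ on ${}_\m\cK^\bu$, so that $ds+sd=\id$ on ${}_\m\cK^\bu$. The composite of each $s^i$ with the inclusion ${}_\m\cK^{i-1}\rarrow\cK^{i-1}$ is a morphism from the subcomodule ${}_\m\cK^i\subseteq\cK^i$ into the injective (that is, cofree, by Lemma~\ref{comodule-inj-cofree}) comodule $\cK^{i-1}$. Using injectivity I extend it to a morphism $h^i\:\cK^i\rarrow\cK^{i-1}$. By construction the restriction of $h$ to ${}_\m\cK^\bu$ is $s$, so the chain endomorphism $g=dh+hd$ of $\cK^\bu$, which commutes with $d$ because $d^2=0$, restricts to $ds+sd=\id$ on ${}_\m\cK^\bu$; in other words, ${}_\m g=\id$.

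The key step, and the one where the local structure enters, is the auxiliary assertion dual to Lemma~\ref{nakayama-proj-free}: a morphism $f$ of cofree $\R$\+comodules for which ${}_\m f$ is an isomorphism is itself an isomorphism. I would argue as follows. Left exactness of $\cM\mapsto{}_\m\cM$ gives ${}_\m(\ker f)=\ker({}_\m f)=0$, so $\ker f=0$ by the dual Nakayama Lemma~\ref{comodule-nakayama}; hence $f$ is a monomorphism. Its source being injective, $f$ splits, and writing $\cL=\cK\oplus\cL'$ with $\cL'=\coker f$ and applying the additive, product-preserving functor ${}_\m$ identifies ${}_\m f$ with the inclusion ${}_\m\cK\rarrow{}_\m\cK\oplus{}_\m\cL'$; this is an isomorphism only when ${}_\m\cL'=0$, whence $\cL'=0$ by Lemma~\ref{comodule-nakayama} once more.

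Applying this to $g$, each component of which is an endomorphism $g^i$ of the cofree comodule $\cK^i$ with ${}_\m g^i=\id$, I conclude that $g$ is an isomorphism of complexes. Since $g$ commutes with $d$, so does $g^{-1}$, and $\tilde h=g^{-1}h$ satisfies $d\tilde h+\tilde h d=g^{-1}(dh+hd)=\id$, exhibiting $\cK^\bu$ as contractible. I expect the auxiliary isomorphism statement to be the main obstacle: the rest is a mechanical transposition of the argument for free contramodules, but that step is where injectivity of cofree comodules and the dual form of Nakayama's lemma must be combined, and where one must take care that the only left exact functor ${}_\m$ nevertheless detects isomorphisms of cofree comodules, thanks precisely to the splitting furnished by injectivity.
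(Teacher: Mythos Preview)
Your proof is correct and follows essentially the same route as the paper, which simply says the argument is ``completely analogous'' to that of Lemma~\ref{nakayama-acycl-contract} ``up to the duality,'' replacing Lemma~\ref{nakayama-lemma} by Lemma~\ref{comodule-nakayama}. Your treatment of the auxiliary isomorphism claim---showing injectivity via left exactness of ${}_\m$ and then killing the cokernel after splitting---is a clean variant of what one gets by directly dualizing the argument for Lemma~\ref{nakayama-proj-free}, and is arguably more transparent than reconstructing the dual of that proof step by step.
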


\begin{proof}[Proofs of Lemmas~\textup{\ref{comodule-inj-cofree}}
and~\textup{\ref{comodule-acycl-contract}}]
 Completely analogous to the proofs of
Lemmas~\ref{nakayama-proj-free} and~\ref{nakayama-acycl-contract},
up to the duality and with the use of Lemma~\ref{nakayama-lemma}
replaced by that of Lemma~\ref{comodule-nakayama}.
\end{proof}

\begin{rem}
 The functor of corestriction of scalars $\R/\J\comod\rarrow
\R\comod$ has also a left adjoint functor $\cM\mpsto\cM/\J\cM$.
 The latter is defined on comodules of finite length by
the rule $(\cM/\J\cM)^\op={}_\J(\cM^\op)$, where ${}_\J M$ denotes
the maximal submodule annihilated by $\J$ in a discrete
$\R$\+module $M$, and extended to arbitrary $\R$\+comodules
as a functor preserving inductive limits.
 It follows that the functor of corestriction of scalars from
$\R/\J$ to $\R$ preserves infinite products.
\end{rem}

\subsection{Hom and contratensor product}  \label{hom-operations}
 For any topological ring $\R$ and any two $\R$\+contramodules
$\P$ and $\Q$, the set $\Hom^\R(\P,\Q)$ of all $\R$\+contramodule
homomorphisms $\P\rarrow\Q$ has a natural $\R$\+contramodule
structure with the infinite summation operation
$(\sum_\alpha r_\alpha f_\alpha)(x) = \sum_\alpha r_\alpha
f_\alpha(x)$, where $f_\alpha\in\Hom^\R(\P,\Q)$ are any
contramodule homomorphisms, $r_\alpha\in\R$ is a family of
elements converging to zero, the infinite sum in the left hand
side belongs to $\Hom^\R(\P,\Q)$, and the infinite sum in
the right hand side is taken in~$\Q$.
 We have constructed the functor
$$
 \Hom^\R({-},{-})\:\R\contra^\sop\times\R\contra\lrarrow\R\contra.
$$

 In particular, for any set $X$ and any $\R$\+contramodule $\Q$
there is a natural isomorphism of $\R$\+contramodules
$\Hom^\R(\R[[X]],\Q)\simeq\prod_{x\in X}\Q$.
 Hence it follows from Lemma~\ref{nakayama-proj-products} that
the functor $\Hom^\R({-},{-})$ preserves the class of projective
contramodules over a pro-Artinian topological ring~$\R$.
 Clearly, the functor $\Hom^\R$ is left exact and takes infinite
direct sums of contramodules in the first argument and infinite
products in the second argument to infinite products; it also
becomes exact when a projective contramodule is substituted as
the first argument.

 For any closed ideal $\J\subset\R$ and any two $\R$\+contramodules
$\P$ and $\Q$, there is a natural morphism of $\R/\J$\+contramodules
$\Hom^\R(\P,\Q)/\J\Hom^\R(\P,\Q)\rarrow\Hom^{\R/\J}(\P/\J\P,\Q/\J\Q)$.
 By Lemma~\ref{nakayama-reduct-products}, this morphism is
an isomorphism whenever the $\R$\+contramodule $\P$ is projective.
 Besides, for any $\R$\+contramodule $\P$ and $\R/\J$\+contramodule
$\Q$ there is a natural isomorphism of $\R$\+contramodules
$\Hom^\R(\P,\Q)\simeq\Hom^{\R/\J}(\P/\J\P,\Q)$, where
the $\R$\+contramodule structures on $\Q$ and the $\Hom$ contramodule
in the right hand side are obtained from the $\R/\J$\+contramodule
structures using the functor of contrarestriction of scalars
$\R/\J\contra\rarrow\R\contra$.

 For any pro-Artinian topological ring $\R$ and any $\R$\+comodules
$\cM$ and $\cN$, the set $\Hom_\R(\cM,\cN)$ of all $\R$\+comodule
morphisms $\cM\rarrow\cN$ has a natural $\R$\+contramodule
structure, which is constructed as follows.
 If $\cM=\limin_\alpha\cM_\alpha$, where $\cM_\alpha$ are
$\R$\+comodules of finite length, then $\Hom_\R(\cM,\cN) \simeq
\limpr_\alpha\Hom_\R(\cM_\alpha,\cN)$ and the $\R$\+contramodule
structure on the left hand side of this isomorphism is defined
as the projective limit of the contramodule structures on
the $\Hom$ groups in the right hand side (recall that the forgetful
functor $\R\contra\rarrow\R\mod$ preserves projective limits).
 Now $\cM_\alpha$ is a comodule of finite length over a discrete
Artinian quotient ring $R_\alpha=\R/\I_\alpha$ of the topological
ring $\R$, and $\Hom_\R(\cM_\alpha,\cN)\simeq\Hom_{R_\alpha}
(\cM_\alpha\;{}_{\I_\alpha}\cN)$ is an $R_\alpha$\+module, hence also
an $\R$\+contramodule.
 Thus we have the functor
$$
 \Hom_\R({-},{-})\:\R\comod^\sop\times\R\comod\lrarrow\R\contra.
$$ 

 Clearly, the functor $\Hom_\R({-},{-})$ is left exact and assigns
infinite products of contramodules to infinite direct sums of comodules
in the first argument and infinite products of comodules in the second
argument.
 Substituting $\cN=\cC(\R)$, we obtain the contravariant functor
$\R\comod\rarrow\R\contra$ taking $\cM$ to $\cM^\op$ that was
introduced in Section~\ref{r-comodules-sect}.

 For any closed ideal $\J\subset\R$ and two $\R$\+comodules $\cM$
and $\cN$, there is a natural morphism of $\R/\J$\+contramodules
$\Hom_\R(\cM,\cN)/\J\Hom_\R(\cM,\cN)\rarrow\Hom_\R({}_\J\cM,
{}_\J\cN)$.
 This morphism is an isomorphism whenever the $\R$\+comodule $\cN$
is injective.
 Indeed, the case $\cM=\cN=\cC(\R)$ is obvious; when both $\cM$
and $\cN$ are cofree $\R$\+comodules, the assertion follows
from Lemma~\ref{nakayama-reduct-products}. 
 Finally, since the functor $\cN\mpsto{}_\J\cN$ preserves injectivity,
the assertion for an injective comodule $\cN$ and an arbitrary
comodule $\cM$ can be deduced by presenting $\cM$ as the kernel of
a morphism of injective comodules.
 Besides, for any $\R/\J$\+comodule $\cM$ and $\R$\+comodule $\cN$
there is a natural isomorphism of $\R$\+contramodules 
$\Hom_\R(\cM,\cN)\simeq\Hom_{\R/\J}(\cM,{}_\J\cN)$, where
the $\R$\+comodule structure on $\cM$ and the $\R$\+contramodule
structure on the $\Hom$ contramodule in the right hand side are
defined using the functors of contra- and corestriction of scalars.

 Now we define the functor of \emph{contratensor product}
$$
 {-}\ocn_\R{-}\:\R\contra\times\R\comod\lrarrow\R\comod
$$
of contramodules and comodules over a pro-Artinian topological
ring~$\R$.
 For any $\R$\+contramodule $\P$ and any $\R$\+comodule $\cM$,
the contratensor product $\P\ocn_\R\cM$ is an $\R$\+comodule
constructed following the rules
\begin{enumerate}
\renewcommand{\theenumi}{\roman{enumi}}
\item $\P\ocn_\R\limin_\alpha\cM_\alpha = \limin_\alpha \P/\I_\alpha\P
      \ocn_{R_\alpha}\cM_\alpha$, where $\cM_\alpha$ is a comodule
      over a discrete Artinian quotient ring $R_\alpha=\R/\I_\alpha$
      of the topological ring~$\R$;
\item for any Artinian ring $R$, the functor $\ocn_R$ commutes with
      inductive limits in both arguments; and
\item $(Q\ocn_R\cN)^\op = \Hom_R(Q,\cN^\op)$ for any module $Q$ and
      any module of finite length $\cN^\op$ over an Artinian ring~$R$.
\end{enumerate}

 Clearly, the functor $\ocn_\R$ is right exact and commutes with
infinite direct sums in both arguments.
 It is also obvious that there is a natural isomorphism of
$\R$\+comodules $\R\ocn_\R\cM\simeq\cM$ for any $\R$\+comodule $\cM$.
 A natural isomorphism of $\R$\+contramodules
$\Hom_\R(\P\ocn_\R\cM\;\cN)\simeq\Hom^\R(\P,\Hom_\R(\cM,\cN))$
can easily be constructed for any $\R$\+contramodule $\P$ and
$\R$\+comodules $\cM$ and $\cN$.
 In particular, there is a natural isomorphism of $\R$\+contramodules
$(\P\ocn_\R\cM)^\op\simeq\Hom^\R(\P,\cM^\op)$ for any
$\R$\+contramodule $\P$ and $\R$\+comodule~$\cM$.

 Given a closed ideal $\J\subset\R$, for any $\R$\+contramodule $\P$
and $\R$\+comodule $\cM$ there is a natural morphism of
$\R/\J$\+comodules $\P/\J\P\ocn_{\R/\J}\.{}_\J\cM \simeq
\P\ocn_\R{}_\J\cM\rarrow {}_\J(\P\ocn_\R\cM)$, which is an isomorphism
when $\P$ is a projective $\R$\+contramodule.
 Here the contratensor product over $\R$ in the middle term is
identified with the $\R/\J$\+comodule from which its $\R$\+comodule
structure is obtained by the corestriction of scalars.

 Let $\cN$ be an $\R$\+comodule.
 It follows from the above assertion that the functor $\R\contra\rarrow
\R\comod$ defined by the rule $\P\mpsto\cN\ocn_\R\P$ is left adjoint
to the functor $\R\comod\rarrow\R\contra$ defined by the rule
$\cM\mpsto\Hom_\R(\cN,\cM)$.
 For any $\R$\+contramodule $\P$ and $\R$\+comodule $\cM$, set
$\Phi_\R(\P)=\cC(\R)\ocn_\R\P$ and $\Psi_\R(\cM)=\Hom^\R(\cC(\R),\cM)$.

 The following result is a generalization of~\cite[Proposition~2.1]{Har}
and~\cite[VI.4.5]{BK}.
 In the triangulated setting, it implies a natural equivalence between
the contraderived category of $\R$\+contramodules and the coderived
category of $\R$\+comodules, as we will see below in
Corollary~\ref{non-adj-fin-dim-reduct-r-co-contra},
Proposition~\ref{non-adj-r-co-contra}, or
Corollary~\ref{non-adj-derived-co-contra}.
 (Cf.\ the \emph{Matlis--Greenlees--May duality}~\cite{Mat,DG,PSY,Pmgm},
which is an equivalence of the conventional derived categories of 
discrete modules and contramodules over the adic completions of certain
rings by finitely generated ideals.)

\begin{prop} \label{r-co-contra}
\emergencystretch=0em\hfuzz=4pt
 The functors\/ $\Phi_\R$ and\/ $\Psi_\R$ restrict to mutually
inverse equivalences between the additive categories of free\/
$\R$\+contramodules and cofree\/ $\R$\+comodules.
\end{prop}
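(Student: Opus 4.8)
The plan is to exploit the adjunction recorded just before the proposition: the functor $\Phi_\R\colon\P\mpsto\cC(\R)\ocn_\R\P$ is left adjoint to $\Psi_\R\colon\cM\mpsto\Hom_\R(\cC(\R),\cM)$. It then suffices to check that $\Phi_\R$ carries free contramodules to cofree comodules, that $\Psi_\R$ carries cofree comodules to free contramodules, and that the adjunction unit and counit are isomorphisms on these two subcategories; an adjoint pair whose unit and counit are invertible on subcategories preserved by the functors restricts to a pair of mutually inverse equivalences.

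First I would compute $\Phi_\R$ on free contramodules. Since $\ocn_\R$ commutes with infinite direct sums and $\R[[X]]=\bigoplus_{x\in X}\R$ in $\R\contra$, while $\Phi_\R(\R)=\R\ocn_\R\cC(\R)\cong\cC(\R)$, I obtain $\Phi_\R(\R[[X]])\cong\bigoplus_X\cC(\R)$, a cofree comodule. Thus $\Phi_\R$ takes free contramodules to cofree comodules and is essentially surjective onto the latter.

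The crux — and what I expect to be the main obstacle — is the computation of $\Psi_\R$ on cofree comodules. Writing $\cC(\R)$ as the filtered union of its finite-length subcomodules $\cC(\R/\I)={}_\I\cC(\R)$, using that $\Hom_\R$ turns a filtered colimit in the first argument into a projective limit and that ${}_\I$ commutes with direct sums, I reduce to the Artinian quotients:
$$
\Psi_\R\Bigl(\bigoplus_X\cC(\R)\Bigr)\cong\limpr_\I\Hom_{\R/\I}\Bigl(\cC(\R/\I),\ \bigoplus_X\cC(\R/\I)\Bigr).
$$
Over an Artinian quotient $R=\R/\I$, identifying $R$\+comodules with $R$\+modules so that $\cC(R)$ becomes the injective hull $E$ of the residue field, the functor $\Psi_R$ is $\Hom_R(E,{-})$; since $E$ has finite length, $\Hom_R(E,{-})$ commutes with direct sums, and $\Hom_R(E,E)\cong R$ by Matlis duality, so each term equals $\bigoplus_XR=\R/\I[X]$. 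Passing to the projective limit gives $\Psi_\R(\bigoplus_X\cC(\R))\cong\limpr_\I\R/\I[X]=\R[[X]]$, a free contramodule.

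Finally I would verify the unit and counit by reduction modulo $\m$, where $k=\R/\m$. For free $\P$ the comodule $\Phi_\R(\P)$ is cofree, hence injective, so the reduction isomorphism $\Psi_\R(\cM)/\m\Psi_\R(\cM)\cong\Psi_k({}_\m\cM)$ valid for injective $\cM$ identifies $\eta_\P$ modulo $\m$ with the unit over $k$; as $\Phi_k$ and $\Psi_k$ are the identity functors on $k$\+vector spaces, this reduced unit is invertible, and since both $\P$ and $\Psi_\R\Phi_\R(\P)$ are free (Steps 1 and 2), a morphism of free contramodules that is invertible modulo $\m$ is invertible (the argument in the proof of Lemma~\ref{nakayama-acycl-contract}). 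Dually, for cofree $\cM$ the contramodule $\Psi_\R(\cM)$ is free, hence projective, and the reduction isomorphism ${}_\m\Phi_\R(\P)\cong\Phi_k(\P/\m\P)$ valid for projective $\P$ identifies ${}_\m\epsilon_\cM$ with the invertible counit over $k$; since $\Phi_\R\Psi_\R(\cM)$ and $\cM$ are both cofree, a morphism of cofree comodules inducing an isomorphism after ${}_\m({-})$ is an isomorphism (the argument in the proof of Lemma~\ref{comodule-acycl-contract}). Hence $\eta$ and $\epsilon$ are invertible on the respective subcategories, and the restricted functors are mutually inverse equivalences.
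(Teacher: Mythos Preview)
Your proof is correct and follows essentially the same route as the paper's: both reduce the key computation to the Artinian quotients $\R/\I$ and use that $\Hom$ from a finite-length comodule preserves infinite direct sums. The paper's packaging is slightly more compressed---it frames the crux as showing that $\Psi_\R$ preserves infinite direct sums of injective comodules (checked by reducing modulo every open ideal~$\I$ rather than just~$\m$), from which the equivalence follows because $\Phi_\R$, $\Psi_\R$, and both composites then all preserve direct sums and agree on the generators $\R$ and $\cC(\R)$; your explicit verification of the unit and counit via Nakayama modulo~$\m$ is a perfectly valid alternative to this implicit argument.
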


\begin{proof}
 Clearly, the functors $\Phi_\R$ and $\Psi_\R$ take the contramodule
$\R$ to the comodule $\cC=\cC(\R)$ and back; the functor $\Phi_\R$
preserves infinite direct sums and the functor $\Psi_\R$ preserves
infinite products.
 It follows that the functors $\Phi_\R$ and $\Psi_\R$ take projective
contramodules to injective comodules and back.
 It remains to check, e.~g., that the functor $\Psi_\R$ preserves
infinite direct sums of injective comodules.

 Indeed, a morphism of projective contramodules $\P\rarrow\Q$ is
an isomorphism whenever so are all the morphisms $\P/\I\P\rarrow
\Q/\I\Q$ for open ideals $\I\subset\R$.
 Now $\Hom_\R(\cC,\cM)/\I\Hom_\R(\cC,\cM)\simeq\Hom_{\R/\I}(\cC(\R/\I),
{}_\I\cM)$ for an injective $\R$\+comodule $\cM$ and the functor
$\Hom_R(\cN,{-})$ preserves infinite direct sums whenever $\cN$
is a comodule of finite length over an Artinian ring~$R$.
\end{proof}

\subsection{Contramodule tensor product and Ctrhom}
\label{contra-operations}
 Let $\R$ be a topological ring, and let $\P$, $\Q$, and $\K$ be
$\R$\+contramodules.
 A map $b\:\P\times\Q\rarrow\K$ is said to be \emph{contrabilinear}
if for any two families of elements $r_\alpha$ and $s_\beta\in\R$ 
converging to zero in the topology of $\R$ and any two families
of elements $p_\alpha\in\P$ and $q_\beta\in\Q$ the equation
$$\textstyle
 b(\sum_\alpha r_\alpha p_\alpha,\.\sum_\beta s_\beta q_\beta)=
 \sum_{\alpha,\beta}r_\alpha s_\beta b(p_\alpha,q_\beta)
$$
holds in~$\K$.
 An $\R$\+contramodule $\L$ is said to be the \emph{contramodule
tensor product} of the $\R$\+contramodules $\P$ and $\Q$ 
(denoted $\L=\P\ot^\R\Q$) if for any $\R$\+contramodule $\K$
the contrabilinear maps $\P\times\Q\rarrow\K$ correspond bijectively
to $\R$\+contramodule morphisms $\L\rarrow\K$ is a way functorial
in~$\K$.
 The following more explicit construction proves existence of
contramodule tensor products.

 For free $\R$\+contramodules $\R[[X]]$ and $\R[[Y]]$, set
$\R[[X]]\ot^\R \R[[Y]] = \R[[X\times Y]]$.
 There is a natural map $\R[[X]]\ot_\R \R[[Y]]\rarrow
\R[[X]]\ot^\R \R[[Y]]$, where $\ot_\R$ denotes the tensor
product in the category of $\R$\+modules $\R\mod$, taking
the tensor $\sum_{x\in X} r_x x\ot_\R\sum_{y\in Y}s_y y$
to the formal sum $\sum_{(x,y)\in X\times Y} r_xs_y(x,y)$,
where $r_x$, $s_y\in\R$ are two families of elements
converging to zero.

 Let $f\:\R[[X']]\rarrow\R[[X'']]$ and $g\:\R[[Y']]\rarrow
\R[[Y'']]$ be two homomorphisms of free $\R$\+contramodules.
 The data of the morphisms $f$ and~$g$ is equivalent to
the data of two families of elements $f(x')\in\R[[X'']]$ and
$g(y')\in\R[[Y'']]$, where $x'\in X'$ and $y'\in Y'$.
 Define the homomorphism $(f\ot g)\:\R[[X'\times Y']]\rarrow
\R[[X''\times Y'']]$ by the rule $(f\ot g)(x'\ot y') = 
f(x')\ot_\R g(y')\in\R[[X''\times Y'']]$.

 One readily checks that we have constructed a biadditive
tensor product functor~$\ot^\R$ on the category of free
$\R$\+contramodules.
 Since there are enough free contramodules in $\R\contra$,
this functor extends in a unique way to a right exact functor
$$
 {-}\ot^\R{-}\:\R\contra\times\R\contra\lrarrow\R\contra,
$$
which we wanted to construct.

 The functor of tensor product of free contramodules is naturally
associative, commutative, and unital with the unit object~$\R$;
hence so is the functor of tensor product of arbitrary
$\R$\+contramodules.
 The functor of tensor product of contramodules also preserves
infinite direct sums, since the functor of tensor product of
free contramodules does.
 In particular, the functor $\R[[X]]\ot^\R{-}$ assigns to any
$\R$\+contramodule $\P$ the direct sum of $X$ copies of~$\P$.

 There is a natural isomorphism of $\R$\+contramodules
$\Hom^\R(\P,\Hom^\R(\Q,\N))\simeq\Hom^\R(\P\ot^\R\Q\;\N)$ for any
three $\R$\+contramodules $\P$, $\Q$ and~$\N$; so $\Hom^\R$ is
the internal $\Hom$ functor for the tensor product functor~$\ot^\R$.
 As above, it suffices to construct this isomorphism for free
$\R$\+contramodules $\P$ and $\Q$, which is easy.

 Assuming that $\R$ is a pro-Artinian topological ring, one easily
checks that the reduction functor $\P\mpsto\P/\J\P$ takes tensor
products of $\R$\+contramodules to tensor products of
$\R/\J$\+contramodules for any closed ideal $\J\subset\R$.
 Besides, whenever the functor of contrarestriction of scalars
$\R/\J\contra\allowbreak\rarrow\R\contra$ preserves infinite
direct sums (see Remark~\ref{contrarestriction-direct-sums}),
there is a natural isomorphism of $\R$\+contramodules
$\R/\J\ot^\R\P\simeq\P/\J\P$ for any $\R$\+contramodule $\P$,
where the $\R$\+contramodule structures on $\R/\J$ and
$\P/\J\P$ are defined in terms of the functor of
contrarestriction of scalars.
 Indeed, both sides are right exact functors of the argument
$\P$ commuting with infinite direct sums, and the isomorphism
holds for $\P=\R$.

\begin{prop}
 For any $\R$\+contramodules\/ $\P$ and\/ $\Q$ and\/
$\R$\+comodule\/ $\cM$, there is a natural isomorphism of\/
$\R$\+comodules
$$
 (\P\ot^\R\Q)\ocn_\R\cM\simeq\P\ocn_\R(\Q\ocn_\R\cM).
$$
 In other words, the functor\/ $\ocn_\R$ makes\/ $\R\comod$ a module
category over the tensor category\/ $\R\contra$.
\end{prop}

\begin{proof}
 Follows from the definition of the functor of contratensor
product~$\ocn_\R$ and the above discussion of compatibility of
the reduction functors $\P\mpsto\P/\J\P$ with the tensor products
of $\R$\+contramodules.
\end{proof}

 Next we define the functor of \emph{contrahomomorphisms}
$$
 \Ctrhom_\R({-},{-})\:\R\contra^\sop\times\R\comod\lrarrow\R\comod
$$
from contramodules to comodules over a pro-Artinian topological
ring~$\R$.
 For any $\R$\+contramodule $\P$ and any $\R$\+comodule $\cM$,
the $\R$\+comodule $\Ctrhom_\R(\P,\cM)$ is constructed following
the rules
\begin{enumerate}
\renewcommand{\theenumi}{\roman{enumi}}
\item $\Ctrhom_\R(\P,\cM) =
      \limin_\I\Ctrhom_{\R/\I}(\P/\I\P\;{}_\I\cM)$, where
      the inductive limit is taken over all open ideals $\I\subset\R$;
\item for any Artinian ring $R$, the functor $\Ctrhom_R$ takes
      inductive limits in the first argument to projective limits;
\item for any module of finite length $Q$ over an Artinian ring $R$,
      the functor $\Ctrhom_R(Q,{-})$ preserves filtered inductive
      limits; and
\item $\Ctrhom_R(Q,\cN)^\op = Q\ot_R\cN^\op$ for any modules of
      finite length $Q$ and $\cN^\op$ over an Artinian ring~$R$.
\end{enumerate}

 Clearly, the functor $\Ctrhom_\R$ is left exact in both arguments.
 The functor $\Ctrhom_\R(\R[[X]],{-})$ assigns to any $\R$\+comodule
$\cM$ the direct product of $X$ copies of~$\cM$.
 Presenting an arbitrary $\R$\+contramodule as the cokernel of 
a morphism of free $\R$\+contramodules, one can conclude that
the functor $\Ctrhom_\R$ transforms infinite direct sums of
$\R$\+contramodules in the first argument and infinite products of
$\R$\+comodules in the second argument into infinite products
of $\R$\+comodules.
  In particular, there is a natural isomorphism $\Ctrhom_\R(\R,\cM)
\simeq\cM$.

\begin{prop}
 For any\/ $\R$\+contramodules\/ $\P$ and\/ $\Q$ and\/
$\R$\+comodule\/~$\cM$, there is a natural isomorphism of\/
$\R$\+comodules
$$
 \Ctrhom_\R(\P\ot^\R\Q\;\cM)\simeq
 \Ctrhom_\R(\P,\Ctrhom_\R(\Q,\cM)).
$$
 In other words, the functor\/ $\Ctrhom_\R$ makes the category\/
$\R\comod^\sop$ opposite to\/ $\R\comod$ a module category over\/
$\R\contra$.
\end{prop}

\begin{proof}
 First one can construct the desired isomorphism in the case of
free $\R$\+contra\-modules $\P$ and $\Q$, using the above discussion.
 Then the general case is dealt with using the exactness properties
of the functors $\ot^\R$ and $\Ctrhom_\R$.
\end{proof}

 Finally, for any $\R$\+contramodule $\P$ and $\R$\+comodules $\cM$
and $\cN$ there is a natural isomorphism of $\R$\+contramodules
$\Hom_\R(\P\ocn_\R\cM\;\cN)\simeq\Hom_\R(\cM,\Ctrhom_\R(\P,\cN))$.
 Given a closed ideal $\J\subset\R$, for any $\R$\+contramodule
$\P$ and $\R$\+comodule $\cM$ there is a natural isomorphism
of $\R/\J$\+comodules ${}_\J\!\.\Ctrhom_\R(\P,\cM)\simeq
\Ctrhom_{\R/\J}(\P/\J\P\;{}_\J\cM)$.
 Besides, there is a natural isomorphism of $\R$\+comodules
$\Ctrhom_\R(\R/\J,\cM)\simeq{}_\J\cM$, where the $\R$\+comodule
structure on the right hand side is defined by means of
the functor of corestriction of scalars $\R/\J\comod\rarrow\R\comod$.

\begin{lem}  \label{idempotent-lifting}
 Let\/ $\A$ be an (associative, noncommutative, unital) algebra
in the tensor category of free contramodules over a pro-Artinian
topological ring\/~$\R$, and let\/ $\J\subset\R$ be a closed ideal.
 Then any idempotent element in\/ $\A/\J\A$ can be lifted to
an idempotent element in\/~$\A$.
\end{lem}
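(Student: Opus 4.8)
The plan is to lift the idempotent by the classical Newton-style iteration built on the \emph{integral} polynomial $f(t)=3t^2-2t^3$, the only delicate point being that all the infinite processes must be carried out inside the complete topological $\R$\+module underlying $\A$. First I would dispose of the trivial case $\J=\R$ (where $\A/\J\A=0$) and otherwise observe that, $\R$ being local, any proper closed ideal satisfies $\J\subseteq\m$, so $\J$ is topologically nilpotent together with $\m$. Since the underlying contramodule of $\A$ is free, I fix an isomorphism $\A\simeq\R[[X]]$ and recall from Section~\ref{nakayama-sect} that then $\A=\limpr_\I\R/\I[X]$ and $\A/\J\A=\R/\J[[X]]$; in particular $\A$ is complete and separated in the linear topology whose base of neighbourhoods of zero is $\{\I\A\}$ over open ideals $\I\subset\R$, and each $\I\A$ is a two-sided ideal because the $\R$\+action is central. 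The surjection $\A\rarrow\A/\J\A$ lets me choose an arbitrary lift $a_0\in\A$ of the given idempotent, so that $u_0:=a_0^2-a_0\in\J\A$.

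The heart of the argument is the iteration $a_{n+1}=3a_n^2-2a_n^3$. A direct computation gives the two identities $a_{n+1}-a_n=-(2a_n-1)(a_n^2-a_n)$ and $a_{n+1}^2-a_{n+1}=(4(a_n^2-a_n)-3)(a_n^2-a_n)^2$, both valid in any ring because $a_n$ commutes with $a_n^2-a_n$; thus the noncommutativity of $\A$ is harmless. Writing $u_n=a_n^2-a_n$, the second identity exhibits $u_{n+1}$ as a left multiple of $u_n^2$, and since scalars are central an easy induction yields $u_n\in\J^{2^n}\A$, whereupon the first identity gives $a_{n+1}-a_n\in\J^{2^n}\A$ as well. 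By topological nilpotence of $\J$, for every open $\I$ one has $\J^{2^n}\subseteq\I$ for $n\gg0$, hence $u_n\to0$ and $(a_n)$ is a Cauchy sequence; completeness of $\A$ then provides a limit $e=\lim_n a_n$.

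It remains to verify that $e$ is the desired idempotent. Continuity of the multiplication $\A\ot^\R\A\rarrow\A$ (a contramodule morphism, hence continuous for the $\{\I\A\}$\+topology) gives $e^2-e=\lim_n u_n=0$, so $e$ is idempotent. Finally each increment $a_{n+1}-a_n$ already lies in $\J\A$, so $a_n\equiv a_0\pmod{\J\A}$ for all $n$; as $\A/\J\A$ is separated and the reduction map is continuous, the image of $e$ equals the common class of the $a_n$, i.e.\ the prescribed idempotent of $\A/\J\A$.

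The main obstacle, and the only place where the contramodule hypothesis is used in an essential way, is precisely this convergence step: one must know that $\A$, being a \emph{free} $\R$\+contramodule, is complete and separated in the $\{\I\A\}$\+topology and that its multiplication is continuous, so that the formally convergent Newton iteration actually produces an honest element of $\A$ landing in the correct residue class. Everything else is the purely algebraic polynomial identity, which requires no denominators and so imposes no restriction on the residue characteristic.
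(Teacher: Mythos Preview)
Your argument is correct and coincides with the paper's \emph{alternative} proof (the final paragraph): fix an arbitrary lift $a\in\A$, observe that in each discrete quotient $\A/\I\A$ the image of $a$ differs from an idempotent by a nilpotent, so iterating an integer polynomial produces a unique idempotent $e_\I$ expressible as a polynomial in $a\bmod\I\A$, and assemble the compatible system $(e_\I)$ into an element of $\A=\limpr_\I\A/\I\A$. You carry this out explicitly with the Newton polynomial $f(t)=3t^2-2t^3$; the paper instead cites the abstract lifting lemmas in \cite{Fe}. One small point worth tightening: your claim ``$u_n\in\J^{2^n}\A$'' is best read in each quotient $\A/\I\A$, where $(\J+\I)/\I$ is an honest nilpotent ideal; this is what you actually need, and it avoids worrying about whether the abstract power $\J^{2^n}$ is closed.

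The paper's \emph{primary} proof takes a genuinely different route: it applies Zorn's lemma to the poset of pairs $(\K,e_\K)$ with $\K\subset\J$ a closed ideal and $e_\K\in\A/\K\A$ an idempotent lifting $e_\J$, using Corollary~\ref{appx-ideals-cor} to handle chains and the nil-ideal lifting lemma to improve $\K$ to $\K\cap\I$. That approach never writes down a limit in $\A$ directly and so sidesteps the convergence discussion; yours is more constructive and yields an explicit formula for the lift.
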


\begin{proof}
 Notice first of all that $\A/\J\A$ is an algebra in the tensor
category of free contramodules over $\R/\J$, the underlying abelian
groups to $\A$ and $\A/\J\A$ have (noncommutative) ring structures,
and the natural projection $\A\rarrow\A/\J\A$ is a morphism of rings,
so the question about lifting idempotents makes sense.
 As usually, we will assume for simplicity that $\R$ is local.

 To prove the assertion, we will apply Zorn's lemma.
 Let $e_\J\in\A/\J\A$ be our idempotent element.
 Consider the set of all pairs $(\K,e_\K)$, where $\K\subset\R$ is
a closed ideal contained in $\J$ and $e_\K\in\A/\K\A$ is
an idempotent element lifting~$e_\J$.
 Endow this set with the obvious partial order relation.

 Given a linearly ordered subset of elements
$(\K_\alpha,e_{\K_\alpha})$, consider the intersection $\K$ of
all the ideals~$\K_\alpha$.
 By Corollary~\ref{appx-ideals-cor}, any open ideal $\I\subset\R$
containing $\K$ contains also one of the ideals~$\K_\alpha$.
 Hence the ring $\A/\K\A=\limpr_{\I\supset\K}\A/\I\A$ (where
$\I\subset\R$ are open ideals) is the projective limit of
the rings $\A/\K_\alpha\A$.
 So we can construct an idempotent element $e_\K\in\A/\K\A$
such that the pair $(\K,e_\K)$ majorates all the pairs
$(\K_\alpha,e_{\K_\alpha})$.

 Let $(\K,e_\K)$ be an element of our poset such that $\K\ne0$.
 Pick an open ideal $\I\subset\R$ not containing~$\K$.
 Then the ring $\A/\K\A$ is the quotient ring of
$\A/(\I\cap\K)\A$ by the ideal $\K\A/(\I\cap\K)\A$.
 Since $\K/(\I\cap\K)\simeq(\K+\I)/\I$ is a discrete and nilpotent
closed ideal in $\R/(\I\cap\K)$, the ideal $\K\A/(\I\cap\K)\A
\subset\A/(\I\cap\K)\A$ is also nilpotent.
 Thus it remains to use the well-known lemma on lifting idempotents
modulo a nil ideal (see, e.~g., \cite[Lemma~I.12.1]{Fe}) in order
to lift $e_\K$ to an idempotent element $e_{\I\cap\K}\in
\A/(\I\cap\K)\A$, completing the Zorn lemma argument and
the whole proof.

 Alternatively, one can use~\cite[Lemmas~I.12.1\+-2]{Fe} to
the effect that for any fixed, not necessarily idempotent
lifting $a\in\A$ of the element~$e_\J$ and any open ideal
$\I\subset\R$ there exists a unique idempotent 
$e_\I\in\A/\I\A$ expressible as a polynomial in $a \bmod \I\A$
with integer coefficients and lifting the idempotent element
$e_\J \bmod (\I+\J)\A/\J\A\in\A/(\I+\J)\A$.
 The compatible system of idempotents~$e_\I$ defines
the desired idempotent element of $\A=\limpr_\I\A/\I\A$
lifting~$e_\J$.
\end{proof}

\subsection{Cotensor product and Cohom} \label{co-operations}
 The following two operations on $\R$\+comod\-ules and
$\R$\+contramodules are not as important for our purposes in this paper
as the previous five.
 We consider them largely for the sake of completeness (however,
cf.\ the proof of Corollary~\ref{r-homol-dim} below, and
the constructions of functors $\ot_\B$ and $\Hom_\B$ in
Section~\ref{non-adj-graded} and functors $\oc_\C$ and $\Cohom_\C$ in
Section~\ref{non-adj-graded-co}).

 Let $\R$ be a pro-Artinian topological ring.
 The \emph{cotensor product} operation $(\cM,\.\cN)\mpsto\cM\oc_\R\cN$,
$$
 {-}\oc_\R{-}\:\R\comod\times\R\comod\lrarrow\R\comod
$$
on the category of $\R$\+comodules is obtained by passing to
the category of ind-objects and the opposite category from
the operation of the tensor product over~$\R$ of discrete
$\R$\+modules of finite length.
 Clearly, $\oc_\R$ is an associative and commutative tensor
category structure on $\R\comod$ with the unit object $\cC(\R)$.

 The functor ${-}\oc_\R{-}$ is left exact and commutes with infinite
direct sums.
 For any closed ideal $\J\subset\R$ and any $\R$\+comodules $\cM$
and $\cN$ there are natural isomorphisms of $\R/\J$\+comodules
$\cC(\R/\J)\oc_\R\cM\simeq{}_\J\cM$ and ${}_\J(\cM\oc_\R\cN)
\simeq{}_\J\cM\oc_\R\cN\simeq{}_\J\cM\oc_{\R/\J}\.{}_\J\cN$,
where the two cotensor products over $\R$ are identified with
the $\R/\J$\+comodules from which their $\R$\+comodule
structures are obtained by the corestriction of scalars.

 Let us define the functor of \emph{cohomomorphisms}
$$
 \Cohom_\R({-},{-})\:\R\comod^\sop\times\R\contra\rarrow\R\contra.
$$
from comodules to contramodules over~$\R$.
 For any $\R$\+comodule $\cM$ and $\R$\+contramod\-ule $\P$,
the $\R$\+contramodule $\Cohom_\R(\cM,\P)$ from $\cM$ to $\P$
is constructed as follows.

 For a cofree $\R$\+comodule $\cM=\bigoplus_{x\in X}\cC(\R)$ and
a projective $\R$\+contramodule $\P$, one sets
$\Cohom_\R(\bigoplus_X\cC(\R)\;\P)=\prod_X\P$.
 It follows that $\Cohom_\R(\bigoplus_X\cC(\R)\;\P) \simeq
\limpr_\I\Cohom_{\R/\I}({}_\I\bigoplus_X\cC(\R)\;\P/\I\P)$,
where the projective limit is taken over all open ideals
$\I\subset\R$.
 To define the (contravariant) action of $\Cohom_\R({-},\P)$ on
morphisms of cofree $\R$\+comodules $\cM$, one can first do
so for an Artinian ring $R$, which is easy; then pass to
the projective limit over~$\I$.
 Alternatively, set $\Cohom_\R(\cM,\P)=\Hom^\R(\Psi_\R(\cM),\P)$
(see Section~\ref{hom-operations}) for any cofree $\R$\+comodule
$\cM$ and any $\R$\+contramodule~$\P$.

 Since there are enough cofree comodules and projective
contramodules, one can extend the above-defined functor in
a unique way to a right exact functor $\Cohom^\R\:
\R\comod^\sop\times\R\contra\rarrow\R\contra$, as desired.
 The functor $\Cohom_\R({-},{-})$ transforms infinite direct sums of
$\R$\+comodules in the first argument and infinite products of
$\R$\+contramodules in the second argument into infinite
products of $\R$\+contramodules.

\begin{prop}
 For any\/ $\R$\+comodules\/ $\cM$ and\/ $\cN$ and\/
$\R$\+contramodule\/ $\P$, there are natural isomorphisms of\/
$\R$\+contramodules\/ $\Cohom_\R(\cC(\R),\P)\simeq\P$ and
$$
 \Cohom_\R(\cM\oc_\R\cN\;\P)\simeq
 \Cohom_\R(\cM,\Cohom_\R(\cN,\P)).
$$
 So the functor\/ $\Cohom_\R$ makes the category\/ $\R\contra^\sop$
opposite to\/ $\R\contra$ a module category over the tensor category\/
$\R\comod$.
\end{prop}

\begin{proof}
 Reduce first to the case of cofree $\R$\+comodules and a projective
$\R$\+contra\-module, and then to the case of a discrete Artinian
ring~$R$.
\end{proof}

 Given a closed ideal $\J\subset\R$, for any $\R$\+comodule $\cM$
and $\R$\+contramodule $\P$ there is a natural isomorphism of
$\R/\J$\+contramodules $\Cohom_\R(\cM,\P)/\J\Cohom_\R(\cM,\P)
\simeq\Cohom_{\R/\J}({}_\J\cM\;\P/\J\P)$ (since this is so for
a cofree $\R$\+comodule $\cM$ and a projective $\R$\+contramodule $\P$,
and all the functors are exact ``from the same side'').

 For any $\R$\+comodule $\cM$, there is a natural isomorphism of
$\R$\+contramodules $\Cohom_\R(\cM,\R)\simeq\cM^\op$.
 It follows that the functor $\Cohom_\R({-},\P)$ into a projective
$\R$\+contramodule $\P$ is exact, since any projective
$\R$\+contramodule is a direct summand of a product of copies
of~$\R$.
 More generally, for any $\R$\+comodules $\cN$ and $\cM$ there is
a natural isomorphism of $\R$\+contramodules $\Cohom_\R(\cM,\cN^\op)
\simeq(\cN\oc_\R\cM)^\op$.

 Assume that $\R$ is a complete Noetherian local ring.
 Then, for any $\R$\+comodule $\cN$ of finite length, there are natural
isomorphisms of $\R$\+contramodules $\Cohom_\R(\cN,\P)\simeq
\cN^\op\ot_\R\P\simeq\cN^\op\ot^\R\P$, where the $\R$\+contramodule
structure on the middle term is obtained by contrarestriction of
scalars from the $R$\+module structure, $R$ being a discrete Artinian
quotient ring of $\R$ such that $\cN$ is a comodule over~$R$.
 Indeed, the second isomorphism follows from the results of
Section~\ref{contra-operations}; and to prove the first one,
it suffices to notice that both functors of the second
argument~$\P$ are right exact and preserve infinite products, and
they produce the same $\R$\+contramodule for $\P=\R$.

 Let $\R$ be a pro-Artinian topological ring.
 Given a closed ideal $\J\subset\R$, for any $\R$\+contramodule $\P$
there is a natural isomorphism of $\R$\+contramodules
$\Cohom_\R\allowbreak(\cC(\R/\J),\P)\simeq\P/\J\P$, where
the $\R$\+contramodule structure on the right hand side is obtained by
the contrarestriction of scalars from the $\R/\J$\+contramodule structure.
 One proves this in the way similar to the above: both functors
of the argument $\P$ are right exact and preserve infinite products,
and the isomorphism holds for $\P=\R$. {\emergencystretch=1em\par}

\begin{lem}  \label{hom-co-associativity}
\textup{(a)} For any\/ $\R$\+contramodule\/ $\P$ and\/
$\R$\+comodules\/ $\cN$ and\/ $\cM$, there is a natural morphism of\/
$\R$\+comodules\/ $\P\ocn_\R(\cN\oc_\R\cM)\rarrow
(\P\ocn_\R\cN)\oc_\R\cM$.
 This morphism is an isomorphism whenever either
the\/ $\R$\+contramodule\/ $\P$ is projective, or
the\/ $\R$\+comodule\/ $\cM$ is injective. \par
\textup{(b)} For any\/ $\R$\+comodules $\cL$, $\cK$ and\/ $\cM$,
there is a natural morphism of\/ $\R$\+contra\-modules
$\Cohom_\R(\cL,\Hom_\R(\cK,\cM))\rarrow\Hom_\R(\cL\oc_\R\cK\;\cM)$.
 This morphism is an isomorphism whenever one of
the\/ $\R$\+comodules\/ $\cL$ and\/ $\cM$ is injective. \par
\textup{(c)} For any\/ $\R$\+contramodules\/ $\P$ and\/ $\Q$
and\/ $\R$\+comodule\/ $\cM$, there is a natural morphism of\/
$\R$\+contramodules\/ $\Cohom_\R(\P\ocn_\R\cM\;\Q)\rarrow
\Hom^\R(\P,\Cohom_\R(\cM,\Q))$.
 This morphism is an isomorphism whenever one of
the\/ $\R$\+contramodules\/ $\P$ and\/ $\Q$ is projective. \par
\textup{(d)} For any\/ $\R$\+contramodules\/ $\P$ and\/ $\Q$
and\/ $\R$\+comodule\/ $\cM$, there is a natural morphism of\/
$\R$\+contramodules\/ $\Cohom_\R(\cM,\Hom^\R(\P,\Q))\rarrow
\Hom^\R(\P,\Cohom_\R(\cM,\Q))$.
 This morphism is an isomorphism whenever either
the\/ $\R$\+contramodule\/ $\P$ is projective, or
the\/ $\R$\+comodule\/ $\cM$ is injective. \hfuzz=8.5pt
\end{lem}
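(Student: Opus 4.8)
The plan is to handle all four morphisms by a single two-step recipe: first construct the natural transformation, then verify invertibility under the stated hypothesis. To construct each morphism I would restrict both of its source and target functors to free $\R$\+contramodule and cofree $\R$\+comodule arguments, where every one of the operations $\ocn_\R$, $\oc_\R$, $\Hom_\R$, $\Hom^\R$, $\Cohom_\R$ has the explicit description recorded in Sections~\ref{hom-operations}, \ref{contra-operations}, and~\ref{co-operations}. On such generators both composites collapse, via the unit normalizations $\R\ocn_\R\cM\simeq\cM$, $\Cohom_\R(\cC(\R),\P)\simeq\P$, $\Cohom_\R(\cM,\R)\simeq\cM^\op$, together with the commutation of $\ocn_\R$, $\oc_\R$ with infinite direct sums and of $\Cohom_\R$, $\Hom_\R$, $\Hom^\R$ with the passage from direct sums to products, to one and the same object; the comparison map is this tautological identification. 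Since $\R\contra$ has enough free objects and $\R\comod$ has enough cofree (injective) ones, and each of the five functors is exact on one side, the transformation then extends uniquely to arbitrary arguments by presenting a general contramodule as a cokernel of frees and a general comodule as a kernel of cofrees, and invoking naturality.

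For the invertibility statements I would treat the ``projective'' and ``injective'' cases symmetrically. The point is that the distinguished (co)free object renders the relevant functor \emph{exact}: a free contramodule $\R[[X]]$ turns $\ocn_\R$ into the direct sum $\bigoplus_X({-})$, and a cofree comodule $\bigoplus_Y\cC(\R)$ turns $\oc_\R$ into $\bigoplus_Y({-})$, both of which are exact because direct sums are exact in the Grothendieck category $\R\comod$. Consequently, after fixing the distinguished projective or injective argument, both sides of each of (a)--(d) become exact functors (from the same side) of the remaining variable. Fixing moreover that distinguished object to be genuinely free or cofree --- which is harmless, since by Lemmas~\ref{nakayama-proj-free} and~\ref{comodule-inj-cofree} every projective contramodule and every injective comodule is a direct summand of such, and all the functors are additive --- the base case is exactly the generator computation above. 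Resolving the remaining variable by frees or cofrees and applying the five-lemma then propagates the isomorphism to all arguments.

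The genuine work, and the step I expect to be the main obstacle, is the descent to the discrete Artinian quotient rings $R=\R/\I$ needed for those cases --- chiefly the $\cM$\+injective case of~(b) and the analogous subtler cases of~(c) and~(d) --- in which a $\Hom$ must be made to commute with an infinite direct sum, which holds only because the comodule in question is a filtered limit of \emph{finite length} objects. Here the two halves of each formula are computed by limits of opposite variance: inductive limits over the $\cM_\alpha$ build $\ocn_\R$, $\oc_\R$ and the comodule-valued functors, while projective limits over the open ideals $\I\subset\R$ build $\Cohom_\R$ and the contramodule-valued $\Hom$'s, and these do not commute in general. Over each $R$ the anti-equivalence $\cM\mapsto\cM^\op$ turns all seven operations into the ordinary $\ot_R$ and $\Hom_R$ of finite-length $R$\+modules, so that (a)--(d) become the standard associativity and $\Hom$\+tensor adjunction isomorphisms of commutative algebra. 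I would then interchange the two limits exactly as in the proofs of Lemma~\ref{nakayama-reduct-products} and Proposition~\ref{r-co-contra}, using the finite length at each level together with Nakayama's lemma (Lemma~\ref{nakayama-lemma}, resp.\ Lemma~\ref{comodule-nakayama}) to conclude that a map of free contramodules or cofree comodules invertible modulo~$\m$ is invertible. The only delicate bookkeeping is tracking which variable sits in which (co)free object and hence which one-sided exactness is available; the finite-level algebra is entirely routine.
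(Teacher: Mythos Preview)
Your two-paragraph strategy---construct on free/cofree generators, extend by one-sided exactness, then verify invertibility on generators---is sound and is essentially how the paper handles the isomorphism claims.  Two points of divergence are worth noting.

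First, your third paragraph misidentifies the crux.  For the ``subtler'' cases (injective~$\cM$ in~(b), projective~$\Q$ in~(c)) the paper uses no Artinian descent and no Nakayama.  For~(b) with $\cM$ injective it observes that both sides take infinite \emph{products} in~$\cM$ to products, reduces to $\cM=\cC(\R)$, and then invokes the identity $\Cohom_\R(\cL,\cK^\op)\simeq(\cL\oc_\R\cK)^\op$ already recorded in Section~\ref{co-operations}.  For~(c) with $\Q$ projective it reduces to $\Q=\R$ via products and uses $\Cohom_\R(\cN,\R)\simeq\cN^\op$ together with $(\P\ocn_\R\cM)^\op\simeq\Hom^\R(\P,\cM^\op)$ from Section~\ref{hom-operations}.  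Your own second-paragraph scheme would in fact also dispatch these cases directly: with $\cM$ injective the functor $\Hom_\R({-},\cM)$ is exact, so the right-hand side of~(b) becomes right exact in~$\cL$ and you may resolve~$\cL$ by cofrees; the base case $\cL=\bigoplus_Y\cC(\R)$ gives $\prod_Y\Hom_\R(\cK,\cM)$ on both sides with no limit interchange needed.  The worry about ``a $\Hom$ commuting with an infinite direct sum'' never arises, and invoking Nakayama here is a red flag that you have not located the clean path.

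Second, the paper's \emph{construction} of the morphism in~(a) is genuinely different from your extension-from-frees: it presents $\cM$ and~$\cN$ as filtered inductive limits of finite-length comodules, passes to a discrete Artinian quotient~$R$, and identifies the map (after dualizing) with the classical $\Hom_R(P,N)\ot_R M\to\Hom_R(P,N\ot_R M)$.  This is the only place in the proof where the finite-length reduction is used.  For~(b)--(d) the paper constructs the morphism by fixing the one variable in which the source functor has the good one-sided exactness (e.g., $\cL$ injective in~(b), $\P$ projective in~(c)), writing down an explicit chain of isomorphisms there---often using part~(a)---and letting the universal property of the cokernel extend it.  Your uniform ``restrict all arguments to generators and extend'' is workable but requires you to specify, for each part, which variable is resolved first and why the relevant side has a universal property in that variable; as written this is left implicit.
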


\begin{proof}
 Part~(a): presenting $\cM$ and $\cN$ as filtered inductive
limits of comodules of finite length, we reduce the problem of
constructing the desired morphism to the case of a discrete
Artinian ring~$R$ and modules $P$, $\cM^\op$, $\cN^\op$ over it.
 Presenting the $R$\+module $P$ as a filtered inductive limit
of finitely generated modules, we can assume all the three
modules to have finite length.
 Then we have $(P\ocn_R(\cN\oc_R\cM))^\op = \Hom_R(P\;
\cN^\op\ot_R\cM^\op)$ and $((P\ocn_R\cN)\oc_R\cM)^\op =
\Hom_R(P,\cN^\op)\ot_R\cM^\op$, and there is a natural morphism
$\Hom_R(P,N)\ot_R M\rarrow\Hom_R(P\;N\ot_R M)$ for any
$R$\+modules $P$, $M$, $N$.
 To check the second assertion, notice that both sides commute
with infinite direct sums in all the three arguments, so it remains
to consider the cases $\P=\R$ or $\cM=\cC(\R)$, which are
straightforward.

 Part~(b): it suffices to construct the desired functorial morphism
in the case when the $\R$\+comodule $\cL$ is injective, because
the left hand side takes kernels in the argument $\cL$ to cokernels.
 So we can assume $\cL=\Phi_\R(\F)$ with $\F$ projective; then
$\cL\oc_\R\cK = \F\ocn_\R\cK$ by part~(a).
 There are functorial isomorphisms $\Cohom_\R(\cL,\Hom_\R(\cK,\cM))
\simeq\Hom^\R(\Psi_\R(\cL),\Hom_\R(\cK,\cM))\simeq\Hom^\R(\F,
\Hom_\R(\cK,\cM))\simeq\Hom_\R(\F\ocn_\R\cK\;\cM)\simeq
\Hom_\R(\cL\oc_\R\cK\;\cM)$.
 We have already proven the second assertion in the case when
$\cL$ is injective; to deal with the remaining case, notice that
both sides commute with infinite products in the argument $\cM$,
so it remains to consider the case $\cM=\cC(\R)$.
 In this case we have $\Cohom_\R(\cL,\Hom_\R(\cK,\cC(\R))
\simeq\Cohom_\R(\cL,\cK^\op)\simeq(\cL\oc_\R\cK)^\op\simeq
\Hom_\R(\cL\oc_\R\cK\;\cC(\R))$.

 Part~(c): it suffices to construct the desired functorial morphism
in the case when the $\R$\+contramodule $\P$ is projective, because
the right hand side takes cokernels in the argument $\P$ to
kernels.
 Assuming $\P$ is projective, the left hand side takes kernels in
the argument $\cM$ to cokernels, so it suffices to consider
the case when $\cM$ is injective.
 Then we can assume $\cM=\Phi_\R(\F)$ with $\F$ projective; hence
$\P\ocn_\R\cM=\Phi_\R(\P\ot^\R\F)$.
 The functorial isomorphisms $\Cohom_\R(\P\ocn_\R\cM\;\Q)
\simeq\Hom^\R(\Psi_\R(\P\ocn_\R\cM)\;\Q)\simeq
\Hom^\R(\P\ot^\R\F\;\Q)\simeq\Hom^\R(\P,\Hom^\R(\F,\Q))\simeq
\Hom^\R(\P,\Cohom_\R(\cM,\Q))$ provide the desired morphism.
 To prove the second assertion, notice that both sides take
infinite direct sums in the argument $\P$ and infinite products
in the argument $\Q$ to infinite products.
 The case $\P=\R$ is easy, and in the case $\Q=\R$ it remains
to recall that $\Cohom_\R(\cM,\R)=\cM^\op$ and
$(\P\ocn_\R\cM)^\op=\Hom^\R(\P,\cM^\op)$.

 The proof of the first assertion of part~(d) is similar to that
of the first assertions of parts~(b\+c); and the second assertion
of part~(d) is easy.
\end{proof}

\subsection{Change of ring}  \label{change-of-ring}
 Let $\eta\:\R\rarrow\S$ be a continuous homomorphism of topological
rings.
 Then for any set $X$ there is a natural map of sets
$\R[[X]]\rarrow\S[[X]]$ taking $\sum_x r_x x$ to $\sum_x \eta(r_x)x$.
 One easily checks that this is a morphism of monads, so any
$\S$\+contramodule $\Q$ acquires an induced $\R$\+contramodule
structure.
 We denote the $\R$\+contramodule so obtained by $R^\eta(\Q)$.
 This defines the functor of contrarestriction of scalars
$$
 R^\eta\:\S\contra\lrarrow\R\contra.
$$

 The functor $R^\eta$ is exact and preserves infinite products.
 It has a left adjoint functor of contraextension of scalars
$$
 E^\eta\:\R\contra\lrarrow\S\contra,
$$
which can be defined on free contramodules by the rule
$E^\eta(\R[[X]])=\S[[X]]$ and extended to arbitrary contramodules as
a right exact functor.
 The functor $E^\eta$ preserves infinite direct sums and tensor
products of contramodules.
 For any $\R$\+contramodule $\P$ and $\S$\+contramodule $\Q$,
there is a natural isomorphism of $\R$\+contramodules
$\Hom^\R(\P,R^\eta\Q)\simeq R^\eta\Hom^\S(E^\eta\P,\Q)$.

 Now assume that $\eta$ is a profinite morphism of pro-Artinian
topological rings, i.~e., the discrete quotient rings of $\S$ are
finite algebras over the discrete quotient rings of~$\R$.
 Then any discrete module of finite length over $\S$ is also
a discrete module of finite length over $\R$ in the induced
structure.
 Passing to the opposite categories and the ind-objects, we
obtain the functor of corestriction of scalars
$$
 R_\eta\:\S\comod\lrarrow\R\comod.
$$
 The functor $R_\eta$ is exact and preserves infinite direct sums.

 The functor $R_\eta$ has a right adjoint functor of coextension
of scalars
$$
 E_\eta\:\R\comod\lrarrow\S\comod,
$$
which can be defined on injective cogenerators by the rule
$E_\eta(\prod_{x\in X}\cC(\R)) = \prod_{x\in X}\cC(\S)$ and
extended to arbitrary comodules as a left exact functor.
 As a right adjoint to a functor taking Noetherian objects to
Noetherian objects, the functor $E_\eta$ preserves both
infinite direct sums and infinite products.
 For any $\R$\+contramodule $\P$ and $\R$\+comodule $\cM$, there is
a natural isomorphism of $\S$\+comodules $\Ctrhom_\S(E^\eta\P,
E_\eta\cM)\simeq E_\eta\Ctrhom_\R(\P,\cM)$.
 For any $\R$\+comodule $\cM$ and $\S$\+comodule $\cN$, there is
a natural isomorphism of $\R$\+contramodules
$\Hom_\R(R_\eta\cM,\cN)\simeq R^\eta\Hom_\S(\cM,E_\eta\cN)$.
{\hbadness=1650\par}

 Using Lemma~\ref{nakayama-reduct-products}, one can check that
the functor $E^\eta$ also preserves infinite products when $\eta$~is
a profinite morphism of pro-Artinian topological rings.
 For any $\R$\+contramodules $\P$ and $\K$, there is a natural
morphism of $\S$\+contramodules $E^\eta\Hom^\R(\P,\K)\rarrow
\Hom^\S(E^\eta\P,E^\eta\K)$, which is an isomorphism whenever
$\P$ is a projective $\R$\+contramodule.
 For any $\R$\+comodules $\cL$ and $\cM$, there is a natural
morphism of $\S$\+contramodules
$E^\eta\Hom_\R(\cL,\cM)\rarrow\Hom_\S(E_\eta\cL,E_\eta\cM)$,
which is an isomorphism whenever $\cM$ is an injective
$\R$\+comodule (see the argument for the case $\S=\R/\J$ in
Section~\ref{hom-operations}).
 For any $\R$\+contramodule $\P$ and $\S$\+comodule $\cN$,
there is a natural isomorphism of $\R$\+comodules
$\P\ocn_\R R_\eta\cN\simeq R_\eta(E^\eta\P\ocn_\S\cN)$.
 For any $\R$\+contramodule $\P$ and $\R$\+comodule $\cM$,
there is a natural morphism of $\S$\+comodules
$E^\eta(\P)\ocn_\S E_\eta(\cM) \rarrow E_\eta(\P\ocn_\R\cM)$,
which is an isomorphism whenever $\P$ is a projective
$\R$\+contramodule.

 For any $\R$\+contramodule $\P$ and $\S$\+contramodule $\Q$,
there is a natural morphism of $\R$\+contramodules
$\P\ot^\R R^\eta\Q\rarrow R^\eta(E^\eta\P\ot^\S\Q)$.
 For any $\R$\+contramodule $\P$ and $\S$\+comodule $\cN$,
there is a natural morphism of $\R$\+comodules
$R_\eta\Ctrhom_\S(E^\eta\P,\cN)\allowbreak\rarrow
\Ctrhom_\R(\P,R_\eta\cN)$.
 For any $\S$\+contramodule $\Q$ and $\R$\+comodule $\cM$,
there is a natural morphism of $\R$\+comodules
$R_\eta\Ctrhom_\S(\Q,E_\eta\cM)\rarrow\Ctrhom_\R(R^\eta\Q,\cM)$.

 For any $\R$\+comodule $\cM$ and $\S$\+comodule $\cN$, there is
a natural isomorphism of $\R$\+comodules $\cM\oc_\R R_\eta\cN
\simeq R_\eta(E_\eta\cM\oc_\S\cN)$.
 For any $\R$\+comodules $\cL$ and $\cM$, there is a natural
isomorphism of $\S$\+comodules $E_\eta(\cL\oc_\R\cM)\simeq
E_\eta(\cL)\oc_\S E_\eta(\cM)$.
 For any $\R$\+comodule $\cM$ and $\S$\+contramodule $\Q$, there is
a natural isomorphism of $\R$\+contramodules
$\Cohom_\R(\cM,R^\eta\Q)\simeq R^\eta\Cohom_\S(E_\eta\cM,\Q)$.
 For any $\S$\+comodule $\cN$ and $\R$\+contramodule $\P$, there is
a natural isomorphism of $\R$\+contramodules
$\Cohom_\R(R_\eta\cN,\P)\simeq R^\eta\Cohom_\S(\cN,E^\eta\P)$.
 For any $\R$\+comodule $\cM$ and $\R$\+contra\-module $\P$, there is
a natural isomorphism of $\S$\+contramodules $E^\eta\Cohom_\R(\cM,\P)
\allowbreak\simeq\Cohom_\S(E_\eta\cM,E^\eta\P)$.

 Let $\R\contra^\free$ and $\R\comod^\cofr$ denote the additive
categories of, respectively, free $\R$\+contramodules and
cofree $\R$\+comodules; the similar notation applies to~$\S$.

\begin{prop}  \label{r-s-extension}
 The equivalences of categories\/ $\Phi_\R=\Psi_\R^{-1}\:
\R\contra^\free\simeq\R\comod^\cofr$ and\/
$\Phi_\S=\Psi_\S^{-1}\:\S\contra^\free\simeq\S\comod^\cofr$
from Proposition~\textup{\ref{r-co-contra}} transform
the contraextension-of-scalars functor
$$
 E^\eta\:\R\contra^\free\lrarrow\S\contra^\free
$$
into the coextension-of-scalars functor
$$
 E_\eta\:\R\comod^\cofr\lrarrow\S\comod^\cofr
$$
and back.
 In other words, the following diagram of categories, functors,
and equivalences is commutative:
$$
\begin{diagram}
 \node{\llap{$\Phi_\R$}\:\R\contra^\free}\arrow{e,=}
 \arrow{s,l}{E^\eta}\node{\R\comod^\cofr\,\.\:\!\rlap{$\Psi_\R$}}
 \arrow{s,r}{E_\eta}\\
 \node{\llap{$\Phi_\S$}\:\S\contra^\free}\arrow{e,=}
 \node{\S\comod^\cofr\,\.\:\!\rlap{$\Psi_\S$}}
\end{diagram}
$$
\end{prop}

\begin{proof}
 It follows from the above that for any $\R$\+contramodule $\P$
there is a natural morphism of $\S$\+comodules $\Phi_\S E^\eta(\P)
\rarrow E_\eta\Phi_\R(\P)$, which is an isomorphism whenever
$\P$ is projective.
 Similarly, for any $\R$\+comodule $\cM$ there is a natural morphism
of $\S$\+contramodules $E^\eta\Psi_\R(\cM)\rarrow
\Psi_\S E_\eta(\cM)$, which is an isomorphism whenever
$\cM$ is injective.
\end{proof}

\subsection{Discrete $\R$-modules}  \label{discrete-modules}
 Let $\R$ be a pro-Artinian topological ring.
 Denote by $\R\discr$ the abelian category of discrete
$\R$\+modules.
 Clearly, it is a locally Noetherian (and even locally finite)
Grothendieck abelian category.

 For each irredicible discrete $\R$\+module, pick its injective
envelope in $\R\discr$, and denote by $C$ the direct sum of
all the injective objects obtained in this way.
 For any discrete $\R$\+module $M$ and any closed ideal $\J\subset\R$,
denote by ${}_\J M$ the maximal submodule of $M$ annihilated by~$\J$.
 Then ${}_\J C$ is a direct sum of injective hulls of
all the irreducible objects in $\R/\J\discr$.

 Applying this assertion in the case of an open ideal $\I\subset\R$
and using the standard results in commutative algebra of Artinian
rings~\cite[Theorem~18.6]{Mats}, one concludes that the functor
$M\mpsto\Hom_{\R\discr}(M,C)$ is an exact auto-anti-equivalence of
the abelian category of discrete $\R$\+modules of finite length.
 Passing to the ind-objects, we obtain an equivalence of abelian
categories $\R\discr\simeq\R\comod$ identifying the injective object
$C\in\R\discr$ with the canonical injective object $\cC\in\R\comod$.
 For discrete $\R$\+modules and $\R$\+comodules $M$ and $\cM$
of finite length, the functor $M\mpsto\Hom_{\R\discr}(M,C)$
is then identified with the functor $\cM\mpsto\cM^\op$ (both
functors taking values in the category of discrete $\R$\+modules
of finite length).

 Using this description of $\R$\+comodules, the constructions of
the operations of contratensor product and contrahomomorphisms in
Sections~\ref{hom-operations}--\ref{contra-operations}
can be presented in the following much simpler form.

 For any $\R$\+contramodule $\P$ and discrete $\R$\+module $M$,
the contratensor product $\P\ocn_\R M$ is a discrete $\R$\+module
defined by the rule $\P\ocn_\R\limin_\alpha M_\alpha =
\limin_\alpha \P/\I_\alpha\P\ot_{R_\alpha}M_\alpha$,
where $M_\alpha$ is a module over a discrete Artinian quotient
ring $R_\alpha=\R/\I_\alpha$ of the topological ring $\R$
and the tensor product in the right hand side is taken in
the category of $R_\alpha$\+modules.
 The discrete $\R$\+module of contrahomomorphisms
$\Ctrhom_\R(\P,M)$ is defined as the inductive limit
$\limin_\I\Hom_{\R/\I}(\P/\I\P\;{}_\I M)$ taken over all the open
ideals $\I\subset\R$, where $\Hom_{\R/\I}$ denotes the internal
Hom in the abelian tensor category of $\R/\I$\+modules.

 The cotensor product of discrete $\R$\+modules can be constructed
by the rule $\limin_\alpha N_\beta\oc_\R\limin_\alpha M_\alpha =
\limin_{\alpha,\beta}\Hom_\R(\Hom_\R(N_\beta,C)\ot_\R
\Hom_\R(M_\alpha,C)\;C)$, where $M_\alpha$ and $N_\beta$ are
discrete $\R$\+modules of finite length.
 In particular, the cotensor product with a discrete $\R$\+module
$N$ of finite length is described as $N\oc_\R M =
\Hom_\R(\Hom_\R(N,C)\;M)$.

\begin{lem} \label{free-r-contramodules}
 A contramodule\/ $\P$ over a pro-Artinian topological ring\/ $\R$
is projective if and only if either of the following equivalent
conditions holds:
\begin{enumerate}
\renewcommand{\theenumi}{\alph{enumi}}
\item the functor\/ $\cN\mpsto\P\ocn_\R\cN$ is exact on
      the abelian category of\/ $\R$\+comodules;
\item the contravariant functor\/ $\cM\mpsto\Cohom_\R(\cM,\P)$
      from the abelian category of\/ $\R$\+comodules to
      the abelian category of\/ $\R$\+contramodules is exact. 
\end{enumerate}
\end{lem}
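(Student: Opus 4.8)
\medskip

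The plan is to show that \emph{each} of the conditions (a) and (b) is equivalent to the assertion that the $\R/\I$\+module $\P/\I\P$ is projective for every open ideal $\I\subset\R$; since by Lemma~\ref{nakayama-reduct-proj} the latter is exactly the projectivity of $\P$, all three properties coincide in one stroke. The common mechanism is a reduction of the \emph{global} exactness assertions over $\R$ to \emph{pointwise} assertions over the discrete Artinian quotients $R=\R/\I$. Concretely, I would use that $\R\comod$ is a locally finite Grothendieck category, so that every $\R$\+comodule, and indeed every short exact sequence of $\R$\+comodules, is a filtered inductive limit of short exact sequences of comodules of finite length (each of which lives over some quotient $R_\alpha=\R/\I_\alpha$). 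Combined with the facts that $\ocn_\R$ and $\Cohom_\R$ commute with filtered inductive limits in the comodule argument (rules~(i)--(ii) of Sections~\ref{hom-operations} and~\ref{co-operations}) and that filtered inductive limits are exact, this lets me test exactness on finite\+length comodules over each Artinian quotient.

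For condition (a): restricting $\P\ocn_\R{-}$ to finite\+length comodules $\cN$ over $R=\R/\I$, rule~(i) of Section~\ref{hom-operations} gives $\P\ocn_\R\cN=\P/\I\P\ocn_R\cN$, so by the reduction above $\P\ocn_\R{-}$ is exact on $\R\comod$ if and only if each $\P/\I\P\ocn_R{-}$ is exact on finite\+length $R$\+comodules. Applying the exact contravariant functor $\cN\mpsto\cN^\op$ and the identity $(Q\ocn_R\cN)^\op=\Hom_R(Q,\cN^\op)$ of rule~(iii), this is equivalent to exactness of $\Hom_R(\P/\I\P,{-})$ on finite\+length $R$\+modules. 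The tensor--hom adjunction $\Hom_R(Q,\Hom_R(M,C))\simeq\Hom_R(Q\ot_R M,C)$ with the injective cogenerator $C$, which is exact and faithful on finite\+length modules, turns this into exactness of $Q\ot_R{-}$, i.e.\ flatness of $Q=\P/\I\P$; over the Artinian (hence perfect) ring $R$ flatness coincides with projectivity \cite{Bas,Ch}.

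Condition (b) is handled symmetrically. Since Section~\ref{co-operations} already records that $\Cohom_\R({-},\P)$ is exact whenever $\P$ is projective, only the converse needs work. For a finite\+length comodule $\cN$ over $R=\R/\I$, the isomorphism $\Cohom_\R(\cN,\P)\simeq\cN^\op\ot^\R\P$ of Section~\ref{co-operations} applies; as $\cN^\op$ is annihilated by $\I$, so is $\cN^\op\ot^\R\P$, and the fact (Section~\ref{contra-operations}) that reduction modulo~$\I$ takes $\ot^\R$ to $\ot^{\R/\I}$ identifies it with $\cN^\op\ot_R(\P/\I\P)$. Thus exactness of the contravariant functor $\Cohom_\R({-},\P)$ on finite\+length $R$\+comodules amounts, via $\cN\mpsto\cN^\op$, to exactness of $N\mpsto N\ot_R(\P/\I\P)$ on finite\+length $R$\+modules, i.e.\ again to flatness, hence projectivity, of $\P/\I\P$. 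Applying the reduction once more, exactness of $\Cohom_\R({-},\P)$ over $\R$ forces $\P/\I\P$ to be projective for every open $\I$, and Lemma~\ref{nakayama-reduct-proj} concludes.

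The step I expect to be the main obstacle is the careful justification of the reduction itself: one must verify both that global exactness of $\ocn_\R$ and $\Cohom_\R$ is \emph{detected} on finite\+length comodules over the Artinian quotients (the easy direction, by restriction of scalars) and, conversely, that pointwise exactness over \emph{all} quotients reassembles into global exactness. The latter is where the locally finite structure of $\R\comod$ and the commutation of the two operations with filtered inductive limits must be used with care; the accompanying identification of comodule\+level exactness with flatness of $\P/\I\P$ over each Artinian quotient, and the passage flatness~$\Rightarrow$~projectivity via \cite{Bas,Ch}, is the other point requiring attention.
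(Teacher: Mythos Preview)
Your approach is correct and follows essentially the same route as the paper: reduce to the discrete Artinian quotients $R=\R/\I$ via Lemma~\ref{nakayama-reduct-proj}, identify the restricted functor with a tensor product over~$R$, and invoke~\cite{Bas} for flat~$\Rightarrow$~projective over Artinian rings.

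Two minor remarks. First, your worry about ``reassembly'' is unnecessary: the ``only if'' directions are already recorded in Sections~\ref{hom-operations} and~\ref{co-operations}, so you only need that (a) (resp.~(b)) \emph{implies} projectivity of each $\P/\I\P$; this is pure restriction, and no filtered-colimit argument to pass back to global exactness over~$\R$ is required. Second, for part~(a) the paper takes a more direct path than your rule~(iii)/tensor--hom detour: using the equivalence $\R\comod\simeq\R\discr$ of Section~\ref{discrete-modules} (which is where the present lemma sits), the functor $\P/\I\P\ocn_R{-}$ on $R$\+comodules becomes literally $\P/\I\P\ot_R{-}$ on $R$\+modules, so exactness is flatness on the nose. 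Your route via $(Q\ocn_R\cN)^\op=\Hom_R(Q,\cN^\op)$ and Matlis duality also works, but costs an extra step. For part~(b) your argument and the paper's coincide.
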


\begin{proof}
 In both cases, the ``only if'' assertions have been already proven
in Sections~\ref{hom-operations} and~\ref{co-operations}; so
it remains to prove the ``if''.
 In both cases, restricting oneself to $\R$\+comodules $\cN$ or $\cM$
obtained by corestriction of scalars from comodules over $\R/\I$, where
$\I\subset\R$ are open ideals, and using
Lemma~\ref{nakayama-reduct-proj}, one reduces the question to the case
of modules over a discrete Artinian ring~$R$.

 In the latter situation, identifying $R$\+comodules with
$R$\+modules as explained above in this section, we see that
the functor in part~(a) is simply the functor $\ot_R$, so
the assertion follows from the fact that a flat module over
an Artinian ring is projective~\cite{Bas}.
 Finally, restricting ourselves to $R$\+comodules $\cM$ of
finite length in part~(b), we also identify the functor under
consideration with the functor of tensor product over~$R$
(see Section~\ref{co-operations}).
 (Cf.~\cite[Section~0.2.9]{Psemi}.)
\end{proof}

\begin{lem} \label{cofree-r-comodules}
 A comodule\/ $\cM$ over a pro-Artinian topological ring\/ $\R$
is injective if and only if either of the following equivalent
conditions holds:
\begin{enumerate}
\renewcommand{\theenumi}{\alph{enumi}}
\item the functor\/ $\cN\mpsto\cN\oc_\R\cM$ is exact on
      the abelian category of\/ $\R$\+comodules;
\item the functor\/ $\cM\mpsto\Cohom_\R(\cM,\P)$ is exact on
      the abelian category of\/ $\R$\+contramodules. 
\end{enumerate}
\end{lem}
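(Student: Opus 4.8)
The plan is to establish this statement as the exact dual of Lemma~\ref{free-r-contramodules}, transporting the whole argument along the contravariant duality $\cM\mapsto\cM^\op$ between $\R$\+comodules and (pro\+objects of) discrete $\R$\+modules of Section~\ref{discrete-modules}. In place of Lemma~\ref{nakayama-proj-free} I would use Lemma~\ref{comodule-inj-cofree} (cofree${}={}$injective), and in place of Lemma~\ref{nakayama-reduct-proj} the local injectivity criterion recorded in Section~\ref{r-comodules-sect}, namely that an $\R$\+comodule $\cM$ is injective if and only if ${}_\I\cM$ is an injective $\R/\I$\+comodule for every open ideal $\I\subset\R$.

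For the \emph{only if} directions I would simply read off the needed exactness from the computations of Sections~\ref{hom-operations} and~\ref{co-operations}. By Lemma~\ref{comodule-inj-cofree} an injective $\cM$ is cofree, say $\cM=\bigoplus_X\cC(\R)$. Since $\oc_\R$ commutes with direct sums and $\cC(\R)$ is its unit, the functor of part~(a) becomes $\cN\mapsto\bigoplus_X\cN$, which is exact in the Grothendieck category $\R\comod$; and by the defining formula $\Cohom_\R(\bigoplus_X\cC(\R),\P)\simeq\prod_X\P$ the functor $\P\mapsto\Cohom_\R(\cM,\P)$ of part~(b) becomes $\P\mapsto\prod_X\P$, which is exact because the forgetful functor $\R\contra\rarrow\R\mod$ is exact and preserves products.

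For the \emph{if} directions the crucial move is the reduction to a discrete Artinian quotient ring $R=\R/\I$; by the local injectivity criterion it suffices to show that ${}_\I\cM$ is an injective $R$\+comodule for each open $\I$. In part~(a) I would restrict the hypothesis to those $\R$\+comodules $\cN$ obtained by corestriction of scalars from $R$, and identify $\cN\oc_\R\cM$ with $\cN\oc_R\.{}_\I\cM$ using the isomorphisms ${}_\I(\cM\oc_\R\cN)\simeq{}_\I\cM\oc_R\.{}_\I\cN$ of Section~\ref{co-operations} together with the relation $({}_\I\cM)^\op\simeq\cM^\op/\I\cM^\op$ dual to the one in the Remark of Section~\ref{r-comodules-sect}. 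In part~(b) I would note that $\Cohom_\R(\cM,\P)$ is again an $R$\+contramodule whenever $\P$ is one, so that the reduction isomorphism $\Cohom_\R(\cM,\P)/\I\Cohom_\R(\cM,\P)\simeq\Cohom_{\R/\I}({}_\I\cM,\P/\I\P)$ identifies the restricted functor with $\P\mapsto\Cohom_R({}_\I\cM,\P)$. In either case, passing through the duality of Section~\ref{discrete-modules}, under which $R$\+comodules become $R$\+modules and both $\oc_R$ and $\Cohom_R$ are computed, on finite\+length objects, as tensor products over $R$ of the dual modules $({-})^\op$, the exactness hypothesis turns into the assertion that $({}_\I\cM)^\op$ is a flat $R$\+module; as $R$ is Artinian this module is then projective~\cite{Bas}, which is precisely the statement that ${}_\I\cM$ is injective.

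The step I expect to be the main obstacle is the bookkeeping of this reduction: one has to check that each of the operations genuinely descends along $\R\rarrow R$ and that neither exactness is lost nor spuriously gained in the process. This is clean in part~(a), where $\cN$ ranges over honest comodules, but in part~(b) the comodule $\cM$ (hence ${}_\I\cM$) must be handled as a filtered inductive limit of finite\+length comodules, so that $\Cohom_R({}_\I\cM,\P)=\limpr_\alpha(\cM_\alpha^\op\ot_R\P)$ involves a projective limit of tensor products; controlling the exactness of this limit, and matching it with flatness of $({}_\I\cM)^\op$, is the only genuinely delicate point. Once the identification with tensor products of dual modules is in place, the conclusion is immediate from the Artinian fact that flat modules are projective, so that all the real content lies in correctly matching the relevant (co/contra)module operations under the two dualities.
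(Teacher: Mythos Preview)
Your overall strategy matches the paper's: cite Section~\ref{co-operations} for the ``only if'' direction, then for ``if'' restrict to objects coming from $R=\R/\I$ via restriction of scalars and invoke the local injectivity criterion of Section~\ref{r-comodules-sect}. Your treatment of part~(a) is essentially the paper's.

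Where you diverge is in part~(b). You try to handle it independently and correctly identify the difficulty: writing ${}_\I\cM$ as a filtered colimit of finite-length comodules turns $\Cohom_R({}_\I\cM,\P)$ into a projective limit of tensor products, and you do not actually show that exactness of this projective-limit functor forces flatness of $({}_\I\cM)^\op$. As stated, this step is a genuine gap in your argument.

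The paper sidesteps the issue entirely. Rather than analyze $\Cohom_R({}_\I\cM,{-})$ directly, it observes that among the $\R$\+contramodules on which~(b) asserts exactness are those of the form $\P=\cN^\op$ for $\R$\+comodules $\cN$, and then uses the natural isomorphism $\Cohom_\R(\cM,\cN^\op)\simeq(\cN\oc_\R\cM)^\op$ from Section~\ref{co-operations}. Since $({-})^\op$ is exact and conservative, exactness of $\P\mapsto\Cohom_\R(\cM,\P)$ on these $\P$ already implies exactness of $\cN\mapsto\cN\oc_\R\cM$, i.e., condition~(a). This reduces~(b) to~(a) in one line, and the projective-limit problem never arises.
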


\begin{proof}
 In both cases, the ``only if'' assertions have been already proven
in Section~\ref{co-operations}; it remains to prove the ``if''.
 Restricting ourselves to $\R$\+comodules $\cN$ or $\R$\+contramodules
$\P$ obtained by restriction of scalars from contramodules or
comodules over $\R/\I$, where $\I\subset\R$ are open ideals,
and using the relevant assertion from Section~\ref{r-comodules-sect},
we can reduce the question to the case of comodules over a discrete
Artinian ring~$R$.
 Restricting to $\R$\+contramodules $\P$ of the form $\P=\cN^\op$,
we conclude that it suffices to prove part~(a).
 Finally, it remains to restrict to $\R$\+comodules $\cN$ of finite
length and use the above description of $\oc_\R$ in terms of
discrete $\R$\+modules.
\end{proof}

\begin{rem} \label{contra-operations-not-exact}
 The assertions of Lemma~\ref{free-r-contramodules} do not apply to
the functors $\ot^\R$ and $\Ctrhom_\R$, because the substitution of
a projective contramodule at the first argument does not make
these functors exact.
 The reason is that infinite direct sums of $\R$\+contramodules
are not exact functors, and neither are infinite products of
$\R$\+comodules.

 Similarly, the assertions of Lemma~\ref{cofree-r-comodules} do not
apply to the functors $\ocn_\R$ and $\Ctrhom_\R$, because
the substitution of an injective comodule at the second argument
does not make these functors exact.
 In the case of the functor $\ocn_\R$, it suffices to consider
a discrete Artinian ring~$\R$ (essentially, one would like to
put projective comodules in the second argument of
$\ocn_\R$, but these do not exist in general).
 A counterexample for the functor $\Ctrhom_\R$ with
$\R=k[[\epsilon]]$ can be obtained from the examples of
$k[[\epsilon]]$\+contramodules with divisible
elements~\cite[Section~A.1.1]{Psemi}.
\end{rem}

\begin{cor} \label{r-homol-dim}
 For any pro-Artinian topological ring\/ $\R$, the homological
dimensions of the abelian categories of\/ $\R$\+contramodules and\/
$\R$\+comodules coincide.
\end{cor}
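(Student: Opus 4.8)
The plan is to read off both homological dimensions from the vanishing of a single derived bifunctor, namely the left derived functor of $\Cohom_\R$. Since $\Cohom_\R\colon\R\comod^\sop\times\R\contra\rarrow\R\contra$ is right exact, and since $\R\comod$ has enough injectives (the cofree comodules) while $\R\contra$ has enough projectives (the free contramodules), the functor $\Cohom_\R$ has a left derived bifunctor $\Coext_\R$. My first step would be to show that $\Coext_\R$ is balanced: its value $\Coext^n_\R(\cM,\P)$ may be computed either from an injective coresolution of $\cM$ or from a projective resolution of $\P$. The adapted classes are the injective comodules in the first argument and the projective contramodules in the second, and the cross-exactness needed for balancing is provided exactly by Lemma~\ref{cofree-r-comodules}(b), which says $\Cohom_\R(\cJ,{-})$ is exact for $\cJ$ injective, together with Lemma~\ref{free-r-contramodules}(b), which says $\Cohom_\R({-},\F)$ is exact for $\F$ projective. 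In particular $\Coext^n_\R(\cM,\F)=0=\Coext^n_\R(\cJ,\P)$ for all $n\ge1$ when $\F$ is projective or $\cJ$ is injective.

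Next I would express the projective dimension of an $\R$-contramodule through $\Coext_\R$. Computing $\Coext_\R$ by a projective resolution of $\P$ gives $\Coext^n_\R(\cM,\P)=0$ for $n>\mathrm{pd}(\P)$. For the converse, the base case is Lemma~\ref{free-r-contramodules}(b): if $\P$ is not projective then $\Cohom_\R({-},\P)$ is not exact, so its first left derived functor in the comodule variable is nonzero, i.e.\ $\Coext^1_\R(\cM,\P)\ne0$ for some comodule $\cM$ (here balancing lets us pass freely between the two computations). Dimension shifting along a free presentation $0\rarrow\Omega\rarrow\F\rarrow\P\rarrow0$, using $\Coext^{\ge1}_\R(\cM,\F)=0$, yields $\Coext^n_\R(\cM,\P)\simeq\Coext^{n-1}_\R(\cM,\Omega)$ for $n\ge2$, so by induction
\[
 \mathrm{pd}(\P)=\sup\{\,n : \Coext^n_\R(\cM,\P)\ne0\ \text{for some comodule}\ \cM\,\}.
\]
Taking the supremum over all $\P$ identifies the homological dimension of $\R\contra$ with $S:=\sup\{\,n:\Coext^n_\R(\cM,\P)\ne0\ \text{for some}\ \cM,\P\,\}$.

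The dual computation, done by injective coresolutions in the comodule argument and using Lemma~\ref{cofree-r-comodules}(b) for the base case, gives
\[
 \mathrm{id}(\cM)=\sup\{\,n : \Coext^n_\R(\cM,\P)\ne0\ \text{for some contramodule}\ \P\,\},
\]
and hence identifies the homological dimension of $\R\comod$ with the same quantity $S$. Comparing the two, both homological dimensions equal $S$ and therefore coincide (the argument and the conclusion remaining valid when $S=+\infty$).

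The step I expect to be the main obstacle is establishing the balancing of $\Coext_\R$, i.e.\ that resolving either variable yields the same derived functor. This is precisely where the two exactness criteria of Lemmas~\ref{free-r-contramodules} and~\ref{cofree-r-comodules} are indispensable; once balancing is secured, the two dimension counts are routine and symmetric, and the equality of homological dimensions follows immediately.
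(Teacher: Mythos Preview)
Your proposal is correct and follows essentially the same approach as the paper: both proofs introduce the balanced derived functor $\Coext_\R$ of $\Cohom_\R$, use Lemmas~\ref{free-r-contramodules}(b) and~\ref{cofree-r-comodules}(b) to establish balancing, and then identify each homological dimension with the homological dimension of this single derived bifunctor. The paper's argument is more terse (it simply asserts that both dimensions equal the maximal $i$ for which $\Coext_\R^{-i}$ is nonzero), while you spell out the dimension-shifting explicitly, but the content is the same.
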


\begin{proof}
 The right derived functor
$$
 \Coext_\R^{-i}\:\R\comod^\sop\times
 \R\contra\lrarrow\R\contra, \quad i=0,\ 1,\ 2,\ \dots
$$
of the functor $\Cohom_\R$ is constructed by either replacing
the first argument of $\Cohom_\R$ with its right injective
resolution, or replacing the second argument with its left
projective resolution.
 The same derived functor is obtained in both ways, since
$\Cohom_\R$ from an injective $\R$\+comodule or into a projective
$\R$\+contramodule is an exact functor.
 It follows from Lemmas~\ref{free-r-contramodules}(b)
and~\ref{cofree-r-comodules}(b) that the homological
dimensions of both abelian categories $\R\contra$ and
$\R\comod$ are equal to the homological dimension of the derived
functor $\Coext_\R$ (i.~e., the maximal integer~$i$ for which
$\Coext_\R^{-i}$ is a nonzero functor).
\end{proof}

 The common value of the homological dimensions of the abelian
categories $\R\contra$ and $\R\comod$ is called the \emph{homological
dimension} of a pro-Artinian topological ring~$\R$.
 Notice that this dimension is also equal to the homological
dimension of the abelian category of discrete $\R$\+modules
of finite length.

 In particular, when $\R$ is a complete Noetherian local ring,
the category of discrete $\R$\+modules of finite length is
contained in the category of finitely generated $\R$\+modules,
and the latter is contained in the category of $\R$\+contramodules.
 It follows that the homological dimension of $\R$ as
a pro-Artinian topological ring is equal to the homological
dimension of $\R$ as an abstract commutative ring, i.~e.,
the Krull dimension of $\R$ when $\R$ is regular and infinity
otherwise.

 Let $\T\rarrow\R$ be a profinite morphism of pro-Artinian topological
rings (see Section~\ref{change-of-ring}).
 Let $C_\T$ be a direct sum of injective hulls of the irreducible
objects of $\T\discr$.
 Then the set $C_\R$ of all continuous $\T$\+module homomorphisms
$\R\rarrow C_\T$ (with the discrete topology on $C_\T$) endowed with
its natural $\R$\+module structure is a direct sum of injective hulls
of all the irreducible objects in $\R\discr$.
 This construction agrees with the compositions of profinite morphisms
$\T\rarrow\R\rarrow\S$.

 Assuming such morphisms to be given and the modules $C_\R$ and $C_\S$
being defined in terms of $C_\T$, we have the equivalences of
categories $\R\discr\simeq\R\comod$ and $\S\discr\simeq\S\comod$
corresponding to $C_\R$ and~$C_\S$.
 These identifications being presumed, the functor of corestriction of
scalars $R_\eta$ for the morphism $\eta\:\R\rarrow\S$ assigns to
a discrete module $N$ over $\S$ the same abelian group $N$ considered
as a discrete module over $\R$ in the module structure induced
by~$\eta$.
 The functor of coextension of scalars $E_\eta$ assigns to
a discrete $\R$\+module $M$ the set of all continuous $\R$\+module
homomorphisms $\S\rarrow M$ endowed with its natural discrete
$\S$\+module structure.

\subsection{Special classes of rings}
 The above construction provides a natural choice of an object $C_\R$
for some special classes of pro-Artinian topological rings $\R$,
such as the profinite-dimensional algebras over a field~$k$ (for which
one can use $\T=k=C_\T$) or the profinite rings (for which one
can take $\T=\widehat{\mathbb Z}=\prod_l\mathbb Z_l$ to be the product
of all the rings of $l$\+adic integers and $C_\T=\mathbb Q/\mathbb Z$).

 Let us discuss the case of a topological algebra $\R$ over
a field~$k$ in some more detail.
 First of all, in this case there is the following simplified
definition of $\R$\+contramodules.

 Given a vector space $V$ over~$k$, set
$\R\ot\comp V =\limpr_\I\R/\I\ot_k V$, where the projective limit
is taken over all the open ideals $\I\subset\R$.
 The unit element of $\R$ induces the natural map
$V\rarrow\R\ot\comp V$.
 The natural map $\R\ot\comp(\R\ot\comp V)\rarrow\R\ot\comp V$
is obtained by passing to the projective limit of the natural
maps $\R/\I\ot_k(\R\ot\comp V)\rarrow\R/\I\ot_k\R/\I\ot_k V
\rarrow \R/\I\ot_k V$.
 This construction can be extended to any noncommutative topological
algebra $\R$ over~$k$ where open right ideals form a base of
neighborhoods of zero~\cite[Section~D.5.2]{Psemi}.

 The above two natural transformations define a monad structure on
the endofunctor $V\mpsto \R\ot\comp V$ of the category of
$k$\+vector spaces.
 The category of algebras/modules over this monad is equivalent to
the category of $\R$\+contramodules as defined in
Section~\ref{contramodules-sect}.
 Indeed, recall from the discussion in the beginning of
Section~\ref{contramodules-sect} that the category of $k$\+vector
spaces can be defined as the category of modules over the monad
$X\mpsto k[X]$ on the category of sets.
 As with any monad, this provides for any $k$\+vector space $V$
the ``monadic bar-resolution'' with the initial fragment
$k[k[V]]\rarrow k[V]\rarrow V\rarrow 0$, where the middle arrow
is the natural map and the leftmost one is the difference of two
natural maps.

 Applying the additive functor $\R\ot\comp{-}$ to this exact sequence
of vector spaces and taking into account the natural isomorphism
$\R\ot\comp k[X]\simeq \R[[X]]$ for any set $X$, we obtain 
an exact sequence
$$
 \R[[k[V]]]\lrarrow\R[[V]]\lrarrow\R\ot\comp V\lrarrow0
$$
for any $k$\+vector space~$V$.
 Now to any map $\R\ot\comp\P\rarrow\P$ for a $k$\+vector space $\P$
one naturally assigns a map $\R[[\P]]\rarrow\P$; the latter
satisfies the contraassociativity axiom whenever the former does.
 Conversely, given a map $\R[[\P]]\rarrow\P$ satisfying
the contraassociativity axiom for a set $\P$, one introduces
a $k$\+vector space structure on $\P$ by considering the composition
$k[\P]\rarrow\R[[\P]]\rarrow\P$.
 Then the map $\R[[\P]]\rarrow\P$ factorizes uniquely through
the surjection $\R[[\P]]\rarrow\R\ot\comp\P$, as one can readily see
from the above exact sequence.

 The free $\R$\+contramodules are exactly the $\R$\+contramodules
of the form $\R\ot\comp V$, where $V$ is a $k$\+vector space.
 The infinite direct sums and the tensor product of free
$\R$\+contramodules are described by the rules $\bigoplus_\alpha
\R\ot\comp V_\alpha=\R\ot\comp\bigoplus_\alpha V_\alpha$ and
$(\R\ot\comp U)\ot^\R(\R\ot\comp V)=\R\ot\comp(U\ot_k V)$, where
$V_\alpha$, $U$, $V$ are $k$\+vector spaces.
 Given a morphism $\eta\:\R\rarrow\S$ of topological algebras over~$k$,
the functor of contraextension of scalars $E^\eta$ takes
$\R\ot\comp V$ to $\S\ot\comp V$.

 Now assume that $\R$ is a profinite-dimensional algebra over~$k$;
in other words, $\R=\cC^*$ is the dual vector space to
a coassociative, cocommutative, and counital coalgebra~$\cC$ over~$k$.
 Then one has $\R\ot\comp V\simeq\Hom_k(\cC,V)$, so our definition
of $\R$\+contramodules is equivalent to the classical definition
of $\cC$\+contramodules in~\cite{EM}.
 As explained above, the category of $\R$\+comodules is naturally
equivalent to the category of discrete $\R$\+modules, which are
clearly the same as $\cC$\+comodules in the conventional sense.
 In fact, these assertions do not depend on the assumption that
$\R$ is commutative and $\cC$ is cocommutative.

 The distinguished $\R$\+comodule $\cC(\R)$ is identified with the
comodule $\cC$ over the coalgebra $\cC$, and the cofree $\R$\+comodules
are the same as the cofree $\cC$\+comodules, i.~e., those of the form
$\cC\ot_k V$, where $V$ is a $k$\+vector space.
 The infinite products of cofree $\R$\+comodules and
free $\R$\+contramodules are described by the rules
$\prod_\alpha\cC\ot_k V_\alpha = \cC\ot_k\prod_\alpha V_\alpha$ and
$\prod_\alpha\Hom_k(\cC,V_\alpha) =
\Hom_k(\cC\;\prod_\alpha V_\alpha)$;
there are similar formulas for the infinite direct sums.

 Finally, the operations of comodule $\Hom$ (taking values in
contramodules), contratensor product $\ocn_\R$, cotensor product
$\oc_\R$, and cohomomorphisms $\Cohom_\R$, as defined in
Sections~\ref{hom-operations} and~\ref{co-operations}, correspond
to the operations of comodule $\Hom$, contratensor product
$\ocn_\cC$, cotensor product $\oc_\cC$, and cohomomorphisms
$\Cohom_\cC$, as defined in~\cite[Section~0.2]{Psemi}, under
the equivalences of categories $\R\contra\simeq\cC\contra$
and $\R\comod\simeq\cC\comod$.
 Given a morphism of profinite-dimensional algebras $\R\rarrow\S$
dual to a morphism of coalgebras $\cD\rarrow\cC$, where $\R=\cC^*$
and $\S=\cD^*$, the functors of contra/coextension of scalars defined
in Section~\ref{change-of-ring} correspond under our equivalences of
categories to the similar functors defined in
\cite[Section~7.1.2]{Psemi} (in the case of coalgebras over a field).

\Section{$\R$-Free and $\R$-Cofree wcDG-Modules}

 For the rest of the paper (with the exception of the appendices),
unless otherwise specified, $\R$ denotes a pro-Artinian commutative
topological local ring with the maximal ideal~$\m$ and the residue
field $k=\R/\m$.

 We fix once and for all a grading group datum $(\Gamma,\sigma,1)$
\cite[Section~1.1]{PP2}; unless a grading by the integers is
specifically mentioned, all our gradings (on algebras, modules,
complexes, etc.)\ are presumed to be $\Gamma$\+gradings.
 The conventions and notation of \emph{loc.\ cit.} are being used
in connection with the $\Gamma$\+gradings.

\subsection{$\R$-free graded algebras and modules}
\label{r-free-graded}
 A \emph{graded\/ $\R$\+contramodule} is a family of
$\R$\+contramodules indexed by elements of the grading
group~$\Gamma$.
 A \emph{free graded\/ $\R$\+contramodule} is
a graded $\R$\+contramodule whose grading components are free
$\R$\+contramodules.

 The operations of tensor product $\ot^\R$ and internal
homomorphisms $\Hom^\R$ are extended to graded
$\R$\+contramodules in the conventional way.
 Specifically, the components of $\P\ot^\R\Q$ are the contramodule
direct sums of the tensor products of the appropriate components
of $\P$ and $\Q$, that is $(\P\ot^\R\Q)^n = \bigoplus_{i+j=n}
\P^i\ot^\R\Q^j$, while the components of $\Hom^\R(\P,\Q)$ are
the direct products of the $\Hom$ contramodules between
the components of $\P$ and~$\Q$, i.~e., $\Hom_\R(\P,\Q)^n=
\prod_{j-i=n}\Hom_\R(\P^i,\Q^j)$.
 Notice that the operations $\ot^\R$ and $\Hom^\R$ preserve
the class of free (graded) $\R$\+contramodules
(see Sections~\ref{contramodules-sect}--\ref{nakayama-sect}).

 An \emph{$\R$\+free graded algebra} $\B$ is, by the definition,
a graded algebra object in the tensor category of free
$\R$\+contramodules; in other words, it is a free graded
$\R$\+contramodule endowed with an (associative, noncommutative)
homogeneous multiplication map $\B\ot^\R\B\rarrow\B$ and
a homogeneous unit map $\R\rarrow\B$ (or, equivalently,
a unit element $1\in\B^0$).

 An \emph{$\R$\+free graded left module} $\M$ over $\B$ is
a graded left module over $\B$ in the tensor category of free
$\R$\+contramodules, i.~e., a free graded $\R$\+contramodule
endowed with an (associative and unital) homogeneous
$\B$\+action map $\B\ot^\R\M\rarrow\M$.
 \ \emph{$\R$\+free graded right modules} $\N$ over $\B$ are
defined in the similar way.
 Alternatively, one can define $\B$\+module structures
on free graded $\R$\+contramodules in terms of the action maps
$\M\rarrow\Hom^\R(\B,\M)$, and similarly for~$\N$.
 The conventional super sign rule has to be used: e.~g.,
the two ways to represent a left action are related by the rule
$f_x(b)=(-1)^{|x||b}bx$, while for a right action it is
$f_y(b)=yb$, etc.

 In fact, the category of $\R$\+free graded $\B$\+modules is
enriched over the tensor category of (graded) $\R$\+contramodules,
so the abelian group of morphisms between two $\R$\+free
graded $\B$\+modules $\L$ and $\M$ is the underlying abelian
group of the degree-zero component of a certain naturally
defined (not necessarily free) graded $\R$\+contramodule
$\Hom_\B(\L,\M)$.
 This $\R$\+contramodule is constructed as the kernel of
the pair of morphisms of graded $\R$\+contramodules
$\Hom^\R(\L,\M)\birarrow\Hom^\R(\B\ot^\R\L\;\M)$ induced by
the actions of $\B$ in $\L$ and~$\M$.
 There is a nonobvious sign rule here, which will be needed when
working with differential modules, and which is different for
the left and the right graded modules;
see~\cite[Section~1.1]{Pkoszul}.

 The \emph{tensor product} $\N\ot_\B\M$ of an $\R$\+free graded right
$\B$\+module $\N$ and an $\R$\+free graded left $\B$\+module $\M$ 
is a (not necessarily free) graded $\R$\+contramodule
constructed as the cokernel of the pair of morphisms of
graded $\R$\+contramodules $\N\ot^\R\B\ot^\R\M\birarrow\N\ot^\R\M$.

 The additive category of $\R$\+free graded (left or right)
$\B$\+modules has a natural exact category structure:
a short sequence of $\R$\+free graded $\B$\+modules is said
to be exact if it is (split) exact as a short sequence of
free graded $\R$\+contramodules.
 The exact category of $\R$\+free graded $\B$\+modules admits
infinite direct sums and products, which are preserved by
the forgetful functors to the category of free graded
$\R$\+contramodules and preserve exact sequences.

 If a morphism of $\R$\+free graded $\B$\+modules has
an $\R$\+free kernel (resp., cokernel) in the category of
graded $\R$\+contramodules, then this kernel (resp., cokernel)
is naturally endowed with an $\R$\+free graded $\B$\+module
structure, making it the kernel (resp., cokernel) of
the same morphism in the additive category of $\R$\+free
graded $\B$\+modules.

 There are enough projective objects in the exact category
of $\R$\+free graded left $\B$\+modules; these are the direct
summands of the graded $\B$\+modules $\B\ot^\R\U$ freely
generated by free graded $\R$\+contramodules~$\U$.
 Similarly, there are enough injectives in this exact
category, and these are the direct summands of the graded
$\B$\+modules $\Hom^\R(\B,\V)$ cofreely cogenerated 
by free graded $\R$\+contramodules~$\V$ (for the sign
rule, see~\cite[Section~1.5]{Pkoszul}).

\begin{lem}  \label{r-free-hom-reduction}
 \textup{(a)} Let\/ $\P$ be a projective\/ $\R$\+free graded left\/
$\B$\+module and\/ $\M$ be an arbitrary\/ $\R$\+free graded left\/
$\B$\+module.
 Then the graded\/ $\R$\+contramodule\/ $\Hom_\B(\P,\M)$ is free,
and the functor of reduction modulo\/~$\m$ induces an isomorphism
of graded $k$\+vector spaces
$$
 \Hom_\B(\P,\M)/\m\Hom_\B(\P,\M)\simeq
 \Hom_{\B/\m\B}(\P/\m\P\;\M/\m\M).
$$ \par
\textup{(b)} Let\/ $\L$ be an arbitrary\/ $\R$\+free graded left\/
$\B$\+module and\/ $\J$ be an injective\/ $\R$\+free graded left\/
$\B$\+module.
 Then the graded\/ $\R$\+contramodule\/ $\Hom_\B(\L,\J)$ is free,
and the functor of reduction modulo\/~$\m$ induces an isomorphism of
graded $k$\+vector spaces
$$
 \Hom_\B(\L,\J)/\m\Hom_\B(\L,\J)\simeq
 \Hom_{\B/\m\B}(\L/\m\L\;\J/\m\J).
$$ \par
\textup{(c)} For any\/ $\R$\+free graded right\/ $\B$\+module\/ $\N$
and\/ $\R$\+free graded left\/ $\B$\+module\/ $\M$, there is a natural
isomorphism of graded $k$\+vector spaces
$$
 (\N\ot_\B\M)/\m(\N\ot_\B\M)\simeq(\N/\m\N)\ot_{\B/\m\B}(\M/\m\M).
$$
 For any $\R$\+free graded right\/ $\B$\+module\/ $\N$ and any
projective\/ $\R$\+free graded left\/ $\B$\+module $\P$, the graded\/
$\R$\+contramodule\/ $\N\ot_\B\P$ is free.
\end{lem}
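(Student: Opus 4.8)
The plan is to treat parts~(a) and~(b) by reducing to freely generated, respectively cofreely cogenerated, $\B$-modules, where the enriched $\Hom_\B$ collapses to $\Hom^\R$; part~(c) I would do directly from the cokernel presentation of $\ot_\B$. The reason for passing to (co)free modules is that the reduction functor $\P\mpsto\P/\m\P$ is only right exact, so it need not commute with the kernel defining $\Hom_\B(\L,\M)$. It is, however, compatible with $\ot^\R$ (Section~\ref{contra-operations}) and, crucially, with $\Hom^\R$ in a projective first argument (Section~\ref{hom-operations}, resting on Lemma~\ref{nakayama-reduct-products}); and every $\R$-free contramodule is projective by Lemma~\ref{nakayama-proj-free}. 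I also use throughout that $\ot^\R$ and $\Hom^\R$ preserve freeness and that a direct summand of a free graded $\R$-contramodule is again free.

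For part~(a), both sides are additive in $\P$, and every projective $\R$-free graded $\B$-module is a direct summand of one of the form $\B\ot^\R\U$ with $\U$ a free graded $\R$-contramodule, so it suffices to take $\P=\B\ot^\R\U$. The free-module adjunction supplies an isomorphism of graded $\R$-contramodules $\Hom_\B(\B\ot^\R\U,\M)\simeq\Hom^\R(\U,\M)$, whose target is free since $\U$ and $\M$ are. Reducing modulo~$\m$ and using the compatibility of $\Hom^\R$ with reduction in the projective argument~$\U$ gives $\Hom^\R(\U,\M)/\m\Hom^\R(\U,\M)\simeq\Hom^k(\U/\m\U,\M/\m\M)$. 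On the other hand $\P/\m\P\simeq(\B/\m\B)\ot^k(\U/\m\U)$, so the free-module adjunction over the residue field identifies $\Hom_{\B/\m\B}(\P/\m\P,\M/\m\M)$ with the same space $\Hom^k(\U/\m\U,\M/\m\M)$. Naturality of the adjunction under the base change $\R\to k$ shows the canonical reduction map realizes this identification, proving the claim for free~$\P$, hence for all projective~$\P$; freeness of $\Hom_\B(\P,\M)$ follows since it is a direct summand of the free contramodule $\Hom^\R(\U,\M)$.

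Part~(b) is the dual argument. Every injective $\R$-free graded $\B$-module is a summand of $\Hom^\R(\B,\V)$, so I take $\J=\Hom^\R(\B,\V)$ and use the cofree-module adjunction $\Hom_\B(\L,\Hom^\R(\B,\V))\simeq\Hom^\R(\L,\V)$, again free. Here $\L$ is merely $\R$-free, but that is precisely $\R$-projective (Lemma~\ref{nakayama-proj-free}), so the $\Hom^\R$-reduction fact applies with $\L$ in the first argument and yields $\Hom^\R(\L,\V)/\m\Hom^\R(\L,\V)\simeq\Hom^k(\L/\m\L,\V/\m\V)$. Since $\B$ is free, the same fact computes $\J/\m\J\simeq\Hom^k(\B/\m\B,\V/\m\V)$, the cofree $\B/\m\B$-module on $\V/\m\V$, and the cofree-module adjunction over~$k$ identifies $\Hom_{\B/\m\B}(\L/\m\L,\J/\m\J)$ with $\Hom^k(\L/\m\L,\V/\m\V)$ as well.

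For part~(c), recall that $\N\ot_\B\M$ is the cokernel of the pair $\N\ot^\R\B\ot^\R\M\birarrow\N\ot^\R\M$. Since reduction modulo~$\m$ is right exact and carries $\ot^\R$ to $\ot^k$ (Section~\ref{contra-operations}), applying it to this presentation exhibits $(\N\ot_\B\M)/\m(\N\ot_\B\M)$ as the cokernel of $(\N/\m\N)\ot^k(\B/\m\B)\ot^k(\M/\m\M)\birarrow(\N/\m\N)\ot^k(\M/\m\M)$, which is exactly $(\N/\m\N)\ot_{\B/\m\B}(\M/\m\M)$. The final assertion reduces by additivity to $\P=\B\ot^\R\U$, where $\N\ot_\B(\B\ot^\R\U)\simeq\N\ot^\R\U$ is free because $\ot^\R$ preserves freeness. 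The one genuinely delicate point of the whole lemma is the one already isolated: the kernel defining $\Hom_\B$ cannot be reduced naively, which is exactly why routing parts~(a) and~(b) through the free- and cofree-module adjunctions is essential.
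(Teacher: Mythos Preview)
Your proof is correct and follows essentially the same approach as the paper: reduce to freely generated $\P=\B\ot^\R\U$ (resp.\ cofreely cogenerated $\J=\Hom^\R(\B,\V)$), invoke the adjunction to collapse $\Hom_\B$ to $\Hom^\R$, and then apply the compatibility of $\Hom^\R$ with reduction modulo~$\m$ via Lemma~\ref{nakayama-reduct-products}; part~(c) is likewise handled by right exactness of reduction on the cokernel presentation and the isomorphism $\N\ot_\B(\B\ot^\R\U)\simeq\N\ot^\R\U$. The only presentational difference is that the paper first constructs the natural comparison map $\Hom_\B(\L,\M)/\m\Hom_\B(\L,\M)\rarrow\Hom_{\B/\m\B}(\L/\m\L,\M/\m\M)$ for arbitrary $\L,\M$ and then verifies it is an isomorphism in the special cases, whereas you identify both sides separately and then appeal to naturality of the adjunction under base change.
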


\begin{proof}
 Clearly, $\B/\m\B$ is a graded $k$\+algebra, and $\M/\m\M$ is
a graded left $\B/\m\B$\+module for any $\R$\+free graded
left $\B$\+module $\M$, since the functor of reduction modulo~$\m$
preserves tensor products of $\R$\+contramodules.
 For any $\R$\+free left $\B$\+modules $\L$ and $\M$, this functor
induces a natural map of graded abelian groups
{\hbadness=1400
$$
 \Hom_\B(\L,\M)\lrarrow\Hom_{\B/\m\B}(\L/\m\L\;\M/\m\M).
$$
 One} easily checks that this map is a morphism of graded
$\R$\+contramodules, hence there is a natural morphism of
graded $k$\+vector spaces
$$
 \Hom_\B(\L,\M)/\m\Hom_\B(\L,\M)\lrarrow
 \Hom_{\B/\m\B}(\L/\m\L\;\M/\m\M).
$$

 Now if $\L=\B\ot^\R\U$ is a graded $\B$\+module freely generated
by a free $\R$\+contramodule $\U$, then
$\Hom_\B(\L,\M)\simeq\Hom^\R(\U,\M)$ and $\L/\m\L\simeq
\B/\m\B\ot_k\U/\m\U$, hence $\Hom_{\B/\m\B}(\L/\m\L\;\M/\m\M)
\simeq\Hom_k(\U/\m\U\;\M/\m\M)$, and the desired isomorphism
follows from Lemma~\ref{nakayama-reduct-products} and the related
result of Section~\ref{hom-operations}.

 Similarly, if $\M=\Hom^\R(\B,\V)$ is a graded $\B$\+module cofreely
cogenerated by a free $\R$\+contramodule $\V$, then
$\Hom_\B(\L,\M)\simeq\Hom^\R(\L,\V)$ and $\M/\m\M\simeq
\Hom_k(\B/\m\B\;\V/\m\V)$, hence $\Hom_{\B/\m\B}(\L/\m\L\;\M/\m\M)
\simeq\Hom_k(\L/\m\L\;\V/\m\V)$ and the desired isomorphism follows.

 Parts~(a) and~(b) are proven.
 The proof of the second assertion of part~(c), based on the natural
isomorphism $\N\ot_\B(\B\ot^\R\U)\simeq \N\ot^\R\U$, is similar;
and the first assertion is easy, since the functor of reduction
modulo~$\m$ preserves tensor products and cokernels in the category
of $\R$\+contramodules.
\end{proof}

\begin{lem} \label{r-free-proj-inj-reduction}
 An\/ $\R$\+free graded module\/ $\M$ over an\/ $\R$\+free graded
algebra\/ $\B$ is projective (resp., injective) if and only if
the graded module\/ $\M/\m\M$ over the graded $k$\+algebra\/
$\B/\m\B$ is projective (resp., injective).
\end{lem}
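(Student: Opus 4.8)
The plan is to treat the two directions by quite different means, handling the projective and injective cases in parallel throughout. The ``only if'' direction is the easy one. If $\M$ is projective, it is a direct summand of a graded $\B$\+module of the form $\B\ot^\R\U$ with $\U$ a free graded $\R$\+contramodule; reducing modulo~$\m$, and using that $\P\mpsto\P/\m\P$ preserves tensor products (Section~\ref{contra-operations}) and direct summands, exhibits $\M/\m\M$ as a direct summand of $\B/\m\B\ot_k\U/\m\U$, a free graded $\B/\m\B$\+module, so $\M/\m\M$ is projective. Dually, if $\M$ is injective it is a direct summand of some $\Hom^\R(\B,\V)$, and the reduction isomorphism $\Hom^\R(\B,\V)/\m\Hom^\R(\B,\V)\simeq\Hom_k(\B/\m\B,\V/\m\V)$ (which follows from the reduction isomorphism for $\Hom^\R$ of Section~\ref{hom-operations}, valid since $\B$ is free, the reduction functor preserving the relevant infinite products by Lemma~\ref{nakayama-reduct-products}) exhibits $\M/\m\M$ as a direct summand of a cofree, hence injective, graded $\B/\m\B$\+module.

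For the ``if'' direction I would argue by lifting an idempotent. Consider first the projective case. Since $\M/\m\M$ is a projective graded $\B/\m\B$\+module, it is a direct summand of a free graded module $\B/\m\B\ot_k W$ for some graded $k$\+vector space~$W$; let $e_0$ be the corresponding idempotent endomorphism. Choose a free graded $\R$\+contramodule $\U$ with $\U/\m\U=W$ and set $\F=\B\ot^\R\U$, so that $\F/\m\F\simeq\B/\m\B\ot_k W$. By Lemma~\ref{r-free-hom-reduction}(a) the degree-zero component $\A=\Hom_\B(\F,\F)^0$ is a free $\R$\+contramodule, and with its composition product it is an algebra in the tensor category of free $\R$\+contramodules satisfying $\A/\m\A\simeq\Hom_{\B/\m\B}(\F/\m\F,\F/\m\F)^0$. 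Hence Lemma~\ref{idempotent-lifting} applies (with the closed ideal~$\m$) and lifts $e_0$ to an idempotent $e\in\A$. Then $\M'=e\F$ is a direct summand of the projective $\R$\+free graded $\B$\+module $\F$, hence is itself an $\R$\+free (by Lemma~\ref{nakayama-proj-free}) projective graded $\B$\+module, and $\M'/\m\M'\simeq e_0(\F/\m\F)\simeq\M/\m\M$. The injective case is entirely parallel: one takes a cofree module $\Hom^\R(\B,\V)$ reducing to a cofree $\B/\m\B$\+module that contains $\M/\m\M$ as a summand, uses Lemma~\ref{r-free-hom-reduction}(b) in place of~(a), and lifts the corresponding idempotent to produce an injective $\R$\+free graded $\B$\+module $\M'$ with $\M'/\m\M'\simeq\M/\m\M$.

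It then remains to upgrade the isomorphism $\M'/\m\M'\simeq\M/\m\M$ to an isomorphism $\M\simeq\M'$, which will finish the proof since $\M'$ is projective (resp.\ injective). Using that $\U$ (resp.\ $\M$) is a projective $\R$\+contramodule, I would lift the mod\+$\m$ isomorphism to a morphism $h$ of $\R$\+free graded $\B$\+modules between $\M$ and $\M'$ with $h/\m h$ an isomorphism, by first lifting the underlying contramodule map into the free generating contramodule of the free (resp.\ cofree) module and then composing with~$e$. The main point, and the place where Nakayama's lemma does the real work, is that such an $h$ is automatically an isomorphism. Surjectivity is immediate from $\coker(h)/\m\coker(h)=\coker(h/\m h)=0$ together with Lemma~\ref{nakayama-lemma}. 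For injectivity one uses that the target is a free graded $\R$\+contramodule, hence a projective $\R$\+contramodule, so the contramodule surjection~$h$ splits; its kernel is then $\R$\+free, the sequence stays split-exact under reduction modulo~$\m$, and $\ker(h)/\m\ker(h)=0$ forces $\ker h=0$ by Lemma~\ref{nakayama-lemma} again. I expect this last step---recovering left-exactness of reduction by splitting at the contramodule level---to be the only subtle point, everything else being a routine transcription of the Nakayama-lemma arguments used for Lemmas~\ref{nakayama-proj-free} and~\ref{nakayama-acycl-contract}.
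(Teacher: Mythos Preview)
Your proof is correct and follows essentially the same route as the paper's: reduce the ``only if'' to the explicit description of projectives and injectives, and for the ``if'' lift an idempotent via Lemma~\ref{idempotent-lifting} applied to the $\R$\+free algebra $\Hom_\B(\F,\F)^0$ (using Lemma~\ref{r-free-hom-reduction}), then lift the resulting mod\+$\m$ isomorphism to a $\B$\+module map and conclude by Nakayama. The only cosmetic difference is that the paper lifts the isomorphism $\P/\m\P\simeq\M/\m\M$ by invoking Lemma~\ref{r-free-hom-reduction}(a) a second time (so that $\Hom_\B(\P,\M)\twoheadrightarrow\Hom_{\B/\m\B}(\P/\m\P,\M/\m\M)$ directly provides the lift), rather than by your hands-on lift through the cogenerating contramodule; and the paper leaves the ``isomorphism mod~$\m$ implies isomorphism'' step implicit, referring to the argument already used for Lemmas~\ref{nakayama-proj-free} and~\ref{nakayama-acycl-contract}, which you spell out.
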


\begin{proof}
 The ``only if'' part follows from the above description of
the projective and injective $\R$\+free graded $\B$\+modules.
 Conversely, assume that $\M/\m\M$ is a projective
$\B/\m\B$\+module.
 Then it is the image of a homogeneous idempotent endomorphism of
a $\B/\m\B$\+module freely generated by some graded
$k$\+vector space~$U$.
 The latter can be presented as $\U/\m\U$, where $\U$ is some
free graded $\R$\+contramodule.
 Set $\F=\B\ot^\R\U$.
 According to Lemma~\ref{r-free-hom-reduction}(a), the graded
$k$\+algebra $\Hom_{\B/\m\B}(\F/\m\F\;\F/\m\F)$ can be obtained by
reducing modulo~$\m$ the $\R$\+free graded algebra $\Hom_\B(\F,\F)$.

 Using Lemma~\ref{idempotent-lifting}, we can lift our idempotent
endomorphism of $\F/\m\F$ to a homogeneous idempotent endomorphism
of~$\F$.
 Let $\P$ be the image of the latter endomorphism; it is
a projective $\R$\+free graded $\B$\+module.
 The graded $k$\+vector space $\Hom_{\B/\m\B}(\P/\m\P\;\M/\m\M)$
can also be obtained by reducing the free graded $\R$\+contramodule
$\Hom_\B(\P,\M)$; in particular, our isomorphism $\P/\m\P\simeq
\M/\m\M$ can be lifted to a morphism of $\R$\+free graded
$\B$\+modules $\P\rarrow\M$.
 The latter, being an isomorphism modulo~$\m$, is consequently
itself an isomorphism.
 The case of injective graded modules is similar.
\end{proof}

\begin{cor}  \label{r-free-homol-dim}
 The homological dimension of the exact category of\/ $\R$\+free
graded left\/ $\B$\+modules does not exceed that of the abelian
category of graded left\/ $\B/\m\B$\+modules.
\end{cor}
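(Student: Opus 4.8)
The plan is to bound, by $d$, the projective dimension of an arbitrary $\R$\+free graded left $\B$\+module $\M$ in the exact category of such modules, where $d$ denotes the homological dimension of the abelian category of graded left $\B/\m\B$\+modules; I may assume $d<\infty$, as otherwise there is nothing to prove. Since the exact category of $\R$\+free graded $\B$\+modules has enough projectives, its homological dimension is the supremum of the projective dimensions of its objects, so this will suffice. The central device is the functor of reduction modulo~$\m$, that is $\M\mpsto\M/\m\M$, landing in the abelian category of graded $\B/\m\B$\+modules. First I would check that this functor is \emph{exact} on the exact category: by the very definition of the exact structure, every admissible short exact sequence is split exact as a sequence of free graded $\R$\+contramodules, and any additive functor preserves split exact sequences, so the image is again exact. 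By Lemma~\ref{r-free-proj-inj-reduction}, this functor moreover carries projective objects to projective objects.

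Next I would choose a projective resolution $\dotsb\rarrow\P_1\rarrow\P_0\rarrow\M\rarrow0$ in the exact category and form the successive syzygies $\M_i$, determined by the short exact sequences $0\rarrow\M_{i+1}\rarrow\P_i\rarrow\M_i\rarrow0$ with $\M_0=\M$. Here I must first observe that each syzygy is genuinely an object of the exact category: since the defining sequence splits over free graded $\R$\+contramodules, $\M_{i+1}$ is a direct summand of the free graded $\R$\+contramodule $\P_i$, hence a projective and therefore (by Lemma~\ref{nakayama-proj-free}) free graded $\R$\+contramodule. Because reduction modulo~$\m$ is exact, it carries the chosen resolution to a projective resolution of $\M/\m\M$ over $\B/\m\B$ whose $i$\+th syzygy is precisely $\M_i/\m\M_i$.

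The conclusion would then run as follows. As the homological dimension of graded $\B/\m\B$\+modules equals~$d$, the module $\M/\m\M$ has projective dimension at most~$d$, and hence its $d$\+th syzygy $\M_d/\m\M_d$ is a projective $\B/\m\B$\+module (the standard fact that $\Ext^{d+1}(\M/\m\M,{-})=0$ forces the $d$\+th syzygy to be projective). Applying Lemma~\ref{r-free-proj-inj-reduction} in the reverse direction, I deduce that the $\R$\+free graded $\B$\+module $\M_d$ is itself projective. Consequently $0\rarrow\M_d\rarrow\P_{d-1}\rarrow\dotsb\rarrow\P_0\rarrow\M\rarrow0$ is a projective resolution of length~$d$, so $\M$ has projective dimension at most~$d$; since $\M$ was arbitrary, the desired bound on the homological dimension follows.

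The only genuine subtleties I anticipate are the two structural points in the middle step: that reduction modulo~$\m$ is exact on the exact category, and that the syzygies remain $\R$\+free. Both rest entirely on the splitting built into the definition of the exact structure, together with the coincidence of projective and free $\R$\+contramodules furnished by Lemma~\ref{nakayama-proj-free}; no completeness or convergence difficulties arise, as the argument never leaves the realm of free $\R$\+contramodules and split sequences between them. Everything else is the routine dictionary between projective dimension, syzygies, and the vanishing of higher $\Ext$.
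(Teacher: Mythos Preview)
Your proof is correct and follows the same approach as the paper, which simply cites the existence of enough projectives together with Lemma~\ref{r-free-proj-inj-reduction}; you have merely spelled out the standard syzygy argument that this terse reference encodes. The care you take with the two subtleties---exactness of reduction modulo~$\m$ on the exact category and $\R$-freeness of the syzygies---is appropriate and correctly handled.
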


\begin{proof}
 This follows immediately from the observation that the exact
category of $\R$\+free graded $\B$\+modules has enough projective
or injective objects together with
Lemma~\ref{r-free-proj-inj-reduction}.
\end{proof}

\subsection{Absolute derived category of $\R$-free CDG-modules}
\label{r-free-absolute}
 Let $\U$ and $\V$ be graded $\R$\+contramodules endowed
with homogeneous $\R$\+contramodule endomorphisms (differentials)
$d_\U\:\U\rarrow\U$ and $d_\V\:\V\rarrow\V$ of degree~$1$.
 Then the graded $\R$\+contramodules $\U\ot^\R\V$ and
$\Hom^\R(\U,\V)$ are endowed with their differentials~$d$
defined by the conventional rules $d(u\ot v)=d_\U(u)\ot v +
(-1)^{|u|}u\ot d_\V(v)$ and $d(f)(u) = 
d_\V(f(u)) - (-1)^{|f|}f(d_\U(u))$.

 An \emph{odd derivation}~$d$ (of degree~1) of an $\R$\+free
graded algebra $\B$ is a differential ($\R$\+contramodule
endomorphism) such that the multiplication map $\B\ot^\R\B
\rarrow\B$ forms a commutative diagram with the differential~$d$
on $\B$ and the induced differential on $\B\ot^\R\B$.
 An \emph{odd derivation}~$d_\M$ of an $\R$\+free graded
left $\B$\+module $\M$ \emph{compatible with} the derivation~$d$
on $\B$ is a differential ($\R$\+contramodule endomorphism of
degree~$1$) such that the action map $\B\ot^\R\M\rarrow\M$
forms a commutative diagram with $d_\M$ and the differential
on $\B\ot^\R\M$ induced by $d$ and~$d_\M$.
 Odd derivations of $\R$\+free graded right $\B$\+modules
are defined similarly.
 Alternatively, one requires the action map $\M\rarrow\Hom^\R(\B,\M)$
to commute with the differentials.

 An \emph{$\R$\+free CDG\+al\-gebra} is, by the definition,
a CDG\+algebra object in the tensor category of free
$\R$\+contra\-modules; in other words, it is an $\R$\+free graded
algebra endowed with an odd derivation $d\:\B\rarrow\B$
of degree~$1$ and a curvature element $h\in\B^2$ satisfying
the conventional equations $d^2(b)=[h,b]$ for all $b\in\B$
and $d(h)=0$.
 Morphisms $\B\rarrow\A$ of $\R$\+free CDG\+algebras are defined
as pairs $(f,a)$, with $f\:\B\rarrow\A$ being a morphism of
$\R$\+free graded algebras and $a\in\A^1$, satisfying the conventional
equations (see~\cite[Section~3.1]{Pkoszul}
or~\cite[Section~1.1]{Psing}).
 
 An \emph{$\R$\+free left CDG\+module} $\M$ over $\B$ is, by
the definition, an $\R$\+free graded left $\B$\+module endowed with
an odd derivation $d_\M\:\M\rarrow\M$ of degree~$1$ compatible with
the derivation~$d$ on $\B$ and satisfying the equation
$d_\M^2(x)=hx$ for all $x\in\M$.
 The definition of an \emph{$\R$\+free right CDG\+module} $\N$ over
$\B$ is similar; the only nonobvious difference is that the equation
for the square of the differential has the form $d_\N^2(y)=-yh$
for all $y\in\N$.

 $\R$\+free left (resp., right) CDG\+modules over $\B$ naturally form
a DG\+category, which we will denote by $\B\mod\Rfr$
(resp., $\modrRfr\B$).
 In fact, these DG\+categories are enriched over the tensor
category of (complexes of) $\R$\+contramodules, so the complexes
of morphisms in $\B\mod\Rfr$ and $\modrRfr\B$ are the underlying
complexes of abelian groups for naturally defined complexes of
$\R$\+contramodules.

 The underlying graded $\R$\+contramodules of these complexes
were defined in Section~\ref{r-free-graded}.
 Given two left CDG\+modules $\L$ and $\M\in\B\mod\Rfr$,
the differential in the graded $\R$\+contramodule $\Hom_\B(\L,\M)$
is defined by the conventional formula $d(f)(x)=d_\M(f(x)) -
(-1)^{|f|}f(d_\L(x))$; one easily checks that $d^2(f)=0$.
 For any two right CDG\+modules $\K$ and $\N\in\modrRfr\B$,
the complex of $\R$\+contramodules $\Hom_{\B^\op}(\K,\N)$ is
defined in the similar way.

 Passing to the zero cohomology of the complexes of morphisms,
we construct the homotopy categories of $\R$\+free CDG\+modules
$H^0(\B\mod\Rfr)$ and $H^0(\modrRfr\B)$.
 Even though these are also naturally enriched over
$\R$\+contramodules, we will mostly consider them as conventional
categories with abelian groups of morphisms.
 Since the DG\+categories $\B\mod\Rfr$ and $\modrRfr\B$ have
shifts, twists, and infinite direct sums and products, their
homotopy categories are triangulated categories with infinite
direct sums and products.

\begin{rem}  \label{contra-enriched}
 Our reason for hesitating to work with triangulated categories
enriched over $\R$\+contramodules \emph{as} our main setting is that
we would like to be able to take the Verdier quotient
categories freely and without always worrying about the existence of
adjoint functors to the localization functor.
 The problem is that the Verdier localization acts on morphisms
as a kind of inductive limit, and inductive limits of
$\R$\+contramodules are \emph{not} well-behaved; in particular,
they do not commute with the forgetful functors to abelian groups.
 When the localization functor \emph{does} have an adjoint, though,
the inductive limits involved in the quotient category
construction are \emph{stabilizing} inductive limits, which
have all the good properties, of course.

 So we generally consider triangulated categories with abelian
groups of morphisms; alternatively, our triangulated categories
can be viewed as being \emph{$\R$\+linear}, i.~e., having
(abstract, nontopological) $\R$\+modules of morphisms.
 The triangulated categories that we are most interested in will
be eventually presented as appropriate resolution subcategories
of the homotopy categories and consequently endowed with
DG\+enhancements; these will be naturally enriched over (complexes
of) $\R$\+contramodules, and in fact, even \emph{free}
$\R$\+contramodules.
 In our notation, these will be spoken of as the ``derived
functors $\Ext$'' defined on our triangulated categories and
taking values in the homotopy category of free $\R$\+contramodules.
\end{rem}

 The tensor product $\N\ot_\B\M$ of a right $\R$\+free
CDG\+module $\N$ and a left $\R$\+free CDG\+module $\M$ over $\B$
is a complex of $\R$\+contramodules obtained by endowing
the graded $\R$\+contramodule $\N\ot_\B\M$ constructed in
Section~\ref{r-free-graded} with the conventional differential
$d(y\ot x)=d_\N(y)\ot x + (-1)^{|y|}y\ot d_\M(x)$.
 The tensor product of $\R$\+free CDG\+modules over $\B$ is
a triangulated functor of two arguments
$$
 \ot_\B\:H^0(\modrRfr\B)\times H^0(\B\mod\Rfr)\lrarrow
 H^0(\R\contra),
$$
where $H^0(\R\contra)$ denotes, by an abuse of notation,
the homotopy category of complexes of $\R$\+contramodules.

 An $\R$\+free left CDG\+module over $\B$ is said to be
\emph{absolutely acyclic} if it belongs to the minimal thick
subcategory of the homotopy category $H^0(\B\mod\Rfr)$ containing
the totalizations of short exact sequences of $\R$\+free
CDG\+modules over~$\B$.
 The quotient category of $H^0(\B\mod\Rfr)$ by the thick
subcategory of absolutely acyclic $\R$\+free CDG\+modules is called
the \emph{absolute derived category} of $\R$\+free left
CDG\+modules over~$\B$ and denoted by $\sD^\abs(\B\mod\Rfr)$.
 The absolute derived category of $\R$\+free right CDG\+modules
over $\B$, denoted by $\sD^\abs(\modrRfr\B)$, is defined
similarly (see~\cite[Sections~1.2 and~3.3]{Pkoszul} or
\cite[Section~3.2]{PP2}).

 An $\R$\+free left CDG\+module over $\B$ is said to be
\emph{contraacyclic} if it belongs to the minimal triangulated
subcategory of the homotopy category $H^0(\B\mod\Rfr)$
containing the totalizations of short exact sequences of
$\R$\+free CDG\+modules over $\B$ and closed under infinite
products.
 The quotient cateogry of $H^0(\B\mod\Rfr)$ by the thick
subcategory of contraacyclic $\R$\+free CDG\+modules is called
the \emph{contraderived category} of $\R$\+free left
CDG\+modules over $\B$ and denoted by $\sD^\ctr(\B\mod\Rfr)$.
 The (similarly defined) contraderived category of $\R$\+free
right CDG\+modules over~$\B$ is denoted by $\sD^\ctr(\modrRfr\B)$.

 An $\R$\+free left CDG\+module over $\B$ is said to be
\emph{coacyclic} if it belongs to the minimal triangulated
subcategory of the homotopy category $H^0(\B\mod\Rfr)$
containing the totalizations of short exact sequences of
$\R$\+free CDG\+modules over $\B$ and closed under infinite
direct sums.
 The quotient category of $H^0(\B\mod\Rfr)$ by the thick
subcategory of coacyclic $\R$\+free CDG\+modules is called
the \emph{coderived category} of $\R$\+free left
CDG\+modules over $\B$ and denoted by $\sD^\co(\B\mod\Rfr)$.
 The coderived category of $\R$\+free right CDG\+modules over~$\B$
is denoted by $\sD^\co(\modrRfr\B)$.

 Denote by $\B\mod\Rfr_\proj\subset\B\mod\Rfr$ the full
DG\+subcategory formed by all the $\R$\+free CDG\+modules
over $\B$ whose underlying $\R$\+free graded $\B$\+modules
are projective.
 Similarly, let $\B\mod\Rfr_\inj\subset\B\mod\Rfr$ be the full
DG\+subcategory formed by all the $\R$\+free CDG\+modules
over $\B$ whose underlying $\R$\+free graded $\B$\+modules
are injective.
 The corresponding homotopy categories are denoted by
$H^0(\B\mod\Rfr_\proj)$ and $H^0(\B\mod\Rfr_\inj)$, respectively.

\begin{lem} \label{contractible-reduction}
\textup{(a)} Let\/ $\P$ be a CDG\+module from\/ $\B\mod\Rfr_\proj$.
 Then\/ $\P$ is contractible (i.~e., represents a zero object in
$H^0(\B\mod\Rfr)$) if and only if the CDG\+module\/ $\P/\m\P$
over the CDG\+algebra $\B/\m\B$ over~$k$ is contractible. \par
\textup{(b)} Let\/ $\J$ be a CDG\+module from\/ $\B\mod\Rfr_\inj$.
 Then\/ $\J$ is contractible if and only if the CDG\+module\/
$\J/\m\J$ over the CDG\+algebra $\B/\m\B$ is contractible.
\end{lem}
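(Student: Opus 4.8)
The plan is to transport the question to the endomorphism complex of $\P$ and then apply the contramodule form of Nakayama's lemma, imitating the proof of Lemma~\ref{nakayama-acycl-contract}. Consider the complex of $\R$\+contramodules $E=\Hom_\B(\P,\P)$ with its differential $D(f)=d_\P\.f-(-1)^{|f|}f\.d_\P$; this $D$ squares to zero because the curvature contributions from the source and the target cancel, namely $d_\P^2$ is multiplication by the curvature element $h\in\B^2$ and every $\B$\+module homomorphism $f$ commutes with the $\B$\+action, so $d_\P^2\.f=f\.d_\P^2$. Because $\P$ is projective, Lemma~\ref{r-free-hom-reduction}(a) guarantees that each term of $E$ is a \emph{free} $\R$\+contramodule and that reduction modulo~$\m$ yields an isomorphism of graded $k$\+vector spaces $E/\m E\simeq\Hom_{\B/\m\B}(\P/\m\P,\P/\m\P)$. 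The reformulation I will use is that $\P$ is contractible precisely when $\id_\P$ is a coboundary in $E$, and likewise $\P/\m\P$ is contractible over $\B/\m\B$ precisely when $\id_{\P/\m\P}$ is a coboundary in $E/\m E$.

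The ``only if'' direction is immediate: a contracting homotopy is an element $s\in E^{-1}$ with $D(s)=\id_\P$, and its image $\bar s\in(E/\m E)^{-1}$ satisfies $D(\bar s)=\id_{\P/\m\P}$, since reduction modulo~$\m$ is a functor compatible with the $\Hom$\+differentials.

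For the ``if'' direction I would start from a contracting homotopy $\bar s$ of $\P/\m\P$ and lift it to a degree $-1$ endomorphism $s\in E^{-1}$ of $\R$\+free graded $\B$\+modules; such a lifting exists simply because the reduction map $E\rarrow E/\m E$ onto the quotient of Lemma~\ref{r-free-hom-reduction}(a) is surjective. Then $g=D(s)=d_\P\.s+s\.d_\P$ is a degree-zero morphism with $D(g)=D^2(s)=0$, i.e.\ a \emph{closed} endomorphism of the CDG\+module $\P$, whose reduction modulo~$\m$ equals $D(\bar s)=\id_{\P/\m\P}$. By the final step in the proof of Lemma~\ref{nakayama-acycl-contract} (a morphism of free $\R$\+contramodules inducing an isomorphism modulo~$\m$ is itself an isomorphism), $g$ is an isomorphism of the underlying $\R$\+free graded $\B$\+modules; being also closed, it is an isomorphism in the DG\+category $\B\mod\Rfr$, and differentiating $\id_\P=g\.g^{-1}$ shows its inverse $g^{-1}$ is closed as well. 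Consequently $t=g^{-1}\.s$ satisfies $D(t)=g^{-1}\.D(s)=g^{-1}g=\id_\P$, so $t$ is a contracting homotopy and $\P$ is contractible.

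Part~(b) is proved in the same way, with Lemma~\ref{r-free-hom-reduction}(b) (applied with $\J$ as both the arbitrary first argument and the injective second argument) guaranteeing that $\Hom_\B(\J,\J)$ is again a complex of free $\R$\+contramodules reducing correctly modulo~$\m$. The one point I would verify with care is that the isomorphism of Lemma~\ref{r-free-hom-reduction}, recorded there only for the underlying graded $\R$\+contramodules, does intertwine the two $\Hom$\+differentials, so that it is an isomorphism of \emph{complexes} and hence transports coboundaries to coboundaries; granting this naturality, the remainder of the argument is purely formal.
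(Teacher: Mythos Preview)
Your argument is correct and follows essentially the same route as the paper: pass to the endomorphism complex $\Hom_\B(\P,\P)$, use Lemma~\ref{r-free-hom-reduction} to identify its reduction modulo~$\m$, and apply a Nakayama-type argument. The only difference is that the paper invokes Lemma~\ref{nakayama-acycl-contract} as a black box (contractibility of $\P/\m\P$ makes the reduced endomorphism DG\+algebra acyclic since its unit class vanishes, hence the original complex of free $\R$\+contramodules is contractible, hence acyclic, hence $\id_\P$ is a coboundary), whereas you re-run that lemma's proof inline by lifting the contracting homotopy and inverting $g=D(s)$.
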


\begin{proof}
 According to Lemma~\ref{r-free-hom-reduction}(a\+b), we have
$\R$\+free DG\+algebras $\Hom_\B(\P,\P)$ and $\Hom_\B(\J,\J)$
whose reductions modulo~$\m$ are naturally isomorphic to
the DG\+algebras $\Hom_{\B/\m\B}(\P/\m\P\;\P/\m\P)$ and
$\Hom_{\B/\m\B}(\J/\m\J\;\J/\m\J)$.
 If, e.~g., the CDG\+module $\P/\m\P$ over $\B/\m\B$ is
contractible, then the DG\+algebra $\Hom_{\B/\m\B}(\P/\m\P\;\P/\m\P)$
over~$k$ is acyclic, and it follows by virtue of
Lemma~\ref{nakayama-acycl-contract} that the $\R$\+free
DG\+algebra $\Hom_\B(\P,\P)$ is contractible as a complex of
$\R$\+contramodules.
 Hence, in particular, the complex $\Hom_\B(\P,\P)$ is acyclic
and $\P$ is a contractible CDG\+module over~$\B$.
\end{proof}

\begin{thm}  \label{r-free-orthogonality}
 Let\/ $\B$ be an\/ $\R$\+free CDG\+algebra.  Then \par
\textup{(a)} for any CDG\+module\/ $\P\in H^0(\B\mod\Rfr_\proj)$
and any contraacyclic\/ $\R$\+free left CDG\+module $\M$ over $\B$,
the complex of\/ $\R$\+contramodules\/ $\Hom_\B(\P,\M)$ is
contractible; \par
\textup{(b)} for any coacyclic $\R$\+free left CDG\+module $\L$
over $\B$ and any CDG\+module $\J\in H^0(\B\mod\Rfr_\inj)$,
the complex of\/ $\R$\+contramodules\/ $\Hom_\B(\L,\J)$
is contractible.
\end{thm}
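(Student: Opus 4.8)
The plan is to prove both parts by the standard ``orthogonal class'' argument. For part~(a) I would fix the projective module $\P$ and consider the full subcategory $\mathcal E\subset H^0(\B\mod\Rfr)$ consisting of those $\M$ for which the complex $\Hom_\B(\P,\M)$ is contractible; the goal is to show that $\mathcal E$ is a triangulated subcategory closed under infinite products and containing the totalizations of all short exact sequences of $\R$\+free CDG\+modules over $\B$. Since the contraacyclic modules form, by definition, the \emph{minimal} such subcategory, it will follow that $\Hom_\B(\P,\M)$ is contractible for every contraacyclic $\M$. Part~(b) will be handled by the dual argument, fixing the injective module $\J$ and using the contravariant functor $\Hom_\B({-},\J)$ in place of $\Hom_\B(\P,{-})$.

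To see that $\mathcal E$ is a thick triangulated subcategory, I would invoke that $\Hom_\B(\P,{-})\:H^0(\B\mod\Rfr)\rarrow H^0(\R\contra)$ is a triangulated functor, so that $\mathcal E$ is precisely the preimage of the zero object. Closure under infinite products follows from the fact that the internal $\Hom$ carries infinite products in its second argument to infinite products of $\R$\+contramodules (Section~\ref{hom-operations}), together with the observation that an infinite product of contractible complexes is again contractible. Dually, for part~(b) the functor $\Hom_\B({-},\J)$ turns infinite direct sums in its argument into infinite products of $\R$\+contramodules, so the corresponding subcategory is closed under infinite direct sums, as required for coacyclicity.

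The remaining and most delicate point is that $\mathcal E$ contains the totalization of every short exact sequence $0\rarrow\K\rarrow\L\rarrow\M\rarrow0$ of $\R$\+free CDG\+modules over $\B$. Here I would use that $\P$ is projective in the exact category of $\R$\+free graded $\B$\+modules, so that $\Hom_\B(\P,{-})$ is exact and yields a short exact sequence of complexes of $\R$\+contramodules $0\rarrow\Hom_\B(\P,\K)\rarrow\Hom_\B(\P,\L)\rarrow\Hom_\B(\P,\M)\rarrow0$. By Lemma~\ref{r-free-hom-reduction}(a) the terms are \emph{free} graded $\R$\+contramodules, so in each degree the surjection onto the free (hence projective) contramodule $\Hom_\B(\P,\M)$ splits, and the sequence is split exact degreewise. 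Because $\Hom_\B(\P,{-})$ is additive and commutes with the finite direct sums and shifts that enter the totalization, it carries the totalization of the original sequence to the totalization of this degreewise split short exact sequence of complexes. The latter totalization is contractible, since the total complex of a degreewise split short exact sequence of complexes is null\+homotopic (it is the cone of the natural homotopy equivalence from $\cone(\K\rarrow\L)$ onto $\M$). For part~(b) the same conclusion is reached via Lemma~\ref{r-free-hom-reduction}(b), which guarantees freeness of $\Hom_\B(\L,\J)$ for injective $\J$.

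I expect the main obstacle to be purely bookkeeping: pinning down the sign conventions so that $\Hom_\B(\P,{-})$ (respectively $\Hom_\B({-},\J)$) genuinely commutes with the formation of the totalization, and verifying cleanly that a degreewise split short exact sequence of complexes of $\R$\+contramodules totalizes to a contractible complex. Once these routine compatibilities are in place, the minimality of the contraacyclic (respectively coacyclic) subcategory closes the argument with no further input.
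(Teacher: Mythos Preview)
Your proposal is correct and follows essentially the same approach as the paper: fix the projective (resp.\ injective) module, use Lemma~\ref{r-free-hom-reduction}(a\+b) to ensure the $\Hom$ complexes consist of free $\R$\+contramodules so that short exact sequences map to degreewise-split short exact sequences with contractible totalizations, and then close up under cones and infinite products (resp.\ direct sums). The paper's proof is terser but makes exactly the same three observations.
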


\begin{proof}
 By the first assertions of Lemma~\ref{r-free-hom-reduction}(a\+b),
the complexes $\Hom_\B(\P,\N)$ and $\Hom_\B(\K,\J)$ are complexes
of free $\R$\+contramodules for any $\R$\+free CDG\+module $\N$
or $\K$ over~$\B$.
 To any short exact sequence of $\R$\+free CDG\+modules $\N$ or $\K$,
short exact sequences of such complexes of $\R$\+contramodules
$\Hom_\B$ correspond; and the totalization of a short exact 
sequence of complexes of free $\R$\+contramodules is a contractible
complex of $\R$\+contramodules.

 Finally, to infinite products of the CDG\+modules $\N$ or
infinite direct sums of the CDG\+modules $\K$, our functors
$\Hom_\B$ assign infinite products of complexes of
$\R$\+contramodules, and to cones of morphisms of the CDG\+modules
$\N$ or $\K$ they assign the (co)cones of morphisms of complexes
of $\R$\+contramodules.
 The class of contractible complexes of $\R$\+contramodules is
closed under both operations.
 (Cf.~\cite[Theorem~3.5 and Remark~3.5]{Pkoszul}, and
Theorem~\ref{non-adj-orthogonality} below.)
\end{proof}

\begin{thm} \label{r-free-absolute-derived}
 Let\/ $\B$ be an\/ $\R$\+free CDG\+algebra.
 Assume that the exact category of\/ $\R$\+free graded left\/
$\B$\+modules has finite homological dimension.
 Then the compositions of natural functors
$$
 H^0(\B\mod\Rfr_\proj)\lrarrow H^0(\B\mod\Rfr)\lrarrow
 \sD^\abs(\B\mod\Rfr)
$$
and
$$
 H^0(\B\mod\Rfr_\inj)\lrarrow H^0(\B\mod\Rfr)\lrarrow
\sD^\abs(\B\mod\Rfr)
$$
are equivalences of triangulated categories.
\end{thm}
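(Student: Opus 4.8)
The plan is to show that each of the two functors is fully faithful and essentially surjective, carrying out the projective case in detail; the injective case will be strictly dual.

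\emph{Full faithfulness} I would deduce from the orthogonality of Theorem~\ref{r-free-orthogonality}. An absolutely acyclic module lies in the minimal \emph{thick} subcategory generated by totalizations of short exact sequences, so it is in particular contraacyclic; hence for any $\P\in H^0(\B\mod\Rfr_\proj)$ and any absolutely acyclic $\M$, part~(a) of Theorem~\ref{r-free-orthogonality} shows that the complex $\Hom_\B(\P,\M)$ is contractible, whence $\Hom_{H^0(\B\mod\Rfr)}(\P,\M)=0$. By the usual calculus of fractions in a Verdier quotient, this left orthogonality forces the localization map $\Hom_{H^0(\B\mod\Rfr)}(\P,\M)\rarrow\Hom_{\sD^\abs(\B\mod\Rfr)}(\P,\M)$ to be an isomorphism for every projective-underlying $\P$ and arbitrary $\M$; specializing $\M$ to a second projective-underlying module yields full faithfulness of $H^0(\B\mod\Rfr_\proj)\rarrow\sD^\abs(\B\mod\Rfr)$. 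The injective case is identical, using part~(b) of Theorem~\ref{r-free-orthogonality} and the dual right-orthogonality argument.

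For \emph{essential surjectivity} it remains to produce, for each $\R$\+free CDG\+module $\M$, a CDG\+module $\P$ with projective underlying graded $\B$\+module together with an isomorphism $\P\simeq\M$ in $\sD^\abs(\B\mod\Rfr)$. I would obtain this from a \emph{finite} exact sequence $0\rarrow\P_d\rarrow\dotsb\rarrow\P_0\rarrow\M\rarrow0$ of $\R$\+free CDG\+modules whose terms $\P_i$ all have projective underlying graded modules and which is split exact as a sequence of graded $\R$\+contramodules. Granting such a sequence, breaking it into short exact sequences exhibits its totalization as an iterated cone of totalizations of short exact sequences, hence as an absolutely acyclic module; and since a finite direct sum of projective-underlying modules is again projective-underlying, the totalization $\P=\mathrm{Tot}(\P_d\rarrow\dotsb\rarrow\P_0)$ lies in $\B\mod\Rfr_\proj$ and becomes isomorphic to $\M$ in $\sD^\abs(\B\mod\Rfr)$.

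Constructing this finite resolution is where the finite-homological-dimension hypothesis is used, and is the crux of the argument. Since the exact category of $\R$\+free graded $\B$\+modules has enough projectives and homological dimension at most~$d$, the underlying graded $\B$\+module of $\M$ admits a projective resolution of length $\le d$, so that after $d$ steps the syzygy module is automatically projective. The difficulty, which I expect to be the main obstacle, is to upgrade such a \emph{graded} projective resolution to a complex of \emph{CDG}\+modules with closed differentials mapping to $\M$: because the curvature enters the square of the differential, a projective graded $\B$\+module carries no canonical CDG\+module structure, so the resolution differentials must be corrected by higher, Maurer--Cartan-type components, following the bar-construction method of \cite[Section~3.3]{Pkoszul}. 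The role of finite homological dimension is precisely to truncate this construction to finite length: the infinite product-totalized bar resolution, which serves the contraderived category, is \emph{not} projective-underlying in general (tensoring with $\B$ does not commute with the infinite products), whereas the truncated resolution involves only finite direct sums and hence stays within $\B\mod\Rfr_\proj$, with cone over $\M$ absolutely rather than merely contraacyclic.

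Dually, using that the same exact category has enough injectives and homological dimension $\le d$ — together with the reduction-modulo-$\m$ bookkeeping of Lemmas~\ref{contractible-reduction} and~\ref{r-free-proj-inj-reduction} and Corollary~\ref{r-free-homol-dim} relating (co)freeness, contractibility, and homological dimension to reduction mod~$\m$ — I would coresolve $\M$ by a finite exact sequence $0\rarrow\M\rarrow\J^0\rarrow\dotsb\rarrow\J^d\rarrow0$ of injective-underlying CDG\+modules and totalize, identifying $\M$ with an object of $H^0(\B\mod\Rfr_\inj)$ in $\sD^\abs(\B\mod\Rfr)$. Combined with the full faithfulness established above, this shows that both functors are equivalences of triangulated categories.
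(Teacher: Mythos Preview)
Your overall architecture---full faithfulness from the orthogonality of Theorem~\ref{r-free-orthogonality}, essential surjectivity from a finite projective (resp.\ injective) resolution whose totalization is absolutely acyclic---is exactly the shape of the paper's argument, which follows \cite[Theorem~3.6 and Remark~3.6]{Pkoszul}.  But you have misidentified, and substantially overcomplicated, the one technical step that actually needs work.

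The difficulty you flag---that a projective graded $\B$\+module carries no canonical CDG\+module structure, so the resolution differentials must be corrected by ``Maurer--Cartan-type components'' via a bar-construction---is dissolved by a much simpler and entirely explicit device: the CDG\+module $G^+(\L)$ \emph{freely generated} by an $\R$\+free graded $\B$\+module~$\L$.  One sets $G^+(\L)^i=\L^i\oplus\L^{i-1}$, writes elements formally as $x+dy$, and defines the $\B$\+action and the differential by forcing the CDG\+module axioms; the formulas are in \cite[proof of Theorem~3.6]{Pkoszul}, and the paper's only contribution here is to observe that they go through verbatim for free $\R$\+contramodules.  Then $G^+(\L)$ has underlying graded $\B$\+module $\L\oplus\L[-1]$ (projective if $\L$ is), and every graded $\B$\+module map $\L\rarrow\M$ into a CDG\+module $\M$ extends uniquely to a \emph{closed} CDG\+module morphism $G^+(\L)\rarrow\M$.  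So the resolution builds itself with no higher corrections at all: choose an admissible projective epimorphism $\L_0\rarrow\M$ in the exact category of $\R$\+free graded $\B$\+modules, extend it to a closed surjection $G^+(\L_0)\rarrow\M$, and iterate on the (automatically $\R$\+free) kernel.  After finitely many steps the syzygy has projective underlying graded module by the homological-dimension hypothesis, and you stop; the finite totalization then gives the absolutely acyclic cone you want.  Your proposed route through a truncated bar resolution is not obviously workable---truncating the bar complex at stage~$d$ does not produce a projective last term just because the graded category has dimension~$d$---whereas the $G^+$ iteration is tailored so that the graded syzygies of $\M$ appear directly.  No reduction-modulo-$\m$ bookkeeping is needed for this theorem; the dual construction $G^-$ handles the injective case.
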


\begin{proof}
 The proof is completely analogous to that in the case of
a CDG\+algebra over a field or a CDG\+ring
\cite[Theorem~3.6]{Pkoszul}, and is a particular case
of the general scheme of~\cite[Remark~3.6]{Pkoszul}.
 The only aspect of these arguments that needs some additional
comments here is the constructions of the $\R$\+free CDG\+module
$G^+(\L)$ over $\B$ freely generated by an $\R$\+free graded
$\B$\+module $\L$ and the $\R$\+free CDG\+module $G^-(\L)$
cofreely cogenerated by~$\L$.

 The free graded $\R$\+contramodule $G^+(\L)$ is defined by
the rule $G^+(\L)^i=\L^i\oplus\L^{i-1}$, the elements of
$G^+(\L)^i$ are denoted formally by $x+dy$, where $x\in\L^i$
and $y\in\L^{i-1}$, and the differential and the left action of $\B$
in $G^+(\L)$ are defined by the formulas from~\cite[proof of
Theorem~3.6]{Pkoszul} (expressing the CDG\+module axioms).
 Similarly, one sets $G^-(\L)^i=\L^{i+1}\oplus\L^i$ as a graded
$\R$\+contramodule, and defines the differential and
the action of $\B$ in $G^-(\L)$ by the formulas from~\cite{Pkoszul}.
\end{proof}

\begin{cor} \label{r-free-abs-acycl-reduction}
 Let\/ $\B$ be an\/ $\R$\+free CDG\+algebra.
 Assume that the exact category of\/ $\R$\+free graded left\/
$\B$\+modules has finite homological dimension.
 Then an\/ $\R$\+free left CDG\+module\/ $\M$ over\/ $\B$ is
absolutely acyclic if and only if the CDG\+module\/
$\M/\m\M$ over the CDG\+algebra\/ $\B/\m\B$ over the field~$k$
is absolutely acyclic.
\end{cor}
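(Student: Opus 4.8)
The plan is to reduce the equivalence, in both directions, to a contractibility criterion for projective CDG\+modules, using the equivalences of Theorem~\ref{r-free-absolute-derived} together with Lemma~\ref{contractible-reduction}. I would first dispose of the ``only if'' direction, which needs no hypothesis on the homological dimension. The functor of reduction modulo~$\m$, taking an $\R$\+free left CDG\+module $\M$ over $\B$ to the CDG\+module $\M/\m\M$ over $\B/\m\B$, is additive and carries the admissible short exact sequences of $\R$\+free CDG\+modules (those split exact as sequences of free graded $\R$\+contramodules) to short exact sequences of CDG\+modules over $\B/\m\B$. Since it also commutes with the finite operation of totalizing a short exact sequence, as well as with cones, shifts, and passage to direct summands, it takes absolutely acyclic $\R$\+free CDG\+modules to absolutely acyclic CDG\+modules over $\B/\m\B$, and so descends to a triangulated functor $\sD^\abs(\B\mod\Rfr)\rarrow\sD^\abs(\B/\m\B\mod)$. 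In particular, if $\M$ is absolutely acyclic then so is $\M/\m\M$.

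For the ``if'' direction I would invoke the finite homological dimension hypothesis. By Theorem~\ref{r-free-absolute-derived} the composite $H^0(\B\mod\Rfr_\proj)\rarrow\sD^\abs(\B\mod\Rfr)$ is an equivalence, so one may choose $\P\in\B\mod\Rfr_\proj$ with $\P\simeq\M$ in $\sD^\abs(\B\mod\Rfr)$. Applying the descended reduction functor yields $\P/\m\P\simeq\M/\m\M$ in $\sD^\abs(\B/\m\B\mod)$, so $\P/\m\P$ is absolutely acyclic by the hypothesis on~$\M/\m\M$. Now $\P/\m\P$ is a \emph{projective} CDG\+module over $\B/\m\B$ by Lemma~\ref{r-free-proj-inj-reduction}. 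Applying the orthogonality of Theorem~\ref{r-free-orthogonality}(a) with $\R$ replaced by the field~$k$ (so that $\B/\m\B$ is a $k$\+free CDG\+algebra and the $k$\+free contramodules are just $k$\+vector spaces), and using that absolutely acyclic CDG\+modules are contraacyclic, one finds that the complex $\Hom_{\B/\m\B}(\P/\m\P,\P/\m\P)$ is acyclic; hence its degree\+zero cocycle $\id_{\P/\m\P}$ is a coboundary, i.e.\ $\P/\m\P$ is contractible. By Lemma~\ref{contractible-reduction}(a) used in the reverse direction, $\P$ is then contractible, whence $\M\simeq\P\simeq0$ in $\sD^\abs(\B\mod\Rfr)$, that is, $\M$ is absolutely acyclic.

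The only step that is not a formal consequence of the cited results, and the one I expect to be the crux, is the passage from ``$\P/\m\P$ absolutely acyclic'' to ``$\P/\m\P$ contractible'' over the field~$k$: it is essential here that $\P/\m\P$ be projective, so that the self\+orthogonality of projective CDG\+modules forces $\id_{\P/\m\P}$ to be null\+homotopic. The finite homological dimension assumption, by contrast, is used only once, in producing the projective representative $\P$ via Theorem~\ref{r-free-absolute-derived}, and it plays no role in the ``only if'' direction.
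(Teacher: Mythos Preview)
Your proof is correct and is exactly the alternative argument the paper sketches in its last sentence (``Alternatively, one can use Theorem~\ref{r-free-absolute-derived} together with Lemma~\ref{contractible-reduction}''): replace $\M$ by a projective representative $\P$, reduce, use orthogonality over~$k$ to see $\P/\m\P$ is contractible, and lift via Lemma~\ref{contractible-reduction}(a). The paper's primary argument is a minor variant: rather than replacing $\M$, it uses the semiorthogonal decomposition from Theorems~\ref{r-free-orthogonality}--\ref{r-free-absolute-derived} to reduce to showing $\Hom_\B(\P,\M)$ is acyclic for every $\P\in\B\mod\Rfr_\proj$, then applies Lemma~\ref{r-free-hom-reduction}(a) and Lemma~\ref{nakayama-acycl-contract} directly to that complex of free $\R$\+contramodules.
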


\begin{proof}
 The ``only if'' part is clear, because the functor of reduction
modulo~$\m$ preserves exact triples of $\R$\+free CDG\+modules
over~$\B$.
 To prove ``if'', notice that by
Theorems~\ref{r-free-orthogonality}\+-\ref{r-free-absolute-derived},
$\R$\+free CDG\+modules over $\B$ with projective underlying graded
$\B$\+modules and absolutely acyclic $\R$\+free CDG\+modules over
$\B$ form a semiorthogonal decomposition of the homotopy category of
$\R$\+free CDG\+modules $H^0(\B\mod\Rfr)$.
 So it suffices to show that for any CDG\+module
$\P\in H^0(\B\mod\Rfr_\proj)$ the complex $\Hom_\B(\P,\M)$
is acyclic (as a complex of abelian groups).

 By Lemma~\ref{r-free-hom-reduction}(a), this complex is
the underlying complex of abelian groups to a complex of
free $\R$\+contramodules, and reducing this complex of
$\R$\+contramodules modulo~$\m$ one obtains the complex of
$k$\+vector spaces $\Hom_{\B/\m\B}(\P/\m\P\;\M/\m\M)$.
 Since the latter complex is acyclic, the desired assertion
follows from Lemma~\ref{nakayama-acycl-contract}.
 Alternatively, one can use Theorem~\ref{r-free-absolute-derived}
together with Lemma~\ref{contractible-reduction}.
\end{proof}

 The following theorem is to be compared with
Theorem~\ref{r-free-co-derived-thm} below.

\begin{thm} \label{r-free-star-conditions}
 Let\/ $\B$ be an\/ $\R$\+free CDG\+algebra.
 Then the composition of natural functors
$$
 H^0(\B\mod\Rfr_\proj)\lrarrow H^0(\B\mod\Rfr)\lrarrow
 \sD^\ctr(\B\mod\Rfr)
$$
is an equivalence of triangulated categories whenever the graded
$k$\+algebra\/ $\B/\m\B$ satisfies the condition~\textup{(${*}{*}$)}
from \textup{\cite[\emph{Section}~3.8]{Pkoszul}}.
 The composition of natural functors
$$
 H^0(\B\mod\Rfr_\inj)\lrarrow H^0(\B\mod\Rfr)\lrarrow
 \sD^\co(\B\mod\Rfr)
$$
is an equivalence of triangulated categories whenever the graded
$k$\+algebra\/ $\B/\m\B$ satisfies the condition~\textup{(${*}$)}
from \textup{\cite[\emph{Section}~3.7]{Pkoszul}}.
\end{thm}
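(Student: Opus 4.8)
The plan is to realize each of the two functors as the inclusion of one half of a semiorthogonal decomposition of the homotopy category, following the field-coefficient arguments of \cite[Sections~3.7--3.8]{Pkoszul} but with the reduction-modulo-$\m$ lemmas of the present section taking over the role played there by the ground field. For the contraderived statement the relevant pair is $\bigl(H^0(\B\mod\Rfr_\proj),\,\text{contraacyclic}\bigr)$: the orthogonality $\Hom_{H^0}(\P,\M)=0$ for $\P\in H^0(\B\mod\Rfr_\proj)$ and $\M$ contraacyclic is already supplied by Theorem~\ref{r-free-orthogonality}(a) (pass to $H^0$ of the contractible complex $\Hom_\B(\P,\M)$, noting that shifts of contraacyclic modules are contraacyclic). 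Granting this, the standard argument for such a decomposition reduces the whole assertion to a single point: every $\R$\+free CDG\+module $\M$ must fit into a triangle $\P\rarrow\M\rarrow\C\rarrow\P[1]$ in $H^0(\B\mod\Rfr)$ with $\P$ projective as a graded $\B$\+module and $\C$ contraacyclic. Full faithfulness and essential surjectivity of the functor then follow formally from this one orthogonality together with the existence of such resolutions, so the entire content is the construction of the resolution.

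To build it I would transport the construction of \cite[proof of Theorem~3.8]{Pkoszul} into the exact category of $\R$\+free graded $\B$\+modules. One chooses a resolution of the underlying graded module of $\M$ by projective $\R$\+free graded $\B$\+modules with $\R$\+free syzygies (there are enough projectives, see Section~\ref{r-free-graded}, and admissible short exact sequences are those split over $\R$\+contramodules), extends the differential $d_\M$ along it, and assembles the result into a single CDG\+module $\P$ by totalizing along the diagonals by infinite \emph{products}, the augmentation giving the closed morphism $\P\rarrow\M$. The free CDG\+modules $G^+$ appearing in the proof of Theorem~\ref{r-free-absolute-derived} are the natural building blocks. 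By construction $\C=\cone(\P\rarrow\M)$ is the product-totalization of the admissible short exact sequences comprising the resolution; it therefore lies in the triangulated subcategory of $H^0(\B\mod\Rfr)$ generated by totalizations of short exact sequences of $\R$\+free CDG\+modules and closed under infinite products, i.e.\ $\C$ is contraacyclic \emph{by its very form}, independently of any hypothesis on~$\B$.

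The role of condition~\textup{(${*}{*}$)} is solely to guarantee that the product-totalization $\P$ is projective, rather than merely flat, as a graded $\B$\+module — which is precisely its function over a field in \cite[Section~3.8]{Pkoszul}. Here the reduction lemmas do the transfer: condition~\textup{(${*}{*}$)} imposed on $\B/\m\B$ carries over verbatim to the exact category of $\R$\+free graded $\B$\+modules, because reduction modulo~$\m$ sends an infinite product of $\R$\+free projective graded $\B$\+modules to the product of their reductions (Lemma~\ref{nakayama-reduct-products}, applied in each grading) while detecting projectivity (Lemma~\ref{r-free-proj-inj-reduction}). Thus $\P$ is projective-graded, $\P\in H^0(\B\mod\Rfr_\proj)$, and the triangle above is the desired resolution. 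The hard part, deserving the most care, is exactly this faithful reproduction of the construction over $\R$: one must verify that the extension of $d_\M$ through the graded-projective resolution can be carried out convergently in the $\m$\+adic sense, using projectivity and the completeness of the free $\Hom$\+contramodules of Lemma~\ref{r-free-hom-reduction}(a), and that the resulting twisted total differential squares to the curvature action, so that $\P$ is a genuine $\R$\+free CDG\+module and $\P\rarrow\M$ a genuine closed morphism.

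The coderived statement is handled by the symmetric scheme, which I would carry out dually: replace $H^0(\B\mod\Rfr_\proj)$ by $H^0(\B\mod\Rfr_\inj)$, projective by injective, infinite products by infinite direct sums, Theorem~\ref{r-free-orthogonality}(a) by~(b), the building blocks $G^+$ by the cofree $G^-$, and condition~\textup{(${*}{*}$)} by condition~\textup{(${*}$)}. The transfer of~\textup{(${*}$)} from $\B/\m\B$ to $\B$ again rests on Lemmas~\ref{r-free-proj-inj-reduction} and~\ref{r-free-hom-reduction}(b), now using that reduction modulo~$\m$ commutes with infinite direct sums of $\R$\+free graded modules, and the cone of the injective coresolution $\M\rarrow\J$ is coacyclic by the same structural argument.
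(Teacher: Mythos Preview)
Your approach matches the paper's: semiorthogonality from Theorem~\ref{r-free-orthogonality}, resolutions built by running the constructions of \cite[Sections~3.7--3.8]{Pkoszul} in the exact category of $\R$\+free graded $\B$\+modules, and projectivity/injectivity of the resulting graded modules detected via Lemma~\ref{r-free-proj-inj-reduction} after reduction modulo~$\m$. Your worry about carrying out the extension of $d_\M$ ``convergently in the $\m$\+adic sense'' is misplaced---the $G^\pm$ constructions and the lifts along projective/injective resolutions are purely formal operations in the exact category and need no completeness arguments; the only genuinely new input beyond \cite{Pkoszul} is exactly the reduction step you identify.
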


\begin{proof}
 The semiorthogonality being known from
Theorem~\ref{r-free-orthogonality}, it remains to show the existence
of resolutions.
 These are provided by the constructions of
\cite[Sections~3.7\+-3.8]{Pkoszul} applied to $\R$\+free
CDG\+modules over $\B$.
 The injectivity/projectivity properties
of the $\R$\+free graded $\B$\+modules so obtained follow from
Lemma~\ref{r-free-proj-inj-reduction}.
\end{proof}

 Clearly, the assertion of Corollary~\ref{r-free-abs-acycl-reduction}
holds with the absolute acyclicity property replaced with the contra-
or coacyclicity whenever the corresponding assertion of
Theorem~\ref{r-free-star-conditions} holds.

\medskip
 Let $f=(f,a)\:\B\rarrow\A$ be a morphism of $\R$\+free CDG\+algebras.
 Then any $\R$\+free graded module over $\A$ can be endowed with
a graded $\B$\+module structure via~$f$, and any homogeneous morphism
(of any degree) between graded $\A$\+modules can be also considered
as a homogeneous morphism (of the same degree) between graded
$\B$\+modules.
 With any $\R$\+free left CDG\+module $(\M,d_\M)$ over $\A$ one can
associate an $\R$\+free left CDG\+module $(\M,d'_\M)$ over $\B$ with
the modified differential $d'_\M(x)=d_\M(x)+ax$.
 Similarly, for any $\R$\+free right CDG\+module $(\N,d_\N)$ over $\A$
the formula $d'_\N(y)=d_\N(y)-(-1)^{|y|}ya$ defines a modified
differential on $\N$ making $(\N,d'_\N)$ an $\R$\+free right
CDG\+module over~$\B$.
 We have constructed the DG\+functors of restriction of scalars
$R_f\:\A\mod\Rfr\rarrow\B\mod\Rfr$ and $\modrRfr\A\rarrow
\modrRfr\B$; passing to the homotopy categories, we obtain
the triangulated functors
$$
 R_f\:H^0(\A\mod\Rfr)\lrarrow H^0(\B\mod\Rfr)
$$
and $H^0(\modrRfr\A)\rarrow H^0(\modrRfr\B)$.

\subsection{Semiderived category of $\R$-free wcDG-modules}
\label{r-free-semi}
 A \emph{weakly curved differential graded algebra}, or
\emph{wcDG\+algebra} over~$\R$ is, by the definition,
an $\R$\+free CDG\+algebra $(\A,d,h)$ with the curvature element
$h$ belonging to~$\m\A^2$.
 A \emph{morphism of wcDG\+algebras} over $\R$ (\emph{wcDG\+morphism})
$\B\rarrow\A$ is a morphism $(f,a)$ between $\B$ and $\A$
considered as $\R$\+free CDG\+algebras such that
the change-of-connection element $a$ belongs to~$\m\A^1$.
 So the reduction modulo~$\m$ of a wcDG\+algebra $\A$ over $\R$
is a DG\+algebra $\A/\m\A$ over the field~$k$, and the reduction
of a wcDG\+algebra morphism is a conventional DG\+algebra morphism.

 CDG\+modules over a wcDG\+algebra will be referred to as
\emph{wcDG\+modules}.
 All the above definitions, constructions, and notation related
to $\R$\+free CDG\+modules will be applied to the particular case
of $\R$\+free wcDG\+modules as well.

 An $\R$\+free wcDG\+module $\M$ over a wcDG\+algebra $\A$ is
said to be \emph{semiacyclic} if the complex of $k$\+vector
spaces $\M/\m\M$ (which is in fact a DG\+module over
the DG\+algebra $\A/\m\A$) is acyclic.
 In particular, it follows from Lemma~\ref{nakayama-acycl-contract}
that when $\A$ itself is an $\R$\+free DG\+algebra
(i.~e., the curvature element~$h$ of $\A$ vanishes),
an $\R$\+free CDG\+module $\M$ over $\A$ is semiacyclic if
and only if $\M$ is contractible as a complex of free
$\R$\+contramodules.

 Clearly, the property of semiacyclicity of an $\R$\+free
wcDG\+module over $\A$ is preserved by shifts, cones and
homotopy equivalences (i.~e., isomorphisms in $H^0(\A\mod\Rfr)$).
 The class of semiacyclic $\R$\+free wcDG\+modules is also
closed with respect to infinite direct sums and products.
 The quotient category of the homotopy category 
$H^0(\A\mod\Rfr)$ by the thick subcategory of semiacyclic
$\R$\+free wcDG\+modules is called the \emph{semiderived
category} of $\R$\+free left wcDG\+modules over $\A$ and
denoted by $\sD^\si(\A\mod\Rfr)$.
 The semiderived category of $\R$\+free right wcDG\+modules
$\sD^\si(\modrRfr\A)$ is defined similarly.

 Every absolutely acyclic $\R$\+free wcDG\+module, and in fact
every contraacyclic and every coacyclic $\R$\+free wcDG\+module
is semiacyclic.
 Indeed, the functor of reduction modulo~$\m$ preserves the absolute
acyclicity, contra- and coacyclicity properties, and any
contraacyclic or coacyclic DG\+module over $\A/\m\A$ is acyclic.

 Notice that any acyclic complex of free $\R$\+contramodules is
contractible when (the category of contramodules over)
$\R$ has finite homological dimension.
 In this case, we call semiacyclic $\R$\+free wcDG\+modules over
$\A$ over simply \emph{acyclic}, and refer to the semiderived
category of $\R$\+free wcDG\+modules as
their \emph{derived category}.

 An $\R$\+free wcDG\+module $\P$ over $\A$ is called
\emph{homotopy projective} if the complex $\Hom_\A(\P,\M)$
is acyclic for any semiacyclic $\R$\+free wcDG\+module
$\M$ over~$\A$.
 Similarly, an $\R$\+free wcDG\+module $\J$ over $\A$ is
called \emph{homotopy injective} if the complex 
$\Hom_\A(\M,\J)$ is acyclic for any semiacyclic $\R$\+free
wcDG\+module $\M$ over~$\A$.
 The full triangulated subcategories in $H^0(\A\mod\Rfr)$ formed
by the homotopy projective and homotopy injective $\R$\+free
wcDG\+modules are denoted by $H^0(\A\mod\Rfr)_\proj$ and
$H^0(\A\mod\Rfr)_\inj$, respectively.
 (Notice the difference with the notation for graded
projective/injective CDG\+modules introduced in
Section~\ref{r-free-absolute}.)

\begin{lem} \label{homotopy-proj-inj-reduction}
\textup{(a)}
 A wcDG\+module\/ $\P\in \A\mod\Rfr_\proj$ is homotopy
projective if and only if the DG\+module\/ $\P/\m\P$ over
the DG\+algebra\/ $\A/\m\A$ over the field~$k$ is homotopy
projective. \par
\textup{(b)}
 A wcDG\+module $\J\in \A\mod\Rfr_\inj$ is homotopy
injective if and only if the DG\+module\/ $\J/\m\J$ over
the DG\+algebra\/ $\A/\m\A$ is homotopy injective.
\end{lem}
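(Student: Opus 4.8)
The plan is to treat both parts in parallel, exploiting the duality between the projective/tensor side and the injective/Hom side; I would write out part~(a) in full and indicate the dual modifications for part~(b). Throughout, the two workhorses are Lemma~\ref{r-free-hom-reduction}, which says that for graded projective $\P$ the object $\Hom_\A(\P,\M)$ is a complex of \emph{free} $\R$\+contramodules reducing modulo~$\m$ to $\Hom_{\A/\m\A}(\P/\m\P\;\M/\m\M)$ (compatibly with the differentials, exactly as in the proof of Lemma~\ref{contractible-reduction}), together with Lemma~\ref{nakayama-acycl-contract}, by which a complex of free $\R$\+contramodules is contractible as soon as its reduction modulo~$\m$ is acyclic. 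Recall also that the forgetful functor $\R\contra\rarrow\R\mod$ is exact, so ``acyclic'' may be read in either category.

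The ``if'' implication is immediate. Suppose $\P/\m\P$ is homotopy projective over $\A/\m\A$, and let $\M$ be any semiacyclic $\R$\+free wcDG\+module over $\A$, so that $\M/\m\M$ is an acyclic DG\+module over $\A/\m\A$. Since $\P$ is graded projective, Lemma~\ref{r-free-hom-reduction}(a) presents $\Hom_\A(\P,\M)$ as a complex of free $\R$\+contramodules whose reduction is $\Hom_{\A/\m\A}(\P/\m\P\;\M/\m\M)$, and the latter is acyclic by homotopy projectivity of $\P/\m\P$. Hence $\Hom_\A(\P,\M)$ is contractible by Lemma~\ref{nakayama-acycl-contract}, in particular acyclic, so $\P$ is homotopy projective. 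Part~(b) runs identically with Lemma~\ref{r-free-hom-reduction}(b) in place of~(a), applied to $\Hom_\A(\M,\J)$ for the graded injective module $\J$.

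For the ``only if'' implication the strategy is to compare $\P$ with a semifree module. I would first construct a \emph{semifree resolution} $\phi\:\F\rarrow\P$ inside the category of $\R$\+free wcDG\+modules over $\A$: an $\R$\+free wcDG\+module $\F$ whose underlying graded $\A$\+module carries an exhausting filtration with free associated graded pieces, together with a closed degree\+zero morphism $\phi$ whose cone $\cone(\phi)$ is semiacyclic. Granting this, the conclusion is formal. Since $\F$ is semifree, $\F/\m\F$ is semifree, hence homotopy projective, over $\A/\m\A$; by the already proven ``if'' direction, $\F$ is then homotopy projective over $\A$. As $\P$ is homotopy projective by hypothesis and the homotopy projective modules form a triangulated subcategory of $H^0(\A\mod\Rfr)$, the module $\cone(\phi)$ is homotopy projective; it is also semiacyclic by construction. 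But a module that is at once homotopy projective and semiacyclic is contractible: testing the defining acyclicity of $\Hom_\A(\cone(\phi),{-})$ on the semiacyclic module $\cone(\phi)$ itself forces $\Hom_\A(\cone(\phi),\cone(\phi))$ to be acyclic, so $\id_{\cone(\phi)}$ is null\+homotopic. Now $\cone(\phi)$ lies in $\A\mod\Rfr_\proj$, being the cone of a morphism between graded projective modules, so Lemma~\ref{contractible-reduction}(a) shows that its reduction $\cone(\phi)/\m\cone(\phi)$ — which is the cone of the reduced morphism $\F/\m\F\rarrow\P/\m\P$ — is contractible over $\A/\m\A$. Equivalently that reduced morphism is a homotopy equivalence, so $\P/\m\P$ is homotopy equivalent to the homotopy projective module $\F/\m\F$ and is therefore homotopy projective.

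The main obstacle is thus the construction of $\phi\:\F\rarrow\P$, since resolutions of this sort are not yet available at this point in the paper. I would build it by the standard iterated cell\+attachment construction of semifree resolutions carried out inside the category of $\R$\+free wcDG\+modules: one repeatedly adjoins free modules $\A\ot^\R\U$ generated by free $\R$\+contramodules $\U$ so as to surject onto, and then kill, the reduced cohomology of the successive cones, assembling the differential of $\F$ and the morphism $\phi$ by $\m$\+adic convergence and the projectivity of free contramodules. By design the reduction $\F/\m\F\rarrow\P/\m\P$ is precisely a semifree resolution of $\P/\m\P$ over $\A/\m\A$, whence $\cone(\phi)/\m\cone(\phi)$ is acyclic and $\cone(\phi)$ is semiacyclic. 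Part~(b) is entirely dual: one replaces semifree by cosemifree resolutions built from cofree modules $\Hom^\R(\A,\V)$, invokes Lemma~\ref{r-free-hom-reduction}(b) and Lemma~\ref{contractible-reduction}(b), and uses that a homotopy injective semiacyclic module is contractible.
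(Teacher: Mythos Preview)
Your ``if'' argument is exactly the paper's: reduce $\Hom_\A(\P,\M)$ modulo~$\m$ via Lemma~\ref{r-free-hom-reduction} and apply Lemma~\ref{nakayama-acycl-contract}. For ``only if'', your overall logical skeleton also coincides with the paper's: produce a resolution $\F\rarrow\P$ with $\F$ graded projective and $\F/\m\F$ homotopy projective, observe that the cone is simultaneously homotopy projective and semiacyclic hence contractible, and then reduce modulo~$\m$. The paper carries this out by forward\+referencing Theorem~\ref{r-free-semi-resolutions}, whose proof writes down an explicit \emph{curved bar resolution}: the underlying graded module is $\bigoplus_{n\ge1}\A^{\ot^\R n}\ot^\R\M$ with total differential $\d+d+\delta$, where the extra summand~$\delta$ inserts the curvature element~$h$ and accounts for the failure of the ordinary bar differential to square to zero.

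The substantive divergence is your proposed construction of~$\F$ by cell attachment. Here there is a real obstacle you have not addressed: in the curved setting, $\A$ carries \emph{no} natural left wcDG\+module structure over itself (on the left regular module one has $d^2(a)=[h,a]\ne ha$), so there are no elementary ``cells'' $\A\ot^\R\U$ available as wcDG\+modules to attach. The paper emphasizes exactly this phenomenon in the introduction. Your remark about assembling the differential ``by $\m$\+adic convergence'' hints at a deformation\+theoretic lift of a semifree resolution of $\P/\m\P$ over $\A/\m\A$, correcting the differential order by order in~$\m$; such a construction may well be feasible, but it is genuinely nontrivial (one must set up and solve an infinite sequence of obstruction problems and verify convergence in the contramodule sense), and you give no details. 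The bar construction sidesteps this entirely: it is written down in closed form, with the curvature handled once and for all by the~$\delta$ term, and its reduction modulo~$\m$ is visibly the classical bar resolution. So your framework is sound, but the resolution you need is precisely the hard part, and the paper's route is both more explicit and avoids the missing\+cells issue.
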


\begin{proof}
 We will prove the ``if'' assertions here.
 Then it will follow from the proof of
Theorem~\ref{r-free-semi-resolutions} below that any
homotopy projective/injective $\R$\+free wcDG\+module $\M$
is homotopy equivalent to a graded projective/injective
$\R$\+free wcDG\+module with a homotopy projective/injective
reduction modulo~$\m$; hence the reduction $\M/\m\M$ of
the wcDG\+module $\M$, being homotopy equivalent to
a homotopy projective/injective DG\+module, is itself
homotopy projective/injective.

 Indeed, let $\N$ be a semiacyclic wcDG\+module over~$\A$.
 By Lemma~\ref{r-free-hom-reduction}(a\+b), the complexes
$\Hom_\A(\P,\N)$ and $\Hom_\A(\N,\J)$ are the underlying
complexes of abelian groups to complexes of free
$\R$\+contramodules whose reductions modulo~$\m$ are
the complexes of $k$\+vector spaces
$\Hom_{\A/\m\A}(\P/\m\P\;\N/\m\N)$ and
$\Hom_{\A/\m\A}(\N/\m\N\;\J/\m\J)$.
 Since the latter two complexes are acyclic by assumption,
so are the former two complexes
(see Lemma~\ref{nakayama-acycl-contract}).
\end{proof}

 Introduce the notation $H^0(\A\mod\Rfr_\proj)_\proj$ for
the intersection $H^0(\A\mod\Rfr)_\proj\cap H^0(\A\mod\Rfr_\proj)$,
i.~e., the homotopy category of homotopy projective $\R$\+free
left CDG\+modules over $\A$ whose underlying graded $\A$\+modules
are also projective.
 Similarly, let $H^0(\A\mod\Rfr_\inj)_\inj = H^0(\A\mod\Rfr)_\inj
\cap H^0(\A\mod\Rfr_\inj)$ denote the homotopy category of homotopy
injective $\R$\+free left CDG\+modules over $\A$ whose underlying
graded $\A$\+modules are injective.

\begin{thm} \label{r-free-semi-resolutions}
\textup{(a)} For any wcDG\+algebra\/ $\A$ over\/ $\R$,
the compositions of functors
$$
H^0(\A\mod\Rfr)_\proj\lrarrow H^0(\A\mod\Rfr)\lrarrow
\sD^\si(\A\mod\Rfr)
$$
and
$$
H^0(\A\mod\Rfr_\proj)_\proj\lrarrow H^0(\A\mod\Rfr)\lrarrow
\sD^\si(\A\mod\Rfr)
$$
are equivalences of triangulated categories. \par
\textup{(b)} For any wcDG\+algebra\/ $\A$ over\/ $\R$,
the compositions of functors
$$
H^0(\A\mod\Rfr)_\inj\lrarrow
H^0(\A\mod\Rfr)\lrarrow\sD^\si(\A\mod\Rfr)
$$
and
$$
H^0(\A\mod\Rfr_\inj)_\inj\lrarrow
H^0(\A\mod\Rfr)\lrarrow\sD^\si(\A\mod\Rfr).
$$
are equivalences of triangulated categories.
\end{thm}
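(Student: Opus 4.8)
The plan is to run the standard semiorthogonal-decomposition argument: show that homotopy projective modules are left-orthogonal to the semiacyclic ones, and then produce homotopy projective resolutions. The orthogonality is immediate from the definitions. For a homotopy projective $\R$\+free wcDG\+module $\P$ and a semiacyclic $\R$\+free wcDG\+module $\M$, the complex $\Hom_\A(\P,\M)$ is acyclic by the very definition of homotopy projectivity, so $\Hom_{H^0(\A\mod\Rfr)}(\P,\M[n])=0$ for all $n$; hence $\P$ lies in the left orthogonal of the thick subcategory of semiacyclic modules. By the usual Verdier-localization lemma this already makes both composite functors in part~\textup{(a)} fully faithful, since morphisms \emph{out of} a left-orthogonal object are not changed by passing to the quotient (a roof $\P\llarrow\nobreak\Y\lrarrow\M$ with semiacyclic cone of $\Y\rarrow\P$ is split by orthogonality).

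It then remains to prove essential surjectivity, i.e.\ that every $\R$\+free wcDG\+module $\M$ admits a morphism $\P\rarrow\M$ with semiacyclic cone from a homotopy projective $\R$\+free wcDG\+module $\P$ that moreover has \emph{projective} (in fact free) underlying graded $\A$\+module. Producing the resolution inside the smaller class $H^0(\A\mod\Rfr_\proj)_\proj$ handles both composites at once. I would take $\P$ to be a Spaltenstein-type totalization, concretely the $\R$\+free two-sided bar resolution of $\M$ over $\A$, with graded-free layers $\A\ot^\R\U_i$ (where $\U_i$ are free $\R$\+contramodules), totalized along the diagonals by infinite direct sums; this is the ``first kind'' totalization. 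Each layer is a free graded $\A$\+module and direct sums of these remain free, so $\P$ lies in $\A\mod\Rfr_\proj$, and the augmentation provides $\P\rarrow\M$.

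The verification proceeds through reduction modulo~$\m$. Reduction sends $\ot^\R$ to $\ot_k$, preserves direct sums and cones, and, because the curvature lies in $\m\A^2$, turns the weakly curved total differential into an honest square-zero differential; therefore $\P/\m\P$ is precisely the classical two-sided bar resolution of the DG\+module $\M/\m\M$ over $\A/\m\A$. The latter is a homotopy projective (semifree) resolution, so $\P/\m\P\rarrow\M/\m\M$ is a quasi-isomorphism, whence the cone of $\P\rarrow\M$ reduces modulo~$\m$ to an acyclic complex and is semiacyclic. Moreover $\P$ is homotopy projective over $\R$ by the ``if'' direction of Lemma~\ref{homotopy-proj-inj-reduction}(a), applied to the graded-projective module $\P$ with homotopy projective reduction. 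This gives the resolution, completing part~\textup{(a)}. Part~\textup{(b)} is entirely dual: one uses the injective cobar resolution with graded-injective layers $\Hom^\R(\A,\V_i)$, totalized by infinite products along the diagonals (which stay $\R$\+free by Lemmas~\ref{nakayama-proj-free} and~\ref{nakayama-proj-products}, and reduce correctly modulo~$\m$ by Lemma~\ref{nakayama-reduct-products}), the orthogonality from the injective side, and Lemma~\ref{homotopy-proj-inj-reduction}(b).

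The main obstacle is the resolution construction, in two respects. First, one must make the two-sided bar (and dually cobar) totalization work for \emph{weakly curved} modules over an \emph{arbitrary} wcDG\+algebra: the point is that the $\m$\+divisibility of the curvature, together with the $\m$\+adic completeness of the free $\R$\+contramodule components, makes the total differential satisfy the wcDG\+module equation $d^2={}h\cdot{}$ and keeps all infinite direct sums (resp.\ products) well-behaved, \emph{without} any analogue of the conditions $(*)$ or $(**)$ required in Theorem~\ref{r-free-star-conditions}. Second, one must justify transporting the homotopy-projectivity (resp.\ homotopy-injectivity) property across reduction modulo~$\m$; this is exactly the content of Lemmas~\ref{homotopy-proj-inj-reduction} and~\ref{nakayama-acycl-contract}, which detect contractibility and homotopy (in)jectivity over $\R$ from the reduction over the residue field~$k$.
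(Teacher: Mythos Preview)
Your proposal is correct and follows essentially the same approach as the paper's own proof: orthogonality from the definition of homotopy projectivity/injectivity, then explicit resolutions via a curved bar (resp.\ cobar) construction totalized by direct sums (resp.\ products), whose reduction modulo~$\m$ is the classical bar (resp.\ cobar) resolution over the DG\+algebra $\A/\m\A$, with Lemma~\ref{homotopy-proj-inj-reduction} supplying the lift of homotopy (in)jectivity. The paper writes out the three components $\d$, $d$, $\delta$ of the curved bar/cobar differential explicitly, but otherwise the argument is the same as yours.
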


\begin{proof}
 It suffices to construct for any $\R$\+free wcDG\+module $\M$
over $\A$ two closed morphisms of $\R$\+free wcDG\+modules $\P\rarrow
\M\rarrow\J$ with semiacyclic cones such that the wcDG\+module $\P$
is homotopy projective and the wcDG\+module $\J$ is homotopy
injective.
 In fact, we will have $\P\in \A\mod\Rfr_\proj$ with
$\P/\m\P$ homotopy projective and $\J\in \A\mod\Rfr_\inj$
with $\J/\m\J$ homotopy injective over $\A/\m\A$.
 Then we will use the ``if'' assertions of
Lemma~\ref{homotopy-proj-inj-reduction}, and
have proven the ``only if'' assertions.

 To obtain the wcDG\+modules $\P$ and $\J$, we will use a curved
version of the bar- and cobar-resolutions.
 Consider the bigraded $\A$\+module whose component of\
the internal grading~$n\ge1$ is the graded $\A$\+module
$\A^{\ot^\R\,n}\ot^\R\M$ with the action of $\A$ given by
the rule $b(b_1\ot\dotsb\ot b_n\ot x)=(-1)^{|b|(n-1)}bb_1
\ot b_2\ot\dotsb\ot b_n\ot x$ for $b$, $b_s\in\A$ and $x\in\M$.
 Define the differentials~$\d$, $d$, and~$\delta$ on this
$\A$\+module by the rules
\begin{multline*}
 \d(b_1\ot\dotsb\ot b_n\ot x) = b_1b_2\ot b_3\ot\dotsb b_n\ot x
 - b_1\ot b_2b_3\ot b_4\ot\dotsb\ot b_n\ot x \\ +\dotsb
 + (-1)^n b_1\ot\dotsb\ot b_{n-2}\ot b_{n-1}b_n \ot x
 + (-1)^{n+1} b_1\ot\dotsb\ot b_{n-1}\ot b_nx,
\end{multline*}
\begin{multline*}
 (-1)^{n-1} d(b_1\ot\dotsb\ot b_n\ot x) =
 d(b_1)\ot b_2\ot\dotsb\ot b_n\ot x \\
 + (-1)^{|b_1|} b_1\ot d(b_2)\ot\dotsb\ot b_n\ot x + \dotsb
 + (-1)^{|b_1|+\dotsb+|b_n|} b_1\ot\dotsb b_n \ot d(x),
\end{multline*}
and
\begin{multline*}
 \delta(b_1\ot\dotsb\ot b_n\ot x) = b_1\ot h\ot b_2\ot
 \dotsb\ot b_n\ot x - \dotsb \\ + (-1)^n b_1\ot\dotsb\ot b_{n-1}\ot
 h\ot b_n\ot x + (-1)^{n+1} b_1\ot\dotsb\ot b_n\ot h\ot x,
\end{multline*}
and endow it with the total differential $\d+d+\delta$.
 Totalizing our bigraded $\A$\+module by taking infinite
direct sums (in the category of $\R$\+contramodules)
along the diagonals where the difference $i=|b_1|+\dotsb+|b_n|+|x|
-n+1$ is constant, we obtain the desired wcDG\+module $\P$ 
over~$\A$.
 The closed morphism of wcDG\+modules $\P\rarrow\M$ is defined
by the rules $b\ot x\mpsto bx$ and $b_1\ot\dotsb\ot b_n\ot x
\mpsto 0$ for $n\ge2$.

 Similarly, consider the bigraded $\A$\+module whose component
of the internal grading $n\ge1$ is the graded $\A$\+module
$\Hom^\R(\A^{\ot^\R\,n}\;\M)$ with the action of $\A$ given
by the rule $(bf)(b_1,\dotsc,b_n)=(-1)^{|b|
(|f|+|b_1+\dotsb+|b_n|+n-1)}f(b_1,\dotsc,b_nb)$, where
$f\:\A^{\ot^\R\,n}\rarrow\M$ is a homogeneous graded
$\R$\+contramodule morphism of degree~$|f|$, while $b$, $b_s\in\A$
and $f(b_1,\dotsc,b_n)\in\M$.
 Define the differentials~$\d$, $d$, and~$\delta$ on this
$\A$\+module by the rules
\begin{multline*}
 (\d f)(b_1,\dotsc,b_n) = (-1)^{|f||b_1|}b_1f(b_2,\dotsc,b_n)
 - f(b_1b_2,b_3,\dotsc,b_n) \\ + f(b_1,b_2b_3,b_4,\dotsc,b_n)
 - \dotsb + (-1)^{n-1}f(b_1,\dotsc,b_{n-2},b_{n-1}b_n),
\end{multline*}
\begin{multline*}
 (-1)^{n-1}(df)(b_1,\dotsc,b_n) = d(f(b_1,\dotsc,b_n))
 - (-1)^{|f|}f(db_1,b_2,\dotsc,b_n) \\ - (-1)^{|f|+|b_1|}
 f(b_1,db_2,\dotsc,b_n) - \dotsb - (-1)^{|f|+|b_1|+\dotsb+|b_{n-1}|}
 f(b_1,\dotsc,b_{n-1},db_n),
\end{multline*}
and
\begin{multline*}
 (\delta f)(b_1,\dotsc,b_n) = - f(h,b_1,\dotsc,b_n) \\
 + f(b_1,h,b_2,\dotsc,b_n) - \dotsb + (-1)^n
 f(b_1,\dotsc,b_{n-1},h,b_n)
\end{multline*}
and endow it with the total differential $\d+d+\delta$.
 Totalizing our bigraded $\A$\+module by taking infinite
products along the diagonals where the sum $i=|f|+n-1$
is constant, we obtain the desired wcDG\+module $\J$ over~$\A$.
 The closed morphism of wcDG\+modules $\M\rarrow\J$ is
defined by the rule $x\mpsto f_x$, \ $f_x(b)=(-1)^{|x||b|}bx$
and $f_x(b_1,\dotsb,b_n)=0$ for $n\ge2$, where $x\in\M$.

 The reductions $\P/\m\P$ and $\J/\m\J$ of $\P$ and $\J$ modulo~$\m$
are the conventional bar- and cobar-resolutions of the DG\+module
$\M/\m\M$ over the DG\+algebra $\A/\m\A$ over the field~$k$.
 Hence DG\+module $\P/\m\P$ is homotopy projective,
the DG\+module $\J/\m\J$ is homotopy injective, and the cones
of the morphisms $\P/\m\P\rarrow\M/\m\M\rarrow\J/\m\J$ are
acyclic (see~\cite[proofs of Theorems~1.4\+-1.5]{Pkoszul}).
\end{proof}

 Notice that it follows from the above arguments that for any
homotopy projective $\R$\+free wcDG\+module $\P$ and semiacyclic
$\R$\+free wcDG\+module $\N$ over $\A$ the complex of
$\R$\+contramodules $\Hom_\A(\P,\N)$ is contractible.
 Similarly, for any semiacyclic $\R$\+free wcDG\+module $\N$ and
homotopy injective $\R$\+free wcDG\+module $\J$ over $\A$
the complex of $\R$\+contramodules $\Hom_\A(\N,\J)$ is
contractible (cf.\ Theorem~\ref{non-adj-semiderived-res} below).

\begin{thm}\hfuzz=2pt \label{r-free-cofibrant}
 Let\/ $\A$ be a wcDG\+algebra over\/~$\R$.
 Assume that the DG\+algebra\/ $\A/\m\A$ is cofibrant (in
the standard model structure on the category of DG\+algebras
over~$k$).
 Then an\/ $\R$\+free wcDG\+module over\/ $\A$ is semiacyclic if and
only if it is absolutely acyclic.
 So the semiderived category of\/ $\R$\+free wcDG\+modules\/
$\sD^\si(\A\mod\Rfr)$ over\/~$\A$ coincides with their
absolute derived category\/ $\sD^\abs(\A\mod\Rfr)$.
\end{thm}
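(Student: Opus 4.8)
The inclusion of absolutely acyclic modules among the semiacyclic ones was already recorded before the statement (reduction modulo~$\m$ preserves absolute acyclicity, and an absolutely acyclic DG\+module over a field is acyclic), so the plan is to prove the reverse: a semiacyclic $\R$\+free wcDG\+module $\M$ over $\A$ is absolutely acyclic. By definition $\M$ is semiacyclic exactly when $\M/\m\M$ is an acyclic DG\+module over the DG\+algebra $\A/\m\A$ over~$k$. I would therefore descend the question to the residue field through the finite-homological-dimension reduction, i.e.\ Corollaries~\ref{r-free-homol-dim} and~\ref{r-free-abs-acycl-reduction}, leaving as the only field-level input the fact that over $\A/\m\A$ acyclic DG\+modules are absolutely acyclic.

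The crux is to extract finite homological dimension from cofibrancy. In the standard model structure a cofibrant DG\+algebra is a retract of a quasi-free DG\+algebra $(T(W),d)$ whose underlying graded algebra is the free graded algebra $T(W)$ on a graded $k$\+vector space~$W$; passing to underlying graded algebras, $(\A/\m\A)^{\#}$ becomes a retract of $T(W)$ among graded $k$\+algebras. A free graded algebra is hereditary, hence of global dimension~$\le1$ (the bimodule $T(W)\ot W\ot T(W)$ provides a length-one resolution of its diagonal bimodule); and I would argue that this finiteness passes to the retract $(\A/\m\A)^{\#}$ because quasi-freeness, i.e.\ formal smoothness, is manifestly stable under retracts and forces Hochschild, hence global, dimension~$\le1$. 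Granting this, Corollary~\ref{r-free-homol-dim} shows that the exact category of $\R$\+free graded $\A$\+modules has finite homological dimension.

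With finite homological dimension available, Corollary~\ref{r-free-abs-acycl-reduction} applies and identifies the absolutely acyclic $\R$\+free wcDG\+modules over $\A$ with those $\M$ for which $\M/\m\M$ is absolutely acyclic over $\A/\m\A$. It then remains to invoke the field-coefficient version of the assertion: over a DG\+algebra whose underlying graded algebra has finite global dimension, the classes of acyclic and absolutely acyclic DG\+modules coincide---this is the finite-homological-dimension situation of \cite[Theorem~3.6]{Pkoszul}, in which the derived categories of the first and second kind all agree. Applying it to $\A/\m\A$, a semiacyclic $\M$ has $\M/\m\M$ acyclic, hence absolutely acyclic over $\A/\m\A$, whence $\M$ is absolutely acyclic over $\A$ by Corollary~\ref{r-free-abs-acycl-reduction}; the same reasoning handles right modules. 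Since the two classes of trivial objects thus coincide, the Verdier quotients $\sD^\si(\A\mod\Rfr)$ and $\sD^\abs(\A\mod\Rfr)$ coincide as well. The one step I expect to require real care, rather than citation, is the passage from the homotopical hypothesis of cofibrancy to the homological finiteness of $(\A/\m\A)^{\#}$; everything after that is an application of the machinery already developed.
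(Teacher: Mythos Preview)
Your overall strategy---reduce modulo~$\m$ via Corollaries~\ref{r-free-homol-dim} and~\ref{r-free-abs-acycl-reduction}, then compare acyclicity with absolute acyclicity over the residue field---is exactly the paper's. The gap is in the field-level input. You assert that over any DG-algebra $A$ whose underlying graded algebra has finite global dimension, acyclic and absolutely acyclic DG-modules coincide, citing \cite[Theorem~3.6]{Pkoszul}. That theorem only provides $H^0(A\mod_\proj)\simeq\sD^\abs(A\mod)$, identifying the derived categories \emph{of the second kind} among themselves; it does not compare them with the ordinary derived category. Your assertion is in fact false. Let $A=k\langle x\rangle$ be the free associative algebra on one generator of degree~$1$, with $dx=x^2$; then $A^{\#}$ has global dimension~$1$, but the rank-one free DG-module $P=A\cdot e$ with $|e|=0$ and $d_P(e)=xe$ is acyclic yet not contractible: since $P^{-1}=0$, any $A^{\#}$-linear homotopy $h$ must have $h(e)=0$, whence $(dh+hd)(e)=h(xe)=\pm x\,h(e)=0\ne e$. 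So $P$ is not absolutely acyclic. (This $A$ is not cofibrant, as $dx=x^2$ rules out any semi-free filtration on the single generator.)

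The paper's proof invokes cofibrancy twice: once to get finite homological dimension of the underlying graded algebra of $\A/\m\A$ (so Corollary~\ref{r-free-abs-acycl-reduction} applies), and again via \cite[Theorem~9.4]{Pkoszul}, which is the statement that acyclic equals absolutely acyclic over a \emph{cofibrant} DG-algebra. Contrary to your concluding remark, it is this second use of cofibrancy that carries the real content; finite homological dimension alone does not suffice.
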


\begin{proof}
 Clearly, the DG\+algebra $\A/\m\A$ has finite homological dimension,
so Corollary~\ref{r-free-abs-acycl-reduction} is applicable in view of
Corollary~\ref{r-free-homol-dim}.
 By~\cite[Theorem~9.4]{Pkoszul}, a DG\+module over $\A/\m\A$ is
acyclic if and only if it is absolutely acyclic, so the assertion
follows.
\end{proof}

\begin{lem} \label{r-free-homotopy-proj-tensor}
 Let\/ $\P$ be a homotopy projective\/ $\R$\+free left wcDG\+module
over a wcDG\+algebra\/ $\A$, and let\/ $\N$ be a semiacyclic\/
$\R$\+free right wcDG\+module over\/~$\A$.
 Then the tensor product\/ $\N\ot_\A\P$ is a contractible complex of\/
$\R$\+contramodules.
\end{lem}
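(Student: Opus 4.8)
The plan is to mirror, on the tensor side, the $\Hom$-computation recorded immediately after Theorem~\ref{r-free-semi-resolutions}, reducing the whole assertion to a statement about the reduction modulo~$\m$. First I would exploit the fact that the functor $\N\ot_\A{-}$ is a DG\+functor and hence carries homotopy equivalences to homotopy equivalences. By Theorem~\ref{r-free-semi-resolutions}(a) both compositions $H^0(\A\mod\Rfr_\proj)_\proj\rarrow\sD^\si(\A\mod\Rfr)$ and $H^0(\A\mod\Rfr)_\proj\rarrow\sD^\si(\A\mod\Rfr)$ are equivalences, so the inclusion $H^0(\A\mod\Rfr_\proj)_\proj\subset H^0(\A\mod\Rfr)_\proj$ is an equivalence as well; consequently the homotopy projective module $\P$ is homotopy equivalent to a wcDG\+module whose underlying graded $\A$\+module is projective. (Equivalently, the resolution map produced in the proof of Theorem~\ref{r-free-semi-resolutions} has a cone that is simultaneously semiacyclic and homotopy projective, hence contractible.) Since contractibility of $\N\ot_\A\P$ depends only on the homotopy type of $\P$, I may therefore assume $\P\in\A\mod\Rfr_\proj$, and by Lemma~\ref{homotopy-proj-inj-reduction}(a) that the DG\+module $\P/\m\P$ over $\A/\m\A$ is homotopy projective.

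With $\P$ now graded projective, I would invoke Lemma~\ref{r-free-hom-reduction}(c): the graded $\R$\+contramodule underlying $\N\ot_\A\P$ is free, so $\N\ot_\A\P$ is a complex of free $\R$\+contramodules, and its reduction modulo~$\m$ is naturally isomorphic to the complex of $k$\+vector spaces $(\N/\m\N)\ot_{\A/\m\A}(\P/\m\P)$. By Lemma~\ref{nakayama-acycl-contract}, the complex $\N\ot_\A\P$ is contractible as soon as this reduced complex is acyclic, so the entire $\R$\+specific bookkeeping collapses to a single acyclicity statement over the field~$k$.

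Here the two hypotheses combine: $\N/\m\N$ is acyclic because $\N$ is semiacyclic, while $\P/\m\P$ is homotopy projective over the DG\+algebra $\A/\m\A$ over~$k$. The remaining point is the standard fact that a homotopy projective DG\+module over a DG\+algebra over a field is homotopy flat, so that tensoring it with an acyclic DG\+module yields an acyclic complex; this ultimately reduces to the observation that over a field the tensor product of an acyclic complex with any complex of free modules is acyclic (K\"unneth, with no $\Tor$ terms). Granting this, $(\N/\m\N)\ot_{\A/\m\A}(\P/\m\P)$ is acyclic, and Lemma~\ref{nakayama-acycl-contract} gives that $\N\ot_\A\P$ is a contractible complex of $\R$\+contramodules. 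The main obstacle is precisely this homotopy\+flatness input in the final step: unlike the $\Hom$\+side, where homotopy projectivity of $\P$ directly forces $\Hom_\A(\P,\N)$ against a semiacyclic $\N$ to be acyclic, on the tensor side one must pass through the flatness property of homotopy projective modules; everything else is routine application of Lemmas~\ref{r-free-hom-reduction}(c) and~\ref{nakayama-acycl-contract}.
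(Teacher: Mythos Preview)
Your proof is correct and follows essentially the same route as the paper: reduce to $\P\in\A\mod\Rfr_\proj$ via Theorem~\ref{r-free-semi-resolutions}(a), apply Lemma~\ref{r-free-hom-reduction}(c) to identify the reduction modulo~$\m$, then use that $\P/\m\P$ is homotopy projective (Lemma~\ref{homotopy-proj-inj-reduction}(a)) hence homotopy flat over $\A/\m\A$ so that the reduced tensor product is acyclic, and conclude with Lemma~\ref{nakayama-acycl-contract}. The paper cites \cite[Section~1.6]{Pkoszul} for the homotopy projective $\Rightarrow$ homotopy flat step that you justify via K\"unneth.
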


\begin{proof} \hbadness=2300
 It follows from Theorem~\ref{r-free-semi-resolutions}(a) that any
homotopy projective $\R$\+free wcDG\+module $\P$ over $\A$ is
homotopy equivalent to a wcDG\+module with a projective underlying
$\R$\+free graded $\A$\+module; so we can assume that
$\P\in\A\mod\Rfr_\proj$.
 Then, by Lemma~\ref{r-free-hom-reduction}(c), the tensor product
$\N\ot_\A\P$ is a complex of free $\R$\+contramodules whose reduction
modulo~$\m$ is isomorphic to the complex of $k$\+vector spaces
$(\N/\m\N)\ot_{\A/\m\A}(\P/\m\P)$.
 The latter complex is acyclic, since the DG\+module $\N/\m\N$
is acyclic and the DG\+module $\P/\m\P$ is homotopy flat over $\A/\m\A$
by Lemma~\ref{homotopy-proj-inj-reduction}(a) and the results
of~\cite[Section~1.6]{Pkoszul}.
 It remains to apply Lemma~\ref{nakayama-acycl-contract}.
\end{proof}

 The left derived functor of tensor product of $\R$\+free
wcDG\+modules
$$
 \Tor^\A\:\sD^\si(\modrRfr\A)\times\sD^\si(\A\mod\Rfr)\lrarrow
 H^0(\R\contra^\free),
$$
where $H^0(\R\contra^\free)$ denotes, by an abuse of notation,
the homotopy category of complexes of free $\R$\+contramodules, is
constructed by restricting the functor $\ot_\A$ to either of
the full subcategories $H^0(\modrRfr\A)\times H^0(\A\mod\Rfr_\proj)
_\proj$ or $H^0(\modrRfrproj\A)_\proj\times H^0(\A\mod\Rfr)\subset
H^0(\modrRfr\A)\times H^0(\A\mod\Rfr)$.
 Here $H^0(\modrRfrproj\A)_\proj$ denotes the homotopy category of
homotopy projective $\R$\+free right wcDG\+modules over~$\A$ with
projective underlying graded $\A$\+modules.
 The restriction takes values in $H^0(\R\contra^\free)$ by
Lemma~\ref{r-free-hom-reduction}(c) and factorizes through
the Cartesian product of semiderived categories $\sD^\si(\modrRfr\A)
\times\sD^\si(\A\mod\Rfr)$ by Theorem~\ref{r-free-semi-resolutions}(a)
and Lemma~\ref{r-free-homotopy-proj-tensor}.
{\hbadness=1100\par}

 Similarly, the right derived functor of homomorphisms of $\R$\+free
wcDG\+modules
$$
 \Ext_\A\:\sD^\si(\A\mod\Rfr)^\sop\times
 \sD^\si(\A\mod\Rfr)\lrarrow H^0(\R\contra^\free)
$$
is constructed by restricting the functor $\Hom_\A$ to either of
the full subcategories $H^0(\A\mod\Rfr_\proj)_\proj^\sop\times
H^0(\A\mod\Rfr)$ or $H^0(\A\mod\Rfr)^\sop\times
H^0(\A\mod\Rfr_\inj)_\inj\subset H^0(\A\mod\Rfr)^\sop\times
H^0(\A\mod\Rfr)$.

 Let $(f,a)\:\B\rarrow\A$ be a morphism of wcDG\+algebras over~$\R$.
 Then we have the induced functor $R_f\:H^0(\A\mod\Rfr)\rarrow
H^0(\B\mod\Rfr)$.
 Notice that, by the definition of a wcDG\+morphism,
this functor agrees with the functor $R_{f/\m f}\:H^0(\A/\m\A\mod)
\rarrow H^0(\B/\m\B\mod)$ induced on the homotopy categories of
DG\+modules over DG\+algebras over~$k$ by the DG\+algebra morphism
$f/\m f\:\B/\m\B\rarrow\A/\m\A$.
 Hence the functor $R_f$ preserves semiacyclicity of wcDG\+modules
and therefore induces a triangulated functor 
$$
 \boI R_f\:\sD^\si(\A\mod\Rfr)\lrarrow\sD^\si(\B\mod\Rfr)
$$
on the semiderived categories.

 The triangulated functor $\boI R_f$ has adjoints on both sides.
 The DG\+functor $E_f\:\B\mod\Rfr_\proj\rarrow\A\mod\Rfr_\proj$
is defined on the level of graded modules by the rule
$\N\mpsto \A\ot_\B\N$; the differential on $\A\ot_\B\N$ induced
by the differentials on $\A$ and $\N$ is modified to obtain
the differential on $E_f(\N)$ using the change-of-connection
element~$a$.
  Restricting the induced triangulated functor $E_f\:
H^0(\B\mod\Rfr_\proj)\rarrow H^0(\A\mod\Rfr_\proj)$ to
$H^0(\B\mod\Rfr_\proj)_\proj$, composing it with the localization
functor $H^0(\A\mod\Rfr_\proj)\rarrow\sD^\si(\A\mod\Rfr)$, and
taking into account Theorem~\ref{r-free-semi-resolutions}(a),
we construct the left derived functor
$$
 \boL E_f\:\sD^\si(\B\mod\Rfr)\lrarrow\sD^\si(\A\mod\Rfr),
$$
which is left adjoint to the functor $\boI R_f$.

 Similarly, the DG\+functor $E^f\:\B\mod\Rfr_\inj\rarrow
\A\mod\Rfr_\inj$ is defined on the level of graded modules by
the rule $\N\mpsto\Hom_\B(\A,\N)$; the change-of-connection
element~$a$ is used to modify the differential on $\Hom_\B(\A,\N)$
induced by the differentials on $\A$ and~$\N$.
 Restricting the induced triangulated functor $E^f$ to
$H^0(\B\mod\Rfr_\inj)_\inj$, composing it
with the localization functor to $\sD^\si(\A\mod\Rfr)$ and
using Theorem~\ref{r-free-semi-resolutions}(b), we obtain
the right derived functor
$$
 \boR E^f\:\sD^\si(\B\mod\Rfr)\lrarrow\sD^\si(\A\mod\Rfr),
$$
which is right adjoint to the functor $\boI R_f$.

\begin{thm}\hbadness=3000  \label{r-free-reduction-quasi}
 The functor\/ $\boI R_f\:\sD^\si(\A\mod\Rfr)\rarrow
\sD^\si(\B\mod\Rfr)$ is an equivalence of triangulated categories
whenever the DG\+algebra morphism $f/\m f\:\B/\m\B\allowbreak\rarrow
\A/\m\A$ is a quasi-isomorphism.
\end{thm}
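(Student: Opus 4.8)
The plan is to use the adjunction $\boL E_f\dashv\boI R_f$ constructed above and to prove that $\boI R_f$ is an equivalence by reducing everything modulo~$\m$ to the corresponding (known) statement for DG\+modules over DG\+algebras over the field~$k$. The guiding principle is that the equivalence relation defining the semiderived category is detected by reduction modulo~$\m$: a closed morphism $\phi$ of $\R$\+free wcDG\+modules over $\A$ is an isomorphism in $\sD^\si(\A\mod\Rfr)$ if and only if its cone is semiacyclic, i.e., since reduction modulo~$\m$ commutes with cones, if and only if $\phi/\m\phi$ is an isomorphism in the derived category $\sD(\A/\m\A\mod)$; the analogous criterion holds over~$\B$. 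Here one invokes Lemma~\ref{nakayama-acycl-contract} together with the definition of semiacyclicity.

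First I would record that $\boI R_f$ is conservative, for any wcDG\+morphism $(f,a)$. Indeed, $\boI R_f\M=0$ in $\sD^\si(\B\mod\Rfr)$ means that $R_f\M$ is semiacyclic; but the underlying complex of $k$\+vector spaces of $(R_f\M)/\m(R_f\M)=R_{f/\m f}(\M/\m\M)$ coincides with that of $\M/\m\M$, since restriction of scalars does not change the underlying complex and the change\+of\+connection element $a\in\m\A^1$ vanishes modulo~$\m$. Hence $\boI R_f\M=0$ if and only if $\M$ is semiacyclic, i.e., $\M=0$ in $\sD^\si(\A\mod\Rfr)$. Combined with the triangle identity $\boI R_f(\varepsilon_\M)\circ\eta_{\boI R_f\M}=\id$, conservativity reduces the whole problem to showing that the adjunction unit $\eta_\N\:\N\rarrow\boI R_f\boL E_f\N$ is an isomorphism in $\sD^\si(\B\mod\Rfr)$ for every~$\N$: once the unit is invertible, $\boI R_f(\varepsilon_\M)=\eta_{\boI R_f\M}^{-1}$ is invertible, hence so is $\varepsilon_\M$ by conservativity, and an adjunction with invertible unit and counit is an equivalence.

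To handle the unit I would compute $\boL E_f\N$ on the bar\+resolution $\P\rarrow\N$ from the proof of Theorem~\ref{r-free-semi-resolutions}(a), so that $\boL E_f\N=\A\ot_\B\P$ with the $a$\+modified differential and, after the identification $\P\simeq\N$ in $\sD^\si(\B\mod\Rfr)$, the morphism $\eta_\N$ is represented by the chain\+level map $\P\rarrow R_f(\A\ot_\B\P)$, $p\mapsto 1\ot p$. The essential point is that every functor and morphism in sight commutes with reduction modulo~$\m$: by Lemma~\ref{r-free-hom-reduction}(c) one has $(\A\ot_\B\P)/\m(\A\ot_\B\P)\simeq(\A/\m\A)\ot_{\B/\m\B}(\P/\m\P)$; the reduction $\P/\m\P$ is exactly the bar\+resolution of $\N/\m\N$ over $\B/\m\B$; the reduction of $R_f$ is $R_{f/\m f}$; and the reduction of $\eta_\N$ is the derived unit $\N/\m\N\rarrow R_{f/\m f}\boL E_{f/\m f}(\N/\m\N)$ over~$k$. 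Since $f/\m f$ is a quasi-isomorphism of DG\+algebras over~$k$, restriction of scalars is an equivalence $\sD(\A/\m\A\mod)\simeq\sD(\B/\m\B\mod)$ \cite{Kel}, so this derived unit over~$k$ is an isomorphism. By the reduction criterion of the first paragraph, $\eta_\N$ is then an isomorphism in $\sD^\si(\B\mod\Rfr)$, which finishes the argument.

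The main obstacle I anticipate is the bookkeeping guaranteeing that the derived functors $\boL E_f$ and $\boI R_f$, and especially the adjunction unit, are genuinely represented by chain\+level data whose reduction modulo~$\m$ yields the field\+theoretic functors and their unit. This rests on the compatibility of the bar\+resolution with reduction modulo~$\m$ (already exploited in Theorem~\ref{r-free-semi-resolutions}) and on the reduction isomorphisms of Lemma~\ref{r-free-hom-reduction}. Some care is also needed to verify that the resolution map $\P\rarrow\N$ and the identification $\P\simeq\N$, which are isomorphisms in $\sD^\si(\B\mod\Rfr)$ with semiacyclic cones, reduce to genuine quasi-isomorphisms over~$k$, so that the reduced unit really is the derived unit over~$k$ rather than merely one of its chain\+level representatives.
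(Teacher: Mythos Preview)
Your proposal is correct and follows essentially the same route as the paper: reduce the adjunction morphisms modulo~$\m$, invoke the reduction criterion (a morphism in $\sD^\si(\A\mod\Rfr)$ is an isomorphism iff its reduction is one in $\sD(\A/\m\A\mod)$), and appeal to the known field case~\cite[Theorem~1.7]{Pkoszul}. The only cosmetic difference is that the paper checks both adjunction morphisms directly (mentioning both $\boL E_f$ and $\boR E^f$), whereas you check only the unit of $\boL E_f\dashv\boI R_f$ and deduce invertibility of the counit from conservativity of~$\boI R_f$ via the triangle identity; this is a standard equivalent maneuver.
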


\begin{proof}
 It suffices to check that the adjunction morphisms for
the functors $\boL E_f$, $\boI R_f$, $\boR E^f$ are isomorphisms.
 However, a morphism in a semiderived category of $\R$\+free
wcDG\+modules is an isomorphism whenever it becomes
an isomorphism in the derived category of DG\+modules after
the reduction modulo~$\m$.
 Our three functors make commutative diagrams with the similar
functors $\boL E_{f/\m f}$, $\boI R_{f/\m f}$, $\boR E^{f/\m f}$
for DG\+modules over $\A/\m\A$ and $\B/\m\B$ and the functors of
reduction modulo~$\m$, and the adjunctions agree.
 Finally, it remains to use the DG\+algebra version of the desired
assertion, which is well-known~\cite[Theorem~1.7]{Pkoszul}.
\end{proof}

\begin{rem} \label{reduction-quasi-not-converse}
 Unlike in the situation of DG\+algebras over a field or DG\+rings
\cite[Theorem~1.7]{Pkoszul}, the converse assertion to
Theorem~\ref{r-free-reduction-quasi} is \emph{not} true for
wcDG\+algebras over a pro-Artinian topological local ring~$\R$.
 Moreover, there exist wcDG\+algebras $\A$ with the DG\+algebra
$\A/\m\A$ having a nonzero cohomology $k$\+algebra, while
the semiderived category $\sD^\si(\A\mod\Rfr)$ vanishes.
 A counterexample, essentially from~\cite{KLN}, will be discussed
below in Example~\ref{kln-counterex}.
\end{rem}

\subsection{$\R$-cofree graded modules}  \label{r-cofree-graded}
 A \emph{graded\/ $\R$\+comodule} is a family of $\R$\+comodules
indexed by elements of the grading group~$\Gamma$.
 A \emph{cofree graded\/ $\R$\+comodule} is a graded $\R$\+comodule
whose grading components are cofree $\R$\+comodules.

 The operations of contratensor product $\ocn_\R$,
contrahomomorphisms $\Ctrhom_\R$, and comodule homomorphisms
$\Hom_\R$ are extended to graded $\R$\+contramodules and
graded $\R$\+comodules in the following way.
 The components of $\P\ocn_\R\cM$ are the direct sums of
the contratensor products of the appropriate components of
$\P$ and $\cM$, i.~e., $(\P\ocn_\R\cM)^n=\bigoplus_{i+j=n}
\P^i\ocn_\R\cM^j$.
 The components of $\Ctrhom_\R(\P,\cM)$ are the comodule direct
products of the comodules $\Ctrhom$ between the appropriate
components of $\P$ and $\cM$, that it $\Ctrhom_\R(\P,\cM)^n 
= \prod_{j-i=n}\Ctrhom_\R(\P^i,\cM^j)$.
 The components of $\Hom_\R(\cL,\cM)$ are the direct products
of the contramodules of homomorphisms between the appropriate
components of $\cL$ and $\cM$, i.~e., $\Hom_\R(\cL,\cM)^n
\allowbreak= \prod_{j-i=n}\Hom_\R(\cL^i,\cM^j)$.

 Similarly, the components of the graded version of
the cotensor product $\cN\oc_\R\cM$ are the direct sums of
the cotensor products of the appropriate components of
$\cN$ and $\cM$, while the components of the graded
$\Cohom_\R(\cM,\P)$ are the direct products of
the contramodules $\Cohom_\R$ between the appropriate
components of $\cM$ and~$\P$.
 Notice that all of our operations produce free (graded)
$\R$\+contramodules or cofree (graded) $\R$\+comodules when
receiving free (graded) $\R$\+contramodules and cofree
(graded) $\R$\+comodules as their inputs
(see Section~\ref{r-comodules-sect}--\ref{co-operations}).

 Recall that the category of (cofree) $\R$\+comodules is
a module category with respect to the operation $\ocn_\R$
over the tensor category of (free) $\R$\+contramodules.
 The same applies to the categories of graded comodules and
contramodules.

 An \emph{$\R$\+cofree graded left module} $\cM$ over
an $\R$\+free graded algebra $\B$ is, by the definition,
a graded left module over $\B$ in the module category of
cofree $\R$\+comodules, i.~e., a cofree graded
$\R$\+comodule endowed with an (associative and unital)
homogeneous $\B$\+action map $\B\ocn_\R\cM\rarrow\cM$.
\ \emph{$\R$\+cofree graded right modules} $\cN$ over $\B$
are defined in the similar way.
 Alternatively, one can define $\B$\+module structures on
cofree graded $\R$\+comodules in terms of the action maps
$\cM\rarrow\Ctrhom_\R(\B,\cM)$, and similarly for~$\cN$.

 The category of $\R$\+cofree graded $\B$\+modules is enriched
over the tensor category of (graded) $\R$\+contramodules, so
the abelian group of morphisms between two $\R$\+cofree
graded $\B$\+modules $\cL$ and $\cM$ is the underlying abelian
group of the degree-zero component of a certain naturally
defined (not necessarily free) graded $\R$\+contramodule
$\Hom_\B(\cL,\cM)$.
 This $\R$\+contramodule is constructed as the kernel of
the pair of morphisms of graded $\R$\+contramodules
$\Hom_\R(\cL,\cM)\birarrow\Hom_\R(\B\ocn_\R\cL\;\cM)$ induced
by the actions of $\B$ in $\cL$ and~$\cM$.
 For the sign rule, see~\cite[Section~1.1]{Pkoszul}.

 The \emph{tensor product} $\N\ot_\B\cM$ of an $\R$\+free graded
right $\B$\+module $\N$ and an $\R$\+cofree graded left $\B$\+module
$\cM$ is a (not necessarily cofree) graded $\R$\+comodule
constructed as the cokernel of the pair of morphisms of graded
$\R$\+comodules $\N\ot^\R\B\ocn_\R\cM\birarrow\N\ocn_\R\cM$ induced
by the actions of $\B$ in $\N$ and~$\cM$.
 The graded $\R$\+comodule $\Hom_\B(\L,\cM)$ from an $\R$\+free graded
left $\B$\+module $\L$ to an $\R$\+cofree graded left $\B$\+module
$\cM$ is defined as the kernel of the pair of morphisms of graded
$\R$\+comodules $\Ctrhom_\R(\L,\cM)\birarrow\Ctrhom_\R(\B\ot^\R\L\;\cM)
\simeq\Ctrhom_\R(\L,\Ctrhom_\R(\B,\cM))$ induced by the $\B$\+action
morphisms $\B\ot^\R\L\rarrow\L$ and $\cM\rarrow\Ctrhom_\R(\B,\cM))$.

 The additive category of $\R$\+cofree graded (left or right)
$\B$\+modules has a natural exact category structure: a short
sequence of $\R$\+cofree graded $\B$\+modules is said to be
exact if it is (split) exact as a short sequence of cofree
graded $\R$\+comodules.
 The exact category of $\R$\+cofree graded $\B$\+modules admits
infinite direct sums and products, which are preserved by
the forgetful functors to the category of cofree graded
$\R$\+comodules and preserve exact sequences.

 If a morphism of $\R$\+cofree graded $\B$\+modules has
an $\R$\+cofree kernel (resp., cokernel) in the category of
graded $\R$\+comodules, then this kernel (resp., cokernel)
is naturally endowed with an $\R$\+cofree graded $\B$\+module
structure, making it the kernel (resp., cokernel) of
the same morphism in the additive category of $\R$\+cofree
graded $\B$\+modules.

 There are enough projective objects in the exact category of
$\R$\+cofree graded left $\B$\+modules; these are the direct
summands of the graded $\B$\+modules $\B\ocn_\R\cU$ freely
generated by cofree graded $\R$\+comodules~$\cU$.
 Similarly, there are enough injectives in this exact category,
and these are the direct summands of the graded $\B$\+modules
$\Ctrhom_\R(\B,\cV)$ cofreely cogenerated by cofree graded
$\R$\+comodules~$\cV$.

\begin{prop} \label{r-cofree-r-co-contra}
 The exact categories of\/ $\R$\+free graded\/ left $\B$\+modules and
of\/ $\R$\+cofree graded\/ left $\B$\+modules are naturally equivalent.
 The equivalence is provided by the functors\/ $\Phi_\R$ and\/
$\Psi_\R$ of comodule-contramodule correspondence over\/~$\R$,
defined in Section~\textup{\ref{hom-operations}}, which transform
free graded\/ $\R$\+contramodules with a\/ $\B$\+module structure
into cofree graded\/ $\R$\+comodules with a\/ $\B$\+module structure
and back.
\end{prop}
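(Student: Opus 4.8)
The plan is to promote the comodule–contramodule correspondence of Proposition~\ref{r-co-contra} from underlying (co)free graded objects to the level of $\B$\+module structures, exploiting the fact that $\ocn_\R$ makes $\R\comod$ a module category over the tensor category $(\R\contra,\ot^\R,\R)$ (Section~\ref{contra-operations}). Applying Proposition~\ref{r-co-contra} separately in each grading, the functors $\Phi_\R$ and $\Psi_\R$ already restrict to mutually inverse equivalences between the additive categories of free graded $\R$\+contramodules and cofree graded $\R$\+comodules; what remains is to match the $\B$\+actions.

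First I would record the key projection isomorphism. Specializing the natural associativity isomorphism $(\P\ot^\R\Q)\ocn_\R\cM\simeq\P\ocn_\R(\Q\ocn_\R\cM)$ of Section~\ref{contra-operations} to the distinguished comodule $\cM=\cC(\R)$, one obtains a natural isomorphism of cofree graded $\R$\+comodules
$$
 \Phi_\R(\B\ot^\R\P)\;\simeq\;\B\ocn_\R\Phi_\R(\P),
$$
natural in the free graded $\R$\+contramodule $\P$. Together with the unit isomorphism $\R\ocn_\R\cM\simeq\cM$, this exhibits $\Phi_\R$ as a \emph{strong module functor} from $\R\contra$ to the module category $\R\comod$; its coherence reduces to the coherence of the module-category structure of $\ocn_\R$, in the graded setting with the sign rule of~\cite[Section~1.1]{Pkoszul}. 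Consequently, given an $\R$\+free graded $\B$\+module $\M$ with action $a\:\B\ot^\R\M\rarrow\M$, the composition $\B\ocn_\R\Phi_\R(\M)\simeq\Phi_\R(\B\ot^\R\M)\rarrow\Phi_\R(\M)$, whose second arrow is $\Phi_\R(a)$, defines an action of $\B$ on $\Phi_\R(\M)$. Associativity and unitality of this action follow, by naturality, from the corresponding identities for $a$ together with the coherence isomorphisms, yielding a functor $F$ from $\R$\+free to $\R$\+cofree graded $\B$\+modules that lifts $\Phi_\R$.

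Next I would deduce that $F$ is an equivalence directly from the fact that $\Phi_\R$ is one on underlying objects. The forgetful functors to free graded $\R$\+contramodules and to cofree graded $\R$\+comodules are faithful and reflect isomorphisms, and a morphism of underlying (co)modules is a $\B$\+module morphism precisely when it commutes with the actions; since $\Phi_\R$ is fully faithful on underlying objects and intertwines the actions by construction, $F$ is fully faithful. For essential surjectivity, given an $\R$\+cofree graded $\B$\+module $\cM$ one sets $\P=\Psi_\R(\cM)$, transports the action $\B\ocn_\R\cM\rarrow\cM$ along the isomorphisms $\Phi_\R(\P)\simeq\cM$ and $\Phi_\R(\B\ot^\R\P)\simeq\B\ocn_\R\Phi_\R(\P)$ into an action on $\Phi_\R(\P)$, and applies $\Psi_\R$ (using $\Psi_\R\Phi_\R\simeq\id$ and the projection isomorphism) to obtain a $\B$\+action on $\P$ with $F(\P)\simeq\cM$. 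Hence $F$ is an equivalence. Finally, for the exact structure: a short sequence of $\R$\+free (resp.\ $\R$\+cofree) graded $\B$\+modules is exact exactly when it is split exact as a sequence of free graded $\R$\+contramodules (resp.\ cofree graded $\R$\+comodules); as an additive equivalence, $\Phi_\R$ preserves split exactness and, being an equivalence, reflects it, so $F$ is an equivalence of exact categories.

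The main obstacle I anticipate is the second step: checking that the transported action is genuinely associative and unital. This is not a computation but a coherence verification, requiring that the associativity and unit isomorphisms of the $\ocn_\R$ module-category structure satisfy the pentagon and triangle axioms compatibly with the graded sign conventions, so that $\Phi_\R$ is a \emph{strong} (and not merely lax) module functor; everything else is formal transport of structure along the already-established underlying equivalence.
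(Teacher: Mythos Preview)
Your proposal is correct and takes essentially the same approach as the paper: the paper's proof also reduces everything to the fact that $\Phi_\R$ and $\Psi_\R$ are mutually inverse equivalences of \emph{module categories} over the tensor category $\R\contra^\free$, citing precisely the natural isomorphism $\Phi_\R(\P\ot^\R\Q)\simeq\P\ocn_\R\Phi_\R(\Q)$ (together with its dual $\Psi_\R(\Ctrhom_\R(\P,\cM))\simeq\Hom^\R(\P,\Psi_\R(\cM))$) as the key compatibility. Your write-up is somewhat more explicit about the coherence verification and the exact-structure check, but the substance is the same.
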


\begin{proof}
 The assertion follows from the fact that the functors $\Phi_\R$
and $\Psi_\R$ define mutually inverse equivalences between
the module categories $\R\contra^\free$ and $\R\comod^\cofr$
of free $\R$\+contramodules and cofree $\R$\+comodules over
the tensor category $\R\contra^\free$.
 In fact, there is a natural isomorphism
$\Phi_\R(\P\ot^\R\Q)\simeq\P\ocn_\R\Phi_\R(\Q)$ 
for any $\R$\+contramodules $\P$ and~$\Q$.
 The functors $\Phi_\R$ and $\Psi_\R$ also transform the functor
$\Hom^\R(\P,{-})$ from a fixed free $\R$\+contramodule $\P$
to varying free $\R$\+contramodules into the functor
$\Ctrhom_\R(\P,{-})$ from the same $\R$\+contramodule $\P$ to
varying cofree $\R$\+comodules.
 In fact, there is a natural isomorphism
$\Psi_\R(\Ctrhom_\R(\P,\cM))\simeq\Hom^\R(\P,\Psi_\R(\cM))$
for any $\R$\+contramodule $\P$ and $\R$\+comod\-ule~$\cM$
(see Sections~\ref{hom-operations}\+-\ref{contra-operations}).
\end{proof}

 The equivalence $\Phi_\R=\Psi_\R^{-1}$ between the categories
of $\R$\+free and $\R$\+cofree graded $\B$\+modules is also
an equivalence of categories enriched over graded
$\R$\+contramodules.
 Besides, for any $\R$\+free graded right $\B$\+module $\N$ and
$\R$\+free graded left $\B$\+module $\M$ there is a natural
isomorphism of graded $\R$\+comodules $\N\ot_\B\Phi_\R(\M)
\simeq\Phi_\R(\N\ot_\B\M)$; and for any $\R$\+free graded left
$\B$\+module $\L$ and any $\R$\+cofree graded left $\B$\+module
$\cM$ there is a natural isomorphism of graded $\R$\+contramodules
$\Hom_\B(\L,\Psi_\R(\cM))\simeq\Psi_\R(\Hom_\B(\L,\cM))$.

 Furthermore, the equivalence $\Phi_\R=\Psi_\R^{-1}$ transforms
graded $\B$\+modules $\B\ot^\R\U$ freely generated by
free graded $\R$\+contramodules $\U$ into graded $\B$\+modules
$\B\ocn_\R\cU$ freely generated by cofree graded $\R$\+comodules
$\cU$, with $\cU=\Phi_\R(\U)$, and graded $\B$\+modules
$\Hom^\R(\B,\V)$ cofreely cogenerated by free graded
$\R$\+contramodules $\V$ into graded $\B$\+modules
$\Ctrhom_\R(\B,\cV)$ cofreely cogenerated by cofree graded
$\R$\+comodules $\cV$, with $\Psi_\R(\cV)=\V$.

 Finally, the equivalence $\Phi_\R=\Psi_\R^{-1}$ between
the exact categories of $\R$\+free and $\R$\+cofree graded
$\B$\+modules transforms the reduction functor 
$\P\mpsto\P/\m\P$ into the reduction functor $\cM\mpsto
{}_\m\cM$ (both taking values in the abelian category of
graded $\B/\m\B$\+modules).
 This follows from the results of Section~\ref{hom-operations}.

 All the results of Section~\ref{r-free-graded} have their
analogues for $\R$\+cofree graded $\B$\+modules, which can be,
at one's choice, either proven directly in the similar way
to the proofs in~\ref{r-free-graded}, or deduced from
the results in~\ref{r-free-graded} using
Proposition~\ref{r-cofree-r-co-contra}.

\subsection{Absolute derived category of $\R$-cofree CDG-modules}
\label{r-cofree-absolute}
 Let $\cU$ and $\cV$ be graded $\R$\+comodules endowed
with homogeneous $\R$\+comodule endomorphisms (differentials)
$d_\cU\:\cU\rarrow\cU$ and $d_\cV\:\cV\rarrow\cV$ of degree~$1$.
 Then the graded $\R$\+contramodule $\Hom_\R(\cU,\cV)$ is
endowed with the differential~$d$ defined by the conventional
rule $d(f)(u) = d_\cV(f(u)) - (-1)^{|f|}f(d_\cU(u))$.

 Let $\W$ be a graded $\R$\+contramodule endowed with 
a differential~$d_\W$.
 Then the graded $\R$\+comodules $\W\ocn_\R\cU$ and
$\Ctrhom_\R(\W,\cV)$ are endowed with their differentials~$d$
defined by the conventional rules
$d(w\ocn u) = d_\W(w)\ocn u + (-1)^{|w|}w\ocn d_\cU(u)$
and $d(f)(w) = d_\cV(f(w)) - (-1)^{|f|}f(d_\W(w))$.

 Let $\B$ be an $\R$\+free graded algebra endowed with an odd
derivation~$d$ of degree~$1$.
 An \emph{odd derivation} $d_\cM$ of an $\R$\+cofree graded left
$\B$\+module $\cM$ \emph{compatible with} the derivation~$d$
on $\B$ is a differential ($\R$\+comodule endomorphism of
degree~$1$) such that the action map $\B\ocn_\R\cM\rarrow\cM$
forms a commutative diagram with $d_\cM$ and the differential
on $\B\ocn_\R\cM$ induced by $d$ and~$d_\cM$.
 Odd derivations of $\R$\+cofree graded right $\B$\+modules are
defined similarly.
 Alternatively, one requires the action map
$\cM\rarrow\Ctrhom_\R(\B,\cM)$ to commute with the differentials;
the adjunction of $\ocn_\R$ and $\Ctrhom_\R$ transforms one
equation into the other.

 An \emph{$\R$\+cofree left CDG\+module} $\cM$ over
an $\R$\+free CDG\+algebra $\B$ is, by the definition,
an $\R$\+cofree graded left $\B$\+module endowed with
an odd derivation $d_\cM\:\cM\rarrow\cM$ of degree~$1$
compatible with the derivation $d$ on~$\B$ and satisfying
the equation $d^2_\cM(x)=hx$ for all $x\in\cM$.
 \ \emph{$\R$\+cofree right CDG\+modules} $\cN$ over~$\B$ are
defined similarly, except for the sign.
 Here the element $h\in\B^2$ can be viewed as a homogeneous
morphism $\R\rarrow\B$ of degree~$2$; the notation $x\mpsto hx$
is interpreted as the composition $\cM\simeq\R\ocn_\R\cM\rarrow
\B\ocn_\R\cM\rarrow\cM$ defining a homogeneous endomorphism of
degree~$2$ of the graded $\R$\+comodule~$\cM$.

 $\R$\+cofree left (resp., right) CDG\+modules over $\B$ form
a DG\+category, which we will denote by $\B\mod\Rcof$
(resp., $\modrRcof\B$).
 In fact, these DG\+categories are enriched over the tensor
category of (complexes of) $\R$\+contramodules, so the complexes
of morphisms in them are the underlying complexes of abelian
groups for naturally defined complexes of $\R$\+contramodules,
which we will denote by $\Hom_\B(\cL,\cM)$.
 The underlying graded $\R$\+contramodules of these complexes
were defined in Section~\ref{r-cofree-graded}, and
the differentials in them are defined in the conventional way.

 Passing to the zero cohomology of the complexes of morphisms,
we construct the homotopy categories of $\R$\+cofree CDG\+modules
$H^0(\B\mod\Rcof)$ and $H^0(\modrRcof\B)$.
 These we will mostly view as conventional categories with
abelian groups of morphisms (see Remark~\ref{contra-enriched}).
 Since the DG\+categories $\B\mod\Rcof$ and $\modrRcof\B$
have shifts, twists, and infinite direct sums and products,
their homotopy categories are triangulated categories with
infinite direct sums and products. {\hbadness=1200\par}

 The tensor product $\N\ot_\B\cM$ of an $\R$\+free right
CDG\+module $\N$ and an $\R$\+cofree left CDG\+module $\cM$
over $\B$ is a complex of $\R$\+comodules obtained by
endowing the graded $\R$\+comodule $\N\ot_\B\cM$ constructed
in Section~\ref{r-cofree-graded} with the conventional
tensor product differential.
 The tensor product of $\R$\+free and $\R$\+cofree CDG\+modules
over $\B$ is a triangulated functor of two arguments
$$
 \ot_\B\: H^0(\modrRfr\B)\times H^0(\B\mod\Rcof)\lrarrow
 H^0(\R\comod),
$$
where $H^0(\R\comod)$ denotes, by an abuse of notation,
the homotopy category of complexes of $\R$\+comodules.
 The complex $\Hom_\B(\L,\cM)$ from an $\R$\+free left
CDG\+module $\L$ to an $\R$\+cofree left CDG\+module $\cM$
over $\B$ is a complex of $\R$\+comodules obtained by
endowing the graded $\R$\+comodule $\Hom_\B(\L,\cM)$ constructed
in Section~\ref{r-cofree-graded} with the conventional
$\Hom$ differential.
 The $\Hom$ between $\R$\+free and $\R$\+cofree CDG\+modules
over $\B$ is a triangulated functor of two arguments
$$
 \Hom_\B\: H^0(\B\mod\Rfr)^\sop\times
 H^0(\B\mod\Rcof)\lrarrow H^0(\R\comod).
$$

 An $\R$\+cofree left CDG\+module over $\B$ is said to be
\emph{absolutely acyclic} if it belongs to the minimal
thick subcategory of the homotopy category $H^0(\B\mod\Rcof)$
containing the totalizations of short exact sequences of
$\R$\+cofree CDG\+modules over~$\B$.
 The quotient category of $H^0(\B\mod\Rcof)$ by the thick
subcategory of absolutely acyclic $\R$\+cofree CDG\+modules is
called the \emph{absolute derived category} of $\R$\+cofree
left CDG\+modules over $\B$ and denoted by
$\sD^\abs(\B\mod\Rcof)$.
 The absolute derived category of $\R$\+cofree right
CDG\+modules over $\B$, denoted by $\sD^\abs(\modrRcof\B)$,
is defined similarly.

 An $\R$\+cofree left CDG\+module over $\B$ is said to be
\emph{contraacyclic} if it belongs to the minimal
triangulated subcategory of the homotopy category
$H^0(\B\mod\Rcof)$ containing the totalizations of short
exact sequences of $\R$\+cofree CDG\+modules over $\B$ and
closed under infinite products.
 The quotient category of $H^0(\B\mod\Rcof)$ by the thick
subcategory of contraacyclic $\R$\+cofree CDG\+modules is called
the \emph{contraderived category} of $\R$\+cofree left CDG\+modules
over $\B$ and denoted by $\sD^\ctr(\B\mod\Rcof)$.
 The (similarly defined) contraderived category of $\R$\+cofree
right CDG\+modules over $\B$ is denoted by
$\sD^\ctr(\modrRcof\B)$.

 An $\R$\+cofree left CDG\+module over $\B$ is said to be
\emph{coacyclic} if it belongs to the minimal triangulated
subcategory of the homotopy category $H^0(\B\mod\Rcof)$
containing the totalizations of short exact sequences of
$\R$\+cofree CDG\+modules over $\B$ and closed under
infinite direct sums.
 The quotient category of $H^0(\B\mod\Rcof)$ by the thick
subcategory of coacyclic $\R$\+cofree CDG\+modules is called
the \emph{coderived category} of $\R$\+cofree left CDG\+modules
over $\B$ and denoted by $\sD^\co(\B\mod\Rcof)$.
 The coderived category of $\R$\+cofree right CDG\+modules
over $\B$ is denoted by $\sD^\co(\modrRcof\B)$.

 As above, we denote by $\B\mod\Rcof_\proj\subset\B\mod\Rcof$
the full DG\+subcategory formed by all the $\R$\+cofree
CDG\+modules whose underlying $\R$\+cofree graded $\B$\+mod\-ules
are projective.
 Similarly, $\B\mod\Rcof_\inj\subset\B\mod\Rcof$ is the full
DG\+subcategory formed by all the $\R$\+cofree CDG\+modules
whose underlying $\R$\+cofree graded $\B$\+modules are injective.
 The corresponding homotopy categories are denoted by
$H^0(\B\mod\Rcof_\proj)$ and $H^0(\B\mod\Rcof_\inj)$.
{\hbadness=1500\par}

 The functors $\Phi_\R=\Psi_\R^{-1}$ from
Section~\ref{r-cofree-graded} define an equivalence of
DG\+categories $\B\mod\Rfr\simeq\B\mod\Rcof$ (and similarly
for right CDG\+modules).
 Being also an equivalence of exact categories, this
correspondence identifies absolutely acyclic (resp., contraacyclic,
coacyclic) $\R$\+free CDG\+modules with absolutely acyclic 
(resp., contraacyclic, coacyclic) $\R$\+cofree CDG\+modules
over~$\B$.
 So an equivalence of the absolute derived categories
$$
 \Phi_\R\:\sD^\abs(\B\mod\Rfr)\simeq
 \sD^\abs(\B\mod\Rcof)\,\,\:\!\Psi_\R,
$$
an equivalence of the contraderived categories 
$$
 \Phi_\R\:\sD^\ctr(\B\mod\Rfr)\simeq
 \sD^\ctr(\B\mod\Rcof)\,\,\:\!\Psi_\R,
$$
and an equivalence of the coderived categories
$$
 \Phi_\R\:\sD^\co(\B\mod\Rfr)\simeq
 \sD^\co(\B\mod\Rcof)\,\,\:\!\Psi_\R
$$
are induced.
 The above equivalence of DG\+categories also identifies
$\B\mod\Rfr_\proj$ with $\B\mod\Rcof_\proj$ and $\B\mod\Rfr_\inj$
with $\B\mod\Rcof_\inj$.

 For any $\R$\+free right CDG\+module $\N$ and $\R$\+free
left CDG\+module $\M$ over $\B$ the isomorphism
$\N\ot_\B\Phi_\R(\M)\simeq\Phi_\R(\N\ot_\B\M)$ from
Section~\ref{r-cofree-graded} is an isomorphism of complexes
of $\R$\+comodules.
 Similarly, for any $\R$\+free left CDG\+module $\L$ and
$\R$\+cofree left CDG\+module $\cM$ over $\B$ the isomorphism
$\Hom_\B(\L,\Psi_\R(\cM))\simeq\Psi_\R(\Hom_\B(\L,\cM))$
is an isomorphism of complexes of $\R$\+contramodules.
 All the results of Section~\ref{r-free-absolute} have
their analogues for $\R$\+cofree CDG\+modules over~$\B$.

 Let $f=(f,a)\:\B\rarrow\A$ be a morphism of $\R$\+free CDG\+algebras.
 Then with any $\R$\+cofree left CDG\+module $(\cM,d_\cM)$ over $\A$
one can associate an $\R$\+cofree left CDG\+module $(\cM,d'_\cM)$
over $\B$ with the graded $\B$\+module structure on $\cM$ defined
via~$f$ and the modified differential~$d'_\cM$ constructed in terms
of~$a$.
 A similar procedure applies to right CDG\+modules.
 So we obtain the DG\+functors of restriction of scalars
$R_f\:\A\mod\Rcof\rarrow\B\mod\Rcof$ and $\modrRcof\A\rarrow
\modrRcof\B$; passing to the homotopy categories, we have
the triangulated functors
$$
 R_f\:H^0(\A\mod\Rcof)\lrarrow H^0(\B\mod\Rcof)
$$
and $H^0(\modrRcof\A)\rarrow H^0(\modrRcof\B)$.
 The equivalences of categories $\Phi_\R=\Psi_\R^{-1}$ identify
these functors with the functors $R_f$ for $\R$\+free CDG\+modules
constructed in Section~\ref{r-free-absolute}.

\subsection{Semiderived category of $\R$-cofree wcDG-modules}
\label{r-cofree-semi}
 As above, we refer to $\R$\+cofree CDG\+modules over a wcDG\+algebra
$\A$ as \emph{$\R$\+cofree wcDG\+modules}.
 An $\R$\+cofree wcDG\+module $\cM$ over $\A$ is said to be
\emph{semiacyclic} if the complex of vector spaces ${}_\m\cM$
(which is in fact a DG\+module over the DG\+algebra $\A/\m\A$)
is acyclic.
 In particular, it follows from Lemma~\ref{comodule-acycl-contract}
that when $\A$ is an $\R$\+free DG\+algebra (i.~e., $h=0$),
an $\R$\+cofree CDG\+module over $\A$ is semiacyclic if and only if
it is contractible as a complex of cofree $\R$\+comodules.

 Clearly, the property of semiacyclicity of an $\R$\+cofree
wcDG\+module over $\A$ is preserved by shifts, cones, homotopy
equivalences, infinite direct sums, and infinite products.
 The quotient category of the homotopy category
$H^0(\A\mod\Rcof)$ by the thick subcategory of semiacyclic
$\R$\+cofree wcDG\+modules is called the \emph{semiderived
category} of $\R$\+cofree left wcDG\+modules over $\A$ and
denoted by $\sD^\si(\A\mod\Rcof)$.
 The semiderived category of $\R$\+cofree right wcDG\+modules
$\sD^\si(\modrRcof\A)$ is defined similarly.

 Notice that any acyclic complex of cofree $\R$\+comodules is
contractible when (the category of comodules over) $\R$ has
finite homological dimension.
 In this case, we call semiacyclic $\R$\+cofree wcDG\+modules
over $\A$ simply \emph{acyclic}, and refer to the semiderived
category of $\R$\+cofree wcDG\+modules as their
\emph{derived category}.

 An $\R$\+cofree wcDG\+module $\cP$ over $\A$ is called
\emph{homotopy projective} if the complex $\Hom_\A(\cP,\cM)$
is acyclic for any semiacyclic $\R$\+cofree wcDG\+module
$\cM$ over~$\A$.
 Similarly, an $\R$\+cofree wcDG\+module $\cJ$ over $\A$ is
called \emph{homotopy injective} if the complex
$\Hom_\A(\cM,\cJ)$ is acyclic for any semiacyclic $\R$\+cofree
wcDG\+module~$\cM$.
 The full triangulated subcategories in $H^0(\A\mod\Rcof)$
formed by the homotopy projective and homotopy injective
$\R$\+cofree wcDG\+modules are denoted by $H^0(\A\mod\Rcof)_\proj$
and $H^0(\A\mod\Rcof)_\inj$, respectively.
 The intersections $H^0(\A\mod\Rcof)_\proj\cap H^0(\A\mod\Rcof_\proj)$
and $H^0(\A\mod\Rcof)_\inj\cap H^0(\A\mod\Rcof_\inj)$ are
denoted by $H^0(\A\mod\Rcof_\proj)_\proj$ and
$H^0(\A\mod\Rcof_\inj)_\inj$. {\hbadness=1700\par}

 The equivalence of homotopy categories $\Phi_\R=\Psi_\R^{-1}\:
H^0(\B\mod\Rfr)\simeq H^0(\B\allowbreak\mod\Rcof)$ identifies
semiacyclic $\R$\+free wcDG\+modules with semiacyclic $\R$\+cofree
wcDG\+modules, and therefore induces an equivalence of
the semiderived categories
$$
 \Phi_\R\:\sD^\si(\A\mod\Rfr)\simeq\sD^\si(\A\mod\Rcof)\,\,\:\!\Psi_\R.
$$
 It also follows that the equivalence of homotopy categories
identifies homotopy projective (resp., injective) $\R$\+free
wcDG\+modules with homotopy projective (resp., injective)
$\R$\+cofree wcDG\+modules.

 All the results of Section~\ref{r-free-semi} have their
analogues for $\R$\+cofree wcDG\+modules, which can be,
at one's choice, either proven directly in the similar way
to the proofs in~\ref{r-free-semi}, or deduced from
the results in~\ref{r-free-semi} using the above observations
about the functors $\Phi_\R=\Psi_\R^{-1}$.
 In particular, the following assertions hold.

\begin{lem}
\textup{(a)}
 A wcDG\+module\/ $\cP\in \A\mod\Rcof_\proj$ is homotopy
projective if and only if the DG\+module\/ ${}_\m\cP$ over
the DG\+algebra\/ $\A/\m\A$ over the field~$k$ is homotopy
projective. \par
\textup{(b)}
 A wcDG\+module $\cJ\in \A\mod\Rcof_\inj$ is homotopy
injective if and only if the DG\+module\/ ${}_\m\cJ$ over
the DG\+algebra\/ $\A/\m\A$ is homotopy injective. \qed
\end{lem}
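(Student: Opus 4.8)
The plan is to transport the statement from the already established $\R$-free case rather than redo the argument from scratch. Recall from Sections~\ref{r-cofree-absolute}--\ref{r-cofree-semi} that the comodule-contramodule correspondence functors $\Phi_\R=\Psi_\R^{-1}$ furnish a DG-equivalence $\A\mod\Rfr\simeq\A\mod\Rcof$ which identifies the subcategories $\A\mod\Rfr_\proj$, $\A\mod\Rfr_\inj$ with $\A\mod\Rcof_\proj$, $\A\mod\Rcof_\inj$, and which carries homotopy projective (resp.\ homotopy injective) $\R$-free wcDG-modules to homotopy projective (resp.\ homotopy injective) $\R$-cofree ones and back. Crucially, all of these identifications have been recorded in the excerpt \emph{before} the present lemma and do not depend on it, so the argument is not circular.

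To prove part~(a), take $\cP\in\A\mod\Rcof_\proj$ and set $\P=\Psi_\R(\cP)\in\A\mod\Rfr_\proj$. By the equivalence above, $\cP$ is homotopy projective if and only if $\P$ is; by Lemma~\ref{homotopy-proj-inj-reduction}(a), the latter holds if and only if the DG-module $\P/\m\P$ over $\A/\m\A$ is homotopy projective. Finally, the last paragraph of Section~\ref{r-cofree-graded} tells us that $\Phi_\R=\Psi_\R^{-1}$ intertwines the reduction functor $\P\mpsto\P/\m\P$ with the reduction functor $\cM\mpsto{}_\m\cM$, so that $\P/\m\P\simeq{}_\m\cP$. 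Composing these equivalences yields the assertion. Part~(b) is proved in the same way, with $\J=\Psi_\R(\cJ)$ and Lemma~\ref{homotopy-proj-inj-reduction}(b) in place of part~(a).

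The single point deserving care is that the identification $\P/\m\P\simeq{}_\m\cP$ must hold at the level of DG-modules over $\A/\m\A$, and not merely of graded $\B/\m\B$-modules, since homotopy projectivity is a property of the differential. This is automatic because $\Phi_\R$ and $\Psi_\R$ are equivalences of DG-categories and both reductions are DG-functors, so the differential on ${}_\m\Phi_\R(\P)$ is the reduction of the differential on $\P$; as $h\in\m\A$, both sides are honest DG-modules over $\A/\m\A$. I expect this compatibility to be the only mildly delicate step, the remainder being a formal chase through the equivalences already in place.

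Should one prefer a self-contained argument, it suffices to imitate the proof of Lemma~\ref{homotopy-proj-inj-reduction} verbatim, substituting the $\R$-cofree analogue of Lemma~\ref{r-free-hom-reduction}(a,b) --- which presents $\Hom_\A(\cP,\cM)$ as a free $\R$-contramodule with reduction modulo~$\m$ equal to $\Hom_{\A/\m\A}({}_\m\cP\;{}_\m\cM)$ --- and replacing Lemma~\ref{nakayama-acycl-contract} by Lemma~\ref{comodule-acycl-contract}.
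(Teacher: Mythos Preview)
Your main argument is correct and is exactly the approach the paper indicates: the lemma carries the $\qed$ symbol and falls under the blanket statement that all results of Section~\ref{r-free-semi} have $\R$-cofree analogues obtainable either directly or by transport along $\Phi_\R=\Psi_\R^{-1}$, the latter being what you spell out.

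One minor slip in your alternative sketch: the complex $\Hom_\A(\cP,\cM)$ for $\R$-cofree wcDG-modules is still a complex of (free) $\R$-\emph{contramodules}, not $\R$-comodules --- the DG-categories $\A\mod\Rcof$ are enriched over $\R$-contramodules, as noted in Section~\ref{r-cofree-absolute}. So in the direct approach one would still invoke Lemma~\ref{nakayama-acycl-contract}, not Lemma~\ref{comodule-acycl-contract}. This does not affect your main (transport) proof.
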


\begin{thm}  \label{r-cofree-semi-resolutions}
\textup{(a)} For any wcDG\+algebra\/ $\A$ over\/ $\R$,
the compositions of functors
$$
 H^0(\A\mod\Rcof)_\proj\lrarrow H^0(\A\mod\Rcof)
 \lrarrow\sD^\si(\A\mod\Rcof)
$$
and
$$
 H^0(\A\mod\Rcof_\proj)_\proj\lrarrow H^0(\A\mod\Rcof)
 \lrarrow\sD^\si(\A\mod\Rcof)
$$
are equivalences of triangulated categories. \par
\textup{(b)} For any wcDG\+algebra\/ $\A$ over\/ $\R$,
the compositions of functors
$$
 H^0(\A\mod\Rcof)_\inj\lrarrow
 H^0(\A\mod\Rcof)\lrarrow\sD^\si(\A\mod\Rcof)
$$
and
$$
 H^0(\A\mod\Rcof_\inj)_\inj\lrarrow
 H^0(\A\mod\Rcof)\lrarrow\sD^\si(\A\mod\Rcof).
$$
are equivalences of triangulated categories. \qed
\end{thm}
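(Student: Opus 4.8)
The plan is to deduce the theorem from its already-established $\R$\+free counterpart, Theorem~\ref{r-free-semi-resolutions}, by transporting the four functor diagrams across the equivalence of DG\+categories $\Phi_\R=\Psi_\R^{-1}\:\A\mod\Rfr\simeq\A\mod\Rcof$. As recalled in Section~\ref{r-cofree-semi}, this equivalence induces an equivalence of homotopy categories $H^0(\A\mod\Rfr)\simeq H^0(\A\mod\Rcof)$, and, because it carries the reduction functor $\P\mpsto\P/\m\P$ into the functor $\cM\mpsto{}_\m\cM$ (see Section~\ref{r-cofree-graded}), it matches semiacyclic $\R$\+free wcDG\+modules with semiacyclic $\R$\+cofree wcDG\+modules and descends to an equivalence $\sD^\si(\A\mod\Rfr)\simeq\sD^\si(\A\mod\Rcof)$.

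First I would verify that all four resolution subcategories appearing in parts~(a) and~(b) correspond to each other under $\Phi_\R=\Psi_\R^{-1}$. On the graded level the equivalence of exact categories identifies $\A\mod\Rfr_\proj$ with $\A\mod\Rcof_\proj$ and $\A\mod\Rfr_\inj$ with $\A\mod\Rcof_\inj$ (Section~\ref{r-cofree-absolute}); and since it preserves the complexes of morphisms via the natural isomorphism $\Hom_\B(\L,\Psi_\R(\cM))\simeq\Psi_\R(\Hom_\B(\L,\cM))$, it sends homotopy projective (resp., injective) $\R$\+free wcDG\+modules to homotopy projective (resp., injective) $\R$\+cofree ones. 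Thus it identifies $H^0(\A\mod\Rcof)_\proj$, $H^0(\A\mod\Rcof_\proj)_\proj$, $H^0(\A\mod\Rcof)_\inj$, and $H^0(\A\mod\Rcof_\inj)_\inj$ with their $\R$\+free analogues. Each of the four compositions in the statement then corresponds to the analogous composition in Theorem~\ref{r-free-semi-resolutions}, which is already known to be an equivalence of triangulated categories, and the theorem follows.

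I expect the only real subtlety to be precisely this bookkeeping: making sure the single equivalence $\Phi_\R=\Psi_\R^{-1}$ simultaneously respects the triangulated structure, the semiacyclicity condition, and every one of the resolution subcategories, the crux being the compatibility of $\Phi_\R$ with the two different reduction functors $\P\mpsto\P/\m\P$ and $\cM\mpsto{}_\m\cM$. Alternatively, one could argue directly, repeating the proof of Theorem~\ref{r-free-semi-resolutions} in the cofree setting: the resolutions $\cP\rarrow\cM\rarrow\cJ$ would be produced by the same curved bar\+ and cobar\+constructions with $\ot^\R$ replaced by $\ocn_\R$ in the generating functor and $\Hom^\R(\B,{-})$ by $\Ctrhom_\R(\B,{-})$ in the cogenerating functor, totalized by infinite direct sums and products of $\R$\+comodules along the diagonals; here one would invoke Lemma~\ref{comodule-acycl-contract} in place of Lemma~\ref{nakayama-acycl-contract}, together with the cofree analogues of Lemma~\ref{r-free-hom-reduction}, to see that the cones become semiacyclic and that the reductions ${}_\m\cP$ and ${}_\m\cJ$ are homotopy projective/injective over $\A/\m\A$.
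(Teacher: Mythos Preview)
Your proposal is correct and matches the paper's own treatment: the theorem carries a \qed, and the surrounding text explicitly says that all results of Section~\ref{r-free-semi} can, at one's choice, either be deduced from the $\R$\+free case via the equivalence $\Phi_\R=\Psi_\R^{-1}$ or be reproven directly in the cofree setting. Both the main transport argument and the alternative direct argument you outline are exactly the two options the paper offers.
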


\begin{lem} \label{r-cofree-homotopy-proj-tensor}
\textup{(a)} Let\/ $\cP$ be a homotopy projective\/ $\R$\+cofree
left wcDG\+module over a wcDG\+algebra\/ $\A$, and let\/ $\N$
be a semiacyclic $\R$\+free right wcDG\+module over\/~$\A$.
 Then the tensor product\/ $\N\ot_\A\cP$ is a contractible
complex of\/ $\R$\+comodules. \par
\textup{(b)}
 Let\/ $\Q$ be a homotopy projective\/ $\R$\+free
right wcDG\+module and\/ $\cM$ be a semiacyclic\/ $\R$\+cofree
left wcDG\+module over a wcDG\+algebra\/~$\A$.
 Then the tensor product\/ $\Q\ot_\A\cM$ is a contractible
complex of\/ $\R$\+comodules.  \par
\textup{(c)} Let\/ $\P$ be a homotopy projective\/ $\R$\+free
left wcDG\+module over a wcDG\+algebra\/ $\A$, and let\/ $\cM$
be a semiacyclic\/ $\R$\+cofree left wcDG\+module over\/~$\A$.
 Then the complex of\/ $\R$\+comodules\/ $\Hom_\A(\P,\cM)$
is contractible. \par
\textup{(d)} Let\/ $\cJ$ be a homotopy injective\/ $\R$\+cofree
left wcDG\+module and\/ $\L$ be a semiacyclic\/ $\R$\+free
left wcDG\+module over a wcDG\+algebra\/~$\A$.
 Then the complex of\/ $\R$\+comodules\/ $\Hom_\A(\L,\cJ)$
is contractible. \qed
\end{lem}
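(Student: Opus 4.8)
The plan is to derive all four parts from the $\R$-free statements already in hand by transporting them across the comodule--contramodule correspondence $\Phi_\R=\Psi_\R^{-1}$, which by Section~\ref{r-cofree-semi} is an equivalence of DG-categories $\A\mod\Rfr\simeq\A\mod\Rcof$ identifying semiacyclic, homotopy projective, and homotopy injective $\R$-free wcDG-modules with the corresponding $\R$-cofree ones. The two isomorphisms of complexes that drive the argument are $\N\ot_\A\Phi_\R(\M)\simeq\Phi_\R(\N\ot_\A\M)$ and $\Hom_\A(\L,\Psi_\R(\cM))\simeq\Psi_\R(\Hom_\A(\L,\cM))$, both recorded as isomorphisms of complexes in Sections~\ref{r-cofree-graded}--\ref{r-cofree-absolute}. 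The guiding observation is that $\Phi_\R$, being additive, carries a contracting homotopy to a contracting homotopy and hence \emph{preserves} contractibility, whereas $\Psi_\R$ restricted to cofree $\R$-comodules is an equivalence onto free $\R$-contramodules and therefore \emph{reflects} contractibility of complexes of cofree $\R$-comodules.

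For parts~(a) and~(b) I would realize each $\R$-cofree object as $\Phi_\R$ of an $\R$-free one. In~(a), write $\cP=\Phi_\R(\P)$ with $\P$ homotopy projective $\R$-free left; then $\N\ot_\A\cP\simeq\Phi_\R(\N\ot_\A\P)$, and $\N\ot_\A\P$ is contractible by Lemma~\ref{r-free-homotopy-proj-tensor}, so applying $\Phi_\R$ gives the claim. In~(b), write $\cM=\Phi_\R(M)$ with $M$ semiacyclic $\R$-free left; then $\Q\ot_\A\cM\simeq\Phi_\R(\Q\ot_\A M)$, and $\Q\ot_\A M$ is contractible by the right-module mirror of Lemma~\ref{r-free-homotopy-proj-tensor} (the same proof applied over $\A^\op$). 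Both conclusions follow since $\Phi_\R$ preserves contractibility.

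For parts~(c) and~(d) I would instead use $\Psi_\R$ together with the $\Hom$ isomorphism. In~(c) it gives $\Psi_\R(\Hom_\A(\P,\cM))\simeq\Hom_\A(\P,\Psi_\R\cM)$, and $\Psi_\R\cM$ is a semiacyclic $\R$-free left module, so the right-hand side is contractible by the contractibility remark following the proof of Theorem~\ref{r-free-semi-resolutions} (homotopy projective against semiacyclic). In~(d), writing $\cJ=\Phi_\R(J)$ with $J$ homotopy injective $\R$-free left, the same isomorphism gives $\Psi_\R(\Hom_\A(\L,\cJ))\simeq\Hom_\A(\L,J)$, contractible by the other half of that remark (semiacyclic against homotopy injective). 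In each case $\Psi_\R$ of the comodule $\Hom$-complex is contractible, and it remains only to reflect this back through $\Psi_\R$.

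This reflection is the one delicate point, and I expect it to be the main obstacle: $\Psi_\R$ reflects contractibility only on \emph{cofree} comodules, so I must first arrange that the comodule $\Hom$-complex in question is termwise cofree. For~(c) I would replace $\P$ by a homotopy equivalent object in $\A\mod\Rfr_\proj$ via Theorem~\ref{r-free-semi-resolutions}(a), and for~(d) replace $\cJ$ by a homotopy equivalent object in $\A\mod\Rcof_\inj$ via Theorem~\ref{r-cofree-semi-resolutions}(b); since $\Hom_\A({-},{-})$ descends to the homotopy category, these replacements leave contractibility of the $\Hom$-complex unchanged. By the cofree analogues of Lemma~\ref{r-free-hom-reduction}(a,b) the resulting complexes $\Hom_\A(\P,\cM)$ and $\Hom_\A(\L,\cJ)$ are then complexes of cofree $\R$-comodules, so $\Psi_\R$ reflects their contractibility and both parts are complete. (Alternatively, each part can be run directly: reduce to a graded projective or injective representative, identify the $\m$-reduction of the resulting cofree tensor- or $\Hom$-complex with the corresponding $\ot_{\A/\m\A}$ or $\Hom_{\A/\m\A}$ of the reductions, which is acyclic, and invoke Lemma~\ref{comodule-acycl-contract}.)
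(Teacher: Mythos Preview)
Your proposal is correct and follows precisely the approach the paper intends: the lemma carries a bare \qed, and the preamble to Section~\ref{r-cofree-semi} states that all results of Section~\ref{r-free-semi} transfer to the $\R$\+cofree setting either via $\Phi_\R=\Psi_\R^{-1}$ or by repeating the direct arguments. Your write-up executes the transfer carefully, including the cofreeness check needed to reflect contractibility through $\Psi_\R$ in parts~(c) and~(d), and your parenthetical alternative is exactly the ``direct'' option the paper alludes to.
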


 The left derived functor of tensor product of $\R$\+free and
$\R$\+cofree wcDG\+modules
$$
 \Tor^\A\:\sD^\si(\modrRfr\A)\times\sD^\si(\A\mod\Rcof)
 \lrarrow H^0(\R\comod^\cofr),
$$
where $H^0(\R\comod^\cofr)$ denotes, by an abuse of notation,
the homotopy category of complexes of cofree $\R$\+comodules,
is constructed by restricting the functor $\ot_\A$ to either
of the full subcategories $H^0(\modrRfr\A)\times
H^0(\A\mod\Rcof_\proj)_\proj$ or $H^0(\modrRfrproj\A)_\proj\times
H^0(\A\mod\Rcof)\subset H^0(\modrRfr\A)\times H^0(\A\mod\Rfr)$.
 The equivalences of categories $\Phi_\R=\Psi_\R^{-1}$
transform this functor into the derived functor $\Tor^\A$ of
tensor product of $\R$\+free wcDG\+modules over $\A$
defined in Section~\ref{r-free-semi}.

 Similarly, the right derived functor of $\Hom$ between $\R$\+free
and $\R$\+cofree wcDG\+mod\-ules {\hbadness=1400
$$
 \Ext_\A\:\sD^\si(\A\mod\Rfr)^\sop\times\sD^\si(\A\mod\Rcof)
 \lrarrow H^0(\R\comod^\cofr)
$$
is} constructed by restricting the functor $\Hom_\A$ to either of
the full subcategories $H^0(\A\mod\Rfr_\proj)_\proj^\sop\times
H^0(\A\mod\Rcof)$ or $H^0(\A\mod\Rfr)^\sop\times
H^0(\A\mod\Rcof_\inj)_\inj\subset H^0(\A\mod\Rfr)^\sop\times
H^0(\A\mod\Rcof)$.
 The right derived functor of homomorphisms of $\R$\+cofree
wcDG\+modules
$$
 \Ext_\A\:\sD^\si(\A\mod\Rcof)^\sop\times\sD^\si(\A\mod\Rcof)
 \lrarrow H^0(\R\contra^\free)
$$
is constructed by restricting the functor $\Hom_\A$ to either of
the full subcategories $H^0(\A\mod\Rcof_\proj)_\proj^\sop\times
H^0(\A\mod\Rcof)$ or $H^0(\A\mod\Rcof)^\sop\times
H^0(\A\mod\Rcof_\inj)_\inj\subset H^0(\A\mod\Rcof)^\sop\times
H^0(\A\mod\Rcof)$.
 The equivalences of categories $\Phi_\R=\Psi_\R^{-1}$ transform
these two functors into each other and the derived functor
$\Ext_\A$ of homomorphisms of $\R$\+free wcDG\+modules 
defined in Section~\ref{r-free-semi}.

 Let $(f,a)\:\B\rarrow\A$ be a morphism of wcDG\+algebras over~$\R$.
 Then the functor $R_f\:H^0(\A\mod\Rcof)\rarrow H^0(\B\mod\Rcof)$
induces a triangulated functor
$$
 \boI R_f\:\sD^\si(\A\mod\Rcof)\lrarrow\sD^\si(\B\mod\Rcof).
$$

 The functor $\boI R_f$ has adjoints on both sides.
 The DG\+functor $E_f\:\B\mod\Rcof_\proj\rarrow\A\mod\Rcof_\proj$
is defined on the level of graded modules by the rule
$\cN\mpsto\A\ot_\B\cN$; the differential on $\A\ot_\B\cN$ induced
by the differentials on $\A$ and $\cN$ is modified to obtain
the differential on $E_f(\cN)$ using the element~$a$.
 Restricting the induced functor between the homotopy categories to
$H^0(\B\mod\Rcof_\proj)_\proj$, we construct the left derived functor
$$
 \boL E_f\:\sD^\si(\B\mod\Rcof)\lrarrow\sD^\si(\A\mod\Rcof),
$$
which is left adjoint to the functor $\boI R_f$.

 Similarly, the DG\+functor $E^f\:\B\mod\Rcof_\inj\rarrow
\A\mod\Rcof_\inj$ is defined on the level of graded modules
by the rule $\cN\mpsto\Hom_\B(\A,\cN)$; the element~$a$ is
used to modify the differential on $\Hom_\B(\A,\cN)$ induced
by the differentials on $\A$ and~$\cN$.
 Restricting the induced functor between the homotopy categories to
$H^0(\B\mod\Rcof_\inj)_\inj$, we obtain the right derived functor
$$
 \boR E_f\:\sD^\si(\B\mod\Rcof)\lrarrow\sD^\si(\A\mod\Rcof),
$$
which is right adjoint to the functor $\boI R_f$.

 The equivalences of semiderived categories $\Phi_\R=\Psi_\R^{-1}$
transform the $\R$\+cofree wcDG\+module restriction- and
extension-of-scalars functors $\boL E_f$, $\boI R_f$, $\boR E^f$
defined above into the $\R$\+free wcDG\+module restriction-
and extension-of-scalars functors $\boL E_f$, $\boI R_f$, $\boR E^f$
defined in Section~\ref{r-free-semi}.

\begin{thm}\hbadness=2600 \label{r-cofree-reduction-quasi}
 The functor\/ $\boI R_f\:\sD^\si(\A\mod\Rcof)\rarrow
\sD^\si(\B\mod\Rcof)$ is an equivalence of triangulated categories
whenever the DG\+algebra morphism $f/\m f\:\B/\m\B\allowbreak
\rarrow\A/\m\A$ is a quasi-isomorphism. \qed
\end{thm}

 Just as in the $\R$\+free wcDG\+module situation of
Theorem~\ref{r-free-reduction-quasi}, the converse statement to
Theorem~\ref{r-cofree-reduction-quasi} is not true.

\Section{$\R$-Free and $\R$-Cofree CDG-Contramodules
and CDG-Comodules}

\subsection{$\R$-free graded coalgebras and contra/comodules}
\label{r-free-graded-co}
 An \emph{$\R$\+free graded coalgebra} $\C$ is, by the definition,
a graded coalgebra object in the tensor category of free
$\R$\+contramodules.
 In other words, it is a free graded $\R$\+contramodule endowed
with a (coassociative, noncocommutative) homogeneous comultiplication
map $\C\rarrow\C\ot^\R\C$ and a homogeneous counit map
$\C\rarrow\R$ satisfying the conventional axioms.

 An \emph{$\R$\+free graded left comodule} $\M$ over $\C$ is
a graded left comodule over $\C$ in the tensor category of free
$\R$\+contramodules, i.~e., a free graded $\R$\+contramodule
endowed with a (coassociative and counital) homogeneous
$\C$\+coaction map $\M\rarrow\C\ot^\R\M$. \
 \emph{$\R$\+free graded right comodules} $\N$ over $\C$ are
defined in the similar way.

 An \emph{$\R$\+free graded left contramodule} $\P$ over $\C$
is a free graded $\R$\+contramodule endowed with
a \emph{$\C$\+contraaction} map $\Hom^\R(\C,\P)\rarrow\P$,
which must be a morphism of graded $\R$\+contramodules satisfying
the conventional contraassociativity and counit axioms.
 This can be rephrased by saying that an $\R$\+free $\C$\+contramodule
is an object of the opposite category to the category of graded
$\C$\+comodules in the module category $(\R\contra^\free)^\sop$ over
the tensor category $\R\contra^\free$ \cite[Section~0.2.4]{Psemi}.

 Specifically, the contraassociativity axiom asserts that
the two compositions $\Hom^\R(\C,\Hom^\R(\C,\P))\simeq
\Hom^\R(\C\ot^\R\C\;\P)\birarrow\Hom^\R(\C,\P)\rarrow\P$
of the maps induced by the contraaction and comultiplication
maps with the contraaction map must be equal to each other.
 The counit axiom claims that the composition
$\P\rarrow\Hom^\R(\C,\P)\rarrow\P$ of the map induced by
the counit map with the contraaction map must be equal to
the identity endomorphism of~$\P$.
 For the details, including the sign rule,
see~\cite[Section~2.2]{Pkoszul}.

 In fact, the categories of $\R$\+free graded $\C$\+comodules and
$\C$\+contramodules are enriched over the tensor category of
(graded) $\R$\+contramodules.
 So the abelian group of morphisms between two $\R$\+free
graded left $\C$\+comodules $\L$ nad $\M$ is the underlying
abelian group of the degree-zero component of
the (not necessarily free) graded $\R$\+contramodule
$\Hom_\C(\L,\M)$ constructed as the kernel of the pair
of morphisms of graded $\R$\+contramodules
$\Hom^\R(\L,\M)\birarrow\Hom^\R(\L\;\C\ot^\R\M)$ induced
by the coactions of $\C$ in $\L$ and~$\M$.
 For the sign rules, which are different for the left and
the right comodules, see~\cite[Section~2.1]{Pkoszul}.

 Similarly, the abelian group of morphisms between two
$\R$\+free graded left $\C$\+contra\-modules $\P$ and $\Q$ is
the underlying abelian group of the zero-degree component
of the (not necessarily free) graded $\R$\+contramodule
$\Hom^\C(\P,\Q)$ constructed as the kernel of the pair
of morphisms of graded $\R$\+contramodules
$\Hom^\R(\P,\Q)\birarrow\Hom^\R(\Hom^\R(\C,\P),\Q)$
induced by the contraactions of $\C$ in $\P$ and~$\Q$.

 The \emph{contratensor product} $\N\ocn_\C\P$ of an $\R$\+free
graded right $\C$\+comodule $\N$ and an $\R$\+free graded left
$\C$\+contramodule $\P$ is the (not necessarily free) graded
$\R$\+contramodule constructed as the cokernel of the pair
of morphisms of graded $\R$\+contramodules
$\N\ot^\R\Hom^\R(\C,\P)\birarrow\N\ot^\R\P$ defined in
terms of the $\C$\+coaction in $\N$, the $\C$\+contraaction
in $\P$, and the evaluation map $\C\ot^\R\Hom^\R(\C,\P)
\rarrow\P$.
 For further details, see~\cite[Section~0.2.6]{Psemi}; for
the sign rule, see~\cite[Section~2.2]{Pkoszul}. 

 The \emph{cotensor product} $\N\oc_\C\M$ of an $\R$\+free
graded right $\C$\+comodule $\N$ and an $\R$\+free graded
left $\C$\+comodule $\M$ is a (not necessarily free) graded
$\R$\+contramodule constructed as the kernel of the pair
of morphisms $\N\ot^\R\M\birarrow\N\ot^\R\C\ot^\R\M$.
 The graded $\R$\+contramodule of \emph{cohomomorphisms}
$\Cohom_\C(\M,\P)$ from an $\R$\+free graded left
$\C$\+comodule $\M$ to an $\R$\+free graded left $\C$\+contramodule
$\P$ is constructed as the cokernel of the pair of morphisms
$\Hom^\R(\C\ot^\R\M\;\P)\simeq\Hom^\R(\M,\Hom^\R(\C,\P))
\birarrow\Hom^\R(\M,\P)$ induced by the $\C$\+coaction in $\M$
and the $\C$\+contraaction in~$\P$.

 For any free graded $\R$\+contramodule $\U$ and any $\R$\+free
graded right $\C$\+comodule $\N$, the free graded $\R$\+contramodule
$\Hom^\R(\N,\U)$ has a natural left $\C$\+contramodule structure
provided by the map $\Hom^\R(\C,\Hom^\R(\N,\U))\simeq
\Hom^\R(\N\ot^\R\C\;\U)\rarrow\Hom^\R(\N,\U)$ induced by
the coaction map $\N\rarrow\N\ot^\R\C$.
 For any $\R$\+free right $\C$\+comodule $\N$ and $\R$\+free
left $\C$\+contramodule $\P$ there is a natural isomorphism
of graded $\R$\+contramodules $\Hom^\R(\N\ocn_\C\P\;\U)\simeq
\Hom^\C(\P,\Hom^\R(\N,\U))$.

 The additive categories of $\R$\+free graded $\C$\+contramodules
and $\C$\+comodules have natural exact category structures:
a short sequence of $\R$\+free graded $\C$\+contra/co\-modules
is said to be exact if it is (split) exact as a short sequence
of free graded $\R$\+contramodules.
 The exact category of $\R$\+free graded $\C$\+comodules admits
infinite direct sums, while the exact category of $\R$\+free
graded $\C$\+contramodules admits infinite products.
 Both of these are preserved by the forgetful functors to
the category of free graded $\R$\+contramodules and preserve
exact sequences.

 If a morphism of $\R$\+free graded $\C$\+contra/comodules has
$\R$\+free kernel and cokernel in the category of
graded $\R$\+contramodules, then these kernel and cokernel
are naturally endowed with $\R$\+free graded
$\C$\+contra/comodule structures, making them the kernel
and cokernel of the same morphism in the additive category
of $\R$\+free graded $\C$\+contra/comodules.

 There are enough projective objects in the exact category of
$\R$\+free graded left $\C$\+contramodules; these are the direct
summands of the graded $\C$\+contramodules $\Hom^\R(\C,\U)$
freely generated by free graded $\R$\+contramodules~$\U$.
 Similarly, there are enough injective objects in the exact
category of $\R$\+free graded left $\C$\+comodules; these are
the direct summands of the graded $\C$\+comodules
$\C\ot^\R\V$ cofreely cogenerated by free graded
$\R$\+contramodules~$\V$.

 For any $\R$\+free graded left $\C$\+comodule $\L$ and any
free graded $\R$\+contramodule $\V$ there is a natural isomorphism
of graded $\R$\+contramodules $\Hom_\C(\L\;\C\ot^\R\V)\simeq
\Hom^\R(\L,\V)$.
 For any $\R$\+free graded left $\C$\+contramodule $\Q$
and any free graded $\R$\+contramodule $\U$, there is a natural
isomorphism of graded $\R$\+contramodules $\Hom^\C(\Hom^\R(\C,\U),
\Q)\simeq\Hom^\R(\U,\Q)$
\cite[Sections~1.1.1\+-2 and~3.1.1\+-2]{Psemi}.
 
 For any $\R$\+free graded right $\C$\+comodule $\N$ and any
free graded $\R$\+contramodule $\U$, there is a natural
isomorphism of graded $\R$\+contramodules $\N\ocn_\C\Hom^\R(\C,\U)
\simeq\N\ot^\R\U$.
 The proof following~\cite[Section~5.1.1]{Psemi}
requires considering non-$\R$-free $\C$\+contramodules
(see Section~\ref{non-adj-graded-co}); alternatively,
a direct argument can be made based on the idea of the proof
of~\cite[Lemma~1.1.2]{Psemi}.

 For any $\R$\+free graded right $\C$\+comodule $\N$ and
any free graded $\R$\+contramodule $\V$, there is a natural
isomorphism of graded $\R$\+contramodules $\N\oc_\C(\C\ot^\R\V)
\simeq\N\ot^\R\V$.
 For any $\R$\+free graded left $\C$\+contramodule $\P$ and
any free graded $\R$\+contramodule $\V$, there is a natural
isomorphism of graded $\R$\+contramodules $\Cohom_\C(\C\ot^\R\V\;
\P)\simeq\Hom^\R(\V,\P)$.
 For any $\R$\+free graded left $\C$\+comodule $\M$ and any
free graded $\R$\+contramodule $\U$, there is a natural
isomorphism of graded $\R$\+contramodules $\Cohom_\C(\M,
\Hom^\R(\C,\U))\simeq\Hom^\R(\M,\U)$
\cite[Sections~1.2.1 and~3.2.1]{Psemi}.

\begin{lem} \label{r-free-co-op-reduction}
 \textup{(a)} Let\/ $\F$ be a projective\/ $\R$\+free graded left\/
$\C$\+contramodule and\/ $\Q$ be an arbitrary\/ $\R$\+free graded
left\/ $\C$\+contramodule.
 Then the graded\/ $\R$\+contramodule\/ $\Hom^\C(\F,\Q)$ is
free, and the functor of reduction modulo\/~$\m$  induces
an isomorphism of graded $k$\+vector spaces
$$
 \Hom^\C(\F,\Q)/\m\Hom^\C(\F,\Q)\simeq\Hom^{\C/\m\C}
 (\F/\m\F\;\Q/\m\Q).
$$ \par
 \textup{(b)} Let\/ $\L$ be an arbitrary\/ $\R$\+free graded left\/
$\C$\+comodule and\/ $\J$ be an injective\/ $\R$\+free graded left\/
$\C$\+comodule.
 Then the graded\/ $\R$\+contramodule\/ $\Hom_\C(\L,\J)$ is
free, and the functor of reduction modulo\/~$\m$ induces
an isomorphism of graded $k$\+vector spaces
$$
 \Hom_\C(\L,\J)/\m\Hom_\C(\L,\J)\simeq\Hom_{\C/\m\C}
 (\L/\m\L\;\J/\m\J).
$$ \par
 \textup{(c)} For any\/ $\R$\+free graded right\/ $\C$\+comodule\/
$\N$ and\/ $\R$\+free graded left\/ $\C$\+contra\-module\/ $\P$,
there is a natural isomorphism of graded $k$\+vector spaces
$$
 (\N\ocn_\C\P)/\m(\N\ocn_\C\P)\simeq(\N/\m\N)\ocn_{\C/\m\C}
 (\P/\m\P).
$$
For any\/ $\R$\+free graded right\/ $\C$\+comodule\/ $\N$ and any
projective $\R$\+free graded left $\C$\+contramodule\/ $\F$,
the graded\/ $\R$\+contramodule\/ $\N\ocn_\C\F$ is free. \par
 \textup{(d)} Let\/ $\N$ be an arbitrary\/ $\R$\+free graded right\/
$\C$\+comodule and\/ $\J$ be an injective\/ $\R$\+free graded
left\/ $\C$\+comodule.
 Then the graded\/ $\R$\+contramodule\/ $\N\oc_\C\J$ is free,
and there is a natural isomorphism of graded $k$\+vector spaces
$$
 (\N\oc_\C\J)/\m(\N\oc_\C\J)\simeq(\N/\m\N)\oc_{\C/\m\C}(\J/\m\J).
$$ \par
 \textup{(e)} For any\/ $\R$\+free graded left\/ $\C$\+comodule\/
$\M$ and\/ $\R$\+free graded left\/ $\C$\+contra\-module\/ $\P$,
there is a natural isomorphism of graded $k$\+vector spaces
$$
 \Cohom_\C(\M,\P)/\m\Cohom_\C(\M,\P)\simeq
 \Cohom_{\C/\m\C}(\M/\m\M\;\P/\m\P).
$$
 For any injective\/ $\R$\+free graded left\/ $\C$\+comodule\/
$\J$ and any\/ $\R$\+free graded left\/ $\C$\+contramodule\/ $\P$,
the graded\/ $\R$\+contramodule\/ $\Cohom_\C(\J,\P)$ is free.
 For any\/ $\R$\+free graded left\/ $\C$\+comodule\/ $\M$ and
any projective\/ $\R$\+free graded left\/ $\C$\+contramodule\/ $\F$,
the graded\/ $\R$\+contramodule\/ $\Cohom_\C(\M,\F)$ is free.
\end{lem}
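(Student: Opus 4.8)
The plan is to follow the pattern of the proof of Lemma~\ref{r-free-hom-reduction}, exploiting the natural isomorphisms collected at the end of Section~\ref{r-free-graded-co} which express each of the operations $\Hom^\C$, $\Hom_\C$, $\ocn_\C$, $\oc_\C$, and $\Cohom_\C$ evaluated on a (co)free generator in terms of the elementary operations $\Hom^\R$ and $\ot^\R$. First I would record the basic compatibilities. Since the reduction functor $\P\mpsto\P/\m\P$ preserves the tensor product $\ot^\R$ (Section~\ref{contra-operations}) and, on free graded $\R$\+contramodules, the internal homomorphisms $\Hom^\R$ (Lemma~\ref{nakayama-reduct-products} together with the result of Section~\ref{hom-operations}), the object $\C/\m\C$ is a graded $k$\+coalgebra and the reduction of an $\R$\+free graded $\C$\+comodule (resp.\ $\C$\+contramodule) is a graded $\C/\m\C$\+comodule (resp.\ $\C/\m\C$\+contramodule); this is what makes the right\+hand sides of the five asserted isomorphisms meaningful. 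The same compatibilities yield the natural reduction morphisms in all five parts, which one checks are morphisms of graded $\R$\+contramodules and hence descend to maps out of the reductions modulo~$\m$.

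For parts~(a) and~(b), where one argument is projective or injective, I would argue exactly as in Lemma~\ref{r-free-hom-reduction}. A projective $\R$\+free graded $\C$\+contramodule $\F$ is a direct summand of one of the form $\Hom^\R(\C,\U)$ with $\U$ free, and the isomorphism $\Hom^\C(\Hom^\R(\C,\U),\Q)\simeq\Hom^\R(\U,\Q)$ reduces the claim to the freeness and reduction behaviour of $\Hom^\R(\U,\Q)$ for $\U$ projective, which is the content of Section~\ref{hom-operations}; the field\+level isomorphism $\Hom^{\C/\m\C}(\F/\m\F\;\Q/\m\Q)\simeq\Hom_k(\U/\m\U\;\Q/\m\Q)$ then matches the two sides. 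Part~(b) is dual, using that an injective $\R$\+free graded $\C$\+comodule $\J$ is a summand of $\C\ot^\R\V$ and the isomorphism $\Hom_\C(\L\;\C\ot^\R\V)\simeq\Hom^\R(\L,\V)$. The freeness assertions follow because a direct summand of a free $\R$\+contramodule is free by Lemma~\ref{nakayama-proj-free}.

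For the freeness and reduction statements attached to a (co)free generator in parts~(c), (d), and~(e), I would likewise substitute the relevant explicit isomorphism, namely $\N\ocn_\C\Hom^\R(\C,\U)\simeq\N\ot^\R\U$, \ $\N\oc_\C(\C\ot^\R\V)\simeq\N\ot^\R\V$, \ $\Cohom_\C(\C\ot^\R\V\;\P)\simeq\Hom^\R(\V,\P)$, and $\Cohom_\C(\M,\Hom^\R(\C,\U))\simeq\Hom^\R(\M,\U)$, and invoke the already established behaviour of $\ot^\R$ and $\Hom^\R$ under reduction from Section~\ref{r-free-graded}. By contrast, the first, ``arbitrary argument'' reduction isomorphisms in~(c) and~(e) are handled without any projectivity hypothesis: the operations $\ocn_\C$ and $\Cohom_\C$ are defined as \emph{cokernels}, and the reduction functor is right exact and so commutes with those cokernels; since the comodule and contramodule arguments have free underlying graded $\R$\+contramodules, reduction also commutes with the $\ot^\R$ and $\Hom^\R$ appearing in the defining diagrams, so the reduced diagrams compute $(\N/\m\N)\ocn_{\C/\m\C}(\P/\m\P)$ and $\Cohom_{\C/\m\C}(\M/\m\M\;\P/\m\P)$ respectively.

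The main obstacle to keep in view is that $\Hom^\C$, $\Hom_\C$, and $\oc_\C$ are defined as \emph{kernels} rather than cokernels, so, the reduction functor being only right exact, one cannot commute reduction modulo~$\m$ past these kernels for arbitrary arguments. This is exactly why parts~(a), (b), and~(d) are stated only under a projectivity or injectivity hypothesis: that hypothesis allows one to replace the kernel by an explicit $\Hom^\R$ or $\ot^\R$ formula \emph{before} reducing, thereby avoiding the left\+exactness issue altogether. The one routine but slightly delicate bookkeeping step is to verify that the natural reduction morphisms constructed for arbitrary arguments genuinely coincide with the isomorphisms produced on the (co)free generators, so that the conclusions assembled from the summand decompositions are indeed realized by those natural morphisms.
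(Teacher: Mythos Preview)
Your proposal is correct and follows essentially the same approach as the paper's proof: setting up the reduction compatibilities with $\ot^\R$ and $\Hom^\R$, substituting the explicit (co)free generator formulas from Section~\ref{r-free-graded-co} for parts (a), (b), (d) and the freeness assertions, and using right exactness of reduction to commute it past the cokernels defining $\ocn_\C$ and $\Cohom_\C$ for the first assertions of~(c) and~(e). Your explicit articulation of the kernel/cokernel dichotomy and of the direct-summand reduction via Lemma~\ref{nakayama-proj-free} makes the structure clearer than the paper's terser treatment, but the underlying argument is the same.
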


\begin{proof}
 We will spell out the proofs of parts~(a) and~(c\+d); part~(b)
is similar to~(a), and (e)~is similar to~(c) (see also the proof of
Lemma~\ref{r-free-hom-reduction}).
 Clearly, $\C/\m\C$ is a graded coalgebra over~$k$, and the reduction
modulo~$\m$ takes $\R$\+free $\C$\+comodules to comodules over
$\C/\m\C$ and $\R$\+free $\C$\+contramodules to contramodules over
$\C/\m\C$, since the functor of reduction modulo~$\m$ preserves
the tensor products of $\R$\+contramodules and the internal $\Hom$
from a free $\R$\+contramodule.

 For any $\R$\+free left $\C$\+contramodules $\P$ and $\Q$,
the reduction functor induces a natural morphism of graded
$k$\+vector spaces
$$
 \Hom^\C(\P,\Q)/\m\Hom^\C(\P,\Q)\lrarrow
 \Hom^{\C/\m\C}(\P/\m\P\;\Q/\m\Q).
$$
 Now if $\F=\Hom^\R(\C,\U)$ is a graded $\C$\+contramodule freely
generated by a free $\R$\+contramodule $\U$, then
$\Hom^\C(\F,\Q)\simeq\Hom^\R(\U,\Q)$ and $\F/\m\F\simeq
\Hom_k(\C/\m\C\;\allowbreak\U/\m\U)$, hence
$\Hom^{\C/\m\C}(\F/\m\F,\Q/\m\Q)\simeq\Hom_k(\U/\m\U\;\Q/\m\Q)$
and the desired isomorphism~(a) follows.

 Analogously, for any $\R$\+free left $\C$\+comodules $\L$ and $\M$,
the functor of reduction modulo~$\m$ induces a natural morphism
of graded $k$\+vector spaces
$$
 \Hom_\C(\L,\M)/\m\Hom_\C(\L,\M)\lrarrow
 \Hom_{\C/\m\C}(\L/\m\L\;\M/\m\M),
$$
and the rest of the argument is as above.

 To prove the first assertion of part~(c), it suffices to notice that
the functor of reduction modulo~$\m$ preserves the internal $\Hom$
from a free $\R$\+contramodule, the tensor products, and the cokernels
in the category of $\R$\+contramodules.
 The second assertion is similar to the above.

 For any $\R$\+free graded right $\C$\+comodule $\N$ and $\R$\+free
graded left $\C$\+comodule $\M$, there is a natural morphism of
$k$\+vector spaces
$$
 (\N\oc_\C\M)/\m(\N\oc_\C\M)\lrarrow(\N/\m\N)\oc_{\C/\m\C}(\M/\m\M) 
$$
induced by the composition $\N\oc_\C\M\rarrow\N\ot^\R\M\rarrow
\N/\m\N\ot_k\M/\m\M$.
 If $\J=\C\ot^\R\V$ is a graded $\C$\+comodule cofreely cogenerated
by a free $\R$\+contramodule $\V$, then $\N\oc_\C\J\simeq\N\ot^\R\V$
and $\J/\m\J\simeq\C/\m\C\ot_k\V/\m\V$, hence
$(\N/\m\N)\oc_\C(\J/\m\J)\simeq(\N/\m\N)\ot_k(\V/\m\V)$ and
the isomorphism~(d) follows.
\end{proof}

\begin{lem} \label{r-free-co-inj-proj-reduction}
\textup{(a)} An\/ $\R$\+free graded comodule\/ $\M$ over
an\/ $\R$\+free graded coalgebra\/ $\C$ is injective if and only if
the graded comodule\/ $\M/\m\M$ over the graded $k$\+coalgebra
$\C/\m\C$ is injective. \par
\textup{(b)} An\/ $\R$\+free graded contramodule\/ $\P$ over
an\/ $\R$\+free graded coalgebra\/ $\C$ is projective if and
only if the graded contramodule\/ $\P/\m\P$ over the graded
$k$\+coalgebra $\C/\m\C$ is projective.
\end{lem}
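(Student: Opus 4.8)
The statement is the $\C$\+contra/comodule counterpart of Lemma~\ref{r-free-proj-inj-reduction}, and the plan is to imitate that proof, with Lemma~\ref{r-free-co-op-reduction} playing the role that Lemma~\ref{r-free-hom-reduction} played there. In both parts the ``only if'' direction is immediate from the explicit descriptions of projective and injective objects recorded in Section~\ref{r-free-graded-co}: a projective $\R$\+free graded $\C$\+contramodule is a direct summand of some $\Hom^\R(\C,\U)$, an injective $\R$\+free graded $\C$\+comodule is a direct summand of some $\C\ot^\R\V$, and the functor of reduction modulo~$\m$ carries these to $\Hom_k(\C/\m\C\;\U/\m\U)$ and $(\C/\m\C)\ot_k(\V/\m\V)$, i.e.\ to the free, respectively cofree, objects over $\C/\m\C$. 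Being additive, reduction preserves direct summands, and since projective (resp.\ injective) graded contramodules (resp.\ comodules) over $\C/\m\C$ are closed under direct summands, the ``only if'' assertions follow.

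For the ``if'' direction of~(b), suppose $\P/\m\P$ is a projective graded $\C/\m\C$\+contramodule. Then it is the image of a homogeneous idempotent endomorphism of a free $\C/\m\C$\+contramodule $\Hom_k(\C/\m\C\;U)$; writing $U=\U/\m\U$ for a suitable free graded $\R$\+contramodule $\U$ and setting $\F=\Hom^\R(\C,\U)$, we have $\F/\m\F\simeq\Hom_k(\C/\m\C\;U)$. By Lemma~\ref{r-free-co-op-reduction}(a) applied to the projective contramodule $\F$ with $\Q=\F$, the degree-zero component of $\Hom^\C(\F,\F)$ is an $\R$\+free algebra whose reduction modulo~$\m$ is $\Hom^{\C/\m\C}(\F/\m\F\;\F/\m\F)^0$, so Lemma~\ref{idempotent-lifting} (with the closed ideal~$\m$) lifts our idempotent to a homogeneous idempotent of~$\F$. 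Let $\P'$ be its image; by the kernel/cokernel remark in Section~\ref{r-free-graded-co} it is a projective $\R$\+free graded $\C$\+contramodule, and $\P'/\m\P'\simeq\P/\m\P$. Using Lemma~\ref{r-free-co-op-reduction}(a) once more, now for the projective $\P'$ and the arbitrary $\P$, the graded $\R$\+contramodule $\Hom^\C(\P',\P)$ is free and reduces to $\Hom^{\C/\m\C}(\P'/\m\P'\;\P/\m\P)$; hence the isomorphism $\P'/\m\P'\simeq\P/\m\P$ lifts to a morphism $\P'\rarrow\P$. Being an isomorphism modulo~$\m$, it is an isomorphism of $\R$\+free $\C$\+contramodules by the final assertion in the proof of Lemma~\ref{nakayama-acycl-contract}, so $\P\simeq\P'$ is projective.

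Part~(a) is dual. If $\M/\m\M$ is an injective graded $\C/\m\C$\+comodule, it is the image of a homogeneous idempotent on a cofree comodule $(\C/\m\C)\ot_k V$; writing $V=\V/\m\V$ and setting $\J=\C\ot^\R\V$, so that $\J/\m\J\simeq(\C/\m\C)\ot_k V$, Lemma~\ref{r-free-co-op-reduction}(b) (with $\L=\J$ and the injective comodule $\J$) shows that $\Hom_\C(\J,\J)$ is $\R$\+free and reduces to $\Hom_{\C/\m\C}(\J/\m\J\;\J/\m\J)^0$. Thus Lemma~\ref{idempotent-lifting} lifts the idempotent and yields an injective $\R$\+free graded $\C$\+comodule $\M'$ with $\M'/\m\M'\simeq\M/\m\M$. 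The direction of the comparison morphism must be chosen so that the relevant freeness applies: taking the arbitrary comodule $\M$ as the first argument and the injective comodule $\M'$ as the second, Lemma~\ref{r-free-co-op-reduction}(b) gives that $\Hom_\C(\M,\M')$ is free with reduction $\Hom_{\C/\m\C}(\M/\m\M\;\M'/\m\M')$, so the isomorphism $\M/\m\M\simeq\M'/\m\M'$ lifts to a morphism $\M\rarrow\M'$ which is an isomorphism modulo~$\m$, hence an isomorphism; therefore $\M$ is injective.

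The only points requiring care — and the place where the two parts are genuinely not interchangeable — are the bookkeeping of which argument of $\Hom^\C$ or $\Hom_\C$ is the projective/injective one, so that the correct half of Lemma~\ref{r-free-co-op-reduction} delivers both the freeness of the $\Hom$ and the reduction formula, and the verification that Lemma~\ref{idempotent-lifting} applies to the relevant degree-zero endomorphism algebra, which is guaranteed precisely because that algebra is $\R$\+free by Lemma~\ref{r-free-co-op-reduction}. Beyond this, the argument is a routine transcription of the proof of Lemma~\ref{r-free-proj-inj-reduction}.
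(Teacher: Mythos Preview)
Your proof is correct and follows exactly the approach the paper intends: the paper's own proof consists of the single sentence ``Similar to the proof of Lemma~\ref{r-free-proj-inj-reduction},'' and you have carried out precisely that transcription, with Lemma~\ref{r-free-co-op-reduction}(a\textendash b) standing in for Lemma~\ref{r-free-hom-reduction}(a\textendash b). Your attention to which argument of $\Hom^\C$ or $\Hom_\C$ must be projective/injective in order to invoke the freeness and reduction formula is exactly the care needed here.
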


\begin{proof}
 Similar to the proof of Lemma~\ref{r-free-proj-inj-reduction}.
\end{proof}

 Recall that the homological dimensions of the abelian categories
of graded left comodules, graded right comodules, and graded
contramodules over a graded coalgebra $C$ over a field $k$ are
equal to each other~\cite[Section~4.5]{Pkoszul}.
 This number is called the \emph{homological dimension} of
a graded coalgebra~$C$.

\begin{cor} \label{r-free-co-homol-dim}
 The homological dimensions of the exact categories of\/
$\R$\+free graded left\/ $\C$\+comodules, $\R$\+free graded
right\/ $\C$\+comodules, and\/ $\R$\+free graded left
$\C$\+contramod\-ules do not exceed that of the graded
coalgebra\/ $\C/\m\C$ over~$k$.  \qed
\end{cor}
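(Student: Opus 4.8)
The plan is to follow the same strategy as in the proof of Corollary~\ref{r-free-homol-dim}, exploiting that the exact category of $\R$\+free graded left $\C$\+comodules has enough injective objects, that the exact category of $\R$\+free graded left $\C$\+contramodules has enough projective objects, and that Lemma~\ref{r-free-co-inj-proj-reduction} provides a reduction-modulo-$\m$ criterion for injectivity and projectivity. Write $n$ for the homological dimension of the graded $k$\+coalgebra $\C/\m\C$, which by the result recalled above is the common value of the homological dimensions of graded left comodules, graded right comodules, and graded contramodules over $\C/\m\C$. I would bound the injective dimension of every $\R$\+free graded $\C$\+comodule and the projective dimension of every $\R$\+free graded $\C$\+contramodule by~$n$.

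The observation to record first is that the functor of reduction modulo~$\m$ is exact on each of these exact categories and carries admissible resolutions to resolutions of the reductions. Indeed, by the definition of the exact structure every admissible short exact sequence of $\R$\+free graded $\C$\+comodules (or $\C$\+contramodules) is split exact as a sequence of free graded $\R$\+contramodules; applying the additive functor $\P\mapsto\P/\m\P$ keeps it split, hence exact. In particular, every cosyzygy (resp.\ syzygy) occurring in an admissible injective (resp.\ projective) resolution is again an $\R$\+free object, and its reduction modulo~$\m$ is the corresponding (co)syzygy over $\C/\m\C$.

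With this in hand the argument is short. Given an $\R$\+free graded left $\C$\+comodule $\M$, I would choose an injective resolution $0\rarrow\M\rarrow\J^0\rarrow\J^1\rarrow\dotsb$ in the exact category; by Lemma~\ref{r-free-co-inj-proj-reduction}(a) its reduction modulo~$\m$ is an injective resolution of $\M/\m\M$ over $\C/\m\C$. Since the homological dimension of $\C/\m\C$ equals~$n$, the $n$\+th cosyzygy of the reduced resolution is an injective $\C/\m\C$\+comodule; as this cosyzygy is $Z^n/\m Z^n$ for the $\R$\+free $n$\+th cosyzygy $Z^n$ of the original resolution, Lemma~\ref{r-free-co-inj-proj-reduction}(a) shows that $Z^n$ is itself injective. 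Hence $0\rarrow\M\rarrow\J^0\rarrow\dotsb\rarrow\J^{n-1}\rarrow Z^n\rarrow0$ is an injective resolution of length~$n$, so the injective dimension of $\M$ is at most~$n$. The case of $\R$\+free graded right $\C$\+comodules is identical, and the case of $\R$\+free graded left $\C$\+contramodules is dual, using projective resolutions, the syzygy criterion, and Lemma~\ref{r-free-co-inj-proj-reduction}(b).

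Because the comodule category has enough injectives and the contramodule category has enough projectives, the homological dimension of each equals the supremum over all objects of their injective (resp.\ projective) dimensions, which the above bounds by~$n$. The only point requiring any care is the exactness of reduction modulo~$\m$ together with the $\R$\+freeness of the intermediate (co)syzygies; but this is immediate from the splitting built into the definition of the exact structure, so I expect no genuine obstacle here beyond routine bookkeeping.
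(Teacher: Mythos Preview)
Your proposal is correct and follows essentially the same approach as the paper, which simply records the corollary with a \qed and no explicit argument, relying (as in the proof of Corollary~\ref{r-free-homol-dim}) on the existence of enough injectives/projectives together with Lemma~\ref{r-free-co-inj-proj-reduction}. You have merely spelled out the details that the paper leaves implicit.
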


 Given a projective $\R$\+free graded left $\C$\+contramodule $\P$,
the $\R$\+free graded left $\C$\+comodule $\Phi_\C(\P)$ is defined
as $\Phi_\C(\P)=\C\ocn_\C\P$.
 Here the contratensor product is $\R$\+free by
Lemma~\ref{r-free-co-op-reduction}(c), and is endowed with a left
$\C$\+comodule structure as an $\R$\+free cokernel of a morphism
of $\R$\+free left $\C$\+comodules.
 Given an injective $\R$\+free graded left $\C$\+comodule $\M$,
the $\R$\+free graded left $\C$\+contramodule $\Psi_\C(\M)$ is
defined as $\Psi_\C(\M)=\Hom_\C(\C,\M)$.
 Here the $\Hom$ of $\C$\+comodules is $\R$\+free by
Lemma~\ref{r-free-co-op-reduction}(b), and is endowed with a left
$\C$\+contramodule structure as an $\R$\+free kernel of a morphism
of $\R$\+free left $\C$\+contramodules.

\begin{prop} \label{r-free-c-co-contra}
 The functors\/ $\Phi_\C$ and\/ $\Psi_\C$ are mutually inverse
equivalences between the additive categories of projective\/
$\R$\+free graded\/ $\C$\+contramodules and injective\/ $\R$\+free
graded left\/ $\C$\+comodules.
\end{prop}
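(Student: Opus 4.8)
The plan is to reduce the statement to the classical comodule-contramodule correspondence over the residue field~$k$ for the graded coalgebra $\C/\m\C$, transporting it across the reduction functor by means of Nakayama's lemma. First I would verify that the two functors are well-defined between the asserted categories. On the free generators this is immediate from the isomorphisms recorded above: taking $\N=\C$ in the isomorphism $\N\ocn_\C\Hom^\R(\C,\U)\simeq\N\ot^\R\U$ shows that $\Phi_\C(\Hom^\R(\C,\U))\simeq\C\ot^\R\U$, while taking $\L=\C$ in $\Hom_\C(\L\;\C\ot^\R\V)\simeq\Hom^\R(\L,\V)$ shows $\Psi_\C(\C\ot^\R\V)\simeq\Hom^\R(\C,\V)$. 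Since $\Phi_\C$ and $\Psi_\C$ are additive, and additive functors preserve direct summands, $\Phi_\C$ carries projective $\R$\+free graded $\C$\+contramodules to injective $\R$\+free graded $\C$\+comodules and $\Psi_\C$ carries injectives back to projectives. In particular, for a projective $\P$ the object $\Phi_\C(\P)$ is $\R$\+free by Lemma~\ref{r-free-co-op-reduction}(c) and injective, so that $\Psi_\C\Phi_\C(\P)=\Hom_\C(\C,\Phi_\C\P)$ is again a free graded $\R$\+contramodule by Lemma~\ref{r-free-co-op-reduction}(b); dually on the comodule side, so that both $\P$, $\Psi_\C\Phi_\C(\P)$ and $\M$, $\Phi_\C\Psi_\C(\M)$ are free graded $\R$\+contramodules.

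Next I would produce the comparison maps. The functor $\Phi_\C=\C\ocn_\C{-}$ is left adjoint to $\Psi_\C=\Hom_\C(\C,{-})$, exactly as in the field case~\cite{Psemi}, so there are adjunction unit and counit morphisms $\P\rarrow\Psi_\C\Phi_\C(\P)$ and $\Phi_\C\Psi_\C(\M)\rarrow\M$; by the previous paragraph both are morphisms of free graded $\R$\+contramodules. The crucial compatibility is that $\Phi_\C$ and $\Psi_\C$ commute with reduction modulo~$\m$: Lemma~\ref{r-free-co-op-reduction}(c) gives $\Phi_\C(\P)/\m\Phi_\C(\P)\simeq\Phi_{\C/\m\C}(\P/\m\P)$ and part~(b) gives $\Psi_\C(\M)/\m\Psi_\C(\M)\simeq\Psi_{\C/\m\C}(\M/\m\M)$, where by Lemma~\ref{r-free-co-inj-proj-reduction} the reduced objects $\P/\m\P$ and $\M/\m\M$ are again projective, respectively injective, over the graded $k$\+coalgebra $\C/\m\C$.

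Finally I would invoke Nakayama. Under the identifications of the preceding paragraph the reductions modulo~$\m$ of the unit and counit are the unit and counit of the classical correspondence for $\C/\m\C$ over~$k$, which are isomorphisms. A morphism of free graded $\R$\+contramodules that becomes an isomorphism after reduction modulo~$\m$ is itself an isomorphism, by the argument used in the proof of Lemma~\ref{nakayama-acycl-contract}; hence the unit and counit are isomorphisms and $\Phi_\C$, $\Psi_\C$ are mutually inverse equivalences. The main obstacle I anticipate is the bookkeeping in the second paragraph: one must check that the reduction isomorphisms for $\ocn_\C$ and $\Hom_\C$ are natural enough that the reduced unit and counit are \emph{literally} the classical adjunction morphisms for $\C/\m\C$, rather than merely abstractly isomorphic to them, so that their being isomorphisms over~$k$ can legitimately be fed back through the Nakayama step.
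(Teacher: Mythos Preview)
Your proof is correct, but it takes a longer route than the paper's. The paper's entire proof is the pair of formulas
\[
\Phi_\C(\Hom^\R(\C,\U))\simeq\C\ot^\R\U,
\qquad
\Psi_\C(\C\ot^\R\V)\simeq\Hom^\R(\C,\V),
\]
which you correctly derive in your first paragraph from the isomorphisms recorded just before the proposition. The point you are missing is that these two formulas already finish the argument: composing them gives $\Psi_\C\Phi_\C(\Hom^\R(\C,\U))\simeq\Hom^\R(\C,\U)$ and $\Phi_\C\Psi_\C(\C\ot^\R\V)\simeq\C\ot^\R\V$, naturally in $\U$ and~$\V$. Since $\Hom^\C(\Hom^\R(\C,\U),\Hom^\R(\C,\U'))\simeq\Hom^\R(\U,\U')$ and similarly on the comodule side, this naturality in the free graded $\R$\+contramodule variable is exactly naturality as functors on the full subcategories of free $\C$\+contramodules and cofree $\C$\+comodules. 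Passing to retracts (idempotent completion) extends the equivalence to projectives and injectives.

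Your detour through reduction modulo~$\m$ and Nakayama is a valid alternative, and it matches the spirit of many other arguments in this section (Lemmas~\ref{r-free-co-inj-proj-reduction} and~\ref{contractible-reduction-co}, for instance). It has the virtue of reducing to a known theorem over a field, at the cost of importing that theorem and checking the compatibility you flag at the end. The paper's direct route avoids both: no external input from the field case, and no naturality bookkeeping beyond what is already contained in the two displayed isomorphisms. Your worry about whether the reduced unit and counit are ``literally'' the classical ones is legitimate but resolvable; still, the simpler path sidesteps it entirely.
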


\begin{proof}
 One has $\Phi_\C(\Hom^\R(\C,\U))=\C\ot^\R\U$ and
$\Psi_\C(\C\ot^\R\V)=\Hom^\R(\C,\V)$.
\end{proof}

\subsection{Contra/coderived category of $\R$-free
CDG-contra/comodules}  \label{r-free-co-derived}
 Let $\C$ be an $\R$\+free graded coalgebra.
 A homogeneous $\R$\+contramodule endomorphism (differential)
$d\:\C\rarrow\C$ (of degree~$1$) is said to be
an \emph{odd coderivation} of $\C$ if it forms a commutative
diagram with the comultiplication map $\C\rarrow\C\ot^\R\C$
and the induced differential on $\C\ot^\R\C$.

 An \emph{odd coderivation} $d_\M$ of an $\R$\+free graded left
$\C$\+comodule $\M$ \emph{compatible with} the coderivation~$d$
on $\C$ is a differential ($\R$\+contramodule endomorphism of
degree~$1$) such that the coaction map $\M\rarrow\C\ot^\R\M$
forms a commutative diagram with $d_\M$ and the differential
on $\C\ot^\R\M$ induced by $d$ and~$d_\M$.
 Odd coderivations of $\R$\+free graded right $\C$\+comodules are
defined similarly.

 An \emph{odd contraderivation} $d_\P$ of an $\R$\+free graded
left $\C$\+contramodule $\P$ \emph{compatible with}
the coderivation~$d$ on $\C$ is a differential ($\R$\+contramodule
endomorphism of degree~$1$) such that the contraaction map
$\Hom^\R(\C,\P)\rarrow\P$ forms a commutative diagram with $d_\P$
and the differential on $\Hom^\R(\C,\P)$ induced by $d$ and~$d_\P$.

 Given an $\R$\+free graded coalgebra $\C$, the free graded
$\R$\+contramodule $\C^*=\Hom^\R(\C,\R)$ is endowed with
an $\R$\+free graded algebra structure provided by
the multiplication map $\Hom^\R(\C,\R)\ot^\R\Hom^\R(\C,\R)
\rarrow\Hom^\R(\C\ot^\R\C\;\R)\rarrow\Hom^\R(\C,\R)$ and
the unit map $\R\rarrow\Hom^\R(\C,\R)$ induced by
the comultiplication and counit morphisms of~$\C$.

 An $\R$\+free graded left comodule $\M$ over $\C$ is endowed
with a graded left $\C^*$\+module structure provided by the action
map $\C^*\ot^\R\M\rarrow\C^*\ot^\R\C\ot^\R\M\rarrow\M$.
 \ $\R$\+free graded right comodules over $\C$ are endowed with
graded right $\C^*$\+module structures in the similar way.
 An $\R$\+free graded left contramodule $\P$ over $\C$ is
endowed with a graded left $\C^*$\+module structure provided by
the action map $\C^*\ot^\R\P\rarrow\Hom^\R(\C,\P)\rarrow\P$.
 For the left-right and sign rules, see~\cite[Section~4.1]{Pkoszul}.
 Following~\cite{Pkoszul}, we will denote the multiplication in $\C^*$
and its action in comodules and contramodules over $\C$ by
the~$*$ symbol.

 An \emph{$\R$\+free CDG\+coalgebra} $\C$ is an $\R$\+free graded
coalgebra endowed with an odd coderivation~$d$ of degree~$1$ and
a homogeneous $\R$\+contramodule morphism $h\:\C\rarrow\R$ of
degree~$2$ (that is, $h$~vanishes on all the components of $\C$
except $\C^{-2}$) satisfying the equations $d^2(c) = h*c-c*h$
and $h(d(c))=0$ for all $c\in\C$.
 Morphisms $\C\rarrow\D$ of $\R$\+free CDG\+coalgebras are defined
as pairs $(f,a)$, with $f\:\C\rarrow\D$ being a morphism of
$\R$\+free graded coalgebras and $a\:\C\rarrow\R$ a homogeneous
$\R$\+contramodule morphism of degree~$1$, satisfying
the conventional equations~\cite[Section~4.1]{Pkoszul}.

 An \emph{$\R$\+free left CDG\+comodule} $\M$ over $\C$ is
an $\R$\+free graded left $\C$\+comodule endowed with an odd
coderivation $d_\M\:\M\rarrow\M$ of degree~$1$ compatible with
the coderivation $d$ on $\C$ and satisfying the equation
$d^2_\M(x)=h*x$ for all $x\in\M$.
 The definition of an \emph{$\R$\+free right CDG\+comodule} $\N$
over $\C$ is similar; the only difference is that the equation
for the square of the differential has the form $d_\N^2(y) =
- y*h$ for all $y\in\N$.
 An \emph{$\R$\+free left CDG\+contramodule} $\P$ over $\C$ is
an $\R$\+free graded left $\C$\+contramodule endowed with an odd
contraderivation $d_\P\:\P\rarrow\P$ of degree~$1$ compatible with
the coderivation $d$ on $\C$ and satisfying the equation
$d^2_\P(p)=h*p$ for all $p\in\P$.

 An \emph{$\R$\+free DG\+coalgebra} $\C$ is an $\R$\+free
CDG\+coalgebra with $h=0$.
 A morphism of $\R$\+free DG\+coalgebras $\C\rarrow\D$ is
a morphism $f=(f,a)$ between $\C$ and $\D$ considered as
$\R$\+free CDG\+coalgebras such that $a=0$.
 A \emph{DG\+comodule} or \emph{DG\+contramodule} over an $\R$\+free
DG\+coalgebra $\C$ is the same thing as a CDG\+comodule or
CDG\+contramodule over $\C$ considered as a CDG\+coalgebra.

 Let $\C$ be an $\R$\+free CDG\+coalgebra.
 $\R$\+free left (resp., right) CDG\+comodules over $\C$ naturally
form a DG\+category, which we will denote by $\C\comod\Rfr$
(resp., $\comodrRfr\C$).
 The similar DG\+category of $\R$\+free left CDG\+contramodules
over $\C$ will be denoted by $\C\contra\Rfr$.
 In fact, these DG\+categories are enriched over the tensor
category of (complexes of) $\R$\+contramodules, so the complexes
of morphisms in $\C\comod\Rfr$, \ $\comodrRfr\C$, and
$\C\contra\Rfr$ are the underlying complexes of abelian groups
for naturally defined complexes of $\R$\+contramodules.

 The underlying graded $\R$\+contramodules of these complexes were
defined in Section~\ref{r-free-graded-co}.
 Given two left CDG\+comodules $\L$ and $\M\in\C\comod\Rfr$,
the differential in the graded $\R$\+contramodule $\Hom_\C(\L,\M)$
is defined by the conventional formula
$d(f)(x) = d_\M(f(x)) - (-1)^{|f|}f(d_\L(x))$; one easily
checks that $d^2(f)=0$.
 The complex of $\R$\+contramodules $\Hom_{\C^\op}(\K,\N)$ for
any two right CDG\+comodules $\K$ and $\N\in\comodrRfr\C$ and
the complex of $\R$\+contramodules $\Hom^\C(\P,\Q)$ for any two
left CDG\+contramodules $\P$ and $\Q\in\C\contra\Rfr$
are defined in the similar way.

 Passing to the zero cohomology of the complexes of morphisms, we
construct the homotopy categories of $\R$\+free CDG\+comodules
and CDG\+contramodules $H^0(\C\comod\Rfr)$, \ $H^0(\comodrRfr\C)$,
and $H^0(\C\contra\Rfr)$.
 Even though these are also naturally enriched over
$\R$\+contramodules, we will mostly consider them as conventional
categories with abelian groups of morphisms
(see Remark~\ref{contra-enriched}).
 Since the DG\+categories $\C\comod\Rfr$, \ $\comodrRfr\C$, and
$\C\contra\Rfr$ have shifts and twists, their homotopy categories
are triangulated.
 The DG- and homotopy categories of $\R$\+free CDG\+comodules
also have infinite direct sums, while the DG- and homotopy
categories of $\R$\+free CDG\+contramodules have infinite products.
{\hbadness=2500\par}

 The \emph{contratensor product} $\N\ocn_\C\P$ of an $\R$\+free 
right CDG\+comodule $\N$ and an $\R$\+free left CDG\+contramodule
$\P$ over $\C$ is a complex of $\R$\+contramodules obtained by
endowing the graded $\R$\+contramodule $\N\ocn_\C\P$ constructed
in Section~\ref{r-free-graded-co} with the conventional tensor
product differential $d(y\ocn p) = d_\N(y)\ocn p +
(-1)^{|y|}y\ocn d_\P(p)$.
 The contratensor product of $\R$\+free right CDG\+comodules and
$\R$\+free left CDG\+contramodules over $\C$ is a triangulated
functor of two arguments
$$
 \ocn_\C\:H^0(\comodrRfr\C)\times H^0(\C\contra\Rfr)\lrarrow
 H^0(\R\contra).
$$

 The \emph{cotensor product} $\N\oc_\C\M$ of an $\R$\+free
right CDG\+comodule $\N$ and an $\R$\+free left CDG\+comodule $\M$
over $\C$ is a complex of $\R$\+contramodules obtained by
endowing the graded $\R$\+contramodule $\N\oc_\C\M$ with
the conventional tensor product differential.
 The cotensor product of $\R$\+free CDG\+comodules over $\C$ is
a triangulated functor of two arguments
$$
 \oc_\C\:H^0(\comodrRfr\C)\times H^0(\C\comod\Rfr)\lrarrow
 H^0(\R\contra).
$$

 The complex $\Cohom_\C(\M,\P)$ from an $\R$\+free left
CDG\+comodule $\M$ to an $\R$\+free left CDG\+contramodule $\P$
over $\C$ is a complex of $\R$\+contramodules obtained by
endowing the graded $\R$\+contramodule $\Cohom_\C(\M,\P)$
constructed in Section~\ref{r-free-graded-co} with
the conventional $\Hom$ differential.
 The $\Cohom$ between $\R$\+free left CDG\+comodules and
CDG\+contramodules over $\C$ is a triangulated functor of
two arguments
$$
 \Cohom_\C\:H^0(\C\comod\Rfr)^\sop\times H^0(\C\contra\Rfr)
 \lrarrow H^0(\R\contra).
$$

 An $\R$\+free left CDG\+comodule over $\C$ is called \emph{coacyclic}
if it belongs to the minimal triangulated subcategory of the homotopy
category $H^0(\C\comod\Rfr)$ containing the totalizations of
short exact sequences of $\R$\+free CDG\+comodules over $\C$ and
closed under infinite direct sums.
 The quotient category of $H^0(\C\comod\Rfr)$ by the thick subcategory
of coacyclic $\R$\+free CDG\+comodules is called the \emph{coderived
category} of $\R$\+free left CDG\+comodules over $\C$ and denoted
by $\sD^\co(\C\comod\Rfr)$.
 The coderived category of $\R$\+free right CDG\+comodules over $\C$,
denoted by $\sD^\co(\comodrRfr\C)$, is defined similarly
(see~\cite[Sections~1.2 and~4.2]{Pkoszul} or~\cite[Sections~0.2.2
and~2.1]{Psemi}).

 An $\R$\+free left CDG\+contramodule over $\C$ is called
\emph{contraacyclic} if it belongs to the minimal triangulated
subcategory of the homotopy category $H^0(\C\contra\Rfr)$ containing
the totalizations of short exact sequences of $\R$\+free
CDG\+contramodules over $\C$ and closed under infinite products.
 The quotient category of $H^0(\C\contra\Rfr)$ by the thick subcategory
of contraacyclic $\R$\+free CDG\+contramodules is called
the \emph{contraderived category} of $\R$\+free left CDG\+contramodules
over $\C$ and denoted by $\sD^\ctr(\C\contra\Rfr)$
(see~\cite[Sections~1.2 and~4.2]{Pkoszul} or~\cite[Sections~0.2.5
and~4.1]{Psemi}).

 Denote by $\C\contra\Rfr_\proj\subset\C\contra\Rfr$ the full
DG\+subcategory formed by all the $\R$\+free CDG\+contramodules
over $\C$ whose underlying $\R$\+free graded $\C$\+contramodules
are projective.
 Similarly, denote by $\C\comod\Rfr_\inj\subset\C\comod\Rfr$ the full
DG\+subcate\-gory formed by all the $\R$\+free CDG\+comodules over $\C$
whose underlying $\R$\+free graded $\C$\+comodules are injective.
 The corresponding homotopy categories are denoted by
$H^0(\C\contra\Rfr_\proj)$ and $H^0(\C\comod\Rfr_\inj)$, respectively.
{\emergencystretch=0em\par}

\begin{lem} \label{contractible-reduction-co}
 \textup{(a)} Let\/ $\P$ be a CDG\+contramodule from\/
$\C\contra\Rfr_\proj$.
 Then\/ $\P$ is contractible (i.~e., represents a zero object in
$H^0(\C\contra\Rfr)$) if and only if the CDG\+contramodule\/
$\P/\m\P$ over the CDG\+coalgebra\/ $\C/\m\C$ over~$k$
is contractible. \par
\textup{(b)} Let\/ $\M$ be a CDG\+comodule from\/ $\C\comod\Rfr_\inj$.
 Then\/ $\M$ is contractible if and only if the CDG\+comodule\/
$\M/\m\M$ over the CDG\+coalgebra\/ $\C/\m\C$ is contractible.
\end{lem}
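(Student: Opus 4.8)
The plan is to follow the proof of Lemma~\ref{contractible-reduction} essentially verbatim, substituting the comodule/contramodule version of the Hom-reduction lemma, namely Lemma~\ref{r-free-co-op-reduction}(a) and~(b), for the module version Lemma~\ref{r-free-hom-reduction}(a\+b) used there. Concretely, to a CDG\+contramodule $\P\in\C\contra\Rfr_\proj$ I would associate its endomorphism object $\Hom^\C(\P,\P)$, and to a CDG\+comodule $\M\in\C\comod\Rfr_\inj$ its endomorphism object $\Hom_\C(\M,\M)$; each is an $\R$\+free DG\+algebra (with composition as multiplication and the conventional commutator differential) whose underlying graded $\R$\+contramodule is free. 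By Lemma~\ref{r-free-co-op-reduction}(a), applied with $\F=\Q=\P$ and using that $\P$ has projective underlying graded $\C$\+contramodule, the reduction modulo~$\m$ of $\Hom^\C(\P,\P)$ is naturally isomorphic to the $k$\+DG\+algebra $\Hom^{\C/\m\C}(\P/\m\P\;\P/\m\P)$; likewise, by Lemma~\ref{r-free-co-op-reduction}(b), applied with $\L=\J=\M$ and using injectivity of the underlying graded $\C$\+comodule of $\M$, the reduction of $\Hom_\C(\M,\M)$ is $\Hom_{\C/\m\C}(\M/\m\M\;\M/\m\M)$.

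For the nontrivial (``if'') implication, suppose $\P/\m\P$ is contractible over $\C/\m\C$. Then the $k$\+DG\+algebra $\Hom^{\C/\m\C}(\P/\m\P\;\P/\m\P)$ is acyclic, since a contracting homotopy exhibits the identity endomorphism as a coboundary. As this DG\+algebra is the reduction modulo~$\m$ of the complex of free $\R$\+contramodules $\Hom^\C(\P,\P)$, Lemma~\ref{nakayama-acycl-contract} shows that $\Hom^\C(\P,\P)$ is itself contractible, hence acyclic, as a complex of $\R$\+contramodules. In particular its degree-zero cohomology vanishes, so the closed degree-zero element $\id_\P$ is a coboundary, and a homotopy realizing this is precisely a contracting homotopy for~$\P$. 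The argument for $\M$ is identical, with $\Hom_\C(\M,\M)$ in place of $\Hom^\C(\P,\P)$.

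The converse (``only if'') implications are immediate: reduction modulo~$\m$ is an additive functor of DG\+categories, so it carries a contracting homotopy on $\P$ (resp.\ $\M$) to one on $\P/\m\P$ (resp.\ $\M/\m\M$). The only point requiring care is that the endomorphism complexes be free over~$\R$ with reductions computing the endomorphism complexes over $\C/\m\C$, and this is exactly where the projectivity and injectivity hypotheses enter through Lemma~\ref{r-free-co-op-reduction}; once that input is granted no genuine obstacle remains, the entire content of the statement being packaged into that lemma together with Nakayama's lemma in the form of Lemma~\ref{nakayama-acycl-contract}. Alternatively, part~(b) can be deduced from part~(a) through the comodule\+contramodule correspondence $\Phi_\C=\Psi_\C^{-1}$ of Proposition~\ref{r-free-c-co-contra}, once that equivalence of additive categories is upgraded to an equivalence of the corresponding DG\+categories.
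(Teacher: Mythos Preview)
Your proposal is correct and follows exactly the approach the paper intends: the paper's proof simply reads ``See the proof of Lemma~\ref{contractible-reduction}'', and that proof proceeds precisely by using the Hom-reduction lemma (here Lemma~\ref{r-free-co-op-reduction}(a\+b) in place of Lemma~\ref{r-free-hom-reduction}(a\+b)) together with Lemma~\ref{nakayama-acycl-contract}, just as you have written out.
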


\begin{proof}
 See the proof of Lemma~\ref{contractible-reduction}.
\end{proof}

\begin{thm} \label{r-free-co-derived-thm}
 Let\/ $\C$ be an\/ $\R$\+free CDG\+coalgebra.  Then \par
\textup{(a)} for any CDG\+contramodule\/
$\P\in H^0(\C\contra\Rfr_\proj)$ and any contraacyclic\/ $\R$\+free
left CDG\+contramodule\/ $\Q$ over\/ $\C$, the complex of\/
$\R$\+contramodules\/ $\Hom^\C(\P,\Q)$ is contractible; \par
\textup{(b)} for any coacyclic\/ $\R$\+free left CDG\+comodule\/ $\L$
over\/ $\C$ and any CDG\+comodule\/ $\M\in H^0(\C\comod\Rfr_\inj)$,
the complex of\/ $\R$\+contramodules\/ $\Hom_\C(\L,\M)$ is
contractible; \par
\textup{(c)} the composition of natural functors
$$
 H^0(\C\contra\Rfr_\proj)\lrarrow H^0(\C\contra\Rfr)\lrarrow
 \sD^\ctr(\C\contra\Rfr)
$$
is an equivalence of triangulated categories; \par
\textup{(d)} the composition of natural functors
$$
 H^0(\C\comod\Rfr_\inj)\lrarrow H^0(\C\comod\Rfr)\lrarrow
 \sD^\co(\C\comod\Rfr)
$$
is an equivalence of triangulated categories.
\end{thm}

\begin{proof}
 Parts~(a\+b) follow from the first assertions of
Lemma~\ref{r-free-co-op-reduction}(a\+b) together with the observations
that the total complex of a short exact sequence of complexes of
free $\R$\+contramodules is contractible and the infinite products
of complexes of $\R$\+contramodules preserve contractibility
(cf.\ Theorem~\ref{non-adj-co-derived-res} below).

 Parts~(c\+d) can be proven in the way similar to the proof of
~\cite[Theorem~4.4(c\+d)]{Pkoszul}, or alternatively deduced
from~\cite[Remark~3.7]{Pkoszul}.
 The key observations are that the class of projective $\R$\+free
$\C$\+contramodules is closed under infinite products, and
the class of injective $\R$\+free $\C$\+comodules is closed under
infinite direct sums.
\end{proof}

\begin{cor}  \label{r-free-co-acycl-reduction}
 For any\/ $\R$\+free CDG\+coalgebra $\C$, an\/ $\R$\+free
CDG\+contramodule\/ $\P$ over\/ $\C$ is contraacyclic if and only if
the CDG\+contramodule\/ $\P/\m\P$ over the CDG\+coalgebra\/ $\C/\m\C$
over the field~$k$ is contraacyclic.
 An\/ $\R$\+free CDG\+comodule\/ $\M$ over\/ $\C$ is coacyclic if and
only if the CDG\+comodule\/ $\M/\m\M$ over the CDG\+coalgebra\/
$\C/\m\C$ over~$k$ is coacyclic.
\end{cor}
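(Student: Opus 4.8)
The plan is to run, essentially verbatim, the argument used to prove Corollary~\ref{r-free-abs-acycl-reduction}, with the semiorthogonal decomposition produced there by Theorems~\ref{r-free-orthogonality}--\ref{r-free-absolute-derived} replaced by the one furnished by Theorem~\ref{r-free-co-derived-thm}. The decisive feature is that Theorem~\ref{r-free-co-derived-thm} is available for \emph{every} $\R$\+free CDG\+coalgebra $\C$, with no finiteness or $(*)$/$(**)$\+type hypothesis, so the present corollary holds unconditionally. I would prove the CDG\+contramodule assertion and the CDG\+comodule assertion separately but in complete parallel, the former using parts~(a) and~(c) of Theorem~\ref{r-free-co-derived-thm} and the latter using parts~(b) and~(d).

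For the ``only if'' directions I would note that reduction modulo~$\m$ is a DG\+functor from $\C\contra\Rfr$ (resp.\ $\C\comod\Rfr$) to the DG\+category of CDG\+contramodules (resp.\ CDG\+comodules) over $\C/\m\C$, hence induces a triangulated functor on homotopy categories which, being exact on $\R$\+free objects, takes totalizations of short exact sequences to totalizations of short exact sequences. By Lemma~\ref{nakayama-reduct-products} this functor preserves infinite products, so it carries contraacyclic $\R$\+free CDG\+contramodules to contraacyclic CDG\+contramodules over $\C/\m\C$; dually, since $\P\mpsto\P/\m\P$ is right exact and commutes with infinite direct sums (Section~\ref{nakayama-sect}), the reduction of a coacyclic $\R$\+free CDG\+comodule is coacyclic.

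For the ``if'' directions I would combine the orthogonality and resolution parts of Theorem~\ref{r-free-co-derived-thm}. By parts~(a) and~(c), the subcategory $H^0(\C\contra\Rfr_\proj)$ and the contraacyclic CDG\+contramodules form a semiorthogonal decomposition of $H^0(\C\contra\Rfr)$, so $\P$ is contraacyclic as soon as $\Hom^\C(\F,\P)$ is acyclic for every $\F\in H^0(\C\contra\Rfr_\proj)$: decompose $\P$ into its projective part $\F_\P$ and a contraacyclic part, observe $\Hom^\C(\F,\P)\simeq\Hom^\C(\F,\F_\P)$ by part~(a), and take $\F=\F_\P$ to conclude $\F_\P\simeq0$. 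Assuming $\P/\m\P$ contraacyclic, Lemma~\ref{r-free-co-op-reduction}(a) shows $\Hom^\C(\F,\P)$ is a complex of free $\R$\+contramodules with reduction $\Hom^{\C/\m\C}(\F/\m\F,\P/\m\P)$; this is acyclic because $\F/\m\F$ is projective (Lemma~\ref{r-free-co-inj-proj-reduction}(b)) and $\P/\m\P$ is contraacyclic, by the field case of the orthogonality \cite[Theorem~4.4]{Pkoszul}, and then Lemma~\ref{nakayama-acycl-contract} makes $\Hom^\C(\F,\P)$ contractible. The comodule case is identical, using parts~(b) and~(d), Lemma~\ref{r-free-co-op-reduction}(b), and Lemma~\ref{r-free-co-inj-proj-reduction}(a), testing coacyclicity of $\M$ by the acyclicity of $\Hom_\C(\M,\J)$ for injective-underlying $\J$.

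I expect the only genuinely delicate point to be the bookkeeping in the ``only if'' directions: one must verify that reduction modulo~$\m$ preserves the precise exact operation defining the relevant subcategory — infinite products for contraacyclicity and infinite direct sums for coacyclicity — and not merely that it is additive and exact. This is exactly where Lemma~\ref{nakayama-reduct-products} and the direct-sum-commutation of $\P\mpsto\P/\J\P$ enter; both are already established, so no new work is needed. The ``if'' directions are then a routine transcription of the absolute-acyclicity argument once the correct halves of Theorem~\ref{r-free-co-derived-thm} and of Lemma~\ref{r-free-co-op-reduction} are inserted.
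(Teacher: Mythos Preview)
Your proposal is correct and follows essentially the same approach as the paper, which simply refers back to the proof of Corollary~\ref{r-free-abs-acycl-reduction}. You have correctly identified the necessary substitutions (Theorem~\ref{r-free-co-derived-thm} for Theorems~\ref{r-free-orthogonality}--\ref{r-free-absolute-derived}, Lemma~\ref{r-free-co-op-reduction} for Lemma~\ref{r-free-hom-reduction}) and are even more careful than the paper in noting that the ``only if'' direction now requires preservation of infinite products and direct sums, not just of exact triples.
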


\begin{proof}
 See the proof of Corollary~\ref{r-free-abs-acycl-reduction}.
\end{proof}

 Given a CDG\+contramodule $\P\in\C\contra\Rfr_\proj$,
the graded left $\C$\+comodule $\Phi_\C(\P)=\C\ocn_\C\P$
is endowed with a CDG\+comodule structure with the conventional
tensor product differential.
 Conversely, given a CDG\+comodule $\M\in\C\comod\Rfr_\inj$,
the graded left $\C$\+contramodule $\Psi_\C(\M)=
\Hom_\C(\C,\M)$ is endowed with a CDG\+con\-tramodule structure
with the conventional $\Hom$ differential.
 One easily checks that $\Phi_\C$ and $\Psi_\C$ are
mutually inverse equivalences between the DG\+categories
$\C\contra\Rfr_\proj$ and $\C\comod\Rfr_\inj$
(see Proposition~\ref{r-free-c-co-contra}).

\begin{cor} \label{r-free-derived-co-contra}
 The derived functors
$$
 \boL\Phi_\C\:\sD^\ctr(\C\contra\Rfr)\lrarrow\sD^\co(\C\comod\Rfr)
$$
and
$$
 \boR\Psi_\C\:\sD^\co(\C\comod\Rfr)\lrarrow\sD^\ctr(\C\contra\Rfr)
$$
defined by identifying\/ $\sD^\ctr(\C\contra\Rfr)$ with
$H^0(\C\contra\Rfr_\proj)$ and\/ $\sD^\co(\C\comod\Rfr)$ with
$H^0(\C\comod\Rfr_\inj)$ are mutually inverse equivalences
between the contraderived category\/ $\sD^\ctr(\C\contra\Rfr)$
and the coderived category\/ $\sD^\co(\C\comod\Rfr)$. \qed
\hfuzz=1.5pt
\end{cor}

 An $\R$\+free CDG\+contra/comodule over $\C$ is said to be
\emph{absolutely acyclic} if it belongs to the minimal thick
subcategory of the homotopy category of $\R$\+free
CDG\+contra/comodules over $\C$ containing the totalizations of
short exact sequences of $\R$\+free CDG\+contra/comodules.
 The quotient category of the homotopy category $H^0(\C\comod\Rfr)$,
\ $H^0(\comodrRfr\C)$, or $H^0(\C\contra\Rfr)$ by the thick
subcategory of absolutely acyclic CDG\+contra/comodules is 
called the \emph{absolute derived category} of (left or right)
CDG\+contra/comodules over $\C$ and denoted by
$\sD^\abs(\C\comod\Rfr)$, \ $\sD^\abs(\comodrRfr\C)$, or
$\sD^\abs(\C\contra\Rfr)$, respectively.

 The following result is to be compared with
Theorem~\ref{r-free-absolute-derived}.

\begin{thm} \label{r-free-finite-dim-co-derived}
 Let\/ $\C$ be an\/ $\R$\+free CDG\+coalgebra.
 Then whenever the exact category of\/ $\R$\+free graded left\/
$\C$\+contramodules has finite homological dimension, the classes of
contraacyclic and absolutely acyclic\/ $\R$\+free left
CDG\+contramodules over\/ $\C$ coincide, so\/
$\sD^\ctr(\C\contra\Rfr)=\sD^\abs(\C\contra\Rfr)$.
 Whenever the exact category of\/ $\R$\+free graded left\/
$\C$\+comodules has finite homological dimension, the classes of
coacyclic and absolutely acyclic\/ $\R$\+free left CDG\+comodules
over\/ $\C$ coincide, so\/
$\sD^\co(\C\comod\Rfr)=\sD^\abs(\C\comod\Rfr)$.
\end{thm}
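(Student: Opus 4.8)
The plan is to deduce both equalities from the appropriate analogues of Theorem~\ref{r-free-absolute-derived}, combined with Theorem~\ref{r-free-co-derived-thm}(c--d). I will spell out the contramodule case; the comodule case is strictly dual, with graded-injective objects and infinite direct sums replacing graded-projective objects and infinite products. Since the contraacyclic $\R$\+free left CDG\+contramodules form a triangulated subcategory of $H^0(\C\contra\Rfr)$ that is closed under infinite products and contains the totalizations of short exact sequences, it is thick and therefore contains the minimal thick subcategory with the latter property; hence every absolutely acyclic CDG\+contramodule is contraacyclic, and the localizations fit into a commutative triangle through a natural functor $r\:\sD^\abs(\C\contra\Rfr)\rarrow\sD^\ctr(\C\contra\Rfr)$. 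It remains to prove that $r$ is an equivalence. For this it suffices to check that the composition $H^0(\C\contra\Rfr_\proj)\rarrow H^0(\C\contra\Rfr)\rarrow\sD^\abs(\C\contra\Rfr)$ is an equivalence: the analogous composition into $\sD^\ctr(\C\contra\Rfr)$ is one by Theorem~\ref{r-free-co-derived-thm}(c), and these two compositions differ precisely by~$r$.

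Full faithfulness of the functor $H^0(\C\contra\Rfr_\proj)\rarrow\sD^\abs(\C\contra\Rfr)$ is already available: Theorem~\ref{r-free-co-derived-thm}(a) shows that $\Hom^\C(\P,\Q)$ is contractible for any $\P\in H^0(\C\contra\Rfr_\proj)$ and any contraacyclic~$\Q$, and in particular for any absolutely acyclic~$\Q$. So the whole point is essential surjectivity, i.e.\ the claim that every $\R$\+free left CDG\+contramodule $\M$ over $\C$ admits a closed morphism $\P\rarrow\M$ with $\P\in\C\contra\Rfr_\proj$ and absolutely acyclic cone. To produce~$\P$ I would use the curved cobar resolution of~$\M$, namely the totalization of the bicomplex whose $n$\+th column (for $n\ge1$) is the graded-projective $\C$\+contramodule $\Hom^\R(\C^{\ot^\R\,n}\;\M)$, carrying the bar differential~$\d$, the internal differential~$d$, and the curvature differential~$\delta$, built exactly as the cobar-resolution $\J$ in the proof of Theorem~\ref{r-free-semi-resolutions}.

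The main obstacle is that this resolution is a priori infinite, so that its totalization only yields a contraacyclic, not an absolutely acyclic, cone. This is exactly where the hypothesis that the exact category of $\R$\+free graded left $\C$\+contramodules has finite homological dimension, say~$\le D$, is used. By Corollary~\ref{r-free-co-homol-dim} together with Lemma~\ref{r-free-co-inj-proj-reduction}(b), the $D$\+th syzygy of the cobar resolution is itself a graded-projective $\R$\+free $\C$\+contramodule; following the general scheme of~\cite[Remark~3.6]{Pkoszul}, one may therefore truncate the resolution to a finite complex
$$
 0\lrarrow\Q_D\lrarrow\Hom^\R(\C^{\ot^\R\,D}\;\M)\lrarrow\dotsb
 \lrarrow\Hom^\R(\C,\M)\lrarrow\M\lrarrow0
$$
of $\R$\+free CDG\+contramodules in which $\Q_D$ is graded-projective. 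Its totalization~$\P$ is then a graded-projective $\R$\+free CDG\+contramodule mapping to~$\M$, and since it is assembled out of finitely many totalizations of short exact sequences of $\R$\+free CDG\+contramodules, the cone of $\P\rarrow\M$ is absolutely acyclic. This establishes essential surjectivity and hence that $r$ is an equivalence; being a Verdier quotient functor, $r$ can be an equivalence only if every contraacyclic $\R$\+free CDG\+contramodule already vanishes in $\sD^\abs(\C\contra\Rfr)$, that is, is absolutely acyclic. The comodule equality $\sD^\co(\C\comod\Rfr)=\sD^\abs(\C\comod\Rfr)$ then follows by the dual argument, invoking Theorem~\ref{r-free-co-derived-thm}(b,d), Lemma~\ref{r-free-co-inj-proj-reduction}(a), and the finite graded-injective coresolution obtained by truncating the curved cobar coresolution with columns $\C^{\ot^\R\,n}\ot^\R\M$.
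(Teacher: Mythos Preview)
Your overall strategy is correct and matches what the paper cites from \cite[Theorem~4.5 and Remark~3.6]{Pkoszul}: one shows that $H^0(\C\contra\Rfr_\proj)\rarrow\sD^\abs(\C\contra\Rfr)$ is an equivalence, using Theorem~\ref{r-free-co-derived-thm}(a) for full faithfulness, and producing for each~$\M$ a finite graded\+projective CDG\+contramodule resolution whose cone is absolutely acyclic.

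The gap is in your construction of that resolution. The columns $\Hom^\R(\C^{\ot^\R\,n},\M)$ of the curved cobar bicomplex are \emph{not}, individually, CDG\+contramodules over~$\C$: with the internal differential~$d$ alone one has $d^2\ne h*(-)$ whenever $h\ne0$, because the curvature acts through the $\C$\+contramodule structure of~$\M$ rather than through the free $\C$\+contramodule structure on $\Hom^\R(\C^{\ot^\R\,n},\M)$. The correction~$\delta$ that fixes this on the full totalization raises the column index, so the filtration by columns is not preserved by the total differential, and your displayed truncation is not a complex of CDG\+contramodules. (Note that the cobar resolution~$\J$ in the proof of Theorem~\ref{r-free-semi-resolutions} is only ever used there as a single totalized object, never as a complex of wcDG\+modules; the analogy breaks at exactly this step.)

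The argument in~\cite{Pkoszul} that the paper invokes uses instead the $G^+$ construction, spelled out here for $\C$\+contramodules in the proof of Theorem~\ref{non-adj-co-derived-res}: for any graded $\C$\+contramodule~$\F$, the object $G^+(\F)$ is a genuine CDG\+contramodule, with underlying graded $\C$\+contramodule an extension of $\F[-1]$ by~$\F$, hence projective when $\F$ is. Starting from a graded\+projective surjection $\F_0\rarrow\M$ one obtains a closed surjection $G^+(\F_0)\rarrow\M$ of CDG\+contramodules; its kernel is again a CDG\+contramodule, to which one applies the same step. At the level of underlying graded $\C$\+contramodules this is an honest projective resolution of~$\M$, so after $D$ steps the kernel is graded\+projective by the homological\+dimension hypothesis, and \emph{now} the truncation is a finite exact complex of CDG\+contramodules. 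Your totalization argument then applies verbatim. The comodule case is dual, using~$G^-$.
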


\begin{proof}
 See~\cite[Theorem~4.5 or Remark~3.6]{Pkoszul}.
 A more general (but also much more difficult) argument can be
found in~\cite[Theorem~1.6]{Psing}.
\end{proof}

 The right derived functor of homomorphisms of $\R$\+free
CDG\+contramodules
$$
 \Ext^\C\:\sD^\ctr(\C\contra\Rfr)^\sop\times
 \sD^\ctr(\C\contra\Rfr)\lrarrow H^0(\R\contra^\free)
$$
is constructed by restricting the functor $\Hom^\C$ to the full
subcategory $H^0(\C\contra\Rfr_\proj)^\sop\allowbreak\times
H^0(\C\contra\Rfr)\subset H^0(\C\contra\Rfr)^\sop\times
H^0(\C\contra\Rfr)$.
 The restriction takes values in $H^0(\R\contra^\free)$ by
Lemma~\ref{r-free-co-op-reduction}(a) and factorizes through
the Cartesian product of contraderived categories by
Theorem~\ref{r-free-co-derived-thm}(a) and~(c).
{\hfuzz=12.6pt\par}

 The right derived functor of homomorphisms of $\R$\+free
CDG\+comodules 
$$
 \Ext_\C\:\sD^\co(\C\comod\Rfr)^\sop\times
 \sD^\co(\C\comod\Rfr)\lrarrow H^0(\R\contra^\free)
$$
is constructed by restricting the functor $\Hom_\C$ to the full
subcategory $H^0(\C\comod\Rfr)^\sop\allowbreak\times
H^0(\C\comod\Rfr_\inj)\subset H^0(\C\comod\Rfr)^\sop\times
H^0(\C\comod\Rfr)$.
 The restriction takes values in $H^0(\R\contra^\free)$ by
Lemma~\ref{r-free-co-op-reduction}(b) and factorizes through
the Cartesian product of coderived categories by
Theorem~\ref{r-free-co-derived-thm}(b) and~(d).
{\hfuzz=14.3pt\par}

 The left derived functor of contratensor product of $\R$\+free
CDG\+comodules and CDG\+contramodules
$$
 \Ctrtor^\C\:\sD^\co(\comodrRfr\C)\times\sD^\ctr(\C\contra\Rfr)
 \lrarrow H^0(\R\contra^\free)
$$
is constructed by restricting the functor $\ocn_\C$ to the full
subcategory $H^0(\comodrRfr\C)\times H^0(\C\contra\Rfr_\proj)
\subset H^0(\comodrRfr\C)\times H^0(\C\contra\Rfr)$.
 The restriction takes values in $H^0(\R\contra^\free)$ by
Lemma~\ref{r-free-co-op-reduction}(c) and factorizes through
the coderived category in the first argument because
the contratensor product with a projective $\R$\+free graded
left $\C$\+contramodule preserves short exact sequences and
infinite direct sums of $\R$\+free graded right $\C$\+comodules.

 The right derived functor of cotensor product of $\R$\+free
CDG\+comodules
$$
 \Cotor^\C\:\sD^\co(\comodrRfr\C)\times\sD^\co(\C\comod\Rfr)
 \lrarrow H^0(\R\contra^\free)
$$
is constructed by restricting the functor $\oc_\C$ to either of
the full subcategories $H^0(\comodrRfr\C)\times H^0(\C\comod\Rfr_\inj)$
or $H^0(\comodrRfrinj\C)\times H^0(\C\comod\Rfr)\subset
H^0(\comodrRfr\C)\times H^0(\C\comod\Rfr)$.
 Here $\comodrRfrinj\C$ denotes the DG\+category of $\R$\+free
right CDG\+comodules over $\C$ with injective underlying
$\R$\+free graded $\C$\+comodules, and $H^0(\comodrRfrinj\C)$
is the corresponding homotopy category.
 The restriction takes values in $H^0(\R\contra^\free)$ by
Lemma~\ref{r-free-co-op-reduction}(d) and factorizes through
the Cartesian product of coderived categories because
the cotensor product with an injective $\R$\+free graded
$\C$\+comodule preserves short exact sequences and infinite
direct sums of $\R$\+free graded $\C$\+comodules.

 The left derived functor of cohomomorphisms of $\R$\+free
CDG\+comodules and CDG\+contramodules
$$
 \Coext_\C\:\sD^\co(\C\comod\Rfr)^\sop\times
 \sD^\ctr(\C\contra\Rfr)\lrarrow H^0(\R\contra^\free)
$$
is constructed by restricting the functor $\Cohom_\C$ to either of
the full subcategories $H^0(\C\comod\Rfr_\inj)^\sop\times
H^0(\C\contra\Rfr)$ or $H^0(\C\comod\Rfr)^\sop\times
H^0(\C\contra\Rfr_\proj)\subset H^0(\C\comod\Rfr)\times
H^0(\C\contra\Rfr)$.
 The restriction takes values in $H^0(\R\contra^\free)$ by
Lemma~\ref{r-free-co-op-reduction}(e) and factorizes through
the Cartesian product of the coderived and contraderived
categories for the reasons similar to the ones explained above.

\begin{prop}  \label{r-free-cotor-ctrtor}
\textup{(a)} The equivalence of triangulated categories
$$
 \boL\Phi_\C\:\sD^\ctr(\C\contra\Rfr)\simeq
 \sD^\co(\C\comod\Rfr)\,\,\:\!\boR\Psi_\C
$$
from Corollary~\textup{\ref{r-free-derived-co-contra}} transforms
the left derived functor
$$
 \Coext_\C\:\sD^\co(\C\comod\Rfr)^\sop\times\sD^\ctr(\C\contra\Rfr)
 \lrarrow H^0(\R\contra^\free)
$$
into the right derived functors
$$
 \Ext^\C\:\sD^\ctr(\C\contra\Rfr)^\sop\times\sD^\ctr(\C\contra\Rfr)
 \lrarrow H^0(\R\contra^\free)
$$
and
$$
 \Ext_\C\:\sD^\co(\C\comod\Rfr)^\sop\times\sD^\co(\C\comod\Rfr)
 \lrarrow H^0(\R\contra^\free).
$$
 In other words, the following diagram of categories, functors,
and equivalences is commutative:
$$
\begin{diagram}
\node{\sD^\ctr(\C\contra\Rfr)^\sop\times\sD^\ctr(\C\contra\Rfr)}
\arrow{s,=}\arrow[4]{e,t}{\Ext^\C}\node[4]{H^0(\R\contra^\free)}
\arrow{s,=} \\
\node{\sD^\co(\C\comod\Rfr)^\sop\times\sD^\ctr(\C\contra\Rfr)}
\arrow{s,=}\arrow[4]{e,t}{\Coext_\C}\node[4]{H^0(\R\contra^\free)}
\arrow{s,=} \\
\node{\sD^\co(\C\comod\Rfr)^\sop\times\sD^\co(\C\comod\Rfr)}
\arrow[4]{e,t}{\Ext_\C}\node[4]{H^0(\R\contra^\free)}
\end{diagram}
$$ \par
\textup{(b)} The same equivalence of triangulated categories\/
$\boL\Phi_\C=\boR\Psi_\C^{-1}$ transforms the right derived functor\/
$$
 \Cotor^\C\:\sD^\co(\comodrRfr\C)\times\sD^\co(\C\comod\Rfr)
 \lrarrow H^0(\R\contra^\free)
$$
into the left derived functor
$$
 \Ctrtor^\C\:\sD^\co(\comodrRfr\C)\times\sD^\ctr(\C\contra\Rfr)
 \lrarrow H^0(\R\contra^\free).
$$
 In other words, the following diagram of categories, functors,
and equivalences is commutative:
$$
\begin{diagram}
\node{\sD^\co(\comodrRfr\C)\times\sD^\co(\C\comod\Rfr)}
\arrow{s,=}\arrow[4]{e,t}{\Cotor^\C}\node[4]{H^0(\R\contra^\free)}
\arrow{s,=} \\
\node{\sD^\co(\comodrRfr\C)\times\sD^\ctr(\C\contra\Rfr)}
\arrow[4]{e,t}{\Ctrtor^\C}\node[4]{H^0(\R\contra^\free)}
\end{diagram}
$$
\end{prop}

\begin{proof}
 For any projective $\R$\+free left CDG\+contramodule $\P$ and any
$\R$\+free left CDG\+contramodule $\Q$ over $\C$, there is
a natural isomorphism of complexes of free $\R$\+contramodules
$\Cohom_\C(\Phi_\C(\P),\Q)\simeq\Hom^\C(\P,\Q)$.
 For any $\R$\+free left CDG\+comodule $\L$ and any injective
$\R$\+free left CDG\+comodule $\M$ over $\C$, there is
a natural isomorphism of complexes of free $\R$\+contramodules
$\Cohom_\C(\L,\Psi_\C(\M))\simeq\Hom_\C(\L,\M)$.
 For any $\R$\+free right CDG\+comodule $\N$ and any projective
$\R$\+free left CDG\+contramodule $\P$ over $\C$, there is
a natural isomorphism of complexes of free $\R$\+contramodules
$\N\ocn_\C\P\simeq\N\oc_\C\Phi_\C(\P)$.
 (Cf.~\cite[Section~5.6]{Psemi}.) \emergencystretch=1em
\end{proof}

 Let $f=(f,a)\:\C\rarrow\D$ be a morphism of $\R$\+free
CDG\+coalgebras.
 Then any $\R$\+free graded contramodule (resp., comodule)
over $\C$ can be endowed with a graded $\D$\+contramodule
(resp., $\D$\+comodule) structure via~$f$, and any homogeneous
morphism (of any degree) between graded $\C$\+contramodules
(resp., $\C$\+comodules) can be also considered as a homogeneous
morphism (of the same degree) between graded $\D$\+contramodules
(resp., $\D$\+comodules).

 With any $\R$\+free left CDG\+contramodule $(\P,d_\P)$ over $\C$
one can associate an $\R$\+free left CDG\+contramodule $(\P,d'_\P)$
over $\D$ with the modified differential $d'_\P(p) = d_\P(p) + a*p$.
 Similarly, for any $\R$\+free left CDG\+comodule $(\M,d_\M)$ over
$\C$ the formula $d'_\M(x) = d_\M(x) + a*x$ defines a modified
differential on $\M$ making $(\M,d'_\M)$ an $\R$\+free left
CDG\+comodule over~$\D$.
 Finally, with any $\R$\+free right CDG\+comodule $(\N,d_\N)$ over
$\C$ one associate an $\R$\+free right CDG\+comodule $(\N,d'_\N)$
over $\D$ with the modified differential
$d'_\N(y)=d_\N(y)-(-1)^{|y|}y*a$.

 We have constructed the DG\+functors of contrarestriction of
scalars $R^f\:\C\contra\Rfr\allowbreak\rarrow\D\contra\Rfr$, and
corestriction of scalars $R_f\:\C\comod\Rfr\rarrow\D\comod\Rfr$ and
$\comodrRfr\C\rarrow\comodrRfr\D$.
 Passing to the homotopy categories, we obtain the triangulated
functors $R^f\:H^0(\C\contra\Rfr)\rarrow H^0(\D\contra\Rfr)$
and $R_f\:H^0(\C\comod\Rfr)\rarrow H^0(\D\comod\Rfr)$.
 Since the contra/corestriction of scalars clearly preserves
contra/coacyclicity, we have the induced functors on
the contra/coderived categories {\hfuzz=5.9pt
$$
 \boI R^f\:\sD^\ctr(\C\contra\Rfr)\lrarrow\sD^\ctr(\D\contra\Rfr)
$$
and }
$$
 \boI R_f\:\sD^\co(\C\comod\Rfr)\lrarrow\sD^\co(\C\comod\Rfr).
$$

 The triangulated functor $\boI R^f$ has a left adjoint.
 The DG\+functor $E^f\:\D\contra\Rfr_\proj\allowbreak\rarrow
\C\contra\Rfr_\proj$ is defined on the level of graded contramodules
by the rule $\Q\mpsto\Cohom_\D(\C,\Q)$
(see Lemma~\ref{r-free-co-op-reduction}(e)); the differential on
$\Cohom_\D(\C,\Q)$ induced by the differentials on $\C$ and $\Q$ is
modified to obtain the differential on $E^f(\Q)$ using
the change-of-connection linear function~$a$.
 Passing to the homotopy categories and taking into account
Theorem~\ref{r-free-co-derived-thm}(c), we obtain the left derived
functor of contraextension of scalars
$$
 \boL E^f\:\sD^\ctr(\D\contra\Rfr)\lrarrow\sD^\ctr(\C\contra\Rfr),
$$
which is left adjoint to the functor~$\boI R^f$.

 Similarly, the triangulated functor $\boI R_f$ has a right adjoint.
 The DG\+functor $E_f\:\D\comod\Rfr_\inj\rarrow\C\comod\Rfr_\inj$
is defined on the level of graded comodules by the rule
$\N\mpsto\C\oc_\D\N$ (see Lemma~\ref{r-free-co-op-reduction}(d));
the change-of-connection linear function~$a$ is used to modify
the differential on $\C\oc_\D\N$ induced by the differentials on
$\C$ and~$\N$.
 Passing to the homotopy categories and taking into account
Theorem~\ref{r-free-co-derived-thm}(d), we obtain the right derived
functor of coextension of scalars
$$
 \boR E_f\:\sD^\co(\D\comod\Rfr)\lrarrow\sD^\co(\C\comod\Rfr).
$$

\begin{prop} \label{r-free-co-extension}
 The equivalences of triangulated categories\/ $\boL\Phi_\C=
\boR\Psi_\C^{-1}$ and\/ $\boL\Phi_\D=\boR\Psi_\D^{-1}$ from
Corollary~\textup{\ref{r-free-derived-co-contra}} transform
the left derived functor\/ $\boL E^f$ into the right derived
functor\/ $\boR E_f$ and back.
 In other words, the following diagram of categories, functors,
and equivalences is commutative:
$$
\begin{diagram}
 \node{\llap{$\boL\Phi_\D$}\:\sD^\ctr(\D\contra\Rfr)}
 \arrow{e,=}\arrow{s,l}{\boL E^f}
 \node{\sD^\co(\D\comod\Rfr)\,\.\:\!\rlap{$\boR\Psi_\D$}}
 \arrow{s,r}{\boR E_f}\\
 \node{\llap{$\boL\Phi_\C$}\:\sD^\ctr(\C\contra\Rfr)}
 \arrow{e,=}
 \node{\sD^\co(\C\comod\Rfr)\,\.\:\!\rlap{$\boR\Psi_\C$}}
\end{diagram}
$$
\end{prop}

\begin{proof}
 See~\cite[Section~5.4]{Pkoszul} or \cite[Section~7.1.4
and Corollary~8.3.4]{Psemi}.
\end{proof}

\begin{thm}  \label{r-free-co-restriction}
\textup{(a)} Let $f=(f,a)\:\C\rarrow\D$ be a morphism of\/
$\R$\+free CDG\+co\-algebras.
 Then the functor\/ $\boI R^f\:\sD^\ctr(\C\contra\Rfr)\rarrow
\sD^\ctr(\D\contra\Rfr)$ is an equivalence of triangulated
categories if and only if the functor\/ $\boI R_f\:
\sD^\co(\C\comod\Rfr)\allowbreak\rarrow\sD^\co(\D\comod\Rfr)$ is
such an equivalence. \par
\textup{(b)} A morphism~$f=(f,a)$ of\/ $\R$\+free CDG\+coalgebras
has the above two equivalent properties~\textup{(a)} provided that
the morphism $f/\m f=(f/\m f\;a/\m a)\:\C/\m\C\rarrow\D/\m\D$
of CDG\+coalgebras over~$k$ has the similar properties.
\end{thm}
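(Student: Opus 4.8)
The plan is to derive part~(a) from the comodule--contramodule correspondence of Corollary~\ref{r-free-derived-co-contra} together with the elementary principle that a functor admitting an adjoint is an equivalence precisely when that adjoint is. Recall that $\boL E^f$ is left adjoint to $\boI R^f$, while $\boR E_f$ is right adjoint to $\boI R_f$. Hence $\boI R^f$ is an equivalence if and only if $\boL E^f$ is one, and likewise $\boI R_f$ is an equivalence if and only if $\boR E_f$ is one. By Proposition~\ref{r-free-co-extension}, the equivalences $\boL\Phi_\C=\boR\Psi_\C^{-1}$ and $\boL\Phi_\D=\boR\Psi_\D^{-1}$ intertwine $\boL E^f$ with $\boR E_f$; since these correspondences are themselves equivalences of triangulated categories, $\boL E^f$ is an equivalence if and only if $\boR E_f$ is. Concatenating the three biconditionals yields part~(a).

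For part~(b), the first step is to observe that the reduction functor $\M\mpsto\M/\m\M$, which carries $\R$\+free CDG\+comodules over $\C$ to CDG\+comodules over $\C/\m\C$, induces a \emph{conservative} triangulated functor $\sD^\co(\C\comod\Rfr)\rarrow\sD^\co((\C/\m\C)\comod)$. Indeed, it takes coacyclic objects to coacyclic ones, so it descends to the coderived categories; and if $g$ is a morphism whose reduction $g/\m g$ is an isomorphism, then $\cone(g/\m g)\simeq\cone(g)/\m\cone(g)$ is coacyclic, whence $\cone(g)$ is coacyclic by Corollary~\ref{r-free-co-acycl-reduction}, so $g$ was already an isomorphism. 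The same applies with $\C$ replaced by $\D$.

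Next I would check that $\boI R_f$ and its right adjoint $\boR E_f$ commute with reduction modulo~$\m$, forming (up to natural isomorphism) commutative squares with the analogous functors $\boI R_{f/\m f}$ and $\boR E_{f/\m f}$ for CDG\+coalgebras over~$k$. For $\boI R_f$ this is immediate, as corestriction of scalars along~$f$ and the modification of the differential by~$a$ reduce to those along $f/\m f$ and $a/\m a$; for $\boR E_f$, given on injective objects by $\N\mpsto\C\oc_\D\N$, it follows from Lemma~\ref{r-free-co-op-reduction}(d). Granting the compatibility of the adjunction unit and counit with these squares, I would argue thus: by hypothesis $f/\m f$ has the equivalence property, so $\boI R_{f/\m f}$ is an equivalence and the unit $\mathrm{id}\rarrow\boR E_{f/\m f}\boI R_{f/\m f}$ and counit $\boI R_{f/\m f}\boR E_{f/\m f}\rarrow\mathrm{id}$ are isomorphisms. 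Reducing the unit and counit of the adjunction $\boI R_f\dashv\boR E_f$ modulo~$\m$ produces exactly these isomorphisms; by the conservativity established above, the unit and counit over~$\R$ are themselves isomorphisms, so $\boI R_f$ is an equivalence, and part~(a) then gives the same for $\boI R^f$.

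The main obstacle is the compatibility asserted at the start of the previous paragraph: one must verify that the units and counits of the adjunction $\boI R_f\dashv\boR E_f$ genuinely correspond, under the reduction functor, to those of $\boI R_{f/\m f}\dashv\boR E_{f/\m f}$. This requires tracing the explicit construction of $\boR E_f$ through the injective resolutions used to compute it, and confirming via Lemma~\ref{r-free-co-op-reduction}(d) and the constructions of Section~\ref{r-free-co-derived} that the comparison maps built from $\C\oc_\D({-})$ reduce correctly modulo~$\m$; the routine but lengthy coherence bookkeeping here parallels the DG\+algebra argument in the proof of Theorem~\ref{r-free-reduction-quasi} and the references to~\cite{Pkoszul} cited there.
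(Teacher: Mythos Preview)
Your proposal is correct and follows essentially the same route as the paper's proof. For part~(a) the paper states tersely that the two functors $\boI R^f$ and $\boI R_f$ are adjoints on different sides to the ``same'' functor $\boL E^f=\boR E_f$ (via Proposition~\ref{r-free-co-extension}), which is exactly the chain of biconditionals you spell out; for part~(b) the paper likewise reduces the adjunction morphisms modulo~$\m$ and invokes Corollary~\ref{r-free-co-acycl-reduction}, passing over in silence the coherence checks you rightly flag as the main bookkeeping burden.
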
 

\begin{proof}
 To prove part~(a), it suffices to notice that, according to
Proposition~\ref{r-free-co-extension}, the two triangulated
functors in question are the adjoints on different sides to
the same functor $\boL E^f=\boR E_f$.
 To prove part~(b), consider the adjunction morphisms
for the functors $\boL E^f$ and $\boI R^f$, or alternatively,
for the functors $\boI R_f$ and $\boR E_f$.
 The functors of reduction modulo $\m$ transform these into
the adjunction morphisms for the similar functors related
to the morphism of CDG\+algebras~$f/\m f$.
 By Corollary~\ref{r-free-co-acycl-reduction}, if the latter
are isomorphisms, then so are the former.
\end{proof}

\subsection{$\R$-cofree graded contra/comodules}
\label{r-cofree-graded-co}
 Let $\C$ be an $\R$\+free graded coalgebra.
 An \emph{$\R$\+cofree graded left comodule} $\cM$ over $\C$ is
a graded left $\C$\+comodule in the module category of cofree
$\R$\+comodules over the tensor category of free $\R$\+contramodules.
 In other words, it is a cofree graded $\R$\+comodule endowed with
a (coassociative and counital) homogeneous $\C$\+coaction map
$\cM\rarrow\C\ocn_\R\cM$.
 \ \ \emph{$\R$\+free graded right comodules} $\cN$ over $\C$ are
defined in the similar way.

 An \emph{$\R$\+cofree graded left contramodule} $\cP$ over $\C$
is a cofree graded $\R$\+comodule endowed with
a $\C$\+contraaction map $\Ctrhom_\R(\C,\cP)\rarrow\cP$, which
must be a morphism of graded $\R$\+comodules satisfying
the conventional contraassociativity and counit axioms.
 This can be rephrased by saying that an $\R$\+cofree
$\C$\+contramodule is an object of the opposite category to
the category of graded $\C$\+comodules in the module category
$(\R\comod^\cofr)^\sop$ over the tensor category
$\R\contra^\free$ (see Section~\ref{contra-operations} and
\cite[Section~0.2.4 or~3.1.1]{Psemi}).

 Specifically, the contraassociativity axiom asserts that the two
compositions $\Ctrhom_\R(\C,\Ctrhom_\R(\C,\cP))\simeq
\Ctrhom_\R(\C\ot^\R\C\;\cP)\birarrow\Ctrhom_\R(\C,\cP)\rarrow\cP$
of the maps induced by the contraaction and comultiplication maps
with the contraaction map must be equal to each other.
 The counit axiom claims that the composition $\cP\rarrow
\Ctrhom_\R(\C,\cP)\rarrow\cP$ of the map induced by the counit
map with the contraaction map must be equal to the identity
endomorphism of~$\cP$.
 For the sign rule, see~\cite[Section~2.2]{Pkoszul}.

 The categories of $\R$\+cofree graded $\C$\+comodules and
$\C$\+contramodules are enriched over the tensor category of
(graded) $\R$\+contramodules.
 So the abelian group of morphisms between two $\R$\+cofree
graded left $\C$\+comodules $\cL$ and $\cM$ is the underlying
abelian group of the degree-zero component of the (not
necessarily free) graded $\R$\+contramodule $\Hom_\C(\cL,\cM)$
constructed as the kernel of the pair of morphisms of graded
$\R$\+contramodules $\Hom_\R(\cL,\cM)\birarrow\Hom_\R(\cL\;
\C\ocn_\R\cM)$ induced by the coactions of $\C$ in $\cL$ and~$\cM$.
 For the sign rules, which are different for the left and
the right comodules, see~\cite[Section~2.1]{Pkoszul}.
 Similarly, the abelian group of morphisms between two
$\R$\+cofree graded left $\C$\+contramodules $\cP$ and $\cQ$ is
the underlying abelian group of the zero-degree component of
the graded $\R$\+contramodule $\Hom^\C(\cP,\cQ)$ constructed as
the kernel of the pair of morphisms of graded $\R$\+contramodules
$\Hom_\R(\cP,\cQ)\birarrow\Hom_\R(\Ctrhom_\R(\C,\cP),\cQ)$
induced by the contraactions of $\C$ in $\cP$ and~$\cQ$.

 The graded $\R$\+comodule $\Hom_\C(\L,\cM)$ from an $\R$\+free
graded left $\C$\+comodule $\L$ to an $\R$\+cofree graded left
$\C$\+comodule $\cM$ is defined as the kernel of the pair of
morphisms of graded $\R$\+comodules $\Ctrhom_\R(\L,\cM)\birarrow
\Ctrhom_\R(\L\;\C\ocn_\R\cM)$, one of which is induced by
the coaction morphism $\cM\rarrow\C\ocn_\R\cM$, while the other
is constructed in terms of the coaction morphism $\L\rarrow
\C\ot^\R\L$ in the following way.
 A given element of $\Ctrhom_\R(\L,\cM)$ is composed with
the adjunction morphism $\cM\rarrow\Ctrhom_\R(\C\;\C\ocn_\R\cM)$,
resulting in an element of $\Ctrhom_\R(\L,\Ctrhom_\R(\C\;
\C\ocn_\R\cM))\simeq\Ctrhom_\R(\C\ot^\R\L\;\C\ocn_\R\cM)$.
 The latter element is composed with the morphism of $\C$\+coaction
in~$\L$.
 Similarly, the graded $\R$\+comodule $\Hom^\C(\P,\cQ)$ from
an $\R$\+free graded left $\C$\+contramodule $\P$ to an $\R$\+cofree
graded left $\C$\+contramodule $\cQ$ is defined as the kernel of
the pair of morphisms of graded $\R$\+comodules
$\Ctrhom_\R(\P,\cQ)\birarrow\Ctrhom_\R(\Hom^\R(\C,\P),\cQ)$,
one of which is induced by the contraaction morphism
$\Hom^\R(\C,\P)\rarrow\P$, while the other is constructed in
terms of the contraaction morphism $\Ctrhom_\R(\C,\cQ)\rarrow\cQ$
in the following way.
 A given element of $\Ctrhom_\R(\P,\cQ)$ is composed with
the evaluation morphism $\C\ot^\R\Hom^\R(\C,\P)\rarrow\P$,
resulting in an element of $\Ctrhom_\R(\C\ot^\R\Hom^\R(\C,\P)\;
\cQ)\simeq\Ctrhom_\R(\Hom^\R(\C,\P)\;\Ctrhom_\R(\C,\cQ))$.
 The latter element is composed with the morphism of
$\C$\+contraaction in~$\cQ$.

 The \emph{contratensor product} $\N\ocn_\C\cP$ of an $\R$\+free
right $\C$\+comodule $\N$ and an $\R$\+cofree left $\C$\+contramodule
$\cP$ is the (not necessarily cofree) graded $\R$\+comodule
constructed as the cokernel of the pair of morphisms of graded
$\R$\+comodules $\N\ocn_\R\Ctrhom_\R(\C,\cP)\birarrow
\N\ocn_\R\cP$ defined in terms of the $\C$\+coaction in $\N$,
the $\C$\+contraaction in $\P$, and the evaluation map
$\C\ocn_\R\Ctrhom_\R(\C,\cP)\rarrow\cP$.
 The latter can be obtained from the natural isomorphism
$\Hom_\R(\C\ocn_\R\Ctrhom_\R(\C,\cP)\;\cP)\simeq
\Hom_\R(\Ctrhom_\R(\C,\cP),\Ctrhom_\R(\C,\cP))$
(see Section~\ref{contra-operations}).
 For further details, see~\cite[Section~0.2.6]{Psemi};
for the sign rule, see~\cite[Section~2.2]{Pkoszul}.

 The \emph{cotensor product} $\N\oc_\C\cM$ of an $\R$\+free graded
right $\C$\+comodule $\N$ and an $\R$\+cofree graded left
$\C$\+comodule $\cM$ is a graded $\R$\+comodule constructed as
the kernel of the pair of morphisms of graded $\R$\+comodules
$\N\ocn_\R\cM\birarrow\N\ot^\R\C\ocn_\R\cM$.
 The graded $\R$\+comodule of \emph{cohomomorphisms}
$\Cohom_\C(\M,\cP)$ from an $\R$\+free graded left $\C$\+comodule
$\M$ to an $\R$\+cofree graded left $\C$\+contramodule $\cP$ is
constructed as the cokernel of the pair of morphisms
$\Ctrhom_\R(\C\ot^\R\M\;\cP)\simeq\Ctrhom_\R(\M,\Ctrhom_\R(\C,\cP))
\birarrow\Ctrhom_\C(\M,\cP)$ induced by the $\C$\+coaction in
$\M$ and the $\C$\+contraaction in~$\cP$.
 The graded $\R$\+contramodule of \emph{cohomomorphisms}
$\Cohom_\C(\cM,\cP)$ from an $\R$\+cofree graded left $\C$\+comodule
$\cM$ to an $\R$\+cofree graded left $\C$\+contramodule $\cP$ is
constructed as the cokernel of the pair of morphisms
$\Hom_\R(\C\ocn_\R\cM\;\cP)\simeq\Hom_\R(\cM,\Ctrhom_\R(\C,\cP))
\rarrow\Hom_\R(\cM,\cP)$ induced by the $\C$\+coaction in $\cM$
and the $\C$\+contraaction in~$\cP$.

 For any cofree graded $\R$\+comodule $\cU$ and any $\R$\+free
graded right $\C$\+comodule $\N$, the cofree graded $\R$\+comodule
$\Ctrhom_\R(\N,\cU)$ has a natural left $\C$\+contramodule
structure provided by the map $\Ctrhom_\R(\C,\Ctrhom_\R(\N,\cU))
\simeq\Ctrhom_\C(\N\ot^\R\C\;\cU)\allowbreak\rarrow
\Ctrhom_\C(\N,\cU)$ induced by the $\C$\+coaction in~$\N$.
 For any $\R$\+free graded right $\C$\+comodule $\N$ and $\R$\+free
graded left $\C$\+contramodule $\P$ there is a natural isomorphism
of graded $\R$\+comodules $\Ctrhom_\R(\N\ocn_\C\P\;\cU)\simeq
\Hom^\C(\P,\Ctrhom_\R(\N,\cU))$.
 For any $\R$\+free graded right $\C$\+comodule $\N$ and $\R$\+cofree
graded left $\C$\+contramodule $\cP$ there is a natural isomorphism of
graded $\R$\+contramodules $\Hom_\R(\N\ocn_\C\cP\;\cU)\simeq
\Hom^\C(\cP,\Ctrhom_\R(\N,\cU))$.
 For any $\R$\+free graded right $\C$\+comodule $\N$ and $\R$\+cofree
graded left $\C$\+comodule $\cM$ there is a natural isomorphism of
graded $\R$\+contramodules $\Hom_\R(\N\oc_\C\cM\;\cU)\simeq
\Cohom^\C(\cM,\Ctrhom_\R(\N,\cU))$.
 
 The additive categories of $\R$\+cofree graded $\C$\+contramodules
and $\C$\+comodules have natural exact category structures: a short
sequence of $\R$\+cofree graded $\C$\+contra/comodules is said to be
exact is it is (split) exact as a short sequence of cofree graded
$\R$\+comodules.
 The exact category of $\R$\+cofree graded $\C$\+comodules admits
infinite direct sums, while the exact category of $\R$\+cofree
graded $\C$\+contramodules admits infinite products.
 Both operations are preserved by the forgetful functors to
the category of cofree graded $\R$\+comodules and preserve exact
sequences.

 If a morphism of $\R$\+cofree graded $\C$\+contra/comodules has
$\R$\+cofree kernel and cokernel in the category of graded
$\R$\+comodules, then these kernel and cokernel are naturally
endowed with $\R$\+cofree graded $\C$\+contra/comodule structures,
making them the kernel and cokernel of the same morphism in
the additive category of $\R$\+cofree graded $\C$\+contra/comodules.

 There are enough projective objects in the exact category of
$\R$\+cofree graded left $\C$\+contramodules; these are the direct
summands of the graded $\C$\+contramodules $\Ctrhom_\R(\C,\cU)$ 
freely generated by cofree graded $\R$\+comodules~$\cU$.
 Similarly, there are enough injective objects in the exact category
of $\R$\+cofree graded left $\C$\+comodules; these are the direct
summands of the graded $\C$\+comodules $\C\ocn_\R\cV$ cofreely
cogenerated by cofree graded $\R$\+comodules~$\cV$.

 For any $\R$\+cofree graded left $\C$\+comodule $\cL$ and any
cofree graded $\R$\+comodule $\cV$ there is a natural isomorphism
of graded $\R$\+contramodules $\Hom_\C(\cL\;\C\ocn_\R\cV)\simeq
\Hom_\R(\cL,\cV)$.
 For any $\R$\+free graded left $\C$\+comodule $\L$ and any
cofree graded $\R$\+co\-module $\cV$ there is a natural isomorphism
of graded $\R$\+comodules $\Hom_\C(\L\;\C\ocn_\R\cV)\allowbreak\simeq
\Ctrhom_\R(\L,\cV)$.
 For any $\R$\+cofree graded left $\C$\+contramodule $\cQ$ and
any cofree graded $\R$\+comodule $\cU$, there is a natural isomorphism
of graded $\R$\+contramodules $\Hom^\C(\Ctrhom_\R(\C,\cU),\cQ)
\simeq\Hom_\R(\cU,\cQ)$.
 For any $\R$\+cofree graded left $\C$\+contramod\-ule $\cQ$ and
any free graded $\R$\+contramodule $\U$, there is a natural
isomorphism of graded $\R$\+comodules $\Hom^\C(\Hom^\R(\C,\U),\cQ))
\simeq\Ctrhom_\R(\U,\cQ)$ \cite[Lemma~1.1.2]{Psemi}.
{\emergencystretch=0em\par}

 For any $\R$\+free graded right $\C$\+comodule $\N$ and any cofree
graded $\R$\+comodule $\cU$, there is a natural isomorphism of
graded $\R$\+comodules $\N\ocn_\C\Ctrhom_\R(\C,\cU)\simeq
\N\ocn_\R\cU$ \cite[Section~5.1.1]{Psemi}.
 For any $\R$\+free graded right $\C$\+comodule $\N$ and any cofree
graded $\R$\+comodule $\cV$, there is a natural isomorphism of
graded $\R$\+comodules $\N\oc_\C(\C\ocn_\R\cV)\simeq\N\ocn_\R\cV$.
 For any $\R$\+cofree graded left $\C$\+contramodule $\cP$ and any
free graded $\R$\+contramodule $\V$, there is a natural isomorphism
of graded $\R$\+comodules $\Cohom_\C(\C\ot^\R\V\;\cP)\simeq
\Ctrhom_\R(\V,\cP)$.
 For any $\R$\+cofree graded left $\C$\+contramodule $\cP$ and any
cofree graded $\R$\+comodule $\cV$, there is a natural isomorphism
of graded $\R$\+contramodules $\Cohom_\C(\C\ocn_\R\cV\;\cP)\simeq
\Hom_\R(\cV,\cP)$.
 For any $\R$\+free graded left $\C$\+comodule $\M$ and any cofree
graded $\R$\+comodule $\cU$, there is a natural isomorphism of
graded $\R$\+comodules $\Cohom_\C(\M,\Ctrhom_\R(\C,\cU))\simeq
\Ctrhom_\R(\M,\cU)$.
 For any $\R$\+cofree graded left $\C$\+comodule $\cM$ and any cofree
graded $\R$\+comodule $\cU$, there is a natural isomorphism of
graded $\R$\+contramodules $\Cohom_\C(\cM,\Ctrhom_\R(\C,\cU))\simeq
\Hom_\R(\cM,\cU)$ \cite[Sections~1.2.1 and~3.2.1]{Psemi}.

 All the results of Section~\ref{r-free-graded-co} have their
analogues for $\R$\+cofree graded $\C$\+contra\-modules and
$\C$\+comodules, which can be, at one's choice, either proven
directly in the similar way to the proofs in~\ref{r-free-graded-co},
or deduced from the results in~\ref{r-free-graded-co} using
Proposition~\ref{r-cofree-co-r-co-contra} below.
 In particular, we have the following constructions.

 Given a projective $\R$\+cofree graded left $\C$\+contramodule
$\cP$, the $\R$\+cofree graded left $\C$\+comodule $\Phi_\C(\cP)$
is defined as $\Phi_\C(\cP)=\C\ocn_\C\cP$.
 Here the contratensor product is endowed with a left $\C$\+comodule
structure as an $\R$\+cofree cokernel of a morphism of $\R$\+cofree
$\C$\+comodules.
 Given an injective $\R$\+cofree graded left $\C$\+comodule $\cM$,
the $\R$\+cofree graded left $\C$\+contramodule $\Psi_\C(\cM)$
is defined as $\Psi_\C(\cM)=\Hom_\C(\C,\cM)$.
 Here the $\Hom$ of $\C$\+comodules is endowed with a left
$\C$\+contramodule structure as an $\R$\+cofree kernel of
a morphism of $\R$\+cofree left $\C$\+contramodules.

\begin{prop} \label{r-cofree-c-co-contra}
 The functors\/ $\Phi_\C$ and\/ $\Psi_\C$ are mutually inverse
equivalences between the additive categories of projective\/
$\R$\+cofree graded left\/ $\C$\+contramodules and injective\/
$\R$\+cofree graded left\/ $\C$\+comodules.
\end{prop}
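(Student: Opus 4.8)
The plan is to follow the proof of Proposition~\ref{r-free-c-co-contra} essentially verbatim, working on generators. Recall from Section~\ref{r-cofree-graded-co} that the projective objects of the exact category of $\R$\+cofree graded left $\C$\+contramodules are precisely the direct summands of the contramodules $\Ctrhom_\R(\C,\cU)$ freely generated by cofree graded $\R$\+comodules~$\cU$, and the injective objects of the exact category of $\R$\+cofree graded left $\C$\+comodules are the direct summands of the comodules $\C\ocn_\R\cV$ cofreely cogenerated by cofree graded $\R$\+comodules~$\cV$. Since $\Phi_\C$ and $\Psi_\C$ are additive functors that preserve direct summands, the first step is to reduce the whole statement to the behaviour of these two functors on the generating objects.

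Next I would carry out the two defining computations using the natural isomorphisms established in Section~\ref{r-cofree-graded-co}. Setting $\N=\C$ in the isomorphism $\N\ocn_\C\Ctrhom_\R(\C,\cU)\simeq\N\ocn_\R\cU$ gives
$$
 \Phi_\C(\Ctrhom_\R(\C,\cU)) = \C\ocn_\C\Ctrhom_\R(\C,\cU)\simeq\C\ocn_\R\cU,
$$
while setting $\L=\C$ in the isomorphism $\Hom_\C(\L\;\C\ocn_\R\cV)\simeq\Ctrhom_\R(\L,\cV)$ gives
$$
 \Psi_\C(\C\ocn_\R\cV) = \Hom_\C(\C\;\C\ocn_\R\cV)\simeq\Ctrhom_\R(\C,\cV).
$$
These are the exact analogues of the identities $\Phi_\C(\Hom^\R(\C,\U))=\C\ot^\R\U$ and $\Psi_\C(\C\ot^\R\V)=\Hom^\R(\C,\V)$ used in Proposition~\ref{r-free-c-co-contra}; they show that $\Phi_\C$ sends the projective generators to the injective generators and $\Psi_\C$ sends them back, with the composites $\Psi_\C\Phi_\C$ and $\Phi_\C\Psi_\C$ restricting to the identity on generators.

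The step I expect to require the most care is promoting this object-level matching of generators to a genuine equivalence of additive categories. For this I would exhibit the canonical unit and counit morphisms $\cP\rarrow\Psi_\C\Phi_\C(\cP)$ and $\Phi_\C\Psi_\C(\cM)\rarrow\cM$ as natural transformations defined on \emph{all} $\R$\+cofree graded $\C$\+contramodules and $\C$\+comodules, and then invoke the computations above to conclude that they are isomorphisms on the generators $\Ctrhom_\R(\C,\cU)$ and $\C\ocn_\R\cV$; additivity then forces them to be isomorphisms on arbitrary direct summands, hence on all projectives and injectives respectively. The delicate point here is the naturality and the compatibility of these morphisms with the $\C$\+coaction and $\C$\+contraaction, which ultimately rests on the $\R$\+level comodule-contramodule correspondence $\Phi_\R=\Psi_\R^{-1}$ of Proposition~\ref{r-co-contra} and its compatibility isomorphisms (in the spirit of Proposition~\ref{r-cofree-r-co-contra}).

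Finally, as an alternative to the direct route, I note that the whole proposition could instead be deduced by transporting the already established $\R$\+free statement of Proposition~\ref{r-free-c-co-contra} across the equivalence of exact categories furnished by the $\R$\+cofree comodule-contramodule correspondence (Proposition~\ref{r-cofree-co-r-co-contra}); I would, however, prefer the direct argument on generators given above, since that proposition is only proved later.
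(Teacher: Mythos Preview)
Your proposal is correct and follows essentially the same approach as the paper: the paper's proof consists of exactly the two isomorphisms $\Phi_\C(\Ctrhom_\R(\C,\cU))\simeq\C\ocn_\R\cU$ and $\Psi_\C(\C\ocn_\R\cV)\simeq\Ctrhom_\R(\C,\cV)$ that you derive, with the remaining steps (reduction to generators, naturality) left implicit just as in the proof of Proposition~\ref{r-free-c-co-contra}. Your added discussion of the unit/counit merely makes explicit what the paper takes for granted.
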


\begin{proof}
 One has $\Phi_\C(\Ctrhom_\R(\C,\cU))\simeq\C\ocn_\R\cU$ and
$\Psi_\C(\C\ocn_\R\cV)\simeq\Ctrhom_\R(\C,\cV)$.
\end{proof}

\begin{prop} \label{r-cofree-co-r-co-contra}
\textup{(a)} The exact categories of\/ $\R$\+free graded left\/
$\C$\+contramodules and of\/ $\R$\+cofree graded left\/
$\C$\+contramodules are naturally equivalent.
 The equivalence is provided by the functors\/ $\Phi_\R$ and\/
$\Psi_\R$ of comodule-contramodule correspondence over\/ $\R$,
defined in Section\/~\textup{\ref{hom-operations}}, which
transform free graded\/ $\R$\+contramodules with a\/ $\C$\+contramodule
structure into cofree graded\/ $\R$\+comodules with
a\/ $\C$\+contramodule structure and back. \par
\textup{(b)} The exact categories of\/ $\R$\+free graded left\/
$\C$\+comodules and of\/ $\R$\+cofree graded left\/ $\C$\+comodules
are naturally equivalent.
 The equivalence is provided by the functors\/ $\Phi_\R$ and\/
$\Psi_\R$, which transform free graded\/ $\R$\+contramodules with
a\/ $\C$\+comodule structure into cofree graded\/ $\R$\+comodules
with a\/ $\C$\+comodule structure and back.
\end{prop}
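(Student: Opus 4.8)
The plan is to deduce both statements from the comodule-contramodule correspondence of Proposition~\ref{r-co-contra}, together with its compatibility with the tensor operations recorded in the proof of Proposition~\ref{r-cofree-r-co-contra}. The conceptual point is that $\C$ is a coalgebra object in the tensor category $\R\contra^\free$, while both $\R\contra^\free$ and $\R\comod^\cofr$ are module categories over $\R\contra^\free$; the categories of $\R$\+free graded $\C$\+comodules and $\C$\+contramodules (resp., of $\R$\+cofree graded $\C$\+comodules and $\C$\+contramodules) are precisely the categories of comodule and contramodule objects over $\C$ inside the first (resp., the second) of these module categories. Since $\Phi_\R=\Psi_\R^{-1}$ is an equivalence of module categories over $\R\contra^\free$, it ought to transport such structures, and the proof I would write simply makes this transport explicit.

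Concretely, for part~(b) I would carry coactions across using the natural isomorphism $\Phi_\R(\C\ot^\R\M)\simeq\C\ocn_\R\Phi_\R(\M)$, a special case of $\Phi_\R(\P\ot^\R\Q)\simeq\P\ocn_\R\Phi_\R(\Q)$ from Proposition~\ref{r-cofree-r-co-contra}. Applying $\Phi_\R$ to a coaction $\M\rarrow\C\ot^\R\M$ yields a map $\Phi_\R(\M)\rarrow\C\ocn_\R\Phi_\R(\M)$, which I would verify is a coaction making $\Phi_\R(\M)$ an $\R$\+cofree graded $\C$\+comodule; the functor $\Psi_\R$ transports coactions back symmetrically. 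For part~(a) the relevant identity is $\Phi_\R(\Hom^\R(\C,\P))\simeq\Ctrhom_\R(\C,\Phi_\R(\P))$, which I would obtain from the isomorphism $\Psi_\R(\Ctrhom_\R(\P,\cM))\simeq\Hom^\R(\P,\Psi_\R(\cM))$ of Proposition~\ref{r-cofree-r-co-contra} by setting $\cM=\Phi_\R(\P)$ and applying $\Phi_\R$. A contraaction $\Hom^\R(\C,\P)\rarrow\P$ is then carried by $\Phi_\R$ to a contraaction $\Ctrhom_\R(\C,\Phi_\R(\P))\rarrow\Phi_\R(\P)$, and conversely.

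The verifications that remain are that the transported maps obey the (co)associativity and (co)unitality axioms, and that the correspondence is exact. The axioms follow from naturality and coherence: the iterated structural isomorphisms $\Phi_\R(\C\ot^\R\C\ot^\R\M)\simeq\C\ocn_\R\C\ocn_\R\Phi_\R(\M)$ and $\Phi_\R(\Hom^\R(\C\ot^\R\C\;\P))\simeq\Ctrhom_\R(\C,\Ctrhom_\R(\C,\Phi_\R(\P)))$ intertwine the two maps appearing in each axiom, so that the commutative diagram expressing the axiom on one side is carried to its counterpart on the other. As for exactness, a short sequence of $\R$\+free graded $\C$\+(co/contra)modules is exact precisely when it splits over free graded $\R$\+contramodules; since $\Phi_\R=\Psi_\R^{-1}$ is an additive equivalence between free graded $\R$\+contramodules and cofree graded $\R$\+comodules by Proposition~\ref{r-co-contra}, it preserves and reflects split short exact sequences, and the exact structures correspond.

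The one point requiring genuine care — the step I expect to be the main obstacle — is the compatibility on the contramodule side in part~(a) with the evaluation and adjunction morphisms built into the definition of the contraaction $\Ctrhom_\R(\C,\cP)\rarrow\cP$ of an $\R$\+cofree $\C$\+contramodule in Section~\ref{r-cofree-graded-co}. One must confirm that the isomorphism $\Phi_\R(\Hom^\R(\C,\P))\simeq\Ctrhom_\R(\C,\Phi_\R(\P))$ is compatible with the comultiplication and counit of $\C$ in the precise form demanded by the contraassociativity and counit equations. I would dispose of this by reducing, as elsewhere in the paper, to the generating case of a free graded $\C$\+contramodule $\P=\Hom^\R(\C,\U)$ (corresponding to $\Ctrhom_\R(\C,\cU)$ on the cofree side), where every operation is given by an explicit formula and the requisite diagram can be checked directly, and then extending by right exactness and preservation of the relevant (co)limits to arbitrary $\R$\+free graded $\C$\+contramodules.
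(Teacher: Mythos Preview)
Your proposal is correct and follows essentially the same approach as the paper, which simply refers back to the proof of Proposition~\ref{r-cofree-r-co-contra}: the equivalence $\Phi_\R=\Psi_\R^{-1}$ is an equivalence of module categories over $\R\contra^\free$, and the natural isomorphisms $\Phi_\R(\P\ot^\R\Q)\simeq\P\ocn_\R\Phi_\R(\Q)$ and $\Psi_\R(\Ctrhom_\R(\P,\cM))\simeq\Hom^\R(\P,\Psi_\R(\cM))$ transport the (co)module and contramodule structures. Your write-up is more explicit about the coherence and exactness checks than the paper, but the underlying argument is identical.
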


\begin{proof}
 See the proof of Proposition~\ref{r-cofree-r-co-contra}.
\end{proof}

 The equivalence $\Phi_\R=\Psi_\R^{-1}$ between the categories of
$\R$\+free and $\R$\+cofree graded $\C$\+contra/comodules is also
an equivalence of categories enriched over graded
$\R$\+contramodules.
 Besides, for any $\R$\+free graded left $\C$\+comodule $\L$ and
$\R$\+cofree graded left $\C$\+comodule $\cM$ there is a natural
isomorphism of graded $\R$\+contramodules
$\Hom_\C(\L,\Psi_\R(\cM))\simeq\Psi_\R(\Hom_\C(\L,\cM))$.
 For any $\R$\+free graded left $\C$\+contramodule $\P$ and
$\R$\+cofree graded left $\C$\+contramodule $\cQ$ there is a natural
isomorphism of graded $\R$\+contramodules
$\Hom^\C(\P,\Psi_\R(\cQ))\simeq\Psi_\R(\Hom^\C(\P,\cQ))$.
 For any $\R$\+free graded right $\C$\+comodule $\N$ and
$\R$\+free graded left $\C$\+contramodule $\P$ there is a natural
isomorphism of graded $\R$\+comodules $\N\ocn_\C\Phi_\R(\P)
\simeq\Phi_\R(\N\ocn_\C\P)$.

 For any $\R$\+free graded right $\C$\+comodule $\N$ and
$\R$\+cofree graded left $\C$\+comodule $\cM$ there is a natural
isomorphism of graded $\R$\+contramodules
$\N\oc_\C\Psi_\R(\cM)\simeq\Psi_\R(\N\oc_\C\cM)$.
 For any $\R$\+free graded left $\C$\+comodule $\M$ and 
$\R$\+free graded left $\C$\+contramodule $\P$ there is a natural
isomorphism of graded $\R$\+contramodules
$\Cohom_\C(\Phi_\R(\M),\Phi_\R(\P))\simeq\Cohom_\C(\M,\P)$
and a natural isomorphism of graded $\R$\+comodules
$\Cohom_\C(\M,\Phi_\R(\P))\simeq\Phi_\R(\Cohom_\C(\M,\P))$.

 Furthermore, the equivalences $\Phi_\R=\Psi_\R^{-1}$ transform
graded $\C$\+contramodules $\Hom_\R(\C,\U)$ freely generated by
free graded $\R$\+contramodules $\U$ into graded $\C$\+contramodules
$\Ctrhom_\R(\C,\cU)$ freely generated by cofree graded
$\R$\+comodules $\cU$, with $\Psi_\R(\cU)=\U$, and graded
$\C$\+comodules $\C\ot^\R\V$ cofreely cogenerated by free graded
$\R$\+contramodules $\V$ into graded $\C$\+comodules $\C\ocn_\R\cV$
cofreely cogenerated by cofree graded $\R$\+comodules $\cV$,
with $\cV=\Phi_\R(\V)$.
{\hbadness=1350\par}

 In particular, the equivalences of additive categories
$\Phi_\C=\Psi_\C^{-1}$ from Propositions~\ref{r-free-c-co-contra}
and~\ref{r-cofree-c-co-contra} form a commutative diagram with
(the appropriate restriction of) the equivalence of exact
categories $\Phi_\R=\Psi_\R^{-1}$ from
Proposition~\ref{r-cofree-co-r-co-contra}.

 Finally, the equivalences $\Phi_\R=\Psi_\R^{-1}$ between the exact
categories of $\R$\+free and $\R$\+cofree graded
$\C$\+contra/comodules transform the reduction functors
$\P\mpsto\P/\m\P$ and $\M\mpsto\M/\m\M$ into the reduction functors
$\cP\mpsto{}_\m\cP$ and $\cM\mpsto{}_\m\cM$, respectively,
the functors taking values in the abelian category of
$\C/\m\C$\+contra/comodules.
 This follows from the results of
Sections~\ref{hom-operations}\+-\ref{contra-operations}.

\subsection{Contra/coderived category of $\R$-cofree
CDG-contra/comodules}  \label{r-cofree-co-derived}
 Let $\C$ be an $\R$\+free graded coalgebra.
 Suppose that it is endowed with an odd coderivation~$d$ of degree~$1$.
 An \emph{odd coderivation} $d_\cM$ of an $\R$\+cofree graded
left $\C$\+comodule $\cM$ \emph{compatible with} the coderivation~$d$
on $\C$ is a differential ($\R$\+comodule endomorphism of degree~$1$)
such that the coaction map $\cM\rarrow\C\ocn_\R\cM$ forms a commutative
diagram with~$d_\cM$ and the differential on $\C\ocn_\R\cM$ induced
by $d$ and~$d_\cM$ (see Section~\ref{r-cofree-absolute}).
 Odd coderivations of $\R$\+cofree graded right $\C$\+comodules are
defined similarly.

 An \emph{odd contraderivation} $d_\cP$ of an $\R$\+cofree graded
left $\C$\+contramodule $\cP$ \emph{compatible with}
the coderivation~$d$ on $\C$ is a differential ($\R$\+comodule
endomorphism of degree~$1$) such that the contraaction map
$\Ctrhom_\R(\C,\cP)\rarrow\cP$ forms a commutative diagram
with~$d_\cP$ and the differential on $\Ctrhom_\R(\C,\cP)$ induced
by $d$ and~$d_\cP$.

 An $\R$\+cofree graded comodule $\cM$ over $\C$ is endowed with
the structure of a graded left module over the $\R$\+free graded
algebra $\C^*$ (see Section~\ref{r-free-co-derived}) provided
by the action map $\C^*\ocn_\R\cM\rarrow\C^*\ocn_\R(\C^*\ocn_\R\cM)
\simeq(\C^*\ot^\R\C)\ocn_\R\cM\rarrow\cM$. 
 \ $\R$\+cofree graded right comodules over $\C$ are endowed with
graded right $\C^*$\+module structures in the similar way.
 An $\R$\+cofree graded left contramodule $\cP$ over $\C$ is endowed
with a graded left $\C^*$\+module structure provided by the action
map $\C^*\ocn_\R\cP\rarrow\Ctrhom_\R(\C,\cP)\rarrow\cP$.
 For the sign rules, see~\cite[Section~4.1]{Pkoszul}.

 An \emph{$\R$\+cofree left CDG\+comodule} $\cM$ over $\C$ is
an $\R$\+cofree graded left $\C$\+comodule endowed with
an odd coderivation $\d_\cM\:\cM\rarrow\cM$ of degree~$1$
compatible with the coderivation~$d$ on $\C$ and satisfying
the equation $d^2_\cM(x)=h*x$ for all $x\in\cM$.
 The definition of an \emph{$\R$\+cofree right CDG\+comodule}
$\cN$ over $\C$ is similar; the only difference is that
the equation for the square of the differential has the form
$d^2_\cN(y)=-y*h$ for all $y\in\cN$.
 An \emph{$\R$\+cofree left CDG\+contramodule} $\cP$ over $\C$ is
an $\R$\+cofree graded left $\C$\+contramodule endowed with
an odd contraderivation $d_\cP\:\cP\rarrow\cP$ of degree~$1$
compatible with the coderivation~$d$ on $\C$ and satisfying
the equation $d^2_\cP(p)=h*p$ for all $p\in\cP$.
 Here the element $h\in\C^*{}^2$ is viewed as a homogeneous
morphism $\R\rarrow\C^*$ of degree~$2$ and the notation
$x\mpsto h*x$ is interpreted as the composition $\cM\simeq
\R\ocn_\R\cM\rarrow\C^*\ocn_\R\cM\rarrow\cM$ defining a homogeneous
endomorphism of degree~$2$ of the graded $\R$\+comodule $\cM$;
and similarly for $\cN$ and~$\cP$.

 $\R$\+cofree left (resp., right) CDG\+comodules over $\C$ naturally
form a DG\+category, which we will denote by $\C\comod\Rcof$
(resp., $\comodrRcof\C$).
 The similar DG\+cat\-egory of $\R$\+cofree left CDG\+contramodules
over $\C$ will be denoted by $\C\contra\Rcof$.
 In fact, these DG\+categories are enriched over the tensor category
of (complexes of) $\R$\+contramodules, so the complexes of morphisms
in $\C\comod\Rcof$, \ $\comodrRcof\C$, and $\C\contra\Rcof$ are
the underlying complexes of abelian groups for naturally defined
complexes of $\R$\+contramodules.
 The underlying graded $\R$\+contramodules of these complexes were
defined in Section~\ref{r-cofree-graded-co}, and the differentials
in them are defined in the conventional way.

 Passing to the zero cohomology of the complexes of morphisms,
we construct the homotopy categories of $\R$\+cofree
CDG\+comodules and CDG\+contramodules $H^0(\C\comod\Rcof)$, \
$H^0(\comodrRcof\C)$, and $H^0(\C\contra\Rcof)$.
 These we will mostly view as conventional categories with
abelian groups of morphisms (see Remark~\ref{contra-enriched}).
 Since the DG\+categories $\C\comod\Rcof$, \ $\comodrRcof\C$,
and $\C\contra\Rcof$ have shifts and twists, their homotopy
categories are triangulated.
 The DG- and homotopy categories of $\R$\+cofree CDG\+comodules
also have infinite direct sums, while the DG- and homotopy categories
of $\R$\+cofree CDG\+contramodules have infinite products.

 The complex $\Hom_\C(\L,\cM)$ from an $\R$\+free left
CDG\+comodule $\L$ to an $\R$\+cofree left CDG\+comodule $\cM$
over $\C$ is a complex of $\R$\+comodules obtained by endowing
the graded $\R$\+comodule $\Hom_\C(\L,\cM)$ constructed in
Section~\ref{r-cofree-graded-co} with the conventional $\Hom$
differential.
 The $\Hom$ between $\R$\+free and $\R$\+cofree CDG\+comodules
over $\C$ is a triangulated functor of two arguments
$$
 \Hom_\C\: H^0(\C\comod\Rfr)^\sop\times H^0(\C\comod\Rcof)
 \lrarrow H^0(\R\comod).
$$
 Similarly, the complex $\Hom^\C(\P,\cQ)$ from an $\R$\+free left
CDG\+contramodule $\P$ to an $\R$\+cofree left CDG\+contramodule
$\cQ$ over $\C$ is a complex of $\R$\+comodules obtained by
endowing the graded $\R$\+comodule $\Hom^\C(\P,\cQ)$ with
the conventional $\Hom$ differential.
 The $\Hom$ between $\R$\+free and $\R$\+cofree CDG\+contramodules
over $\C$ is a triangulated functor of two arguments
$$
 \Hom^\C\: H^0(\C\contra\Rfr)^\sop\times H^0(\C\contra\Rcof)
 \lrarrow H^0(\R\comod).
$$
 The \emph{contratensor product} $\N\ocn_\C\cP$ of an $\R$\+free
right CDG\+comodule $\N$ and an $\R$\+cofree left CDG\+contramodule
$\cP$ over $\C$ is a complex of $\R$\+comodules obtained by
endowing the graded $\R$\+comodule $\N\ocn_\C\cP$ with
the conventional tensor product differential.
 The contratensor product of $\R$\+free right CDG\+comodules and
$\R$\+cofree left CDG\+contramodules over $\C$ is a triangulated
functor of two arguments
$$
 \ocn_\C\: H^0(\comodrRfr\C)\times H^0(\C\contra\Rcof)\lrarrow
 H^0(\R\comod).
$$

 The cotensor product $\N\oc_\C\cM$ of an $\R$\+free right
CDG\+comodule $\N$ and an $\R$\+cofree left CDG\+comodule $\cM$
over $\C$ is a complex of $\R$\+comodules obtained by endowing
the graded $\R$\+comodule $\N\oc_\C\cM$ defined in
Section~\ref{r-cofree-graded-co} with the conventional tensor
product differential.
 The cotensor product of $\R$\+free right CDG\+comodules and
$\R$\+cofree left CDG\+comodules over $\C$ is a triangulated functor
of two arguments
$$
 \oc_\C\: H^0(\comodrRfr\C)\times H^0(\C\comod\Rcof)\lrarrow
 H^0(\R\comod).
$$
 The complex $\Cohom_\C(\M,\cP)$ from an $\R$\+free left
CDG\+comodule $\M$ to an $\R$\+cofree left CDG\+contramodule
$\cP$ over $\C$ is a complex of $\R$\+comodules obtained by
endowing the graded $\R$\+comodule $\Cohom_\C(\M,\cP)$ with
the conventional $\Hom$ differential.
 The $\Cohom$ from $\R$\+free CDG\+comodules to $\R$\+cofree
CDG\+contramodules over $\C$ is a triangulated functor of
two arguments
$$
 \Cohom_\C\: H^0(\C\comod\Rfr)^\sop\times H^0(\C\contra\Rcof)
 \lrarrow H^0(\R\comod).
$$
 The complex $\Cohom_\C(\cM,\cP)$ from an $\R$\+cofree left
CDG\+comodule $\cM$ to an $\R$\+cofree left CDG\+contramodule
$\cP$ over $\C$ is a complex of $\R$\+contramodules obtained
by endowing the graded $\R$\+contramodule $\Cohom_\C(\cM,\cP)$
with the conventional $\Hom$ differential.
 The $\Cohom$ from $\R$\+cofree CDG\+comodules to $\R$\+cofree
CDG\+contramodules over $\C$ is a triangulated functor of
two arguments
$$
 \Cohom_\C\: H^0(\C\comod\Rcof)^\sop\times H^0(\C\comod\Rcof)
 \lrarrow H^0(\R\contra).
$$

 An $\R$\+cofree left CDG\+comodule over $\C$ is called
\emph{coacyclic} if it belongs to the minimal triangulated 
subcategory of the homotopy category $H^0(\C\comod\Rcof)$
containing the totalizations of short exact sequences of
$\R$\+cofree CDG\+comodules over $\C$ and closed under infinite
direct sums.
 The quotient category of $H^0(\C\comod\Rcof)$ by the thick
subcategory of coacyclic $\R$\+cofree CDG\+comodules is called
the \emph{coderived category} of $\R$\+cofree left CDG\+comodules
over $\C$ and denoted by $\sD^\co(\C\comod\Rcof)$.
 The coderived category of $\R$\+cofree right CDG\+comodules over
$\C$, denoted by $\sD^\co(\comodrRcof\C)$, is defined similarly.

 An $\R$\+cofree left CDG\+contramodule over $\C$ is called
\emph{contraacyclic} if it belongs to the minimal triangulated
subcategory of the homotopy category $H^0(\C\contra\Rcof)$
containing the totalizations of short exact sequences of
$\R$\+cofree CDG\+contramodules over $\C$ and closed under
infinite products.
 The quotient category of $H^0(\C\contra\Rcof)$ by the thick
subcategory of contraacyclic $\R$\+cofree CDG\+contramodules
is called the \emph{contraderived category} of $\R$\+cofree left
CDG\+contramodules over $\C$ and denoted by
$\sD^\ctr(\C\contra\Rcof)$.

 Denote by $\C\contra\Rcof_\proj\subset\C\contra\Rcof$ the full
DG\+subcategory formed by all the $\R$\+cofree CDG\+contramodules
over $\C$ whose underlying $\R$\+cofree graded $\C$\+contramodules
are projective.
 Similarly, denote by $\C\comod\Rcof_\inj\subset\C\comod\Rcof$
the full DG\+subcategory formed by all the $\R$\+cofree
CDG\+comodules whose underlying $\R$\+cofree graded $\C$\+comodules
are injective.
 The corresponding homotopy categories are denoted by
$H^0(\C\contra\Rcof_\proj)$ and $H^0(\C\comod\Rcof_\inj)$.

 The functors $\Phi_\R=\Psi_\R^{-1}$ from
Section~\ref{r-cofree-graded-co} define equivalences of
DG\+categories $\C\comod\Rfr\simeq\C\comod\Rcof$ (and similarly
for right CDG\+comodules), and
$\C\contra\Rfr\simeq\C\contra\Rcof$.
 Being also equivalences of exact categories, these correspondences
idenfity coacyclic $\R$\+free CDG\+comodules with coacyclic
$\R$\+cofree CDG\+comodules and contraacyclic $\R$\+free
CDG\+contramodules with contraacyclic $\R$\+cofree CDG\+contramodules.
 So an equivalence of the coderived categories {\hbadness=1200
$$
 \Phi_\R\:\sD^\co(\C\comod\Rfr)\simeq
 \sD^\co(\C\comod\Rcof)\,\,\:\!\Psi_\R
$$
and} an equivalence of the contraderived categories
$$
 \Phi_\R\:\sD^\ctr(\C\contra\Rfr)\simeq
 \sD^\ctr(\C\contra\Rcof)\,\,\:\!\Psi_\R
$$
are induced.
 The above equivalences of DG\+categories also identify
$\C\contra\Rfr_\proj$ with $\C\contra\Rcof_\proj$ and
$\C\comod\Rfr_\inj$ with $\C\comod\Rcof_\inj$.
 The comparison isomorphisms of graded $\R$\+comodules and
$\R$\+contramodules from Section~\ref{r-cofree-graded-co}
describing the compatibilities of the functors $\Phi_\R$ and
$\Psi_\R$ with the operations $\Hom_\C$, $\Hom^\C$, $\ocn_\C$,
$\oc_\C$, $\Cohom_\C$ on $\R$\+(co)free graded $\C$\+comodules and
$\C$\+contramodules all become isomorphisms of complexes of
$\R$\+comodules and $\R$\+contramodules when applied to
$\R$\+(co)free CDG\+comodules and CDG\+contramodules over~$\C$.

 All the results of Section~\ref{r-free-co-derived} have their
analogues for $\R$\+cofree CDG\+comodules and CDG\+contramodules,
which can be, at one's choice, either proven directly in
the similar way to the proofs in~\ref{r-free-co-derived},
or deduced from the results in~\ref{r-free-co-derived} using
the above observations about the functors $\Phi_\R=\Psi_\R^{-1}$.
 In particular, we have the following results.

\begin{thm} \label{r-cofree-co-derived-thm}
 Let\/ $\C$ be an\/ $\R$\+free CDG\+coalgebra.  Then \par
\textup{(a)} for any CDG\+contramodule\/
$\cP\in H^0(\C\contra\Rcof_\proj)$ and any contraacyclic\/ $\R$\+cofree
left CDG\+contramodule\/ $\cQ$ over\/ $\C$, the complex of\/
$\R$\+contramodules\/ $\Hom^\C(\cP,\cQ)$ is contractible;
{\hbadness=1800\par}
\textup{(b)} for any coacyclic\/ $\R$\+cofree left CDG\+comodule\/
$\cL$ over\/ $\C$ and any CDG\+co\-module\/
$\cM\in H^0(\C\comod\Rcof_\inj)$, the complex of\/
$\R$\+contramodules\/ $\Hom_\C(\cL,\cM)$ is contractible; \par
\textup{(c)} the composition of natural functors 
$$
 H^0(\C\contra\Rcof_\proj)\lrarrow H^0(\C\contra\Rcof)
 \lrarrow\sD^\ctr(\C\contra\Rcof)
$$
is an equivalence of triangulated categories; \par
\textup{(d)} the composition of natural functors
$$
 H^0(\C\comod\Rcof_\inj)\lrarrow H^0(\C\comod\Rcof)\lrarrow
 \sD^\co(\C\comod\Rcof)
$$
is an equivalence of triangulated categories. \qed
\end{thm}

 Given a CDG\+contramodule $\cP\in\C\contra\Rcof_\proj$, the graded
left $\C$\+comodule $\Phi_\C(\cP)=\C\ocn_\C\cP$ is endowed with
a CDG\+comodule structure with the conventional tensor product
differential.
 Conversely, given a CDG\+comodule $\cM\in\C\comod\Rcof_\inj$,
the graded left $\C$\+contramodule $\Psi_\C(\cM)=\Hom_\C(\C,\cM)$
is endowed with a CDG\+con\-tramodule structure with
the conventional $\Hom$ differential.
 One easily checks that $\Phi_\C$ and $\Psi_\C$ are mutually
inverse equivalences between the DG\+categories $\C\contra\Rcof_\proj$
and $\C\comod\Rcof_\inj$ (see Proposition~\ref{r-cofree-c-co-contra}).

\begin{cor} \label{r-cofree-derived-co-contra}
 The derived functors
$$
 \boL\Phi_\C\:\sD^\ctr(\C\contra\Rcof)\lrarrow
 \sD^\co(\C\comod\Rcof)
$$
and
$$
 \boR\Psi_\C\:\sD^\co(\C\comod\Rcof)\lrarrow
 \sD^\ctr(\C\contra\Rcof)
$$
defined by identifying\/ $\sD^\ctr(\C\contra\Rcof)$ with
$H^0(\C\contra\Rcof_\proj)$ and\/ $\sD^\co(\C\comod\Rcof)$
with $H^0(\C\comod\Rcof_\inj)$ are mutually inverse equivalences
between the contraderived category\/ $\sD^\ctr(\C\contra\Rcof)$
and the coderived category\/ $\sD^\co(\C\comod\Rcof)$. \qed
\hfuzz=16.2pt
\end{cor}

 The equivalences of DG\+categories $\Phi_\C=\Psi_\C^{-1}$ form
a commutative diagram with the (appropriate restrictions of)
the equivalences of DG\+categories $\Phi_\R=\Psi_\R^{-1}$, hence
the derived functors $\boL\Phi_\C=\boR\Psi_\C^{-1}$ commute
with the triangulated functors induced by $\Phi_\R=\Psi_\R^{-1}$
(cf.\ the construction of functors $\boL\Phi_{\R,\C}=
\boR\Psi_{\R,\C}^{-1}$ in Section~\ref{non-adj-co-derived} below). 

 The right derived functor of homomorphisms of $\R$\+cofree
CDG\+contramodules
$$
 \Ext^\C\:\sD^\ctr(\C\contra\Rcof)^\sop\times
 \sD^\ctr(\C\contra\Rcof)\lrarrow H^0(\R\contra^\free)
$$
is constructed by restricting the functor $\Hom^\C$ to the full
subcategory $H^0(\C\contra\Rcof_\proj)^\sop\allowbreak\times
H^0(\C\contra\Rcof)\subset H^0(\C\contra\Rcof)^\sop
\times H^0(\C\contra\Rcof)$.
 Similarly, the right derived functor of $\Hom$ between
$\R$\+free and $\R$\+cofree CDG\+contramodules
{\hfuzz=18.2pt
$$
 \Ext^\C\:\sD^\ctr(\C\contra\Rfr)^\sop\times
 \sD^\ctr(\C\contra\Rcof)\lrarrow H^0(\R\comod^\cofr)
$$
is} constructed by restricting the functor $\Hom^\C$ to the full
subcategory $H^0(\C\contra\Rfr_\proj)^\sop\allowbreak\times
H^0(\C\contra\Rcof)$.
 The equivalences of categories $\Phi_\R=\Psi_\R^{-1}$ transform
these two functors into each other and the derived functor
$\Ext^\C$ of homomorphisms of $\R$\+free CDG\+contramodules
defined in Section~\ref{r-free-co-derived}.
{\hfuzz=12.6pt\par}

 Analogously, the right derived functor of homomorphisms of
$\R$\+cofree CDG\+co\-modules
$$
 \Ext_\C\:\sD^\co(\C\comod\Rcof)^\sop\times
 \sD^\co(\C\comod\Rcof)\lrarrow H^0(\R\contra^\free)
$$
is constructed by restricting the functor $\Hom_\C$ to the full
subcategory $H^0(\C\comod\Rcof)^\sop\allowbreak\times
H^0(\C\comod\Rcof_\inj)\subset H^0(\C\comod\Rcof)^\sop
\times H^0(\C\comod\Rcof)$.
 Similarly, the right derived functor of $\Hom$ between
$\R$\+free and $\R$\+cofree CDG\+comodules
{\hfuzz=19.9pt
$$
 \Ext_\C\:\sD^\co(\C\comod\Rfr)^\sop\times
 \sD^\co(\C\comod\Rcof)\lrarrow H^0(\R\comod^\cofr)
$$
is} constructed by restricting the functor $\Hom_\C$ to the full
subcategory $H^0(\C\comod\Rfr)^\sop\allowbreak\times
H^0(\C\comod\Rcof_\inj)$.
 The equivalences of categories $\Phi_\R=\Psi_\R^{-1}$ transform
these two functors into each other and the derived functor
$\Ext_\C$ of homomorphisms of $\R$\+free CDG\+comodules defined
in Section~\ref{r-free-co-derived}.
{\hfuzz=14.3pt\par}

 The left derived functor of contratensor product of $\R$\+free
CDG\+comodules and $\R$\+cofree CDG\+contramodules
$$
 \Ctrtor^\C\:\sD^\co(\comodrRfr\C)\times\sD^\ctr(\C\contra\Rcof)
 \lrarrow H^0(\R\comod^\cofr)
$$
is constructed by restricting the functor $\ocn_\C$ to the full
subcategory $H^0(\comodrRfr\C)\times H^0(\C\contra\Rcof_\proj)$.
 The equivalences of categories $\Phi_\R=\Psi_\R^{-1}$ transform
this functor into the derived functor $\Ctrtor^\C$ of
contratensor product of $\R$\+free CDG\+comodules and
CDG\+contramodules constructed in Section~\ref{r-free-co-derived}.

 The right derived functor of cotensor product of $\R$\+free
and $\R$\+cofree CDG\+co\-modules
$$
 \Cotor^\C\:\sD^\co(\comodrRfr\C)\times \sD^\co(\C\comod\Rcof)
 \lrarrow H^0(\R\comod^\cofr)
$$
is constructed by restricting the functor $\oc_\C$ to either of
the full subcategories $H^0(\comodrRfr\C)\times
H^0(\C\comod\Rcof_\inj)$ or $H^0(\comodrRfrinj\C)\times
H^0(\C\comod\Rcof)\subset H^0(\comodrRfr\C)\times
H^0(\C\comod\Rcof)$.
 The equivalences of categories $\Phi_\R=\Psi_\R^{-1}$ transform
this functor into the derived functor $\Cotor^\C$ of cotensor
product of $\R$\+free CDG\+comodules constructed in
Section~\ref{r-free-co-derived}.

 The left derived functor of cohomomorphisms from $\R$\+free
CDG\+comodules to $\R$\+cofree CDG\+contramodules
$$
 \Coext_\C\:\sD^\co(\C\comod\Rfr)^\sop\times\sD^\ctr(\C\contra\Rcof)
 \lrarrow H^0(\R\comod^\cofr)
$$
is constructed by restricting the functor $\Cohom_\C$ to either of
the full subcategories $H^0(\C\comod\Rfr_\inj)^\sop\times
H^0(\C\contra\Rcof)$ or $H^0(\C\comod\Rfr)^\sop\times
H^0(\C\contra\Rcof_\proj)\subset H^0(\C\comod\Rfr)^\sop\times
H^0(\C\contra\Rcof)$.
 Similarly, the left derived functor of cohomomorphisms of
$\R$\+cofree CDG\+comodules and CDG\+contramodules
$$
 \Coext_\C\:\sD^\co(\C\comod\Rcof)^\sop\times\sD^\ctr(\C\contra\Rcof)
 \lrarrow H^0(\R\contra^\free)
$$
is constructed by restricting the functor $\Cohom_\C$ to either of
the full subcategories $H^0(\C\comod\Rcof_\inj)^\sop\times
H^0(\C\contra\Rcof)$ or $H^0(\C\comod\Rcof)^\sop\times
H^0(\C\contra\Rcof_\proj)\allowbreak\subset H^0(\C\comod\Rcof)^\sop
\times H^0(\C\contra\Rcof)$.
 The equivalences of categories $\Phi_\R=\Psi_\R^{-1}$ transform
these two functors into each other and the derived functor
$\Coext_\C$ of cohomomorphisms of $\R$\+free CDG\+comodules and
CDG\+contramodules constructed in Section~\ref{r-free-co-derived}.

\begin{prop}  \label{r-cofree-cotor-ctrtor}
\textup{(a)} The equivalences of triangulated categories\/
$$
 \boL\Phi_\C\:\sD^\ctr(\C\contra\Rfr)\simeq
 \sD^\co(\C\comod\Rfr)\,\,\:\!\boR\Psi_\C
$$
and 
$$
 \boL\Phi_\C\:\sD^\ctr(\C\contra\Rcof)\simeq
 \sD^\co(\C\comod\Rcof)\,\,\:\!\boR\Psi_\C
$$
from Corollaries~\textup{\ref{r-free-derived-co-contra}}
and~\textup{\ref{r-cofree-derived-co-contra}} transform
the left derived functor
$$
 \Coext_\C\:\sD^\co(\C\comod\Rfr)^\sop\times\sD^\ctr(\C\contra\Rcof)
 \lrarrow H^0(\R\comod^\cofr)
$$
into the right derived functors
$$
 \Ext^\C\:\sD^\ctr(\C\contra\Rfr)^\sop\times\sD^\ctr(\C\contra\Rcof)
 \lrarrow H^0(\R\comod^\cofr)
$$
and
$$
\Ext_\C\:\sD^\co(\C\comod\Rfr)^\sop\times\sD^\co(\C\comod\Rcof)
\lrarrow H^0(\R\comod^\cofr).
$$
 In other words, the following diagram of categories, functors,
and equivalences is commutative:
$$
\begin{diagram}
\node{\sD^\ctr(\C\contra\Rfr)^\sop\times\sD^\ctr(\C\contra\Rcof)}
\arrow{s,=}\arrow[4]{e,t}{\Ext^\C}\node[4]{H^0(\R\comod^\cofr)}
\arrow{s,=} \\
\node{\sD^\co(\C\comod\Rfr)^\sop\times\sD^\ctr(\C\contra\Rcof)}
\arrow{s,=}\arrow[4]{e,t}{\Coext_\C}\node[4]{H^0(\R\comod^\cofr)}
\arrow{s,=} \\
\node{\sD^\co(\C\comod\Rfr)^\sop\times\sD^\co(\C\comod\Rcof)}
\arrow[4]{e,t}{\Ext_\C}\node[4]{H^0(\R\comod^\cofr)}
\end{diagram}
$$ \par
\textup{(b)} The equivalence of triangulated categories\/
$\boL\Phi_\C=\boR\Psi_\C^{-1}$ from
Corollary~\textup{\ref{r-cofree-derived-co-contra}} transforms
the left derived functor
$$
 \Coext_\C\:\sD^\co(\C\comod\Rcof)^\sop\times\sD^\ctr(\C\contra\Rcof)
 \lrarrow H^0(\R\contra^\free)
$$
into the right derived functors
$$
 \Ext^\C\:\sD^\ctr(\C\contra\Rcof)^\sop\times\sD^\ctr(\C\contra\Rcof)
 \lrarrow H^0(\R\contra^\free)
$$
and
$$
 \Ext_\C\:\sD^\co(\C\comod\Rcof)^\sop\times\sD^\co(\C\comod\Rcof)
 \lrarrow H^0(\R\contra^\free).
$$
 In other words, the following diagram of categories, functors,
and equivalences is commutative:
$$
\begin{diagram}
\node{\sD^\ctr(\C\contra\Rcof)^\sop\times\sD^\ctr(\C\contra\Rcof)}
\arrow{s,=}\arrow[4]{e,t}{\Ext^\C}\node[4]{H^0(\R\contra^\free)}
\arrow{s,=} \\
\node{\sD^\co(\C\comod\Rcof)^\sop\times\sD^\ctr(\C\contra\Rcof)}
\arrow{s,=}\arrow[4]{e,t}{\Coext_\C}\node[4]{H^0(\R\contra^\free)}
\arrow{s,=} \\
\node{\sD^\co(\C\comod\Rcof)^\sop\times\sD^\co(\C\comod\Rcof)}
\arrow[4]{e,t}{\Ext_\C}\node[4]{H^0(\R\contra^\free)}
\end{diagram}
$$ \par
\textup{(c)} The equivalence of triangulated categories\/
$\boL\Phi_\C=\boR\Psi_\C^{-1}$ from
Corollary~\textup{\ref{r-cofree-derived-co-contra}} transforms
the right derived functor
$$
 \Cotor^\C\:\sD^\co(\comodrRfr\C)\times\sD^\co(\C\comod\Rcof)
 \lrarrow H^0(\R\comod^\cofr)
$$
into the left derived functor
$$
 \Ctrtor^\C\:\sD^\co(\comodrRfr\C)\times\sD^\ctr(\C\contra\Rcof)
 \lrarrow H^0(\R\comod^\cofr).
$$
 In other words, the following diagram of categories, functors,
and equivalences is commutative:
$$
\begin{diagram}
\node{\sD^\co(\comodrRfr\C)\times\sD^\co(\C\comod\Rcof)}
\arrow{s,=}\arrow[4]{e,t}{\Cotor^\C}\node[4]{H^0(\R\comod^\cofr)}
\arrow{s,=} \\
\node{\sD^\co(\comodrRfr\C)\times\sD^\ctr(\C\contra\Rcof)}
\arrow[4]{e,t}{\Ctrtor^\C}\node[4]{H^0(\R\comod^\cofr)}
\end{diagram}
$$ \qed
\end{prop}

 Let $f=(f,a)\:\C\rarrow\D$ be a morphism of $\R$\+free
CDG\+coalgebras.
 Then with any $\R$\+cofree left CDG\+contramodule $(\cP,d_\cP)$
over $\C$ one can associate an $\R$\+cofree left CDG\+contramodule
$(\cP,d'_\cP)$ over $\D$ with the graded $\C$\+contramodule
structure on $\cP$ defined via~$f$ and the modified
differential~$d'_\cP$ constructed in terms of~$a$.
 Similar procedures apply to left and right CDG\+comodules.

 So we obtain the DG\+functors of contrarestriction of scalars
$R^f\:\C\contra\Rcof\rarrow\D\contra\Rcof$, and corestriction
of scalars $R_f\:\C\comod\Rcof\rarrow\D\comod\Rcof$ and
$\comodrRcof\C\rarrow\comodrRcof\D$.
 Passing to the homotopy categories, we have the triangulated
fuctors $R^f\:H^0(\C\contra\Rcof)\rarrow H^0(\D\contra\Rcof)$
and $R_f\:H^0(\C\comod\Rcof)\rarrow H^0(\D\comod\Rcof)$.
 The equivalences of categories $\Phi_\R=\Psi_\R^{-1}$ identify
these functors with the functors $R^f$ and $R_f$ for
$\R$\+free CDG\+contra\-modules and CDG\+comodules constructed
in Section~\ref{r-free-co-derived}.
{\emergencystretch=1em\par}

 Since the contra/corestriction of scalars clearly preserves
contra/coacyclicity, we have the induced functors on
the contra/coderived categories
\begin{alignat*}{3}
 &\boI R^f &&\:\sD^\ctr(\C\contra\Rcof)&&\lrarrow
 \sD^\ctr(\D\contra\Rcof), \\
 &\boI R_f &&\:\sD^\co(\C\comod\Rcof)&&\lrarrow
 \sD^\co(\D\comod\Rcof).
\end{alignat*}

 The triangulated functor $\boI R^f$ has a left adjoint.
 The DG\+functor $E^f\:\D\contra\Rcof_\proj\allowbreak\rarrow
\C\contra\Rcof_\proj$ is defined on the level of graded contramodules
by the rule $\cQ\mpsto\Cohom_\D(\C,\cQ)$; the differential on
$\Cohom_\D(\C,\cQ)$ induced by the differentials on $\C$
and $\cQ$ is modified to obtain the differential on $E^f(\cQ)$
using the linear function~$a$.
 Passing to the homotopy categories and taking into account
Theorem~\ref{r-cofree-co-derived-thm}(c), we obtain the left
derived functor
$$
 \boL E^f\:\sD^\ctr(\D\contra\Rcof)\lrarrow\sD^\ctr(\C\contra\Rcof),
$$
which is left adjoint to the functor $\boI R^f$.
 
 Similarly, the triangulated functor $\boI R^f$ has a right adjoint.
 The DG\+functor $E_f\:\D\comod\Rcof\rarrow\C\comod\Rcof$ is defined
on the level of graded comodules by the rule
$\cN\mpsto\C\oc_\D\cN$; the linear function~$a$ is used to modify
the differential on $\C\oc_\D\cN$ induced by the differentials
on $\C$ and~$\cN$.
 Passing to the homotopy categories and taking into account
Theorem~\ref{r-cofree-co-derived-thm}(d), we obtain the right
derived functor
$$
 \boR E_f\:\sD^\co(\D\comod\Rcof)\lrarrow\sD^\co(\C\comod\Rcof).
$$

\begin{prop} \label{r-cofree-co-extension}
 The equivalences of triangulated categories\/ $\boL\Phi_\C=
\boR\Psi_\C^{-1}$ and\/ $\boL\Phi_\D=\boR\Psi_\D^{-1}$ from
Corollary~\textup{\ref{r-cofree-derived-co-contra}} transform
the left derived functor\/ $\boL E^f$ into the right derived
functor\/ $\boR E_f$ and back.
 In other words, the following diagram of categories, functors,
and equivalences is commutative:
$$
\begin{diagram}
 \node{\llap{$\boL\Phi_\D$}\:\sD^\ctr(\D\contra\Rcof)}
 \arrow{e,=}\arrow{s,l}{\boL E^f}
 \node{\sD^\co(\D\comod\Rcof)\,\.\:\!\rlap{$\boR\Psi_\D$}}
 \arrow{s,r}{\boR E_f}\\
 \node{\llap{$\boL\Phi_\C$}\:\sD^\ctr(\C\contra\Rcof)}
 \arrow{e,=}
 \node{\sD^\co(\C\comod\Rcof)\,\.\:\!\rlap{$\boR\Psi_\C$}}
\end{diagram}
$$ \qed
\end{prop}

 The equivalences of contra/coderived categories
$\Phi_\R=\Psi_\R^{-1}$ transform the $\R$\+cofree CDG\+contra/comodule
contra/corestriction- and contra/coextension-of-scalars functors
$\boL E^f$, $\boI R^f$, $\boI R_f$, $\boR E_f$ defined above into
the $\R$\+free CDG\+contra/comodule contra/corestriction-
and contra/coextension-of-scalars functors
$\boL E^f$, $\boI R^f$, $\boI R_f$, $\boR E_f$ defined
in Section~\ref{r-free-co-derived}.

\Section{Non-$\R$-Free and Non-$\R$-Cofree wcDG-Modules,
\protect\\ CDG-Contramodules, and CDG-Comodules}

\subsection{Non-$\R$-free and non-$\R$-cofree graded modules}
\label{non-adj-graded}
 Let $\B$ be an $\R$\+free graded algebra.
 An \emph{$\R$\+contramodule graded left module} $\M$ over $\B$
is, by the definition, a graded left $\B$\+module in the module
category of $\R$\+contramodules over the tensor category of
free $\R$\+contramodules.
 In other words, it is a graded $\R$\+contramodule endowed with
an (associative and unital) homogeneous $\B$\+action map
$\B\ot^\R\M\rarrow\M$.
 \ \emph{$\R$\+contramodule graded right modules} $\N$ over $\B$
are defined in the similar way.

 Alternatively, one can define $\B$\+module structures on
graded $\R$\+contramodules in terms of the action maps
$\M\rarrow\Hom^\R(\B,\M)$, and similarly for~$\N$.
 The latter point of view may be preferable in that the functor
$\Hom^\R(\B,{-})$ is exact, while the functor
$\B\ot^\R{-}$ is only right exact (see
Remark~\ref{contra-operations-not-exact}).

 In fact, the category of $\R$\+contramodule graded (left or right)
$\B$\+modules is enriched over the tensor category of
(graded) $\R$\+contramodules, so the abelian group of morphisms
between two $\R$\+contramodule graded $\B$\+modules $\L$ and $\M$
is the underlying abelian group of the degree-zero component of
the graded $\R$\+contramodule $\Hom_\B(\L,\M)$ constructed as
the kernel of the pair of morphisms of graded $\R$\+contra\-modules
$\Hom^\R(\L,\M)\birarrow\Hom^\R(\B\ot^\R\L\;\M)\simeq
\Hom^\R(\L,\Hom^\R(\B,\M))$ induced by the actions of $\B$ in
$\L$ and~$\M$.
 The \emph{tensor product} $\N\ot_\B\M$ of an $\R$\+contramodule
graded right $\B$\+module $\N$ and an $\R$\+contramodule graded
left $\B$\+module $\M$ is a graded $\R$\+contramodule constructed
as the cokernel of the pair of morphisms of graded
$\R$\+contramodules  $\N\ot^\R\B\ot^\R\M\birarrow\N\ot^\R\M$.

 The category of $\R$\+contramodule graded $\B$\+modules is
abelian with infinite direct sums and products; and the forgetful
functor from it to the category of graded $\R$\+contramodules
is exact and preserves both infinite direct sums and products.
 There are enough projective objects in the abelian category of
$\R$\+contramodule graded $\B$\+modules; these are the same as
the projective objects in the exact subcategory of $\R$\+free
graded $\B$\+modules.
 The latter exact subcategory in the abelian category of
$\R$\+contramodule graded $\B$\+modules is closed under
extensions and infinite direct sums and products.
 Infinite products of $\R$\+contramodule graded $\B$\+modules
are exact functors (because infinite products of $\R$\+contramodules
are); infinite direct sums are not, in general (because infinite
direct sums of $\R$\+contramodules are not).

 For any graded $\R$\+contramodule $\U$ and any $\R$\+contramodule
graded left $\B$\+module $\M$, there is a natural isomorphism of
graded $\R$\+contramodules $\Hom_\B(\B\ot^\R\U\;\M)\simeq
\Hom^\R(\U,\M)$ \cite[Lemma~1.1.2]{Psemi}.
 For any graded $\R$\+contramodule $\V$, the $\R$\+contramodule
graded left $\B$\+module $\Hom^\R(\B,\V)$ has a similar property
\cite[Section~3.1.1]{Psemi}.
 Given an $\R$\+contramodule graded left $\B$\+module $\N$,
an epimorphism onto in from a projective $\R$\+free graded
$\B$\+module can be obtained as the composition
$\B\ot^\R\U\rarrow\B\ot^\R\N\rarrow\N$, where $\U$ is a free
graded $\R$\+contramodule and $\U\rarrow\N$ is an epimorphism
of graded $\R$\+contramodules.
 For any $\R$\+contramodule graded right $\B$\+module $\N$ and
any graded $\R$\+contramodule $\U$, there is a natural isomorphism
of $\R$\+contramodules $\N\ot_\B(\B\ot^\R\U)\simeq\N\ot^\R\U$
\cite[Lemma~1.2.1]{Psemi}.

 An \emph{$\R$\+comodule graded left module} $\cM$ over $\B$ is,
by the definition, a graded $\B$\+module in the module category
of $\R$\+comodules over the tensor category of free 
$\R$\+contramodules.
 In other words, it is a graded $\R$\+comodule endowed with
an (associative and unital) homogeneous $\B$\+action map
$\B\ocn_\R\cM\rarrow\cM$.
 \ \emph{$\R$\+comodule graded right modules} $\cN$ over $\B$
are defined in the similar way.

 Alternatively, one can define $\B$\+module structures on graded
$\R$\+comodules in terms of the action maps $\cM\rarrow
\Ctrhom_\R(\B,\cM)$, and similarly for~$\cN$.
 The former point of view may be preferable in that the functor
$\B\ocn_\R{-}$ is exact, while the functor
$\Ctrhom_\R(\B,{-})$ is only left exact (see
Remark~\ref{contra-operations-not-exact}).

 In fact, the category of $\R$\+comodule graded $\B$\+modules is
enriched over the tensor category of (graded) $\R$\+contramodules,
so the abelian group of morphisms between two $\R$\+comodule
graded $\B$\+modules $\cL$ and $\cM$ is the underlying abelian group
of the degree-zero component of the graded $\R$\+contramodule
$\Hom_\B(\cL,\cM)$ constructed as the kernel of the pair of
morphisms of graded $\R$\+contramodules
$\Hom_\R(\cL,\cM)\birarrow\Hom_\R(\B\ocn_\R\cL\;\cM)\simeq
\Hom_\R(\cL,\Ctrhom_\R(\B,\cM))$.
 The \emph{tensor product} $\N\ot_\B\cM$ of an $\R$\+contramodule
graded right $\B$\+module $\N$ and an $\R$\+comodule graded left
$\B$\+module $\cM$ is a graded $\R$\+comodule constructed as
the cokernel of the pair of morphisms of graded $\R$\+comodules
$\N\ot^\R\B\ocn_\R\cM\birarrow\N\ocn_\R\cM$.
 The graded $\R$\+comodule $\Hom_\B(\L,\cM)$ from
an $\R$\+contramodule graded left $\B$\+module $\L$ to
an $\R$\+comodule graded left $\B$\+module $\cM$ is defined
as the kernel of the pair of morphisms of graded $\R$\+comodules
$\Ctrhom_\R(\L,\cM)\birarrow\Ctrhom_\R(\B\ot^\R\L\;\cM)\simeq
\Ctrhom_\R(\L,\Ctrhom_\R(\B,\cM))$.

 The contratensor product $\cK\ocn_\R\Q$ of an $\R$\+comodule
$\cK$ and an $\R$\+contramodule $\Q$ is set to be equal to
the contratensor product $\Q\ocn_\R\cK$ as defined in
Section~\ref{hom-operations}.
 This operation is extended to graded $\R$\+comodules and
$\R$\+contramodules by taking infinite direct sums of
$\R$\+comodules along the diagonals of the bigrading (as usually).

 The \emph{tensor product} $\cN\ot_\B\cM$ of an $\R$\+comodule
graded right $\B$\+module $\cN$ and and $\R$\+comodule graded left
$\B$\+module $\cM$ is a graded $\R$\+comodule constructed as
the cokernel of the pair of morphisms of graded $\R$\+comodules
$(\cN\ocn_\R\B)\oc\cM\simeq\cN\oc_\R(\B\ocn_\R\cM)\birarrow
\cN\oc_\R\cM$ (see Lemma~\ref{hom-co-associativity}).
 The graded $\R$\+contramodule $\Hom_\B(\cL,\M)$ from
an $\R$\+comodule graded left $\B$\+module $\cL$ to
an $\R$\+contramodule graded left $\B$\+module $\M$ is defined
as the kernel of the pair of morphisms of graded $\R$\+contramodules
$\Cohom_\R(\cL,\M)\birarrow\Cohom_\R(\B\ocn_\R\cL\;\M)\simeq
\Cohom_\R(\cL,\Hom^\R(\B,\M))$.

 The category of $\R$\+comodule graded $\B$\+modules is abelian
with infinite direct sums and products; and the forgetful functor
from it to the category of graded $\R$\+comodules is exact and
preserves both infinite direct sums and products.
 There are enough injective objects in the abelian category of
$\R$\+comodule graded $\B$\+modules; these are the same as
the injective objects in the exact subcategory of $\R$\+cofree
graded $\B$\+modules.
 The latter exact subcategory in the abelian category of
$\R$\+comodule graded $\B$\+modules is closed under extensions
and infinite direct sums and products.
 Filtered inductive limits of $\R$\+comodule graded $\B$\+modules
are exact functors (because filtered inductive limits of
$\R$\+comodules are); infinite products are not, in general
(because infinite products of $\R$\+comodules are not).

 For any graded $\R$\+comodule $\cV$ and any $\R$\+comodule graded
left $\B$\+module $\cL$, there is a natural isomorphism of graded
$\R$\+contramodules $\Hom_\B(\cL,\Ctrhom_\R(\B,\cV))\simeq
\Hom_\R(\cL,\cV)$.
 Given an $\R$\+comodule graded left $\B$\+module $\cM$,
a monomorphism from it into an injective $\R$\+cofree graded
$\B$\+module can be obtained as the composition
$\cM\rarrow\Ctrhom_\R(\B,\cM)\rarrow\Ctrhom_\R(\B,\cV)$,
where $\cV$ is a cofree graded $\R$\+comodule and 
$\cM\rarrow\cV$ is a monomorphism of graded $\R$\+comodules.

 For any graded $\R$\+contramodule $\U$ and $\R$\+comodule graded
left $\B$\+module $\cM$, there is a natural isomorphism of graded
$\R$\+comodules $(\U\ot^\R\B)\ot_\B\cM\simeq\U\ocn_\R\cM$.
 For any graded $\R$\+contramodule $\U$ and $\R$\+comodule
graded left $\B$\+module $\cM$, there is a natural isomorphism of
graded $\R$\+comodules $\Hom_\B(\B\ot^\R\U\;\cM)\simeq
\Ctrhom_\R(\U,\cM)$.
 Similarly, for any $\R$\+contramodule graded left $\B$\+module
$\L$ and any graded $\R$\+comodule $\cV$, there is a natural
isomorphism of graded $\R$\+comodules $\Hom_\B(\L,\Ctrhom_\R(\B,\cV))
\simeq\Ctrhom_\R(\L,\cV)$.

 For any graded $\R$\+comodule $\cU$, the $\R$\+comodule
graded left $\B$\+module $\B\ocn_\R\cU$ has similar properties.
 In particular, for any graded $\R$\+comodule $\cU$ and
$\R$\+comodule graded right $\B$\+module $\cN$, there is
a natural isomorphism of graded $\R$\+comodules
$\cN\ot_\B(\B\ocn_\R\cU)\simeq\cN\oc_\R\cU$.
 For any graded $\R$\+comodule $\cU$ and $\R$\+contramodule
graded left $\B$\+module $\M$, there is a natural isomorphism
of graded $\R$\+contramodules $\Hom_\B(\B\ocn_\R\cU\;\M)\simeq
\Cohom_\R(\cU,\M)$.
 For any $\R$\+comodule graded left $\B$\+module $\cL$ and
graded $\R$\+contramodule $\V$, there is a natural isomorphism
of graded $\R$\+contramodules $\Hom_\B(\cL,\Hom^\R(\B,\V))
\simeq\Cohom_\R(\cL,\V)$.

 For any $\R$\+contramodule graded left $\B$\+module $\M$,
the graded $\R$\+comodule $\Phi_\R(\M)=\cC(\R)\ocn_\R\M$
has a natural $\R$\+comodule graded left $\B$\+module structure.
 The similar construction applies to right $\B$\+modules.
 For any $\R$\+comodule graded left $\B$\+module $\cM$,
the graded $\R$\+contramodule $\Psi_\R(\cM)=\Hom_\R(\cC(\R),\cM)$
has a natural $\R$\+contramodule graded left $\B$\+module
structure (see the proof of Proposition~\ref{r-cofree-r-co-contra}).

 The functors $\Phi_\R$ and $\Psi_\R$ between the abelian
categories of $\R$\+contramodule and $\R$\+comodule graded
left $\B$\+modules are adjoint to each other.
 Their restrictions to the exact subcategories of $\R$\+free
and $\R$\+cofree graded $\B$\+modules provide the equivalence
$\Phi_\R=\Psi_\R^{-1}$ between these exact categories defined
in Section~\ref{r-cofree-graded}.

 For any $\R$\+contramodule graded left $\B$\+module $\L$ and any
$\R$\+comodule graded left $\B$\+module $\cM$ there are natural
isomorphisms of graded $\R$\+contramodules
$\Hom_\B(\L,\Psi_\R(\cM))\simeq\Psi_\R(\Hom_\B(\L,\cM))\simeq
\Hom_\B(\Phi_\R(\L),\cM))$.
 For any $\R$\+contra\-module graded right $\B$\+module $\N$ and
any $\R$\+contramodule graded left $\B$\+module $\M$ there is
a natural isomorphism of graded $\R$\+comodules
$\N\ot_\B\Phi_\R(\M)\simeq\Phi_\R(\N\ot_\B\M)$.

 For any $\R$\+comodule graded left $\B$\+modules $\cL$ and $\cM$,
there is a natural morphism of graded $\R$\+contramodules
$\Hom_\B(\cL,\Psi_\R(\cM))\rarrow\Hom_\B(\cL,\cM)$, which is
an isomorphism whenever one of the graded $\B$\+modules $\cL$ and
$\cM$ is $\R$\+cofree.
 For any $\R$\+contramodule graded left $\B$\+modules $\L$ and
$\M$, there is a natural morphism of graded $\R$\+contramodules
$\Hom_\B(\Phi_\R(\L),\M)\rarrow\Hom_\B(\L,\M)$, which is
an isomorphism whenever one of the graded $\B$\+modules $\L$
and $\M$ is $\R$\+free.
 For any $\R$\+contramodule graded right $\B$\+module $\N$ and
any $\R$\+comodule graded left $\B$\+mod\-ule $\cM$, there is
a natural morphism of graded $\R$\+comodules
$\N\ot_\B\cM\rarrow\Phi_\R(\N)\ot_\B\cM$, which is 
an isomorphism whenever either the graded $\B$\+module $\N$
is $\R$\+free, or the graded $\B$\+module $\cM$ is $\R$\+cofree.

\subsection{Contra/coderived category of CDG-modules}
\label{cdg-coderived}
 The definitions of
\begin{itemize}
\item odd derivations of $\R$\+contramodule and
      $\R$\+comodule $\B$\+modules compatible with a given odd
      derivation of an $\R$\+free graded algebra~$\B$,
\item $\R$\+contramodule and $\R$\+comodule left and right
      CDG\+modules over an $\R$\+free CDG\+algebra~$\B$,
\item the complexes of $\R$\+contramodules $\Hom_\B(\L,\M)$ for
      given $\R$\+contramodule left CDG\+modules $\L$ and $\M$
      over~$\B$,
\item the complexes of $\R$\+contramodules $\Hom_\B(\cL,\cM)$ for
      given $\R$\+comodule left CDG\+modules $\cL$ and $\cM$
      over~$\B$,
\item the complex of $\R$\+comodules $\Hom_\B(\L,\cM)$ for
      a given $\R$\+contramodule left CDG\+module $\L$ and
      $\R$\+comodule left CDG\+module $\cM$ over~$\B$,
\item the complex of $\R$\+contramodules $\Hom_\B(\cL,\M)$ for
      a given $\R$\+comodule left CDG\+module $\cL$ and
      $\R$\+contramodule left CDG\+module $\M$ over~$\B$,
\item the complex of $\R$\+contramodules $\N\ot_\B\M$ for
      a given $\R$\+contramodule right CDG\+module $\N$ and
      $\R$\+contramodule left CDG\+module $\M$ over~$\B$,
\item the complex of $\R$\+comodules $\N\ot_\B\cM$ for
      a given $\R$\+contramodule right CDG\+module $\N$ and
      $\R$\+comodule left CDG\+module $\cM$ over~$\B$,
\item the complex of $\R$\+comodules $\cN\ot_\B\cM$ for
      a given $\R$\+comodule right CDG\+mod\-ule $\cN$ and
      $\R$\+comodule left CDG\+module $\cM$ over~$\B$,
\item the $\R$\+contramodule or $\R$\+comodule CDG\+modules
      over $\B$ obtained by restriction of scalars via a morphism
      of $\R$\+free CDG\+algebras $\B\rarrow\A$ from
      $\R$\+contramodule or $\R$\+comodule CDG\+modules over~$\A$
\end{itemize}
repeat the similar definitions
for $\R$\+free and $\R$\+cofree $\B$\+modules given in
Sections~\ref{r-free-absolute} and~\ref{r-cofree-absolute}
\emph{verbatim} (with the definitions and constructions of
Section~\ref{non-adj-graded} being used in place of those from
Sections~\ref{r-free-graded} and~\ref{r-cofree-graded} as applicable),
so there is no need to spell them out here again.
 We restrict ourselves to introducing the new notation for
our new and more general classes of objects.

 The DG\+categories of $\R$\+contramodule left and right
CDG\+modules over $\B$ are denoted by $\B\mod\Rctr$ and
$\modrRctr\B$, and their homotopy categories are
$H^0(\B\mod\Rctr)$ and $H^0(\modrRctr\B)$.
 Similarly, the DG\+categories of $\R$\+comodule left and right
CDG\+modules over $\B$ are denoted by $\B\mod\Rco$ and
$\modrRco\B$, and their homotopy categories are
$H^0(\B\mod\Rco)$ and $H^0(\modrRco\B)$.
 The tensor products of CDG\+modules over $\B$ are triangulated
functors of two arguments
\begin{alignat*}{3}
 &\ot_\B\: H^0(\modrRctr\B)&&\times H^0(\B\mod\Rctr)&&\lrarrow
 H^0(\R\contra), \\
 &\ot_\B\: H^0(\modrRctr\B)&&\times H^0(\B\mod\Rco)&&\lrarrow
 H^0(\R\comod), \\
 &\ot_\B\: H^0(\modrRco\B)&&\times H^0(\B\mod\Rco)&&\lrarrow
 H^0(\R\comod).
\end{alignat*}
 The $\Hom$ from $\R$\+contramodule to $\R$\+comodule CDG\+modules
over $\B$ is a triangulated functor
$$
 \Hom_\B\: H^0(\B\mod\Rctr)^\sop\times H^0(\B\mod\Rco)\lrarrow
 H^0(\R\comod),
$$ 
and the $\Hom$ from $\R$\+comodule to $\R$\+contramodule CDG\+modules
over $\B$ is a triangulated functor
$$
 \Hom_\B\: H^0(\B\mod\Rco)^\sop\times H^0(\B\mod\Rctr)\lrarrow
 H^0(\R\contra).
$$
 Given a morphism of $\R$\+free CDG\+algebras $f=(f,a)\:\B\rarrow\A$,
the functors of restriction of scalars are denoted by
\begin{align*}
 R_f\: H^0(\A\mod\Rctr)&\lrarrow H^0(\B\mod\Rctr), \\
 R_f\: H^0(\A\mod\Rco)&\lrarrow H^0(\B\mod\Rco),
\end{align*}
and similarly for right CDG\+modules.

 An $\R$\+contramodule left CDG\+module over $\B$ is said to be
\emph{contraacyclic} if it belongs to the minimal triangulated
subcategory of the homotopy category $H^0(\B\allowbreak\mod\Rctr)$
containing the totalizations of short exact sequences of
$\R$\+contramodule CDG\+modules over $\B$ and closed under
infinite products.
 The quotient category of $H^0(\B\mod\Rctr)$ by the thick
subcategory of contraacyclic $\R$\+contramodule CDG\+mod\-ules
is called the \emph{contraderived category} of $\R$\+contramodule
left CDG\+modules over $\B$ and denoted by $\sD^\ctr(\B\mod\Rctr)$.
 The contraderived category of $\R$\+contramodule right
CDG\+modules over $\B$, denoted by $\sD^\ctr(\modrRctr\B)$,
is defined similarly. {\emergencystretch=0em\hfuzz=6pt\par}

 An $\R$\+comodule left CDG\+module over $\B$ is said to be
\emph{coacyclic} if it belongs to the minimal triangulated
subcategory of the homotopy category $H^0(\B\mod\Rco)$ containing
the totalizations of short exact sequences of $\R$\+comodule
CDG\+modules over $\B$ and closed under infinite direct sums.
 The quotient category of $H^0(\B\mod\Rco)$ by the thick
subcategory of coacyclic $\R$\+comodule CDG\+modules is called
the \emph{coderived category} of $\R$\+comodule left
CDG\+modules over $\B$ and denoted by $\sD^\co(\B\mod\Rco)$.
 The coderived category of $\R$\+comodule right CDG\+modules
over $\B$, denoted by $\sD^\co(\modrRco\B)$, is defined similarly.

 It follows from the next theorem, among other things, that our
terminology is not ambigous: an $\R$\+free CDG\+module over $\B$
is contraacyclic in the sense of Section~\ref{r-free-absolute}
if and only if it is contraacyclic as an $\R$\+contramodule
CDG\+module, in the sense of the above definition.
 Similarly, an $\R$\+cofree CDG\+module over $\B$ is coacyclic
in the sense of Section~\ref{r-cofree-absolute} if and only if
it is coacyclic as an $\R$\+comodule CDG\+module, in the sense
of the above definition.

\begin{thm}  \label{co-derived-mod}
 For any\/ $\R$\+free CDG\+algebra\/ $\B$, the functors
$$
 \sD^\ctr(\B\mod\Rfr)\lrarrow\sD^\ctr(\B\mod\Rctr)
$$
and
$$
 \sD^\co(\B\mod\Rcof)\lrarrow\sD^\co(\B\mod\Rco)
$$
induced by the natural embeddings of DG\+categories\/
$\B\mod\Rfr\rarrow\B\mod\Rctr$ and\/ $\B\mod\Rcof\rarrow\B\mod\Rco$
are equivalences of triangulated categories.
\end{thm}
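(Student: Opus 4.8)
The two assertions are dual, and I would prove them by the same method with the roles of projectives, infinite products and $\R$\+free objects interchanged with injectives, direct sums and $\R$\+cofree objects; let me describe the contramodule case. Write $\iota$ for the functor $\sD^\ctr(\B\mod\Rfr)\rarrow\sD^\ctr(\B\mod\Rctr)$ in question. The plan is to produce an explicit quasi-inverse $G$ by $\R$\+free projective resolution, rather than to compare both sides with a homotopy category of graded-projective CDG\+modules; the latter route would force finiteness hypotheses on $\B/\m\B$ (as in Theorem~\ref{r-free-star-conditions}), which must be avoided since the present statement is unconditional. The key structural input is that, by Section~\ref{non-adj-graded}, the abelian category $\B\mod\Rctr$ has enough projective objects and that these coincide with the graded-projective $\R$\+free CDG\+modules, i.e.\ with the objects of $\B\mod\Rfr_\proj$.

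First I would construct, for every $\R$\+contramodule CDG\+module $\cM$ over $\B$, an $\R$\+free resolution: choose a left resolution $\dotsb\rarrow P_1\rarrow P_0\rarrow\cM\rarrow0$ in the abelian category $\B\mod\Rctr$ by projective (hence $\R$\+free) CDG\+modules $P_i$, and totalize it by taking infinite products of $\R$\+contramodules along the diagonals, obtaining an $\R$\+free CDG\+module $\P_\cM$ together with a closed morphism $\P_\cM\rarrow\cM$. Products of free $\R$\+contramodules are again free (Lemmas~\ref{nakayama-proj-free} and~\ref{nakayama-proj-products}), so $\P_\cM$ indeed lies in $\B\mod\Rfr$, and its cone over $\cM$, being the product-totalization of an exact complex, is contraacyclic in $\B\mod\Rctr$ by the standard Postnikov/telescope filtration together with the closure of contraacyclic objects under infinite products. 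The usual comparison theorem for projective resolutions (liftings of closed morphisms, unique up to homotopy) makes $\cM\mpsto\P_\cM$ a triangulated functor $G\:H^0(\B\mod\Rctr)\rarrow\sD^\ctr(\B\mod\Rfr)$; applying the horseshoe lemma and using that infinite products are exact in $\B\mod\Rctr$, one checks that $G$ preserves infinite products.

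Next I would show that $G$ annihilates contraacyclic objects, so that it descends to a functor $\sD^\ctr(\B\mod\Rctr)\rarrow\sD^\ctr(\B\mod\Rfr)$. Since $G$ is triangulated and product-preserving, it suffices to check this on the generators, i.e.\ on the totalization of an arbitrary short exact sequence $0\rarrow\cK\rarrow\cL\rarrow\cM\rarrow0$ in $\B\mod\Rctr$. Here the horseshoe lemma provides a compatible triple of $\R$\+free resolutions forming a short exact sequence $\P_\cK\rarrow\P_\cL\rarrow\P_\cM$ whose columns are direct sums, hence termwise split over $\R$; therefore $G$ carries the given generator to the totalization of a short exact sequence of $\R$\+free CDG\+modules, which is contraacyclic in $\B\mod\Rfr$ by definition.

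Finally I would verify that $G$ and $\iota$ are mutually inverse. The resolution morphism $\P_\cM\rarrow\cM$ shows $\iota G\cong\id$ on $\sD^\ctr(\B\mod\Rctr)$, its cone being contraacyclic there as well. Conversely, when $X$ is already $\R$\+free, every short exact sequence occurring in its resolution and in the cone of $\P_X\rarrow X$ is automatically split over $\R$ (an epimorphism of free $\R$\+contramodules onto a free one splits), so that cone is contraacyclic in $\B\mod\Rfr$ and $G\iota(X)=\P_X\cong X$ there. I expect the main obstacle to be precisely the descent step — confirming that the resolution functor sends contraacyclic complexes to contraacyclic ones — because it is here that one must pass between the abelian exact structure of $\B\mod\Rctr$, whose short exact sequences need not split over $\R$, and the split exact structure of $\B\mod\Rfr$; the horseshoe construction producing $\R$\+split sequences of free resolutions is what bridges the two, while the exactness of infinite products of $\R$\+contramodules keeps the unbounded totalizations under control. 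The comodule assertion is proved dually, replacing projective resolutions, products and Lemmas~\ref{nakayama-proj-free}--\ref{nakayama-proj-products} by injective coresolutions, direct sums and Lemmas~\ref{comodule-inj-cofree}--\ref{comodule-acycl-contract}, and exchanging the roles of $\R$\+free and $\R$\+cofree CDG\+modules throughout.
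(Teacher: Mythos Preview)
Your outline is sound, but there is one genuine misidentification that must be fixed before the argument closes. You assert that the projective objects of the abelian category $Z^0(\B\mod\Rctr)$ of CDG-modules and closed morphisms are the objects of $\B\mod\Rfr_\proj$, citing Section~\ref{non-adj-graded}; but that section concerns the abelian category of \emph{graded} $\B$-modules. A CDG-module with projective underlying graded module is typically not projective in $Z^0(\B\mod\Rctr)$; the projectives there are (direct summands of) the freely generated CDG-modules $G^+(\P)$ with $\P$ a projective $\R$-free graded $\B$-module---precisely the objects used in the paper's resolution. Once you resolve by these, your comparison-theorem argument for the well-definedness of $G$ goes through, and the horseshoe step yields a genuine projective resolution of the totalization $T$, since its terms (totalizations of split short exact sequences of $G^+$'s) are contractible with projective underlying graded module, hence projective in $Z^0(\B\mod\Rctr)$.

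With that repair, your route is genuinely different from the paper's. Both product-totalize a resolution to obtain $\P_\cM\to\cM$ with contraacyclic cone (Lemma~\ref{bounded-above-lem}), but then diverge. The paper invokes \cite[Lemma~1.6]{Pkoszul} to reduce to showing that any $\R$-free CDG-module contraacyclic in $\B\mod\Rctr$ is already contraacyclic in $\B\mod\Rfr$, and proves this by a fairly intricate direct argument (Lemmas~\ref{lifting-contract-lem}--\ref{generated-lifting-lem}) factorizing every morphism from an $\R$-free object into a contraacyclic one through a contraacyclic $\R$-free object. Your approach sidesteps this factorization entirely by promoting the resolution to a functor via the comparison theorem for projective resolutions and using the horseshoe to handle descent. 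This is arguably cleaner, though it shifts the burden onto the structure of the abelian category $Z^0(\B\mod\Rctr)$ (identifying its projectives, their closure under products, etc.); the paper's route is more hands-on but makes no functoriality claims and needs no such structural input.
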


\begin{proof}
 We follow the idea of~\cite[proof of Theorem~1.5 and
Remark~1.5]{Psing}.
 Let $\M$ be an $\R$\+contramodule left CDG\+module over~$\B$.
 As explained in Section~\ref{non-adj-graded}, there exists
a surjective morphism onto the $\R$\+contramodule graded
$\B$\+module $\M$ from an $\R$\+free (and even projective)
$\R$\+contramodule graded $\B$\+module~$\P_0$.
 Let $G^+(\P_0)$ be the $\R$\+contramodule CDG\+module over $\B$
freely generated by $\P_0$ (see the proof of
Theorem~\ref{r-free-absolute-derived}); then there is a surjective
closed morphism of CDG\+modules $G^+(\P_0)\rarrow\M$.
 Applying the same procedure to the kernel of the latter morphism,
etc., we obtain a left resolution of $\M$ by $\R$\+free
CDG\+modules and closed morphisms $\dotsb\rarrow G^+(\P_1)
\rarrow G^+(\P_0)\rarrow\M\rarrow0$.

 Totalizing this complex of $\R$\+free CDG\+modules by taking
infinite products along the diagonals, we get a closed morphism
of $\R$\+contramodule CDG\+modules $\P\rarrow\M$ with
an $\R$\+free CDG\+module~$\P$.
 It follows from the next lemma that the cone of this morphism
is a contraacyclic $\R$\+contramodule CDG\+module.

\begin{lem} \label{bounded-above-lem}
 For any bounded above exact sequence of\/ $\R$\+contramodule
CDG\+mod\-ules over\/ $\B$ and closed morphisms between them
$\dotsb\rarrow\K_2\rarrow\K_1\rarrow\K_0\rarrow0$, the total
CDG\+module\/ $\L$ of the complex of CDG\+modules\/ $\K_\bu$,
constructed by taking infinite products along the diagonals,
is a contraacyclic\/ $\R$\+contramodule CDG\+module.
\end{lem}

\begin{proof}
 Let $\L_n$ denote the totalizations of the finite quotient
complexes of canonical filtration of the exact complex~$\K_\bu$.
 Clearly, the $\R$\+contramodule CDG\+modules $\L_n$ are
contraacyclic.
 Consider the ``telescope'' short sequence of $\R$\+contramodule
CDG\+modules $\L\rarrow\prod_n\L_n\rarrow\prod_n\L_n$, the second
morphism being constructed in terms of the identity morphisms
$\L_n\rarrow\L_n$ and the natural closed surjections
$\L_{n+1}\rarrow\L_n$.
 Forgetting the differentials (that is considering our short
sequence as a sequence of $\R$\+contramodule graded $\B$\+modules),
we discover that this short sequence is a product of telescope
sequences related to stabilizing projective systems of
$\R$\+contramodule graded $\B$\+modules.
 The latter being always split exact, our short sequence of
$\R$\+contramodule CDG\+modules is also exact; and $\L_n$ being
contraacyclic, it follows that $\L$ is contraacyclic, too.
\end{proof}

 By~\cite[Lemma~1.6]{Pkoszul}, it follows that the contraderived
category $\sD^\ctr(\B\mod\Rctr)$ is equivalent to the quotient
category of the homotopy category of $\R$\+free left
CDG\+modules over $\B$ by the thick subcategory of $\R$\+free
CDG\+modules \emph{contraacyclic as\/ $\R$\+contramodule
CDG\+modules}.
 It remains to show that any $\R$\+free CDG\+module contraacyclic
as an $\R$\+contramodule CDG\+module is also contraacyclic as
an $\R$\+free CDG\+module.

 In fact, we will prove that any closed morphism from an $\R$\+free
CDG\+module $\P$ to a contraacyclic $\R$\+contramodule CDG\+module
$\L$ factorizes through a contraacyclic $\R$\+free CDG\+module $\F$
as a morphism in the homotopy category $H^0(\B\mod\Rctr)$.
 Indeed, consider the class of all $\R$\+contramodule CDG\+modules
$\L$ satisfying the above condition with respect to all
$\R$\+free CDG\+modules~$\P$.
 We will check that this class of CDG\+modules is closed under
cones and infinite products, and contains the total CDG\+modules
of short exact sequences of $\R$\+contramodule CDG\+modules.

 Let $\L_\alpha$ be a family of $\R$\+contramodule CDG\+modules,
$\P$ be an $\R$\+free CDG\+module, and $\P\rarrow\prod_\alpha
\L_\alpha$ be a morphism in $H^0(\B\mod\Rctr)$.
 Assuming that each component $\P\rarrow\L_\alpha$ of our
morphism factorizes through a contraacyclic $\R$\+free CDG\+module
$\F_\alpha$, the morphism $\P\rarrow\prod_\alpha\L_\alpha$
factorizes though the CDG\+module $\prod_\alpha\F_\alpha$,
which is also a contraacyclic $\R$\+free CDG\+module.
 This proves the closedness under infinite products.

 Now let us reformulate the property of CDG\+modules that we are
interested in as follows: an $\R$\+contramodule CDG\+module $\L$
belongs to our class of CDG\+modules if and only if for any
closed morphism $\P\rarrow\L$ into $\L$ from an $\R$\+free
CDG\+module $\P$ there exists a closed morphism of $\R$\+free
CDG\+modules $\Q\rarrow\P$ whose cone is a contraacyclic
$\R$\+free CDG\+module and whose composition with the morphism
$\P\rarrow\L$ is homotopic to zero.
 Assume that $\R$\+contramodule CDG\+modules $\K$ and $\M$ have
this property, and let $\K\rarrow\L\rarrow\M$ be a distinguished
triangle in $H^0(\B\mod\Rctr)$.

 Given a closed morphism $\P\rarrow\L$ with an $\R$\+free
CDG\+module $\P$, consider the composition $\P\rarrow\L\rarrow\M$
and find a closed morphism of $\R$\+free CDG\+modules
$\T\rarrow\P$ whose cone is a contraacyclic $\R$\+free CDG\+module
and such that the composition $\T\rarrow\P\rarrow\L\rarrow\M$ is
homotopic to zero.
 Then the composition $\T\rarrow\P\rarrow\L$ factorizes
through~$\K$ in the homotopy category.
 Find a closed morphism of $\R$\+free CDG\+modules $\Q\rarrow\T$
whose cone is a contraacyclic $\R$\+free CDG\+module and
such that the composition $\Q\rarrow\T\rarrow\K$ is homotopic to zero.
 Then the composition $\Q\rarrow\T\rarrow\P$ provides the desired
closed morphism of $\R$\+free CDG\+modules with a contraacyclic
$\R$\+free cone annihilating the morphism $\P\rarrow\L$.
 This proves the closedness with respect to cones.

 Finally, let $\M$ be the total CDG\+module of a short exact
sequence of $\R$\+contra\-module CDG\+modules $\U\rarrow\V\rarrow\W$.
 Let $\P\rarrow\M$ be a closed morphism into $\M$ from an $\R$\+free
CDG\+module~$\P$.
 It remains to construct a closed morphism $\Q\rarrow\P$ of
$\R$\+free CDG\+modules whose cone is a contraacyclic $\R$\+free
CDG\+module and whose composition with the morphism $\P\rarrow\M$
is homotopic to zero.

 As a graded $\R$\+contramodule $\B$\+module, $\M$ is the direct
sum of three modules $\U[1]$, \ $\V$, and $\W[-1]$; so any
graded $\B$\+module morphism $\N\rarrow\M$ into $\M$ from
an $\R$\+contramodule graded $\B$\+module $\N$ can be viewed as
a triple of graded $\B$\+module morphisms $f\:\N\rarrow\U[1]$, \ 
$g\:\N\rarrow\V$, and $h\:\N\rarrow\W[-1]$.

\begin{lem}  \label{lifting-contract-lem}
 Let\/ $\N\rarrow\M$ be a closed morphism of\/ $\R$\+contramodule
CDG\+mod\-ules represented by a triple $(f,g,h)$ as above.
 Then whenever the morphism $h\:\N\rarrow\W[-1]$ can be lifted to
a morphism of\/ $\R$\+contramodule graded\/ $\B$\+modules
$t\:\N\rarrow\V[-1]$, the morphism\/ $\N\rarrow\M$ is homotopic
to zero.
\end{lem}

\begin{proof}
 See~\cite[Lemma~1.5.E(b)]{Psing}.
\end{proof}

\begin{lem}  \label{generated-lifting-lem}
 Let\/ $\N\rarrow\M$ be a morphism of\/ $\R$\+contramodule graded\/
$\B$\+modules with the components $(f,g,h)$.
 Let $G^+(\N)\rarrow\M$ be the induced closed morphism of\/
$\R$\+contramodule CDG\+modules over\/~$\B$; denote its components
by $(\tilde f,\tilde g,\tilde h)$.
 Then the morphism of\/ $\R$\+contramodule graded\/ $\B$\+modules
$\tilde h\:G^+(\N)\rarrow\W[-1]$ can be lifted to a morphism
of\/ $\R$\+contramodule graded\/ $\B$\+modules $G^+(\N)\rarrow\V[-1]$
whenever the morphism $h\:\N\rarrow\W[-1]$ can be lifted to
a morphism\/ $\N\rarrow\V[-1]$.
\end{lem}

\begin{proof}
 See~\cite[Lemma~1.5.F]{Psing}.
\end{proof}

 Let $\N$ be an $\R$\+free graded $\B$\+module mapping surjectively
onto the fibered product of the morphisms of $\R$\+contramodule
graded $\B$\+modules $\P\rarrow\W[-1]$ and $\V[-1]\rarrow\W[-1]$.
 Then $\N\rarrow\P$ is a surjective morphism of $\R$\+free graded
$\B$\+modules; consider the induced surjective closed morphism of
$\R$\+free CDG\+modules $G^+(\N)\rarrow\P$ over~$\B$.
 Let $\T$ be the kernel of the latter morphism and $\Q$ be the cone
of the closed embedding $\T\rarrow G^+(\N)$.
 Then there is a natural closed morphism of $\R$\+free CDG\+modules
$\Q\rarrow\P$.
 Its cone, being the total CDG\+module of a short exact sequence of
$\R$\+free CDG\+modules $\T\rarrow\Q\rarrow G^+(\N)$ over $\B$, is
a contraacyclic $\R$\+free CDG\+module.

 Consider the composition $\Q\rarrow\P\rarrow\W[-1]$; it is
a morphism of $\R$\+contra\-module graded $\B$\+modules
$G^+(\N)\oplus\T[1]\rarrow\W[-1]$ vanishing on $\T[1]$.
 The morphism $G^+(\N)\rarrow\W[-1]$ is the component $\tilde h$
of the closed morphism $G^+(\N)\rarrow\M$ induced by the morphism of
$\R$\+contramodule graded $\B$\+modules $\N\rarrow\M$ equal to
the composition $\N\rarrow\P\rarrow\M$.
 By the construction, the component $h\:\N\rarrow\W[-1]$ of
the latter morphism lifts to an $\R$\+contramodule graded $\B$\+module
morphism $\N\rarrow\V[-1]$.
 By Lemma~\ref{generated-lifting-lem}, the $\R$\+contramodule graded
$\B$\+module morphism $\tilde h\:G^+(\N)\rarrow\W[-1]$ can be also
lifted into~$\V[-1]$.
 Hence the same applies to the composition $\Q\rarrow\P\rarrow\W[-1]$;
and by Lemma~\ref{lifting-contract-lem} it follows that
the composition $\Q\rarrow\P\rarrow\M$ is homotopic to zero.

 We have proven the first assertion of Theorem; the proof of
the second one is analogous up to duality.
\end{proof}

 Notice that when the pro-Artinian topological local ring $\R$ has
finite homological dimension (see Section~\ref{discrete-modules}),
the above argument also proves that the embeddings of DG\+categories
$\B\mod\Rfr\rarrow\B\mod\Rctr$ and $\B\mod\Rcof\rarrow\B\mod\Rco$
induce also equivalences of the absolute derived categories
$\sD^\abs(\B\mod\Rfr)\simeq\sD^\abs(\B\mod\Rctr)$ and
$\sD^\abs(\B\mod\Rcof)\simeq\sD^\abs(\B\mod\Rco)$.
 Here the absolute derived categories $\sD^\abs(\B\mod\Rctr)$ and
$\sD^\abs(\B\mod\Rco)$ are defined as the quotient categories of
the homotopy categories $H^0(\B\mod\Rctr)$ and $H^0(\B\mod\Rco)$
by the thick subcategories of \emph{absolutely acyclic}
$\R$\+contramodule and $\R$\+comodule CDG\+modules, i.~e.,
the minimal thick subcategories containing the totalizations of
short exact sequences of $\R$\+contramodule or $\R$\+comodule
CDG\+modules, respectively.

 As a particular case of the above definitions in the case
$\B=\R$, we have the classes of \emph{contraacyclic} and
\emph{absolutely acyclic} complexes of $\R$\+contramodules, and
similarly, \emph{coacyclic} and \emph{absolutely acyclic}
complexes of $\R$\+comodules.
 The corresponding quotient categories of the homotopy categories
$H^0(\R\contra)$ and $H^0(\R\comod)$ are the \emph{contraderived
category} of complexes of $\R$\+contramodules $\sD^\ctr(\R\contra)$,
the \emph{absolute derived category} of complexes of
$\R$\+contramodules $\sD^\abs(\R\contra)$, the \emph{coderived
category} of complexes of $\R$\+comodules $\sD^\co(\R\comod)$,
and the \emph{absolute derived category} of complexes of
$\R$\+comodules $\sD^\abs(\R\comod)$.

 When $\R$ has finite homological dimension, the quotient categories
$\sD^\ctr(\R\contra)$ and $\sD^\abs(\R\contra)$ coincide with each
other and with the conventional derived category of (complexes of)
$\R$\+contramodules $\sD(\R\contra)$, and similarly, the quotient
categories $\sD^\co(\R\comod)$ and $\sD^\abs(\R\comod)$ coincide
with each other and with the conventional derived category of
(complexes of) $\R$\+comodules $\sD(\R\comod)$
\cite[Remark~2.1]{Psemi}.

\begin{thm}  \label{non-adj-orthogonality}
 Let\/ $\B$ be an\/ $\R$\+free CDG\+algebra.  Then \par
\textup{(a)} for any CDG\+module\/ $\P\in H^0(\B\mod\Rfr_\proj)$
and any contraacyclic\/ $\R$\+contra\-module left CDG\+module\/ $\M$
over\/ $\B$, the complex of\/ $\R$\+contramodules\/
$\Hom_\B(\P,\M)$ is contraacyclic; \par
\textup{(b)} for any coacyclic\/ $\R$\+comodule left CDG\+module\/
$\cL$ over\/ $\B$ and any CDG\+module\/ $\cJ\in H^0(\B\mod\Rcof_\inj)$,
the complex of\/ $\R$\+contramodules\/ $\Hom_\B(\cL,\cJ)$ is
contraacyclic.
\end{thm}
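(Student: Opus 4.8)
The plan is to reduce the claimed contraacyclicity of the $\Hom$ complexes to the already-established contractibility results of Theorem~\ref{r-free-orthogonality}, together with the generation-by-resolutions argument of Theorem~\ref{co-derived-mod}. Let me sketch part~(a); part~(b) will be dual.

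For part~(a), fix $\P\in H^0(\B\mod\Rfr_\proj)$. First I would observe that $\Hom_\B(\P,{-})$ is an exact functor from the abelian category of $\R$\+contramodule left CDG\+modules over $\B$ to the homotopy category of complexes of $\R$\+contramodules: since $\P$ has a projective underlying graded $\B$\+module, applying $\Hom_\B(\P,{-})$ to a short exact sequence of $\R$\+contramodule CDG\+modules yields a short exact sequence of complexes of $\R$\+contramodules, and $\Hom_\B(\P,{-})$ commutes with infinite products. Consequently $\Hom_\B(\P,{-})$ sends the totalization of a short exact sequence of $\R$\+contramodule CDG\+modules to the totalization of a short exact sequence of complexes of $\R$\+contramodules, which is a contraacyclic complex of $\R$\+contramodules; and it sends infinite products of CDG\+modules to infinite products of $\Hom$ complexes, and cones to cones. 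Since the class of contraacyclic complexes of $\R$\+contramodules is by definition the minimal triangulated subcategory of $H^0(\R\contra)$ containing such totalizations and closed under infinite products, and it is automatically closed under cones, the functor $\Hom_\B(\P,{-})$ carries the class of contraacyclic $\R$\+contramodule CDG\+modules into the class of contraacyclic complexes of $\R$\+contramodules. This gives the assertion directly once the above closure and exactness properties are checked.

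The key input I would invoke is that $\Hom_\B(\P,\M)$, for $\P$ projective as a graded $\B$\+module and $\M$ an arbitrary $\R$\+contramodule CDG\+module, is a complex of $\R$\+contramodules whose formation is exact in $\M$; this rests on the isomorphism $\Hom_\B(\B\ot^\R\U\;\M)\simeq\Hom^\R(\U,\M)$ from Section~\ref{non-adj-graded} (reducing to the case of a free graded $\B$\+module) and on the exactness of $\Hom^\R(\U,{-})$ for a free graded $\R$\+contramodule $\U$. For the contratensor- and comodule-side statement in part~(b), the dual facts are that $\Hom_\B({-},\J)$ for $\J$ injective as an $\R$\+cofree graded $\B$\+module is exact and commutes with infinite direct sums (via $\Hom_\B(\cL,\Ctrhom_\R(\B,\cV))\simeq\Hom_\R(\cL,\cV)$ and the exactness of $\Hom_\R({-},\cV)$ into a cofree graded $\R$\+comodule), so that it carries coacyclic $\R$\+comodule CDG\+modules to contraacyclic complexes of $\R$\+contramodules; here one uses that $\Hom_\R({-},{-})$ lands in $\R\contra$ even when the arguments are $\R$\+comodules, and that contraacyclicity is closed under the cones, direct-sum-totalizations, and the operations arising from applying an exact functor commuting with the relevant infinite operations.

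The main obstacle I expect is bookkeeping about which infinite operations are preserved and which acyclicity class results: on the comodule side one totalizes short exact sequences and takes infinite direct sums to define coacyclicity, yet $\Hom_\B(\cL,\J)$ lands among $\R$\+contramodules where the relevant notion is contraacyclicity defined using infinite products. The point to verify carefully is that $\Hom_\B({-},\J)$ converts infinite direct sums of source comodules into infinite products of target $\Hom$ complexes (because $\Hom_\R({-},\cV)$ turns direct sums in its first argument into products), so that the closure under direct sums defining coacyclicity is matched with the closure under products defining contraacyclicity. Once this matching of infinite operations is pinned down, the rest is the formal dévissage argument indicated above, modeled on \cite[Theorem~3.5 and Remark~3.5]{Pkoszul} and parallel to Theorem~\ref{r-free-orthogonality}.
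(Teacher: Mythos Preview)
Your proposal is correct and takes essentially the same approach as the paper: show that $\Hom_\B(\P,{-})$ preserves short exact sequences, infinite products, and cones, hence carries contraacyclic $\R$\+contramodule CDG\+modules to contraacyclic complexes of $\R$\+contramodules, with the dual argument for part~(b). Your opening sentence about ``reducing to Theorem~\ref{r-free-orthogonality} together with Theorem~\ref{co-derived-mod}'' is slightly misleading since you never actually invoke either result---the argument is self-contained and merely parallels the former---but the substance of your d\'evissage is exactly what the paper does (and in more detail, including the direct-sums-to-products bookkeeping for part~(b), which the paper leaves implicit).
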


\begin{proof}
 To prove part~(a), notice that the functor $\Hom_\B(\P,{-})$
takes short exact sequences and infinite products of
$\R$\+contramodule CDG\+modules to short exact sequences and
infinite products of complexes of $\R$\+contramodules.
 It also takes cones of closed morphisms of $\R$\+contramodule
CDG\+modules to cones of closed morphisms of complexes of
$\R$\+contramodules.
 The proof of part~(b) is similar up to duality
(cf.\ \cite[Theorem~3.5 and Remark~3.5]{Pkoszul}; see also
Theorem~\ref{r-free-orthogonality}).
\end{proof}

\begin{cor}  \label{non-adj-fin-dim-reduct-co-abs}
 Let\/ $\B$ be an\/ $\R$\+free CDG\+algebra such that the exact
category of\/ $\R$\+(co)free graded left\/ $\B$\+modules has
finite homological dimension
(cf.\ Proposition~\textup{\ref{r-cofree-r-co-contra}} and
Corollary~\textup{\ref{r-free-homol-dim}}).
 Then the functors
$$
 \sD^\abs(\B\mod\Rfr)\lrarrow\sD^\ctr(\B\mod\Rctr)
$$
and
$$
 \sD^\abs(\B\mod\Rcof)\lrarrow\sD^\co(\B\mod\Rco)
$$
induced by the natural embeddings of DG\+categories\/
$\B\mod\Rfr\rarrow\B\mod\Rctr$ and\/ $\B\mod\Rcof\rarrow
\B\mod\Rco$ are equivalences of triangulated categories. 
\end{cor}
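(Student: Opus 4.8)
The plan is to factor each of the two functors through the corresponding exotic derived category of $\R$\+free (respectively, $\R$\+cofree) CDG\+modules. For the first functor I would write it as the composition
$$
 \sD^\abs(\B\mod\Rfr)\lrarrow\sD^\ctr(\B\mod\Rfr)\lrarrow
 \sD^\ctr(\B\mod\Rctr),
$$
in which the second arrow is an equivalence by Theorem~\ref{co-derived-mod}, while the first arrow is the natural functor between the two quotients of $H^0(\B\mod\Rfr)$ (well defined, since every absolutely acyclic $\R$\+free CDG\+module is contraacyclic). Thus it suffices to prove that the classes of absolutely acyclic and contraacyclic $\R$\+free CDG\+modules over $\B$ coincide. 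Dually, the second functor factors as $\sD^\abs(\B\mod\Rcof)\to\sD^\co(\B\mod\Rcof)\to\sD^\co(\B\mod\Rco)$, and one reduces to showing that absolute acyclicity and coacyclicity agree for $\R$\+cofree CDG\+modules. By Proposition~\ref{r-cofree-r-co-contra} the exact categories of $\R$\+free and $\R$\+cofree graded left $\B$\+modules are equivalent, so the finite homological dimension hypothesis holds on both sides at once.

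To identify the two acyclicity classes in the $\R$\+free case, I would proceed as follows. The inclusion of absolutely acyclic into contraacyclic modules is immediate from the definitions. Conversely, let $\M$ be a contraacyclic $\R$\+free CDG\+module. Under the finite homological dimension assumption, the construction in the proof of Theorem~\ref{r-free-absolute-derived} furnishes a closed morphism $\P\rarrow\M$ from a CDG\+module $\P\in H^0(\B\mod\Rfr_\proj)$ whose cone $\mathsf{A}$ is absolutely acyclic; equivalently, a distinguished triangle $\P\rarrow\M\rarrow\mathsf{A}\rarrow\P[1]$ in $H^0(\B\mod\Rfr)$. Since $\M$ and $\mathsf{A}$ are both contraacyclic, and the contraacyclic modules form a triangulated subcategory, the module $\P$ is contraacyclic as well.

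The decisive observation is that a contraacyclic CDG\+module with projective underlying graded $\B$\+module is contractible. Indeed, applying Theorem~\ref{r-free-orthogonality}(a) with the contraacyclic module taken to be $\P$ itself shows that $\Hom_\B(\P,\P)$ is a contractible complex of $\R$\+contramodules; hence $\id_\P$ is homotopic to zero and $\P$ represents a zero object of $H^0(\B\mod\Rfr)$. It follows that $\M\simeq\mathsf{A}$ is absolutely acyclic, completing the identification of the two classes. The $\R$\+cofree case is entirely symmetric: one replaces the projective resolution by the injective coresolution supplied by the $\R$\+cofree analogue of Theorem~\ref{r-free-absolute-derived}, and invokes the $\R$\+cofree analogue of Theorem~\ref{r-free-orthogonality}(b) to conclude that a coacyclic $\R$\+cofree CDG\+module with injective underlying graded $\B$\+module is contractible.

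I do not anticipate a serious obstacle here; the argument is a direct application of the resolution and orthogonality theorems already in hand. The one point requiring care is the bookkeeping at the two ends of the displayed compositions: verifying that the functor named in the statement genuinely coincides with the composition, and recalling (as noted in the discussion preceding Theorem~\ref{co-derived-mod}) that being contraacyclic as an $\R$\+free CDG\+module is the same as being contraacyclic as an $\R$\+contramodule CDG\+module, so that the reduction and Theorem~\ref{co-derived-mod} fit together. The self-orthogonality trick---feeding $\P$ into Theorem~\ref{r-free-orthogonality}(a) as its own second argument---is the only slightly nonobvious step.
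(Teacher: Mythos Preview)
Your proposal is correct and follows exactly the approach of the paper: the paper's proof simply states that the coincidence $\sD^\abs(\B\mod\Rfr)=\sD^\ctr(\B\mod\Rfr)$ follows from Theorems~\ref{r-free-orthogonality}(a) and~\ref{r-free-absolute-derived}, and then applies Theorem~\ref{co-derived-mod}. You have unpacked precisely what ``follows from'' means here---the resolution from Theorem~\ref{r-free-absolute-derived} together with the self-orthogonality trick using Theorem~\ref{r-free-orthogonality}(a)---which is the intended argument.
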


\begin{proof}
 It follows from Theorems~\ref{r-free-orthogonality}(a)
and~\ref{r-free-absolute-derived} that the exotic derived
categories $\sD^\abs(\B\mod\Rfr)$ and $\sD^\ctr(\B\mod\Rfr)$
coincide.
 Similarly one can show that the derived categories of the second
kind $\sD^\abs(\B\mod\Rcof)$ and $\sD^\co(\B\mod\Rcof)$ coincide
(see Section~\ref{r-cofree-absolute}).
 So it remains to apply Theorem~\ref{co-derived-mod}.
\end{proof}

\begin{cor}  \label{non-adj-fin-dim-reduct-r-co-contra}
 Let\/ $\B$ be an\/ $\R$\+free CDG\+algebra such that the exact
category of\/ $\R$\+(co)free graded left\/ $\B$\+modules has
finite homological dimension.
 Then the derived functors
$$
 \boL\Phi_\R\:\sD^\ctr(\B\mod\Rctr)\lrarrow\sD^\co(\B\mod\Rco)
$$
and
$$
 \boR\Psi_\R\:\sD^\co(\B\mod\Rco)\lrarrow\sD^\ctr(\B\mod\Rctr)
$$
defined by identifying\/ $\sD^\ctr(\B\mod\Rctr)$ with\/
$\sD^\abs(\B\mod\Rfr)$ and\/ $\sD^\co(\B\mod\Rco)$ with\/
$\sD^\abs(\B\mod\Rcof)$
(see Corollary~\textup{\ref{non-adj-fin-dim-reduct-co-abs}})
are mutually inverse equivalences of triangulated categories.
\end{cor}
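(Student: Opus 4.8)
The plan is to exhibit both $\boL\Phi_\R$ and $\boR\Psi_\R$ as composites of three equivalences of triangulated categories and then to read off that the two composites are mutually inverse. The only ingredients I would draw on are the exact/DG\+category equivalence $\Phi_\R=\Psi_\R^{-1}$ and the two identifications furnished by Corollary~\ref{non-adj-fin-dim-reduct-co-abs}; everything substantive has already been placed in those results, so the remaining argument is essentially formal.

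First I would recall from Section~\ref{r-cofree-absolute} that $\Phi_\R$ and $\Psi_\R$ are mutually inverse equivalences of the DG\+categories $\B\mod\Rfr$ and $\B\mod\Rcof$, and simultaneously of the underlying exact categories. Being exact equivalences, they match absolutely acyclic $\R$\+free CDG\+modules with absolutely acyclic $\R$\+cofree CDG\+modules and back, and therefore descend to mutually inverse equivalences of the absolute derived categories $\sD^\abs(\B\mod\Rfr)$ and $\sD^\abs(\B\mod\Rcof)$; this step requires no hypothesis on homological dimension. Next, invoking the assumption that the exact category of $\R$\+(co)free graded left $\B$\+modules has finite homological dimension, I would apply Corollary~\ref{non-adj-fin-dim-reduct-co-abs} to obtain the equivalences $\sD^\abs(\B\mod\Rfr)\simeq\sD^\ctr(\B\mod\Rctr)$ and $\sD^\abs(\B\mod\Rcof)\simeq\sD^\co(\B\mod\Rco)$, induced by the embeddings of DG\+categories into the ambient $\R$\+contramodule and $\R$\+comodule CDG\+module categories.

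By the definition of the functors in the statement, $\boL\Phi_\R$ is then the composite
$$
 \sD^\ctr(\B\mod\Rctr)\xrightarrow{\sim}\sD^\abs(\B\mod\Rfr)
 \xrightarrow{\Phi_\R}\sD^\abs(\B\mod\Rcof)\xrightarrow{\sim}
 \sD^\co(\B\mod\Rco),
$$
in which the first arrow is the inverse of the Corollary~\ref{non-adj-fin-dim-reduct-co-abs} equivalence and the last is the Corollary~\ref{non-adj-fin-dim-reduct-co-abs} equivalence itself; while $\boR\Psi_\R$ is the same chain traversed in the reverse direction with $\Psi_\R$ in place of $\Phi_\R$. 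Each composite is an equivalence because each of its three factors is, and since $\Phi_\R$ and $\Psi_\R$ are mutually inverse while the two outer identifications are applied in opposite senses, the functors $\boL\Phi_\R$ and $\boR\Psi_\R$ come out mutually inverse, as required. I do not anticipate a genuine obstacle, the real content having been absorbed into Corollary~\ref{non-adj-fin-dim-reduct-co-abs} (and, through it, Theorem~\ref{co-derived-mod} together with the finite\+dimensional reduction). If one wishes to justify the derived\+functor notation, one can additionally check that this composite agrees with the left, resp.\ right, derived functor of the adjoint pair $(\Phi_\R,\Psi_\R)$ on the abelian categories of $\R$\+contramodule and $\R$\+comodule graded $\B$\+modules described in Section~\ref{non-adj-graded}.
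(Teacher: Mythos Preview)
Your proposal is correct and follows essentially the same approach as the paper, which simply says ``Follows from the results of Section~\ref{r-cofree-absolute}.'' You have spelled out explicitly the composition of equivalences that the paper leaves implicit: the equivalence $\Phi_\R=\Psi_\R^{-1}$ between absolute derived categories from Section~\ref{r-cofree-absolute}, sandwiched between the identifications of Corollary~\ref{non-adj-fin-dim-reduct-co-abs} already invoked in the statement itself.
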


\begin{proof}
 Follows from the results of Section~\ref{r-cofree-absolute}.
\end{proof}

\subsection{Semiderived category of wcDG-modules}
\label{wcdg-semiderived}
 Let $\A$ be a wcDG\+algebra over~$\R$.
 An \emph{$\R$\+contramodule} (left or right) \emph{wcDG\+module}
over $\A$ is, by the definition, an $\R$\+contramodule CDG\+module
over $\A$ considered as an $\R$\+free CDG\+algebra.
 \ \emph{$\R$\+comodule wcDG\+modules} over $\A$ are defined
similarly.
 All the definitions, constructions, and notation of
Section~\ref{cdg-coderived} related to CDG\+modules will be applied
to wcDG\+modules as a particular case.

 An $\R$\+contramodule left wcDG\+module over $\A$ is called
\emph{semiacyclic} if it belongs to the minimal thick subcategory
of $H^0(\A\mod\Rctr)$ containing both the contraacyclic
$\R$\+contramodule wcDG\+modules and the semiacyclic $\R$\+free
wcDG\+modules.
 Equivalently, an $\R$\+contramodule wcDG\+module is semiacyclic
if the equivalence of contraderived categories from
Theorem~\ref{co-derived-mod} assigns a semiacyclic $\R$\+free
wcDG\+module to it.
 It is clear from the latter definition that the class of
semiacyclic $\R$\+contramodule CDG\+modules is closed under
infinite products.
 It is important for these arguments that the class of semiacyclic
$\R$\+free wcDG\+modules contains the class of contraacyclic
$\R$\+free wcDG\+modules.

 The \emph{semiderived category} $\sD^\si(\A\mod\Rctr)$ of
$\R$\+contramodule left wcDG\+modules over $\A$ is defined as
the quotient category of the homotopy category $H^0(\A\mod\Rctr)$
by the thick subcategory of semiacyclic $\R$\+contramodule
wcDG\+modules.
 It is obvious from the definition that the embedding of
DG\+categories $\A\mod\Rfr\rarrow\A\mod\Rctr$ induces
an equivalence of semiderived categories $\sD^\si(\A\mod\Rfr)
\rarrow\sD^\si(\A\mod\Rctr)$.
 The semiderived category of $\R$\+contramodule right wcDG\+modules
$\sD^\si(\modrRctr\A)$ over $\A$ is defined similarly.

 Analogously, an $\R$\+comodule left wcDG\+module over $\A$ is
called \emph{semiacyclic} if it belongs to the minimal thick
subcategory of $H^0(\A\mod\Rco)$ containing both the coacyclic
$\R$\+comodule wcDG\+modules and the semiacyclic $\R$\+cofree
wcDG\+modules.
 Equivalently, an $\R$\+comodule wcDG\+module is semiacyclic
if the equivalence of coderived categories from
Theorem~\ref{co-derived-mod} assigns a semiacyclic $\R$\+cofree
wcDG\+module to it.
 It is clear from the latter definition that the class of
semiacyclic $\R$\+comodule wcDG\+modules is closed under
infinite direct sums.
 It is important for these arguments that the class of semiacyclic
$\R$\+cofree wcDG\+modules contains the class of coacyclic
$\R$\+cofree wcDG\+modules.

 The \emph{semiderived category} $\sD^\si(\A\mod\Rco)$ of
$\R$\+comodule left wcDG\+modules over $\A$ is defined as the quotient
category of the homotopy category $H^0(\A\mod\Rco)$ by the thick
subcategory of semiacyclic $\R$\+comodule wcDG\+modules.
 It is obvious from the definition that the embedding of
DG\+categories $\A\mod\Rcof\rarrow\A\mod\Rco$ induces an equivalence
of semiderived categories $\sD^\si(\A\mod\Rcof)\rarrow
\sD^\si(\A\mod\Rco)$.
 The semiderived category of $\R$\+comodule right wcDG\+modules
$\sD^\si(\modrRco\A)$ is defined similarly.

 When $\A$ is actually a DG\+algebra (i.~e., $h=0$),
an $\R$\+contramodule wcDG\+module over $\A$ is semiacyclic
if and only if its underlying complex of $\R$\+contramodules
is contraacyclic.
 This follows from the discussion in Section~\ref{r-free-semi}
together with the facts that the forgetful functor takes
contraacyclic $\R$\+contramodule wcDG\+modules to contraacyclic
complexes of $\R$\+contramodules, and a complex of
free $\R$\+contramodules is contraacyclic (with respect to
the class of complexes of arbitrary $\R$\+contramodules) if and
only if it is contractible.

 Similarly, an $\R$\+comodule wcDG\+module over $\A$ is semiacyclic
if and only if its underlying complex of $\R$\+comodules is coacyclic
(see Section~\ref{r-cofree-semi}).
 These assertions explain the ``semiderived category'' terminology
(cf.~\cite{Psemi}).

 When the topological local ring $\R$ has finite homological
dimension, the semiacyclic $\R$\+contramodule or $\R$\+comodule
wcDG\+modules over $\A$ can be simply called \emph{acyclic},
and the semiderived categories of $\R$\+contramodule or
$\R$\+comodule wcDG\+modules over $\A$ can be simply called
their \emph{derived categories}.

\begin{thm} \label{non-adj-semiderived-res}
 Let\/ $\A$ be a wcDG\+algebra over\/~$\R$.  Then \par
\textup{(a)} for any wcDG\+module\/
$\P\in H^0(\A\mod\Rfr_\proj)_\proj$ and any semiacyclic\/
$\R$\+contra\-module left wcDG\+module\/ $\M$ over\/ $\A$,
the complex of\/ $\R$\+contramodules\/ $\Hom_\A(\P,\M)$
is contraacyclic; \par
\textup{(b)} for any semiacyclic\/ $\R$\+comodule left wcDG\+module\/
$\cL$ over\/ $\A$ and any wcDG\+module\/
$\cJ\in H^0(\A\mod\Rcof_\inj)_\inj$, the complex of\/
$\R$\+contramodules\/ $\Hom_\A(\cL,\cJ)$ is contraacyclic;
{\hbadness=1400\par}
\textup{(c)} the composition of natural functors 
$$
 H^0(\A\mod\Rfr_\proj)_\proj\lrarrow H^0(\A\mod\Rctr)
 \lrarrow\sD^\si(\A\mod\Rctr)
$$
is an equivalence of triangulated categories; \par
\textup{(d)} the composition of natural functors
$$
 H^0(\A\mod\Rcof_\inj)_\inj\lrarrow H^0(\A\mod\Rco)
 \lrarrow\sD^\si(\A\mod\Rco)
$$
is an equivalence of triangulated categories.
\end{thm}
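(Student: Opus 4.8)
The plan is to prove the orthogonality statements (a) and (b) first, and then derive the resolution equivalences (c) and (d) from them together with the results already established for $\R$\+free and $\R$\+cofree wcDG\+modules.

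For part (a), I would fix a wcDG\+module $\P\in H^0(\A\mod\Rfr_\proj)_\proj$ and view $\Hom_\A(\P,{-})$ as a triangulated functor from $H^0(\A\mod\Rctr)$ to the homotopy category $H^0(\R\contra)$. The contraacyclic complexes of $\R$\+contramodules form a thick subcategory of $H^0(\R\contra)$, and by definition the semiacyclic $\R$\+contramodule wcDG\+modules form the minimal thick subcategory of $H^0(\A\mod\Rctr)$ generated by the contraacyclic $\R$\+contramodule wcDG\+modules together with the semiacyclic $\R$\+free ones. Hence it suffices to verify the claim on these two classes of generators, and here the two defining properties of the class $H^0(\A\mod\Rfr_\proj)_\proj$ are used separately. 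For a contraacyclic $\M$ the assertion is exactly Theorem~\ref{non-adj-orthogonality}(a), which needs $\P$ to have projective underlying graded $\A$\+module; for a semiacyclic $\R$\+free $\M$, the remark following Theorem~\ref{r-free-semi-resolutions} gives that $\Hom_\A(\P,\M)$ is contractible, hence \emph{a fortiori} contraacyclic, which needs $\P$ to be homotopy projective. Part~(b) is proved dually, replacing Theorem~\ref{non-adj-orthogonality}(a) by its part~(b) and using the injective version of the same remark.

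The crucial consequence of (a) for parts (c) and (d) is that $H^0(\A\mod\Rfr_\proj)_\proj$ is left orthogonal to the semiacyclic subcategory. Indeed, every contraacyclic complex of $\R$\+contramodules is acyclic, since infinite products in $\R\contra$ are exact; thus for any semiacyclic $\N$ one gets $\Hom_{H^0(\A\mod\Rctr)}(\P,\N[n]) = H^n(\Hom_\A(\P,\N)) = 0$ for all~$n$. By the standard semiorthogonality property of Verdier localizations, the localization functor then induces isomorphisms $\Hom_{H^0(\A\mod\Rctr)}(\P,X)\simeq\Hom_{\sD^\si(\A\mod\Rctr)}(\P,X)$ for every $X$, so the functor in (c) is fully faithful. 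For essential surjectivity I would resolve an arbitrary $\R$\+contramodule wcDG\+module $\M$ through the bar-type construction from the proof of Theorem~\ref{co-derived-mod}, obtaining a closed morphism $\P'\rarrow\M$ from an $\R$\+free wcDG\+module whose cone is contraacyclic, and therefore semiacyclic; thus $\M\simeq\P'$ in $\sD^\si(\A\mod\Rctr)$. Transporting Theorem~\ref{r-free-semi-resolutions}(a) along the equivalence $\sD^\si(\A\mod\Rfr)\simeq\sD^\si(\A\mod\Rctr)$ of Section~\ref{wcdg-semiderived} then replaces $\P'$ by a homotopy equivalent object of $H^0(\A\mod\Rfr_\proj)_\proj$. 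Part~(d) is entirely dual, with $\R$\+comodule wcDG\+modules, coacyclicity, Theorem~\ref{non-adj-orthogonality}(b), and Theorem~\ref{r-free-semi-resolutions}(b).

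The main point requiring care is the essential-surjectivity step of (c) and (d), where two resolution procedures must be chained: first the $\R$\+free resolution from Theorem~\ref{co-derived-mod}, whose cone is only contraacyclic rather than contractible, and then the graded-projective homotopy-projective resolution of the resulting $\R$\+free module. This composition is legitimate precisely because the class of semiacyclic $\R$\+free wcDG\+modules was set up to contain the contraacyclic ones, so that the contraacyclic cone of the first step becomes semiacyclic and is annihilated in the semiderived category. By contrast, the orthogonality inputs (a) and (b) require no new homological computation: via the triangulated functoriality of $\Hom_\A(\P,{-})$ they reduce to correctly matching the two kinds of generators of the semiacyclic subcategory against Theorem~\ref{non-adj-orthogonality} and the contractibility remarks.
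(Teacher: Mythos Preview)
Your treatment of parts~(a) and~(b) is exactly the paper's argument: reduce to the two generating classes of the semiacyclic subcategory, invoke Theorem~\ref{non-adj-orthogonality} for the contraacyclic generators, and the contractibility remark following Theorem~\ref{r-free-semi-resolutions} for the semiacyclic $\R$\+free (resp.\ $\R$\+cofree) ones.

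For parts~(c) and~(d), your argument is correct but more elaborate than the paper's.  The paper simply observes that these follow immediately from Theorem~\ref{r-free-semi-resolutions}(a) and Theorem~\ref{r-cofree-semi-resolutions}(b) respectively: the composite in~(c) factors as
\[
  H^0(\A\mod\Rfr_\proj)_\proj\lrarrow\sD^\si(\A\mod\Rfr)\lrarrow
  \sD^\si(\A\mod\Rctr),
\]
where the first arrow is an equivalence by Theorem~\ref{r-free-semi-resolutions}(a) and the second is the equivalence already recorded in Section~\ref{wcdg-semiderived} (which you yourself invoke at the end).  There is no need to re-establish full faithfulness via~(a) and then chain the resolution of Theorem~\ref{co-derived-mod} with that of Theorem~\ref{r-free-semi-resolutions}; the two equivalences compose directly.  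Your route has the merit of making the semiorthogonal decomposition explicit (and of actually using~(a) for something), but the paper bypasses this entirely.  Also note that for~(d) the relevant citation is Theorem~\ref{r-cofree-semi-resolutions}(b), not Theorem~\ref{r-free-semi-resolutions}(b).
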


\begin{proof}
 Part~(a): it suffices to consider the cases when $\M$ is
a contraacyclic $\R$\+contra\-module wcDG\+module or $\M$ is
a semiacyclic $\R$\+free wcDG\+module.
 The former case holds for any $\P\in H^0(\A\mod\Rfr_\proj)$
by Theorem~\ref{non-adj-orthogonality}(a).
 In the latter case, the complex of $\R$\+contramodules
$\Hom_\A(\P,\M)$ is even contractible; see the remarks after
the proof of Theorem~\ref{r-free-semi-resolutions}.

 The proof of part~(b) is similar up to duality.
 Parts~(c) and~(d) follow from
Theorems~\ref{r-free-semi-resolutions}(a)
and~\ref{r-cofree-semi-resolutions}(b), respectively.
\end{proof}

\begin{prop} \label{non-adj-r-co-contra}
 The derived functors
$$
 \boL\Phi_\R\:\sD^\si(\A\mod\Rctr)\lrarrow\sD^\si(\A\mod\Rco)
$$
and
$$
 \boR\Psi_\R\:\sD^\si(\A\mod\Rco)\lrarrow\sD^\si(\A\mod\Rctr)
$$
defined by identifying\/ $\sD^\si(\A\mod\Rctr)$ with\/
$\sD^\si(\A\mod\Rfr)$ and\/ $\sD^\si(\A\mod\Rco)$ with\/
$\sD^\si(\A\mod\Rcof)$ are mutually inverse equivalences between
the semiderived categories\/ $\sD^\si(\A\mod\Rctr)$ and\/
$\sD^\si(\A\mod\Rco)$ of\/ $\R$\+contramodule and\/
$\R$\+comodule wcDG\+modules.
\end{prop}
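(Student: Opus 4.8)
The plan is to exhibit both $\boL\Phi_\R$ and $\boR\Psi_\R$ as transports of the single equivalence $\Phi_\R=\Psi_\R^{-1}$ between the semiderived categories of $\R$-free and $\R$-cofree wcDG-modules, and then to read off mutual inverseness by composing equivalences.

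First I would assemble the two ingredients already at hand. By the discussion in Section~\ref{r-cofree-semi}, the equivalence of DG-categories $\Phi_\R=\Psi_\R^{-1}\:\A\mod\Rfr\simeq\A\mod\Rcof$ carries semiacyclic $\R$-free wcDG-modules to semiacyclic $\R$-cofree wcDG-modules and back, and therefore induces an equivalence of semiderived categories
$$
 \Phi_\R\:\sD^\si(\A\mod\Rfr)\simeq\sD^\si(\A\mod\Rcof),
 \qquad \Psi_\R=\Phi_\R^{-1}.
$$
By the definitions in Section~\ref{wcdg-semiderived}, the natural embeddings of DG-categories induce equivalences $\sD^\si(\A\mod\Rfr)\simeq\sD^\si(\A\mod\Rctr)$ and $\sD^\si(\A\mod\Rcof)\simeq\sD^\si(\A\mod\Rco)$. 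It is exactly through these last two identifications that $\boL\Phi_\R$ and $\boR\Psi_\R$ are defined: $\boL\Phi_\R$ is the composite of the inverse of the first embedding equivalence, the equivalence $\Phi_\R$, and the second embedding equivalence, while $\boR\Psi_\R$ runs the other way using $\Psi_\R$.

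Next I would check that these transports genuinely compute the left and right derived functors of the adjoint pair $\Phi_\R\dashv\Psi_\R$ recorded in Section~\ref{non-adj-graded}, so as to justify the notation. The key observation is that $\R$-free wcDG-modules are \emph{adapted} to $\Phi_\R$: the functor $\Phi_\R$ is exact on the exact category of $\R$-free wcDG-modules and preserves semiacyclicity by the equivalence above, while the resolution in the proof of Theorem~\ref{co-derived-mod} equips every $\R$-contramodule wcDG-module $\M$ with a closed morphism $\P\rarrow\M$ from an $\R$-free wcDG-module whose cone is contraacyclic, hence semiacyclic, obtained by totalizing a left $G^+$-resolution along the diagonals by infinite products. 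Passing from $\M$ to such a $\P$ and applying $\Phi_\R$ is precisely the route through the two sets of identifications above, so the composite is $\boL\Phi_\R$; dually, the cofree resolutions of Theorem~\ref{co-derived-mod} show that $\R$-cofree wcDG-modules are adapted to $\Psi_\R$ and that the transport computes $\boR\Psi_\R$.

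Finally, mutual inverseness is formal: all the functors occurring in the two composites are equivalences, and $\Phi_\R$, $\Psi_\R$ are already mutually inverse on $\sD^\si(\A\mod\Rfr)$ and $\sD^\si(\A\mod\Rcof)$, whence $\boR\Psi_\R\circ\boL\Phi_\R\simeq\id$ and $\boL\Phi_\R\circ\boR\Psi_\R\simeq\id$. The only genuinely non-formal point is the adaptedness verification of the preceding paragraph --- that $\Phi_\R$ and $\Psi_\R$ preserve semiacyclicity of $\R$-(co)free wcDG-modules, and that $\R$-free (resp.\ $\R$-cofree) resolutions with semiacyclic cones exist --- and both halves of this have already been supplied, by the equivalence of Section~\ref{r-cofree-semi} and by Theorem~\ref{co-derived-mod} respectively. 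I therefore expect this adaptedness check, rather than the concluding composition of equivalences, to be the substantive content of the argument.
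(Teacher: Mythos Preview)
Your proposal is correct and matches the paper's approach: the paper's proof is the single sentence ``Follows from the results of Section~\ref{r-cofree-semi},'' and your first and last paragraphs unpack exactly this, composing the equivalence $\Phi_\R=\Psi_\R^{-1}\:\sD^\si(\A\mod\Rfr)\simeq\sD^\si(\A\mod\Rcof)$ from Section~\ref{r-cofree-semi} with the identifications $\sD^\si(\A\mod\Rfr)\simeq\sD^\si(\A\mod\Rctr)$ and $\sD^\si(\A\mod\Rcof)\simeq\sD^\si(\A\mod\Rco)$ built into the definition of the semiderived categories in Section~\ref{wcdg-semiderived}.

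Your middle paragraph on adaptedness is additional commentary rather than part of the proof: the proposition \emph{defines} $\boL\Phi_\R$ and $\boR\Psi_\R$ via these identifications, so there is nothing to verify beyond the fact that a composite of equivalences is an equivalence. Your adaptedness argument is a (correct) justification of why the notation $\boL\Phi_\R$, $\boR\Psi_\R$ is reasonable, but the paper simply takes the names as given by the statement and does not pause over this.
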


\begin{proof}
 Follows from the results of Section~\ref{r-cofree-semi}.
\end{proof}

 One uses the equivalences of categories $\sD^\si(\A\mod\Rctr)
\simeq\sD^\si(\A\mod\Rfr)$ and $\sD^\si(\A\mod\Rco)\simeq\sD^\si
(\A\mod\Rcof)$ in order to extend the constructions and results
of Sections~\ref{r-free-semi} and~\ref{r-cofree-semi} related to
$\R$\+free and $\R$\+cofree wcDG\+modules over $\A$ to arbitrary
$\R$\+contramodule and $\R$\+comodule wcDG\+modules.
 Let us state some of the assertions which one can obtain
in this way.

\begin{cor}  \label{non-adj-cofibrant}
 Assume that the DG\+algebra\/ $\A/\m\A$ is cofibrant (in the standard
model structure on the category of DG\+algebras over~$k$).
 Then an\/ $\R$\+contramodule wcDG\+module over\/ $\A$ is semiacyclic 
if and only if it is contraacyclic, that is\/
$\sD^\ctr(\A\mod\Rctr)\simeq\sD^\si(\A\mod\Rctr)$.
 Similarly, an\/ $\R$\+comodule wcDG\+module over\/ $\A$ is semiacyclic
if and only if it is coacyclic, that is\/
$\sD^\co(\A\mod\Rco)\simeq\sD^\si(\A\mod\Rco)$.
 Assuming additionally that\/ $\R$ has finite homological dimension,
an\/ $\R$\+contramodule or\/ $\R$\+comodule wcDG\+module over\/ $\A$
is semiacyclic if and only if it is absolutely acyclic.
\end{cor}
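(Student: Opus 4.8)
The plan is to deduce the statement from the already-settled $\R$\+free and $\R$\+cofree cases by transporting acyclicity along the equivalences of Theorem~\ref{co-derived-mod}; cofibrancy of $\A/\m\A$ will enter only through the finite homological dimension it forces. First I would record the one structural consequence of cofibrancy that I need: the underlying graded $k$\+algebra of $\A/\m\A$ has finite homological dimension, so by Corollary~\ref{r-free-homol-dim} the exact category of $\R$\+free graded left $\A$\+modules has finite homological dimension, and likewise, by Proposition~\ref{r-cofree-r-co-contra}, the exact category of $\R$\+cofree graded left $\A$\+modules.

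Next I would dispose of the $\R$\+free case. By Theorem~\ref{r-free-cofibrant} the semiacyclic $\R$\+free wcDG\+modules over $\A$ are exactly the absolutely acyclic ones; and by the finite homological dimension just recorded, absolutely acyclic $\R$\+free CDG\+modules coincide with contraacyclic ones (this is the argument in the proof of Corollary~\ref{non-adj-fin-dim-reduct-co-abs}, combining Theorems~\ref{r-free-orthogonality}(a) and~\ref{r-free-absolute-derived}). Hence semiacyclic and contraacyclic $\R$\+free wcDG\+modules coincide.

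I would then promote this to the $\R$\+contramodule setting using the very definition of semiacyclicity. By construction the semiacyclic $\R$\+contramodule wcDG\+modules form the minimal thick subcategory of $H^0(\A\mod\Rctr)$ generated by the contraacyclic $\R$\+contramodule wcDG\+modules together with the semiacyclic $\R$\+free wcDG\+modules. The previous step identifies the latter generators with contraacyclic $\R$\+free CDG\+modules, and Theorem~\ref{co-derived-mod} identifies these in turn with contraacyclic $\R$\+contramodule CDG\+modules; thus the second generating class is absorbed into the first, the minimal thick subcategory collapses onto the contraacyclic $\R$\+contramodule wcDG\+modules, and $\sD^\si(\A\mod\Rctr)\simeq\sD^\ctr(\A\mod\Rctr)$. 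The $\R$\+comodule assertion I would establish dually, replacing Theorem~\ref{r-free-cofibrant} by its $\R$\+cofree analogue, the equality of the absolute and coderived categories of $\R$\+cofree CDG\+modules by the finite-homological-dimension discussion of Section~\ref{r-cofree-absolute}, and the contraderived half of Theorem~\ref{co-derived-mod} by its coderived half.

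For the last clause I would bring in the separate hypothesis that $\R$ has finite homological dimension, via the remark following the proof of Theorem~\ref{co-derived-mod}: in that case the embedding yields $\sD^\abs(\A\mod\Rfr)\simeq\sD^\abs(\A\mod\Rctr)$, which together with $\sD^\abs(\A\mod\Rfr)=\sD^\ctr(\A\mod\Rfr)$ and Theorem~\ref{co-derived-mod} forces the absolutely acyclic and contraacyclic $\R$\+contramodule wcDG\+modules to coincide (and dually in the comodule case); combined with the previous paragraphs this gives semiacyclic $=$ absolutely acyclic. The main obstacle I anticipate is purely bookkeeping: keeping the two finiteness inputs strictly apart --- cofibrancy gives finite homological dimension of the graded module categories, used to equate semiacyclic with contraacyclic for $\R$\+(co)free modules, whereas the finite homological dimension of $\R$ itself is what is needed to further identify the contraderived and absolute derived categories in the full non-(co)free world --- and in verifying that the equivalences of Theorem~\ref{co-derived-mod} really match the two a priori different notions of contraacyclicity, so that each exotic derived category genuinely appears as a Verdier quotient of the next by a subcategory that has been shown to vanish.
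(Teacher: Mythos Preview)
Your proposal is correct and is essentially a careful unpacking of the paper's one-line proof ``Follows from Theorem~\ref{r-free-cofibrant}.'' You trace exactly the intended route: use Theorem~\ref{r-free-cofibrant} (plus the finite homological dimension of $\R$\+free graded $\A$\+modules) to get semiacyclic $=$ contraacyclic on the $\R$\+free level, and then invoke the definition of semiacyclicity together with Theorem~\ref{co-derived-mod} to transport this to arbitrary $\R$\+contramodule wcDG\+modules; the final clause is handled via the remark after Theorem~\ref{co-derived-mod} just as you describe.
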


\begin{proof}
 Follows from Theorem~\ref{r-free-cofibrant}.
\end{proof}

 In line with our usual notation, let $H^0(\modrRcofproj\A)_\proj$
denote the homotopy category of homotopy projective $\R$\+cofree
right wcDG\+modules over $\A$ with projective underlying
$\R$\+cofree graded $\A$\+modules.

\begin{lem} 
 Let\/ $\A$ be a wcDG\+algebra over\/~$\R$. Then \par
\textup{(a)} for any wcDG\+module\/ $\Q\in H^0(\modrRfrproj\A)_\proj$
and any semiacyclic\/ $\R$\+comodule left wcDG\+module\/ $\cM$ over\/
$\A$, the complex of\/ $\R$\+comodules\/ $\Q\ot_\A\cM$ is
coacyclic; \par
\textup{(b)} for any wcDG\+module\/ $\cQ\in H^0(\modrRcofproj\A)_\proj$
and any semiacyclic\/ $\R$\+comodule left wcDG\+module\/ $\cM$ over\/
$\A$, the complex of\/ $\R$\+comodules\/ $\cQ\ot_\A\cM$ is
coacyclic; {\hfuzz=1.3pt\par}
\textup{(c)} for any wcDG\+module\/ $\cP\in
H^0(\A\mod\Rcof_\proj)_\proj$ and any semiacyclic\/ $\R$\+contra\-module
left wcDG\+module\/ $\M$ over\/ $\A$, the complex of\/
$\R$\+contramodules\/ $\Hom_\A(\cP,\M)$ is contraacyclic; \par
\textup{(d)} for any wcDG\+module\/ $\J\in H^0(\A\mod\Rfr_\inj)_\inj$
and any semiacyclic\/ $\R$\+comodule left wcDG\+module $\cL$ over\/
$\A$, the complex of\/ $\R$\+contramodules\/ $\Hom_\A(\cL,\J)$ is
contraacyclic.
\end{lem}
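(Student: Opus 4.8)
The four parts share one shape, and the plan is to treat part~(a) by an honest dévissage and then deduce parts~(b)--(d) from it and from Theorem~\ref{non-adj-semiderived-res} by passing through the comodule-contramodule correspondence on the fixed (homotopy projective/injective) argument.

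For part~(a), I would fix $\Q\in H^0(\modrRfrproj\A)_\proj$ and consider the class $\mathcal S$ of all $\R$\+comodule left wcDG\+modules $\cM$ over $\A$ for which $\Q\ot_\A\cM$ is a coacyclic complex of $\R$\+comodules. The key preliminary point is that, because $\Q$ has a projective underlying $\R$\+free graded $\A$\+module, the functor $\Q\ot_\A{-}$ is exact on the abelian category of $\R$\+comodule graded $\A$\+modules: writing $\Q$ as a direct summand of a freely generated $\U\ot^\R\A$ and using the isomorphism $(\U\ot^\R\A)\ot_\A\cM\simeq\U\ocn_\R\cM$ of Section~\ref{non-adj-graded}, this reduces to the exactness of $\U\ocn_\R{-}$ for a free graded $\R$\+contramodule $\U$, which is Lemma~\ref{free-r-contramodules}(a). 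Since $\Q\ot_\A{-}$ is moreover a triangulated functor $H^0(\A\mod\Rco)\rarrow H^0(\R\comod)$ commuting with infinite direct sums, the class $\mathcal S$, being the preimage of the thick subcategory of coacyclic complexes of $\R$\+comodules, is itself a thick subcategory of $H^0(\A\mod\Rco)$ closed under infinite direct sums.

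It then remains to check that $\mathcal S$ contains the two generating classes of the semiacyclicity relation from Section~\ref{wcdg-semiderived}. For the totalization of a short exact sequence of $\R$\+comodule wcDG\+modules, exactness of $\Q\ot_\A{-}$ turns it into the totalization of a short exact sequence of complexes of $\R$\+comodules, which is coacyclic; together with closure under direct sums this places all coacyclic $\R$\+comodule wcDG\+modules in $\mathcal S$. For a semiacyclic $\R$\+cofree wcDG\+module $\cM$, Lemma~\ref{r-cofree-homotopy-proj-tensor}(b) shows $\Q\ot_\A\cM$ is contractible, hence coacyclic. As the semiacyclic $\R$\+comodule wcDG\+modules are by definition the thick subcategory generated by these two classes, $\mathcal S$ contains every semiacyclic $\cM$, proving~(a). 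For part~(b), I would set $\Q=\Psi_\R(\cQ)$; the correspondence of Section~\ref{r-cofree-semi} identifies $H^0(\modrRcofproj\A)_\proj$ with $H^0(\modrRfrproj\A)_\proj$, and the natural morphism $\Q\ot_\A\cM\rarrow\Phi_\R(\Q)\ot_\A\cM$ of Section~\ref{non-adj-graded} is an isomorphism since $\Q$ is $\R$\+free, so $\cQ\ot_\A\cM\simeq\Q\ot_\A\cM$ and~(b) follows from~(a). Parts~(c) and~(d) reduce likewise to Theorem~\ref{non-adj-semiderived-res}: for~(c), set $\P=\Psi_\R(\cP)\in H^0(\A\mod\Rfr_\proj)_\proj$ and use the comparison map $\Hom_\A(\Phi_\R(\P),\M)\rarrow\Hom_\A(\P,\M)$ (an isomorphism since $\P$ is $\R$\+free) to identify $\Hom_\A(\cP,\M)$ with $\Hom_\A(\P,\M)$, which is contraacyclic by Theorem~\ref{non-adj-semiderived-res}(a); for~(d), set $\cJ=\Phi_\R(\J)\in H^0(\A\mod\Rcof_\inj)_\inj$ and use $\Hom_\A(\cL,\J)=\Hom_\A(\cL,\Psi_\R(\cJ))\simeq\Hom_\A(\cL,\cJ)$ (an isomorphism since $\cJ$ is $\R$\+cofree) to invoke Theorem~\ref{non-adj-semiderived-res}(b).

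The main obstacle I anticipate is located entirely in part~(a): verifying rigorously that $\Q\ot_\A{-}$ is exact on non-$\R$\+free $\R$\+comodule modules and commutes with the infinite direct sums defining coacyclicity, so that it does preserve the generating short exact sequences rather than merely the $\R$\+cofree base case. This is precisely the place where the failure of exactness flagged in Remark~\ref{contra-operations-not-exact} for $\ot^\R$, $\ocn_\R$ and infinite sums must be circumvented, and the circumvention depends essentially on the projectivity of the underlying graded module of $\Q$ via the reduction to $\U\ocn_\R{-}$. Once this exactness and continuity are secured, the dévissage and the three reductions of~(b)--(d) are purely formal.
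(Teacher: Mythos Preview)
Your proposal is correct and follows essentially the same approach as the paper's proof: a d\'evissage for part~(a) splitting into the coacyclic case (handled by exactness of $\Q\ot_\A{-}$) and the semiacyclic $\R$\+cofree case (Lemma~\ref{r-cofree-homotopy-proj-tensor}(b)), with parts~(b)--(d) reduced via the comparison isomorphisms of Section~\ref{non-adj-graded} exactly as the paper does. Your version simply spells out in full the exactness argument that the paper refers to as ``straightforward in view of the results of Section~\ref{non-adj-graded}''.
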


\begin{proof}
 The proof is similar to that of
Theorem~\ref{non-adj-semiderived-res}(a\+b).
 The cases of a coacyclic $\R$\+comodule wcDG\+module $\cM$ or $\cL$,
or a contraacyclic $\R$\+contramodule wcDG\+module $\M$, are
straightforward in view of the results of Section~\ref{non-adj-graded}.
 The cases of a semiacyclic $\R$\+cofree wcDG\+module $\cM$ or $\cL$,
or a semiacyclic $\R$\+free wcDG\+module $\M$, can be dealt with using
the techniques of Sections~\ref{r-free-semi} and~\ref{r-cofree-semi}.
 Alternatively, the isomorphisms $\Hom_\A(\cP,\M)\simeq
\Hom_\A(\Psi_\R(\cP),\M)$ and $\Hom_\A(\cL,\J)\simeq
\Hom_\A(\cL,\Phi_\R(\J))$ from Section~\ref{non-adj-graded} reduce
parts (c) and~(d) to Theorem~\ref{non-adj-semiderived-res}(a\+b).
 The isomorphism $\cQ\ot_\A\cM\simeq\Psi_\R(\cQ)\ot_\A\cM$
reduces part~(b) to part~(a); and part~(a) in the case of
a semiacyclic $\R$\+cofree wcDG\+module $\cM$ is 
Lemma~\ref{r-cofree-homotopy-proj-tensor}(b). \emergencystretch=0em
\end{proof}

\begin{rem}
 The analogues of the assertions of Lemma do \emph{not} hold for
the other tensor product and $\Hom$ operations on wcDG\+modules
over~$\A$.
 In particular, the tensor product of a projective $\R$\+free 
wcDG\+module $\Q$ and a semiacyclic $\R$\+contramodule wcDG\+module
$\M$ is \emph{not} in general a contraacyclic complex of
$\R$\+contramodules (because this is not true for contraacyclic
$\R$\+contramodule wcDG\+modules~$\M$).
 The tensor product of a projective $\R$\+cofree wcDG\+module $\cQ$
and a semiacyclic $\R$\+contra\-module wcDG\+module $\M$ is \emph{not}
in general a coacyclic complex of $\R$\+comodules.
 The $\Hom$ from a projective $\R$\+free wcDG\+module $\P$ to
a semiacyclic $\R$\+comodule wcDG\+module $\cM$ is \emph{not} in
general a coacyclic complex of $\R$\+comodules (because this is not
true for coacyclic $\R$\+comodule wcDG\+modules~$\cM$).
 The $\Hom$ from a semiacyclic $\R$\+contramodule wcDG\+module $\L$
to an injective $\R$\+cofree wcDG\+module $\cJ$ is \emph{not} in
general a coacyclic complex of $\R$\+comodules (because this is not
true for contraacyclic $\R$\+contramodule wcDG\+modules~$\L$).
\end{rem}

 Restricting the functor $\ot_\A$ from Section~\ref{cdg-coderived}
to either of the full subcategories
$H^0(\modrRco\A)\times H^0(\A\mod\Rcof_\proj)_\proj$ or
$H^0(\modrRcofproj\A)_\proj\times H^0(\A\mod\Rco)\subset
H^0(\modrRco\A)\times H^0(\A\mod\Rco)$, composing it with
the localization functor $H^0(\R\comod)\rarrow\sD^\co(\R\comod)$,
and identifying the coderived category $\sD^\co(\R\comod)$ with
the homotopy category $H^0(\R\comod^\cofr)$ (see, e.~g.,
Theorem~\ref{non-adj-co-derived-res}(d) below),
we construct the double-sided derived functor {\hbadness=2650
$$
 \Tor^\A\:\sD^\si(\modrRco\A)\times\sD^\si(\A\mod\Rco)
 \lrarrow H^0(\R\comod^\cofr).
$$ 
 Using} the above identifications of the semiderived categories and
the constructions of the derived functors $\Tor^\A$ from
Sections~\ref{r-free-semi} and~\ref{r-cofree-semi}, we obtain
the left derived functors
\begin{alignat*}{2}
 &\Tor^\A\:\sD^\si(\modrRctr\A)\times\sD^\si(\A\mod\Rctr)
 &&\lrarrow H^0(\R\contra^\free), \\
 &\Tor^\A\:\sD^\si(\modrRctr\A)\times\sD^\si(\A\mod\Rco)
 &&\lrarrow H^0(\R\comod^\cofr).
\end{alignat*}
 The latter functor can be also constructed by restricting the functor
$\ot_\A$ from Section~\ref{cdg-coderived} to the full subcategory
$H^0(\modrRfrproj\A)_\proj\times H^0(\A\mod\Rco)$, composing it with
the localization functor $H^0(\R\comod)\rarrow\sD^\co(\R\comod)$, and
identifying $\sD^\co(\R\comod)$ with $H^0(\R\comod^\cofr)$.
 The three functors $\Tor^\A$ are transformed into each other by
the equivalences of categories $\boL\Phi_\R=\boR\Psi_\R^{-1}$ from
Proposition~\ref{non-adj-r-co-contra}.

 Similarly, restricting the functor $\Hom_\A$ from
Section~\ref{cdg-coderived} to either of the full subcategories
$H^0(\A\mod\Rcof_\proj)_\proj^\sop\times H^0(\A\mod\Rctr)$ or
$H^0(\A\mod\Rco)^\sop\times H^0(\A\mod\Rfr_\inj)_\inj\subset
H^0(\A\mod\Rco)^\sop\times H^0(\A\mod\Rctr)$, composing it
with the localization functor $H^0(\R\contra)\rarrow
\sD^\ctr(\R\contra)$, and identifying the contraderived category
$\sD^\ctr(\R\contra)$ with the homotopy category
$H^0(\R\contra^\free)$ (see, e.~g.,
Theorem~\ref{non-adj-co-derived-res}(c)),
we construct the double-sided derived functor {\hbadness=1250
$$
 \Ext_\A\:\sD^\si(\A\mod\Rco)^\sop\times\sD^\si(\A\mod\Rctr)
 \lrarrow H^0(\R\contra^\free).
$$
 From} the constructions of the derived functors
$\Ext_\A$ in Sections~\ref{r-free-semi} and~\ref{r-cofree-semi}
we obtain the right derived functors 
\begin{alignat*}{3}
 &\Ext_\A\:\sD^\si(\A\mod\Rctr)^\sop&&\times\sD^\si(\A\mod\Rctr)
 &&\lrarrow H^0(\R\contra^\free), \\
 &\Ext_\A\:\sD^\si(\A\mod\Rco)^\sop&&\times\sD^\si(\A\mod\Rco)
 &&\lrarrow H^0(\R\contra^\free), \\
 &\Ext_\A\:\sD^\si(\A\mod\Rctr)^\sop&&\times\sD^\si(\A\mod\Rco)
 &&\lrarrow H^0(\R\comod^\cofr),
\end{alignat*}
 The second of the four can be also constructed by restricting
the functor $\Hom_\A$ from Section~\ref{cdg-coderived} to
the full subcategory $H^0(\A\mod\Rfr_\proj)_\proj^\sop\times
H^0(\A\mod\Rctr)$, composing it with the localization functor
$H^0(\R\contra)\rarrow\sD^\ctr(\R\contra)$, and identifying
$\sD^\ctr(\R\contra)$ with $H^0(\R\contra^\free)$.
 Similarly, the third functor can be constructed by restricting
the functor $\Hom_\A$ from Section~\ref{cdg-coderived} to the full
subcategory $H^0(\A\mod\Rco)^\sop\times H^0(\A\mod\Rcof_\inj)_\inj$,
composing it with the localization functor
$H^0(\R\contra)\rarrow\sD^\ctr(\R\contra)$, and identifying
$\sD^\ctr(\R\contra)$ with $H^0(\R\contra^\free)$.
 The four functors $\Ext_\A$ are transformed into each other by
the equivalences of categories $\boL\Phi_\R=\boR\Psi_\R^{-1}$.

 Let $(f,a)\:\B\rarrow\A$ be a morphism of wcDG\+algebras over~$\R$.
 Then we have the induced functor $R_f\:H^0(\A\mod\Rctr)\rarrow
H^0(\B\mod\Rctr)$.
 This functor obviously takes contraacyclic $\R$\+contramodule
wcDG\+modules over $\A$ to contraacyclic $\R$\+contramodule
wcDG\+modules over $\B$; it was explained in
Section~\ref{r-free-semi} that it takes semiacyclic $\R$\+free
wcDG\+modules over $\A$ to semiacyclic $\R$\+free wcDG\+modules
over~$\B$.
 Hence it takes semiacyclic $\R$\+contramodule wcDG\+modules to
semiacyclic $\R$\+contramodule wcDG\+modules, and therefore
induces a triangulated functor
$$
 \boI R_f\:\sD^\si(\A\mod\Rctr)\lrarrow\sD^\si(\B\mod\Rctr).
$$
 The constructions of Section~\ref{r-free-semi} provide, via
the identification of the semiderived categories of $\R$\+free
and arbitrary $\R$\+contramodule CDG\+modules, the left and
right adjoint functors to $\boI R_f$
\begin{align*}
 \boL E_f\:\sD^\si(\B\mod\Rctr)&\lrarrow\sD^\si(\A\mod\Rctr), \\
 \boR E^f\:\sD^\si(\B\mod\Rctr)&\lrarrow\sD^\si(\A\mod\Rctr).
\end{align*}

 Similarly, the functor $R_f\:H^0(\A\mod\Rco)\rarrow
H^0(\B\mod\Rco)$ takes coacyclic $\R$\+comodule wcDG\+modules
over $\A$ to coacyclic $\R$\+comodule wcDG\+modules over $\B$,
and semiacyclic $\R$\+cofree wcDG\+modules over $\A$ to
semiacyclic $\R$\+cofree wcDG\+mod\-ules over $\B$.
 Hence it also takes semiacyclic $\R$\+comodule wcDG\+modules
to semiacyclic $\R$\+comodule wcDG\+modules, and therefore
induces a triangulated functor 
$$
 \boI R_f\:\sD^\si(\A\mod\Rco)\lrarrow\sD^\si(\B\mod\Rco).
$$
 The constructions of Section~\ref{r-cofree-semi} provide
the left and right adjoint functors
\begin{align*}
 \boL E_f\:\sD^\si(\B\mod\Rco)&\lrarrow\sD^\si(\A\mod\Rco), \\
 \boR E^f\:\sD^\si(\B\mod\Rco)&\lrarrow\sD^\si(\A\mod\Rco).
\end{align*}
 The equivalences of semiderived categories $\boL\Phi_\R = 
\boR\Psi_\R^{-1}$ transform the $\R$\+comodule wcDG\+module
restriction- and extension-of-scalars functors into the similar
$\R$\+contra\-module wcDG\+module functors.

\begin{cor}\emergencystretch=0em\hfuzz=2.6pt  \label{non-adj-quasi}
 The functors\/ $\boI R_f\:\sD^\si(\A\mod\Rctr)\rarrow
\sD^\si(\B\mod\Rctr)$ and\/ $\boI R_f\:\allowbreak\sD^\si(\A\mod\Rco)
\rarrow\sD^\si(\B\mod\Rco)$ are equivalences of triangulated
categories whenever the DG\+algebra morphism $f/\m f\:
\B/\m\B\rarrow\A/\m\A$ is a quasi-isomorphism.
\end{cor}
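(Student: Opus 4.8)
The plan is to reduce the assertion to the already-established $\R$\+free and $\R$\+cofree cases, Theorems~\ref{r-free-reduction-quasi} and~\ref{r-cofree-reduction-quasi}, by passing through the equivalences of semiderived categories constructed earlier in this section. Recall from the discussion in Section~\ref{wcdg-semiderived} that the embeddings of DG\+categories $\A\mod\Rfr\rarrow\A\mod\Rctr$ and $\B\mod\Rfr\rarrow\B\mod\Rctr$ induce equivalences $\sD^\si(\A\mod\Rfr)\simeq\sD^\si(\A\mod\Rctr)$ and $\sD^\si(\B\mod\Rfr)\simeq\sD^\si(\B\mod\Rctr)$; similarly, the embeddings $\A\mod\Rcof\rarrow\A\mod\Rco$ and $\B\mod\Rcof\rarrow\B\mod\Rco$ induce equivalences of the corresponding $\R$\+comodule semiderived categories.

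First I would observe that the restriction-of-scalars functor $R_f$ on $\R$\+contramodule (resp.\ $\R$\+comodule) CDG\+modules restricts, along the above embeddings, to the restriction-of-scalars functor $R_f$ on $\R$\+free (resp.\ $\R$\+cofree) CDG\+modules defined in Section~\ref{r-free-absolute} (resp.\ \ref{r-cofree-absolute}). Indeed, the only effect of $R_f$ is to change the graded $\B$\+module structure via~$f$ and to add the change-of-connection term $ax$ to the differential, and neither operation depends on the underlying graded $\R$\+contramodule (or $\R$\+comodule) being free (or cofree). Consequently the induced triangulated functor $\boI R_f\:\sD^\si(\A\mod\Rctr)\rarrow\sD^\si(\B\mod\Rctr)$ forms a commutative square with the functor $\boI R_f\:\sD^\si(\A\mod\Rfr)\rarrow\sD^\si(\B\mod\Rfr)$ and the two vertical equivalences above; and likewise in the $\R$\+comodule case. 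The conclusion then follows from the standard fact that in a commutative square of triangulated functors in which the two vertical arrows and one horizontal arrow are equivalences, the remaining horizontal arrow is an equivalence as well: here the vertical arrows are equivalences by Section~\ref{wcdg-semiderived}, and the top horizontal arrow $\boI R_f\:\sD^\si(\A\mod\Rfr)\rarrow\sD^\si(\B\mod\Rfr)$ is an equivalence under the hypothesis that $f/\m f$ is a quasi-isomorphism, by Theorem~\ref{r-free-reduction-quasi}. Hence the bottom arrow $\boI R_f\:\sD^\si(\A\mod\Rctr)\rarrow\sD^\si(\B\mod\Rctr)$ is an equivalence. The $\R$\+comodule statement is obtained in exactly the same way from Theorem~\ref{r-cofree-reduction-quasi}.

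I expect there to be essentially no serious obstacle here, since the entire content has already been packaged into the equivalences $\sD^\si(\A\mod\Rfr)\simeq\sD^\si(\A\mod\Rctr)$ and into the $\R$\+free and $\R$\+cofree reduction theorems. The only point requiring a moment's care is the verification that the relevant square genuinely commutes, i.e., that the identifications $\sD^\si({-}\mod\Rfr)\simeq\sD^\si({-}\mod\Rctr)$ intertwine the two versions of $R_f$; but this is immediate from the description of $R_f$ recalled above, since restriction of scalars is given by the same formulas in both the $\R$\+free and the general $\R$\+contramodule settings, and the embedding of DG\+categories is compatible with it on the nose.
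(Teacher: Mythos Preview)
Your proof is correct and follows essentially the same approach as the paper, which simply notes that the result follows from Theorems~\ref{r-free-reduction-quasi} and~\ref{r-cofree-reduction-quasi}. Your added verification that the restriction-of-scalars squares commute is a reasonable spelling-out of what the paper leaves implicit.
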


\begin{proof}
 Follows from Theorems~\ref{r-free-reduction-quasi}
and/or~\ref{r-cofree-reduction-quasi}.
\end{proof}

\subsection{Non-$\R$-free graded contramodules and non-$\R$-cofree
graded comodules} \label{non-adj-graded-co}
 Let $\C$ be an $\R$\+free graded coalgebra.
 An \emph{$\R$\+comodule graded left\/ $\C$\+comodule} $\cM$ is,
by the definition, a graded left $\C$\+comodule in the module
category of $\R$\+comodules over the tensor category of free
$\R$\+contramodules.
 In other words, it is a graded $\R$\+comodule endowed with
a (coassociative and counital) homogeneous $\C$\+coaction map
$\cM\rarrow\C\ocn_\R\cM$.
 \ \emph{$\R$\+comodule graded right\/ $\C$\+comodules} $\cN$ are
defined in the similar way.

 An \emph{$\R$\+contramodule graded left\/ $\C$\+contramodule} $\P$
is, by the definition, a graded left $\C$\+contramodule in
the ``internal Hom category'' of $\R$\+contramodules over
the tensor category of free $\R$\+contramodules.
 In other words, it is a graded $\R$\+contramodule endowed with
a (contraassociative and counital) homogeneous $\C$\+contraaction
map $\Hom^\R(\C,\P)\rarrow\P$.
 For the details, see Section~\ref{r-free-graded-co} and
the references therein.

 The categories of $\R$\+contramodule graded $\C$\+contramodules
and $\R$\+comodule graded $\C$\+comodules are enriched over
the tensor category of (graded) $\R$\+contra\-modules.
 So the abelian group of morphisms between two $\R$\+contramodule
graded left $\C$\+contramodules $\P$ and $\Q$ is the underlying
abelian group of the degree-zero component of the graded
$\R$\+contramodule $\Hom^\C(\P,\Q)$ constructed as the kernel of
the pair of morphisms of graded $\R$\+contramodules
$\Hom^\R(\P,\Q)\birarrow\Hom^\R(\Hom^\R(\C,\P),\Q)$.
 Similarly, the abelian group of morphisms between two $\R$\+comodule
graded left $\C$\+comodules $\cL$ and $\cM$ is the underlying
abelian group of the degree-zero component of the graded
$\R$\+contramodule $\Hom_\C(\cL,\cM)$ constructed as the kernel
of the pair of morphisms of graded $\R$\+contramodules
$\Hom_\R(\cL,\cM)\birarrow\Hom_\R(\cL\;\C\ocn_\R\cM)$.

 The graded $\R$\+comodule $\Hom_\C(\L,\cM)$ from an $\R$\+free
graded left $\C$\+comodule $\L$ to an $\R$\+comodule graded
left $\C$\+comodule $\cM$ is defined as the kernel of the pair
of morphisms of graded $\R$\+comodules $\Ctrhom_\R(\L,\cM)
\birarrow\Ctrhom_\R(\L\;\C\ocn_\R\cM)$ constructed in the way
explained in Section~\ref{r-cofree-graded-co}.
 The graded $\R$\+comodule $\Hom^\C(\P,\cQ)$ from
an $\R$\+contramodule graded left $\C$\+contramodule $\P$ to
an $\R$\+cofree graded left $\C$\+contramodule $\cQ$ is defined
as the kernel of the pair of morphisms of graded $\R$\+comodules
$\Ctrhom_\R(\P,\cQ)\birarrow\Ctrhom_\R(\Hom^\R(\C,\P),\cQ)$
constructed in the way explained in Section~\ref{r-cofree-graded-co}.

 The \emph{contratensor product} $\cN\ocn_\C\P$ of an $\R$\+comodule
graded right $\C$\+comodule $\cN$ and an $\R$\+contramodule graded
left $\C$\+contramodule $\P$ is the graded $\R$\+comodule
constructed as the cokernel of the pair of morphisms of graded
$\R$\+comodules $\cN\ocn_\R\Hom^\R(\C,\P)\birarrow\cN\ocn_\R\P$
defined in terms of the $\C$\+coaction in $\cN$,
the $\C$\+contraaction in $\P$, and the evaluation map
$\C\ot^\R\Hom^\R(\C,\P)\rarrow\P$ (see Section~\ref{non-adj-graded}
for the definition of $\cN\ocn_\R\P$).
 The \emph{contratensor product} $\N\ocn_\C\P$ of an $\R$\+free
graded right $\C$\+comodule $\N$ and and $\R$\+contramodule
graded left $\C$\+contramodule $\P$ is the graded $\R$\+contramodule
constructed as the cokernel of the pair of morphisms of graded
$\R$\+contramodules $\N\ot^\R\Hom^\R(\C,\P)\birarrow\N\ot^\R\P$
defined in terms of the $\C$\+coaction in $\N$,
the $\C$\+contraaction in $\P$, and the evaluation map
$\C\ot^\R\Hom^\R(\C,\P)\rarrow\P$.

 The \emph{cotensor product} $\N\oc_\C\cM$ of an $\R$\+free
graded right $\C$\+comodule $\N$ and an $\R$\+comodule graded
left $\C$\+comodule $\cM$ is a graded $\R$\+comodule constructed
as the kernel of the pair of morphisms $\N\ocn_\R\cM\birarrow
\N\ot^\R\C\ocn_\R\cM$.
 Similarly one defines the cotensor product $\cN\oc_\C\M$ of
an $\R$\+comodule graded right $\C$\+comodule $\cN$ and an $\R$\+free
graded left $\C$\+comodule~$\M$.
 The graded $\R$\+contramodule of \emph{cohomomorphisms}
$\Cohom_\C(\M,\P)$ from an $\R$\+free graded left $\C$\+comodule
$\M$ to an $\R$\+contramodule graded left $\C$\+contramodule $\P$
is constructed as the cokernel of the pair of morphisms
$\Hom^\R(\C\ot^\R\M\;\P)\simeq\Hom^\R(\M,\Hom^\R(\C,\P))\birarrow
\Hom^\R(\M,\P)$ induced by the $\C$\+coaction in $\M$ and
the $\C$\+contraaction in~$\P$.
 The graded $\R$\+contramodule of \emph{cohomomorphisms}
$\Cohom_\C(\cM,\cP)$ from an $\R$\+comodule graded left $\C$\+comodule
$\cM$ to an $\R$\+cofree graded left $\C$\+contramodule $\cP$
is constructed as the cokernel of the pair of morphisms
$\Hom_\R(\C\ocn_\R\cM\;\cP)\simeq\Hom_\R(\cM,\Ctrhom_\R(\C,\cP))
\birarrow\Hom_\R(\cM,\cP)$.

 The \emph{cotensor product} $\cN\oc_\C\cM$ of an $\R$\+comodule
graded right $\C$\+comodule $\cN$ and an $\R$\+comodule graded
left $\C$\+comodule $\cM$ is a graded $\R$\+comodule constructed
as the kernel of the pair of morphisms
$\cN\oc_\R\cM\birarrow(\cN\ocn_\R\C)\oc_\R\cM\simeq
\cN\oc_\R(\C\ocn_\R\cM)$ (see Lemma~\ref{hom-co-associativity}).
 The graded $\R$\+contramodule of \emph{cohomomorphisms}
$\Cohom_\C(\cM,\P)$ from an $\R$\+comodule graded left
$\C$\+comodule $\cM$ to an $\R$\+contramodule graded left
$\C$\+contramodule $\P$ is constructed as the cokernel of the pair
of morphisms $\Cohom_\R(\C\ocn_\R\cM,\P)\simeq
\Cohom_\R(\cM,\Hom^\R(\C,\P))\birarrow\Cohom_\R(\cM,\P)$
induced by the $\C$\+coaction in $\cM$ and the $\C$\+contraaction
in~$\P$.

 The category of $\R$\+contramodule graded $\C$\+contramodules is
abelian and admits infinite products; the forgetful functor
from it to the category of graded $\R$\+con\-tramodules is exact and
preserves infinite products.
 There are enough projective objects in the abelian category of
$\R$\+contramodule graded $\C$\+contramodules; these are the same
as the projective objects in the exact subcategory of $\R$\+free
graded $\C$\+contramodules.
 The latter exact subcategory in the abelian category of
$\R$\+contra\-module graded $\C$\+contramodules is closed under
extensions and infinite products.

 The category of $\R$\+comodule graded $\C$\+comodules is abelian
and admits infinite direct sums; the forgetful functor from it
to the category of $\R$\+comodules is exact and preserves infinite
direct sums.
 There are enough injective objects in the abelian category of
$\R$\+comodule graded $\C$\+comodules; these are the same as
the injective objects in the exact subcategory of $\R$\+cofree
graded $\C$\+contramodules.
 The latter exact subcategory in the abelian category of
$\R$\+comodule graded $\C$\+comodules is closed under extensions
and infinite direct sums.

\begin{rem}
 $\R$\+contramodule $\C$\+comodules and $\R$\+comodule
$\C$\+contramodules can be defined in what is our usual way
in this paper.
 The reason we do not consider these is because the kernels of
morphisms of $\R$\+contramodule $\C$\+comodules and the cokernels
of morphisms of $\R$\+comodule $\C$\+contramodules are not
well-behaved, the functors of contramodule tensor product with
a free $\R$\+contramodule and $\Ctrhom$ from a free
$\R$\+contramodule being not exact (see
Remark~\ref{contra-operations-not-exact}).
 So we do not know of any reason for the categories of
$\R$\+contramodule $\C$\+comodules and $\R$\+comodule 
$\C$\+contramodules to be even abelian.
 $\R$\+contramodule $\C$\+comodules are reasonably behaved,
though, when the pro-Artinian topological ring $\R$ has homological
dimension~$1$ (see Remark~\ref{dim-1-contramodules} and
Section~\ref{discrete-modules}).
\end{rem}

 For any graded $\R$\+contramodule $\V$ and any $\R$\+comodule
graded left $\C$\+comodule $\cM$, the contratensor product
$\V\ocn_\R\cM$ has a natural left $\C$\+comodule structure.
 For any graded $\R$\+comodule $\cV$ and any $\R$\+free graded
left $\C$\+comodule $\M$, the contratensor product
$\M\ocn_\R\cV$ has a natural left $\C$\+comodule structure.
 The same applies to right $\C$\+comodules.
 For any graded $\R$\+comodule $\cU$ and any $\R$\+comodule
graded right $\C$\+comodule $\cN$, the graded $\R$\+contramodule
$\Hom_\R(\cN,\cU)$ has a natural left $\C$\+contramodule
structure provided by the map $\Hom^\R(\C,\Hom_\R(\cN,\cU))
\simeq\Hom_\R(\cN\ocn_\R\C\;\cU)\rarrow\Hom_\R(\cN,\cU)$.
 For any graded $\R$\+contramodule $\U$ and any $\R$\+free
graded right $\C$\+comodule $\N$, the graded $\R$\+contramodule
$\Hom^\R(\N,\U)$ has a natural left $\C$\+contramodule structure.

 The left $\C$\+contramodule $\Hom^\R(\C,\U)$, where $\U$ is
a graded $\R$\+contramodule, is called the left $\C$\+contramodule
\emph{induced} from the graded $\R$\+contramodule~$\U$.
 The left $\C$\+comodule $\C\ocn_\R\cV$, where $\cV$ is a graded
$\R$\+comodule, is called the left $\C$\+comodule \emph{coinduced}
from the graded $\R$\+comodule~$\cV$.

 For any $\R$\+comodule graded right $\C$\+comodule~$\cN$, \
$\R$\+contramodule graded left $\C$\+contramodule $\P$, and 
graded $\R$\+comodule $\cU$ there is a natural isomorphism
of graded $\R$\+contramodules $\Hom_\R(\cN\ocn_\C\P\;\cU)
\simeq\Hom^\C(\P,\Hom_\R(\cN,\cU))$.
 For any $\R$\+free graded right $\C$\+comodule~$\N$, \
$\R$\+contramodule graded left $\C$\+contramodule $\P$, and
graded $\R$\+contramodule $\U$ there is a natural isomorphism
of graded $\R$\+contra\-modules $\Hom^\R(\N\ocn_\C\P\;\U)
\simeq\Hom^\C(\P,\Hom^\R(\N,\U))$.
 For any $\R$\+comodule graded right $\C$\+comodule~$\cN$, \
$\R$\+free graded left $\C$\+comodule $\M$, and graded
$\R$\+comodule $\cU$ such that the cotensor product
$\cN\oc_\C\M$ is a cofree graded $\R$\+comodule, there is
a natural isomorphism of graded $\R$\+contramodules
$\Hom_\R(\cN\oc_\C\M\;\cU)\simeq\Cohom_\C(\M,\Hom_\R(\cN,\cU))$.

 For any $\R$\+comodule graded left $\C$\+comodule $\cL$ and
any graded $\R$\+comodule $\cV$ there is a natural isomorphism
of graded $\R$\+contramodules $\Hom_\C(\cL\;\C\ocn_\R\cV)\simeq
\Hom_\R(\cL,\cV)$.
 For any $\R$\+contramodule graded left $\C$\+contramodule $\Q$
and any graded $\R$\+contramodule $\U$ there is a natural isomorphism
of graded $\R$\+contramodules
$\Hom^\C(\Hom^\R(\C,\U),\Q)\simeq\Hom^\R(\U,\Q)$
\cite[Sections~1.1.1\+-2 and~3.1.1\+-2]{Psemi}.
 For any $\R$\+free graded left $\C$\+comodule $\L$ and any
graded $\R$\+comodule $\cV$ there is a natural isomorphism of
graded $\R$\+comodules $\Hom_\C(\L\;\C\ocn_\R\cV)\simeq
\Ctrhom_\R(\L,\cV)$.
 For any $\R$\+cofree graded left $\C$\+contramodule $\cQ$ and any
graded $\R$\+contramodule $\U$ there is a natural isomorphism of
graded $\R$\+comodules $\Hom^\C(\Hom^\R(\C,\U),\cQ)\simeq
\Ctrhom_\R(\U,\cQ)$.
 For any $\R$\+comodule graded right $\C$\+comodule $\cN$ and
any graded $\R$\+contramodule $\U$, there is a natural
isomorphism of graded $\R$\+comodules
$\cN\ocn_\C\Hom^\R(\C,\U)\simeq\cN\ocn_\R\U$.
 For any $\R$\+free graded right $\C$\+comodule $\N$ and any
graded $\R$\+contramodule $\U$, there is a natural isomorphism
of graded $\R$\+contramodules $\N\ocn_\C\Hom^\R(\C,\U)\simeq\N\ot^\R\U$
\cite[Section~5.1.1]{Psemi}. {\hbadness=1400\par}

 For any $\R$\+free graded right $\C$\+comodule $\N$ and any
graded $\R$\+comodule $\cV$, there is a natural isomorphism of
graded $\R$\+comodules $\N\oc_\C(\C\ocn_\R\cV)\simeq\N\ocn_\R\cV$.
 For any free graded $\R$\+contramodule $\V$ and any
$\R$\+comodule graded left $\C$\+comodule $\cM$, there is
a natural isomorphism of graded $\R$\+comodules
$(\V\ot^\R\C)\oc_\C\cM\simeq\V\ocn_\R\cM$.
 For any free graded $\R$\+contramodule $\V$ and any
$\R$\+contramodule graded left $\C$\+contramodule $\P$, there is
a natural isomorphism of graded $\R$\+contramodules
$\Cohom_\C(\C\ot^\R\V\;\P)\simeq\Hom^\R(\V,\P)$.
 For any $\R$\+free graded left $\C$\+comodule $\M$ and any
graded $\R$\+contramodule $\U$, there is a natural isomorphism
of graded $\R$\+contramodules $\Cohom_\C(\M,\Hom^\R(\C,\U))
\simeq\Hom^\R(\M,\U)$.
 For any graded $\R$\+comodule $\cV$ and any $\R$\+cofree graded
left $\C$\+contramodule $\cP$ there is a natural isomorphism of
graded $\R$\+contramodules $\Cohom_\C(\C\ocn_\R\cV\;\cP)\simeq
\Hom_\R(\cV,\cP)$.
 For any $\R$\+comodule graded left $\C$\+comodule $\cM$ and
any cofree graded $\R$\+comodule $\cU$, there is a natural
isomorphism of graded $\R$\+contramodules $\Cohom_\C(\cM,
\Ctrhom_\R(\C,\cU))\simeq\Hom_\R(\cM,\cU)$
\cite[Sections~1.2.1 and~3.2.1]{Psemi}.
{\hbadness=1200\par}

 For any $\R$\+comodule graded right $\C$\+comodule $\cN$ and
any graded $\R$\+comodule $\cV$, there is a natural isomorphism
of graded $\R$\+comodules $\cN\oc_\C(\C\ocn_\R\cV)\simeq
\cN\oc_\R\cV$.
 For any $\R$\+contramodule graded left $\C$\+contramodule $\P$
and any graded $\R$\+comodule $\cV$, there is a natural
isomorphism of graded $\R$\+contramodules
$\Cohom_\C(\C\ocn_\R\cV\;\P)\simeq\Cohom_\R(\cV,\P)$.
 For any $\R$\+comodule graded left $\C$\+comodule $\cM$ and
any graded $\R$\+contramodule $\U$, there is a natural
isomorphism of graded $\R$\+contramodules
$\Cohom_\C(\cM,\Hom^\R(\C,\U))\simeq\Cohom_\R(\cM,\U)$.

 Given an $\R$\+contramodule graded left $\C$\+contramodule
$\P$, an epimorphism onto it from a projective $\R$\+free
graded $\C$\+contramodule can be obtained as the composition
$\Hom^\R(\C,\U)\rarrow\Hom^\R(\C,\P)\rarrow\P$, where $\U$
is a free graded $\R$\+contramodule and $\U\rarrow\P$ is
an epimorphism of graded $\R$\+contramodules.
 Given an $\R$\+comodule graded left $\C$\+comodule $\cM$,
a monomorphism from it into an injective $\R$\+cofree graded
$\C$\+comodule can be obtained as the composition
$\cM\rarrow\C\ocn_\R\cM\rarrow\C\ocn_\R\cV$, where $\cV$
is a cofree graded $\R$\+comodule and $\cM\rarrow\cV$ is
a monomorphism of graded $\R$\+comodules.

 Let $\C$ and $\D$ be $\R$\+free graded coalgebras.
 An \emph{$\R$\+comodule graded\/ $\D$\+$\C$\+bicomodule} $\cK$
is a graded $\R$\+comodule endowed with commuting graded
left $\D$\+comodule and graded right $\C$\+comodule structures.
 Equivalently, $\cK$ should be endowed with a coassociative
and counital homogeneous \emph{$\D$\+$\C$\+bicoaction} map
$\cK\rarrow\D\ocn_\R\cK\ocn_\R\C$.

 Given an $\R$\+comodule graded $\D$\+$\C$\+bicomodule $\cK$
and an $\R$\+contramodule graded left $\C$\+contramodule $\P$,
the contratensor product $\cK\ocn_\C\P$ is endowed with
a graded left $\D$\+comodule structure as the cokernel of a pair
of morphisms of $\R$\+comodule graded left $\D$\+comodules.
 Given an $\R$\+comodule graded $\D$\+$\C$\+bicomodule $\cK$
and an $\R$\+comodule graded left $\D$\+comodule $\cM$,
the graded $\R$\+contramodule $\Hom_\D(\cK,\cM)$ is endowed with
a graded left $\C$\+contramodule structure as the kernel of a pair
of morphisms of $\R$\+contramodule graded left $\C$\+contramodules.

\begin{lem} \hbadness=1125 \label{r-c-co-contra-adjunction}
 Let\/ $\cK$ be an\/ $\R$\+comodule graded\/ $\D$\+$\C$\+bicomodule.
 Then the functor taking an\/ $\R$\+contramodule graded left\/
$\C$\+contramodule\/ $\P$ to the\/ $\R$\+comodule graded left\/
$\D$\+comodule\/ $\cK\ocn_\C\P$ is naturally left adjoint to
the functor taking an\/ $\R$\+comodule graded left\/
$\D$\+comodule\/ $\cM$ to the\/ $\R$\+contramodule graded left\/
$\C$\+contramodule\/ $\Hom_\D(\cK,\cM)$.
 Moreover, there is a natural isomorphism of graded\/
$\R$\+contramodules\/ $\Hom_\D(\cK\ocn_\C\P\;\cM)\simeq
\Hom^\C(\P,\Hom_\D(\cK,\cM))$.
\end{lem}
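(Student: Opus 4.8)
The plan is to deduce the adjunction from the stronger assertion of the Lemma, namely the natural isomorphism of graded $\R$\+contramodules $\Hom_\D(\cK\ocn_\C\P\;\cM)\simeq\Hom^\C(\P,\Hom_\D(\cK,\cM))$, whose degree-zero component, passed to underlying abelian groups, is precisely the claimed adjunction isomorphism of $\Hom$ sets. The key observation is that both sides are the $\D$\+equivariant parts of objects to which an $\R$\+level isomorphism already proved in Section~\ref{non-adj-graded-co} applies. Regarding $\cK$ merely as an $\R$\+comodule graded right $\C$\+comodule and forgetting the $\D$\+structure on $\cM$, that cited isomorphism reads $\Hom_\R(\cK\ocn_\C\P\;\cU)\simeq\Hom^\C(\P,\Hom_\R(\cK,\cU))$, naturally and covariantly in the graded $\R$\+comodule $\cU$. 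I would take this as the starting point and cut it down to the $\D$\+comodule morphisms.

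Concretely, by the definitions preceding the Lemma, $\Hom_\D(\cK,\cM)$ is the kernel of the pair $\Hom_\R(\cK,\cM)\birarrow\Hom_\R(\cK\;\D\ocn_\R\cM)$, one map induced by the $\D$\+coaction $\cM\rarrow\D\ocn_\R\cM$ on the target and the other by the left $\D$\+coaction of $\cK$. Since $\Hom^\C(\P,{-})$ is left exact, $\Hom^\C(\P,\Hom_\D(\cK,\cM))$ is the kernel of $\Hom^\C(\P,\Hom_\R(\cK,\cM))\birarrow\Hom^\C(\P,\Hom_\R(\cK\;\D\ocn_\R\cM))$. On the other side, $\Hom_\D(\cK\ocn_\C\P\;\cM)$ is the kernel of $\Hom_\R(\cK\ocn_\C\P\;\cM)\birarrow\Hom_\R(\cK\ocn_\C\P\;\D\ocn_\R\cM)$, one map coming from the coaction on $\cM$ and the other from the left $\D$\+coaction on $\cK\ocn_\C\P$, which by construction is induced from that of $\cK$. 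Applying the cited $\R$\+level isomorphism with $\cU=\cM$ and with $\cU=\D\ocn_\R\cM$, naturality in $\cU$ identifies the two ``target-coaction'' maps of these parallel pairs; it then remains to match the two ``source-coaction'' maps, after which the kernels correspond and the desired isomorphism follows.

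The main obstacle is exactly this last matching: one must check that the isomorphism $\Hom_\R(\cK\ocn_\C\P\;\cU)\simeq\Hom^\C(\P,\Hom_\R(\cK,\cU))$, constructed with no reference to $\D$, intertwines the morphism induced by the left $\D$\+coaction on $\cK\ocn_\C\P$ with the morphism induced by the left $\D$\+coaction of $\cK$ on $\Hom_\R(\cK,{-})$. This compatibility has to be verified by unwinding the definition of the $\D$\+comodule structure on $\cK\ocn_\C\P$ from Section~\ref{non-adj-graded-co} and tracing it through the construction of the isomorphism. Should that direct verification prove awkward, I would instead argue by reduction to induced contramodules: both functors $\P\mpsto\Hom_\D(\cK\ocn_\C\P\;\cM)$ and $\P\mpsto\Hom^\C(\P,\Hom_\D(\cK,\cM))$ convert cokernels of morphisms of induced $\C$\+contramodules $\Hom^\R(\C,\U)$ into kernels (using right exactness of $\cK\ocn_\C{-}$ together with left exactness of the two $\Hom$ functors), and every $\R$\+contramodule graded $\C$\+contramodule is such a cokernel; so by the five lemma it suffices to treat $\P=\Hom^\R(\C,\U)$. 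For such $\P$ one has $\cK\ocn_\C\Hom^\R(\C,\U)\simeq\cK\ocn_\R\U$ as left $\D$\+comodules and $\Hom^\C(\Hom^\R(\C,\U),{-})\simeq\Hom^\R(\U,{-})$, which reduces the claim to the purely $\R$\+level identity $\Hom_\D(\cK\ocn_\R\U\;\cM)\simeq\Hom^\R(\U,\Hom_\D(\cK,\cM))$; the latter follows by taking $\D$\+equivariant parts in the adjunction $\Hom_\R(\P\ocn_\R\cM\;\cN)\simeq\Hom^\R(\P,\Hom_\R(\cM,\cN))$ of Section~\ref{hom-operations}, where the source-coaction bookkeeping is transparent. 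Taking degree-zero components and underlying abelian groups then yields the stated adjunction between $\cK\ocn_\C{-}$ and $\Hom_\D(\cK,{-})$.
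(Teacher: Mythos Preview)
Your approach is correct, but the paper's argument is organized more directly. You start from the already-proven isomorphism $\Hom_\R(\cK\ocn_\C\P\;\cU)\simeq\Hom^\C(\P,\Hom_\R(\cK,\cU))$ (which has the $\C$\+compatibility built in) and then impose the $\D$\+condition by passing to equalizers, which forces you to chase the ``source-coaction'' map through that isomorphism---the step you flag as the main obstacle. The paper instead goes one level lower: it begins with the bare $\R$\+level adjunction $\Hom_\R(\cK\ocn_\R\P\;\cM)\simeq\Hom^\R(\P,\Hom_\R(\cK,\cM))$ from Section~\ref{hom-operations} and checks two independent correspondences at once: a map $\cK\ocn_\R\P\rarrow\cM$ factors through the cokernel $\cK\ocn_\C\P$ iff the adjoint $\P\rarrow\Hom_\R(\cK,\cM)$ is a $\C$\+contramodule morphism, and the map $\cK\ocn_\R\P\rarrow\cM$ is a $\D$\+comodule morphism iff the adjoint lands in the kernel $\Hom_\D(\cK,\cM)$. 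Each of these is a one-line unwinding of a cokernel/kernel definition against the basic adjunction, so no intermediate isomorphism needs to be traced. Your route reaches the same destination, but the paper's symmetric treatment of the $\C$\+ and $\D$\+conditions avoids the bookkeeping you correctly anticipate, and makes your fallback via induced contramodules unnecessary.
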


\begin{proof}
 According to Section~\ref{hom-operations}, there is a natural
isomorphism of $\R$\+contramodules $\Hom_\R(\cK\ocn_\R\P\;\cM)
\simeq\Hom^\R(\P,\Hom_\R(\cK,\cM))$.
 It is straightforward to check that a graded $\R$\+comodule
map $\cK\ocn_\R\P\rarrow\cM$ factorizes through $\cK\ocn_\C\P$
if and only if the corresponding graded $\R$\+contramodule map
$\P\rarrow\Hom_\R(\cK,\cM)$ is a $\C$\+contramodule morphism,
and a graded $\R$\+contramodule map $\P\rarrow\Hom_\R(\cK,\cM)$
factorizes through $\Hom_\D(\cK,\cM)$ if and only if
the corresponding graded $\R$\+comodule map $\cK\ocn_\R\P
\rarrow\cM$ is a $\D$\+comodule morphism.
 (Cf.\ \cite[Section~5.1.2]{Psemi}.)
\end{proof}

 Set $\cC(\R,\C)=\cC(\R)\ocn_\R\C$.
 Obviously, the graded $\R$\+comodule $\cC(\R,\C)$ has a natural
$\R$\+comodule graded $\C$\+bicomodule structure.
 For any $\R$\+contramodule graded left $\C$\+contramodule $\P$
and $\R$\+comodule graded left $\C$\+comodule $\cM$, set
$\Phi_{\R,\C}(\P)=\cC(\R,\C)\ocn_\C\P$ and
$\Psi_{\R,\C}(\cM)=\Hom_\C(\cC(\R,\C)\;\cM)$.
 According to Lemma~\ref{r-c-co-contra-adjunction}, $\,\Phi_{\R,\C}$
and $\Psi_{\R,\C}$ are adjoint functors between the abelian
categories of $\R$\+contramodule graded left $\C$\+contramodules
and $\R$\+comodule graded left $\C$\+comodules.

 One easily checks that for any projective $\R$\+free graded
left $\C$\+contramodule $\F$ there is a natural isomorphism of
$\R$\+comodule graded $\C$\+comodules $\Phi_\R\Phi_\C(\F)\simeq
\Phi_{\R,\C}(\F)$.
 Similarly, for any injective $\R$\+cofree graded left
$\C$\+comodule $\cJ$ there is a natural isomorphism of
$\R$\+contramodule graded $\C$\+contramodules $\Psi_\R\Psi_\C(\cJ)
\simeq\Psi_{\R,\C}(\cJ)$ (see Sections~\ref{r-free-graded-co}
and~\ref{r-cofree-graded-co} for the definitions of
$\Phi_\R=\Psi_\R^{-1}$ and $\Phi_\C=\Psi_\C^{-1}$).
 So the functors $\Phi_{\R,\C}$ and $\Psi_{\R,\C}$ restrict to
mutually inverse equivalences between the additive categories of
projective $\R$\+free left $\C$\+contramodules and injective
$\R$\+cofree left $\C$\+comodules.

 The equivalence $\Phi_\R=\Psi_\R^{-1}$ between the exact categories
of $\R$\+free and $\R$\+cofree graded right $\C$\+comodules is
constructed in the way similar to the case of left $\C$\+comodules
(see Proposition~\ref{r-cofree-co-r-co-contra}(b)).
 For any $\R$\+free graded right $\C$\+comodule $\N$ and any
$\R$\+contramodule graded left $\C$\+contramodule $\P$ there is
a natural isomorphism of graded $\R$\+comodules
$\Phi_\R(\N)\ocn_\C\P\simeq\Phi_\R(\N\ocn_\C\P)$.

 For any $\R$\+free graded right $\C$\+comodule $\N$ and any
$\R$\+comodule graded left $\C$\+comodule $\cM$, there is
a natural isomorphism of graded $\R$\+comodules
$\Phi_\R(\N)\oc_\C\cM\simeq\N\oc_\C\cM$.
 For any $\R$\+free graded left $\C$\+comodule $\M$ and any
$\R$\+contramodule graded left $\C$\+contramodule $\P$,
there is a natural isomorphism of graded $\R$\+contramodules
$\Cohom_\C(\Phi_\R(\M),\P)\simeq\Cohom_\C(\M,\P)$.
 For any $\R$\+comodule graded left $\C$\+comodule $\cM$ and any
$\R$\+cofree graded left $\C$\+contramodule $\cP$, there is
a natural isomorphsm of graded $\R$\+contramodules
$\Cohom_\C(\cM,\Psi_\R(\cP))\simeq\Cohom_\C(\cM,\cP)$.
{\emergencystretch=1em\par}

\subsection{Contra/coderived category of CDG-contra/comodules}
\label{non-adj-co-derived}
 The definitions of
\begin{itemize}
\item odd contra- and coderivations of $\R$\+contramodule
      $\C$\+contramodules and $\R$\+co\-module $\C$\+comodules
      compatible with a given odd derivation of an $\R$\+free
      graded coalgebra~$\C$,
\item $\R$\+contramodule CDG\+contramodules and $\R$\+comodule
      CDG\+comodules over an $\R$\+free CDG\+coalgebra~$\C$,
\item the complexes of $\R$\+contramodules $\Hom^\C(\P,\Q)$ for
      given $\R$\+contramodule left CDG\+contramodules $\P$
      and $\Q$ over~$\C$,
\item the complexes of $\R$\+contramodules $\Hom_\C(\cL,\cM)$
      for given $\R$\+comodule left CDG\+comodules $\cL$ and
      $\cM$ over~$\C$,
\item the complex of $\R$\+comodules $\Hom^\C(\P,\cQ)$ for
      an $\R$\+contramodule left CDG\+contramodule $\P$
      and $\R$\+cofree left CDG\+contramodule $\cQ$ over~$\C$,
{\hbadness=1550
\item the complex} of $\R$\+comodules $\Hom_\C(\L,\cM)$ for
      an $\R$\+free left CDG\+comodule $\L$ and
      $\R$\+comodule left CDG\+comodule $\cM$ over~$\C$,
\item the complex of $\R$\+comodules $\cN\ocn_\C\P$ for
      an $\R$\+comodule right CDG\+comodule $\cN$ and
      $\R$\+contramodule left CDG\+contramodule $\P$ over~$\C$,
\item the complex of $\R$\+contramodules $\N\ocn_\C\P$ for
      an $\R$\+free right CDG\+comodule $\N$ and $\R$\+contramodule
      left CDG\+contramodule $\P$ over~$\C$,
\item the complex of $\R$\+comodules $\N\oc_\C\cM$ for 
      an $\R$\+free right CDG\+comodule $\N$ and $\R$\+comodule
      left CDG\+comodule $\cM$ over~$\C$,
\item the complex of $\R$\+comodules $\cN\oc_\C\cM$ for
      an $\R$\+comodule right CDG\+comodule $\cN$ and
      $\R$\+comodule left CDG\+comodule $\cM$ over~$\C$,
\item the complex of $\R$\+contramodules $\Cohom_\C(\M,\P)$
      for an $\R$\+free left CDG\+co\-module $\M$ and
      $\R$\+contramodule left CDG\+contramodule $\P$ over~$\C$,
\item the complex of $\R$\+contramodules $\Cohom_\C(\cM,\cP)$
      for an $\R$\+comodule left CDG\+comodule $\cM$ and
      $\R$\+cofree left CDG\+contramodule $\cP$ over~$\C$,
\item the complex of $\R$\+contramodules $\Cohom_\C(\cM,\P)$
      for an $\R$\+comodule left CDG\+comodule $\cM$ and
      $\R$\+contramodule left CDG\+contramodule $\P$ over~$\C$,
\item the $\R$\+contramodule CDG\+contramodules or $\R$\+comodule
      CDG\+comodules over $\D$ obtained by contra- or corestriction
      of scalars via a morphism of $\R$\+free CDG\+coalgebras
      $\C\rarrow\D$ from $\R$\+contramodule CDG\+contramodules
      or $\R$\+comodule CDG\+comodules over~$\C$
\end{itemize}
repeat the similar definition for $\R$\+free CDG\+contramodules
and $\R$\+cofree CDG\+co\-modules given in
Sections~\ref{r-free-co-derived} and~\ref{r-cofree-co-derived}
\emph{verbatim} (with the definitions and constructions of
Section~\ref{non-adj-graded-co} being used in place of those from
Sections~\ref{r-free-graded-co} and~\ref{r-cofree-graded-co}
as applicable), so there is no need to spell them out here again.
 We restrict ourselves to introducing the new notation for our
new classes of objects.

 The DG\+categories of $\R$\+comodule left and right CDG\+comodules
over $\C$ are denoted by $\C\comod\Rco$ and $\comodrRco\C$,
and their homotopy categories are
$H^0(\C\comod\Rco)$ and
$H^0(\comodrRco\C)$.
 Similarly, the DG\+category of $\R$\+contra\-module left
CDG\+contramodules over $\C$ is denoted by $\C\contra\Rctr$,
and its homotopy category is $H^0(\C\contra\Rctr)$.
 The $\Hom$ from $\R$\+contramodule to $\R$\+cofree CDG\+contramodules
over $\C$ is a triangulated functor of two arguments
$$
 \Hom^\C\: H^0(\C\contra\Rctr)^\sop\times H^0(\C\contra\Rcof)
 \lrarrow H^0(\R\comod).
$$
 The $\Hom$ from $\R$\+free to $\R$\+comodule CDG\+comodules over $\C$
is a triangulated functor of two arguments
$$
 \Hom_\C\: H^0(\C\comod\Rfr)^\sop\times H^0(\C\comod\Rco)
 \lrarrow H^0(\R\comod).
$$
 The contratensor product of $\R$\+comodule right CDG\+comodules
and $\R$\+contramodule left CDG\+contramodules over $\C$ is
a triangulated functor of two arguments
$$
 \ocn_\C\: H^0(\comodrRco\C)\times H^0(\C\contra\Rctr)\lrarrow
 H^0(\R\comod).
$$
 The contratensor product of $\R$\+free right CDG\+comodules and
$\R$\+contramodule left CDG\+contramodules over $\C$ is
a triangulated functor of two arguments
$$
 \ocn_\C\: H^0(\comodrRfr\C)\times H^0(\C\contra\Rctr)\lrarrow
 H^0(\R\contra).
$$
 The cotensor product of $\R$\+free right CDG\+comodules and
$\R$\+comodule left CDG\+co\-modules over $\C$ is a triangulated
functor of two arguments
$$
 \oc_\C\: H^0(\comodrRfr\C)\times H^0(\C\comod\Rco)\lrarrow
 H^0(\R\comod).
$$
 The cotensor product of $\R$\+comodule left and right CDG\+comodules
over $\C$ is a triangulated functor of two arguments
$$
 \oc_\C\: H^0(\comodrRco\C)\times H^0(\C\comod\Rco)\lrarrow
 H^0(\R\comod).
$$
 The $\Cohom$ from $\R$\+free left CDG\+comodules to
$\R$\+contramodule left CDG\+contra\-modules over $\C$ is
a triangulated functor of two arguments
$$
 \Cohom_\C\: H^0(\C\comod\Rfr)^\sop\times H^0(\C\contra\Rctr)
 \lrarrow H^0(\R\contra).
$$
 The $\Cohom$ from $\R$\+comodule left CDG\+comodules to
$\R$\+cofree left CDG\+contra\-modules over $\C$ is
a triangulated functor of two arguments
$$
 \Cohom_\C\: H^0(\C\comod\Rco)^\sop\times H^0(\C\contra\Rcof)
 \lrarrow H^0(\R\contra).
$$
 The $\Cohom$ from $\R$\+comodule left CDG\+comodules to
$\R$\+contramodule left CDG\+con\-tramodules over $\C$ is
a triangulated functor of two arguments
$$
 \Cohom_\C\: H^0(\C\comod\Rco)^\sop\times H^0(\C\contra\Rctr)
 \lrarrow H^0(\R\contra).
$$
 Given a morphism of $\R$\+free CDG\+coalgebras $f=(f,a)\:
\C\rarrow\D$, the functors of contra- and corestriction of
scalars are denoted by
$$
 R^f\: H^0(\C\contra\Rctr)\lrarrow H^0(\D\contra\Rctr)
$$
and
$$
 R_f\: H^0(\C\comod\Rco)\lrarrow H^0(\D\comod\Rco).
$$

 An $\R$\+contramodule left CDG\+contramodule over $\C$ is called
\emph{contraacyclic} if it belongs to the minimal triangulated
subcategory of the homotopy category $H^0(\C\contra\Rctr)$
containing the totalizations of short exact sequences of
$\R$\+contra\-module CDG\+contramodules over $\C$ and closed under
infinite products.
 The quotient category of $H^0(\C\contra\Rctr)$ by the thick
subcategory of contraacyclic $\R$\+contra\-module CDG\+contramodules
is called the \emph{contraderived category} of $\R$\+contramodule
left CDG\+contramodules over $\C$ and denoted by
$\sD^\ctr(\C\contra\Rctr)$.

 An $\R$\+comodule left CDG\+comodule over $\C$ is called
\emph{coacyclic} if it belongs to the minimal triangulated
subcategory of the homotopy category $H^0(\C\comod\Rco)$
containing the totalizations of short exact sequences of
$\R$\+comodule CDG\+comodules over $\C$ and closed under infinite
direct sums.
 The quotient category of $H^0(\C\comod\Rco)$ by the thick
subcategory of coacyclic $\R$\+comodule CDG\+comodules is called
the \emph{coderived category} of $\R$\+comodule left
CDG\+comodules over $\C$ and denoted by $\sD^\co(\C\comod\Rco)$.
 The coderived category of $\R$\+comodule right CDG\+comodules
over $\C$, denoted by $\sD^\co(\comodrRco\C)$, is defined similarly.

 It follows from the next theorem, among other things, that our
terminology is not ambigous: an $\R$\+free CDG\+contramodule over
$\C$ is contraacyclic in the sense of Section~\ref{r-free-co-derived}
if and only if it is contraacyclic as an $\R$\+contramodule
CDG\+contramodule, in the sense of the above definition.
 Similarly, an $\R$\+cofree CDG\+comodule over $\C$ is coacyclic
in the sense of Section~\ref{r-cofree-co-derived} if and only if
it is coacyclic as an $\R$\+comodule CDG\+comodule, in
the sense of the above definition.

\begin{thm} \label{non-adj-co-derived-comp}
 For any\/ $\R$\+free CDG\+coalgebra\/ $\C$, the functors
$$
 \sD^\ctr(\C\contra\Rfr)\lrarrow\sD^\ctr(\C\contra\Rctr)
$$
and
$$
 \sD^\co(\C\comod\Rcof)\lrarrow\sD^\co(\C\comod\Rco)
$$
induced by the natural embeddings of DG\+categories\/ $\C\contra\Rfr
\rarrow\C\contra\Rctr$ and\/ $\C\comod\Rcof\rarrow\C\comod\Rco$
are equivalences of triangulated categories.
\end{thm}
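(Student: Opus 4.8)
The plan is to imitate the proof of Theorem~\ref{co-derived-mod} step by step, with $\R$\+contramodule (resp.\ $\R$\+comodule) graded $\C$\+contramodules (resp.\ $\C$\+comodules) and the constructions of Section~\ref{non-adj-graded-co} playing the roles that $\R$\+contramodule (resp.\ $\R$\+comodule) graded $\B$\+modules played there; I would spell out the contramodule equivalence and obtain the comodule one by duality. First I would resolve an arbitrary $\R$\+contramodule left CDG\+contramodule $\P$ over $\C$. Section~\ref{non-adj-graded-co} supplies a surjection onto the underlying graded $\C$\+contramodule from a projective $\R$\+free graded $\C$\+contramodule, namely the composition $\Hom^\R(\C,\U)\rarrow\Hom^\R(\C,\P)\rarrow\P$ for a suitable free graded $\R$\+contramodule~$\U$. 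Applying to the resulting graded $\C$\+contramodule $\P_0$ the freely generated CDG\+contramodule functor $G^+$ (the $\C$\+contramodule analogue of the construction in the proof of Theorem~\ref{r-free-absolute-derived}, cf.\ the proof of Theorem~\ref{r-free-co-derived-thm}), I obtain a surjective closed morphism $G^+(\P_0)\rarrow\P$ with $\R$\+free source. Iterating on kernels yields a left resolution $\dotsb\rarrow G^+(\P_1)\rarrow G^+(\P_0)\rarrow\P\rarrow0$ by $\R$\+free CDG\+contramodules, which I totalize by taking infinite products along the diagonals to produce a closed morphism $\widetilde\P\rarrow\P$ with $\widetilde\P$ an $\R$\+free CDG\+contramodule.

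Next I would prove that the cone of $\widetilde\P\rarrow\P$ is contraacyclic as an $\R$\+contramodule CDG\+contramodule, via the exact analogue of Lemma~\ref{bounded-above-lem}: the finite totalizations $\L_n$ of the canonical filtration quotients are contraacyclic, and the telescope short sequence $\L\rarrow\prod_n\L_n\rarrow\prod_n\L_n$ becomes a split exact sequence of graded $\C$\+contramodules once differentials are forgotten (being a product of telescope sequences of degreewise stabilizing projective systems), whence it is exact and $\L$ is contraacyclic. This telescope argument transfers verbatim. By~\cite[Lemma~1.6]{Pkoszul} the passage to resolutions then identifies $\sD^\ctr(\C\contra\Rctr)$ with the quotient of the homotopy category $H^0(\C\contra\Rfr)$ of $\R$\+free CDG\+contramodules by the thick subcategory of those objects that are contraacyclic \emph{as $\R$\+contramodule} CDG\+contramodules.

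The remaining and main step is to show that this subcategory coincides with the $\R$\+free contraacyclics, i.e.\ that an $\R$\+free CDG\+contramodule contraacyclic as an $\R$\+contramodule CDG\+contramodule is already contraacyclic as an $\R$\+free one. Here I would reproduce the factorization argument of Theorem~\ref{co-derived-mod}: consider the class of $\R$\+contramodule CDG\+contramodules $\L$ such that every closed morphism into $\L$ from an $\R$\+free CDG\+contramodule factors, in $H^0(\C\contra\Rctr)$, through a contraacyclic $\R$\+free CDG\+contramodule, and verify that this class is closed under infinite products and cones and contains totalizations of short exact sequences. Closure under products and cones is formal and copies the module argument word for word. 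The delicate point, and the step I expect to be the main obstacle, is the case of the total CDG\+contramodule of a short exact sequence $\U\rarrow\V\rarrow\W$: this requires the $\C$\+contramodule analogues of Lemmas~\ref{lifting-contract-lem} and~\ref{generated-lifting-lem}, reducing null-homotopy of a closed morphism into the totalization to liftability of its $\W[-1]$\+component along $\V[-1]\rarrow\W[-1]$, and ensuring this liftability is inherited by the morphism induced out of a freely generated $G^+(\N)$. These statements depend only on the underlying graded $\C$\+contramodule structure and on the explicit form of $G^+$, so I expect them to carry over; the real work is in verifying them in the coalgebra setting, in particular the interaction of the morphism induced out of $G^+(\N)$ with the three\+term graded decomposition $\U[1]\oplus\V\oplus\W[-1]$ of the totalization.

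Finally, the comodule equivalence $\sD^\co(\C\comod\Rcof)\rarrow\sD^\co(\C\comod\Rco)$ is obtained by the dual procedure. Section~\ref{non-adj-graded-co} provides enough injective $\R$\+cofree graded $\C$\+comodules and a monomorphism $\cM\rarrow\C\ocn_\R\cM\rarrow\C\ocn_\R\cV$ into one; the cofreely cogenerated CDG\+comodule functor $G^-$ then yields right coresolutions of $\R$\+comodule CDG\+comodules by $\R$\+cofree ones, totalized by taking infinite direct sums along the diagonals. The bounded\+below, direct\+sum version of Lemma~\ref{bounded-above-lem}, together with the dual factorization argument and the dual lifting lemmas, gives the claim, completing the proof.
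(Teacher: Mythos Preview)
Your proposal is correct and is in fact the approach the paper explicitly acknowledges as ``applicable'' (it is the direct transposition of the proof of Theorem~\ref{co-derived-mod} to the coalgebra setting). However, the paper then observes that in this case a substantially shorter argument is available and uses that instead.

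The key difference is this: in the algebra case of Theorem~\ref{co-derived-mod}, one does not in general have enough graded-projective resolutions for the contraderived category (one needs conditions like~$({*}{*})$; cf.\ Theorem~\ref{r-free-star-conditions}), so the elaborate factorization argument with the analogues of Lemmas~\ref{lifting-contract-lem} and~\ref{generated-lifting-lem} is genuinely needed. In the coalgebra case, by contrast, Theorem~\ref{r-free-co-derived-thm}(c) already identifies $\sD^\ctr(\C\contra\Rfr)$ with $H^0(\C\contra\Rfr_\proj)$ unconditionally. The paper therefore simply proves directly (as the subsequent Theorem~\ref{non-adj-co-derived-res}(c)) that the composite $H^0(\C\contra\Rfr_\proj)\rarrow H^0(\C\contra\Rctr)\rarrow\sD^\ctr(\C\contra\Rctr)$ is also an equivalence, using the same $G^+$-resolution you construct but needing only the semiorthogonality of part~(a) rather than the full factorization machinery. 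The desired equivalence is then the composite of two equivalences through $H^0(\C\contra\Rfr_\proj)$. Your route works but does more than necessary; the paper's route exploits the extra structure available on the coalgebra side.
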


\begin{proof}
 The argument from the proof of Theorem~\ref{co-derived-mod} is
applicable, but due to the fact that there are always enough
projective resolutions for the contraderived category of
CDG\+contramodules (and injective resolutions for the coderived
category of CDG\+comodules), there is a much simpler proof.
 The desired assertions follow immediately from the next
Theorem~\ref{non-adj-co-derived-res} together with
Theorems~\ref{r-free-co-derived-thm}(c)
and~\ref{r-cofree-co-derived-thm}(d).
\end{proof}

\begin{thm} \label{non-adj-co-derived-res}
 Let\/ $\C$ be an\/ $\R$\+free CDG\+coalgebra.  Then \par
\textup{(a)} for any CDG\+contramodule\/
$\P\in H^0(\C\contra\Rfr_\proj)$ and any contraacyclic\/
$\R$\+contramodule left CDG\+contramodule\/ $\Q$ over\/ $\C$,
the complex of\/ $\R$\+contramodules\/ $\Hom^\C(\P,\Q)$
is contraacyclic; \par
\textup{(b)} for any coacyclic\/ $\R$\+comodule left CDG\+comodule\/
$\cL$ over\/ $\C$ and any CDG\+co\-module\/
$\cM\in H^0(\C\comod\Rcof_\inj)$, the complex of\/
$\R$\+contramodules\/ $\Hom_\C(\cL,\cM)$ is contraacyclic; \par
\textup{(c)} the composition of natural functors 
$$
 H^0(\C\contra\Rfr_\proj)\lrarrow H^0(\C\contra\Rctr)
 \lrarrow\sD^\ctr(\C\contra\Rctr)
$$
is an equivalence of triangulated categories; \par
\textup{(d)} the composition of natural functors
$$
 H^0(\C\comod\Rcof_\inj)\lrarrow H^0(\C\comod\Rco)\lrarrow
 \sD^\co(\C\comod\Rco)
$$
is an equivalence of triangulated categories.
\end{thm}

\begin{proof}
 Part~(a) holds, since the functor $\Hom^\C(\P,{-})$ takes
short exact sequences and infinite products of $\R$\+contramodule
CDG\+contramodules to short exact sequences and infinite products
of complexes of $\R$\+contramodules.
 The proof of part~(b) is similar up to duality
(cf.\ \cite[Theorem~4.4, Theorem~3.5 and Remark~3.5]{Pkoszul}).

 To prove part~(c), we will need to use the construction of
the CDG\+contramodule $G^+(\F)$ over $\C$ freely generated by
a given $\R$\+contramodule graded left $\C$\+contramodule $\F$
(cf.\ the proof of Theorem~\ref{r-free-absolute-derived}).
 The graded $\R$\+contramodule $G^+(\F)$ is defined by the rule
$G^+(\F)^i=\F^i\oplus\F^{i-1}$; the elements of $G^+(\F)^i$
are denoted formally by $x+d_Gy$, where $x\in\F^i$ and $y\in\F^{i-1}$.

 To define the left contraaction of $\C$ in $G^+(\F)$,
present an arbitrary element of degree~$i$ component
$\Hom^{\R,i}(\C\;G^+(\F))$ of the graded $\R$\+contramodule
$\Hom^\R(\C\;G^+(\F))$ in the form $c\mpsto f(c) + d_G(g(c))$,
where $f\in\Hom^{\R,i}(\C,\F)$ and $g\in\Hom^{\R,\,i-1}(\C,\F)$.
 Set $\pi_{G^+(\F)}(f)=\pi_\F(f)$ and $\pi_{G^+(\F)}(d_G\circ g)=
d(\pi_\F(g)) + (-1)^{|g|}\pi_\F(g\circ d_\C)$, where
$\pi_\F\:\Hom^\R(\C,\F)\rarrow\F$ and $\pi_{G^+(\F)}\:
\Hom^\R(\C\;G^+(\F))\rarrow G^+(\F)$ denote
the $\C$\+contraaction map and $d_\C\:\C\rarrow\C$ is
the differential in~$\C$.
 Finally, define the differential in $G^+(\F)$ by the rule
$d(x+d_G(y)) = h*y + d_G(x)$, where $h\:\C^{-2}\rarrow\R$
is the curvature linear function of~$\C$.

 There is a natural morphism of graded $\C$\+contramodules
$\F\rarrow G^+(\F)$ defined by the rule $x\mpsto x = x + d_G(0)$.
 The cokernel of this morphism is isomorphic to $\F[-1]$ as
an $\R$\+contra\-module graded left $\C$\+contramodule.
 Any morphism of graded $\C$\+contramodules $p\:\F\rarrow\P$
from $\F$ to an $\R$\+contramodule CDG\+contramodule $\P$ over $\C$
factorizes uniquely as the composition of the natural embedding
$\F\rarrow G^+(\F)$ and a closed morphism of CDG\+contramodules
$G^+(\F)\rarrow\P$; the latter is given by the rule $x+d_Gy \mpsto
p(x) + d_\P(p(y))$, where $d_\P$~is the differential in~$\P$.

 Now let $\P$ be an $\R$\+contramodule left CDG\+contramodule
over~$\C$.
 As explained in Section~\ref{non-adj-graded-co}, there exists
a surjective morphism onto the $\R$\+contramodule graded
$\C$\+contramodule $\P$ from a projective $\R$\+free graded
$\C$\+contramodule~$\F_0$.
 The above construction then provides a surjective closed morphism
$G^+(\F_0)\rarrow\P$ onto $\P$ from a CDG\+contramodule
$G^+(\F_0)\in H^0(\C\contra\Rfr_\proj)$.
 Applying the same procedure to the kernel of the latter morphism,
etc., we obtain a left resolution of $\P$ by
$\R$\+free CDG\+contramodules with projective underlying
graded $\C$\+contramodules $\dotsb\rarrow G^+(\F_1)\rarrow
G^+(\F_0)\rarrow\P\rarrow0$.

 Totalizing this complex of CDG\+contramodules from
$H^0(\C\contra\Rfr_\proj)$ by taking infinite products along 
the diagonals, we get a closed morphism of $\R$\+contramodule
CDG\+contramodules $\F\rarrow\P$ with
$\F\in H^0(\C\contra\Rfr_\proj)$ (cf.\ the proof of
Theorem~\ref{r-free-co-derived-thm}).
 It remains to use the $\C$\+contramodule analogue of
Lemma~\ref{bounded-above-lem}.

 We have proven part~(c).
 The proof of part~(d) is analogous up to the duality, so we
restrict ourselves to writing down the construction of
the $\R$\+comodule left CDG\+comodule $G^-(\L)$ cofreely
cogenerated by an $\R$\+comodule graded left $\C$\+comod\-ule~$\cL$.
 The graded $\R$\+comodule $G^-(\cL)$ is defined by the rule
$G^-(\cL)^i=\cL^{i+1}\oplus\cL^i$; the elements of $G^+(\cL)^i$
are denoted formally by $d_G^{-1}x+py$, where $x\in\cL^{i+1}$
and $y\in\cL^i$.

 We will need to use Sweedler's notation for the coaction maps:
given an $\R$\+comod\-ule left $\C$\+comodule $\cM$, the left
$\C$\+coaction map $\cM\rarrow\C\ocn_\R\cM$ is denoted by
by $z\mpsto z_{(-1)}\ocn z_{(0)}$, where $z$, $z_{(0)}\in\cM$,
\ $z_{(-1)}\in\C$, and $c\ocn z$ is a symbolic notation for
an element of $\C\ocn_\R\cM$.
 Then the $\C$\+coaction in $G^+(\cL)$ is expressed in terms of
the $\C$\+coaction in $\cL$ and the differential~$d_\C$ in $\C$
by the rules $d_G^{-1}(x)\mpsto (-1)^{|x_{(-1)}|}x_{(-1)}
\ocn d_G^{-1}(x_{(0)})$ and $py\mpsto (-1)^{|y_{(-1)}|}d_\C(y_{(-1)})
\ocn d_G^{-1}(y_{(0)}) + y_{(-1)}\ocn p(y_{(0)})$.
 The differential in $G^-(\cL)$ is
$d(d_G^{-1}x+py) = d_G^{-1}(h*y) + px$.

 There is a natural morphism of graded $\C$\+comodules
$G^-(\cL)\rarrow\cL$ defined by the rule $d_G^{-1}x+py\mpsto y$.
 The kernel of this morphism is isomorphic to $\cL[1]$ as
an $\R$\+comodule graded left $\C$\+comodule.
 Any morphism of graded $\C$\+comodules $f\:\cM\rarrow\cL$ from
an $\R$\+comodule left CDG\+comodule $\cM$ over $\C$ to $\cL$
factorizes uniquely as the composition of a closed morphism
of CDG\+comodules $\cM\rarrow G^-(\cL)$ and the natural
surjection $G^-(\cL)\rarrow\cL$; the former is given by
the formula $z\mpsto d_G^{-1}(f(d_\cM(z))) + p(f(z))$
(cf.\ \cite[proof of Theorem~3.6]{Pkoszul}).
\end{proof}

 Given an $\R$\+contramodule left CDG\+contramodule $\P$ over $\C$,
the $\R$\+comodule graded left $\C$\+comodule $\Phi_{\R,\C}(\P) =
\cC(\R,\C)\ocn_\C\P$ is endowed with a CDG\+comodule structure with
the conventional tensor product differential (where the differential
on $\cC(\R,\C)=\C\ocn_\R\cC(\R)$ is induced by the differential
on~$\C$).
 Similarly, given an $\R$\+comodule left CDG\+comodule $\cM$ over
$\C$, the $\R$\+contramodule graded left $\C$\+contramodule
$\Psi_{\R,\C}(\cM)=\Hom_\C(\cC(\R,\C)\;\cM)$ is endowed with
a CDG\+contra\-module structure with the conventional
$\Hom$ differential.

 For any CDG\+contramodule $\F\in\C\contra\Rfr_\proj$,
the natural isomorphism $\Phi_{\R,\C}(\F)\simeq\Phi_\R\Phi_\C(\F)$
from Section~\ref{non-adj-graded-co} agrees with the CDG\+comodule
structures on both sides.
 Similarly, for any CDG\+comodule $\cJ\in\C\comod\Rcof_\inj$,
the natural isomorphism $\Psi_{\R,\C}(\cJ)\simeq\Psi_\R\Psi_\C(\cJ)$
agrees with the CDG\+contramodule structures on both sides
(see Sections~\ref{r-free-co-derived} and~\ref{r-cofree-co-derived}
for the definitions).

\begin{cor} \label{non-adj-derived-co-contra}
 The derived functors
$$
 \boL\Phi_{\R,\C}\:\sD^\ctr(\C\contra\Rctr)\lrarrow\sD^\co(\C\comod\Rco)
$$
and
$$
 \boR\Psi_{\R,\C}\:\sD^\co(\C\comod\Rco)\lrarrow\sD^\ctr(\C\contra\Rctr)
$$
defined by identifying\/ $\sD^\ctr(\C\allowbreak\contra\Rctr)$ with
$H^0(\C\contra\Rfr_\proj)$ and\/ $\sD^\co(\C\comod\Rco)$ with
$H^0(\C\comod\Rcof_\inj)$ are mutually inverse equivalences
between the contraderived category\/ $\sD^\ctr(\C\contra\Rctr)$
and the coderived category\/ $\sD^\co(\C\comod\Rco)$. \qed
\hfuzz=7.4pt
\end{cor}

 Using the equivalence of triangulated categories from
Theorem~\ref{non-adj-co-derived-comp} and the construction of
the derived functor $\Ext^\C$ from Section~\ref{r-free-co-derived},
we obtain the right derived functor
$$
 \Ext^\C\:\sD^\ctr(\C\contra\Rctr)^\sop\times\sD^\ctr(\C\contra\Rctr)
 \lrarrow H^0(\R\contra^\free).
$$
 The same functor can be constructed by restricting the functor
$\Hom^\C$ to the full subcategory $H^0(\C\contra\Rfr_\proj)^\sop
\times H^0(\C\contra\Rctr)\subset H^0(\C\contra\Rctr)^\sop\times
H^0(\C\contra\Rctr)$, composing it with the localization functor
$H^0(\R\contra)\rarrow\sD^\ctr(\R\contra)$, and identifying
the contraderived category $\sD^\ctr(\R\contra)$ with
the homotopy category $H^0(\R\contra^\free)$.

 Similarly, from Theorem~\ref{non-adj-co-derived-comp} and
the construction of the derived functor $\Ext_\C$
in Section~\ref{r-cofree-co-derived} we obtain the right derived
functor
$$
 \Ext_\C\:\sD^\co(\C\comod\Rco)^\sop\times\sD^\co(\C\comod\Rco)
 \lrarrow H^0(\R\contra^\free).
$$
 The same functor can be constructed by restricting the functor
$\Hom_\C$ to the full subcategory $H^0(\C\comod\Rco)^\sop\times
H^0(\C\comod\Rcof_\inj)\subset H^0(\C\comod\Rco)^\sop\times
H^0(\C\comod\Rco)$, composing it with the localization functor
$H^0(\R\contra)\rarrow\sD^\ctr(\R\contra)$, and identifying
$\sD^\ctr(\R\contra)$ with $H^0(\R\contra^\free)$.

 Restricting the functor $\ocn_\C$ to the full subcategory
$H^0(\comodrRco\C)\times H^0(\C\contra\Rfr_\proj)\allowbreak\subset
H^0(\comodrRco\C)\times H^0(\C\contra\Rctr)$, composing it with
the localization functor $H^0(\R\comod)\rarrow\sD^\co(\R\comod)$,
and identifying the coderived category $\sD^\co(\R\comod)$ with
the homotopy category $H^0(\R\comod^\cofr)$, we obtain
the left derived functor {\hfuzz=18.2pt
$$
 \Ctrtor^\C\:\sD^\co(\comodrRco\C)\times \sD^\ctr(\C\contra\Rctr)
 \lrarrow H^0(\R\comod^\cofr).
$$
 The} functor obtained by restriction and composition factorizes
through the Cartesian product of coderived and contraderived
categories by Theorem~\ref{non-adj-co-derived-res}(c) and
because the contratensor product with a projective $\R$\+free
CDG\+contramodule preserves exact triples and infinite direct
sums of $\R$\+comodule right CDG\+comodules.
 The equivalences of categories from
Theorem~\ref{non-adj-co-derived-comp} together with the equivalences
of categories $\Phi_\R=\Psi_\R^{-1}$ transform
the derived functor $\Ctrtor^\C$ that we have constructed into
the functors $\Ctrtor^\C$ from Sections~\ref{r-free-co-derived}
and~\ref{r-cofree-co-derived}.

 Restricting the functor $\oc_\C$ to either of the full
subcategories $H^0(\comodrRco\C)\times H^0(\C\comod\Rcof_\inj)$
or $H^0(\comodrRcofinj\C)\times H^0(\C\comod\Rco)\subset
H^0(\comodrRco\C)\times H^0(\C\comod\Rcof_\inj)$, composing
it with the localization functor $H^0(\R\comod)\rarrow
\sD^\co(\R\comod)$, and identifying $\sD^\co(\R\comod)$ with
$H^0(\R\comod^\cofr)$, we obtain the right derived functor
$$
 \Cotor^\C\:\sD^\co(\comodrRco\C)\times\sD^\co(\C\comod\Rco)
 \lrarrow H^0(\R\comod^\cofr).
$$
 Here $\comodrRcofinj\C$ denotes the DG\+category of $\R$\+cofree
right CDG\+comodules over $\C$ with injective underlying
$\R$\+cofree graded $\C$\+comodules, and $H^0(\comodrRcofinj\C)$
is the corresponding homotopy category.
 From Theorem~\ref{non-adj-co-derived-comp} and the construction
of the derived functor $\Cotor^\C$ in Section~\ref{r-cofree-co-derived}
we obtain a triangulated functor
$$
 \Cotor^\C\:\sD^\co(\comodrRfr\C)\times\sD^\co(\C\comod\Rco)
 \lrarrow H^0(\R\comod^\cofr).
$$
 The latter functor can be also obtained by restricting
the functor $\oc_\C$ to the full subcategory
$H^0(\comodrRfrinj\C)\times H^0(\C\comod\Rco)\subset
H^0(\comodrRfr\C)\times H^0(\C\comod\Rco)$, composing it with
the localization functor $H^0(\R\comod)\rarrow \sD^\co(\R\comod)$,
and identifying $\sD^\co(\R\comod)$ with $H^0(\R\comod^\cofr)$.
 The equivalences of categories from
Theorem~\ref{non-adj-co-derived-comp} together with the equivalences
of categories $\Phi_\R=\Psi_\R^{-1}$ transform
the above two derived functors $\Ctrtor^\C$ into each other 
and the functor $\Cotor^\C$ from Sections~\ref{r-free-co-derived}.

 Restricting the functor $\Cohom_\C$ to either of the full
subcategories $H^0(\C\comod\Rcof_\inj)^\sop\allowbreak\times
H^0(\C\contra\Rctr)$ or $H^0(\C\comod\Rco)^\sop\times
H^0(\C\contra\Rfr_\proj)\subset H^0(\C\comod\Rco)^\sop\allowbreak
\times H^0(\C\contra\Rctr)$, composing it with the localization
functor $H^0(\R\contra)\rarrow\sD^\ctr(\R\contra)$, and
identifying $\sD^\ctr(\R\contra)$ with $H^0(\R\contra^\free)$,
we obtain the left derived functor {\hfuzz=12.5pt
$$
 \Coext_\C\:\sD^\co(\C\comod\Rco)^\sop\times\sD^\ctr(\C\contra\Rctr)
 \lrarrow H^0(\R\contra^\free).
$$
 From} Theorem~\ref{non-adj-co-derived-comp} and the constructions
of the derived functors $\Coext_\C$ in Sections~\ref{r-free-co-derived}
and~\ref{r-cofree-co-derived} we obtain triangulated functors
\begin{alignat*}{3}
 &\Coext_\C\:\sD^\co(\C\comod\Rfr)^\sop&&\times\sD^\ctr(\C\contra\Rctr)
 &&\lrarrow H^0(\R\contra^\free), \\
 &\Coext_\C\:\sD^\co(\C\comod\Rco)^\sop&&\times\sD^\ctr(\C\contra\Rcof)
 &&\lrarrow H^0(\R\contra^\free).
\end{alignat*}
 The former of these two functors can be also obtained by restricting
the functor $\Cohom_\C$ to the full subcategory
$H^0(\C\comod\Rfr_\inj)^\sop\times H^0(\C\contra\Rctr)\subset
H^0(\C\comod\Rfr)^\sop\times H^0(\C\contra\Rctr)$, composing it
with the localization functor $H^0(\R\contra)\rarrow
\sD^\ctr(\R\contra)$, and identifying $\sD^\ctr(\R\contra)$ with
$H^0(\R\contra^\free)$.
 The latter derived functor can be similarly obtained by restricting
the functor $\Cohom_\C$ to the full subcategory
$H^0(\C\comod\Rco)^\sop\times H^0(\C\contra\Rcof_\proj)\subset
H^0(\C\comod\Rco)^\sop\times H^0(\C\contra\Rcof)$.
 The equivalences of categories from
Theorem~\ref{non-adj-co-derived-comp} together with the equivalences
of categories $\Phi_\R=\Psi_\R^{-1}$ transform
these three derived functors $\Coext_\C$ into each other 
and the derived functor $\Coext_\C$ of cohomomorphisms from
$\R$\+free CDG\+comodules to $\R$\+cofree CDG\+comodules
from Section~\ref{r-cofree-co-derived} (taking values in
$H^0(\R\comod^\cofr)$). {\hbadness=4300\par}

\begin{prop}
\textup{(a)} The equivalence of triangulated categories
$$
 \boL\Phi_{\R,\C}\:\sD^\ctr(\C\contra\Rctr)
 \simeq\sD^\co(\C\comod\Rco)\,\,\:\!\boR\Psi_{\R,\C}
$$
from Corollary~\textup{\ref{non-adj-derived-co-contra}} transforms
the left derived functor
$$
 \Coext_\C\:\sD^\co(\C\comod\Rco)^\sop\times\sD^\ctr(\C\contra\Rctr)
 \lrarrow H^0(\R\contra^\free)
$$
into the right derived functors
$$
 \Ext^\C\:\sD^\ctr(\C\contra\Rctr)^\sop\times\sD^\ctr(\C\contra\Rctr)
 \lrarrow H^0(\R\contra^\free)
$$
and
$$
 \Ext_\C\:\sD^\co(\C\comod\Rco)^\sop\times\sD^\co(\C\comod\Rco)
 \lrarrow H^0(\R\contra^\free).
$$
 In other words, the following diagram of categories, functors,
and equivalences is commutative:
$$
\begin{diagram}
\node{\sD^\ctr(\C\contra\Rctr)^\sop\times\sD^\ctr(\C\contra\Rctr)}
\arrow{s,=}\arrow[4]{e,t}{\Ext^\C}\node[4]{H^0(\R\contra^\free)}
\arrow{s,=} \\
\node{\sD^\co(\C\comod\Rco)^\sop\times\sD^\ctr(\C\contra\Rctr)}
\arrow{s,=}\arrow[4]{e,t}{\Coext_\C}\node[4]{H^0(\R\contra^\free)}
\arrow{s,=} \\
\node{\sD^\co(\C\comod\Rco)^\sop\times\sD^\co(\C\comod\Rco)}
\arrow[4]{e,t}{\Ext_\C}\node[4]{H^0(\R\contra^\free)}
\end{diagram}
$$ \par
\textup{(b)} The same equivalence of triangulated categories\/
$\boL\Phi_{\R,\C}=\boR\Psi_{\R,\C}^{-1}$ transforms
the right derived functor
$$
 \Cotor^\C\:\sD^\co(\comodrRco\C)\times\sD^\co(\C\comod\Rco)
 \lrarrow H^0(\R\comod^\cofr)
$$
into the left derived functor
$$
 \Ctrtor^\C\:\sD^\co(\comodrRco\C)\times\sD^\ctr(\C\contra\Rctr)
 \lrarrow H^0(\R\comod^\cofr).
$$
 In other words, the following diagram of categories, functors,
and equivalences is commutative:
$$
\begin{diagram}
\node{\sD^\co(\comodrRco\C)\times\sD^\co(\C\comod\Rco)}
\arrow{s,=}\arrow[4]{e,t}{\Cotor^\C}\node[4]{H^0(\R\contra^\cofr)}
\arrow{s,=} \\
\node{\sD^\co(\comodrRco\C)\times\sD^\ctr(\C\contra\Rctr)}
\arrow[4]{e,t}{\Ctrtor^\C}\node[4]{H^0(\R\contra^\cofr)}
\end{diagram}
$$
\end{prop}

\begin{proof}
 For any $\R$\+contramodule left CDG\+contramodules $\P$ and $\Q$
over $\C$, there is a natural morphism of complexes of
$\R$\+contramodules $\Cohom_\C(\Phi_{\R,\C}(\P),\Q)=
\Cohom_\C(\cC(\R,\C)\ocn_\C\P\;\Q)\rarrow
\Hom^\C(\P,\Cohom_\C(\cC(\R,\C),\Q))\simeq\Hom^\C(\P,\Q)$,
which is an isomorphism whenever either of the graded
$\C$\+contramodules $\P$ or $\Q$ is a projective $\R$\+free
graded $\C$\+contramodule.
 For any $\R$\+comodule left CDG\+comodules $\cL$ and $\cM$ over $\C$,
there is a natural morphism of complexes of $\R$\+contramodules
$\Cohom_\C(\cL,\Psi_{\R,\C}(\cM))=\Cohom_\C(\cL,\Hom_\C(\cC(\R,\C),
\cM))\rarrow\Hom_\C(\cC(\R,\C)\oc_\C\cL\;\cM)\allowbreak\simeq
\Hom_\C(\cL,\cM)$, which is an isomorphism whenever either of
the graded $\C$\+comodules $\cL$ or $\cM$ is an injective
$\R$\+cofree graded $\C$\+comodule.

 For any $\R$\+comodule right CDG\+comodule $\cN$ and
$\R$\+contramodule left CDG\+contra\-module $\P$ over $\C$,
there is a natural morphism of complexes of $\R$\+comodules
$\cN\ocn_\C\P\simeq(\cN\oc_\C\cC(\R,\C))\ocn_\C\P\rarrow
\cN\oc_\C(\cC(\R,\C)\ocn_\C\P)=\cN\oc_\C\Phi_{\R,\C}(\P)$,
which is an isomorphism whenever either the graded right
$\C$\+comodule $\cN$ is an injective $\R$\+cofree graded
$\C$\+comodule, or the graded left $\C$\+contramodule $\P$
is a projective $\R$\+free graded $\C$\+contramodule.
 (Cf.\ \cite[Section~5.3]{Pkoszul} and
Lemma~\ref{hom-co-associativity}; see \cite[proof of
Corollary~5.6]{Psemi} for further details.)
\end{proof}

 Let $(f,a)\:\C\rarrow\D$ be a morphism of $\R$\+free CDG\+algebras.
 Then the functor $R^f\:H^0(\C\contra\Rctr)\rarrow H^0(\D\contra\Rctr)$
obviously takes contraacyclic $\R$\+con\-tramodule CDG\+contramodules
over $\C$ to contraacyclic $\R$\+contramodule CDG\+mod\-ules over $\D$,
and hence induces a triangulated functor
$$
 \boI R^f\:\sD^\ctr(\C\contra\Rctr)\lrarrow\sD^\ctr(\D\contra\Rctr).
$$
 Similarly, the functor $R_f\:H^0(\C\comod\Rco)\rarrow
H^0(\D\comod\Rco)$ takes coacyclic $\R$\+comodule CDG\+comodules
over $\C$ to coacyclic $\R$\+comodule CDG\+comodules over $\D$,
and hence induces a triangulated functor
$$
 \boI R_f\:\sD^\co(\C\comod\Rco)\lrarrow\sD^\co(\D\comod\Rco).
$$
 The constructions of Sections~\ref{r-free-co-derived}
and~\ref{r-cofree-co-derived} together with
Theorem~\ref{non-adj-co-derived-comp} provide the left adjoint
functor to $\boI R^f$
$$
 \boL E^f\:\sD^\ctr(\D\contra\Rctr)\lrarrow\sD^\ctr(\C\contra\Rctr)
$$
and the right adjoint functor to $\boI R_f$
$$
 \boR E_f\:\sD^\co(\D\comod\Rco)\lrarrow\sD^\co(\C\comod\Rco).
$$

\begin{prop}  \label{non-adj-co-extension}
 The equivalences of triangulated categories\/ $\boL\Phi_{\R,\C}=
\boR\Psi_{\R,\C}^{-1}$ and $\boL\Phi_{\R,\D}=\boR\Psi_{\R,\D}^{-1}$
from Corollary~\textup{\ref{non-adj-derived-co-contra}} transform
the left derived functor\/ $\boL E^f$ into the right derived
functor\/ $\boR E_f$ and back.
 In other words, the following diagram of categories, functors,
and equivalences is commutative:
$$
\begin{diagram}
 \node{\llap{$\boL\Phi_{\R,\D}$}\:\sD^\ctr(\D\contra\Rctr)}
 \arrow{e,=}\arrow{s,l}{\boL E^f}
 \node{\sD^\co(\D\comod\Rco)\,\.\:\!\rlap{$\boR\Psi_{\R,\D}$}}
 \arrow{s,r}{\boR E_f} \\
 \node{\llap{$\boL\Phi_{\R,\C}$}\:\sD^\ctr(\C\contra\Rctr)}
 \arrow{e,=}
 \node{\sD^\co(\C\comod\Rco)\,\.\:\!\rlap{$\boR\Psi_{\R,\C}$}}
\end{diagram}
$$
\end{prop}

\begin{proof}
 Follows from Proposition~\ref{r-free-co-extension}
or~\ref{r-cofree-co-extension}.
\end{proof}

\Section{Change of Coefficients and Compact Generation}

\subsection{Change of coefficients for wcDG-modules}
 Let $\eta\:\R\rarrow\S$ be a profinite morphism (see
Section~\ref{change-of-ring}) of pro-Artinian topological
local rings with the maximal ideals $\m_\R$ and $\m_\S$ and
the residue fields $k_\R$ and $k_\S$.
 Notice that such a morphism is always local, i.~e.,
$\eta(\m_\R)\subset\m_\S$; so $\eta$ induces a finite
field extension $\eta/\m\:k_\R\rarrow k_\S$.

 We will apply the functors $R^\eta$, $E^\eta$, $R_\eta$,
$E_\eta$ to graded contramodules and comodules over $\R$
and $\S$ termwise.
 The natural (iso)morphisms from Section~\ref{change-of-ring}
hold for graded contramodules and comodules in the same form
as in the ungraded case.

 Let $\B$ be an $\R$\+free graded algebra.
 Then the free graded $\S$\+contramodule $E^\eta(\B)$ has
a natural graded algebra structure provided by
the multiplication map $E^\eta(\B)\ot^\S E^\eta(\B)\simeq
E^\eta(\B\ot^\R\B)\rarrow E^\eta(\B)$ induced by
the multiplication in $\B$ and the unit map $\S\rarrow E^\eta(\B)$
induced by the unit map $\R\rarrow\B$.
 
 Let $\M$ be an $\R$\+contramodule graded left $\B$\+module.
 Then the graded $\S$\+contra\-module $E^\eta(\M)$ has
a natural graded left $E^\eta(\B)$\+module structure provided
by the action map $E^\eta(\B)\ot^\S E^\eta(\M)\simeq
E^\eta(\B\ot^\R\M)\rarrow E^\eta(\M)$.
 The same module structure can be defined in terms of the action
map $E^\eta(\M)\rarrow E^\eta\Hom^\R(\B,\M)\simeq
\Hom^\S(E^\eta(\B),E^\eta(\M))$.
 The similar construction applies to right $\B$\+modules.

 Let $\cM$ be an $\R$\+comodule graded left $\B$\+module.
 Then the graded $\S$\+comodule $E_\eta(\cM)$ has
a natural graded left $E^\eta(\B)$\+module structure provided
by the action map $E^\eta(\B)\ocn_\S E_\eta(\cM)\simeq
E_\eta(\B\ocn_\R\cM)\rarrow E_\eta(\cM)$.
 The same module structure can be defined in terms of the action
map $E_\eta(\cM)\rarrow E_\eta\Ctrhom_\R(\B,\cM)\simeq
\Ctrhom_\S(E^\eta(\B),E_\eta(\cM))$.
 The similar construction applies to right $\B$\+modules.

 Let $\N$ be an $\S$\+contramodule graded left $E^\eta(\B)$\+module.
 Then the graded $\R$\+contramodule $R^\eta(\N)$ has
a natural graded left $\B$\+module structure provided by
the action map $\B\ot^\R R^\eta(\N)\rarrow R^\eta(E^\eta(\B)
\ot^\S\N)\rarrow R^\eta(\N)$.
 The same module structure can be defined in terms of the action
map $R^\eta(\N)\rarrow R^\eta\Hom^\S(E^\eta(\B),\N)\simeq
\Hom^\R(\B,R^\eta(\N))$.
 The similar construction applies to right $E^\eta(\B)$\+modules.

 Let $\cN$ be an $\S$\+comodule graded left $E^\eta(\B)$\+module.
 Then the graded $\R$\+comodule $R_\eta(\cN)$ has a natural
graded left $\B$\+module structure provided by the action map
$\B\ocn_\R R_\eta(\cN)\simeq R_\eta(E^\eta(\B)\ocn_\S\cN)
\rarrow R_\eta(\cN)$.
 The same module structure can be defined in terms of the action
map $R_\eta(\cN)\rarrow R_\eta\Ctrhom_\S(E^\eta(\B),\cN)
\rarrow\Ctrhom_\S(\B,R_\eta(\cN))$.
 The similar construction applies to right $E^\eta(\B)$\+modules.

 Now let $\B$ be an $\R$\+free CDG\+algebra.
 Then the $\S$\+free graded algebra $E^\eta(\B)$ has a natural
CDG\+algebra structure with the differential and the curvature element
induced by the differential and the curvature element of~$\B$.
 The above constructions $E^\eta$, $E_\eta$, $R^\eta$, $R_\eta$
for graded $\B$- and $E^\eta(\B)$\+modules assign CDG\+modules
to CDG\+modules, defining DG\+functors
\begin{align*}
 E^\eta\:\B\mod\Rctr &\lrarrow E^\eta(\B)\mod\Sctr, \\
 E_\eta\:\B\mod\Rco &\lrarrow E^\eta(\B)\mod\Sco
\end{align*}
and
\begin{align*}
 R^\eta\: E^\eta(\B)\mod\Sctr &\lrarrow \B\mod\Rctr, \\
 R_\eta\: E^\eta(\B)\mod\Sco &\lrarrow \B\mod\Rco.
\end{align*}
 The DG\+functor $E^\eta$ is left adjoint to the DG\+functor
$R^\eta$, and the DG\+functor $E_\eta$ is right adjoint to
the DG\+functor~$R_\eta$.
 Passing to the homotopy categories, we obtain the induced
triangulated functors.

 Clearly, the functor $R^\eta$ takes short exact sequences and
infinite products of $\S$\+contramodule CDG\+modules to short
exact sequences and infinite products of $\R$\+contramodule
CDG\+modules, hence it takes contraacyclic $\S$\+contramodule
CDG\+modules to contraacyclic $\R$\+contramodule CDG\+modules
and therefore induces a triangulated functor
$$
 \boI R^\eta\:\sD^\ctr(E^\eta(\B)\mod\Sctr)\lrarrow
 \sD^\ctr(\B\mod\Rctr).
$$
 Similarly, the functor $R_\eta$ takes short exact sequences
and infinite direct sums of $\S$\+comodule CDG\+modules to short
exact sequences and infinite direct sums of $\R$\+comodule
CDG\+modules, hence it takes coacyclic $\S$\+comodule CDG\+modules
to coacyclic $\R$\+comodule CDG\+modules and induces
a triangulated functor
$$
 \boI R^\eta\:\sD^\co(E^\eta(\B)\mod\Sco)\lrarrow
 \sD^\co(\B\mod\Rco).
$$

 The functor $E^\eta\:H^0(\B\mod\Rfr)\rarrow H^0(E^\eta(\B)\mod\Sfr)$
takes short exact sequences and infinite products of $\R$\+free
CDG\+modules to short exact sequences and infinite products of
$\S$\+free CDG\+modules; hence it takes contraacyclic $\R$\+free
CDG\+modules to contraacyclic $\S$\+free CDG\+modules and induces
a triangulated functor $\sD^\ctr(\B\mod\Rfr)\rarrow
\sD^\ctr(E^\eta(\B)\mod\Sfr)$.
 Using Theorem~\ref{co-derived-mod}, we obtain the left derived functor
$$
 \boL E^\eta\:\sD^\ctr(\B\mod\Rctr)\lrarrow
 \sD^\ctr(E^\eta(\B)\mod\Sctr),
$$
which is left adjoint to the functor~$\boI R^\eta$.
 Similarly, the functor $E_\eta\:H^0(\B\mod\Rcof)\allowbreak\rarrow
H^0(E^\eta(\B)\mod\Scof)$ takes coacyclic $\R$\+cofree CDG\+modules
to coacyclic $\S$\+cofree CDG\+modules and induces
a triangulated functor $\sD^\co(\B\mod\Rcof)\rarrow
\sD^\co(E^\eta(\B)\mod\Scof)$.
 Using Theorem~\ref{co-derived-mod}, we obtain the right
derived functor {\hbadness=1100
$$
 \boR E_\eta\:\sD^\co(\B\mod\Rco)\lrarrow
 \sD^\co(E^\eta(\B)\mod\Sco),
$$
which} is right adjoint to the functor~$\boI R_\eta$.

 Finally, let $\A$ be a wcDG\+algebra over $\R$; then
$E^\eta(\A)$ is a wcDG\+algebra over~$\S$.
 Clearly, for any $\R$\+contramodule wcDG\+module $\M$ over $\A$,
one has $E^\eta(\M)/\m_\S E^\eta(\M)\allowbreak\simeq
E^{\eta/\m}(\M/\m_\R\M)$, and the DG\+modules $\M/\m_\R\M$ over
$\A/\m_\R\A$ and $E^{\eta/\m}(\M/\m_\R\M)$ over $E^{\eta/\m}(\A/\m\A)
\simeq E^\eta(\A)/\m_\S E^\eta(\A)$ are acyclic simultaneously.
 Hence an $\R$\+free wcDG\+module $\M$ over $\A$ is semiacyclic
if and only if the $\S$\+free wcDG\+module $E^\eta(\M)$ over
$E^\eta(\A)$ is semiacyclic, and the functor $E^\eta$ induces
a triangulated functor $\sD^\si(\A\mod\Rfr)\rarrow\sD^\si
(E^\eta(\A)\mod\Sfr)$.
 Identifying the semiderived categories of $\R$- or $\S$\+free
wcDG\+modules with the semiderived categories of $\R$- or
$\S$\+contramodule wcDG\+modules, we obtain the left derived functor
$$
 \boL E^\eta\:\sD^\si(\A\mod\Rctr)\lrarrow\sD^\si(E^\eta(\A)\mod\Sctr).
$$
 Similarly, for any $\R$\+comodule wcDG\+module $\cM$ over $\A$, one
has ${}_{\m_\S}E_\eta(\cM)\simeq E_{\eta/\m}({}_{\m_\R}\cM)$, hence
an $\R$\+cofree wcDG\+module $\cM$ over $\A$ is semiacyclic if and
only if the $\S$\+cofree wcDG\+module $E_\eta(\cM)$ over $\A$ is
semiacyclic.
 The functor $E_\eta$ induces a triangulated functor
$\sD^\si(\A\mod\Rcof)\rarrow\sD^\si(E^\eta(\A)\mod\Scof)$.
 Identifying the semiderived categories of $\R$- or $\S$\+cofree
wcDG\+modules with the semiderived categories of $\R$- or
$\S$\+comodule wcDG\+modules, we obtain the right derived functor
$$
 \boR E_\eta\:\sD^\si(\A\mod\Rco)\lrarrow\sD^\si(E^\eta(\A)\mod\Sco).
$$

\begin{prop}  \label{semi-extension-coefficients}
 The equivalences of triangulated categories\/ $\boL\Phi_\R =
\boR\Psi_\R^{-1}$ and\/ $\boL\Phi_\S=\boR\Psi_\S^{-1}$ from
Proposition~\textup{\ref{non-adj-r-co-contra}} transform the left
derived functor\/ $\boL E^\eta$ into the right derived functor\/
$\boR E_\eta$ and back.
 In other words, the following diagram of categories, functors,
and equivalences is commutative:
$$
\begin{diagram}
 \node{\llap{$\boL\Phi_\R$}\:\sD^\si(\A\mod\Rctr)}
 \arrow{e,=}\arrow{s,l}{\boL E^\eta}
 \node{\sD^\si(\A\mod\Rco)\,\.\:\!\rlap{$\boR\Psi_\R$}}
 \arrow{s,r}{\boR E_\eta}\\
 \node{\llap{$\boL\Phi_\S$}\:\sD^\si(E^\eta(\A)\mod\Sctr)}
 \arrow{e,=}
 \node{\sD^\si(E^\eta(\A)\mod\Sco)\,\.\:\!\rlap{$\boR\Psi_\S$}}
\end{diagram}
$$
\end{prop}

\begin{proof}
 For any $\R$\+contramodule left wcDG\+module $\M$ over $\A$,
the natural morphism of graded $\S$\+comodules $\Phi_\S E^\eta(\M)
\rarrow E_\eta\Phi_\R(\M)$ is a closed morphism of $\S$\+comodule
left wcDG\+modules over~$E^\eta(\A)$.
 Similarly, for any $\R$\+comodule left wcDG\+module $\cM$ over
$\A$, the natural morphism of graded $\S$\+contramodules
$E^\eta\Psi_\R(\cM)\rarrow\Psi_\S E_\eta(\cM)$ is a closed morphism
of $\S$\+contramodule left wcDG\+modules over~$E^\eta(\A)$
(cf.\ Proposition~\ref{r-s-extension}).
\end{proof}

 In order to define the functors induced by $R^\eta$ and $R_\eta$
on the semiderived categories of wcDG\+modules, we will need to prove
the following lemma first.

\begin{lem} \label{restriction-semi-preserves}
\textup{(a)} The triangulated functor of contrarestriction of
coefficients $R^\eta\:H^0(E^\eta(\A)\mod\Sctr)\rarrow H^0(\A\mod\Rctr)$
takes semiacyclic\/ $\S$\+contramodule wcDG\+modules to semiacyclic\/
$\R$\+contramodule wcDG\+modules. \par
\textup{(b)} The triangulated functor corestriction of coefficients
$R_\eta\:H^0(E^\eta(\A)\mod\Sco)\allowbreak\rarrow H^0(\A\mod\Rco)$
takes semiacyclic\/ $\S$\+comodule wcDG\+modules to semiacyclic\/
$\R$\+comodule wcDG\+modules.
\end{lem}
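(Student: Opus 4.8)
The plan is to reduce the statement to the two classes of wcDG\+modules that, by the definitions in Section~\ref{wcdg-semiderived}, generate the semiacyclic ones as a thick subcategory, and to use that $R^\eta$ and $R_\eta$ are triangulated. Recall that a semiacyclic $\S$\+contramodule wcDG\+module over $E^\eta(\A)$ is, by definition, an object of the minimal thick subcategory of $H^0(E^\eta(\A)\mod\Sctr)$ containing the contraacyclic $\S$\+contramodule wcDG\+modules together with the semiacyclic $\S$\+free wcDG\+modules; dually for comodules. Since $R^\eta$ (resp.\ $R_\eta$) is a triangulated functor, it suffices to check that it carries each of these two generating classes into the semiacyclic $\R$\+contramodule (resp.\ $\R$\+comodule) wcDG\+modules.

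The first class is handled as in Section~\ref{change-of-ring}: the functor $R^\eta$ is exact and preserves infinite products, so it takes contraacyclic $\S$\+contramodule wcDG\+modules to contraacyclic $\R$\+contramodule wcDG\+modules, and these are semiacyclic. Dually, $R_\eta$ is exact and preserves infinite direct sums, so it takes coacyclic $\S$\+comodule wcDG\+modules to coacyclic $\R$\+comodule wcDG\+modules, which are again semiacyclic. Thus the whole content of the lemma lies in the second generating class.

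For part~(a), let $\N$ be a semiacyclic $\S$\+free wcDG\+module over $E^\eta(\A)$. Using Theorem~\ref{non-adj-semiderived-res}(c) I would choose a homotopy projective resolution $\P\rarrow R^\eta(\N)$ with $\P\in H^0(\A\mod\Rfr_\proj)_\proj$ and semiacyclic cone. By the adjunction $E^\eta\dashv R^\eta$ and the natural isomorphism $\Hom^\R(\P,R^\eta\Q)\simeq R^\eta\Hom^\S(E^\eta\P,\Q)$ of Section~\ref{change-of-ring}, lifted to the $\Hom$ complexes of modules, there is an isomorphism of complexes of $\R$\+contramodules $\Hom_\A(\P,R^\eta(\N))\simeq R^\eta\Hom_{E^\eta(\A)}(E^\eta(\P),\N)$. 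Now $E^\eta(\P)$ is an $\S$\+free wcDG\+module, and it is homotopy projective: by Lemma~\ref{homotopy-proj-inj-reduction}(a) this is read off from $E^\eta(\P)/\m_\S E^\eta(\P)\simeq E^{\eta/\m}(\P/\m_\R\P)$, since extension of scalars along the finite field extension $\eta/\m\:k_\R\rarrow k_\S$ preserves homotopy projectivity of DG\+modules over DG\+algebras over a field. Hence, by the contractibility statement recorded after the proof of Theorem~\ref{r-free-semi-resolutions}, the complex $\Hom_{E^\eta(\A)}(E^\eta(\P),\N)$ is contractible; applying the additive functor $R^\eta$ keeps it contractible, so $\Hom_\A(\P,R^\eta(\N))$ is contractible and in particular acyclic. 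Consequently the resolution morphism $\P\rarrow R^\eta(\N)$ is null\+homotopic, its cone is isomorphic in $H^0(\A\mod\Rctr)$ to $\P[1]\oplus R^\eta(\N)$, and since the cone is semiacyclic, so is its direct summand $R^\eta(\N)$.

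Part~(b) is entirely dual. One takes a homotopy injective coresolution $R_\eta(\cN)\rarrow\cJ$ with $\cJ\in H^0(\A\mod\Rcof_\inj)_\inj$ and semiacyclic cone, uses the adjunction $R_\eta\dashv E_\eta$ together with the isomorphism $\Hom_\R(R_\eta\cM,\cN)\simeq R^\eta\Hom_\S(\cM,E_\eta\cN)$ to obtain $\Hom_\A(R_\eta(\cN),\cJ)\simeq R^\eta\Hom_{E^\eta(\A)}(\cN,E_\eta(\cJ))$, checks via ${}_{\m_\S}E_\eta(\cJ)\simeq E_{\eta/\m}({}_{\m_\R}\cJ)$ that $E_\eta(\cJ)$ is a homotopy injective $\S$\+cofree wcDG\+module, and invokes the $\R$\+cofree analogue of the same contractibility statement to conclude that $R_\eta(\cN)$ is semiacyclic. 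The step I expect to be the most delicate is the compatibility of the change\+of\+coefficients adjunctions with the module $\Hom$ complexes and with reduction modulo the maximal ideals, i.e.\ verifying that the scalar isomorphisms of Section~\ref{change-of-ring} upgrade to isomorphisms of $\Hom$ complexes of wcDG\+modules and that $E^\eta$, $E_\eta$ really preserve homotopy projectivity/injectivity after reduction; granting these, the argument is forced. Note that this route deliberately avoids analyzing the underlying $\R$\+contramodule of $R^\eta(\N)$, which need not be $\R$\+free when $\eta$ is not flat (for instance for a surjection $\R\rarrow\S$), so that a naive ``reduce modulo $\m_\R$'' argument would instead have to contend with the higher derived reduction of $\S$ over~$\R$.
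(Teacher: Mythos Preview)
Your proof is correct and rests on the same key ingredient as the paper's: the adjunction $E^\eta\dashv R^\eta$ together with the fact that $E^\eta$ preserves homotopy projectivity of $\R$\+free wcDG\+modules, verified via the reduction isomorphism $E^\eta(\P)/\m_\S E^\eta(\P)\simeq E^{\eta/\m}(\P/\m_\R\P)$ and Lemma~\ref{homotopy-proj-inj-reduction}(a).

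The packaging differs slightly. The paper argues in one stroke: by the semiorthogonal decomposition of Theorem~\ref{non-adj-semiderived-res}(a,\,c), the statement that $R^\eta$ preserves semiacyclicity is \emph{equivalent} to $E^\eta$ taking $H^0(\A\mod\Rfr_\proj)_\proj$ into $H^0(E^\eta(\A)\mod\Sfr_\proj)_\proj$, and then checks the latter. Your argument instead first splits off the contraacyclic generating class (handled trivially since $R^\eta$ is exact and product\+preserving) and then, for the semiacyclic $\S$\+free class, unwinds the same semiorthogonal\+decomposition\+plus\+adjunction argument concretely: pick a resolution $\P\rarrow R^\eta(\N)$, compute $\Hom_\A(\P,R^\eta(\N))\simeq R^\eta\Hom_{E^\eta(\A)}(E^\eta(\P),\N)$ via the DG\+adjunction, and conclude contractibility from the remark after Theorem~\ref{r-free-semi-resolutions}. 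Your preliminary reduction to the two generating classes is correct but not strictly necessary; the paper's abstract formulation handles all semiacyclic $\S$\+contramodule wcDG\+modules at once. Conversely, your explicit resolution argument makes transparent exactly where the contractibility (not merely contraacyclicity) of the Hom complex is used, which is why you rightly restrict to $\S$\+free $\N$ at that step.
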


\begin{proof}
 Part~(a): in view of the semiorthogonal decomposition of
Theorem~\ref{non-adj-semiderived-res}(a,\,c) and the adjunction
of $R^\eta$ and $E^\eta$, the desired assertion is equivalent to 
the functor $E^\eta\:H^0(\A\mod\Rctr)\rarrow H^0(E^\eta(\A)\mod\Sctr)$
taking $H^0(\A\mod\Rctr_\proj)_\proj$ to
$H^0(E^\eta(\A)\mod\Sctr_\proj)_\proj$.
 It is clear that the functor $E^\eta$ takes $H^0(\A\mod\Rctr_\proj)$
to $H^0(E^\eta(\A)\mod\Sctr_\proj)$, and it remains to use
Lemma~\ref{homotopy-proj-inj-reduction}(a) in order to check
the preservation of homotopy projectivity (which is obviously
preserved by the functor $E^{\eta/\m}$).
 The proof of part~(b) is analogous up to duality.
\end{proof}

 According to Lemma~\ref{restriction-semi-preserves}, the functor
$R^\eta$ induces a triangulated functor of contrarestriction of
coefficients
$$
 \boI R^\eta\:\sD^\si(E^\eta(\A)\mod\Sctr)\lrarrow
 \sD^\si(\A\mod\Rctr).
$$
 Similarly, the functor $R_\eta$ induces a triangulated functor
of corestriction of coefficients
$$
 \boI R_\eta\:\sD^\si(E^\eta(\A)\mod\Sco)\lrarrow
 \sD^\si(\A\mod\Rco).
$$
 The functor $\boI R^\eta$ is right adjoint to the functor
$\boL E^\eta$, and the functor $\boI R_\eta$ is left adjoint to
the functor $\boR E_\eta$.
 In view of Proposition~\ref{semi-extension-coefficients},
identifying $\sD^\si(\A\mod\Rctr)$ with $\sD^\si(\A\mod\Rco)$ and
$\sD^\si(E^\eta(\A)\mod\Sctr)$ with $\sD^\si(E^\eta(\A)\mod\Sco)$
allows one to view the functors $\boI R^\eta$ and $\boI R_\eta$ as
the adjoints on two sides to the same triangulated functor
$\boL E^\eta=\boR E_\eta$.

\subsection{Change of coefficients for CDG-contra/comodules}
 Let $\C$ be an $\R$\+free graded coalgebra.
 Then the free graded $\S$\+contramodule $E^\eta(\C)$ has
a natural graded coalgebra structure provided by the comultiplication
map $E^\eta(\C)\rarrow E^\eta(\C\ot^\R\C)\simeq
E^\eta(\C)\ot^\S E^\eta(\C)$ induced by the comultiplication in $\C$
and the counit map $E^\eta(\C)\rarrow\S$ induced by the counit map
$\C\rarrow\R$.

 Let $\P$ be an $\R$\+contramodule graded left $\C$\+contramodule.
 Then the graded $\S$\+contramodule $E^\eta(\P)$ has a natural
left $E^\eta(\C)$\+contramodule structure provided by
the contraaction map $\Hom^\S(E^\eta(\C),E^\eta(\P))\simeq
E^\eta\Hom^\R(\C,\P)\rarrow E^\eta(\P)$.

 Let $\cP$ be an $\R$\+cofree graded left $\C$\+contramodule.
 Then the cofree graded $\S$\+comodule $E_\eta(\cP)$ has 
a natural left $E^\eta(\C)$\+contramodule structure provided by
the contraaction map $\Ctrhom^\S(E^\eta(\C),E_\eta(\cP))\simeq
E_\eta\Ctrhom_\R(\C,\cP)\rarrow E_\eta(\cP)$.

 Let $\cM$ be an $\R$\+comodule graded left $\C$\+comodule.
 Then the graded $\S$\+comodule $E_\eta(\cM)$ has a natural
left $E^\eta(\C)$\+comodule structure provided by the coaction map
$E_\eta(\cM)\rarrow E_\eta(\C\ocn_\R\cM)\simeq E^\eta(\C)\ocn_\S
E_\eta(\cM)$.
 The similar construction applies to right $\C$\+comodules.

 Let $\M$ be an $\R$\+free graded left $\C$\+comodule.
 Then the free graded $\S$\+contramodule $E^\eta(\M)$ has a natural
left $E^\eta(\C)$\+comodule structure provided by the coaction
map $E^\eta(\M)\rarrow E^\eta(\C\ot^\R\M)\simeq E^\eta(\C)\ot^\S
E^\eta(\M)$.
 The similar construction applies to right $\C$\+comodules.

 Let $\Q$ be an $\S$\+contramodule graded left
$E^\eta(\C)$\+contramodule.
 Then the graded $\R$\+contramodule $R^\eta(\Q)$ has a natural
graded left $\C$\+contramodule structure provided by
the contraaction map $\Hom^\R(\C,R^\eta(\Q))\simeq
R^\eta\Hom^\S(E^\eta(\C),\Q)\rarrow R^\eta(\Q)$.

 Let $\cN$ be an $\S$\+comodule graded left $E^\eta(\C)$\+comodule.
 Then the graded $\R$\+comodule $R_\eta(\cN)$ has a natural graded
left $\C$\+comodule structure provided by the coaction map
$R_\eta(\cN)\rarrow R_\eta(E^\eta(\C)\ocn_\S\cN)\simeq
\C\ocn_\R R_\eta(\cN)$.
 The similar construction applies to right $E^\eta(\C)$\+comodules.

 Now let $\C$ be an $\R$\+free CDG\+coalgebra.
 Then the $\S$\+free graded coalgebra $E^\eta(\C)$ has a natural
CDG\+coalgebra structure with the differential and the curvature
linear function induced by the differential and the curvature
linear function of~$\C$.
 The above constructions $E^\eta$, $E_\eta$, $R^\eta$, $R_\eta$
assign CDG\+contra/comodules to CDG\+contra/comodules, defining
DG\+functors
\begin{align*}
 E^\eta\:\C\contra\Rctr &\lrarrow E^\eta(\C)\contra\Sctr, \\
 E_\eta\:\C\contra\Rcof &\lrarrow E^\eta(\C)\contra\Scof,
\displaybreak[0] \\
 E_\eta\:\C\comod\Rco &\lrarrow E^\eta(\C)\comod\Sco,     \\
 E^\eta\:\C\comod\Rfr &\lrarrow E^\eta(\C)\comod\Sfr
\end{align*}
and
\begin{align*}
 R^\eta\: E^\eta(\C)\contra\Sctr &\lrarrow \C\contra\Rctr, \\
 R_\eta\: E^\eta(\C)\comod\Sco &\lrarrow \C\comod\Rco.
\end{align*}
 The DG\+functor $E^\eta\:\C\contra\Rctr\rarrow E^\eta(\C)\contra
\Sctr$ is left adjoint to the DG\+functor $R^\eta$, and
the DG\+functor $E_\eta\:\C\comod\Rco\rarrow E^\eta(\C)\comod\Sco$
is right adjoint to the DG\+functor~$R_\eta$.
 Passing to the homotopy categories, we obtain the induced
triangulated functors.

 Clearly, the functor $R^\eta$ takes short exact sequences and
infinite products of $\S$\+contramodule CDG\+contramodules to
short exact sequences and infinite products of $\R$\+contramodule
CDG\+contramodules, hence it takes contraacyclic $\S$\+contra\-module
CDG\+contramodules to contraacyclic $\R$\+contramodule
CDG\+contramodules and induces a triangulated functor
$$
 \boI R^\eta\:\sD^\ctr(E^\eta(\C)\contra\Sctr)\lrarrow
 \sD^\ctr(\C\contra\Rctr).
$$
 Similarly, the functor $R_\eta$ takes coacyclic $\S$\+comodule
CDG\+comodules to coacyclic $\R$\+comodule CDG\+comodules and
therefore induces a triangulated functor
$$
 \boI R_\eta\:\sD^\co(E^\eta(\C)\comod\Sco)\lrarrow
 \sD^\co(\C\comod\Rco).
$$

 The functor $E^\eta\:H^0(\C\contra\Rfr)\rarrow H^0(E^\eta(\C)
\contra\Sfr)$ takes short exact sequences and infinite products
of $\R$\+free CDG\+contramodules to short exact sequences and
infinite products of $\S$\+free CDG\+contramodules; hence it takes
contraacyclic $\R$\+free CDG\+contramodules to contraacyclic
$\S$\+free CDG\+contramodules and induces a triangulated functor
$\sD^\ctr(\C\contra\Rfr)\rarrow\sD^\ctr(E^\eta(\C)\contra\Sfr)$.
 Using Theorem~\ref{non-adj-co-derived-comp}, we obtain
the left derived functor
$$
 \boL E^\eta\:\sD^\ctr(\C\contra\Rctr)\lrarrow
 \sD^\ctr(E^\eta(\C)\contra\Sctr).
$$
 Similarly, the functor $E_\eta\:H^0(\C\comod\Rcof)\lrarrow
H^0(E^\eta(\C)\comod\Scof)$ takes coacyclic $\R$\+cofree
CDG\+comodules to coacyclic $\S$\+cofree CDG\+comodules and
induces a triangulated functor $\sD^\co(\C\comod\Rcof)\rarrow
\sD^\co(E^\eta(\C)\comod\Scof)$.
 Using Theorem~\ref{non-adj-co-derived-comp}, we obtain
the right derived functor
$$
 \boR E_\eta\:\sD^\co(\C\comod\Rco)\lrarrow
 \sD^\co(E^\eta(\C)\comod\Sco).
$$

 The functor $E_\eta\:H^0(\C\contra\Rcof)\rarrow H^0(E^\eta(\C)
\contra\Scof)$ takes short exact sequences and infinite products
of $\R$\+cofree CDG\+contramodules to short exact sequences and
infinite products of $\S$\+cofree CDG\+contramodules.
 Hence it takes contraacyclic $\R$\+cofree CDG\+contramodules to
contraacyclic $\S$\+cofree CDG\+contramodules and induces
a triangulated functor
$$
 \boI E_\eta\:\sD^\ctr(\C\contra\Rcof)\lrarrow
 \sD^\ctr(E^\eta(\C)\contra\Scof).
$$
 Similarly, the functor $E^\eta\:H^0(\C\comod\Rfr)\lrarrow
H^0(E^\eta(\C)\comod\Sfr)$ takes coacyclic $\R$\+free CDG\+comodules
to coacyclic $\S$\+free CDG\+comodules and induces
a triangulated functor
$$
 \boI E^\eta\:\sD^\co(\C\comod\Rfr)\lrarrow
 \sD^\co(E^\eta(\C)\comod\Sfr).
$$

\begin{prop}
\textup{(a)} The equivalences of triangulated categories\/
$\Phi_\R=\Psi_\R^{-1}$ and\/ $\Phi_\S=\Psi_\S^{-1}$ from
Section~\textup{\ref{r-cofree-co-derived}} together with
the equivalences of categories from
Theorem~\textup{\ref{non-adj-co-derived-comp}} transform
the left derived functor\/ $\boL E^\eta$ into the induced functor\/
$\boI E_\eta$ and the right derived functor\/ $\boR E_\eta$
into the induced functor\/~$\boI E^\eta$.
 In other words, the following diagrams of categories, functors,
and equivalences are commutative:
$$
\begin{diagram}
 \node{\sD^\ctr(\C\contra\Rctr)}\arrow{e,=}\arrow{s,l}{\boL E^\eta}
 \node{\sD^\ctr(\C\contra\Rcof)}\arrow{s,r}{\boI E_\eta}\\
 \node{\sD^\ctr(E^\eta(\C)\contra\Sctr)}\arrow{e,=}
 \node{\sD^\ctr(E^\eta(\C)\contra\Scof)}
\end{diagram}
$$
and
$$
\begin{diagram}
 \node{\sD^\co(\C\comod\Rco)}\arrow{e,=}\arrow{s,l}{\boR E_\eta}
 \node{\sD^\co(\C\comod\Rfr)}\arrow{s,r}{\boI E^\eta}\\
 \node{\sD^\co(E^\eta(\C)\comod\Sco)}\arrow{e,=}
 \node{\sD^\co(E^\eta(\C)\comod\Sfr)}
\end{diagram}
$$ \par
\textup{(b)} The equivalences of triangulated categories\/
$\boL\Phi_\C=\boR\Psi_\C^{-1}$ and\/ $\boL\Phi_{E^\eta(\C)}=
\boR\Psi_{E^\eta(\C)}^{-1}$ from
Corollaries~\textup{\ref{r-free-derived-co-contra}}
and~\textup{\ref{r-cofree-derived-co-contra}} together with
the equivalence of categories from
Theorem~\textup{\ref{non-adj-co-derived-comp}} transform
the left derived functor\/ $\boL E^\eta$ into the induced functor\/
$\boI E^\eta$ and the right derived functor\/ $\boR E_\eta$
into the induced functor\/~$\boI E_\eta$.
 In other words, the following diagrams of categories, functors,
and equivalences are commutative:
$$
\begin{diagram}
 \node{\sD^\ctr(\C\contra\Rctr)}\arrow{e,=}\arrow{s,l}{\boL E^\eta}
 \node{\sD^\co(\C\comod\Rfr)}\arrow{s,r}{\boI E^\eta}\\
 \node{\sD^\ctr(E^\eta(\C)\contra\Sctr)}\arrow{e,=}
 \node{\sD^\co(E^\eta(\C)\comod\Sfr)}
\end{diagram}
$$
and
$$
\begin{diagram}
 \node{\sD^\co(\C\comod\Rco)}\arrow{e,=}\arrow{s,l}{\boR E_\eta}
 \node{\sD^\ctr(\C\contra\Rcof)}\arrow{s,r}{\boI E_\eta}\\
 \node{\sD^\co(E^\eta(\C)\comod\Sco)}\arrow{e,=}
 \node{\sD^\ctr(E^\eta(\C)\contra\Scof)}
\end{diagram}
$$ \par
\textup{(c)} The equivalences of triangulated categories
$\boL\Phi_{\R,\C}=\boR\Psi_{\R,\C}^{-1}$ and\/
$\boL\Phi_{\S,\.E^\eta(\C)}=\boR\Psi_{\S,\.E^\eta(\C)}^{-1}$
from Corollary~\textup{\ref{non-adj-derived-co-contra}} transform
the left derived functor\/ $\boL E^\eta$ into the right derived
functor\/~$\boR E_\eta$.
 In other words, the following diagram of categories, functors,
and equivalences is commutative:
$$
\begin{diagram}
 \node{\llap{$\boL\Phi_{\R,\C}$}\:\sD^\ctr(\C\contra\Rctr)}
 \arrow{e,=}\arrow{s,l}{\boL E^\eta}
 \node{\sD^\co(\C\comod\Rco)\,\.\:\!\rlap{$\boR\Psi_{\R,\C}$}}
 \arrow{s,r}{\boR E_\eta}\\
 \node{\llap{$\boL\Phi_{\S,\.E^\eta(\C)}$}\:
 \sD^\ctr(E^\eta(\C)\contra\Sctr)}
 \arrow{e,=}\node{\sD^\co(E^\eta(\C)\comod\Sco)
 \,\.\:\!\rlap{$\Psi_{\S,\.E^\eta(\C)}$}}
\end{diagram}
$$
\end{prop}

\begin{proof}
 Part~(c): Notice that the functor $E_\eta$ takes the $\R$\+cofree
graded $\C$\+bicomodule $\cC(\R,\C)$ to the $\S$\+cofree graded
$E^\eta(\C)$\+bicomodule $\cC(\S,E^\eta(\C))$.
 Furthermore, for any $\R$\+contramodule left CDG\+contramodule $\P$
over $\C$ there is a natural closed morphism of $\S$\+comodule left
CDG\+comodules $\Phi_{\S,\.E^\eta(\C)}E^\eta(\P)\rarrow
E_\eta\Phi_{\R,\C}(\P)$ over $E^\eta(\C)$, which is an isomorphism
whenever $\P$ is a projective $\R$\+free graded $\C$\+contramodule.
 Similarly, for any $\R$\+comodule left CDG\+comodule $\cM$ over $\C$
there is a natural closed morphism of $\S$\+contramodule left
CDG\+contramodules $E^\eta\Psi_{\R,\C}(\cM)\rarrow
\Psi_{\S,\.E^\eta(\C)}E_\eta(\cM)$ over $E^\eta(\C)$, which is
an isomorphism whenever $\cM$ is an injective $\R$\+cofree graded
$\C$\+comodule.
\end{proof}

\subsection{Compact generator for wcDG-modules}
 Denote by $\kappa\:\R\rarrow k$ the natural surjection from
a pro-Artinian topological local ring $\R$ to its residue field~$k$.
 Given a CDG\+algebra $B$ over~$k$, we denote by
$\sD^\ctr(B\mod)$ and $\sD^\co(B\mod)$, respectively, the contraderived
and coderived category of left CDG\+modules over~$B$.

\begin{thm}  \label{restriction-generates}
 Let\/ $\B$ be an\/ $\R$\+free CDG\+algebra. Then \par
\textup{(a)} the contraderived category\/ $\sD^\ctr(\B\mod\Rctr)$ is
generated, as a triangulated category with infinite products, by
the image of the triangulated functor
$$
 \boI R^\kappa\:\sD^\ctr(\B/\m\B\mod)\lrarrow\sD^\ctr(\B\mod\Rctr);
$$ \par
\textup{(b)} the coderived category\/ $\sD^\co(\B\mod\Rco)$ is
generated, as a triangulated category with infinite direct sums, by
the image of the triangulated functor
$$
 \boI R_\kappa\:\sD^\co(\B/\m\B\mod)\lrarrow\sD^\co(\B\mod\Rco).
$$
\end{thm}

\begin{proof}
 Part~(a): by Theorem~\ref{co-derived-mod}, any object of
$\sD^\ctr(\B\mod\Rctr)$ can be represented by an $\R$\+free
left CDG\+module $\M$ over~$\B$.
 For any $n\ge0$, denote by $\om n\subset\R$ the topological
closure of the $n$\+th power of the ideal $\m\subset\R$; so we have
$\R=\om0\supset\m=\om1\supset\om2\supset\om3\supset\dotsb$
 Applying to $\M$ the contraextension and subsequently
the contrarestriction of scalars for the morphism of pro-Artinian
local rings $\R\rarrow\R/\om n$, we obtain a sequence of closed
morphisms of $\R$\+contramodule CDG\+modules $\M/\m\M\larrow
\M/\om2\M\larrow\M/\om3\M\larrow\dotsb$ over~$\B$.
 Since $\M$ is $\R$\+free and $\m$ is topologically nilpotent,
the projective limit of this sequence coincides with~$\M$.
 Since the contraaction morphism $\m[[\om n\M]]\rarrow\M$
lands in $\om{{n+1}}\M$, all the kernels of closed morphisms
of CDG\+modules in our sequence have their graded $\R$\+contramodule
structures obtained by the contrarestriction of scalars from
$k$\+vector space structures, i.~e., they belong to the image
of~$R^\kappa$. 
 Hence the CDG\+modules $\M/\om n\M$ belong to the triangulated
subcategory in $\sD^\ctr(\B\mod\Rctr)$ generated by the image
of~$R^\kappa$.
 It remains to notice that the telescope short sequence
$0\rarrow\M\rarrow\prod_n\M/\om n\M\rarrow\prod_n \M/\om n\M\rarrow0$
is exact, since it is exact as a sequence of abelian groups
(the morphisms $\M/\om{{n+1}}\M\rarrow\M/\om n\M$ being surjective
and the forgetful functor from $\R\contra$ to abelian groups
commuting with the infinite products).

 The proof of part~(b) is analogous up to duality (and even somewhat
simpler).
\end{proof}

 Given a DG\+algebra $A$ over the field~$k$, we denote by $\sD(A\mod)$
the (conventional) derived category of left DG\+modules over~$A$.

\begin{cor} \label{wcdg-restriction-generates}
 Let\/ $\A$ be a wcDG\+algebra over\/~$\R$.  Then \par
\textup{(a)} the semiderived category\/ $\sD^\si(\A\mod\Rctr)$ is
generated, as a triangulated category with infinite products, by
the image of the triangulated functor
$$
 \boI R^\kappa\:\sD(\A/\m\A\mod)\lrarrow\sD^\si(\A\mod\Rctr);
$$ \par
\textup{(b)} the semiderived category\/ $\sD^\si(\A\mod\Rco)$ is
generated, as a triangulated category with infinite direct sums,
by the image of the triangulated functor
$$
 \boI R_\kappa\:\sD(\A/\m\A\mod)\lrarrow\sD^\si(\A\mod\Rco).
$$
\end{cor}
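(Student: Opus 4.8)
The plan is to obtain both parts from Theorem~\ref{restriction-generates}, applied to the $\R$-free CDG-algebra $\B=\A$, by pushing its generation statement along two Verdier localizations: the one passing from the contra/coderived category to the semiderived category on the module side, and the one passing from the contra/coderived category of DG-modules over $\A/\m\A$ to the conventional derived category $\sD(\A/\m\A\mod)$ on the coefficient side. I treat part~(a) in detail; part~(b) is dual.

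First I would record the relevant localizations. The semiderived category $\sD^\si(\A\mod\Rctr)$ is by definition the Verdier quotient of $\sD^\ctr(\A\mod\Rctr)$ by the thick subcategory $\mathcal S$ of (images of) semiacyclic wcDG-modules; since $\mathcal S$ is closed under infinite products and products in $\sD^\ctr(\A\mod\Rctr)$ are computed in the homotopy category, the quotient functor $L\:\sD^\ctr(\A\mod\Rctr)\to\sD^\si(\A\mod\Rctr)$ preserves infinite products and is essentially surjective. On the coefficient side, $\A/\m\A$ is an honest DG-algebra over $k$ (the curvature of $\A$ lies in $\m\A^2$), and $\sD(\A/\m\A\mod)$ is the Verdier quotient of $\sD^\ctr(\A/\m\A\mod)$ by the acyclic DG-modules.

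The crux is to show that the composite $L\circ\boI R^\kappa$ kills the acyclic DG-modules, so that it factors through $\sD(\A/\m\A\mod)$ as the functor $\boI R^\kappa\:\sD(\A/\m\A\mod)\to\sD^\si(\A\mod\Rctr)$ named in the statement. Equivalently, I must verify that $R^\kappa$ sends an acyclic DG-module $N$ over $\A/\m\A$ to a \emph{semiacyclic} $\R$-contramodule wcDG-module over $\A$; note this is a genuine point, since $R^\kappa(N)$ carries the trivial $\m$-action and is therefore merely acyclic, whereas semiacyclicity is strictly stronger. Semiacyclicity is detected by the derived reduction functor $\rho$ whose kernel defines $\sD^\si$ out of $\sD^\ctr$ (Section~\ref{wcdg-semiderived}), so it suffices to see that $\rho\,\boI R^\kappa(N)$ is acyclic. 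Computing $\rho$ through an $\R$-free replacement of $R^\kappa(N)$ (Theorem~\ref{co-derived-mod}) and separating the $\R$-variables from the $\A/\m\A$-variables, I expect to find a natural isomorphism $\rho\,\boI R^\kappa(N)\simeq N\ot_k(k\ot^{\boL}_\R k)$ in $\sD(\A/\m\A\mod)$, the second tensor factor carrying trivial $\A/\m\A$-action. Replacing $k\ot^{\boL}_\R k$ by its cohomology $\Tor^\R(k,k)$, a graded $k$-vector space with zero differential, the underlying complex becomes $N\ot_k\Tor^\R(k,k)$, whose cohomology is $H^*(N)\ot_k\Tor^\R(k,k)=0$; this vanishing is insensitive to the totalization convention, since both infinite direct sums and infinite products are exact on $k$-vector spaces. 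Hence $\rho\,\boI R^\kappa(N)$ is acyclic and $\boI R^\kappa(N)$ is semiacyclic. This identification of the derived reduction is the step I expect to require the most care.

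Finally I would transfer the generation across $L$. By Theorem~\ref{restriction-generates}(a) the essential image $\mathcal X$ of $\boI R^\kappa$ generates $\sD^\ctr(\A\mod\Rctr)$ as a triangulated category with infinite products. Let $\mathcal G'$ be the smallest product-closed triangulated subcategory of $\sD^\si(\A\mod\Rctr)$ containing $L(\mathcal X)$, and set $\mathcal G=\{Y:L(Y)\in\mathcal G'\}$. Since $L$ is triangulated and preserves products, $\mathcal G$ is a product-closed triangulated subcategory, and it contains $\mathcal X$; hence $\mathcal G=\sD^\ctr(\A\mod\Rctr)$, and essential surjectivity of $L$ forces $\mathcal G'=\sD^\si(\A\mod\Rctr)$. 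Because the localization $\sD^\ctr(\A/\m\A\mod)\to\sD(\A/\m\A\mod)$ is essentially surjective, $L(\mathcal X)$ is exactly the essential image of the factored functor $\boI R^\kappa\:\sD(\A/\m\A\mod)\to\sD^\si(\A\mod\Rctr)$, which therefore generates with products, proving~(a). Part~(b) follows by the same scheme applied to Theorem~\ref{restriction-generates}(b): the semiacyclic $\R$-comodule wcDG-modules are closed under infinite direct sums, so the quotient $\sD^\co(\A\mod\Rco)\to\sD^\si(\A\mod\Rco)$ preserves direct sums, and the dual coreduction computation shows that $R_\kappa$ carries acyclic DG-comodules over $\A/\m\A$ to semiacyclic $\R$-comodule wcDG-modules.
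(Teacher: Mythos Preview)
Your overall strategy---deduce the result from Theorem~\ref{restriction-generates} by pushing along the Verdier localizations on both sides, using that the quotient functor $\sD^\ctr\to\sD^\si$ preserves products (resp.\ direct sums) and is essentially surjective---is correct and is exactly what the paper's one-line proof has in mind. Your final paragraph transfers the generation cleanly.

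The step where you work harder than the paper is the factorization: showing that $R^\kappa$ sends acyclic DG\+modules over $\A/\m\A$ to semiacyclic $\R$\+contramodule wcDG\+modules. This is precisely Lemma~\ref{restriction-semi-preserves}(a) with $\eta=\kappa$ (over the field~$k$, semiacyclic just means acyclic), and the paper has already proven it via the adjunction $(E^\kappa,R^\kappa)$ together with the semiorthogonal decomposition of Theorem~\ref{non-adj-semiderived-res}: one reduces to checking that $E^\kappa$ preserves $H^0(\A\mod\Rfr_\proj)_\proj$, which follows from Lemma~\ref{homotopy-proj-inj-reduction}. This adjunction argument is short and sidesteps your proposed direct computation of the derived reduction.

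Your formula $\rho\,\boI R^\kappa(N)\simeq N\ot_k(k\ot^{\boL}_\R k)$ is plausible in spirit but not established by the sketch you give. The $\R$\+free replacement of $R^\kappa(N)$ produced in the proof of Theorem~\ref{co-derived-mod} is built from $G^+$ of projective $\R$\+free graded $\A$\+modules and does not manifestly decouple into an $\R$\+resolution tensored with~$N$; and since contrarestriction $R^\kappa$ need not commute with infinite direct sums of $\R$\+contramodules (Remark~\ref{contrarestriction-direct-sums}), any attempt to reduce to the case $N=k$ by decomposing $N$ as a $k$\+vector space requires justification. You were right to flag this as the point needing the most care---but it is simpler to bypass it entirely by citing Lemma~\ref{restriction-semi-preserves}.
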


\begin{proof}
 Follows from Theorem~\ref{restriction-generates}.
\end{proof}

 Any DG\+algebra $A$ over a field $k$ can be also considered, at
one's choice, as a left or a right DG\+module over itself.
 We will denote this DG\+module simply by~$A$.

\begin{thm} \label{semiderived-compact-generator-thm}
 For any wcDG\+algebra\/ $\A$ over\/ $\R$, the\/ $\R$\+comodule
left wcDG\+module \hfuzz=3.4pt
$$
 \boI R_\kappa(\A/\m\A)
$$
over\/ $\A$ is a compact generator of the semiderived category of\/
$\R$\+comodule left wcDG\+modules\/ $\sD^\si(\A\mod\Rco)$.
\end{thm}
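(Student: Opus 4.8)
The plan is to verify the two defining properties of a compact generator --- that the object generates and that it is compact --- separately, drawing both from the change-of-coefficients formalism applied to the natural surjection $\kappa\:\R\rarrow k$. This morphism is local and profinite, since the residue field $k=\R/\m$ is a discrete Artinian quotient of~$\R$. Set $A=\A/\m\A=E^\kappa(\A)$; this is a DG\+algebra over~$k$ (its curvature vanishes modulo~$\m$), and by the discussion identifying $\sD^\si$ with a conventional derived category over a field, the semiderived category of $k$\+comodule wcDG\+modules over $A$ is $\sD(A\mod)$. Write $G=\boI R_\kappa(A)$ for the object in question, where $A$ is viewed as the free left DG\+module over itself.

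For generation, I would invoke the preceding Corollary (which follows from Theorem~\ref{restriction-generates}): the category $\sD^\si(\A\mod\Rco)$ is generated, as a triangulated category with infinite direct sums, by the image of $\boI R_\kappa\:\sD(A\mod)\rarrow\sD^\si(\A\mod\Rco)$. Over a field, the free module $A$ is a compact generator of $\sD(A\mod)$, so the whole of $\sD(A\mod)$ is the triangulated subcategory with direct sums generated by~$A$. Since $\boI R_\kappa$ is a triangulated functor preserving infinite direct sums (being a left adjoint, as recorded in the change-of-coefficients subsection), it carries every object of $\sD(A\mod)$ into the triangulated subcategory with infinite direct sums generated by $G=\boI R_\kappa(A)$. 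As the image of $\boI R_\kappa$ already generates the target, so does~$G$.

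For compactness, the key input is the adjunction $\boI R_\kappa\dashv\boR E_\kappa$ established in the change-of-coefficients subsection (and reflected in Proposition~\ref{semi-extension-coefficients}): for every $\R$\+comodule wcDG\+module $\cN$ over~$\A$ there is a natural isomorphism $\Hom_{\sD^\si(\A\mod\Rco)}(G,\cN)\simeq\Hom_{\sD(A\mod)}(A,\boR E_\kappa(\cN))$. Here $A$ is compact in $\sD(A\mod)$, so that $\Hom_{\sD(A\mod)}(A,M)\simeq H^0(M)$ commutes with infinite direct sums. Granting that $\boR E_\kappa$ also commutes with infinite direct sums, I would chain, for any family $\{\cN_i\}$, the natural isomorphisms $\Hom_{\sD^\si(\A\mod\Rco)}(G,\bigoplus_i\cN_i)\simeq\Hom_{\sD(A\mod)}(A,\boR E_\kappa(\bigoplus_i\cN_i))\simeq\Hom_{\sD(A\mod)}(A,\bigoplus_i\boR E_\kappa\cN_i)\simeq\bigoplus_i\Hom_{\sD(A\mod)}(A,\boR E_\kappa\cN_i)\simeq\bigoplus_i\Hom_{\sD^\si(\A\mod\Rco)}(G,\cN_i)$, which is exactly compactness of~$G$.

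The main obstacle is the middle step, that the right derived functor $\boR E_\kappa$ preserves infinite direct sums, and this is precisely where the $\R$\+comodule side is indispensable. The functor $R_\kappa$ of corestriction of scalars along~$\kappa$ sends $\R$\+comodules of finite length to $k$\+comodules of finite length, hence takes Noetherian objects to Noetherian objects; therefore its right adjoint $E_\kappa$ preserves infinite direct sums, as recorded in Section~\ref{change-of-ring}. Since $\boR E_\kappa$ is induced on the semiderived categories by applying $E_\kappa$ to $\R$\+cofree representatives (Lemma~\ref{restriction-semi-preserves}), and infinite direct sums in $\sD^\si(\A\mod\Rco)$ are computed in the homotopy category of $\R$\+cofree wcDG\+modules, where the semiacyclic modules are closed under direct sums, the preservation of coproducts descends to $\boR E_\kappa$. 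The failure of the contramodule analogue (where the contraextension functor does not preserve coproducts) is exactly the reason a \emph{single} compact generator can be exhibited on the $\R$\+comodule side but not on the $\R$\+contramodule side.
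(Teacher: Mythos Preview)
Your proof is correct and follows essentially the same approach as the paper: compactness via the adjunction $\boI R_\kappa\dashv\boR E_\kappa$ together with preservation of coproducts by $\boR E_\kappa$, and generation via the preceding Corollary combined with $A$ generating $\sD(A\mod)$. The paper also records a more direct generation argument---$G$ represents the functor $\cM\mapsto H^0({}_\m\cM)$, which detects semiacyclicity by definition---but your route through Theorem~\ref{restriction-generates}(b) is the alternative the paper itself offers.
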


\begin{proof}
 Let us show that $\boI R_\kappa(\A/\m\A)\in\sD^\si(\A\mod\Rco)$ is 
a compact object.
 Since the DG\+module $\A/\m\A$ is a compact object of
$\sD(\A/\m\A\mod)$ and the functor $\boI R_\kappa$ is left adjoint
to the functor $\boR E_\kappa$, it suffices to check that
the functor $\boR E_\kappa$ preserves infinite direct sums.
 The latter assertion is true for any profinite morphism
$\eta\:\R\rarrow\S$ in place of~$\kappa$.
 It suffices to identify $\sD^\si(\A\mod\Rco)$ with
$\sD^\si(\A\mod\Rcof)$ and notice that the DG\+functor
$E_\eta\:\A\mod\Rcof\rarrow E^\eta(\A)\mod\Scof$ preserves infinite
direct sums, as does the localization functor
$H^0(\A\mod\Rcof)\rarrow\sD^\si(\A\mod\Rcof)$.
 (Similarly, the functor $\boL E^\eta$ preserves infinite products.)

 More explicitly, the object $\boI R_\kappa(\A/\m\A)\in
\sD^\si(\A\mod\Rco)$ represents the functor assigning to
an $\R$\+cofree wcDG\+module $\cM$ over $\A$ the degree-zero
cohomology group of the DG\+module ${}_\m\cM$ over $\A/\m\A$.
 By the definition of the semiderived category of $\R$\+cofree
wcDG\+modules, an object $\cM$ annihilated, together with all of
its shift, by this cohomological functor, vanishes in
$\sD^\co(\A\mod\Rcof)$.
 Alternatively, one can use
Corollary~\ref{wcdg-restriction-generates}(b) together with the fact
that the DG\+module $\A/\m\A$ generates $\sD(\A/\m\A\mod)$ in order to
show that $\boI R_\eta(\A/\m\A)$ generates $\sD^\co(\A\mod\Rco)$.
\end{proof}

\begin{rem}
 Informally speaking, the semiderived category $\sD^\si(\A\mod\Rco)$ 
is a mixture of the conventional derived category in the direction
of $\A$ relative to $\R$ and the coderived category along the variables
from~$\R$.
 So the assertion of Theorem~\ref{semiderived-compact-generator-thm} is
a common generalization of two results.
 On the one hand, when $\R=k$ is a field and $\A=A$ is a DG\+algebra
over~$k$, Theorem~\ref{semiderived-compact-generator-thm} reduces to
the assertion that the left DG\+module $A$ over $A$ is a single compact
generator of the derived category of left DG\+modules $\sD(A\mod)$
(cf.~\cite[Section~5]{Kel}).
 On the other hand, when $\A=\R$,
Theorem~\ref{semiderived-compact-generator-thm} claims that
the irreducible $\R$\+comodule $k^\sop$ is a single compact generator
of the coderived category $\sD^\co(\R\comod)$.
 The coderived category $\sD^\co(\R\comod)$, which is equivalent to
the homotopy category $H^0(R\comod^\cofr)$ of complexes of injective or
cofree $\R$\+comodules, is essentially the zero-dimensional particular
case of what is called the category of ind-coherent sheaves on
an ind-scheme in~\cite[Section~10]{Gai} or~\cite[Chapter~3]{GR}.
 Thus the results of our Corollary~\ref{wcdg-restriction-generates}(b)
and Theorem~\ref{semiderived-compact-generator-thm} for $\A=\R$ can be
viewed as a particular case of~\cite[Corollary~3.2.2 in Chapter~3]{GR}.
 (See~\cite[Theorem~2.4]{Jor} and~\cite[Proposition~2.3]{Kra} for
historically first results of this kind,
and~\cite[Section~3.11]{Pkoszul} for an exposition based on coderived
categories.)
\end{rem}

\begin{cor} \label{semiderived-compact-generator-cor}
 For any wcDG\+algebra\/ $\A$ over\/ $\R$, the semiderived category\/
$\sD^\si(\A\mod\Rctr)$ of\/ $\R$\+contramodule left wcDG\+modules
over\/ $\A$ has a single compact generator.
 So do the semiderived category\/ $\sD^\si(\A\mod\Rfr)$ of\/
$\R$\+free left wcDG\+modules over\/ $\A$ and the semiderived category\/
$\sD^\si(\A\mod\Rcof)$ of\/ $\R$\+cofree left wcDG\+modules over\/~$\A$.
\end{cor}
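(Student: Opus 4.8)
The plan is to deduce all three assertions formally from Theorem~\ref{semiderived-compact-generator-thm}, which already exhibits a single compact generator, namely the object $\boI R_\kappa(\A/\m\A)$, in the semiderived category $\sD^\si(\A\mod\Rco)$ of $\R$\+comodule left wcDG\+modules. The only general principle I would invoke is that an equivalence between triangulated categories with infinite direct sums carries a compact generator to a compact generator: such an equivalence preserves compactness because it commutes with infinite direct sums, and it preserves the generating property because it is fully faithful and essentially surjective. So it suffices to link each of the three categories in the statement to $\sD^\si(\A\mod\Rco)$ by a chain of equivalences of this kind and then transport the generator across.

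First I would move from the $\R$\+comodule side to the $\R$\+contramodule side. By Proposition~\ref{non-adj-r-co-contra}, the derived comodule-contramodule correspondence functors $\boL\Phi_\R=\boR\Psi_\R^{-1}$ are mutually inverse triangulated equivalences between $\sD^\si(\A\mod\Rco)$ and $\sD^\si(\A\mod\Rctr)$; hence the object $\boR\Psi_\R\boI R_\kappa(\A/\m\A)$ is a single compact generator of $\sD^\si(\A\mod\Rctr)$, which is the first assertion. To reach the $\R$\+free and $\R$\+cofree categories, I would then use the two equivalences recorded in Section~\ref{wcdg-semiderived}: the embedding of DG\+categories $\A\mod\Rfr\rarrow\A\mod\Rctr$ induces an equivalence $\sD^\si(\A\mod\Rfr)\simeq\sD^\si(\A\mod\Rctr)$, and the embedding $\A\mod\Rcof\rarrow\A\mod\Rco$ induces an equivalence $\sD^\si(\A\mod\Rcof)\simeq\sD^\si(\A\mod\Rco)$. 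Transporting the compact generator across these two equivalences yields the remaining two claims.

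I do not expect any serious obstacle here: the entire mathematical content of the corollary already resides in Theorem~\ref{semiderived-compact-generator-thm}, and what is left is a purely formal transport of structure along equivalences. The single point I would take care to address is that all the functors in play genuinely commute with the infinite direct sums used to define compactness and generation; this is automatic, since $\boR\Psi_\R$, $\boL\Phi_\R$, and the localizations of the inclusions of $\R$\+free and $\R$\+cofree wcDG\+modules are all triangulated functors between categories with infinite direct sums that preserve those direct sums. Should a more explicit description be wanted, one could instead unwind the definitions and present the generator of $\sD^\si(\A\mod\Rctr)$ as the $\R$\+free wcDG\+module corresponding to $\boI R_\kappa(\A/\m\A)$ under $\boR\Psi_\R$, and similarly in the $\R$\+cofree case.
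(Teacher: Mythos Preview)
Your proposal is correct and follows essentially the same route as the paper: transport the compact generator $\boI R_\kappa(\A/\m\A)$ of Theorem~\ref{semiderived-compact-generator-thm} across the known triangulated equivalences linking the four semiderived categories. The paper's proof cites Theorem~\ref{co-derived-mod} for the step $\sD^\si(\A\mod\Rco)\simeq\sD^\si(\A\mod\Rcof)$ and the functor $\Psi_\R$ (via Sections~\ref{r-cofree-graded}\+-\ref{r-cofree-semi} and Proposition~\ref{non-adj-r-co-contra}) for the passage to $\sD^\si(\A\mod\Rfr)$, while you route through $\sD^\si(\A\mod\Rctr)$ first using Proposition~\ref{non-adj-r-co-contra} and then invoke the Section~\ref{wcdg-semiderived} embedding equivalences; the difference is purely cosmetic.
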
 

\begin{proof}
 All the three categories are naturally equivalent to the one whose
compact generator was constructed in
Theorem~\ref{semiderived-compact-generator-thm}.
 Use the construction of Theorem~\ref{co-derived-mod} to obtain
the object of $\sD^\si(\A\mod\Rcof)$ corresponding to our
$\R$\+comodule wcDG\+module $\boI R_\kappa(\A/\m\A)$, and
the construction of the functor $\Psi_\R$ from
Sections~\ref{r-cofree-graded}\+-\ref{r-cofree-semi} and
Proposition~\ref{non-adj-r-co-contra} to obtain
the corresponding object of $\sD^\si(\A\mod\Rfr)$.
\end{proof}

\begin{ex} \label{kln-counterex}
 Let $\R$ be a pro-Artinian topological local ring with the maximal
ideal~$\m$ and the residue field~$k$.
 Let $\epsilon$ be an element of $\m\setminus\om2\subset\R$.
 Then there exists an open ideal $\om2\subset\I\subset\R$ such that
the quotient ring $S=\R/\I$ is a module of length~$2$ over itself
and has its maximal ideal $m$ generated by the class
$0\ne\bar\epsilon\in S$ of the element~$\epsilon$.
 Denote the natural surjections between our local rings by $\eta\:
\R\rarrow S$, \ $\sigma\:S\rarrow k$, and $\kappa\:\R\rarrow k$.

 Consider the $\R$\+free graded algebra $\A$ with the components
$\A^i=\R$ for all $i$ divisible by~$2$ and $\A^i=0$ otherwise,
the multiplication maps in $\A$ being the identity maps
$\R\ot^\R\R=\R\rarrow\R$.
 Let $x\in\A^2$ denote the element corresponding to $1\in\R$
(so $\A=\R[x,x^{-1}]$ if $2$ has an infinite order in the grading
group~$\Gamma$ and $\A=\R[x]/(x^n-1)$ if $2\in\Gamma$ is
an element of order~$n$; in particular, $\A=\R$ and $x=1$ if $2=0$
in~$\Gamma$).
 Define the wcDG\+algebra structure on $\A$ with $d=0$ and
$h=\epsilon x$.
 Set $B=E^\eta(\A)$.

 It was noticed in~\cite[proof of Proposition~3.7]{KLN}
that the $S$\+contramodule wcDG\+mod\-ule $R^\sigma(B/mB)$ (or, which
is essentially the same, the $S$\+comodule wcDG\+module
$R_\sigma(B/mB)$) over $B$ is absolutely acyclic.
 Indeed, consider the algebra $B$ as an $S$\+contra/comodule graded
module over itself and apply the construction $G^+$ of the freely
generated wcDG\+module (see the proof of
Theorem~\ref{r-free-absolute-derived}) to it.

 Taking the tensor product of the exact triple of $S$\+modules
$k\rarrow S\rarrow k$ with the algebra $B$ over $S$, we obtain
an exact triple of $S$\+contra/comodule graded $B$\+modules
$B/mB\rarrow B\rarrow B/mB$; applying the construction $G^+$, we get
an exact triple of $S$\+contra/comodule CDG\+modules and closed
morphisms $G^+(B/mB)\rarrow G^+(B)\rarrow G^+(B/mB)$.
 Since the quotient module $B/mB$ has a natural structure of
$S$\+contra/comodule CDG\+module over $B$, there is a natural
closed morphism of CDG\+modules $G^+(B/mB)\rarrow B/mB$; hence
the induced exact triple of CDG\+modules and closed morphisms
$B/mB\rarrow M\rarrow G^+(B/mB)$.

 The closed morphism $B/mB\rarrow M$ is homotopic to zero,
the contracting homotopy being given by the rule $b\mpsto
x^{-1}d_G(b)$.
 The CDG\+modules $G^+(L)$ being always contractible
(see~\cite[proof of Theorem~1.4]{Psing}), it follows that both
the $S$\+contra/co\-module CDG\+modules $B/mB$ and $M$ are
absolutely acyclic.
 Consequently, the $\R$\+contramodule wcDG\+module
$R^\kappa(\A/\m\A)=R^\eta R^\sigma(B/mB)$ and the $\R$\+comodule
wcDG\+module $R_\kappa(\A/\m\A)=R_\eta R_\sigma(B/mB)$ are
absolutely acyclic, too.

 Notice that the graded algebra $\A/\m\A$ is a ``graded field'',
i.~e., every graded module over it is free.
 Since the differential on $\A/\m\A$ is zero, it follows easily
(cf.~\cite[proof of Proposition~5.10]{KLN}) that every acyclic
DG\+module over $\A/\m\A$ is contractible.
 Consequently, the DG\+module $\A/\m\A$ generates the homotopy
category $H^0(\A/\m\A\mod)$ considered either as a triangulated
category with infinite direct sums, or as a triangulated
category with infinite products.
 By Theorem~\ref{restriction-generates}, it follows that both
the contraderived category $\sD^\ctr(\A\mod\Rctr)$ and
the coderived category $\sD^\co(\A\mod\Rco)$ vanish.
 Thus $\sD^\si(\A\mod\Rctr)=0=\sD^\si(\A\mod\Rco)$
(cf.\ Example~\ref{kln-counterex2} below).

 Furthermore, every short exact sequence of $\R$\+free
graded modules over $\A$ splits, as does every short exact
sequence of $\R$\+cofree graded modules; so
$\sD^\ctr(\A\mod\Rfr)=H^0(\A\mod\Rfr)$ and
$\sD^\co(\A\mod\Rcof)=H^0(\A\mod\Rcof)$.
 Using Theorem~\ref{co-derived-mod}, we conclude that
$H^0(\A\mod\Rfr)=0=H^0(\A\mod\Rcof)$.

 In particular, the wcDG\+algebra morphisms like $\A\rarrow 0$
or $\A\rarrow\A\oplus\A$, etc., induce equivalences of
the semiderived categories of wcDG\+modules, while not being
quasi-isomorphisms modulo~$\m$ at all
(cf.~Remark~\ref{reduction-quasi-not-converse}).
\end{ex}

\begin{rem}
 The notion of compactness in application to the triangulated
categories we are dealing with in this paper in inherently
ambigous, because these categories (and their DG\+enhancements)
can be naturally viewed as being enriched over $\R$\+contramodules.
 The problem is that the definition of compactness involves
considering infinite direct sums of the groups/modules of
morphisms in the category, and the forgetful functor
$\R\contra\rarrow\R\mod$ does not preserve infinite direct sums.

 In the above discussion, as indeed everywhere in this paper,
we presume the conventional notion of compactness of triangulated
categories with abelian groups of morphisms (so the contramodule
enrichment is ignored).
 The following example illustrates the difference.

 Let $\A$ be an $\R$\+free DG\+algebra (i.~e., a wcDG\+algebra
with $h=0$); consider $\A$ as a left (wc)DG\+module over itself.
 Then the $\R$\+comodule wcDG\+module $\boI R_\kappa(\A/\m\A)$
over $\A$ has a right resolution by direct sums of copies of
$\Phi_\R(\A)$, so $\A$ generates $\sD^\si(\A\mod\Rctr)\simeq
\sD^\si(\A\mod\Rco)$ as a triangulated category with infinite
direct sums and products (and even as a triangulated category with
infinite direct sums when $\R$ has finite homological dimension).
 Besides, for any $\R$\+contramodule wcDG\+module $\M$ over $\A$,
the complex $\Hom_\A(\A,\M)$ computes $\Hom$ in the semiderived
category of wcDG\+modules (see Lemma~\ref{homotopy-proj-inj-reduction}
and Theorem~\ref{non-adj-semiderived-res}); this complex also
coincides with the complex of $\R$\+contramodules underlying
the DG\+module~$\M$.
 Suppose $\R$ has finite homological dimension; then a wcDG\+module
$\M$ is (semi)acyclic whenever the complex $\M=\Hom_\A(\A,\M)$
is acyclic.

 Furthermore, the functor $\Hom_\A(\A,{-})$ transforms infinite
direct sums in $\sD^\si(\A\mod\Rctr)$ (represented by infinite
direct sums in $H^0(\A\mod\Rfr)$; cf.\ the definition of
functor $\Ext_\A$ in Section~\ref{wcdg-semiderived}) into infinite
direct sums in the contraderived category $\sD^\ctr(\R\contra)$
(represented by infinite direct sums in $H^0(\R\contra^\free)$).
 When $\R$ has homological dimension~$1$, infinite direct sums
in $\sD^\ctr(\R\contra)$ even commute with the passage to
the $\R$\+contramodules of cohomology (see
Remark~\ref{dim-1-contramodules} and Section~\ref{discrete-modules}).
 Still, these do not commute with the forgetful functor to
$\R\mod$, and so the wcDG\+module $\A$ over $\A$ is \emph{not}
compact in our sense (as one can see already in the simplest case
$\A=\R=k[[\epsilon]]$).  \hbadness=2950
\end{rem}

\subsection{Compact generators for CDG-co/contramodules}
 Given a CDG\+coalge\-bra $C$ over the field~$k$, we denote by
$\sD^\ctr(C\contra)$ and $\sD^\co(C\comod)$, respectively,
the contraderived category of left CDG\+contramodules and
the coderived category of left CDG\+comodules over~$C$.

\begin{thm}  \label{restriction-generates-co}
 Let\/ $\C$ be an\/ $\R$\+free CDG\+coalgebra. Then \par
\textup{(a)} the contraderived category\/ $\sD^\ctr(\C\contra\Rctr)$
is generated, as a triangulated category with infinite products, by
the image of the triangulated functor
$$
 \boI R^\kappa\:\sD^\ctr(\C/\m\C\contra)\lrarrow\sD^\ctr(\C\contra\Rctr);
$$ \par
\textup{(b)} the coderived category\/ $\sD^\co(\C\comod\Rco)$ is
generated, as a triangulated category with infinite direct sums, by
the image of the triangulated functor
$$
 \boI R_\kappa\:\sD^\co(\C/\m\C\comod)\lrarrow\sD^\co(\C\comod\Rco).
$$
\end{thm}

\begin{proof}
 Similar to the proof of Theorem~\ref{restriction-generates}.
\end{proof}

\begin{lem} \label{union-of-finite-length}
\textup{(a)} Let\/ $\C$ be an\/ $\R$\+free graded coalgebra.
 Then any\/ $\R$\+comodule graded\/ $\C$\+comodule is
the union of its\/ $\R$\+comodule graded\/ $\C$\+subcomodules that
have finite length as graded\/ $\R$\+comodules (in particular, these
have a finite number of nonzero grading components only). \par
\textup{(b)} Let\/ $\C$ be an\/ $\R$\+free CDG\+coalgebra.
 Then any\/ $\R$\+comodule CDG\+comodule over\/ $\C$
is the union of its\/ $\R$\+comodule CDG\+subcomodules whose
underlying graded\/ $\R$\+comodules have finite length.
\end{lem}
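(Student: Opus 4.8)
The plan is to prove both parts by a \emph{fundamental theorem of comodules} argument, adapted to the present setting where the coefficient category is $\R\comod$ and $\C$ is a free graded $\R$\+contramodule (rather than a comodule). The starting observation is that $\R\comod$, and hence the category of graded $\R$\+comodules, is a locally finite Grothendieck category, so every graded $\R$\+comodule $\cM$ is the filtered union of its graded $\R$\+subcomodules of finite length (those with finitely many nonzero components, each of finite length in $\R\comod$). It therefore suffices to show that any finite\+length graded $\R$\+subcomodule can be enlarged, first to a finite\+length $\C$\+subcomodule (part~(a)), and then to a finite\+length CDG\+subcomodule (part~(b)). Two structural facts will be recorded at the outset: since $\C^i=\R[[X^i]]$ and $\R[[X^i]]\ocn_\R\cN\simeq\bigoplus_{x\in X^i}\cN$, the functor $\C\ocn_\R({-})$ is exact and commutes with direct sums and filtered colimits; and a finite\+length graded $\R$\+comodule is a compact (Noetherian) object, so any morphism from it into a direct sum factors through a finite subsum. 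One also has, for each basis element $x\in X^i$, the dual functional $\phi_x\in\C^*=\Hom^\R(\C,\R)$, which exists precisely because $\C$ is $\R$\+free.

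For part~(a) I would start from a finite\+length graded $\R$\+subcomodule $\cK\subset\cM$ and restrict the coaction $\rho\:\cM\rarrow\C\ocn_\R\cM$ to it. Writing $(\C\ocn_\R\cM)^n=\bigoplus_{i}\bigoplus_{x\in X^i}\cM^{n-i}$ and using compactness of each component $\cK^n$, the image $\rho(\cK)$ meets only finitely many of these summands. Let $\cK'_0\subset\cM$ be the sum of the images of the finitely many homogeneous coefficient maps $\pi_x\circ\rho$, where $\pi_x=\phi_x\ocn\id_\cM\:\C\ocn_\R\cM\rarrow\cM$; this $\cK'_0$ is of finite length, and for $\cK'=\cK+\cK'_0$ one has $\rho(\cK)\subset\C\ocn_\R\cK'$. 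The heart of the matter is to show $\rho(\cK'_0)\subset\C\ocn_\R\cK'$ as well, which is exactly where coassociativity $(\Delta\ocn\id)\rho=(\id\ocn\rho)\rho$ enters: applying $\phi_x$ to the first tensor factor of this identity turns the right\+hand side into $\rho\circ(\pi_x\circ\rho)$ and the left\+hand side into a map landing in $\C\ocn_\R\cK'_0$, so that $\rho$ carries the image of $\pi_x\circ\rho$ into $\C\ocn_\R\cK'$. Hence $\rho(\cK')\subset\C\ocn_\R\cK'$, i.e.\ $\cK'$ is a finite\+length $\C$\+subcomodule containing $\cK$, and taking the union over all $\cK$ gives~(a).

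For part~(b) I would take a finite\+length $\C$\+subcomodule $\cK$ (produced by~(a)) and set $\cL=\cK+d_\cM(\cK)$, which has finite length since $d_\cM$ is a homogeneous $\R$\+comodule endomorphism of degree~$1$. Compatibility of the coderivation with the coaction, $\rho\,d_\cM=(d_\C\ocn\id\pm\id\ocn d_\cM)\,\rho$, shows $\rho(d_\cM\cK)\subset\C\ocn_\R\cL$, so $\cL$ is a $\C$\+subcomodule; and $d_\cM\cL=d_\cM\cK+d_\cM^2\cK$, where $d_\cM^2={}h*({-})=(h\ocn\id)\rho$ lands in $\cK$ because $\rho(\cK)\subset\C\ocn_\R\cK$, whence $d_\cM\cL\subset\cL$. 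Thus $\cL$ is a finite\+length CDG\+subcomodule containing $\cK$, and the union statement follows.

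The main obstacle I anticipate is the coassociativity extraction in part~(a): carrying out the classical fundamental\+theorem\+of\+comodules computation in the element\+free $\R$\+comodule language, with $\C$ a free $\R$\+contramodule, and verifying that applying the dual functional $\phi_x$ to the first tensor factor genuinely recovers the coaction on the coefficient subobject $\mathrm{im}(\pi_x\circ\rho)$. Everything else—local finiteness, compactness, exactness of $\C\ocn_\R({-})$, and the degree/curvature bookkeeping in~(b)—is routine once this step is in place.
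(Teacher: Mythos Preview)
Your proposal is correct, and for part~(b) it coincides with the paper's argument. For part~(a), however, you and the paper take dual routes to the fundamental theorem of comodules. The paper works \emph{top-down}: given a finite-length graded $\R$\+subcomodule $\cV\subset\cM$, it sets $\cL=\rho^{-1}(\C\ocn_\R\cV)\subset\cM$; the counit axiom forces $\cL\subset\cV$ (so $\cL$ has finite length), and coassociativity---phrased as ``the coaction $\rho$ is a $\C$\+comodule morphism''---shows that $\cL$ is a $\C$\+subcomodule, since $\C\ocn_\R\cV$ is one. You instead work \emph{bottom-up}, enlarging $\cK$ by the images of finitely many coefficient maps $\pi_x\circ\rho$ and invoking coassociativity to show the process closes after one step. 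Your approach is the more classical textbook proof transported to this setting; the paper's preimage trick is slicker precisely because it sidesteps the element-free coassociativity extraction you flagged as the main obstacle---no dual functionals~$\phi_x$, no basis decomposition of $\C$, and no compactness argument for finite-length $\R$\+comodules are needed, only exactness and colimit-preservation of $\C\ocn_\R({-})$.
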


\begin{proof}
 Part~(a): the key observation is that the functor of contratensor
product of $\R$\+contramodules and $\R$\+comodules $\ocn_\R$
commutes with the inductive limits in the comodule argument.
 In addition, filtered inductive limits are exact in $\R\comod$,
as is the functor $\C\ocn_\R{-}$. 
 Let $\cM$ be an $\R$\+comodule graded left $\C$\+comodule; pick
an $\R$\+subcomodule of finite length $\cV\subset\cM$ and consider
the full preimage $\cL\subset\cM$ of $\C\ocn_\R\cV\subset
\C\ocn_\R\cM$ under the $\C$\+coaction map $\cM\rarrow\C\ocn_\R\cM$.
 It follows from the counit axiom for $\cM$ that $\cL$ is contained
in~$\cV$, hence $\cL$ is an $\R$\+comodule of finite length.
 Furthermore, $\cL$ is a $\C$\+subcomodule in $\cM$, because
the $\C$\+coaction map $\cM\rarrow\C\ocn_\R\cM$ is a $\C$\+comodule
morphism (the coassociativity axiom for the coaction) and
$\C\ocn_\R\cV$ is a $\C$\+subcomodule in $\C\ocn_\R\cM$.
 Finally, $\cM$ is the filtered inductive limit of its
$\C$\+subcomodules $\cL$ indexed by all the $\R$\+subcomodules
$\cV\subset\cM$ of finite length, since $\C\ocn_\R\cM$ is
the inductive limit of $\C\ocn_\R\cV$ and inductive limits commute
with fibered products in $\R\comod$.

 Part~(b): Let $\cM$ be a left CDG\+comodule over $\C$ and 
$\cL\subset\cM$ be a graded $\C$\+subcomodule having finite length
as a graded $\R$\+comodule.
 Then $\cL+d_\cM(\cL)\subset\cM$ is a CDG\+subcomodule of $\cM$
with the same property.
\end{proof}

 It follows from Lemma~\ref{union-of-finite-length} that having
finite length as a graded $\R$\+comodule or as a graded $\C$\+comodule
(or even as a CDG\+comodule over~$\C$) are equivalent properties for
an $\R$\+comodule graded $\C$\+comodule (or an $\R$\+comodule
CDG\+comodule over~$\C$).
 Therefore, we will simply call the graded comodules (resp.,
CDG\+comodules) with this property the $\R$\+comodule graded
$\C$\+comodules (resp., CDG\+comodules over~$\C$) \emph{of
finite length}.

 The DG\+subcategory of $\C\comod\Rco$ formed by the CDG\+comodules
of finite length will be denoted by $\C\comod\Rco_\fin$;
the corresponding homotopy category is $H^0(\C\comod\Rco_\fin)$.
 The quotient category of $H^0(\C\comod\Rco_\fin)$ by its minimal
thick subcategory containing the total CDG\+comodules of short
exact sequences of CDG\+comodules of finite length and closed
morphisms between them is called the \emph{absolute derived
category} of $\R$\+comodule left CDG\+comodules of finite length
over $\C$ and denoted by $\sD^\abs(\C\comod\Rco_\fin)$.

\begin{thm}  \label{coderived-compact-generators-thm}
 Let\/ $\C$ be an\/ $\R$\+free CDG\+coalgebra.  Then \par
\textup{(a)} the triangulated functor
$$
 \sD^\abs(\C\comod\Rco_\fin)\lrarrow\sD^\co(\C\comod\Rco)
$$
induced by the embedding of DG\+categories\/
$\C\comod\Rco_\fin\rarrow\C\comod\Rco$ is fully faithful; and \par
\textup{(b)} the image of this functor (or, more precisely, a set
of representatives of the isomorphism classes in the image) is
a set of compact generators of the coderived category\/
$\sD^\co(\C\comod\Rco)$.
\end{thm}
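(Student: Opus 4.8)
The plan is to establish the compactness and generation assertions of part~(b) first, since the full faithfulness of part~(a) will rest on them in an essential way. Throughout I will compute morphisms in $\sD^\co(\C\comod\Rco)$ by resolving the target: by Theorem~\ref{non-adj-co-derived-res}(b,\,d) every object of $\sD^\co(\C\comod\Rco)$ is isomorphic to a CDG\+comodule $\cJ\in H^0(\C\comod\Rcof_\inj)$, and for such a $\cJ$ and an arbitrary source $\cK$ one has $\Hom_{\sD^\co(\C\comod\Rco)}(\cK,\cJ)=H^0\Hom_\C(\cK,\cJ)$ (the orthogonality of coinjectives to coacyclics).

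For generation, I would start from Lemma~\ref{union-of-finite-length}(b): every $\R$\+comodule CDG\+comodule $\cM$ over $\C$ is the filtered union of its finite-length CDG\+subcomodules $\cL_\alpha$. Presenting this filtered colimit as the totalization, by direct sums, of a bounded-above exact complex whose terms are direct sums of the $\cL_\alpha$, and invoking the comodule (direct-sum) analogue of Lemma~\ref{bounded-above-lem} to the effect that such a totalization is coacyclic, I conclude that $\cM$ is isomorphic in $\sD^\co(\C\comod\Rco)$ to an object of the localizing subcategory generated by the $\cL_\alpha$. Hence the finite-length CDG\+comodules generate $\sD^\co(\C\comod\Rco)$ as a triangulated category with infinite direct sums.

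For compactness, let $\cK$ be of finite length and $\{\cM_i\}$ a family; choose coinjective representatives $\cJ_i\in H^0(\C\comod\Rcof_\inj)$. Because the abelian category of $\R$\+comodule graded $\C$\+comodules is locally finite by Lemma~\ref{union-of-finite-length}(a), hence locally Noetherian, direct sums of injective graded $\C$\+comodules are injective, so $\bigoplus_i\cJ_i$ again lies in $H^0(\C\comod\Rcof_\inj)$ and represents $\bigoplus_i\cM_i$. It therefore suffices to show $\Hom_\C(\cK,-)$ commutes with infinite direct sums for finite-length $\cK$. Using the kernel description of $\Hom_\C$ from Section~\ref{non-adj-graded-co} together with the exactness of direct sums of abelian groups, this reduces to the commutation of $\Hom_\R(\cK,-)$ with direct sums; the latter holds since $\cK$ is a comodule of finite length over some Artinian quotient $R=\R/\I$, where $\Hom_R(\cK,-)$ preserves direct sums and ${}_\I({-})$ commutes with filtered colimits (as in the proof of Proposition~\ref{r-co-contra}). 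Passing to $H^0$, which commutes with direct sums, yields compactness of $\cK$, and with the previous paragraph this proves part~(b).

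Finally, for part~(a): the embedding $\C\comod\Rco_\fin\rarrow\C\comod\Rco$ is a full DG\+subcategory inclusion, so $H^0(\C\comod\Rco_\fin)\rarrow H^0(\C\comod\Rco)$ is fully faithful, and it remains to descend to the Verdier quotients defining $\sD^\abs(\C\comod\Rco_\fin)$ and $\sD^\co(\C\comod\Rco)$. By the standard calculus of fractions, full faithfulness of the induced functor amounts to a cofinality statement: every coacyclicity-quasi-isomorphism $\cN\rarrow\cL$ into a finite-length $\cL$ should be refinable by a morphism $\cW\rarrow\cN$ with $\cW$ of finite length such that $\cW\rarrow\cL$ is again a coacyclicity-quasi-isomorphism, together with the matching faithfulness statement; in particular it contains the object-level assertion that a finite-length CDG\+comodule which is coacyclic over $\C$ is already absolutely acyclic inside $\C\comod\Rco_\fin$. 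I expect this to be the main obstacle. The key inputs I would use are, first, that by Lemma~\ref{union-of-finite-length} the totalizations of short exact sequences of \emph{finite-length} CDG\+comodules already generate the coacyclic subcategory as a localizing subcategory (any short exact sequence being the filtered colimit of finite-length ones); and second, that by part~(b) the finite-length objects are compact, so a Thomason--Neeman argument forces any compact object of that localizing subcategory to lie in the thick subcategory generated by those finite-length totalizations, i.e.\ to be absolutely acyclic in $\C\comod\Rco_\fin$. Combining the object-level matching with compactness and local finiteness to produce the required finite-length factorizations gives the cofinality, hence full faithfulness; part~(b) then identifies the image with a set of compact generators.
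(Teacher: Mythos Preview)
Your compactness argument for part~(b) is sound. The generation argument, however, has a real gap: you invoke ``the comodule (direct-sum) analogue of Lemma~\ref{bounded-above-lem}'' on the bar resolution of a filtered colimit, which is a bounded-\emph{above} exact complex, but the correct dual of that lemma applies only to bounded-\emph{below} exact complexes totalized by direct sums. One dualizes contraacyclic\,/\,products\,/\,bounded-above to coacyclic\,/\,direct-sums\,/\,bounded-below simultaneously; a bounded-above exact complex totalized by direct sums need not be coacyclic, and the telescope argument underlying Lemma~\ref{bounded-above-lem} does not go through in that direction. The paper avoids this entirely: it either cites the Arinkin-style argument of \cite[proof of Theorem~3.11.2]{Pkoszul}, or, more simply, uses Theorem~\ref{restriction-generates-co}(b) to reduce generation to the residue-field case $\C/\m\C$, where the analogous result is already established in \cite[Section~5.5]{Pkoszul}.

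Your approach to part~(a) inherits the same gap. For the Thomason--Neeman step you need the coacyclic subcategory of $H^0(\C\comod\Rco)$ to equal the localizing closure of totalizations of \emph{finite-length} short exact sequences; establishing this requires showing that the totalization of an arbitrary short exact sequence, being a filtered colimit of finite-length ones, lies in that localizing subcategory---precisely the unresolved colimit issue above. The paper's route, following \cite[Theorem~3.11.1]{Pkoszul}, is more direct and independent of part~(b): one shows that any morphism in the homotopy category from a finite-length object $\cK$ into a coacyclic object factors through a finite-length absolutely acyclic object, arguing by induction on the way coacyclics are built from totalizations via cones and infinite direct sums, and using the compactness of $\cK$ only at the direct-sum step to factor through a finite sub-sum.
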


\begin{proof}
 The proof of part~(a) is similar to that
of~\cite[Theorem~3.11.1]{Pkoszul}.
 There are two ways to prove part~(b): either one can use
the general argument from~\cite[proof of Theorem~3.11.2]{Pkoszul}
(due to D.~Arinkin), or alternatively the assertion can be deduced,
using Theorem~\ref{restriction-generates-co}(b), from the similar
result for CDG\+comodules over CDG\+coalgebras over
fields~\cite[Section~5.5]{Pkoszul}.
\end{proof}

\begin{cor}  \label{coderived-compact-generators-cor}
 For any\/ $\R$\+free\/ CDG\+coalgebra\/ $\C$, the contraderived
category\/ $\sD^\ctr(\C\contra\Rctr)$ of\/ $\R$\+contramodule left
CDG\+contramodules over\/ $\C$ is compactly generated.
 So are the contraderived categories\/ $\sD^\ctr(\C\contra\Rfr)$
and\/ $\sD^\ctr(\C\contra\Rcof)$ and the coderived categories\/
$\sD^\co(\C\comod\Rcof)$ and\/ $\sD^\co(\C\comod\Rfr)$.
\emergencystretch=1em\hfuzz=13pt
\end{cor}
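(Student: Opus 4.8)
The plan is to deduce this corollary entirely from the preceding Theorem~\ref{coderived-compact-generators-thm}, which already supplies the only nontrivial input: the coderived category $\sD^\co(\C\comod\Rco)$ is compactly generated, with a set of compact generators furnished by the $\R$\+comodule CDG\+comodules of finite length. Everything else is a formal transport of this property along the equivalences of exotic derived categories established earlier. The abstract principle I would invoke is that an equivalence of triangulated categories carries a set of compact generators to a set of compact generators: such an equivalence preserves whatever direct sums exist, hence preserves both the compactness of an object and the property of a set being generating. Thus it suffices to exhibit, for each of the five categories listed, a chain of triangulated equivalences connecting it to $\sD^\co(\C\comod\Rco)$.

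First I would connect the contramodule side: by Corollary~\ref{non-adj-derived-co-contra}, the derived comodule-contramodule correspondence functors $\boL\Phi_{\R,\C}=\boR\Psi_{\R,\C}^{-1}$ give an equivalence $\sD^\ctr(\C\contra\Rctr)\simeq\sD^\co(\C\comod\Rco)$, so $\sD^\ctr(\C\contra\Rctr)$ is compactly generated. Next, Theorem~\ref{non-adj-co-derived-comp} provides the equivalences $\sD^\ctr(\C\contra\Rfr)\simeq\sD^\ctr(\C\contra\Rctr)$ and $\sD^\co(\C\comod\Rcof)\simeq\sD^\co(\C\comod\Rco)$, which dispose of $\sD^\ctr(\C\contra\Rfr)$ and $\sD^\co(\C\comod\Rcof)$. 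Finally, the $\R$\+comodule-contramodule correspondence $\Phi_\R=\Psi_\R^{-1}$, applied componentwise as recorded in Section~\ref{r-cofree-co-derived}, induces equivalences $\sD^\ctr(\C\contra\Rfr)\simeq\sD^\ctr(\C\contra\Rcof)$ and $\sD^\co(\C\comod\Rfr)\simeq\sD^\co(\C\comod\Rcof)$; composing these with the previous equivalences identifies $\sD^\ctr(\C\contra\Rcof)$ and $\sD^\co(\C\comod\Rfr)$ with $\sD^\co(\C\comod\Rco)$ as well. Hence all five categories are compactly generated.

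I do not expect a genuine obstacle, since the mathematical content is already contained in Theorem~\ref{coderived-compact-generators-thm}; the one conceptual point meriting a line of care is the status of infinite direct sums in the contraderived categories. These categories are \emph{defined} via closure under infinite \emph{products} (contramodule categories have products, not well-behaved coproducts), so it is not immediate that they even possess the coproducts required to speak of compact generation. The resolution is precisely that the equivalence with $\sD^\co(\C\comod\Rco)$ endows each contraderived category with coproducts transported from the comodule side, and it is with respect to \emph{those} coproducts that the transported finite-length CDG\+comodules form a set of compact generators. It therefore only remains to observe that every functor in the above chains is a triangulated equivalence, which each was shown to be in Corollary~\ref{non-adj-derived-co-contra}, Theorem~\ref{non-adj-co-derived-comp}, and Section~\ref{r-cofree-co-derived}; compactness being understood in the conventional sense (ignoring the contramodule enrichment, as discussed in the preceding remark), the corollary follows.
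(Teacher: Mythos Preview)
Your proof is correct and follows essentially the same approach as the paper: transport compact generation from $\sD^\co(\C\comod\Rco)$ (established in Theorem~\ref{coderived-compact-generators-thm}) to the other five categories along the known triangulated equivalences. The paper's own proof is a one-sentence citation of Theorem~\ref{non-adj-co-derived-comp} and Corollaries~\ref{r-free-derived-co-contra}, \ref{r-cofree-derived-co-contra}, and~\ref{non-adj-derived-co-contra}; you route through the $\Phi_\R=\Psi_\R^{-1}$ equivalences of Section~\ref{r-cofree-co-derived} rather than the $\boL\Phi_\C=\boR\Psi_\C^{-1}$ equivalences of Corollaries~\ref{r-free-derived-co-contra} and~\ref{r-cofree-derived-co-contra}, but since all of these equivalences commute (as noted after Corollary~\ref{r-cofree-derived-co-contra}), this is a cosmetic difference, not a genuinely different argument. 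Your remark on the existence of coproducts in the contraderived categories is a worthwhile clarification that the paper leaves implicit.
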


\begin{proof}
 All the five mentioned categories are naturally equivalent to
the one whose compact generators were constructed in
Theorem~\ref{coderived-compact-generators-thm}.
 See Theorem~\ref{non-adj-co-derived-comp} and
Corollaries~\ref{r-free-derived-co-contra},
\ref{r-cofree-derived-co-contra},
and~\ref{non-adj-derived-co-contra}.
\end{proof}

\Section{Bar and Cobar Duality}

\subsection{Bar- and cobar-constructions}  \label{bar-cobar-sect}
 The bar-construction for nonaugmented $\R$\+free CDG\+algebras
and the cobar-construction for noncoaugmented $\R$\+free
CDG\+coalgebras are based on the following lemma.
{\hbadness=1950\par}

\begin{lem}  \label{unit-split}
\textup{(a)} If\/ $\B$ is a nonzero\/ $\R$\+free graded algebra,
then the unit map\/ $\R\rarrow\B$ is the embedding of a direct
summand in the category of free graded\/ $\R$\+contramodules. \par
\textup{(b)} If\/ $\C$ is a nonzero\/ $\R$\+free graded coalgebra,
then the counit map\/ $\C\rarrow\R$ is the projection onto
a direct summand in the category of free graded\/
$\R$\+contramodules.
\end{lem}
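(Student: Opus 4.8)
The plan is to prove both parts by reduction modulo $\m$, exploiting the projectivity of free $\R$-contramodules together with the fact that $1+\m$ consists of units in the local ring $\R$.

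For part~(a), write $\iota\colon\R\rarrow\B$ for the unit map; it is homogeneous of degree~$0$, so it lands in $\B^0$, and it suffices to produce a retraction $r\colon\B^0\rarrow\R$ of $\R$-contramodules with $r(1)=1$, extended by zero in the other degrees. First I would check that the reduction $\bar1\in\B^0/\m\B^0$ of the unit element is nonzero: since $\B$ is a nonzero free graded $\R$-contramodule, its reduction $\B/\m\B$ is a nonzero graded $k$-algebra (recall that for a free contramodule one has $\R[[X]]/\m(\R[[X]])=k[X]$, as recorded after Lemma~\ref{nakayama-lemma}), and a nonzero unital algebra has a nonzero unit. Choosing a $k$-linear retraction $\bar r\colon\B^0/\m\B^0\rarrow k$ with $\bar r(\bar1)=1$ and composing with the reduction map yields a morphism of $\R$-contramodules $\B^0\rarrow k$; since $\B^0$ is free, hence projective, this morphism lifts along the surjection $\R\rarrow k$ to a morphism $r_0\colon\B^0\rarrow\R$ whose reduction modulo~$\m$ equals $\bar r$.

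The element $r_0(1)$ then reduces to $\bar r(\bar1)=1$, so $r_0(1)\in 1+\m$, which is a unit $u$ in the local ring~$\R$. Rescaling, $r=u^{-1}r_0$ is again a morphism of $\R$-contramodules (being scalar multiplication by a fixed element of~$\R$, with which every contramodule morphism is compatible), and $r(1)=1$. Thus $r\circ\iota=\id_\R$, so $\iota$ is a split monomorphism. Its complement $\ker r$ is a direct summand of the free graded $\R$-contramodule~$\B$, hence projective, hence free by Lemma~\ref{nakayama-proj-free}; so the splitting takes place in the category of free graded $\R$-contramodules, as required.

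Part~(b) is dual. Writing $\epsilon\colon\C\rarrow\R$ for the counit, I would produce a section $s\colon\R\rarrow\C^0$, that is, an element $c\in\C^0$ with $\epsilon(c)=1$. The key input is that the counit of a nonzero counital coalgebra is surjective: the counit axiom $(\id\otimes\epsilon)\Delta=\id$ forces $\epsilon\ne0$, so after reduction $\bar\epsilon\colon\C^0/\m\C^0\rarrow k$ is a nonzero map into the one-dimensional space~$k$, hence surjective. I would pick $\bar c$ with $\bar\epsilon(\bar c)=1$, lift it to some $c_0\in\C^0$ along the surjective reduction map $\C^0\rarrow\C^0/\m\C^0$, observe that $\epsilon(c_0)\in 1+\m$ is a unit~$u$, and set $c=u^{-1}c_0$, so that $\epsilon(c)=1$. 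Then $s(1)=c$ defines a section, $\ker\epsilon$ is a free graded $\R$-contramodule by the same argument as above, and $\epsilon$ is the projection onto a direct summand. The only genuinely delicate points I anticipate are the two nonvanishing claims --- that $\bar1\ne0$ in~(a) and that $\bar\epsilon$ is surjective in~(b) --- and the verification that correcting by the unit $u^{-1}$ is legitimate in the contramodule category; the lifting steps are routine given projectivity of free contramodules and the surjectivity of $\R\rarrow k$ and of the reduction maps.
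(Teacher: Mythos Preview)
Your proof is correct and follows essentially the same route as the paper: reduce modulo~$\m$, use that the unit (resp.\ counit) of a nonzero $k$-algebra (resp.\ $k$-coalgebra) is nonzero, lift a retraction (resp.\ section) using projectivity of free $\R$-contramodules, and then invoke the invertibility of elements of $1+\m$. The only cosmetic difference is that the paper does not explicitly rescale by $u^{-1}$; instead it observes that the composition $\R\rarrow\B\rarrow\R$ reduces to the identity modulo~$\m$ and is therefore an automorphism (citing the argument in the proof of Lemma~\ref{nakayama-acycl-contract}), which amounts to the same thing.
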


\begin{proof}
 Part~(a): reducing the unit map $\R\rarrow\B$ modulo~$\m$, we
obtain the unit map $k\rarrow\B/\m\B$ of the graded
$k$\+algebra $\B/\m\B$.
 If the latter map is zero, it follows that $\B/\m\B=0$ and $\B=0$.
 Otherwise, pick a homogeneous $k$\+linear map
$\bar v\:\B/\m\B\rarrow k$ such that the composition
$k\rarrow\B/\m\B\rarrow k$ is the identity map, and lift $\bar v$
to a homogeneous morphism of graded $\R$\+contramodules
$v\:\B\rarrow\R$.
 Then the composition $\R\rarrow\B\rarrow\R$ is invertible
(see the proof of Lemma~\ref{nakayama-acycl-contract}).

 Part~(b): reducing the counit map $\C\rarrow\R$ modulo~$\m$, we
obtain the counit map $\C/\m\C\rarrow k$ of the graded
coalgebra $\C$ over~$k$.
 If the latter map is zero, it follows that $\C/\m\C=0$ and $\C=0$.
 Otherwise, pick a homogeneous $k$\+linear map $\bar w\:k\rarrow
\C/\m\C$ such that the composition $k\rarrow\C/\m\C\rarrow k$
is the identity map, and continue to argue as above.
\end{proof}

 Let $\U$ be a free graded $\R$\+contramodule.
 Then the infinite direct sum of tensor powers $\bigoplus_{n=0}^
\infty\U^{\ot n}$ in the category of free graded
$\R$\+contramodules has a natural structure of $\R$\+free graded
algebra with the multiplication given by the conventional rule
$(u_1\ot\dotsb \ot u_j)(u_{j+1}\ot\dotsb\ot u_n) =
u_1\ot\dotsb\ot u_j\ot u_{j+1}\ot\dotsb\ot u_n$ and
the unit element provided by the embedding of the component
$\R=\U^{\ot 0}\rarrow\bigoplus_{n=0}^\infty\U^{\ot n}$.
 The same infinite direct sum of tensor powers also has a natural
$\R$\+free graded coalgebra structure with the comultiplication
$u_1\ot\dotsb\ot u_n\mpsto\sum_{j=0}^n (u_1\ot\dotsb\ot u_j)\ot
(u_{j+1}\ot\dotsb\ot u_n)$ and the counit map being the projection
onto the component $\U^{\ot 0}=\R$.

\begin{lem}  \label{free-algebra-derivations}
 \textup{(a)} Odd derivations of degree~$1$ on the\/ $\R$\+free
graded algebra\/ $\bigoplus_{n=0}^\infty \U^{\ot n}$ are determined
by their restrictions to the component\/ $\U^{\ot1}\simeq\U$.
 Conversely, any homogeneous\/ $\R$\+contramodule morphism\/
$\U\rarrow\bigoplus_{n=0}^\infty\U^{\ot n}$ of degree~$1$
gives rise to an odd derivation of degree~$1$ on\/
$\bigoplus_{n=0}^\infty \U^{\ot n}$. \par
 \textup{(b)} Odd coderivations of degree~$1$ on the\/ $\R$\+free
graded coalgebra\/ $\bigoplus_{n=0}^\infty \U^{\ot n}$ are determined
by their projections to the component\/ $\U^{\ot1}\simeq\U$.
 Conversely, any homogeneous\/ $\R$\+contramodule morphism\/
$\bigoplus_{n=0}^\infty \U^{\ot n}\rarrow\U$ of degree~$1$
gives rise to an odd coderivation of degree~$1$ on\/
$\bigoplus_{n=0}^\infty \U^{\ot n}$.
\end{lem}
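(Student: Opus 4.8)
The plan is to prove both parts as the standard universal properties of the free tensor algebra and the cofree tensor coalgebra, transported into the tensor category of free graded $\R$\+contramodules. The only structural facts needed are those established in Section~\ref{contra-operations}: the functor $\ot^\R$ is associative, commutative, and unital with unit object $\R$, it preserves freeness, and it commutes with infinite direct sums. Consequently $T=\bigoplus_{n=0}^\infty\U^{\ot n}$ is again a free graded $\R$\+contramodule, all of the structure maps (multiplication, comultiplication, unit, counit) are morphisms of $\R$\+contramodules, and the usual generators/cogenerators arguments apply verbatim. A point worth emphasizing throughout is that every formula below, when evaluated on a fixed tensor power $\U^{\ot n}$, is a finite sum of morphisms of free $\R$\+contramodules; since $T$ is assembled as a direct sum (not a product), assembling these over all $n$ produces honest morphisms in $\R\contra^\free$ with no convergence issue.

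For part~(a) I would first record the explicit formula forced by the Leibniz rule. Writing $u_1\ot\dotsb\ot u_n=u_1\cdot(u_2\ot\dotsb\ot u_n)$ as a product in $T$ and applying the derivation property of $d$ repeatedly (equivalently, inducting on $n$), one obtains
\[
 d(u_1\ot\dotsb\ot u_n)=\sum_{i=1}^n(-1)^{|u_1|+\dotsb+|u_{i-1}|}\,
 u_1\ot\dotsb\ot u_{i-1}\ot f(u_i)\ot u_{i+1}\ot\dotsb\ot u_n,
\]
where $f=d|_\U$ is the restriction of $d$ to the generating component $\U^{\ot1}\simeq\U$. This shows that restriction to $\U$ is injective on odd derivations. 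For the converse, given any homogeneous degree\+$1$ morphism $f\:\U\rarrow T$, I would take the displayed formula as the definition of a homogeneous degree\+$1$ endomorphism $d$ of $T$ and verify that it forms a commutative diagram with the multiplication map, in the precise sense of an odd derivation from Section~\ref{r-free-absolute}. Because the multiplication is concatenation, this compatibility is a termwise comparison of the two sides on $\U^{\ot j}\ot^\R\U^{\ot(n-j)}$, and the Koszul signs match by the standard bookkeeping; it suffices to check it on products of generators, where it is immediate.

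Part~(b) is dual, carried out with the deconcatenation comultiplication $\Delta(u_1\ot\dotsb\ot u_n)=\sum_{j=0}^n(u_1\ot\dotsb\ot u_j)\ot(u_{j+1}\ot\dotsb\ot u_n)$. Let $g$ denote the composition of a coderivation $d$ with the projection $T\rarrow\U^{\ot1}\simeq\U$. For the converse direction I would define, for a given degree\+$1$ morphism $g\:T\rarrow\U$, the endomorphism
\[
 d(u_1\ot\dotsb\ot u_n)=\sum_{0\le p\le q\le n}\pm\,
 u_1\ot\dotsb\ot u_p\ot g(u_{p+1}\ot\dotsb\ot u_q)\ot u_{q+1}\ot\dotsb\ot u_n,
\]
with the sign given by the Koszul rule for moving $g$ past $u_1\ot\dotsb\ot u_p$, and check that it commutes with $\Delta$ and the induced differential on $T\ot^\R T$, i.e.\ that it is an odd coderivation in the sense of Section~\ref{r-free-co-derived}. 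For the forward direction---that a coderivation is determined by $g$---I would use that the weight grading (number of tensor factors) is respected by $\Delta$, so the weight\+$m$ component of $d(\xi)$ is detected by the iterated comultiplication followed by the $m$\+fold projection onto $\U^{\ot1}$; the coderivation identity then rewrites this in terms of $g$ alone, giving uniqueness and recovering the explicit formula.

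The routine parts are the injectivity/uniqueness statements and the sign verifications. I expect the main obstacle to be the converse (existence) directions, and within these the coalgebra case of part~(b): one must confirm that the ``insert $g$ on a consecutive block'' operator genuinely satisfies the co\+Leibniz identity, which is the place where the deconcatenation structure and the direct\+sum (rather than completed\+product) nature of the cofree tensor coalgebra are used in an essential way. The contramodule subtleties are mild---one only has to note that each of the displayed maps is a finite sum of compositions of $\ot^\R$ of free\+contramodule morphisms, hence a legitimate morphism in $\R\contra^\free$---but they are exactly what make the infinite direct sum $\bigoplus_n\U^{\ot n}$, rather than an infinite product, the correct object on which cofreeness holds.
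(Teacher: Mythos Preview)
Your approach is the same as the paper's---both treat this as the standard universal property of the tensor (co)algebra, transported into $\R\contra^\free$.  The paper's proof is essentially a one-line ``straightforward and similar to the graded $k$\+(co)algebra case,'' so your more detailed outline is compatible with it.

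There is, however, one contramodule-specific point in the uniqueness direction of part~(b) that you pass over and that the paper singles out as essential.  Your argument says: the weight\+$m$ component $\pi_m\circ d$ is determined by $g$ via the iterated comultiplication, hence $d$ is determined.  The passage from ``all $\pi_m\circ d$ are known'' to ``$d$ is known'' amounts to asserting that a morphism into $\bigoplus_m\U^{\ot m}$ is determined by its projections to the summands---equivalently, that the canonical map $\bigoplus_m\U^{\ot m}\rarrow\prod_m\U^{\ot m}$ is a monomorphism.  In the abelian category $\R\contra$ this is \emph{not} automatic for arbitrary contramodules; it holds here precisely because $\U$ is free, so each $\U^{\ot m}$ is free and one may invoke the fact recorded in Remark~\ref{dim-1-contramodules} that the direct sum of projective $\R$\+contramodules injects into their product.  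Your closing remarks about ``contramodule subtleties being mild'' and about finite sums of morphisms address the existence direction (that the constructed $d$ lands in the direct sum, since $d|_{\U^{\ot n}}$ has image in $\bigoplus_{m\le n+1}\U^{\ot m}$), but do not cover this injectivity issue on the uniqueness side.  Once you add that sentence, the argument is complete.
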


\begin{proof}
 Straightforward and similar to the graded $k$\+(co)algebra case.
 In the case~(b), it is essential that the natural map
$\bigoplus_{n=0}^\infty\U^{\ot n}\rarrow\prod_{n=0}^\infty\U^{\ot n}$
is injective (since $\U$ is a \emph{free} graded $\R$\+contramodule)
and no coderivation of $\bigoplus_{n=0}^\infty\U^{\ot n}$ can raise
the tensor degree by more than~$1$ (since no map
$\bigoplus_{n=0}^\infty\U^{\ot n}\rarrow\U$ does).
\end{proof}

 A graded coalgebra $D$ without counit over a field~$k$ is called
\emph{conilpotent} if it is the union
$D=\bigcup_n\ker(D\to D^{\ot n+1})$ of the kernels of the iterated
comultiplication maps.
 A graded coalgebra $C$ over $k$ endowed with a coaugmentation
(morphism of coalgebras) $\bar w\:k\rarrow C$ is called
\emph{conilpotent} if the graded coalgebra without counit
$D/\bar w(k)$ is conilpotent.
 One can easily see that a conilpotent graded coalgebra has
a unique coaugmentation.

 The graded tensor coalgebra $\bigoplus_{n=0}^\infty U^{\ot n}$
over~$k$ is conilpotent for any graded vector space~$U$.
 For the reasons that are clear from the following lemma, this
coalgebra can be called the \emph{conilpotent graded coalgebra
cofreely cogenerated by} the graded vector space~$U$.
 More generally, the $\R$\+free graded coalgebra
$\bigoplus_n\U^{\ot n}$ is the $\R$\+free graded coalgebra with
conilpotent reduction modulo~$\m$ cofreely cogenerated by
the free graded $\R$\+contramodule~$\U$.

 On the other hand, the $\R$\+free graded algebra
$\bigoplus_n\U^{\ot n}$ is freely generated, just as an $\R$\+free
graded algebra, by the free graded $\R$\+contramodule~$\U$.

\begin{lem}  \label{free-algebra-morphisms}
\textup{(a)} Let\/ $\B$ be an\/ $\R$\+free graded algebra and\/ $\U$
be a free graded\/ $\R$\+contramodule.
 Then morphisms of\/ $\R$\+free graded algebras\/ $\bigoplus_{n=0}^
\infty \U^{\ot n}\rarrow\B$ are determined by their restrictions
to the component\/ $\U^{\ot1}\simeq\U$.
 Conversely, any homogeneous\/ $\R$\+contramodule morphism\/
$\U\rarrow\B$ of degree~$0$ gives rise to a morphism of\/
$\R$\+free graded algebras\/ $\bigoplus_{n=0}^\infty
\U^{\ot n}\rarrow\B$. \par
\textup{(b)}
 Let\/ $\C$ be an\/ $\R$\+free graded coalgebra and\/ $\U$ be a free
graded\/ $\R$\+contramodule.
 Then morphisms of\/ $\R$\+free graded coalgebras\/ $\C\rarrow
\bigoplus_{n=0}^\infty\U^{\ot n}$ are determined by
their projections to the component\/ $\U^{\ot1}\simeq\U$.
 Conversely, if the graded coalgebra\/ $\C/\m\C$ is conilpotent with
the coaugmentation\/ $k\rarrow\C/\m\C$, then a homogeneous\/
$\R$\+contramodule morphism\/ $\C\rarrow\U$ of degree~$0$ gives
rise to a morphism of\/ $\R$\+free graded coalgebras\/
$\C\rarrow\bigoplus_{n=0}^\infty\U^{\ot n}$ if and only if
the composition $k\rarrow\C/\m\C\rarrow\U/\m\U$ vanishes.
\end{lem}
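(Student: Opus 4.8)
The plan is to treat part~(a) as the standard universal property of the tensor algebra and to concentrate the real work on the convergence statement in part~(b); throughout I abbreviate $T^c(\U)=\bigoplus_{n=0}^\infty\U^{\ot n}$ and write $\Delta_\C^{(m-1)}\:\C\rarrow\C^{\ot m}$ for the iterated comultiplication.

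For part~(a) I would define the algebra morphism $f\:T(\U)\rarrow\B$ on each summand as the composition $\U^{\ot n}\xrightarrow{\phi^{\ot n}}\B^{\ot^\R n}\rarrow\B$, where the second map is the iterated multiplication of $\B$ and $\phi\:\U\rarrow\B$ is the given morphism, while on $\U^{\ot0}=\R$ the map is the unit of~$\B$. Since the direct sum is the coproduct in $\R\contra$, these component maps assemble into a single morphism of graded $\R$\+contramodules with \emph{no} convergence condition to check. That $f$ is multiplicative and unital follows from associativity of $\B$ exactly as in the graded $k$\+algebra case, using the adjunction $\Hom^\R(\P\ot^\R\Q,\N)\simeq\Hom^\R(\P,\Hom^\R(\Q,\N))$ of Section~\ref{contra-operations} to reduce assertions about the tensor powers to their generators. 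Uniqueness is immediate, since an algebra morphism must send $u_1\ot\dotsb\ot u_n$ to the product of the images of the~$u_i$ and is therefore determined by its restriction to~$\U$.

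For the ``determined by projection'' assertion in part~(b) I would argue as in Lemma~\ref{free-algebra-derivations}(b): writing a coalgebra morphism $g\:\C\rarrow T^c(\U)$ as a family of components $g_m\:\C\rarrow\U^{\ot m}$, counit\+compatibility forces $g_0=\epsilon_\C$, while comparing the $(\U^{\ot1})^{\ot m}$ components of $\Delta_{T^c}\circ g$ and $(g\ot g)\circ\Delta_\C$ — using that full deconcatenation is the identity on $\U^{\ot m}$ and that $T^c(\U)\hookrightarrow\prod_n\U^{\ot n}$ is injective (Lemma~\ref{free-algebra-derivations}) — yields $g_m=g_1^{\ot m}\circ\Delta_\C^{(m-1)}$, so $g$ is recovered from its projection $g_1$ to~$\U$. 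For the ``only if'' half of the converse, reducing a hypothetical $g$ modulo~$\m$ sends the coaugmentation to a grouplike element $\bar w\in\C/\m\C$, so $\bar g(\bar w)$ is grouplike in $T^c(\U/\m\U)$ and lies in the honest direct sum; the only such grouplike element is $1\in(\U/\m\U)^{\ot0}$, whose $\U^{\ot1}$\+component vanishes, and since that component is $\bar g_1(\bar w)=\bar\psi(\bar w)$, the composition $k\rarrow\C/\m\C\rarrow\U/\m\U$ must be zero. For the substantive ``if'' direction I would set $g_m=\psi^{\ot m}\circ\Delta_\C^{(m-1)}$ and $g_0=\epsilon_\C$; each $g_m$ is an honest morphism of free graded $\R$\+contramodules, and the whole issue is to show that the family $(g_m)_m$ defines a morphism into the \emph{genuine} direct sum $T^c(\U)=\R[[\coprod_n X_n]]$ rather than only into the product. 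Here I would use that reduction modulo a closed ideal commutes with tensor products and with infinite direct sums (Sections~\ref{nakayama-sect} and~\ref{contra-operations}), so that $\C=\limpr_n\C/\om n\C$, \ $T^c(\U)=\limpr_n\bigoplus_m(\U/\om n\U)^{\ot m}$, and, because $\C$ is free, $\Hom^\R(\C,T^c(\U))=\limpr_n\Hom^{\R/\om n}(\C/\om n\C,\bigoplus_m(\U/\om n\U)^{\ot m})$. Since $\psi^{\ot m}\Delta^{(m-1)}$ is natural in the reduction, it then suffices to prove convergence over each Artinian local ring $\R/\om n$; compatibility over varying~$n$ is automatic, and the coalgebra identity $\Delta_{T^c}g=(g\ot g)\Delta_\C$ may also be checked over each $\R/\om n$, where it is the classical computation for the deconcatenation coproduct.

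The main obstacle is the Artinian\+local convergence statement: over $R=\R/\om n$ with nilpotent maximal ideal $\bar\m$, say $\bar\m^N=0$, one must show for each homogeneous basis element $x$ of the $R$\+free coalgebra $C=\C/\om n\C$ that $g_m(x)=0$ for all large~$m$, and I would prove this by induction on~$N$. The base case $N=1$ is the field case: conilpotency of $C/\bar\m C$ gives the coradical filtration $F_0\subset F_1\subset\dotsb$, and for $x\in F_p$ the reduced iterated comultiplication vanishes once $m>p$; since $\bar\psi$ kills the coaugmentation, every term of $\Delta^{(m-1)}(x)$ containing a coaugmentation slot is annihilated by $\psi^{\ot m}$, leaving $g_m(x)=\psi^{\ot m}\bar\Delta^{(m-1)}(x)=0$ for $m>p$. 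For the inductive step I would lift the coradical filtration to a filtration of $C$ by free $R$\+submodules adapted to a homogeneous basis, normalize a lift $w$ of the coaugmentation with $\epsilon(w)=1$ (splitting of the counit, Lemma~\ref{unit-split}(b)), and use the counit axiom to obtain the decomposition $\Delta(c)=w\ot c+c\ot w+\bar\Delta(c)$ on $C^+=\ker\epsilon$. The reduced comultiplication then respects the lifted filtration up to an error lying in $\bar\m(C^+\ot C^+)$, while $\psi(w)\in\bar\m V$ for $V=\U/\om n\U$; a bookkeeping count shows that, starting from $x$ in filtration level~$p$, producing $m$ tensor factors forces at least $m-p$ comultiplication steps to spend a factor of~$\bar\m$ — through either the error term or a $w$\+insertion — so that $g_m(x)\in\bar\m^{\,m-p}V^{\ot m}$ and hence $g_m(x)=0$ once $m\ge p+N$. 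This $\bar\m$\+power bookkeeping weighed against the lifted coradical filtration is the delicate point; everything else is formal reduction to the residue field.
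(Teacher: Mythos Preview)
Your overall strategy matches the paper's: reduce to a local ring with nilpotent maximal ideal by writing the direct sum as a projective limit of its reductions, then over such a ring track powers of the maximal ideal against the conilpotent filtration to show the components $g_m=\psi^{\ot m}\Delta^{(m-1)}$ eventually vanish on each element. Two points are worth noting.

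First, a technical slip in the reduction. You pass to the quotients $\R/\om n$ and call them ``Artinian local'', but the closures $\om n=\overline{\m^n}$ are closed ideals that need not be open, and $\R/\om n$ need not be Artinian (though its maximal ideal is indeed nilpotent, which is what your bookkeeping actually uses). The paper instead reduces over all \emph{open} ideals $\I\subset\R$: then $\R/\I$ is genuinely discrete Artinian by the pro-Artinian hypothesis, and the key identity $\bigoplus_\alpha\V_\alpha=\limpr_\I\bigoplus_\alpha\V_\alpha/\I\V_\alpha$ for free contramodules $\V_\alpha$ is immediate from the description $\R[[X]]=\limpr_\I\R/\I[X]$. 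With your system $\{\om n\}$ you would have to justify this limit formula separately before the reduction goes through.

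Second, your Artinian-case argument and the paper's are the same idea in slightly different packaging. You lift the whole coradical filtration and count $\bar\m$-insertions coming from $w$-slots and from filtration errors; the paper lifts only the coaugmentation to a rank-one free summand $N\subset C$, notes $\psi(N)\subset mU$, and then argues iteratively that after enough comultiplications the image of $c$ in $C^{\ot l+1}$ lies in $m^rC^{\ot l+1}$ plus a sum of tensors with at least $r$ factors equal to~$N$, whence $\psi^{\ot l+1}$ sends it into $m^rU^{\ot l+1}=0$. The paper's formulation avoids having to lift the full filtration by free submodules and makes the ``one more power of $m$ per step'' count explicit, which is exactly the part you flag as delicate.
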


\begin{proof}
 We will only prove part~(b), as the proof of part~(a) is similar
but much simpler.
 Since the map $\bigoplus_{n=0}^\infty\U^{\ot n}\rarrow
\prod_{n=0}^\infty\U^{\ot n}$ is injective, a morphism
$\C\rarrow\bigoplus_{n=2}^\infty\U^{\ot n}$ is determined by
its projections to $\U^{\ot n}$.
 For an $\R$\+free graded coalgebra morphism, the component
$\C\rarrow\U^{\ot n}$ is equal to the composition of the iterated
comultiplication map $\C\rarrow\C^{\ot n}$ and the $n$\+th
tensor power $\C^{\ot n}\rarrow\U^{\ot n}$ of
the component $\C\rarrow\U$.
 This proves the first assertion; to prove the ``only if'' part
of the second one, it suffices to notice that it holds for
$\R=k$ and apply the reduction modulo~$\m$.

 Assuming that $\C/\m\C$ is conilpotent, let us show that
an $\R$\+contramodule morphism $\C\rarrow\U$ of degree~$0$
for which the composition $k\rarrow\C/\m\C\rarrow\U/\m\U$
is zero gives rise to an $\R$\+free graded coalgebra morphism
into $\bigoplus_{n=0}^\infty\U^{\ot n}$.
 For any family of free $\R$\+contramodules $\V_\alpha$, one has
$\bigoplus_\alpha\V_\alpha=\limpr_\I\bigoplus_\alpha
\V_\alpha/\I\V_\alpha$, where the projective limit is taken over
all open ideals $\I\subset\R$.
 Hence it suffices to consider the case of a discrete Artinian
local ring $R$ in place of~$\R$.

 Let $m$ be the maximal ideal of~$R$; let $C$ be an $R$\+free
graded coalgebra such that $C/mC$ is conilpotent; let $U$ be
a free graded $R$\+module endowed with a graded $R$\+module
morphism $C\rarrow U$ such that the composition $k\rarrow C/mC
\rarrow U/mU$ vanishes.
 Given an element $c\in C$, we have to show that the composition
$C\rarrow C^{\ot n+1}\rarrow U^{\ot n+1}$ annihilates~$c$ for
$n$~large enough.

 Let $N\subset C$ be a free $R$\+submodule with one generator
such that the quotient module $C/N$ is free and the image of
the map $N/mN\rarrow C/mC$ coincides with the image of
$k\rarrow C/mC$ (see the proof of Lemma~\ref{unit-split}(b)).
 Then the image of the composition $N\rarrow C\rarrow U$ is
contained in~$mU$.
 Pick an integer $i\ge1$ such that $c$~is annihilated by
the composition $C\rarrow C/mC\rarrow ((C/mC)/k)^{\ot i+1}$
of the natural surjection with the iterated comultiplication map.
 Then the image of~$c$ in $C^{\ot i+1}$ belongs to the sum of
$mC^{\ot i+1}$ and $\sum_{j=0}^iC^{\ot j}\ot_RN\ot_RC^{\ot j-i}$.

 Repeat this procedure by applying the high enough iterated
comultiplication maps simultaneously to, e.~g., the first and
the last tensor factors in $C^{\ot i+1}$, etc.
 Proceeding in this way, we can find an integer~$l\ge1$ such that
the image of~$c$ in $C^{\ot l+1}$ belongs to the sum of
$m^rC^{\ot l+1}$ and all the tensor products of $l+1$ factors,
$r$~of which are the submodules $N$ and the remaining ones are
the whole of $C$, for any given~$r\ge1$.
 If $m^r=0$, it follows that the image of~$c$ in $U^{\ot l+1}$
vanishes.
\end{proof}

 The following constructions repeat those
of~\cite[Section~6.1]{Pkoszul}; the only difference is that 
$k$\+vector spaces are replaced with free $\R$\+contramodules.

 Let $\B=(\B,d,h)$ be an $\R$\+free CDG\+algebra; we assume that
$\B\ne0$.
 Let $v\:\B\rarrow\R$ be a homogeneous retraction onto
the image of the unit map $\R\rarrow\B$, i.~e., a morphism of
graded $\R$\+contramodules such that the composition
$\R\rarrow\B\rarrow\R$ is the identity map.
 Set $\V=\ker v\subset\B$, so $\B=\R\oplus\V$ as a graded
$\R$\+contramodule.
 Using this direct sum decomposition, we can split the multiplication
map $m\:\V\ot^\R\V\rarrow\B$, the differential $d\:\V\rarrow\B$,
and the curvature element $h\in\B$ into the components
$m=(m_\V,m_\R)$, \ $d=(d_\V,d_\R)$, and $h=(h_\V,h_\R)$, where
$m_\V\:\V\ot^\R\V\rarrow\V$, \ $m_\R\:\V\ot^\R\V\rarrow\R$, \
$d_\V\:\V\rarrow\V$, \ $d_\R\:\V\rarrow\R$, \ $h_\V\in\V$, and
$h_\R\in\R$.
 Notice that the restrictions of $m$ and~$d$ to the direct summands
$\R\ot^\R\V$, \ $\V\ot^\R\R$, \ $\R\ot^\R\R$, and $\R$ are uniquely
determined by the axioms of a graded algebra and its derivation.
 One has $h_\R=0$ for the dimension reasons unless $2=0$
in~$\Gamma$.

 Set $\B_+=\B/\R$.
 Let $\Br(\B)=\bigoplus_{n=0}^\infty\B_+^{\ot n}$ be the tensor
coalgebra of the free graded $\R$\+contramodule~$\B_+$.
 The $\R$\+free coalgebra $\Br(\B)$ is endowed with
the induced grading $|b_1\ot\dotsb\ot b_n| = |b_1|+\dotsb+|b_n|$,
the tensor grading~$n$, and the total grading $|b_1|+\dotsb+|b_n|-n$.
 All the gradings here are understood as direct sum decompositions
in $\R\contra^\free$.
 In the sequel, the total grading on $\Br(\B)$ will be generally
presumed.
 Equivalently, one can define the $\R$\+free (totally) graded
coalgebra $\Br(\B)$ as the tensor coalgebra of the free graded
$\R$\+contramodule $\B_+[1]$.

 Let $d_{\Br}$ be the odd coderivation of degree~$1$ on
$\Br(\B)$ whose compositions with the projection $\Br(\B)\rarrow\B_+$
are given by the rules $b_1\ot\dotsb\ot b_n\mpsto 0$ for $n\ge3$, \
$b_1\ot b_2\mpsto (-1)^{|b_1|+1}m_\V(b_1\ot b_2)$, \
$b\mpsto -d_\V(b)$, and $1\mpsto h_\V$, where $\B_+$ is identified
with $\V$ and $1\in\B_+^{\ot0}$.
 Let $h_{\Br}\:\Br(\B)\rarrow\R$ be the linear function given by
the formulas $h_{\Br}(b_1\ot\dotsb b_n)=0$ for $n\ge3$, \
$h_{\Br}(b_1\ot b_2)=(-1)^{|b_1|+1}h_\R(b_1\ot b_2)$, \
$h_{\Br}(b)=-d_\R(b)$, and $h_{\Br}(1)=h_\R$.
 Then the $\R$\+free graded coalgebra $\Br(\B)$ endowed with
the coderivation $d_{\Br}$ and the curvature linear function
$h_{\Br}$ is a CDG\+coalgebra.
 We will denote it $\Br_v(\B)$ and call the \emph{bar-construction}
of an $\R$\+free CDG\+algebra $\B$ endowed with a homogeneous
retraction $v\:\B\rarrow\R$ of the unit map.

 Given an $\R$\+free CDG\+algebra $\B$, changing a retraction
$v\:\B\rarrow\R$ to another one $v'\:\B\rarrow\R$ given by
the formula $v'(b)=v(b)+\alpha(b)$ leads to an isomorphism of
$\R$\+free CDG\+coalgebras $(\id,a)\:\Br_v(\B)\rarrow\Br_{v'}(\B)$,
where the linear function $a\:\Br(\B)\rarrow\R$ of degree~$1$
is obtained as the composition of the natural projection $\Br(\B)
\rarrow \B_+^{\ot 1}\simeq\B_+$ and the linear function
$\alpha\:\B_+\rarrow\R$ of degree~$0$.

 A morphism of $\R$\+free CDG\+algebras $(f,a)\:\B\rarrow\A$
is said to be \emph{weakly strict} if the element $a\in\A^1$
belongs to $\m\A_1$.
 In particular, if $\A$ and $\B$ are wcDG\+algebras, then, by
the definition, $(f,a)$ is a wcDG\+algebra morphism if and only
if it is a weakly strict CDG\+algebra morphism.

 To a weakly strict isomorphism of $\R$\+free CDG\+algebras
$(\id,a)\:(\B,d',h')\rarrow(\B,d,h)$ one can assign an isomorphism
of the corresponding bar-constructions of the form
$(f_{\Br},a_{\Br})\:\Br_v(\B,d',h')\rarrow\Br_v(\B,d,h)$
constructed as follows.
 Let $a_\V\in\V$ and $a_\R\in\R$ be the components of the element
$a\in\B$ with respect to the direct sum decomposition
$\B=\R\oplus\V$.

 Then the automorphism~$f_{\Br}$ of the $\R$\+free graded coalgebra
$\Br(\B)$ is defined by the rule that the composition of~$f_{\Br}$
with the projection $\Br(\B)\rarrow\B_+^{\ot 1}\simeq\B_+$
is equal to the sum of the same projection and minus the composition
of the projection $\Br(\B)\rarrow\B_+^{\ot 0}\simeq\R$
with the map $a_\V\:\R\rarrow\B_+$.
 A unique such automorphism~$f_{\Br}$ exists by
Lemma~\ref{free-algebra-morphisms}(b).
 The linear function $a_{\Br}\:\Br(\B)\rarrow\R$ is equal to
the composition of the projection $\Br(\B)\rarrow\R$ with
the map $a_\R\:\R\rarrow\R$.
 Notice that $a_\R$ and $a_{\Br}$ can be only nonzero when
$1=0$ in $\Gamma$, which can only happen when $2=0$ in~$\R$
(see~\cite[Section~1.1]{PP2}).

 Consequently, there is a functor from the category of
$\R$\+free CDG\+algebras and weakly strict morphisms between
them to the category of $\R$\+free CDG\+coalgebras assigning
to a CDG\+algebra $\B$ its bar-construction $\Br_v(\B)$.
 This functor takes wcDG\+algebras $\A$ over $\R$ to
$\R$\+free CDG\+coalgebras $\C=\Br(\A)$ whose reductions
$\C/\m\C$ are \emph{conilpotent CDG\+coalgebras} in the sense
of~\cite[Sections~6.1 and~6.4]{Pkoszul}.

 Recall the definition of the latter notion: a CDG\+coalgebra $C$
over a field~$k$ is called \emph{conilpotent} if it is conilpotent
as graded coalgebra and the homogeneous coaugmentation morphism
$\bar w\:k\rarrow C$ satisfies the equations $d\circ\bar w = 0 =
h\circ \bar w$ of compatibility with the CDG\+coalgebra structure.
 The definition of the category of conilpotent CDG\+coalgebras
requires a little care when the field~$k$ has characteristic~$2$:
a morphism of conilpotent CDG\+coalgebras $(f,a)\:C\rarrow D$ is 
a morphism of CDG\+coalgebras such that $a\circ\bar w=0$.

 Let $\C=(\C,d,h)$ be an $\R$\+free CDG\+coalgebra; we assume that
$\C\ne0$.
 Let $w\:\R\rarrow\C$ be a homogeneous section of the counit map
$\C\rarrow\R$, i.~e., a morphism of graded $\R$\+contramodules
such that the composition $\R\rarrow\C\rarrow\R$ is the identity map.
 Set $\W=\coker w$, so $\C=\R\oplus\W$ as a graded $\R$\+contramodule.
 Using this direct sum decomposition, we can split
the comultiplication map $\mu\:\C\rarrow\W\ot^\R\W$,
the differential $d\:\C\rarrow\W$, and the curvature linear function
$h\:\C\rarrow\R$ into the components $\mu=(\mu_\W,\mu_\R)$, \
$d=(d_\W,d_\R)$, and $h=(h_\W,h_\R)$, where
$\mu_\W\:\W\rarrow\W\ot^\R\W$, \ $\mu_\R\in\W\ot^\R\W$, \
$d_\W\:\W\rarrow\W$, \ $d_\R\in\W$, \ $h_\W\:\W\rarrow\R$, and
$h_\R\in\R$.
 Notice that compositions of $\mu$ and~$d$ with the projections
of $\C\ot^\R\C$ onto $\R\ot^\R\W$, \ $\W\ot^\R\R$, \ $\R\ot^\R\R$
and $\C$ onto $\R$ are uniquely determined by the axioms of
a graded coalgebra and a coderivation.
 One has $h_\R=0$ for the dimension reasons if $2\ne0$ in~$\Gamma$.

 Set $\C_+=\ker(\C\to\R)$ to be the kernel of the counit map.
 Let $\Cb(\C)=\bigoplus_{n=0}^\infty\C_+^{\ot n}$ be the tensor
algebra of the free graded $\R$\+contramodule~$\C_+$.
 The $\R$\+free algebra $\Cb(\C)$ is endowed with the induced
grading $|c_1\ot\dotsb\ot c_n|=|c_1|+\dotsb+|c_n|$, the tensor
grading~$n$, and the total grading $|c_1|+\dotsb+|c_n|+n$.
 All the gradings here are understood as direct sum decompositions
in $\R\contra^\free$.
 In the sequel, the total grading on $\Cb(\C)$ will be generally
presumed.
 Equivalently, one can define the $\R$\+free (totally) graded
coalgebra $\Cb(\C)$ as the tensor coalgebra of the free graded
$\R$\+contramodule $\C_+[-1]$.

 Let $d_{\Cb}$ be the odd derivation of degree~$1$ on $\Cb(\B)$
whose restriction to $\C_+\subset\Cb(\C)$ is given by
the formula $d(c) = (-1)^{|c_{(1,\W)}|+1}c_{(1,\W)}\ot c_{(2,\W)}
- d_\W(c) + h_\W(c)$, where $\C_+$ is identified with $\W$ and
$\mu_\W(c)=c_{(1,\W)}\ot c_{(2,\W)}$.
 Let $h_{\Cb}\in\Cb(\C)$ be the element given by the formula
$h_{\Cb} = (-1)^{|\mu_{(1,\R)}|+1}\mu_{(1,\R)}\ot\mu_{(2,\R)} - d_\R
+ h_\R$, where $\mu_\R=\mu_{(1,\R)}\ot\mu_{(2,\R)}$.
 Then the $\R$\+free graded algebra $\Cb(\C)$ endowed with
the derivation $d_{\Cb}$ and the curvature element $h_{\Cb}$ is
a CDG\+algebra.
 We will denote it $\Cb_w(\C)$ and call the \emph{cobar-construction}
of an $\R$\+free CDG\+coalgebra $\C$ endowed with a homogeneous
section $w\:\R\rarrow\C$ of the counit map.

 Given an $\R$\+free CDG\+coalgebra $\C$, changing a section
$w\:\R\rarrow\C$ to another one $w'\:\R\rarrow\C$
given by the rule $w'(1)=w(1)+\alpha$ leads to an isomorphism of
$\R$\+free CDG\+algebras $(\id,a)\:\Cb_{w'}(\C)\rarrow\Cb_w(\C)$,
where $a\in\Cb(\C)$ is the element of degree~$1$ corresponding to
$\alpha\in\C_+\subset\Cb(\C)$.

 To an isomorphism of $\R$\+free CDG\+coalgebras
$(\id,a)\:(\C,d,h)\rarrow(\C,d',h')$ one can assign an isomorphism
of the corresponding cobar-constructions of the form
$(f_{\Cb},a_{\Cb})\:\Cb_w(\C,d,h)\rarrow\Cb_w(\C,d',h')$
constructed as follows.
 Let $a_\W\:\W\rarrow\R$ and $a_\R\in\R$ be the components of
the linear function $a\:\C\rarrow\R$ with respect to the direct
sum decomposition $\C=\R\oplus\W$.
 Then the automorphism~$f_{\Cb}$ of the $\R$\+free graded algebra
$\Cb(\C)$ is given by the rule $c\mpsto c-a_\W(c)$, where
$c\in\C_+^{\ot 1}\simeq\C_+$.
 The element $a_{\Cb}\in\Cb(\C)$ is equal to $a_\R\in\R\simeq
\C_+^{\ot0}$.
 Notice that $a_\R$ and $a_{\Cb}$ are always zero unless $1=0$ in
$\Gamma$, which implies $2=0$ in~$\R$.

 Consequently, there is a functor from the category of $\R$\+free
CDG\+coalgebras to the category of $\R$\+free CDG\+algebras
assigning to a CDG\+coalgebra $\C$ its cobar-construction
$\Cb_w(\C)$.
 When the CDG\+coalgebra $\C/\m\C$ is \emph{coaugmented}
\cite[Section~6.1]{Pkoszul}, one can pick a section $w\:\R\rarrow\C$
such that its reduction $\bar w\:k\rarrow\C/\m\C$ is the
coaugmentation.
 This makes $\C\mpsto\Cb_w(\C)$ a functor from the category of
$\R$\+free CDG\+coalgebras $(\C,d,h)$ with coaugmented (and, in
particular, conilpotent) reductions $(\C/\m\C\;d/\m d\;h/\m h)$
to the category of wcDG\+algebras over~$\R$.

 Here we recall that a CDG\+coalgebra $C$ over~$k$ is said to be
\emph{coaugmented} if it is endowed with a morphism of CDG\+coalgebras
$(\bar w,0)\:(k,0,0)\rarrow(C,d,h)$, or equivalently, a morphism of
graded coalgebras $\bar w\:k\rarrow C$ such that
$d\circ \bar w = 0 = h\circ \bar w$.
 A morphism of coaugmented CDG\+coalgebras $(f,a)\:C\rarrow D$
is a morphism of CDG\+coalgebras such that $a\circ\bar w = 0$
(a condition nontrivial in characteristic~$2$ only).

\subsection{Twisting cochains}  \label{twisting-cochains-sect}
 Let $\C=(\C,d_\C,h_\C)$ be an $\R$\+free CDG\+coalgebra and
$\B=(\B,d_\B,h_\B)$ be an $\R$\+free CDG\+algebra.
 Introduce a CDG\+algebra structure on the graded $\R$\+contramodule
of homogeneous $\R$\+contramodule homomorphisms $\Hom^\R(\C,\B)$ in
the following way.
 The multiplication in $\Hom^\R(\C,\B)$ is defined as the composition
$\Hom^\R(\C,\B)\ot^\R\Hom^\R(\C,\B)\rarrow\Hom^\R(\C\ot^\R\C\;
\B\ot^\R\B)\rarrow\Hom^\R(\C,\B)$, the second map being induced by
the comultiplication in $\C$ and the multiplication in~$\B$.
 The left-right and sign rule is $(fg)(c)=(-1)^{|g||c_{(1)}|}
f(c_{(1)})g(c_{(2)})$.
 The differential is given by the conventional rule
$d(f)(c)=d_\B(f(c))-(1)^{|f|}f(d_\C(c))$.
 The curvature element in $\Hom^\R(\C,\B)$ is defined by the
formula $h(c)=\varepsilon(c)h_\B-h_\C(c)e$, where $\varepsilon\:
\C\rarrow\R$ is the counit map and $e\in\B$ is the unit element.

 A homogeneous $\R$\+contramodule map $\tau\:\C\rarrow\B$ of
degree~$1$ is called a \emph{twisting cochain} if it satisfies
the equation $\tau^2+d\tau+h=0$ with respect to the above-defined
CDG\+algebra structure on $\Hom^\R(\C,\B)$
(see~\cite[Section~6.2]{Pkoszul} and the references therein).
 Given a morphism of $\R$\+free CDG\+algebras $(f,a)\:\B\rarrow\A$
and a twisting cochain $\tau\:\C\rarrow\B$, one constructs
the twisting cochain $(f,a)\circ\tau\:\C\rarrow\A$ by the rule
$(f,a)\circ\tau = f\circ\tau + a\circ\varepsilon_\C$.
 Given a morphism of $\R$\+free CDG\+coalgebras
$(g,a)\:\D\rarrow\C$ and a twisting cochain $\tau\:\C\rarrow\B$,
one constructs the twisting cochain $\tau\circ(g,a)\:\D\rarrow\B$
by the rule $\tau\circ(g,a)=\tau\circ g - e_\B\circ a$.

 Let $\C$ be an $\R$\+free CDG\+coalgebra and $w\:\R\rarrow\C$ be
a homogeneous section of the counit map.
 Then the composition $\tau=\tau_{\C,w}\:\C\rarrow\Cb(\C)$ of
the maps $\C\rarrow\W\simeq\C_+\simeq\C_+^{\ot 1}\rarrow\Cb(\C)$
is a twisting cochain for $\C$ and $\Cb_w(\C)$.
 Let $\B$ be an $\R$\+free CDG\+algebra and $v\:\C\rarrow\R$ be
a homogeneous retraction of the unit map.
 Then minus the composition $\Br(\B)\rarrow\B_+^{\ot 1}\simeq
\B_+\simeq\V\rarrow\B$ is a twisting cochain $\tau=\tau_{\B,v}\:
\Br_v(\B)\rarrow\B$ for $\Br_v(\B)$ and~$\B$.

 Let $\tau\:\C\rarrow\B$ be a twisting cochain for an $\R$\+free
CDG\+coalgebra $\C$ and an $\R$\+free CDG\+algebra~$\B$.
 Then for any $\R$\+free left CDG\+module $\M$ over $\B$ there is
a natural structure of $\R$\+free left CDG\+comodule over $\C$ on
the tensor product $\C\ot^\R\M$.
 Namely, the coaction of $\C$ in $\C\ot^\R\M$ is induced by
the left coaction of $\C$ in itself, while the differential on
$\C\ot^\R\M$ is given by the formula
$d(c\ot x) = d(c)\ot x + (-1)^{|c|}c\ot d(x) + (-1)^{|c_{(1)}|}
c_{(1)}\ot \tau(c_{(2)})x$, where $c\mpsto c_{(1)}\ot c_{(2)}$
denotes the comultiplication in $\C$, while $b\ot x\mpsto bx$ 
is the left action of $\B$ in~$\M$.
 We will denote the free graded $\R$\+contramodule $\C\ot^\R\M$
with this CDG\+comodule structure by $\C\ot^\tau\M$.

 Furthermore, for any $\R$\+free left CDG\+comodule $\N$ over $\C$
there is a natural structure of $\R$\+free left CDG\+module over $\B$
on the tensor product $\B\ot^\R\N$.
 Namely, the action of $\B$ in $\B\ot^\R\N$ is induced by the left
action of $\B$ in itself, while the differential on $\B\ot^\R\N$
is given by the formula $d(b\ot y) = d(b)\ot y + (-1)^{|b|}b\ot d(y)
- (-1)^{|b|}b\tau(y_{(-1)})\ot y_{(0)}$, where $y\mpsto y_{(-1)}
\ot y_{(0)}$ denotes the left coaction of $\C$ in $\N$, while
$b\ot b'\mpsto bb'$ is the multiplication in~$\B$.
 We will denote the free graded $\R$\+contramodule $\B\ot^\R\N$
with this CDG\+module structure by $\B\ot^\tau\N$.

 The correspondences assigning to an $\R$\+free CDG\+module $\M$
over $\B$ the $\R$\+free CDG\+comodule $\C\ot^\tau\M$ over $\C$ and
to an $\R$\+free CDG\+comodule $\N$ over $\C$ the $\R$\+free
CDG\+module $\B\ot^\tau\N$ over $\B$ can be extended to
DG\+functors whose action on morphisms is given by the standard
formulas $f_*(c\ot x) = (-1)^{|f||c|}c\ot f(x)$ and
$g_*(b\ot y) = (-1)^{|g||b|}b\ot g(y)$.
 The DG\+functor $\C\ot^\tau{-}\:\B\mod\Rfr\rarrow\C\comod\Rfr$ is
right adjoint to the DG\+functor $\B\ot^\tau{-}\:\C\comod\Rfr
\rarrow\B\mod\Rfr$.

 Similarly, for any $\R$\+free right CDG\+module $\M$ over $\B$
there is a natural structure of right CDG\+comodule
over $\C$ on the tensor product $\M\ot^\R\C$.
 The coaction of $\C$ in $\M\ot^\R\C$ is induced by the right
coaction of $\C$ in itself, and the differential on $\M\ot^\R\C$
is given by the formula $d(x\ot c) = d(x)\ot c + (-1)^{|x|}
x\ot d(c) - (-1)^{|x|}x\tau(c_{(1)})\ot c_{(2)}$.
 We will denote the free graded $\R$\+contramodule $\M\ot^\R\C$
with this CDG\+comodule structure by $\M\ot^\tau\C$.
 For any $\R$\+free right CDG\+comodule $\N$ over $\C$ there is
a natural structure of right CDG\+module over $\B$ on the tensor
product $\N\ot^\R\B$.
 Namely, the action of $\B$ in $\N\ot^\R\B$ is induced by
the right action of $\B$ in itself, and the differential on
$\N\ot^\R\B$ is given by the formula $d(y\ot b) = d(y)\ot b +
(-1)^{|y|}y\ot d(b) + (-1)^{|y_{(0)}|}y_{(0)}\ot\tau(y_{(1)})b$,
where $y\mpsto y_{(0)}\ot y_{(1)}$ denotes the right coaction
of $\C$ in~$\N$.
 We will denote the free graded $\R$\+contramodule $\N\ot^\R\B$
with this CDG\+module structure by $\N\ot^\tau\B$.

 For any $\R$\+contramodule left CDG\+module $\P$ over $\B$ there
is a natural structure of left CDG\+contramodule over $\C$ on
the graded $\R$\+contramodule $\Hom^\R(\C,\P)$.
 The contraaction of $\C$ in $\Hom^\R(\C,\P)$ is induced by
the right coaction of $\C$ in itself as explained
in Sections~\ref{r-free-graded-co} and~\ref{non-adj-graded-co}
(for the sign rule, see~\cite[Section~2.2]{Pkoszul}).
 The differential on $\Hom^\R(\C,\P)$ is given by the formula
$d(f)(c) = d(f(c)) - (-1)^{|f|}f(d(c)) + (-1)^{|f||c_{(1)}|}
\tau(c_{(1)})f(c_{(2)})$ for $f\in\Hom^\R(\C,\P)$.
 Here the third summand is interpreted as the $\R$\+contramodule
morphism $\Hom^\R(\C,\P)\rarrow\Hom^\R(\C,\P)$ corresponding
to the $\R$\+contramodule morphism $\C\ot^\R\Hom^\R(\C,\P)\rarrow\P$
defined as the composition $\C\ot^\R\Hom^\R(\C,\P)\rarrow\C\ot^\R\C
\ot^\R\Hom^\R(\C,\P)\rarrow\B\ot^\R\C\ot^\R\Hom^\R(\C,\P)\rarrow
\B\ot^\R\P\rarrow\P$.
 We will denote the graded $\R$\+contramodule $\Hom^\R(\C,\P)$ 
with this CDG\+contramodule structure by $\Hom^\tau(\C,\P)$.

 For any $\R$\+contramodule left CDG\+contramodule $\Q$ over $\C$
there is a natural structure of left CDG\+module over $\B$ on
the graded $\R$\+contramodule $\Hom^\R(\B,\Q)$.
 The action of $\B$ in $\Hom^\R(\B,\Q)$ is induced by the right
action of $\B$ in itself (see Section~\ref{r-free-graded}).
 The differential on $\Hom^\R(\B,\Q)$ is given by the formula
$d(f)(b) = d(f(b)) - (-1)^{|f|}f(d(b)) - \pi(c\mapsto
(-1)^{|f|+|c||b|}f(\tau(c)b)$, where $\pi$ denotes the contraaction
map $\Hom^\R(\C,\Q)\rarrow\Q$.
 Here the third summand is interpreted as the $\R$\+contramodule
morphism $\Hom^\R(\B,\Q)\rarrow\Hom^\R(\B,\Q)$ corresponding to
the $\R$\+contramodule morphism $\B\ot^\R\Hom^\R(\B,\Q)\rarrow\Q$
defined as the composition $\B\ot^\R\Hom^\R(\B,\Q)\rarrow
\Hom^\R(\C,\B)\ot^\R\Hom^\R(\B,\Q)\rarrow\Hom^\R(\C,\Q)\rarrow\Q$,
where the morphism $\B\rarrow\Hom^\R(\C,\B)$ corresponds to
the composition $\C\ot^\R\B\rarrow\B\ot^\R\B\rarrow\B$.
 We denote the graded $\R$\+contramodule $\Hom^\R(\B,\Q)$ with
this CDG\+module structure by $\Hom^\tau(\B,\Q)$.
{\emergencystretch=1em\par}

 The correspondences assigning to an $\R$\+contramodule
CDG\+module $\P$ over $\B$ the $\R$\+contramodule CDG\+contramodule
$\Hom^\tau(\C,\P)$ over $\C$ and to an $\R$\+contramodule
CDG\+contramodule $\Q$ over $\C$ the $\R$\+contramodule
CDG\+module $\Hom^\tau(\B,\Q)$ over $\B$ can be extended to
DG\+functors whose action on morphisms is given by the standard
formula $g_*(f)=g\circ f$ for $f\:\C\rarrow\P$ or $f\:\B\rarrow\Q$.
 The DG\+functor $\Hom^\tau(\C,{-})\:\B\mod\Rctr\rarrow
\C\contra\Rctr$ is left adjoint to the DG\+functor
$\Hom^\tau(\B,{-})\:\C\contra\Rctr\rarrow\B\mod\Rctr$.

 Analogously, for any $\R$\+comodule left CDG\+module $\cM$ over $\B$
there is a natural structore of $\R$\+comodule left CDG\+comodule
over $\B$ on the contratensor product $\C\ocn_\R\cM$.
 Namely, the coaction of $\C$ in $\C\ocn_\R\cM$ is induced by
the left coaction of $\C$ in itself, and the differential on
$\C\ocn_\R\cM$ is defined in terms of the differentials on $\C$
and $\cM$, the twisting cochain~$\tau$, the comultiplication in $\C$,
and the action of $\B$ in $\cM$ by the formula above.
 We will denote the graded $\R$\+comodule $\C\ocn_\R\cM$ with
this CDG\+comodule structure by $\C\ocn^\tau\cM$.

 For any $\R$\+comodule left CDG\+comodule $\cN$ over $\C$ there is
a natural structure of $\R$\+comodule left CDG\+module over $\B$
on the contratensor product $\B\ocn_\R\cN$.
 The action of $\B$ in $\B\ocn_\R\cN$ is induced by the left
action of $\B$ in itself, and the differential on $\B\ocn_\R\cN$
is defined in terms of the differentials on $\B$ and $\cN$,
the twisting cochain~$\tau$, the multiplication in $\B$, and
the coaction of $\C$ in $\cN$ by the formula above.
 We will denote the graded $\R$\+comodule $\B\ocn_\R\cN$ with
this CDG\+module structure by $\B\ocn^\tau\cN$.

 The correspondences assigning to an $\R$\+comodule CDG\+module
$\cM$ over $\B$ the $\R$\+comodule CDG\+comodule $\C\ocn^\tau\cM$
over $\C$ and to an $\R$\+comodule CDG\+comodule $\cN$ over $\C$
the $\R$\+comodule CDG\+module $\B\ocn^\tau\cN$ over $\B$ can be
extended to DG\+functors whose action on morphisms is given
by the standard formulas above.
 The DG\+functor $\C\ocn^\tau{-}\:\B\mod\Rco\rarrow\C\comod\Rco$
is right adjoint to the DG\+functor $\B\ocn^\tau\:\C\comod\Rco
\rarrow\B\mod\Rco$.

 Similarly, for any $\R$\+comodule right CDG\+module $\cM$ over
$\B$ there is a natural structure of right CDG\+comodule over $\C$
on the contratensor product $\cM\ocn_\R\C$ (see
Section~\ref{non-adj-graded} for the definition of such
contratensor product).
 The coaction of $\C$ in $\cM\ocn_\R\C$ is induced by the right
coaction of $\C$ in itself, and the differential is given by
the formula above.
 We will denote the graded $\R$\+comodule $\cM\ocn_\R\C$ with
this CDG\+comodule structure by $\cM\ocn^\tau\C$.
 For any $\R$\+comodule right CDG\+comodule $\cN$ over $\C$ there is
a natural structure of right CDG\+module over $\B$ on
the contratensor product $\cN\ocn_\R\B$.
 The action of $\B$ in $\cN\ocn_\R\B$ is induced by the right action
of $\B$ in itself, and the differential is given by the formula
above.
 We will denote the graded $\R$\+comodule $\cN\ocn_\C\B$ with this
CDG\+module structure by $\cN\ocn^\tau\B$.

 For any $\R$\+cofree left CDG\+module $\cP$ over $\B$ there is
a natural structure of left CDG\+contramodule over $\C$ on
the cofree graded $\R$\+comodule $\Ctrhom_\R(\C,\cP)$.
 The contraaction of $\C$ in $\Ctrhom_\R(\C,\cP)$ is induced by
the right coaction of $\C$ in itself as explained in
Section~\ref{r-cofree-graded-co}.
 The differential on $\Ctrhom_\R(\C,\cP)$ is defined in terms of
the differentials on $\C$ and $\cP$, the twisting cochain~$\tau$,
the comultiplication in $\C$, and the action of $\B$ in $\cP$
by the formula above.
 The twisting term is constructed as the $\R$\+comodule morphism
$\Ctrhom_\R(\C,\cP)\rarrow\Ctrhom_\R(\C,\cP)$ corresponding to
the $\R$\+comodule morphism $\C\ocn_\R\Ctrhom_\R(\C,\cP)\rarrow
\cP$ defined as the composition $\C\ocn_\R\Ctrhom_\R(\C,\cP)\rarrow
\C\ot^\R\C\ocn_\R\Ctrhom^\R(\C,\cP)\rarrow\B\ot^\R\C\ocn_\R
\Ctrhom_\R(\C,\cP)\rarrow\B\ocn_\R\cP\rarrow\cP$.
 We will denote the cofree graded $\R$\+comodule $\Ctrhom_\R(\C,\cP)$
with this CDG\+contramodule structure by $\Ctrhom^\tau(\C,\cP)$.

 For any $\R$\+cofree left CDG\+contramodule $\cQ$ over $\C$ there is
a natural structure of left CDG\+module over $\B$ on the cofree
graded $\R$\+comodule $\Ctrhom_\R(\B,\cQ)$.
 The action of $\B$ in $\Ctrhom_\R(\B,\cQ)$ is induced by the right
action of $\B$ in itself (see Section~\ref{r-cofree-graded}).
 The differential on $\Ctrhom_\R(\B,\cQ)$ is defined in terms of
the differentials on $\B$ and $\cQ$, the twisting cochain~$\tau$,
the multiplication in $\B$, and the contraaction of $\C$ in $\cQ$
by the formula above.
 The twisting term is constructed as the $\R$\+comodule morphism
$\Ctrhom_\R(\B,\cQ)\rarrow\Ctrhom_\R(\B,\cQ)$ corresponding to
the $\R$\+comodule morphism $\B\ocn_\R\Ctrhom_\R(\B,\cQ)\rarrow\cQ$
defined as the composition $\B\ocn_\R\Ctrhom_\R(\B,\cQ)\rarrow
\Hom^\R(\C,\B)\ocn_\R\Ctrhom_\R(\B,\cQ)\rarrow\Ctrhom_\R(\C,\cQ)
\rarrow\cQ$, where the morphism $\B\rarrow\Hom^\R(\C,\B)$ was
defined above.
 The natural morphism $\Hom^\R(\C,\B)\ocn_\R\Ctrhom_\R(\B,\cQ)
\rarrow\Ctrhom_\R(\C,\cQ)$ corresponds to the composition
$\Ctrhom_\R(\B,\cQ)\rarrow\Ctrhom_\R(\C\ot^\R\Hom^\R(\C,\B)\;\cQ)
\simeq\Ctrhom_\R(\Hom^\R(\C,\B),\Ctrhom_\R(\C,\cQ))$.
 We will denote the cofree graded $\R$\+co\-module $\Ctrhom_\R(\B,\cQ)$
with this CDG\+module structure by $\Ctrhom^\tau(\B,\cQ)$.

 The correspondences assigning to an $\R$\+cofree CDG\+module
$\cP$ over $\B$ the $\R$\+cofree CDG\+contramodule
$\Ctrhom^\tau(\C,\cP)$ over $\C$ and to an $\R$\+cofree
CDG\+contramodule $\cQ$ over $\C$ the $\R$\+cofree CDG\+module
$\Ctrhom^\tau(\B,\cQ)$ over $\B$ can be extended to DG\+functors
whose action on morphisms is given by the standard formula above.
 The DG\+functor $\Ctrhom^\tau(\C,{-})\:\B\mod\Rcof\rarrow
\C\contra\Rcof$ is left adjoint to the DG\+functor
$\Ctrhom^\tau(\B,{-})\:\C\contra\Rcof\rarrow\B\mod\Rcof$.

\subsection{Conilpotent duality}  \label{conilpotent-sect}
 Let $\A$ be a wcDG\+algebra over $\R$ and $\C$ be an $\R$\+free
CDG\+coalgebra such that the CDG\+coalgebra $\C/\m\C$ over~$k$
is conilpotent with the coaugmentation map $\bar w\:k\rarrow\C/\m\C$.
 Let $\tau\:\C\rarrow\A$ be a twisting cochain such that
the composition $k\rarrow\C/\m\C\rarrow\A/\m\A$ vanishes.

\begin{thm}  \label{acyclic-cochain-duality}
 Assume that the twisting cochain $\bar\tau=\tau/\m\tau\:
\C/\m\C\rarrow\A/\m\A$ is \emph{acyclic} in the sense
of~\textup{\cite[\emph{Section}~6.5]{Pkoszul}}.
 Then \par
\textup{(a)} the functors\/ $\C\ot^\tau{-}\:H^0(\A\mod\Rfr)
\rarrow H^0(\C\comod\Rfr)$ and\/ $\A\ot^\tau{-}\:H^0(\C\comod\Rfr)
\rarrow H^0(\A\mod\Rfr)$ induce functors
$$
 \sD^\si(\A\mod\Rfr)\lrarrow\sD^\co(\C\comod\Rfr)
$$
and
$$
 \sD^\co(\C\comod\Rfr)\lrarrow\sD^\si(\A\mod\Rfr),
$$
which are mutually inverse equivalences of triangulated categories; \par
\textup{(b)} the functors\/ $\Hom^\tau(\C,{-})\:H^0(\A\mod\Rctr)
\rarrow H^0(\C\contra\Rctr)$ and\/ $\Hom^\tau(\A,{-})\:
H^0(\C\contra\Rctr)\rarrow H^0(\A\mod\Rctr)$ induce functors
{\hbadness=2600
$$
 \sD^\si(\A\mod\Rctr)\lrarrow\sD^\ctr(\C\contra\Rctr)
$$
and }
$$
 \sD^\ctr(\C\contra\Rctr)\lrarrow\sD^\si(\A\mod\Rctr),
$$
which are mutually inverse equivalences of triangulated categories; \par
\textup{(c)} the functors\/ $\C\ocn^\tau{-}\:H^0(\A\mod\Rco)
\rarrow H^0(\C\comod\Rco)$ and\/ $\A\ocn^\tau{-}\:H^0(\C\comod\Rco)
\rarrow H^0(\A\mod\Rco)$ induce functors
$$
 \sD^\si(\A\mod\Rco)\lrarrow\sD^\co(\C\comod\Rco)
$$
and
$$
 \sD^\co(\C\comod\Rco)\lrarrow\sD^\si(\A\mod\Rco),
$$
which are mutually inverse equivalences of triangulated categories; \par
\textup{(d)} the functors\/ $\Ctrhom^\tau(\C,{-})\:
H^0(\A\mod\Rcof)\rarrow H^0(\C\contra\Rcof)$ and\/
$\Ctrhom^\tau(\A,{-})\:H^0(\C\contra\Rcof)\rarrow H^0(\A\mod\Rcof)$
induce functors
$$
 \sD^\si(\A\mod\Rcof)\lrarrow\sD^\ctr(\C\contra\Rcof)
$$
and
$$
 \sD^\ctr(\C\contra\Rcof)\lrarrow\sD^\si(\A\mod\Rcof),
$$
which are mutually inverse equivalences of triangulated categories; \par
\textup{(e)} the above equivalences of triangulated categories
form a commutative diagram with the equivalences of categories\/
$\Phi_\R=\Psi_\R^{-1}$ of Sections~\textup{\ref{r-cofree-semi}}
and~\textup{\ref{r-cofree-co-derived}}, $\boL\Phi_\R=\boR\Psi_\R^{-1}$
of Proposition~\textup{\ref{non-adj-r-co-contra}},
$\boL\Phi_\C=\boR\Psi_\C^{-1}$ of
Corollaries~\textup{\ref{r-free-derived-co-contra}}
and~\textup{\ref{r-cofree-derived-co-contra}}, $\boL\Phi_{\R,\C}=
\boR\Psi_{\R,\C}^{-1}$ of
Corollary~\textup{\ref{non-adj-derived-co-contra}},
the equivalences of semiderived categories from
Section~\textup{\ref{wcdg-semiderived}}, and the equivalences
of contra/coderived categories from
Theorem~\textup{\ref{non-adj-co-derived-comp}}.
 In other words, the following square diagrams of triangulated
equivalences are commutative:
$$
\begin{diagram}
\node{\llap{$\C\ot^\tau{-}$}\:\sD^\si(\A\mod\Rfr)}
\arrow{e,=} \arrow{s,lr,=}{\Phi_\R}{\boR\Psi_\R}
\node{\sD^\co(\C\comod\Rfr)\,\,\:\!\rlap{$\A\ot^\tau{-}$}}
\arrow{s,lr,=}{\Phi_\R}{\boR\Psi_\R} \\
\node{\llap{$\C\ocn^\tau{-}$}\:\sD^\si(\A\mod\Rco)}
\arrow{e,=}
\node{\sD^\co(\C\comod\Rco)\,\,\:\!\rlap{$\A\ocn^\tau{-}$}}
\end{diagram}
$$
$$
\begin{diagram}
\node{\llap{$\Hom^\tau(\C,{-})$}\:\sD^\si(\A\mod\Rctr)}
\arrow{e,=} \arrow{s,lr,=}{\boL\Phi_\R}{\Psi_\R}
\node{\sD^\co(\C\contra\Rctr)\,\,\:\!\rlap{$\Hom^\tau(\A,{-})$}}
\arrow{s,lr,=}{\boL\Phi_\R}{\Psi_\R} \\
\node{\llap{$\Ctrhom^\tau(\C,{-})$}\:\sD^\si(\A\mod\Rcof)}
\arrow{e,=}
\node{\sD^\co(\C\contra\Rcof)\,\,\:\!\rlap{$\Ctrhom^\tau(\A,{-})$}}
\end{diagram}
$$
$$
\begin{diagram}
\node{\llap{$\C\ot^\tau{-}$}\:\sD^\si(\A\mod\Rfr)}
\arrow{e,=} \arrow{s,=}
\node{\sD^\co(\C\comod\Rfr)\,\,\:\!\rlap{$\A\ot^\tau{-}$}}
\arrow{s,lr,=}{\boR\Psi_\C}{\boL\Phi_\C} \\
\node{\llap{$\Hom^\tau(\C,{-})$}\:\sD^\si(\A\mod\Rctr)}
\arrow{e,=} 
\node{\sD^\co(\C\contra\Rctr)\,\,\:\!\rlap{$\Hom^\tau(\A,{-})$}}
\end{diagram}
$$
$$
\begin{diagram}
\node{\llap{$\C\ocn^\tau{-}$}\:\sD^\si(\A\mod\Rco)}
\arrow{e,=} \arrow{s,=}
\node{\sD^\co(\C\comod\Rco)\,\,\:\!\rlap{$\A\ocn^\tau{-}$}}
\arrow{s,lr,=}{\boR\Psi_\C}{\boL\Phi_\C} \\
\node{\llap{$\Ctrhom^\tau(\C,{-})$}\:\sD^\si(\A\mod\Rcof)}
\arrow{e,=}
\node{\sD^\co(\C\contra\Rcof)\,\,\:\!\rlap{$\Ctrhom^\tau(\A,{-})$}}
\end{diagram}
$$
$$
\begin{diagram}
\node{\llap{$\C\ocn^\tau{-}$}\:\sD^\si(\A\mod\Rco)}
\arrow{e,=} \arrow{s,lr,=}{\boR\Psi_\R}{\boL\Phi_\R}
\node{\sD^\co(\C\comod\Rco)\,\,\:\!\rlap{$\A\ocn^\tau{-}$}}
\arrow{s,lr,=}{\boR\Psi_{\R,\C}}{\boL\Phi_{\R,\C}} \\
\node{\llap{$\Hom^\tau(\C,{-})$}\:\sD^\si(\A\mod\Rctr)}
\arrow{e,=} 
\node{\sD^\co(\C\contra\Rctr)\,\,\:\!\rlap{$\Hom^\tau(\A,{-})$}}
\end{diagram}
$$
\end{thm}

\begin{proof}
 The assertions about the existence of induced functors in
parts~(a\+d) do not depend on the acyclicity assumption on~$\bar\tau$;
the assertions about the induced functors being equivalences of
categories do.
 Part~(a): the functor $\A\ot^\tau{-}$ takes coacyclic $\R$\+free
CDG\+comodules over $\C$ to contractible wcDG\+modules over~$\A$.
 Indeed, whenever $\N$ is the total CDG\+comodule of an short
exact sequence of $\R$\+free CDG\+comodules, $\A\ot^\tau\N$ is
the total wcDG\+module of a short exact sequence of wcDG\+modules
that is split as a short exact sequence of graded $\A$\+modules.
 Furthermore, the functor $\C\ot^\tau{-}$ takes semiacyclic
$\R$\+free wcDG\+modules $\M$ to contractible CDG\+comodules
$\N=\C\ot^\tau\M$.
 Indeed, the $\R$\+free graded $\C$\+comodule $\N$ is injective,
and the CDG\+comodule $\N/\m\N$ over $\C/\m\C$ is contractible by
\cite[proofs of Theorems~6.3 and~6.5]{Pkoszul}, so it remains to
apply Lemma~\ref{contractible-reduction-co}.
 The cone of the adjunction map $\A\ot^\tau\C\ot^\tau\M\rarrow\M$
is semiacyclic for any $\R$\+free wcDG\+module $\M$, because
the reduction modulo~$\m$ of this cone is acyclic
by~\cite[Theorem~6.5(a)]{Pkoszul}.
 The cone of the adjunction map $\N\rarrow\C\ot^\tau\A\ot^\tau\N$
is coacyclic for any $\R$\+free CDG\+comodule $\N$ by the same
result from~\cite{Pkoszul} and according to
Corollary~\ref{r-free-co-acycl-reduction}.

 Part~(b): the functor $\Hom^\tau(\A,{-})$ takes contraacyclic
$\R$\+contramodule CDG\+con\-tramodules to contraacyclic
$\R$\+contramodule wcDG\+modules, since it preserves short exact
sequences and infinite products of $\R$\+contramodule
CDG\+contramodules.
 The functor $\Hom^\tau(\C,{-})$ takes contraacyclic
$\R$\+contramodule wcDG\+modules to contraacyclic $\R$\+cotramodule
CDG\+contramodules for the same reason.
 It also takes semiacyclic $\R$\+free wcDG\+modules to contractible
CDG\+contramodules, for the reasons explained in the proof of
part~(a).
 Hence the induced adjoint functors $\sD^\si(\A\mod\Rctr)\rarrow
\sD^\ctr(\C\contra\Rctr)$ and $\sD^\ctr(\C\contra\Rctr)\rarrow
\sD^\si(\A\mod\Rctr)$ exist.
 To check that they are mutually inverse equivalences, it suffices
to consider them as functors $\sD^\si(\A\mod\Rfr)\rarrow
\sD^\ctr(\C\contra\Rfr)$ and $\sD^\ctr(\C\contra\Rfr)\rarrow
\sD^\si(\A\mod\Rfr)$, restricting both constructions to
$\R$\+free wcDG\+modules and $\R$\+free CDG\+contramodules.
 Then one can argue as in the proof of part~(a) above.

 The proof of parts (c) and~(d) are similar to the proofs of parts
(b) and~(a), respectively.
 To prove part~(e), notice that for any $\R$\+free wcDG\+module
$\M$ over $\A$, there is a natural isomorphism of $\R$\+cofree
CDG\+comodules $\Phi_\R(\C\ot^\tau\M)\simeq\C\ocn^\tau\Phi_\R(\M)$
over~$\C$.
 For any $\R$\+cofree wcDG\+module $\cP$ over $\A$, there is
a natural isomorphism of $\R$\+free CDG\+contramodules
$\Psi_\R(\Ctrhom^\tau(\C,\cP))\simeq\Hom^\tau(\C,\Psi_\R(\cP))$
over~$\C$.
 For any $\R$\+free wcDG\+module $\P$ over $\A$, the functors
$\Phi_\C=\Psi_\C^{-1}$ transform the CDG\+contramodule
$\Hom^\tau(\C,\P)\in\C\contra\Rfr_\proj$ into the CDG\+comodule
$\C\ot^\tau\P\in\C\comod\Rfr_\inj$ and back.
 For any $\R$\+cofree wcDG\+module $\cM$ over $\A$, the functors
$\Phi_\C=\Psi_\C^{-1}$ transform the CDG\+contramodule
$\Ctrhom^\tau(\C,\cM)\in\C\contra\Rcof_\proj$ into
the CDG\+comodule $\C\ocn^\tau\cM\in\C\comod\Rcof_\inj$ and back.
 For any $\R$\+contramodule wcDG\+module $\M$ over $\A$, there is
a natural isomorphism of $\R$\+comodule CDG\+comodules
$\Phi_{\R,\C}(\Hom^\tau(\C,\M))\simeq\C\ocn^\tau\Phi_\R(\M)$.
 For any $\R$\+comodule wcDG\+module $\cP$ over $\A$, there is
a natural isomorphism of $\R$\+contramodule CDG\+contramodules
$\Psi_{\R,\C}(\C\ocn^\tau\cP)\simeq\Hom^\tau(\C,\Psi_\R(\cP))$.
\end{proof}

 Let $\A$ be a wcDG\+algebra over $\R$, $v\:\A\rarrow\R$ be
a homogeneous retraction onto the unit map, $\C=\Br_v(\A)$ be
the corresponding $\R$\+free CDG\+coalgebra, and
$\tau_{\A,v}\:\C\rarrow\A$ be the natural twisting cochain.

\begin{cor}  \label{bar-duality}
 All the assertions of Theorem~\textup{\ref{acyclic-cochain-duality}}
hold for the twisting cochain $\tau=\tau_{\A,v}$.
\end{cor}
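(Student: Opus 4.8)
The plan is to verify that the twisting cochain $\tau = \tau_{\A,v}$ satisfies every hypothesis of Theorem~\ref{acyclic-cochain-duality}; once this is done, the four equivalences in parts~(a)--(d) and their mutual compatibility in part~(e) follow \emph{verbatim} from that theorem, with no further work. Three of the hypotheses are built into the bar-construction. First, $\C = \Br_v(\A)$ is an $\R$\+free CDG\+coalgebra whose reduction $\C/\m\C$ is a conilpotent CDG\+coalgebra, as recorded at the end of Section~\ref{bar-cobar-sect}; this supplies the required coaugmentation $\bar w\:k\rarrow\C/\m\C$. Second, $\tau_{\A,v}$ is a genuine twisting cochain by its construction in Section~\ref{twisting-cochains-sect}. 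Third, the composition $k\rarrow\C/\m\C\rarrow\A/\m\A$ vanishes: the coaugmentation of $\Br(\A)$ lands in the degree\+zero tensor component $\A_+^{\ot 0}\simeq\R$, while $\tau_{\A,v}$ is supported on the degree\+one component $\A_+^{\ot 1}$ and kills all the others, so the two compose to zero.

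The substantive point is the acyclicity of the reduced twisting cochain $\bar\tau = \tau_{\A,v}/\m\tau_{\A,v}\:\C/\m\C\rarrow\A/\m\A$ in the sense of \cite[Section~6.5]{Pkoszul}. The first step here is to observe that the whole bar\+construction commutes with reduction modulo~$\m$. The functor $\P\mpsto\P/\m\P$ preserves tensor products of $\R$\+contramodules and carries the free graded $\R$\+contramodule $\A_+$, together with the split retraction $v$, to $(\A/\m\A)_+$ with its reduced retraction $\bar v = v/\m v$. Hence it sends the $\R$\+free graded coalgebra $\bigoplus_n\A_+^{\ot n}$ to the $k$\+coalgebra $\bigoplus_n\bigl((\A/\m\A)_+\bigr)^{\ot n}$, and a direct comparison of the defining formulas for the coderivation $d_{\Br}$ and the curvature linear function $h_{\Br}$ exhibits this as an isomorphism of CDG\+coalgebras $\C/\m\C\simeq\Br_{\bar v}(\A/\m\A)$. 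Since $\A$ is a wcDG\+algebra, its curvature lies in $\m\A^2$, so $\A/\m\A$ is in fact an honest DG\+algebra over~$k$. Under the identification above, the reduction of $\tau_{\A,v}$ matches, term by term, the canonical bar twisting cochain $\tau_{\A/\m\A,\,\bar v}$ of this DG\+algebra.

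It then remains to invoke the acyclicity of the canonical bar twisting cochain over a field. This is the statement that $\tau_{B,v}\:\Br_v(B)\rarrow B$ is acyclic for every CDG\+algebra $B$ over~$k$, which is part of the bar\+duality theory of \cite[Section~6.5]{Pkoszul} and underlies the equivalences of \cite[Theorems~6.3 and~6.5]{Pkoszul}. Applying it with $B = \A/\m\A$ shows that $\bar\tau$ is acyclic, so all the hypotheses of Theorem~\ref{acyclic-cochain-duality} hold and its conclusions apply to $\tau = \tau_{\A,v}$. I expect the only step requiring genuine care to be the compatibility of the bar\+construction with the reduction functor $\P\mpsto\P/\m\P$, namely the isomorphism $\C/\m\C\simeq\Br_{\bar v}(\A/\m\A)$ of CDG\+coalgebras identifying $\bar\tau$ with the classical bar twisting cochain; everything else is a matter of matching up the hypotheses of the ambient theorem.
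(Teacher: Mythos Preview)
Your proposal is correct and follows the same approach as the paper: verify that the reduction $\bar\tau$ is the canonical bar twisting cochain of the DG\+algebra $\A/\m\A$ and then cite its acyclicity from \cite{Pkoszul}. The paper's proof is a one-liner invoking \cite[Theorems~6.3 and~6.5]{Pkoszul} (or directly \cite[Theorem~6.10(a)]{Pkoszul}), while you spell out in addition the compatibility of $\Br_v$ with reduction modulo~$\m$ and the other routine hypotheses of Theorem~\ref{acyclic-cochain-duality}; one minor imprecision is that you quote the acyclicity ``for every CDG\+algebra $B$ over~$k$'' when you only need (and only use) the DG case $B=\A/\m\A$.
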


\begin{proof}
 The twisting cochain $\bar\tau_{\A,v}=\tau_{\A,v}/\m\tau_{\A,v}$
is acyclic, as one can see by
comparing~\cite[Theorems~6.3 and~6.5]{Pkoszul}, or more directly
from~\cite[Theorem~6.10(a)]{Pkoszul}.
\end{proof}

 Let $\C$ be an $\R$\+free CDG\+coalgebra; suppose the reduction
$\C/\m\C$ is a coaugmented CDG\+coalgebra over~$k$ with
the coaugmentation $\bar w\:k\rarrow\C/\m\C$.
 Let $w\:\R\rarrow\C$ be a homogeneous section of the counit map
lifting the coaugmentation~$\bar w$.
 Consider the corresponding wcDG\+algebra $\A=\Cb_w(\C)$ over~$\R$,
and let $\tau_{\C,w}\:\C\rarrow\A$ be the natural twisting cochain.

\begin{cor}  \label{conilpotent-cobar}
 All the assertions of Theorem~\textup{\ref{acyclic-cochain-duality}}
hold for the twisting cochain $\tau=\tau_{\C,w}$, provided that
the CDG\+coalgebra\/ $\C/\m\C$ is conilpotent.
\end{cor}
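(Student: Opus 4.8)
The plan is to deduce everything from Theorem~\ref{acyclic-cochain-duality}, whose hypotheses are already in force here except for the acyclicity of the reduced twisting cochain: since $\C/\m\C$ is assumed conilpotent, we are in the standing situation of Section~\ref{conilpotent-sect}, and the composition $k\rarrow\C/\m\C\rarrow\A/\m\A$ vanishes by the construction of $\tau_{\C,w}$ (the section $w$ lifts the coaugmentation $\bar w$). Thus the whole statement reduces to checking that the twisting cochain $\bar\tau=\tau_{\C,w}/\m\tau_{\C,w}\:\C/\m\C\rarrow\A/\m\A$ is acyclic in the sense of \cite[Section~6.5]{Pkoszul}.

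First I would identify the reduction modulo~$\m$ of all the data of the cobar-construction. By Lemma~\ref{unit-split}(b) the counit $\C\rarrow\R$ splits in the category of free graded $\R$\+contramodules, giving $\C=\R\oplus\W$ with $\C_+\simeq\W$; reduction modulo~$\m$ carries this to the decomposition $\C/\m\C=k\oplus(\W/\m\W)$, with the chosen section $w$ reducing to $\bar w$. Because reduction modulo~$\m$ preserves tensor products of free $\R$\+contramodules, it takes the tensor algebra $\Cb(\C)=\bigoplus_n\C_+^{\ot n}$ to $\bigoplus_n(\C_+/\m\C_+)^{\ot n}=\Cb(\C/\m\C)$; and the explicit formulas of Section~\ref{bar-cobar-sect} for $d_{\Cb}$ and $h_{\Cb}$ show that the reduced derivation and curvature element coincide with those defining $\Cb_{\bar w}(\C/\m\C)$. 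Hence $\A/\m\A\simeq\Cb_{\bar w}(\C/\m\C)$ as CDG\+algebras over~$k$, and under this identification $\bar\tau$ becomes the canonical cobar twisting cochain $\tau_{\C/\m\C,\,\bar w}$.

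Second, I would invoke the corresponding assertion over the field~$k$: for a conilpotent CDG\+coalgebra $D$ over~$k$, the natural twisting cochain $D\rarrow\Cb(D)$ is acyclic. This is exactly the cobar side of the Koszul duality of \cite{Pkoszul}, and I would cite it in the form used for the dual statement in Corollary~\ref{bar-duality}, namely by comparing \cite[Theorems~6.3 and~6.5]{Pkoszul}, or more directly from \cite[Theorem~6.10(b)]{Pkoszul}. Applying this to $D=\C/\m\C$ shows that $\bar\tau$ is acyclic, whereupon Theorem~\ref{acyclic-cochain-duality} delivers all five assertions~(a)--(e) at once.

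The one genuinely load-bearing hypothesis is conilpotency of $\C/\m\C$: it is precisely what guarantees acyclicity of the cobar twisting cochain over~$k$, which fails for general coaugmented but non-conilpotent coalgebras, so it cannot be weakened. Everything else is the bookkeeping of the first step --- checking that the cobar-construction, together with its section, differential, curvature, and the twisting cochain $\tau_{\C,w}$, is compatible with the reduction functor --- which is routine given the exactness of reduction modulo~$\m$ on free $\R$\+contramodules.
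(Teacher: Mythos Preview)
Your proposal is correct and follows the same overall route as the paper: reduce to showing that the reduced twisting cochain $\bar\tau=\tau_{\C,w}/\m\tau_{\C,w}$ is acyclic, identify it with the canonical cobar twisting cochain $\tau_{\C/\m\C,\bar w}\:\C/\m\C\rarrow\Cb_{\bar w}(\C/\m\C)$ over~$k$, and then invoke the known acyclicity over a field.

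The only difference is in the last step. You argue by analogy with Corollary~\ref{bar-duality}, citing \cite[Theorems~6.3 and~6.5]{Pkoszul} or \cite[Theorem~6.10(b)]{Pkoszul}. The paper is even more direct: it observes that $\bar\tau_{\C,w}$ is acyclic \emph{by the definition} of an acyclic twisting cochain in \cite[Section~6.5]{Pkoszul} (with a parenthetical pointer to \cite[Theorem~6.4]{Pkoszul}). The point is that a twisting cochain $C\to A$ over~$k$ is called acyclic precisely when the induced DG\+algebra morphism $\Cb(C)\to A$ is a quasi-isomorphism; for the canonical cobar twisting cochain this morphism is the identity on $\Cb(C)$, so acyclicity is tautological. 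Your intermediate bookkeeping (compatibility of the cobar-construction with reduction modulo~$\m$) is fine and implicit in the paper's one-line proof, but you can replace the final appeal to \cite[Theorems~6.3, 6.5, 6.10]{Pkoszul} with this definitional observation.
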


\begin{proof}
 The twisting cochain $\bar\tau_{\C,w}=\tau_{\C,w}/\m\tau_{\C,w}$
is acyclic by the definition (see also~\cite[Theorem~6.4]{Pkoszul}).
\end{proof}

\subsection{Nonconilpotent duality}  \label{nonconilpotent-sect}
 Let $\B$ be an $\R$\+free CDG\+algebra such that the graded
$k$\+algebra $\B/\m\B$ has finite left homological dimension.
 Let $\C$ be an $\R$\+free CDG\+coalgebra, $\tau\:\C\rarrow\B$ be
a twisting cochain, and $\bar\tau\:\C/\m\C\rarrow\B/\m\B$ be
its reduction modulo~$\m$.

\begin{thm}  \label{nonconilpotent-general-duality}
 Assume one (or, equivalently, all) of the functors\/
$\C/\m\C\ot^{\bar\tau}{-}$, \ $\B/\m\B\ot^{\bar\tau}{-}$, \
$\Hom^{\bar\tau}(\C/m\C,{-})$, and/or\/ $\Hom^{\bar\tau}(\B/\m\B,{-})$
induce equivalences between the triangulated categories\/
$\sD^\abs(\B/\m\B\mod)$, \ $\sD^\co(\C/\m\C\comod)$, and/or
$\sD^\ctr(\C/\m\C\contra)$
(see~\textup{\cite[\emph{Theorems}~6.7 \emph{and}~6.8]{Pkoszul}}).
 Then \par
\textup{(a)} the functors\/ $\C\ot^\tau{-}\:H^0(\B\mod\Rfr)
\rarrow H^0(\C\comod\Rfr)$ and\/ $\B\ot^\tau{-}\:H^0(\C\comod\Rfr)
\rarrow H^0(\B\mod\Rfr)$ induce functors
$$
 \sD^\abs(\B\mod\Rfr)\lrarrow\sD^\co(\C\comod\Rfr)
$$
and
$$
 \sD^\co(\C\comod\Rfr)\lrarrow\sD^\abs(\B\mod\Rfr),
$$
which are mutually inverse equivalences of triangulated categories; \par
\textup{(b)} the functors\/ $\Hom^\tau(\C,{-})\:H^0(\B\mod\Rctr)
\rarrow H^0(\C\contra\Rctr)$ and\/ $\Hom^\tau(\B,{-})\:
H^0(\C\contra\Rctr)\rarrow H^0(\B\mod\Rctr)$ induce functors
{\hbadness=2250
$$
 \sD^\ctr(\B\mod\Rctr)\lrarrow\sD^\ctr(\C\contra\Rctr)
$$
and }
$$
 \sD^\ctr(\C\contra\Rctr)\lrarrow\sD^\ctr(\B\mod\Rctr),
$$
which are mutually inverse equivalences of triangulated categories; \par
\textup{(c)} the functors\/ $\C\ocn^\tau{-}\:H^0(\B\mod\Rco)
\rarrow H^0(\C\comod\Rco)$ and\/ $\B\ocn^\tau{-}\:H^0(\C\comod\Rco)
\rarrow H^0(\B\mod\Rco)$ induce functors
$$
 \sD^\co(\B\mod\Rco)\lrarrow\sD^\co(\C\comod\Rco)
$$
and
$$
 \sD^\co(\C\comod\Rco)\lrarrow\sD^\co(\B\mod\Rco),
$$
which are mutually inverse equivalences of triangulated categories; \par
\textup{(d)} the functors\/ $\Ctrhom^\tau(\C,{-})\:
H^0(\B\mod\Rcof)\rarrow H^0(\C\contra\Rcof)$ and\/
$\Ctrhom^\tau(\B,{-})\:H^0(\C\contra\Rcof)\rarrow H^0(\B\mod\Rcof)$
induce functors
$$
 \sD^\abs(\B\mod\Rcof)\lrarrow\sD^\ctr(\C\contra\Rcof)
$$
and
$$
 \sD^\ctr(\C\contra\Rcof)\lrarrow\sD^\abs(\B\mod\Rcof),
$$
which are mutually inverse equivalences of triangulated categories; \par
\textup{(e)} the above equivalences of triangulated categories
form a commutative diagram with the equivalences of categories\/
$\Phi_\R=\Psi_\R^{-1}$ of Sections~\textup{\ref{r-cofree-absolute}}
and~\textup{\ref{r-cofree-co-derived}}, $\boL\Phi_\R=\boR\Psi_\R^{-1}$
of Corollary~\textup{\ref{non-adj-fin-dim-reduct-r-co-contra}},
$\boL\Phi_\C=\boR\Psi_\C^{-1}$ of
Corollaries~\textup{\ref{r-free-derived-co-contra}}
and~\textup{\ref{r-cofree-derived-co-contra}}, $\boL\Phi_{\R,\C}=
\boR\Psi_{\R,\C}^{-1}$ of
Corollary~\textup{\ref{non-adj-derived-co-contra}},
the equivalences of triangulated categories from
Corollary~\textup{\ref{non-adj-fin-dim-reduct-co-abs}}, and
the equivalences of contra/coderived categories from
Theorem~\textup{\ref{non-adj-co-derived-comp}}.
 In other words, the following square diagrams of triangulated
equivalences are commutative:
$$
\begin{diagram}
\node{\llap{$\C\ot^\tau{-}$}\:\sD^\abs(\B\mod\Rfr)}
\arrow{e,=} \arrow{s,lr,=}{\Phi_\R}{\boR\Psi_\R}
\node{\sD^\co(\C\comod\Rfr)\,\,\:\!\rlap{$\B\ot^\tau{-}$}}
\arrow{s,lr,=}{\Phi_\R}{\boR\Psi_\R} \\
\node{\llap{$\C\ocn^\tau{-}$}\:\sD^\co(\B\mod\Rco)}
\arrow{e,=}
\node{\sD^\co(\C\comod\Rco)\,\,\:\!\rlap{$\B\ocn^\tau{-}$}}
\end{diagram}
$$
$$
\begin{diagram}
\node{\llap{$\Hom^\tau(\C,{-})$}\:\sD^\ctr(\B\mod\Rctr)}
\arrow{e,=} \arrow{s,lr,=}{\boL\Phi_\R}{\Psi_\R}
\node{\sD^\co(\C\contra\Rctr)\,\,\:\!\rlap{$\Hom^\tau(\B,{-})$}}
\arrow{s,lr,=}{\boL\Phi_\R}{\Psi_\R} \\
\node{\llap{$\Ctrhom^\tau(\C,{-})$}\:\sD^\abs(\B\mod\Rcof)}
\arrow{e,=}
\node{\sD^\co(\C\contra\Rcof)\,\,\:\!\rlap{$\Ctrhom^\tau(\B,{-})$}}
\end{diagram}
$$
$$
\begin{diagram}
\node{\llap{$\C\ot^\tau{-}$}\:\sD^\abs(\B\mod\Rfr)}
\arrow{e,=} \arrow{s,=}
\node{\sD^\co(\C\comod\Rfr)\,\,\:\!\rlap{$\B\ot^\tau{-}$}}
\arrow{s,lr,=}{\boR\Psi_\C}{\boL\Phi_\C} \\
\node{\llap{$\Hom^\tau(\C,{-})$}\:\sD^\ctr(\B\mod\Rctr)}
\arrow{e,=} 
\node{\sD^\co(\C\contra\Rctr)\,\,\:\!\rlap{$\Hom^\tau(\B,{-})$}}
\end{diagram}
$$
$$
\begin{diagram}
\node{\llap{$\C\ocn^\tau{-}$}\:\sD^\co(\B\mod\Rco)}
\arrow{e,=} \arrow{s,=}
\node{\sD^\co(\C\comod\Rco)\,\,\:\!\rlap{$\B\ocn^\tau{-}$}}
\arrow{s,lr,=}{\boR\Psi_\C}{\boL\Phi_\C} \\
\node{\llap{$\Ctrhom^\tau(\C,{-})$}\:\sD^\abs(\B\mod\Rcof)}
\arrow{e,=}
\node{\sD^\co(\C\contra\Rcof)\,\,\:\!\rlap{$\Ctrhom^\tau(\B,{-})$}}
\end{diagram}
$$
$$
\begin{diagram}
\node{\llap{$\C\ocn^\tau{-}$}\:\sD^\co(\B\mod\Rco)}
\arrow{e,=} \arrow{s,lr,=}{\boR\Psi_\R}{\boL\Phi_\R}
\node{\sD^\co(\C\comod\Rco)\,\,\:\!\rlap{$\B\ocn^\tau{-}$}}
\arrow{s,lr,=}{\boR\Psi_{\R,\C}}{\boL\Phi_{\R,\C}} \\
\node{\llap{$\Hom^\tau(\C,{-})$}\:\sD^\ctr(\B\mod\Rctr)}
\arrow{e,=} 
\node{\sD^\co(\C\contra\Rctr)\,\,\:\!\rlap{$\Hom^\tau(\B,{-})$}}
\end{diagram}
$$
\end{thm}

\begin{proof}
 The assertions about the existence of induced functors in parts~(a\+d)
do not depend on the assumption on~$\bar\tau$; the assertions about
them being equivalences of categories do.
 Both functors in part~(a) take coacyclic objects to contractible ones,
since they transform infinite direct sums into infinite direct sums
and short exact sequences into short exact sequences whose underlying
sequences of graded objects are split exact.
 Both functors in part~(c) take coacyclic objects to coacyclic objects,
for the similar reasons.
 Analogously, both functors in part~(d) take contraacyclic objects to
contractible ones, and the functors in part~(b) preserve
contraacyclicity.
 It follows that the induced adjoint functors exist in all cases.

 To prove that the adjunction morphisms in~(a) are equivalences, one
reduces them modulo~$\m$ and uses the assumption of Theorem together
with Corollaries~\ref{r-free-abs-acycl-reduction}
and~\ref{r-free-co-acycl-reduction}.
 To demonstrate the same assertion in the case~(b), it suffices to
restrict both functors to $\R$\+free CDG\+modules and
CDG\+contramodules (see Theorems~\ref{co-derived-mod}
and~\ref{non-adj-co-derived-comp}) and apply the same argument as
in part~(a).

 Parts~(c) and~(d) are similar; and the proof of part~(e) is identical
to that of Theorem~\ref{acyclic-cochain-duality}(e).
\end{proof}

 Let $\C$ be an $\R$\+free CDG\+coalgebra and $w\:\R\rarrow\C$ be
a homogeneous section of the counit map.
 Consider the corresponding $\R$\+free CDG\+algebra $\B=\Cb_w(\C)$,
and let $\tau_{\C,w}\:\C\rarrow\B$ be the natural twisting cochain.

\begin{cor}  \label{nonconilpotent-cobar}
 All the assertions of
Theorem~\textup{\ref{nonconilpotent-general-duality}} are true
for the twisting cochain $\tau=\tau_{\C,w}$.
\end{cor}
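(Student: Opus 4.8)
The plan is to derive this corollary as a direct application of Theorem~\ref{nonconilpotent-general-duality} to the cobar twisting cochain $\tau=\tau_{\C,w}$, so the whole task reduces to checking that the two hypotheses of that theorem are met in this situation. Those hypotheses are, first, that the graded $k$\+algebra $\B/\m\B$ has finite left homological dimension, and second, that the reduced twisting cochain $\bar\tau=\tau/\m\tau$ induces the equivalences among $\sD^\abs(\B/\m\B\mod)$, $\sD^\co(\C/\m\C\comod)$, and $\sD^\ctr(\C/\m\C\contra)$. The first thing I would establish is the identification of the reduction $\B/\m\B$. Since the reduction functor $\P\mpsto\P/\m\P$ is monoidal and commutes with infinite direct sums of free $\R$\+contramodules (Sections~\ref{contramodules-sect}--\ref{contra-operations}), applying it to the cobar-construction of Section~\ref{bar-cobar-sect} yields a natural isomorphism of CDG\+algebras $\B/\m\B=\Cb_w(\C)/\m\Cb_w(\C)\simeq\Cb_{\bar w}(\C/\m\C)$, where $\bar w=w/\m w\colon k\rarrow\C/\m\C$ is the induced section of the counit. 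Here one verifies that the cobar differential $d_{\Cb}$ and the curvature element $h_{\Cb}$, being assembled from the comultiplication, differential, and curvature of $\C$ by the formulas of Section~\ref{bar-cobar-sect}, reduce modulo~$\m$ to the corresponding structure maps of $\C/\m\C$.

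Next I would dispatch the finite-homological-dimension hypothesis. The underlying graded algebra of $\Cb_{\bar w}(\C/\m\C)$ is the tensor (free associative) algebra on the graded $k$\+vector space $(\C/\m\C)_+[-1]$; every free graded algebra is hereditary, so the abelian category of graded left $\B/\m\B$\+modules has homological dimension at most~$1$. This is exactly the finite-homological-dimension assumption on $\B/\m\B$ demanded by Theorem~\ref{nonconilpotent-general-duality} (and, by Corollary~\ref{r-free-homol-dim}, it also bounds the homological dimension of the exact categories of $\R$\+free and $\R$\+cofree graded $\B$\+modules). It is precisely at this step that the nonconilpotent cobar duality parts ways with the conilpotent case of Corollary~\ref{conilpotent-cobar}: the cobar-construction is automatically of finite homological dimension, so no conilpotency of $\C/\m\C$ need be assumed.

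Finally I would identify the reduced twisting cochain and invoke the field-case result. By the explicit description in Section~\ref{twisting-cochains-sect}, $\tau_{\C,w}$ is the composition $\C\rarrow\W\simeq\C_+\simeq\C_+^{\ot 1}\rarrow\Cb(\C)$; reducing modulo~$\m$ and using the CDG\+algebra identification of the first paragraph shows that $\bar\tau$ coincides with the cobar twisting cochain $\tau_{\C/\m\C,\bar w}\colon\C/\m\C\rarrow\Cb_{\bar w}(\C/\m\C)$ over the field~$k$. The field-case nonconilpotent cobar duality of~\cite[Theorems~6.7 and~6.8]{Pkoszul} then guarantees that $\bar\tau$ induces the equivalences required in the statement of Theorem~\ref{nonconilpotent-general-duality}. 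With both hypotheses in hand, that theorem applies verbatim and produces all the assertions of the corollary. The main obstacle is the careful bookkeeping behind the CDG\+algebra isomorphism $\B/\m\B\simeq\Cb_{\bar w}(\C/\m\C)$ together with the matching $\bar\tau=\tau_{\C/\m\C,\bar w}$; once these reduction-compatibilities are confirmed, the result is a formal consequence of the field-case theorem and the general duality Theorem~\ref{nonconilpotent-general-duality}.
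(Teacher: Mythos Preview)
Your proposal is correct and follows the same approach as the paper: verify the hypotheses of Theorem~\ref{nonconilpotent-general-duality} by reducing modulo~$\m$ and invoking the field-case cobar duality. The paper's proof is simply the one-line citation ``The assumption of Theorem~\ref{nonconilpotent-general-duality} holds in this case by~\cite[Theorem~6.7]{Pkoszul}'', leaving implicit the reduction compatibility $\Cb_w(\C)/\m\Cb_w(\C)\simeq\Cb_{\bar w}(\C/\m\C)$, the identification $\bar\tau=\tau_{\C/\m\C,\bar w}$, and the hereditary nature of the tensor algebra; you have made all three of these steps explicit, which is a reasonable elaboration of the same argument.
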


\begin{proof}
 The assumption of Theorem~\ref{nonconilpotent-general-duality}
holds in this case by~\cite[Theorem~6.7]{Pkoszul}.
\end{proof}

 Notice that by comparing
Corollaries~\ref{conilpotent-cobar} and~\ref{nonconilpotent-cobar}
one can conclude that $\sD^\si(\A\mod\Rfr)=\sD^\abs(\A\mod\Rfr)$
and $\sD^\si(\A\mod\Rcof)=\sD^\abs(\A\mod\Rcof)$ when $\C/\m\C$ is
conilpotent, $w\:\R\rarrow\C$ reduces to the coaugmentation, and
$\A=\Cb_w(\C)$.
 In fact, this is a particular case of Theorem~\ref{r-free-cofibrant}.

\subsection{Transformation of functors under Koszul duality}
\label{transformation-koszul}
 Let $\A$ be a wcDG\+alge\-bra over $\R$ and $\C$ be an $\R$\+free
CDG\+coalgebra with a conilpotent reduction $\C/\m\C$.
 Let $\tau\:\C\rarrow\A$ be a twisting cochain such that
the twisting cochain $\bar\tau=\tau/\m\tau$ is acyclic.
 Notice that by the right version of
Theorem~\ref{acyclic-cochain-duality} the functors
$\M\mpsto\M\ot^\tau\C$ and $\N\mpsto\N\ot^\tau\A$ induce
an equivalence of triangulated categories
$\sD^\si(\modrRfr\A)\simeq\sD^\co(\comodrRfr\C)$, while
the functors $\cM\mpsto\cM\ocn^\tau\C$ and $\cN\mpsto\cN\ocn^\tau\A$
induce an equivalence of triangulated categories
$\sD^\si(\modrRco\A)\simeq\sD^\co(\comodrRco\C)$.

 The left derived functor
$$
 \Tor^\A\:\sD^\si(\modrRco\A)\times\sD^\si(\A\mod\Rctr)
 \lrarrow H^0(\R\comod^\cofr)
$$
is obtained by switching the left and right sides
in the construction of the derived functor $\Tor^\A$ from
Section~\ref{wcdg-semiderived}.

 The following theorem shows how Koszul duality transforms
the functor $\Tor^\A$ into the functor $\Ctrtor^\C$.

\begin{thm}  \label{ctrtor-and-tor}
\textup{(a)} The equivalences of categories
$$
\sD^\si(\modrRfr\A)\simeq\sD^\co(\comodrRfr\C)
$$
and
$$
 \sD^\si(\A\mod\Rfr)\simeq\sD^\ctr(\C\contra\Rfr)
$$
from Theorem~\textup{\ref{acyclic-cochain-duality}}
transform the functor
$$
 \Tor^\A\:\sD^\si(\modrRfr\A)\times\sD^\si(\A\mod\Rfr)
 \lrarrow H^0(\R\contra^\free)
$$
from Section~\textup{\ref{r-free-semi}}
into the functor
$$
 \Ctrtor^\C\:\sD^\co(\comodrRfr\C)\times\sD^\ctr(\C\contra\Rfr)
 \lrarrow H^0(\R\contra^\free)
$$
from Section~\textup{\ref{r-free-co-derived}}. \par
\textup{(b)} The equivalences of categories
$$
 \sD^\si(\modrRfr\A)\simeq\sD^\co(\comodrRfr\C)
$$
and
$$
 \sD^\si(\A\mod\Rcof)\simeq\sD^\ctr(\C\contra\Rcof)
$$
transform the functor
$$
 \Tor^\A\:\sD^\si(\modrRfr\A)\times\sD^\si(\A\mod\Rcof)\lrarrow
 H^0(\R\comod^\cofr)
$$
from Section~\textup{\ref{r-cofree-semi}} into the functor
$$
 \Ctrtor^\C\:\sD^\co(\comodrRfr\C)\times\sD^\ctr(\C\contra\Rcof)
 \lrarrow H^0(\R\comod^\cofr)
$$
from Section~\textup{\ref{r-cofree-co-derived}}. \par
\textup{(c)} The equivalences of categories
$$
 \sD^\si(\modrRco\A)\simeq\sD^\co(\comodrRco\C)
$$
and
$$
 \sD^\si(\A\mod\Rctr)\simeq\sD^\ctr(\C\contra\Rctr)
$$
transform the functor
$$
 \Tor^\A\:\sD^\si(\modrRco\A)\times\sD^\si(\A\mod\Rctr)\lrarrow
 H^0(\R\comod^\cofr)
$$
into the functor
$$
 \Ctrtor^\C\:\sD^\co(\comodrRco\C)\times\sD^\ctr(\C\contra\Rctr)\lrarrow
 H^0(\R\comod^\cofr)
$$
from Section~\textup{\ref{non-adj-co-derived}}.
\end{thm}

 The following theorem shows how Koszul duality transforms
the functor $\Tor^\A$ into the functor $\Cotor^\C$.

\begin{thm}  \label{cotor-and-tor}
\textup{(a)} The equivalences of categories
$$
 \sD^\si(\modrRfr\A)\simeq\sD^\co(\comodrRfr\C)
$$
and
$$
 \sD^\si(\A\mod\Rfr)\simeq \sD^\co(\C\comod\Rfr)
$$
from Theorem~\textup{\ref{acyclic-cochain-duality}}
transform the functor
$$
 \Tor^\A\:\sD^\si(\modrRfr\A)\times\sD^\si(\A\mod\Rfr)\lrarrow
 H^0(\R\contra^\free)
$$
from Section~\textup{\ref{r-free-semi}} into the functor
$$
 \Cotor^\C\:\sD^\co(\comodrRfr\C)\times\sD^\co(\C\comod\Rfr)\lrarrow
 H^0(\R\contra^\free)
$$
from Section~\textup{\ref{r-free-co-derived}}. \par
\textup{(b)} The equivalences of categories
$$
 \sD^\si(\modrRfr\A)\simeq\sD^\co(\comodrRfr\C)
$$
and
$$
 \sD^\si(\A\mod\Rcof)\simeq\sD^\co(\C\comod\Rcof)
$$
transform the functor
$$
 \Tor^\A\:\sD^\si(\modrRfr\A)\times\sD^\si(\A\mod\Rcof)\lrarrow
 H^0(\R\comod^\cofr)
$$
from Section~\textup{\ref{r-cofree-semi}} into the functor
$$
 \Cotor^\C\:\sD^\co(\comodrRfr\C)\times\sD^\co(\C\comod\Rcof)\lrarrow
 H^0(\R\comod^\cofr)
$$
from Section~\textup{\ref{r-cofree-co-derived}}. \par
\textup{(c)} The equivalences of categories
$$
 \sD^\si(\modrRco\A)\simeq\sD^\co(\comodrRco\C)
$$
and
$$
 \sD^\si(\A\mod\Rco)\simeq\sD^\co(\C\comod\Rco)
$$
transform the functor
$$
 \Tor^\A\:\sD^\si(\modrRco\A)\times\sD^\si(\A\mod\Rco)\lrarrow
 H^0(\R\comod^\cofr)
$$
from Section~\textup{\ref{wcdg-semiderived}} into the functor
$$
 \Cotor^\C\:\sD^\co(\comodrRco\C)\times\sD^\co(\C\comod\Rco)\lrarrow
 H^0(\R\comod^\cofr)
$$
from Section~\textup{\ref{non-adj-co-derived}}.
\end{thm}

 The following theorem shows how Koszul duality transforms
the functor $\Ext_\A$ into the functor $\Ext^\C$.

\begin{thm}  \label{contramodule-ext-and-ext}
\textup{(a)} The equivalence of categories
$$
 \sD^\si(\A\mod\Rctr)\simeq\sD^\ctr(\C\contra\Rctr)
$$
from Theorem~\textup{\ref{acyclic-cochain-duality}}
transforms the functor
$$
 \Ext_\A\:\sD^\si(\A\mod\Rctr)^\sop\times\sD^\si(\A\mod\Rctr)\lrarrow
 H^0(\R\contra^\free)
$$
from Section~\textup{\ref{wcdg-semiderived}} into the functor
$$
 \Ext^\C\:\sD^\ctr(\C\contra\Rctr)^\sop\times\sD^\ctr(\C\contra\Rctr)
 \lrarrow H^0(\R\contra^\free)
$$
from Section~\textup{\ref{non-adj-co-derived}}. \par
\textup{(b)} The equivalence of categories
$$
 \sD^\si(\A\mod\Rcof)\simeq\sD^\ctr(\C\contra\Rcof)
$$
transforms the functor
$$
 \Ext_\A\:\sD^\si(\A\mod\Rcof)^\sop\times\sD^\si(\A\mod\Rcof)\lrarrow
 H^0(\R\contra^\free)
$$
from Section~\textup{\ref{r-cofree-semi}} into the functor
$$
 \Ext^\C\:\sD^\ctr(\C\contra\Rcof)^\sop\times\sD^\ctr(\C\contra\Rcof)
 \lrarrow H^0(\R\contra^\free)
$$
from Section~\textup{\ref{r-cofree-co-derived}}. \par
\textup{(c)} The equivalences of categories
$$
 \sD^\si(\A\mod\Rfr)\simeq\sD^\ctr(\C\contra\Rfr)
$$
and
$$
 \sD^\si(\A\mod\Rcof)\simeq\sD^\ctr(\C\contra\Rcof)
$$
transform the functor
$$
 \Ext_\A\:\sD^\si(\A\mod\Rfr)^\sop\times\sD^\si(\A\mod\Rcof)
 \lrarrow H^0(\R\comod^\cofr)
$$
from Section~\textup{\ref{r-cofree-semi}} into the functor
$$
 \Ext^\C\:\sD^\ctr(\C\contra\Rfr)^\sop\times\sD^\ctr(\C\contra\Rcof)
 \lrarrow H^0(\R\comod^\cofr)
$$
from Section~\textup{\ref{r-cofree-co-derived}}.
\end{thm}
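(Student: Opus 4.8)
The plan is to compute both sides by restricting the relevant $\Hom$\+functors to the resolution subcategories used to define the derived functors $\Ext_\A$ and $\Ext^\C$, and then to identify the two resulting complexes of free $\R$\+contramodules by means of the DG\+adjunction between the Koszul duality functors of Section~\ref{twisting-cochains-sect}. Since the three versions of $\Ext_\A$ appearing in parts~(a\+c) are transformed into one another by the comodule\+contramodule correspondence $\boL\Phi_\R=\boR\Psi_\R^{-1}$ of Proposition~\ref{non-adj-r-co-contra} (and likewise for the three versions of $\Ext^\C$, by the constructions in Sections~\ref{non-adj-co-derived} and~\ref{r-cofree-co-derived}), while the Koszul duality equivalences of Theorem~\ref{acyclic-cochain-duality} commute with these correspondences by part~(e) of that theorem, I expect it to suffice to prove part~(a) and to deduce parts~(b) and~(c) formally.

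For part~(a), I would represent objects $\M$, $\N\in\sD^\si(\A\mod\Rctr)\simeq\sD^\si(\A\mod\Rfr)$ by $\R$\+free wcDG\+modules, choosing for $\M$ a homotopy projective module $\P\in H^0(\A\mod\Rfr_\proj)_\proj$ with projective underlying graded $\A$\+module (Theorem~\ref{r-free-semi-resolutions}(a)). Then $\Ext_\A(\M,\N)$ is computed as $\Hom_\A(\P,\N)$, a complex of free $\R$\+contramodules by Lemma~\ref{r-free-hom-reduction}(a). On the dual side, the object $\Hom^\tau(\C,\P)$ lies in $\C\contra\Rfr_\proj$ and represents $\Hom^\tau(\C,\M)$ in $\sD^\ctr(\C\contra\Rctr)$, the cone of $\Hom^\tau(\C,\P)\rarrow\Hom^\tau(\C,\M)$ being contraacyclic because $\Hom^\tau(\C,{-})$ sends semiacyclic wcDG\+modules to contractible CDG\+contramodules, as in the proof of Theorem~\ref{acyclic-cochain-duality}(b). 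Hence $\Ext^\C(\Hom^\tau(\C,\M)\;\Hom^\tau(\C,\N))$ is computed as $\Hom^\C(\Hom^\tau(\C,\P)\;\Hom^\tau(\C,\N))$, which is free by Lemma~\ref{r-free-co-op-reduction}(a).

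The heart of the argument is then a natural isomorphism of complexes of free $\R$\+contramodules
\[
 \Hom^\C(\Hom^\tau(\C,\P)\;\Hom^\tau(\C,\N))\simeq\Hom_\A(\P,\N),
\]
which I would obtain in two steps. First, upgrading the DG\+adjunction $\Hom^\tau(\C,{-})\dashv\Hom^\tau(\A,{-})$ to an isomorphism of the $\R$\+contramodule\+enriched $\Hom$\+complexes
\[
 \Hom^\C(\Hom^\tau(\C,\P)\;\Q)\simeq\Hom_\A(\P,\Hom^\tau(\A,\Q))
\]
valid for every $\R$\+contramodule CDG\+contramodule $\Q$ over $\C$, and substituting $\Q=\Hom^\tau(\C,\N)$. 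Second, the unit of the adjunction $\N\rarrow\Hom^\tau(\A,\Hom^\tau(\C,\N))$ has a semiacyclic cone, this being precisely what makes the duality functors mutually inverse in Theorem~\ref{acyclic-cochain-duality}(b); hence the induced morphism $\Hom_\A(\P,\N)\rarrow\Hom_\A(\P,\Hom^\tau(\A,\Hom^\tau(\C,\N)))$ is a homotopy equivalence, since for a homotopy projective $\P$ the functor $\Hom_\A(\P,{-})$ annihilates semiacyclic modules (the remark following Theorem~\ref{r-free-semi-resolutions}, cf.\ Theorem~\ref{non-adj-semiderived-res}(a)) and a contraacyclic complex of free $\R$\+contramodules is contractible.

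I expect the main obstacle to lie in the first step above: verifying that the DG\+adjunction between $\Hom^\tau(\C,{-})$ and $\Hom^\tau(\A,{-})$ is realized as an isomorphism of the \emph{$\R$\+contramodule} $\Hom$\+complexes respecting the $\tau$\+twisted differentials, and not merely of the underlying abelian groups of morphisms; this is where the left\+right and sign conventions of Section~\ref{twisting-cochains-sect} must be tracked carefully. Granting part~(a), I would deduce part~(b) by transporting along the equivalences $\sD^\si(\A\mod\Rcof)\simeq\sD^\si(\A\mod\Rctr)$ and $\sD^\ctr(\C\contra\Rcof)\simeq\sD^\ctr(\C\contra\Rctr)$ furnished by $\Phi_\R=\Psi_\R^{-1}$ and Theorem~\ref{non-adj-co-derived-comp}, using that these correspondences interchange the $\Ext_\A$ (resp.\ $\Ext^\C$) of the two flavours and commute with Koszul duality; part~(c), whose target is $H^0(\R\comod^\cofr)$, follows in the same way upon applying $\Phi_\R=\Psi_\R^{-1}$ to a single argument.
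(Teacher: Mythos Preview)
Your approach is correct and closely related to the paper's, but the paper organizes the argument more symmetrically and avoids your ``second step.''  The paper observes directly that both $\Hom^\C(\Hom^\tau(\C,\P),\Q)$ and $\Hom_\A(\P,\Hom^\tau(\A,\Q))$ are naturally identified with the complex $\Hom^\R(\P,\Q)$ equipped with the $\tau$\+twisted differential; this disposes of what you flag as the main obstacle.  It then notes that $\Hom^\tau(\C,\P)\in\C\contra\Rfr_\proj$ for any $\P\in\A\mod\Rfr$, \emph{and} that $\Hom^\tau(\A,\Q)\in H^0(\A\mod\Rfr_\inj)_\inj$ for any $\Q\in\C\contra\Rfr$ (the latter follows from the adjunction and the fact that $\Hom^\tau(\C,{-})$ kills semiacyclics).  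Thus one may compute $\Ext^\C$ using the projective first argument $\Hom^\tau(\C,\P)$ and simultaneously compute $\Ext_\A$ using the homotopy injective second argument $\Hom^\tau(\A,\Q)$; the adjunction isomorphism then matches them on the nose, with no need to invoke the semiacyclicity of the adjunction unit or the homotopy projectivity of~$\P$.  Your route---resolving $\M$ projectively, applying the adjunction with $\Q=\Hom^\tau(\C,\N)$, and then passing through the unit $\N\rarrow\Hom^\tau(\A,\Hom^\tau(\C,\N))$---reaches the same conclusion but at the cost of an extra homotopy equivalence.  The paper also treats all sixteen assertions of Theorems~\ref{ctrtor-and-tor}\+-\ref{coext-and-ext} uniformly rather than deducing~(b) and~(c) from~(a) via $\Phi_\R$, though your reduction is perfectly valid given Theorem~\ref{acyclic-cochain-duality}(e).
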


 The following theorem shows how Koszul duality transforms
the functor $\Ext_\A$ into the functor $\Ext_\C$.

\begin{thm}  \label{comodule-ext-and-ext}
\textup{(a)} The equivalence of categories
$$
 \sD^\co(\C\comod\Rfr)\simeq\sD^\si(\A\mod\Rfr)
$$
from Theorem~\textup{\ref{acyclic-cochain-duality}}
transforms the functor
$$
 \Ext_\A\:\sD^\si(\A\mod\Rfr)^\sop\times\sD^\si(\A\mod\Rfr)\lrarrow
 H^0(\R\contra^\free)
$$
from Section~\textup{\ref{r-free-semi}} into the functor
$$
 \Ext_\C\:\sD^\co(\C\comod\Rfr)^\sop\times\sD^\co(\C\comod\Rfr)\lrarrow
 H^0(\R\contra^\free)
$$
from Section~\textup{\ref{r-free-co-derived}}. \par
\textup{(b)} The equivalence of categories
$$
 \sD^\si(\A\mod\Rco)\simeq\sD^\co(\C\comod\Rco)
$$
transforms the functor
$$
 \Ext_\A\:\sD^\si(\A\mod\Rco)^\sop\times\sD^\si(\A\mod\Rco)\lrarrow
 H^0(\R\contra^\free)
$$
from Section~\textup{\ref{wcdg-semiderived}} into the functor
$$
 \Ext_\C\:\sD^\co(\C\comod\Rco)^\sop\times\sD^\co(\C\comod\Rco)\lrarrow
 H^0(\R\contra^\free)
$$
from Section~\textup{\ref{non-adj-co-derived}}. \par
\textup{(c)} The equivalences of categories
$$
 \sD^\si(\A\mod\Rfr)\simeq\sD^\co(\C\comod\Rfr)
$$
and
$$
 \sD^\si(\A\mod\Rcof)\simeq\sD^\co(\C\comod\Rcof)
$$
transform the functor
$$
 \Ext_\A\:\sD^\si(\A\mod\Rfr)^\sop\times\sD^\si(\A\mod\Rcof)\lrarrow
 H^0(\R\comod^\cofr)
$$
from Section~\textup{\ref{r-cofree-semi}} into the functor
$$
 \Ext_\C\:\sD^\co(\C\comod\Rfr)^\sop\times\sD^\co(\C\comod\Rcof)\lrarrow
 H^0(\R\comod^\cofr)
$$
from Section~\textup{\ref{r-cofree-co-derived}}.
\end{thm}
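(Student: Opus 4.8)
The plan is to prove Theorem~\ref{comodule-ext-and-ext} by reducing all three parts to the compatibility of the Koszul duality equivalences of Corollary~\ref{bar-duality} (via Theorem~\ref{acyclic-cochain-duality}) with the natural $\Hom$ operations between $\R$\+free/cofree CDG\+comodules, $\R$\+free CDG\+modules, and their $\R$\+contramodule/comodule counterparts. The essential point is that all the derived functors $\Ext_\A$ and $\Ext_\C$ in question are computed on appropriate resolution subcategories where the underlying $\Hom$ complexes of $\R$\+contramodules or $\R$\+comodules are manifestly preserved, up to natural isomorphism, by the Koszul duality functors $\C\ot^\tau{-}$, $\A\ot^\tau{-}$, and their variants.

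First I would treat part~(a), which is the cleanest case. The derived functor $\Ext_\A\:\sD^\si(\A\mod\Rfr)^\sop\times\sD^\si(\A\mod\Rfr)\rarrow H^0(\R\contra^\free)$ is computed by restricting $\Hom_\A$ to $H^0(\A\mod\Rfr_\proj)_\proj$ in the first argument (or to injectives in the second), while $\Ext_\C\:\sD^\co(\C\comod\Rfr)^\sop\times\sD^\co(\C\comod\Rfr)\rarrow H^0(\R\contra^\free)$ is computed by restricting $\Hom_\C$ to injective objects $H^0(\C\comod\Rfr_\inj)$ in the second argument. Under the equivalence $\sD^\co(\C\comod\Rfr)\simeq\sD^\si(\A\mod\Rfr)$ of Theorem~\ref{acyclic-cochain-duality}(a), the functor $\A\ot^\tau{-}$ carries injective $\R$\+free CDG\+comodules over $\C$ to homotopy projective $\R$\+free wcDG\+modules over $\A$, while $\C\ot^\tau{-}$ carries homotopy projective wcDG\+modules to injective CDG\+comodules. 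So the key step is to produce, for a homotopy projective $\P$ and an arbitrary $\M$ over $\A$, a natural isomorphism of complexes of free $\R$\+contramodules
$$
 \Hom_\A(\P,\M)\simeq\Hom_\C(\C\ot^\tau\P\;\C\ot^\tau\M),
$$
which follows from the standard adjunction between $\C\ot^\tau{-}$ and $\A\ot^\tau{-}$ together with the fact that $\A\ot^\tau\C\ot^\tau\M\rarrow\M$ has semiacyclic cone. Once this isomorphism is established on the resolution subcategory, it descends to the derived functors by Theorem~\ref{r-free-semi-resolutions} and Theorem~\ref{r-free-co-derived-thm}.

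For parts~(b) and~(c) I would use exactly the same strategy, substituting the $\R$\+cofree and $\R$\+comodule versions of the Koszul duality equivalences (parts~(c) and~(d) of Theorem~\ref{acyclic-cochain-duality}) and the corresponding resolution theorems (Theorems~\ref{r-cofree-semi-resolutions}, \ref{r-cofree-co-derived-thm}, \ref{non-adj-semiderived-res}, and~\ref{non-adj-co-derived-res}). Here the duality functors are $\C\ocn^\tau{-}$, $\A\ocn^\tau{-}$, $\Ctrhom^\tau(\C,{-})$, and $\Ctrhom^\tau(\A,{-})$, and the relevant $\Hom$ complexes land in $H^0(\R\comod^\cofr)$ in the mixed case~(c). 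The compatibility of the $\Phi_\R=\Psi_\R^{-1}$ correspondences with all these operations, which was recorded in Sections~\ref{r-cofree-graded}, \ref{r-cofree-graded-co}, and~\ref{non-adj-graded}, ensures that the three parts are transformed into one another, so it suffices to verify any one of them directly and transport the result.

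The main obstacle I anticipate is bookkeeping the signs and the left/right module conventions when writing down the natural isomorphism $\Hom_\A(\P,\M)\simeq\Hom_\C(\C\ot^\tau\P\;\C\ot^\tau\M)$ at the chain level and checking that it commutes with the differentials built from the twisting cochain~$\tau$; the underlying graded isomorphism is the ordinary adjunction from Section~\ref{twisting-cochains-sect}, but confirming that the twisted differentials match requires care. A secondary subtlety is that $\Ext_\C$ for CDG\+comodules is computed using an injective resolution in the \emph{second} argument while $\Ext_\A$ uses a homotopy projective resolution in the first, so I must check that the duality functor sends a resolution of the appropriate type on one side to a resolution of the appropriate type on the other — this is precisely the content of the statements in the proof of Theorem~\ref{acyclic-cochain-duality} that $\C\ot^\tau{-}$ takes semiacyclic wcDG\+modules to contractible CDG\+comodules and sends $\A\mod\Rfr_\proj$ into $\C\comod\Rfr_\inj$, so no new work is needed beyond invoking it.
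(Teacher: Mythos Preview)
Your approach is correct and proves the theorem, but the paper takes a slightly different and more direct route. Rather than applying the Koszul functor $\C\ot^\tau{-}$ to \emph{both} arguments of $\Hom_\A$ and then invoking the counit $\A\ot^\tau\C\ot^\tau\P\rarrow\P$, the paper applies the Koszul functors to \emph{one argument on each side}: for an $\R$\+free CDG\+comodule $\L$ over $\C$ and an $\R$\+free wcDG\+module $\M$ over $\A$, it writes down the chain-level isomorphism
\[
 \Hom_\C(\L,\C\ot^\tau\M)\;\simeq\;\Hom_\A(\A\ot^\tau\L,\M),
\]
both sides being naturally identified with $\Hom^\R(\L,\M)$ endowed with the $\tau$\+twisted differential. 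This is precisely the DG\+adjunction isomorphism. Since $\C\ot^\tau\M\in\C\comod\Rfr_\inj$ for every $\M$ and $\A\ot^\tau\L\in H^0(\A\mod\Rfr_\proj)_\proj$ for every $\L$, the left side computes $\Ext_\C$ and the right side computes $\Ext_\A$ immediately, with no further comparison needed.

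Your route reaches the same conclusion but via two steps: the adjunction gives $\Hom_\C(\C\ot^\tau\P,\C\ot^\tau\M)\simeq\Hom_\A(\A\ot^\tau\C\ot^\tau\P,\M)$, and then you need $\A\ot^\tau\C\ot^\tau\P\rarrow\P$ to be a homotopy equivalence (which it is, both sides being homotopy projective with semiacyclic cone). Note that this second step only produces a homotopy equivalence of complexes, not the ``natural isomorphism of complexes'' you claimed; this is harmless since the target category is $H^0(\R\contra^\free)$, but it means the sign/differential check you flagged as an obstacle is not actually where the work lies. The paper's one-step version sidesteps this entirely and gives a genuine chain-level identification.
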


 The following theorem shows how Koszul duality transforms
the functor $\Ext_\A$ into the functor $\Coext_\C$.

\begin{thm}  \label{coext-and-ext}
\textup{(a)} The equivalences of categories
$$
 \sD^\si(\A\mod\Rfr)\simeq\sD^\co(\C\comod\Rfr)
$$
and
$$
 \sD^\si(\A\mod\Rfr)\simeq\sD^\ctr(\C\contra\Rfr)
$$
from Theorem~\textup{\ref{acyclic-cochain-duality}}
transform the functor
$$
 \Ext_\A\:\sD^\si(\A\mod\Rfr)^\sop\times\sD^\si(\A\mod\Rfr)\lrarrow
 H^0(\R\contra^\free)
$$
from Section~\textup{\ref{r-free-semi}} into the functor
$$
 \Coext_\C\:\sD^\co(\C\comod\Rfr)^\sop\times\sD^\ctr(\C\contra\Rfr)
 \lrarrow H^0(\R\contra^\free)
$$
from Section~\textup{\ref{r-free-co-derived}}. \par
\textup{(b)} The equivalences of categories
$$
 \sD^\si(\A\mod\Rcof)\simeq\sD^\co(\C\comod\Rcof)
$$
and
$$
 \sD^\si(\A\mod\Rcof)\simeq\sD^\ctr(\C\contra\Rcof)
$$
transform the functor
$$
 \Ext_\A\:\sD^\si(\A\mod\Rcof)^\sop\times\sD^\si(\A\mod\Rcof)\lrarrow
 H^0(\R\contra^\free)
$$
from Section~\textup{\ref{r-cofree-semi}} into the functor
$$
 \Coext_\C\:\sD^\co(\C\comod\Rcof)^\sop\times\sD^\ctr(\C\contra\Rcof)
 \lrarrow H^0(\R\contra^\free)
$$
from Section~\textup{\ref{r-cofree-co-derived}}. \par
\textup{(c)} The equivalences of categories
$$
 \sD^\si(\A\mod\Rfr)\simeq\sD^\co(\C\comod\Rfr)
$$
and
$$
 \sD^\si(\A\mod\Rcof)\simeq\sD^\ctr(\C\contra\Rcof)
$$
transform the functor
$$
 \Ext_\A\:\sD^\si(\A\mod\Rfr)^\sop\times\sD^\si(\A\mod\Rcof)\lrarrow
 H^0(\R\comod^\cofr)
$$
from Section~\textup{\ref{r-cofree-semi}} into the functor
$$
 \Coext_\C\:\sD^\co(\C\comod\Rfr)^\sop\times\sD^\ctr(\C\contra\Rcof)
 \lrarrow H^0(\R\comod^\cofr)
$$
from Section~\textup{\ref{r-cofree-co-derived}}. \par
\textup{(d)} The equivalences of categories
$$
 \sD^\si(\A\mod\Rco)\simeq\sD^\co(\C\comod\Rco)
$$
and
$$
 \sD^\si(\A\mod\Rctr)\simeq\sD^\ctr(\C\contra\Rctr)
$$
transform the functor
$$
 \Ext_\A\:\sD^\si(\A\mod\Rco)^\sop\times\sD^\si(\A\mod\Rctr)\lrarrow
 H^0(\R\contra^\free)
$$
from Section~\textup{\ref{wcdg-semiderived}} into the functor
$$
 \Coext_\C\:\sD^\co(\C\comod\Rco)^\sop\times\sD^\ctr(\C\contra\Rctr)
 \lrarrow H^0(\R\contra^\free)
$$
from Section~\textup{\ref{non-adj-co-derived}}.
\end{thm}

\begin{proof}[Proof of Theorems~\textup{\ref{ctrtor-and-tor}}\+-%
\textup{\ref{coext-and-ext}}]
 The proofs of all the sixteen assertions are very similar to
each other, so will only discuss two randomly chosen ones, namely,
Theorem~\ref{cotor-and-tor}(c) and
Theorem~\ref{contramodule-ext-and-ext}(a).

 Let $\cM$ be an $\R$\+comodule left wcDG\+module over $\A$ and
$\cN$ be an $\R$\+comodule right CDG\+comodule over~$\C$.
 Then there is a natural isomorphism of complexes of $\R$\+comodules
$\cN\oc_\C(\C\ocn^\tau\cM)\simeq(\cN\ocn^\tau\A)\ot_\A\cM$.
 Indeed, both complexes are isomorphic to the complex
$\cN\oc_\R\cM$ with the differential induced by the differentials
on $\cN$ and $\cM$ being twisted using the twisting cochain~$\tau$.
 Notice that $\C\ocn^\tau\cM\in\C\comod\Rcof_\inj$ whenever
$\cM\in\A\mod\Rcof$ and $\cN\ocn^\tau\A\in H^0(\modrRcofproj\A)_\proj$
whenever $\cN\in\comodrRcof\C$.
 The latter assertion follows from the adjunction of the functors
${-}\ocn^\tau\A$ and ${-}\ocn^\tau\C$ together with the fact that
the latter functor takes semiacyclic $\R$\+cofree wcDG\+modules
to contractible $\R$\+cofree CDG\+comodules (see the proof of
Theorem~\ref{acyclic-cochain-duality}(a)) and the semiorthogonal
decomposition of Theorem~\ref{r-cofree-semi-resolutions}(a).

 Let $\P$ be an $\R$\+contramodule left wcDG\+module over $\A$ and
$\Q$ be an $\R$\+contramod\-ule left CDG\+contramodule over~$\C$.
 Then there is a natural isomorphism of complexes of
$\R$\+contramodules $\Hom^\C(\Hom^\tau(\C,\P),\Q)\simeq
\Hom_\A(\P,\Hom^\tau(\A,\Q))$.
 Indeed, both complexes are isomorphic to the complex
$\Hom^\R(\P,\Q)$ with the differential induced by the differentials
on $\P$ and $\Q$ being twisted by~$\tau$.
 Notice that $\Hom^\tau(\C,\P)\in\C\contra\Rfr_\proj$ whenever
$\P\in\A\mod\Rfr$ and $\Hom^\tau(\A,\Q)\in H^0(\A\mod\Rfr_\inj)_\inj$
whenever $\Q\in\C\contra\Rfr$.
 The proof of the latter assertion is similar to the above.
 (See~\cite[proof of Theorem~6.9.1]{Pkoszul} for further details.)
\end{proof}

 Let $\A$ and $\B$ be wcDG\+algebras over $\R$, let $\C$ and $\D$ be
$\R$\+free CDG\+coalgebras with conilpotent reductions $\C/\m\C$ and
$\D/\m\D$, and let $\tau\:\C\rarrow\A$ and $\sigma\:\D\rarrow\B$ be
twisting cochains with acyclic reductions $\tau/\m\tau$ and
$\sigma/\m\sigma$.
 Let $f=(f,\xi)\:\A\rarrow\B$ be a morphism of wcDG\+algebras and
$g=(g,\eta)\:\C\rarrow\D$ be a morphism of CDG\+coalgebras.
 Assume that $(f,\xi)$ and $(g,\eta)$ make a commutative diagram
with $\tau$ and~$\sigma$, i.~e., $\sigma\circ(g,\eta) =
(f,\xi)\circ\tau$, or explicitly, $\sigma\circ g - f\circ\tau =
e_\B\circ\eta + \xi\circ \varepsilon_\C$.

\begin{prop} \label{koszul-restriction-extension}
 \textup{(a)} The equivalences of triangulated categories
$$
\sD^\si(\A\mod\Rfr)\simeq\sD^\co(\C\comod\Rfr)
$$
and
$$
 \sD^\si(\B\mod\Rfr)\simeq\sD^\co(\D\comod\Rfr)
$$
from Theorem~\textup{\ref{acyclic-cochain-duality}}
transform the functor
$$
 \boI R_f\:\sD^\si(\B\mod\Rfr)\lrarrow\sD^\si(\A\mod\Rfr)
$$
from Section~\textup{\ref{r-free-semi}} into the functor
$$
 \boR E_g\:\sD^\co(\D\comod\Rfr)\lrarrow\sD^\co(\C\comod\Rfr)
$$
from Section~\textup{\ref{r-free-co-derived}}, and the functor
$$
 \boL E_f\:\sD^\si(\A\mod\Rfr)\lrarrow\sD^\si(\B\mod\Rfr)
$$
into the functor
$$
 \boI R_g\:\sD^\co(\C\comod\Rfr)\rarrow\sD^\co(\D\comod\Rfr).
$$
 In other words, the following two square diagrams of categories,
functors, and equivalences are commutative:
$$
\begin{diagram}
 \node{\llap{$\boL E_f$}\:\sD^\si(\A\mod\Rfr)}
 \arrow{e}\arrow{s,=}
 \node{\sD^\si(\B\mod\Rfr)\,\,\:\!\rlap{$\boI R_f$}}
 \arrow{w}\arrow{s,=} \\
 \node{\llap{$\boI R_g$}\:\sD^\co(\C\comod\Rfr)} \arrow{e}
 \node{\sD^\co(\D\comod\Rfr)\,\,\:\!\rlap{$\boR E_g$}} \arrow{w}
\end{diagram}
$$ \par
 \textup{(b)} The equivalences of triangulated categories
$$
 \sD^\si(\A\mod\Rctr)\simeq\sD^\ctr(\C\contra\Rctr)
$$
and
$$
 \sD^\si(\B\mod\Rctr)\simeq\sD^\ctr(\D\contra\Rctr)
$$
transform the functor
$$
 \boI R_f\:\sD^\si(\B\mod\Rctr)\lrarrow\sD^\si(\A\mod\Rctr)
$$
from Section~\textup{\ref{wcdg-semiderived}} into the functor
$$
 \boL E^g\:\sD^\ctr(\D\contra\Rctr)\lrarrow\sD^\ctr(\C\contra\Rctr)
$$
from Section~\textup{\ref{non-adj-co-derived}}, and the functor
$$
 \boR E^f\:\sD^\si(\A\mod\Rctr)\lrarrow\sD^\si(\B\mod\Rctr)
$$
into the functor
$$
 \boI R^g\:\sD^\ctr(\C\contra\Rctr)\rarrow\sD^\ctr(\D\contra\Rctr).
$$
 In other words, the following two square diagrams of categories,
functors, and equivalences are commutative:
$$
\begin{diagram}
 \node{\llap{$\boR E^f$}\:\sD^\si(\A\mod\Rctr)}
 \arrow{e}\arrow{s,=}
 \node{\sD^\si(\B\mod\Rctr)\,\,\:\!\rlap{$\boI R_f$}}
 \arrow{w}\arrow{s,=} \\
 \node{\llap{$\boI R^g$}\:\sD^\ctr(\C\contra\Rctr)} \arrow{e}
 \node{\sD^\ctr(\D\contra\Rctr)\,\,\:\!\rlap{$\boL E^g$}} \arrow{w}
\end{diagram}
$$ \par
 \textup{(c)} The equivalences of triangulated categories
$$
 \sD^\si(\A\mod\Rco)\simeq\sD^\co(\C\comod\Rco)
$$
and
$$
 \sD^\si(\B\mod\Rco)\simeq\sD^\co(\D\comod\Rco)
$$
transform the functor
$$
 \boI R_f\:\sD^\si(\B\mod\Rco)\lrarrow\sD^\si(\A\mod\Rco)
$$
from Section~\textup{\ref{wcdg-semiderived}} into the functor
$$
 \boR E_g\:\sD^\co(\D\comod\Rco)\lrarrow\sD^\co(\C\comod\Rco),
$$
from Section~\textup{\ref{non-adj-co-derived}}, and the functor
$$
 \boL E_f\:\sD^\si(\A\mod\Rco)\lrarrow\sD^\si(\B\mod\Rco)
$$
into the functor
$$
 \boI R_g\:\sD^\co(\C\comod\Rco)\lrarrow\sD^\co(\D\comod\Rco).
$$ \par
 In other words, the following two square diagrams of categories,
functors, and equivalences are commutative:
$$
\begin{diagram}
 \node{\llap{$\boL E_f$}\:\sD^\si(\A\mod\Rco)}
 \arrow{e}\arrow{s,=}
 \node{\sD^\si(\B\mod\Rco)\,\,\:\!\rlap{$\boI R_f$}}
 \arrow{w}\arrow{s,=} \\
 \node{\llap{$\boI R_g$}\:\sD^\co(\C\comod\Rco)} \arrow{e}
 \node{\sD^\co(\D\comod\Rco)\,\,\:\!\rlap{$\boR E_g$}} \arrow{w}
\end{diagram}
$$ \par
 \textup{(d)} The equivalences of triangulated categories
$$
 \sD^\si(\A\mod\Rcof)\simeq\sD^\ctr(\C\contra\Rcof)
$$
and
$$
 \sD^\si(\B\mod\Rcof)\simeq\sD^\ctr(\D\contra\Rcof)
$$
transform the functor
$$
 \boI R_f\:\sD^\si(\B\mod\Rcof)\lrarrow\sD^\si(\A\mod\Rcof)
$$
from Section~\textup{\ref{r-cofree-semi}} into the functor
$$
 \boL E^g\:\sD^\ctr(\D\contra\Rcof)\lrarrow\sD^\ctr(\C\contra\Rcof),
$$
from Section~\textup{\ref{r-cofree-co-derived}} and the functor
$$
 \boR E^f\:\sD^\si(\A\mod\Rcof)\lrarrow\sD^\si(\B\mod\Rcof)
$$
into the functor
$$
 \boI R^g\:\sD^\ctr(\C\contra\Rcof)\lrarrow\sD^\ctr(\D\contra\Rcof).
$$
 In other words, the following two square diagrams of categories,
functors, and equivalences are commutative:
$$
\begin{diagram}
 \node{\llap{$\boR E^f$}\:\sD^\si(\A\mod\Rcof)}
 \arrow{e}\arrow{s,=}
 \node{\sD^\si(\B\mod\Rcof)\,\,\:\!\rlap{$\boI R_f$}}
 \arrow{w}\arrow{s,=} \\
 \node{\llap{$\boI R^g$}\:\sD^\ctr(\C\contra\Rcof)} \arrow{e}
 \node{\sD^\ctr(\D\contra\Rcof)\,\,\:\!\rlap{$\boL E^g$}} \arrow{w}
\end{diagram}
$$
\end{prop}

\begin{proof}
 See~\cite[proof of Proposition~6.9]{Pkoszul}.
\end{proof}

\begin{rem}
 The analogues of the results of this section also hold for
the nonconilpotent Koszul duality theory of
Section~\ref{nonconilpotent-sect}.
 In order to formulate them, though, one needs to define
the $\Tor$, $\Ext$, and derived (co)extension-of-scalars functors
acting in the co/contra/absolute derived categories of CDG\+modules.
 This can be done, at least, for a CDG\+algebra $\B$ such that
the categories of $\R$\+(co)free graded left/right $\B$\+modules
have finite homological dimension, using the results of
Theorem~\ref{r-free-absolute-derived} and
Corollary~\ref{non-adj-fin-dim-reduct-co-abs}.
 (Cf.\ \cite[Section~3.12 and Theorem~6.9.2]{Pkoszul}.)
\end{rem}

\subsection{Examples}
 Here we use the results of Sections~\ref{conilpotent-sect}
and~\ref{transformation-koszul} in order to compute the remaining
two of the three examples mentioned in the Introduction
(for the first one, see Example~\ref{kln-counterex}).
 We will only discuss the $\R$\+contramodules of morphisms
and the complexes $\Ext_\A$ of free $\R$\+contramodules in
the exotic derived categories of wcDG\+modules over certain
wcDG\+algebras~$\A$.

 According to the results of Section~\ref{wcainfty-semiderived} below,
the semiderived categories of strictly unital wc~\Ainfty\+modules
over the wcDG\+algebra $\A$ viewed as a strictly unital
wc~\Ainfty\+algebra are equivalent to the semiderived categories
of wcDG\+modules, and the complexes of free $\R$\+contramodules
$\Ext$ in them are isomorphic in the homotopy category.
 Moreover, in both examples the wcDG\+algebra $\A$ is actually
augmented, so one can interpret the same $\R$\+contramodules
of morphisms and complexes of free $\R$\+contramodules $\Ext_\A$
as being related to the semiderived categories of nonunital
wc~\Ainfty\+modules over the augmentation ideal of $\A$ viewed
as a nonunital wc~\Ainfty\+algebra (see
Sections~\ref{nonunital-wcainfty}\+-\ref{strictly-unital-wcainfty}).

\begin{ex} \label{kln-counterex2}
 Let $\R$ be a pro-Artinian topological local ring with the maximal
ideal~$\m$, and let $\epsilon\in\m$ be an element.
 Consider the $\R$\+free graded algebra $\A=\R[x]=
\bigoplus_{n=0}^\infty\R x^n$, where the direct sum is taken in
the category of free graded $\R$\+contramodules and $\deg x=2$.
 Define the wcDG\+algebra structure on $\A$ with $d=0$ and
$h=\epsilon x$ (cf.\ Example~\ref{kln-counterex}).

 Let $\C$ be the $\R$\+free DG\+coalgebra such that the $\R$\+free
DG\+algebra $\C^*$ is isomorphic to the graded algebra $\R[y]/(y^2)$
(i.~e., the direct sum of $\R$ and $\R y$) with $\deg y=-1$ and
$d(y)=\epsilon$.
 Then the wcDG\+algebra $\A$ is isomorphic to $\Cb_w(\C)$, where
$w\:\R\rarrow\C$ is defined by the rule that
$w^*\:\C^*\rarrow\R$ takes $y$ to~$0$.

 Let $N$ be any ($\R$\+contramodule or $\R$\+comodule)
DG\+contramodule or DG\+comodule over $\C$; equivalently, it
can be viewed as a DG\+module over~$\C^*$.
 Then the action of the element $y\in\C^*$ provides a contracting
homotopy for the endomorphism of multiplication with~$\epsilon$
on~$N$.
 By Corollary~\ref{conilpotent-cobar}, it follows that
all the $\R$\+modules of morphisms in the $\R$\+linear triangulated
category $\sD^\si(\A\mod\Rctr)\simeq\sD^\si(\A\mod\Rco)$ are
annihilated by the multiplication with~$\epsilon$.

 By Theorem~\ref{r-free-cofibrant} and
Corollaries~\ref{non-adj-fin-dim-reduct-co-abs}\+-%
\ref{non-adj-fin-dim-reduct-r-co-contra}, the same applies to
the $\R$\+linear triangulated categories
$\sD^\abs(\A\mod\Rfr)\simeq\sD^\abs(\A\mod\Rcof)$ and
$\sD^\ctr(\A\mod\Rctr)\simeq\sD^\co(\A\mod\Rco)$.

 Furthermore, by Theorem~\ref{contramodule-ext-and-ext}
or~\ref{comodule-ext-and-ext}, it follows that the endomorphisms
of multiplication with~$\epsilon$ are homotopic to zero on
the complexes of free $\R$\+contramodules $\Ext_\A$ between
$\R$\+comodule or $\R$\+contramodule wcDG\+modules over~$\A$.
\end{ex}

\begin{ex} \label{clifford-ex}
 Let $\R$ be a pro-Artinian topological local ring and $\C$ be
an ungraded $\R$\+free coalgebra considered as an $\R$\+free
CDG\+coalgebra concentrated entirely in grading zero with
the trivial CDG\+coalgebra structure $d=0=h$.
 Assume that the reduced coalgebra $\C/\m\C$ is conilpotent and
pick a section $w\:\R\rarrow\C$ of the counit map such that
$w/\m w=\bar w$ is the coaugmentation of $\C/\m\C$.

 Let $\N$ be an $\R$\+free $\C$\+comodule viewed as a CDG\+comodule
concentrated in degree zero and with zero differential.
 By~\cite[Remark~4.1]{Psemi}, one has 
$\Hom_{\sD^\co(\C\comod\Rfr)}(\N,\N)\allowbreak\simeq\Hom_\C(\N,\N)$.
 The same applies to $\R$\+contramodule $\C$\+contramodules,
$\R$\+comodule $\C$\+comodules, etc.
 In particular, for $\N=\C$ one has $\Ext_\C(\N,\N)\simeq\C^*$
(by the definition and since $\N\in\C\comod\Rfr_\inj$).

 Set $\A=\Cb_w(\C)$ and $\M=\A\ot^{\tau_{\C,w}}\N$.
 By Corollary~\ref{conilpotent-cobar}, it follows that
$\Hom_{\sD^\si(\A\mod\Rfr)}(\M,\M)\simeq\Hom_\C(\N,\N)$.
 In particular, for $\N=\C$ we have $\Ext_\A(\M,\M)\simeq\C^*$
by Theorem~\ref{comodule-ext-and-ext}(a).

 To point out a specific example, let $\C^*$ be the Clifford algebra
with one generator $\R[y]/(y^2-\epsilon)$, where $\epsilon\in
\m\subset\R$.
 Being by the definition a free $\R$\+module with two generators
endowed with an $\R$\+algebra structure, it is clearly an algebra in
the tensor category of (finitely generated) free $\R$\+contramodules
(since the tensor categories of finitely generated free
$\R$\+modules and finitely generated free $\R$\+contramodules
are equivalent).
 One easily recovers from it the dual $\R$\+free coalgebra~$\C$.
 Let $w\:\R\rarrow\C$ be the section of the counit map given by
the rule $w^*(y)=0$.
 
 Then $\A=\Cb_w(\C)$ is the $\R$\+free graded algebra
$\R[x]$ with $\deg x=1$ endowed with the wcDG\+algebra structure
with $d=0$ and $h=\epsilon x^2$.
 Thus there is wcDG\+module structure on the free graded
$\A$\+module $\M$ with two generators in degree~$0$ such that
the $\R$\+module $\Hom_{\sD^\si(\A\mod\Rfr)}(\M,\M[i])$ is
isomorphic to the two-dimensional $\R$\+free algebra $\C^*$ when
$i=0$ and vanishes otherwise.
 More precisely, the complex of free $\R$\+contramodules
$\Ext_\A(\M,\M)$ is homotopy equivalent to~$\C^*$.
\end{ex}

\subsection{Duality between algebras and coalgebras}
 Let $\R\coalg_\cdg^\cnlp$ denote the category of $\R$\+free
CDG\+coalgebras with conilpotent reductions modulo~$\m$.
 The objects of this category are $\R$\+free CDG\+coalgebras $\C$
such that the CDG\+coalgebra $\C/\m\C$ over~$k$ is conilpotent.
 Morphisms $\C\rarrow\D$ in $\R\coalg_\cdg^\cnlp$ are $\R$\+free
CDG\+co\-algebra morphisms $(f,a)\:\C\rarrow\D$ whose reductions
$(f/\m f\;a/\m a)\:\C/\m\C\rarrow\D/\m\D$ are conilpotent
CDG\+coalgebra morphisms; in other words, the composition
$k\rarrow\C/\m\C\rarrow k$ of the coaugmentation and
the reduced change-of-connection maps is requred to be zero
(a condition only nontrivial when $1=0$ in $\Gamma$ and
$2=0$ in~$\R$).
 The category of nonzero wcDG\+algebras over $\R$ will be denoted
by $\R\alg_\wcdg^+$.

 It follows from Lemma~\ref{free-algebra-morphisms} that the functor
$\A\mpsto\Br_v(\A)$ acting from the category $\R\alg_\wcdg^+$
to the category $\R\coalg_\cdg^\cnlp$ is right adjoint to
the functor $\C\mpsto\Cb_w(\C)$ acting in the opposite direction.

 Indeed, let $\A$ be a wcDG\+algebra over $\R$ and $\C$ be
an $\R$\+free CDG\+coalgebra with conilpotent reduction.
 Let $v\:\A\rarrow\R$ be a homogeneous retraction onto the unit map,
and let $w\:\R\rarrow\C$ be a homogeneous section of the counit map
such that $\bar w = w/\m w\:k\rarrow\C/\m\C$ is the coaugmentation.
 Then both wcDG\+algebra morphisms $\Cb_w(\C)\rarrow\A$ and
CDG\+coalgebra morphisms $\C\rarrow\Br_v(\A)$ correspond bijectively
to twisting cochains $\tau\:\C\rarrow\A$ for which the composition
$k\rarrow\C/\m\C\rarrow\A/\m\A$ vanishes.
 The former bijection assigns to a morphism $(f,a)\:\Cb_w(\C)\rarrow\A$
the twisting cochain $(f,a)\circ\tau_{\C,w}$, and the latter one
assigns to a morphism $(g,a)\:\C\rarrow\Br_v(\A)$ the twisting
cochain $\tau_{\A,v}\circ(g,a)$.

 More generally, let $\B$ be an $\R$\+free CDG\+algebra and $\C$
be an $\R$\+free CDG\+coalge\-bra.
 Let $v\:\B\rarrow\R$ be a homogeneous retraction and
$w\:\C\rarrow\R$ be a homogeneous section.
 Then CDG\+algebra morphisms $\Cb_w(\C)\rarrow\B$ correspond
bijectively to twisting cochains $\tau\:\C\rarrow\B$.

 Let us call a wcDG\+algebra morphism $(f,a)\:\B\rarrow\A$
a \emph{semi-isomorphism} if the DG\+algebra morphism
$f/\m f\:\B/\m\B\rarrow\A/\m\A$ is a quasi-isomorphism.
 The definition of the equivalence relation on CDG\+coalgebras
with conilpotent reductions requires a little more effort.

 Let $C$ be a coaugmented CDG\+coalgebra over~$k$ with
the coaugmentation $\bar w\:k\rarrow C$.
 An \emph{admissible filtration} $F$ on $C$ \cite{Hin}
(see also~\cite[Section~6.10]{Pkoszul}) is an increasing filtration
$F_0C\subset F_1C\subset F_2C\subset\dotsb$ that is preserved by
the differential, compatible with the comultiplication, and such that
$F_0C=\bar w(k)$ and $C=\bigcup_n F_nC$.
 The compatibility with the comultiplication means, as usually,
that $\mu(F_nC)\subset\sum_{p+q=n}F_pC\ot_k F_q C$, where
$\mu\:C\rarrow C\ot_k C$ is the comultiplication map.

 A coaugmented CDG\+coalgebra $C$ over $k$ admits an admissible
filtration if and only if it is conilpotent.
 A conilpotent CDG\+coalgebra $C$ has a canonical (maximal)
admissible filtration $F$ given by the rule
$F_nC=\ker(C\to (C/\bar w(k))^{\ot n+1})$;
for any other admissible filtration $G$ on $C$, one has
$G_nC\subset F_nC$.
 
 Let $F$ be an admissible filtration on a conilpotent
CDG\+coalgebra~$C$.
 Then the associated quotient coalgebra $\gr_FC=\bigoplus_n
F_nC/F_{n-1}C$ with the induced differential is a DG\+coalgebra
over~$k$ (since it is assumed that $h\circ \bar w=0$).
 A morphism of conilpotent CDG\+coalgebras $(g,a)\:C\rarrow D$
is said to be a \emph{filtered quasi-isomorphism} if there
exist admissible filtrations $F$ on $C$ and $D$ such that
$g(F_nC)\subset F_nD$ and the induced morphisms of complexes
$F_nC/F_{n-1}C\rarrow F_nD/F_{n-1}D$ are quasi-isomorphisms.

 A morphism of $\R$\+free CDG\+coalgebras with conilpotent
reductions $(f,a)\:\C\rarrow\D$ is called a \emph{filtered
semi-isomorphism} if its reduction $(f/\m f\;a/\m a)\:
\C/\m\C\rarrow\D/\m\D$ is a filtered quasi-isomorphism.
 In other words, there should exist admissible filtrations $F$
on the CDG\+coalgebras $\C/\m\C$ and $\D/\m\D$ over~$k$
such that the above conditions are satisfied.
 The classes of semi-isomorphisms of wcDG\+algebras over $\R$
and filtered semi-isomorphisms of $\R$\+free CDG\+coalgebras
with conilpotent reductions will be denoted by
$\Seis\subset\R\alg_\wcdg^+$ and
$\FSeis\subset\R\coalg_\cdg^\cnlp$.

 Let us make a warning that the class of filtered semi-isomorphisms
$\FSeis$ is \emph{not} closed under compositions, fractions, or
retractions of morphisms in $\R\coalg_\cdg^\cnlp$ (while the class
of semi-isomorphisms $\Seis\subset\R\alg_\wcdg^+$ is, of course,
closed under these operations).

\begin{thm}
 The functors\/ $\Br_v\:\R\alg_\wcdg^+\rarrow\R\coalg_\cdg^\cnlp$
and\/ $\Cb_w\:\allowbreak\R\coalg_\cdg^\cnlp\rarrow\R\alg_\wcdg^+$
induce functors between the localized categories
$$
 \R\alg_\wcdg^+[\Seis^{-1}]\lrarrow\R\coalg_\cdg^\cnlp[\FSeis^{-1}]
$$
and
$$
 \R\coalg_\cdg^\cnlp[\FSeis^{-1}]\lrarrow\R\alg_\wcdg^+[\Seis^{-1}],
$$
which are mutually inverse equivalences of categories.
\end{thm}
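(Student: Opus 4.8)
The plan is to derive the theorem from the adjunction $\Cb_w\dashv\Br_v$ recorded just above, by verifying the hypotheses of the following general principle: if $L\dashv R$ is an adjunction whose two functors carry the chosen classes of weak equivalences into one another and all of whose unit and counit components are weak equivalences, then $L$ and $R$ descend to mutually inverse equivalences of the localized categories. I want to stress at the outset that this principle requires \emph{no} calculus of fractions on either side: the induced functors exist by the universal property of localization, and the unit and counit automatically become natural isomorphisms once their components are inverted. This is exactly why the preceding warning that $\FSeis$ is not closed under compositions, fractions, or retractions does not obstruct the argument, provided one runs it through the adjunction morphisms rather than trying to represent morphisms in $\R\coalg_\cdg^\cnlp[\FSeis^{-1}]$ by fractions.

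First I would check that reduction modulo~$\m$ commutes with both constructions, producing natural isomorphisms $\Br_v(\A)/\m\Br_v(\A)\simeq\Br_{\bar v}(\A/\m\A)$ and $\Cb_w(\C)/\m\Cb_w(\C)\simeq\Cb_{\bar w}(\C/\m\C)$, where $\bar v=v/\m v$ and $\bar w=w/\m w$. This holds because the functor of reduction modulo~$\m$ preserves tensor products, infinite direct sums, and the kernels and cokernels of free $\R$\+contramodules occurring in Section~\ref{bar-cobar-sect}, while the coderivations, derivations, and curvature functions built into $\Br_v$ and $\Cb_w$ are assembled from structure that reduces compatibly. Consequently the components of the unit $\C\rarrow\Br_v\Cb_w(\C)$ and the counit $\Cb_w\Br_v(\A)\rarrow\A$ reduce modulo~$\m$ exactly to the unit and counit of the analogous adjunction over the residue field~$k$, and the morphisms $\Br_v(f)$ and $\Cb_w(g)$ reduce to $\Br_{\bar v}(f/\m f)$ and $\Cb_{\bar w}(g/\m g)$ respectively. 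Here one uses that the reduction of a wcDG\+algebra over~$\R$ is an (uncurved) DG\+algebra over~$k$ and the reduction of an object of $\R\coalg_\cdg^\cnlp$ is a conilpotent CDG\+coalgebra over~$k$.

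Next I would verify the four required conditions, each of which, by the very definitions of $\Seis$ and $\FSeis$, reduces to an assertion about DG\+algebras and conilpotent CDG\+coalgebras over~$k$ established in~\cite[Section~6.10]{Pkoszul}: (i) $\Br_{\bar v}$ sends quasi-isomorphisms to filtered quasi-isomorphisms, so $\Br_v(\Seis)\subset\FSeis$; (ii) $\Cb_{\bar w}$ sends filtered quasi-isomorphisms to quasi-isomorphisms, so $\Cb_w(\FSeis)\subset\Seis$; (iii) the field-case unit $\C/\m\C\rarrow\Br_{\bar v}\Cb_{\bar w}(\C/\m\C)$ is a filtered quasi-isomorphism, so the unit over~$\R$ lies in $\FSeis$; and (iv) the field-case counit $\Cb_{\bar w}\Br_{\bar v}(\A/\m\A)\rarrow\A/\m\A$ is a quasi-isomorphism, so the counit over~$\R$ lies in $\Seis$. (For (iii) one also checks that the unit is a genuine morphism of $\R\coalg_\cdg^\cnlp$, i.e.\ respects the conilpotency condition, which again follows after reduction modulo~$\m$.)

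With (i)--(iv) in hand the conclusion is formal. Conditions (i) and (ii) make $\Br_v$ and $\Cb_w$ send weak equivalences to weak equivalences, so they induce functors $\overline{\Br_v}\:\R\alg_\wcdg^+[\Seis^{-1}]\rarrow\R\coalg_\cdg^\cnlp[\FSeis^{-1}]$ and $\overline{\Cb_w}$ in the opposite direction, and one has $\overline{\Br_v\Cb_w}=\overline{\Br_v}\,\overline{\Cb_w}$ and $\overline{\Cb_w\Br_v}=\overline{\Cb_w}\,\overline{\Br_v}$. By (iii) and (iv) all components of the unit and counit become invertible after localization, so these natural transformations descend to natural isomorphisms $\id\simeq\overline{\Br_v}\,\overline{\Cb_w}$ and $\overline{\Cb_w}\,\overline{\Br_v}\simeq\id$; naturality in the localized categories follows because inverting the naturality squares attached to weak equivalences yields naturality for their formal inverses, hence for all localized morphisms. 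Therefore $\overline{\Br_v}$ and $\overline{\Cb_w}$ are mutually inverse equivalences. The substantive content thus sits entirely in the field-case bar--cobar duality of~\cite{Pkoszul} together with the reduction-modulo-$\m$ bookkeeping; the only conceptual care needed is to avoid the (unavailable) calculus of fractions on the coalgebra side, which the adjunction-based formulation sidesteps.
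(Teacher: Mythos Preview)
Your proposal is correct and follows essentially the same approach as the paper: verify that $\Br_v$ and $\Cb_w$ preserve the respective classes of weak equivalences and that the adjunction unit and counit lie in $\FSeis$ and $\Seis$, all by reducing modulo~$\m$ and invoking the field-case result of~\cite[Theorem~6.10]{Pkoszul}. The paper's proof is terser and omits the explicit formulation of the general localization principle and the reduction-modulo-$\m$ bookkeeping that you spell out, but the substance is identical.
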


\begin{proof}
 It follows from the arguments in the proof
of~\cite[Theorem~6.10]{Pkoszul} that the functor $\Br_v$ takes
$\Seis$ to $\FSeis$ and the functor $\Cb_w$ takes $\FSeis$ to $\Seis$.
 Furthermore, the adjunction morphisms $\Cb_v(\Br_w(\A))\rarrow\A$
belong to $\Seis$ and the adjunction morphisms $\C\rarrow
\Br_w(\Cb_v(\C))$ belong to $\FSeis$ for all wcDG\+algebras
$\A\in\R\alg_\wcdg^+$ and all $\R$\+free CDG\+coalgebras with
conilpotent reductions $\C\in\R\coalg_\cdg^\cnlp$.
\end{proof}

\Section{Strictly Unital Weakly Curved \Ainfty-Algebras}

\subsection{Nonunital wc \Ainfty-algebras} \label{nonunital-wcainfty}
 Let $\A$ be a free graded $\R$\+contramodule.
 Consider the tensor coalgebra $\C=\bigoplus_{n=0}^\infty
\A[1]^{\ot n}$; it is an $\R$\+free graded coalgebra.
 Its reduction $\C/\m\C$ is the tensor coalgebra
$\bigoplus_{n=0}^\infty\A/\m\A[1]^{\ot n}$.
 Being a conilpotent graded $k$\+coalgebra, $\C/\m\C$ has a unique
coaugmentation $k\simeq\A/\m\A[1]^{\ot0}\rarrow
\bigoplus_n\A/\m\A[1]^{\ot n}$, which
we will denote by~$\bar w$.

 A \emph{nonunital wc~\Ainfty\+algebra} structure on $\A$ is,
by the definition, a structure of $\R$\+free DG\+coalgebra with
a conilpotent reduction on the $\R$\+free graded coalgebra
$\bigoplus_n\A[1]^{\ot n}$, i.~e., an odd coderivation
$d\:\bigoplus_n\A[1]^{\ot n}\rarrow\bigoplus_n\A[1]^{\ot n}$ of
degree~$1$ such that $d^2=0$ and $d/\m d\circ \bar w=0$.
 A nonunital wc~\Ainfty\+algebra $\A$ over $\R$ is a free graded
$\R$\+contramodule endowed with the mentioned structure.

 Since a coderivation of $\bigoplus_n\A[1]^{\ot n}$ is uniquely
determined by its composition with the projection
$\bigoplus_n\A[1]^{\ot n}\rarrow\A[1]^{\ot 1}\simeq\A[1]$,
a nonunital wc~\Ainfty\+algebra structure on $\A$ can be viewed
as a sequence of $\R$\+contramodule morphisms
$m_n\:\A^{\ot n}\rarrow\A$, \ $n=0$,~$1$, $2$,~\dots\ of
degree $2-n$ (see Lemma~\ref{free-algebra-derivations}(b)).
 For the sign rule (here and below), see~\cite[Section~7.1]{Pkoszul}.
 The sequence of maps~$m_n$ must satisfy the \emph{weak
curvature} condition $m_0/\m m_0=0$ (i.~e., $m_0\in\m\A^2$)
and a sequence of quadratic equations corresponding to
the equation $d^2=0$ on the coderivation~$d$.
 
 A morphism of nonunital wc~\Ainfty\+algebras $f\:\A\rarrow\B$ over
$\R$ is, by the definition, a morphism of $\R$\+free DG\+coalgebras
$\bigoplus_n\A[1]^{\ot n}\rarrow\bigoplus_n\B[1]^{\ot n}$
(such a morphism is always compatible with
the coaugmentations modulo~$\m$).

 Since an $\R$\+free graded coalgebra morphism into
$\bigoplus_n\B[1]^{\ot n}$ is uniquely determined by its
composition with the projection $\bigoplus_n\B[1]^{\ot n}
\rarrow\B[1]$, a morphism of nonunital wc~\Ainfty\+algebras
$f\:\A\rarrow\B$ can be viewed as a sequence of
$\R$\+contramodule morphisms $f_n\:\A^{\ot n}\rarrow\B$,
\ $n=0$,~$1$, $2$,~\dots\ of degree $1-n$ (see
Lemma~\ref{free-algebra-morphisms}(b)).
 The sequence of maps~$f_n$ must satisfy the \emph{weak
change of connection} condition $f_0/\m f_0=0$ (i.~e.,
$f_0\in\m\B^1$) and a sequence of polynomial equations corresponding
to the equation $d\circ f = f\circ d$ on the morphism~$f$.
 (With respect to~$f_0$, these are even formal power series rather
than polynomials.)

\begin{lem}  \label{free-co-module-derivations}
 Let\/ $\C$ be an\/ $\R$\+free graded coalgebra endowed with
an odd coderivation~$d$ of degree~$1$.  Then \par
\textup{(a)} for any free graded\/ $\R$\+contramodule\/ $\V$,
odd coderivations of degree~$1$ on the\/ $\R$\+free graded left\/
$\C$\+comodule\/ $\C\ot^\R\V$ compatible with the coderivation~$d$
on\/ $\C$ are determined by their compositions with the map\/
$\C\ot^\R\V\rarrow\V$ induced by the counit of\/~$\C$.
 Conversely, any homogeneous\/ $\R$\+contramodule morphism\/
$\C\ot^\R\V\rarrow\V$ of degree~$1$ gives rise to an odd
coderivation of degree~$1$ on\/ $\C\ot^\R\V$ compatible with
the coderivation~$d$ on\/~$\C$; \par
\textup{(b)} for any graded\/ $\R$\+contramodule\/ $\U$, odd
contraderivations of degree~$1$ on the\/ $\R$\+contramodule
graded left\/ $\C$\+contramodule\/ $\Hom^\R(\C,\U)$ compatible with
the coderivation~$d$ on\/ $\C$ are determined by their compositions
with the map\/ $\U\rarrow\Hom^\R(\C,\U)$ induced by the counit
of\/~$\C$.
 Conversely, any homogeneous\/ $\R$\+contramodule morphism\/
$\U\rarrow\Hom^\R(\C,\U)$ of degree~$1$ gives rise to an odd
contraderivation of degree~$1$ on\/ $\Hom^\R(\C,\U)$ compatible
with the coderivation~$d$ on\/~$\C$; \par
\textup{(c)} for any graded\/ $\R$\+comodule\/ $\cV$, odd
coderivations of degree~$1$ on the\/ $\R$\+comodule graded left\/
$\C$\+comodule\/ $\C\ocn_\R\cV$ compatible with the coderivation~$d$
on\/ $\C$ are determined by their compositions with the map\/
$\C\ocn_\R\cV\rarrow\cV$ induced by the counit of\/~$\C$.
 Conversely, any homogeneous\/ $\R$\+comodule morphism\/
$\C\ocn_\R\cV\rarrow\cV$ of degree~$1$ gives rise to an odd
coderivation of degree~$1$ on\/ $\C\ocn_\R\cV$ compatible with
the coderivation~$d$ on\/~$\C$; \par
\textup{(d)} for any cofree graded\/ $\R$\+comodule\/ $\cU$, odd
contraderivations of degree~$1$ on the\/ $\R$\+cofree graded left\/
$\C$\+contramodule\/ $\Ctrhom_\R(\C,\cV)$ compatible with
the coderivation~$d$ on\/ $\C$ are determined by their compositions
with the map\/ $\cV\rarrow\Ctrhom_\R(\C,\cV)$ induced by the counit
of\/~$\C$.
 Conversely, any homogeneous\/ $\R$\+comodule morphism\/
$\cV\rarrow\Ctrhom_\R(\C,\cV)$ of degree~$1$ gives rise to an odd
contraderivation of degree~$1$ on\/ $\Ctrhom_\R(\C,\cV)$ compatible
with the coderivation~$d$ on\/~$\C$.
\end{lem}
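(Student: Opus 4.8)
The plan is to treat all four parts by a single mechanism: the universal (co)generation property of the (co)free (co)modules involved, which was recorded in Sections~\ref{r-free-graded-co} and~\ref{r-cofree-graded-co}. Recall that $\C\ot^\R\V$ is the cofree graded $\C$\+comodule cogenerated by the free $\R$\+contramodule $\V$, that $\Hom^\R(\C,\U)$ is the free graded $\C$\+contramodule generated by $\U$, and that $\C\ocn_\R\cV$ and $\Ctrhom_\R(\C,\cV)$ are the corresponding $\R$\+comodule-coefficient analogues. Since the statement parallels Lemma~\ref{free-algebra-derivations}, I would write out part~(a) in full and then indicate how (b)--(d) follow by the evident dualizations and by substituting $\ocn_\R$, $\Ctrhom_\R$ for $\ot^\R$, $\Hom^\R$.

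For part~(a), let $M=\C\ot^\R\V$ with coaction $\nu=\Delta\ot\id_\V\colon M\rarrow\C\ot^\R M$, where $\Delta$ is the comultiplication of $\C$, and let $p\colon M\rarrow\V$ be the projection induced by the counit. The counit axiom gives $(\id_\C\ot p)\circ\nu=\id_M$. Writing the compatibility of an odd coderivation $d_M$ with $d$ as the equation $\nu\circ d_M=(d\ot\id_M+\id_\C\ot d_M)\circ\nu$ and applying $\id_\C\ot p$ on the left, one obtains, up to the Koszul sign,
\[
 d_M=(d\ot\id_\V)+(\id_\C\ot(p\circ d_M))\circ\nu,
\]
so that $d_M$ is recovered from $\phi=p\circ d_M$; this settles uniqueness. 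Conversely, given any homogeneous $\R$\+contramodule morphism $\phi\colon M\rarrow\V$ of degree~$1$, I would define $d_M$ by this same formula and check directly that it is a coderivation compatible with $d$ with $p\circ d_M=\phi$. The verification is a short diagram chase using coassociativity and counitality of $\C$ together with the coderivation identity for $d$; crucially, there is no convergence issue here, because --- unlike the tensor-coalgebra situation of Lemma~\ref{free-algebra-derivations}(b) --- the coaction $\nu$ is a single comultiplication and the formula is manifestly finite.

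Parts~(c) and~(d) go through verbatim with $\ot^\R$ replaced by $\ocn_\R$ and $\Hom^\R$ replaced by $\Ctrhom_\R$; here one uses the module-category structure of $\R\comod$ over $\R\contra^\free$ and the associativity isomorphism $(\P\ot^\R\Q)\ocn_\R\cM\simeq\P\ocn_\R(\Q\ocn_\R\cM)$ of Section~\ref{contra-operations} in place of the plain associativity of $\ot^\R$. (When the coefficients happen to be (co)free, one could instead transport (c)--(d) from (a)--(b) along the correspondence $\Phi_\R=\Psi_\R^{-1}$, but the direct argument has the advantage of covering arbitrary $\R$\+comodule coefficients.) Part~(b), and likewise~(d), is the contramodule dual of~(a): the contraaction $\pi_P\colon\Hom^\R(\C,P)\rarrow P$ of the free contramodule $P=\Hom^\R(\C,\U)$ satisfies $\pi_P\circ e=\id_P$ for the counit-induced map $e\colon P\rarrow\Hom^\R(\C,P)$, and composing the compatibility equation for an odd contraderivation $d_P$ with the generator inclusion $i\colon\U\rarrow P$ recovers $d_P$ from $\psi=d_P\circ i$.

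The hard part, in each case, is purely bookkeeping: keeping track of the Koszul signs (which differ for the left and right versions, as in \cite[Section~2.2]{Pkoszul}) in the converse verification, and, on the contramodule side, the precise identification of $\Hom^\R(\C,\Hom^\R(\C,-))$ with $\Hom^\R(\C\ot^\R\C,-)$ (respectively the $\Ctrhom_\R$ analogue) under which the contraaction is induced by $\Delta$. The conceptual content is entirely carried by the counit axioms exhibited above, so once those are in place the remaining work is routine and is the reason the parallel Lemma~\ref{free-algebra-derivations} could be dismissed as ``straightforward.''
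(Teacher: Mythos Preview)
Your proposal is correct and is precisely the intended expansion of what the paper records as ``Straightforward from the definitions.'' The mechanism you isolate --- the counit identity $(\id_\C\ot p)\circ\nu=\id_M$ (and its three duals) recovering a compatible (co/contra)derivation from its (co)generator component, together with the reference (co/contra)derivation $d\ot\id$ (respectively $\Hom(d,\id)$) supplying existence --- is the only natural argument, so there is no meaningful difference in approach to report.
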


\begin{proof}
 Straightforward from the definitions.
\end{proof}

 Let $\A$ be a nonunital wc~\Ainfty\+algebra over $\R$ and $\M$ be
a free graded $\R$\+contramod\-ule.
 A structure of \emph{nonunital\/ $\R$\+free left wc~\Ainfty\+module}
over $\A$ on $\M$ is, by the definition, a structure of
DG\+comodule over the DG\+coalgebra $\bigoplus_n\A[1]^{\ot n}$ on
the injective $\R$\+free graded left comodule
$\bigoplus_n\A[1]^{\ot n}\ot^\R\M$ over the $\R$\+free graded
coalgebra $\bigoplus_n\A[1]^{\ot n}$.
 By Lemma~\ref{free-co-module-derivations}(a), a nonunital left
wc~\Ainfty\+module structure on $\M$ can be viewed as
a sequence of $\R$\+contramodule morphisms $l_n\:\A^{\ot n}\ot^\R
\M\rarrow\M$, \ $n=0$,~$1$,~\dots\ of degree $1-n$.
 The sequence of maps~$l_n$ must satisfy a system of nonhomogeneous
quadratic equations corresponding to the equation $d^2=0$ on
the coderivation~$d$ on $\bigoplus_n\A[1]^{\ot n}\ot^\R\M$.
 \emph{Nonunital\/ $\R$\+free right wc~\Ainfty\+modules} over $\A$
are defined similarly (see~\cite[Section~7.1]{Pkoszul}).

 The complex of morphisms between nonunital $\R$\+free left
wc~\Ainfty\+modules $\L$ and $\M$ over $\A$ is, by the definition,
the complex of morphisms between the $\R$\+free left
DG\+comodules $\bigoplus_n\A[1]^{\ot n}\ot^\R\L$ and
$\bigoplus_n\A[1]^{\ot n}\ot^\R\M$ over the $\R$\+free DG\+coalgebra
$\bigoplus_n\A[1]^{\ot n}$.
 Since an $\R$\+free graded $\C$\+comodule morphism into $\C\ot^\R\M$
is determined by its composition with the map $\C\ot^\R\M\rarrow\M$
induced by the counit of $\C$, a morphism of nonunital $\R$\+free
left wc~\Ainfty\+modules $f\:\L\rarrow\M$ of degree~$i$ can be viewed
as a sequence of $\R$\+contramodule morphisms
$f_n\:\A^{\ot n}\ot^\R\L\rarrow\M$, \ $n=0$,~$1$,~\dots\
of degree $i-n$ (see Section~\ref{r-free-graded-co}).
 Any sequence of homogeneous $\R$\+contramodule maps $f_n$ of
the required degrees corresponds to a (not necessarily closed)
morphism of nonunital $\R$\+free wc~\Ainfty\+modules~$f$.

 Let $\P$ be a graded $\R$\+contramodule.
 A structure of \emph{nonunital\/ $\R$\+contramodule left
wc~\Ainfty\+module} over $\A$ on $\P$ is, by the definition,
a structure of DG\+contramodule over the DG\+coalgebra
$\bigoplus_n\A[1]^{\ot n}$ on the induced $\R$\+contramodule graded
left contramodule $\Hom^\R(\bigoplus_n\A[1]^{\ot n}\;\P)$ over
the $\R$\+free graded coalgebra $\bigoplus_n\A[1]^{\ot n}$.
 By Lemma~\ref{free-co-module-derivations}(b), a nonunital
left wc~\Ainfty\+module structure on $\P$ can be viewed as
a sequence of $\R$\+contramodule morphisms $p_n\:\P\rarrow
\Hom^\R(\A^{\ot n}\;\P)$, \ $n=0$,~$1$,~\dots\ of degree $1-n$.
 The sequence of maps~$p_n$ must satisfy a sequence of
nonhomogeneous quadratic equations corresponding to the equation
$d^2=0$ on the contraderivation~$d$ on
$\Hom^\R(\bigoplus_n\A[1]^{\ot n}\;\P)$.

 The complex of morphisms between nonunital $\R$\+contramodule
left wc~\Ainfty\+modules $\P$ and $\Q$ over $\A$ is, by
the definition, the complex of morphisms between the $\R$\+free
left DG\+contramodules $\Hom^\R(\bigoplus_n\A[1]^{\ot n}\;\P)$
and $\Hom^\R(\bigoplus_n\A[1]^{\ot n}\;\Q)$ over
the $\R$\+free DG\+coalgebra $\bigoplus_n\A[1]^{\ot n}$.
 Since an $\R$\+contramodule graded $\C$\+contramodule morphism
from $\Hom^\R(\C,\P)$ is determined by the its composition with
the map $\P\rarrow\Hom^\R(\C,\P)$ induced by the counit of $\C$,
a morphism of nonunital $\R$\+contramodule left wc~\Ainfty\+modules
$f\:\P\rarrow\Q$ of degree~$i$ can be viewed as a sequence of
$\R$\+contra\-module morphisms $f^n\:\P\rarrow
\Hom^\R(\A^{\ot n}\;\Q)$ of degree~$i-n$ (see
Section~\ref{non-adj-graded-co}).
 Any sequence of homogeneous $\R$\+contramodule maps $f^n$ of
the required degrees corresponds to a (not necessarily closed)
morphism of nonunital $\R$\+contramodule wc~\Ainfty\+modules~$f$.

 For any $\R$\+free CDG\+coalgebra $\C$, the functors $\Phi_\C$
and $\Psi_\C$ of Section~\ref{r-free-co-derived} provide
an equivalence between the DG\+category of $\R$\+free left
CDG\+comodules over $\C$ which, considered as graded
$\C$\+comodules, are cofreely cogenerated by free graded
$\R$\+contramodules, and the DG\+category of $\R$\+free left
CDG\+contramodules over $\C$ which, considered as graded
$\C$\+contramodules, are freely generated by free graded
$\R$\+contramodules.
 So our terminology is consistent: a nonunital $\R$\+free
left wc~\Ainfty\+module $\M$ over $\A$ with the structure maps
$l_n\:\A^{\ot n}\ot^\R\M\rarrow\M$ can be equivalently viewed
as a nonunital $\R$\+contramodule wc~\Ainfty\+module with
a free underlying graded $\R$\+contramodule and
the structure maps $p_n\:\M\rarrow\Hom^\R(\A^{\ot n}\;\M)$
given by the rule $p_n(x)(a_1\ot\dotsb\ot a_n)=
(-1)^{|x|(|a_1|+\dotsb+|a_n|)}l_n(a_1\ot\dotsb\ot a_n\ot x)$.
 Similarly, a morphism of nonunital $\R$\+free left
wc~\Ainfty\+modules $\L\rarrow\M$ over $\A$ with the components
$f_n\:\A^{\ot n}\ot^\R\L\rarrow\M$ can be equivalently viewed
as a morphism of nonunital $\R$\+contramodule wc~\Ainfty\+modules
$\L\rarrow\M$ with the components $f^n\:\L\rarrow
\Hom^\R(\A^{\ot n}\;\M)$ given by the rule
$f^n(x)(a_1\ot\dotsb\ot a_n) =
(-1)^{|x|(|a_1|+\dotsb+|a_n|)}f_n(a_1\ot\dotsb\ot a_n\ot x)$.

 Let $\cM$ be a graded $\R$\+comodule.
 A structure of \emph{nonunital\/ $\R$\+comodule left
wc~\Ainfty\+module} over $\A$ on $\cM$ is, by the definition,
a structure of DG\+comodule over the DG\+coalgebra
$\bigoplus_n\A[1]^{\ot n}$ on the coinduced $\R$\+comodule
graded left comodule $\bigoplus_n\A[1]^{\ot n}\ocn_\R\cM$ over
the $\R$\+free graded coalgebra $\bigoplus_n\A[1]^{\ot n}$.
 By Lemma~\ref{free-co-module-derivations}(c), a nonunital
left wc~\Ainfty\+module structure on $\cM$ can be viewed as
a sequence of $\R$\+comodule morphisms $l_n\:\A^{\ot n}\ocn_\R
\cM\rarrow\cM$, \ $n=0$,~$1$,~\dots\ of degree $1-n$.
 The sequence of maps~$l_n$ must satisfy a sequence of
nonhomogeneous quadratic equations corresponding to
the equation $d^2=0$ on the coderivation~$d$ on
$\bigoplus_n\A[1]^{\ot n}\ocn_\R\cM$.
 \emph{Nonunital\/ $\R$\+comodule right wc~\Ainfty\+modules}
over $\A$ are defined similarly.

 The complex of morphisms between nonunital $\R$\+comodule left
wc~\Ainfty\+modules $\cL$ and $\cM$ over $\A$ is, by the definition,
the complex of morphisms between the $\R$\+comodule left
DG\+comodules $\bigoplus_n\A[1]^{\ot n}\ocn_\R\cL$ and
$\bigoplus_n\A[1]^{\ot n}\ocn_\R\cM$ over the $\R$\+free
DG\+coalgebra $\bigoplus_n\A[1]^{\ot n}$.
 Since an $\R$\+comodule graded $\C$\+comodule morphism into
$\C\ocn_\R\cM$ is determined by its composition with the map
$\C\ocn_\R\cM\rarrow\cM$ induced by the counit of $\C$,
a morphism of nonunital $\R$\+comodule left wc~\Ainfty\+modules
$f\:\cL\rarrow\cM$ of degree~$i$ can be viewed as a sequence
of $\R$\+comodule morphisms $f_n\:\A^{\ot n}\ocn_\R\cL
\rarrow\cM$, \ $n=0$,~$1$,~\dots\ of degree $i-n$
(see Section~\ref{non-adj-graded-co}).
 Any sequence of homogeneous $\R$\+comodule maps~$f_n$ of
the required degrees corresponds to a (not necessarily closed)
morphism of nonunital $\R$\+comodule wc~\Ainfty\+modules~$f$.

 For any $\R$\+free CDG\+coalgebra $\C$, the functors $\Phi_\C$
and $\Psi_\C$ of Section~\ref{r-cofree-co-derived} provide
an equivalence between the DG\+category of $\R$\+cofree left
CDG\+comodules over $\C$ which, considered as graded
$\C$\+comodules, are cofreely cogenerated by cofree graded
$\R$\+comodules, and the DG\+category of $\R$\+cofree left
CDG\+contramodules over $\C$ which, considered as graded
$\C$\+contramodules, are freely generated by cofree graded
$\R$\+comodules.
 So one can alternatively define a \emph{nonunital\/ $\R$\+cofree
left wc~\Ainfty\+module} $\cP$ over $\A$ as a cofree graded
$\R$\+comodule for which a structure of DG\+contramodule over
the DG\+coalgebra $\bigoplus_n\A[1]^{\ot n}$ is given on
the projective $\R$\+cofree graded contramodule
$\Ctrhom_\R(\bigoplus_n\A[1]^{\ot n}\;\cP)$.
 By Lemma~\ref{free-co-module-derivations}(d), such a structure
can be viewed as a sequence of $\R$\+comodule maps
$p_n\:\cP\rarrow\Ctrhom_\R(\A^{\ot n}\;\cP)$, \ $n=0$,~$1$,~\dots\
of degree $1-n$.
 The maps~$p_n$ are related to the above maps~$l_n$ by
the above rule.

 Furthermore, one can alternatively define the complex of
morphisms between nonunital $\R$\+cofree left wc~\Ainfty\+modules
$\cP$ and $\cQ$ over $\A$ as the complex of morphisms between
left DG\+contramodules $\Hom_\R(\bigoplus_n\A[1]^{\ot n}\;\cP)$
and $\Hom_\R(\bigoplus_n\A[1]^{\ot n}\;\cQ)$ over
the $\R$\+free DG\+coalgebra $\bigoplus_n\A[1]^{\ot n}$.
 Thus a (not necessarily closed) morphism of nonunital
$\R$\+cofree left wc~\Ainfty\+modules $f\:\cP\rarrow\cQ$
is the same thing as a sequence of $\R$\+comodule maps
$f^n\:\cP\rarrow\Ctrhom_\R(\A^{\ot n}\;\cQ)$ of degree~$i-n$
(see Section~\ref{r-cofree-graded-co}).
 The maps~$f^n$ are related to the above maps~$f_n$ by
the above rule.

\begin{prop}  \label{nonunital-wcainfty-r-c-co-contra}
 The equvalences of DG\+categories\/ $\Phi_\R=\Psi_\R^{-1}$
from Section~\textup{\ref{r-cofree-co-derived}} and
the DG\+functors $\Phi_{\R,\C}$, $\Psi_{\R,\C}$ from
Section~\textup{\ref{non-adj-co-derived}} restrict to an equivalence\/
$\Phi_\R=\Psi_\R^{-1}$ between the DG\+categories of nonunital\/
$\R$\+free left wc~\Ainfty\+modules and nonunital\/ $\R$\+cofree left
wc~\Ainfty\+modules over\/~$\A$ making a commutative diagram with
the forgetful functors and the equivalence\/ $\Phi_\R=\Psi_\R^{-1}$
between the categories of free graded\/ $\R$\+contramodules and
cofree graded\/ $\R$\+comodules from
Proposition~\textup{\ref{hom-operations}}. \qed
\end{prop}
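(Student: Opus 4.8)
The plan is to realize both DG-categories in the statement as full DG-subcategories of categories of CDG-comodules over the associated $\R$\+free DG-coalgebra $\C=\bigoplus_n\A[1]^{\ot n}$, and then simply restrict the DG-equivalence $\Phi_\R=\Psi_\R^{-1}$ of Section~\ref{r-cofree-co-derived}. By the very definitions of Section~\ref{nonunital-wcainfty}, a nonunital $\R$\+free left wc~\Ainfty\+module over $\A$ is nothing but an object of $\C\comod\Rfr$ (a DG-comodule, since $\C$ carries no curvature) whose underlying graded $\C$\+comodule is the injective comodule $\C\ot^\R\M$ cofreely cogenerated by a free graded $\R$\+contramodule $\M$; likewise a nonunital $\R$\+cofree left wc~\Ainfty\+module is an object of $\C\comod\Rcof$ whose underlying graded $\C$\+comodule is the coinduced comodule $\C\ocn_\R\cM$ for a cofree graded $\R$\+comodule $\cM$. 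Thus the two DG-categories in question are precisely the full DG-subcategories of $\C\comod\Rfr$ and $\C\comod\Rcof$ cut out by these conditions on the underlying graded comodules.

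First I would invoke the fact, recorded in Section~\ref{r-cofree-graded-co} and resting on the natural isomorphism $\Phi_\R(\P\ot^\R\Q)\simeq\P\ocn_\R\Phi_\R(\Q)$ from the proof of Proposition~\ref{r-cofree-r-co-contra}, that the DG-equivalence $\Phi_\R=\Psi_\R^{-1}\colon\C\comod\Rfr\simeq\C\comod\Rcof$ carries the cofreely cogenerated graded $\C$\+comodule $\C\ot^\R\M$ to the cofreely cogenerated graded $\C$\+comodule $\C\ocn_\R\cM$ with $\cM=\Phi_\R(\M)$, and conversely. Consequently $\Phi_\R$ maps the $\R$\+free subcategory into the $\R$\+cofree one and $\Psi_\R$ maps it back; being a DG-equivalence it identifies the respective complexes of morphisms verbatim, these being by definition the complexes $\Hom_\C$ between the underlying DG-comodules. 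This already yields the desired mutually inverse DG-equivalence together with the compatibility with the forgetful functors: extracting the cogenerator $\M$ of $\C\ot^\R\M$ via the $n=0$ summand, i.e.\ the map induced by the counit $\C\rarrow\R$, is intertwined by $\Phi_\R$ with the extraction of the cogenerator $\cM=\Phi_\R(\M)$ of $\C\ocn_\R\cM$, which is exactly the transformation rule quoted above restricted to the cogenerating objects.

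Next I would reconcile this with the functors $\Phi_{\R,\C}$, $\Psi_{\R,\C}$ of Section~\ref{non-adj-co-derived}, as the statement requires. A nonunital $\R$\+free left wc~\Ainfty\+module may equally be presented as a projective $\R$\+free CDG-contramodule $\Hom^\R(\C,\M)$ over $\C$, the comodule and contramodule presentations being interchanged by the DG-equivalence $\Phi_\C=\Psi_\C^{-1}$ with $\Phi_\C(\Hom^\R(\C,\M))=\C\ot^\R\M$ (Proposition~\ref{r-free-c-co-contra}). Applying $\Phi_{\R,\C}$ to the contramodule presentation and using the natural isomorphism $\Phi_{\R,\C}(\F)\simeq\Phi_\R\Phi_\C(\F)$ of Section~\ref{non-adj-graded-co}, one obtains $\Phi_\R(\C\ot^\R\M)=\C\ocn_\R\Phi_\R(\M)$, that is, the same $\R$\+cofree wc~\Ainfty\+module produced by the comodule route; so the two descriptions of the equivalence coincide on the nose.

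The argument is essentially bookkeeping, since every ingredient is already in place: the DG-equivalence $\Phi_\R=\Psi_\R^{-1}$ over $\C$, the transformation of cofreely cogenerated comodules, and the identity $\Phi_{\R,\C}\simeq\Phi_\R\Phi_\C$. The only points demanding care, and thus the closest thing to an obstacle, are verifying that $\Phi_\R$ restricts \emph{exactly} onto the subcategories singled out by the cofree-cogeneration conditions, so that no object is lost or gained, and that it intertwines the cogenerator-extraction functors; both reduce to the displayed compatibility isomorphisms of Sections~\ref{r-cofree-graded-co} and~\ref{non-adj-graded-co}. It is also worth noting explicitly that $\C$ is a genuine $\R$\+free DG-coalgebra with vanishing curvature function, so that the CDG-comodule formalism of Section~\ref{r-cofree-co-derived} specializes to DG-comodules and applies without change.
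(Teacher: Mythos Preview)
Your proposal is correct and follows essentially the same approach the paper intends: the proposition is marked \qed\ in the paper with no proof given, as it is meant to be immediate from the definitions in Section~\ref{nonunital-wcainfty} together with the compatibility isomorphisms already recorded in Sections~\ref{r-cofree-graded-co} and~\ref{non-adj-graded-co}. Your elaboration---identifying both DG-categories as full DG-subcategories of $\C\comod\Rfr$ and $\C\comod\Rcof$ cut out by the cofree-cogeneration condition, then restricting the ambient equivalence $\Phi_\R=\Psi_\R^{-1}$ using the transformation rule $\Phi_\R(\C\ot^\R\M)\simeq\C\ocn_\R\Phi_\R(\M)$, and finally reconciling with $\Phi_{\R,\C}$ via $\Phi_{\R,\C}\simeq\Phi_\R\Phi_\C$ on projective $\R$\+free $\C$\+contramodules---is exactly the bookkeeping the paper leaves implicit.
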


 By the definition, the category of nonunital wc~\Ainfty\+algebras
over $\R$ is isomorphic to the full subcategory of the category
of $\R$\+free DG\+coalgebras consisting of those DG\+coalgebras $\C$
whose underlying graded coalgebras are the tensor coalgebras
$\bigoplus_n\U^{\ot n}$ and whose reductions $\U/\m\U$ are
conilpotent DG\+coalgebras.
 Analogously, the DG\+category of nonunital $\R$\+free
wc~\Ainfty\+modules over $\A$ is isomorphic to the full subcategory
in the category of $\R$\+free DG\+comodules $\N$ over $\C$ consisting
of those DG\+comodules whose underlying graded comodules are
the graded comodules $\C\ot^\R\M$ (and similarly for nonunital
$\R$\+cofree wc~\Ainfty\+modules).
 The following lemmas provide a characterization formulated entirely
in terms of the reductions modulo~$\m$.

\begin{lem}
 \textup{(a)} An\/ $\R$\+free graded comodule\/ $\N$ over
an\/ $\R$\+free graded coalgebra\/ $\C$ is cofreely cogenerated by
a free graded\/ $\R$\+contramodule\/ $\V$ (i.~e., is isomorphic to
$\C\ot^\R\V$) if and only if the graded comodule\/ $\N/\m\N$ over
the graded coalgebra\/ $\C/\m\C$ is isomorphic to\/
$\C/\m\C\ot_k\V/\m\V$. \par
 \textup{(b)} An\/ $\R$\+free graded contramodule\/ $\Q$ over
an\/ $\R$\+free graded coalgebra\/ $\C$ is freely generated by
a free graded\/ $\R$\+contramodule\/ $\U$ (i.~e., is isomorphic to
$\Hom^\R(\C,\U)$) if and only if the graded contramodule\/
$\Q/\m\Q$ over the graded coalgebra\/ $\C/\m\C$ is isomorphic to\/
$\Hom_k(\C/\m\C\;\U/\m\U)$. \par
 \textup{(c)} An\/ $\R$\+cofree graded comodule\/ $\cN$ over
an\/ $\R$\+free graded coalgebra\/ $\C$ is cofreely cogenerated by
a cofree graded\/ $\R$\+comodule\/ $\cV$ (i.~e., is isomorphic to
$\C\ocn_\R\cV$) if and only if the graded comodule\/ ${}_\m\cN$
over the graded coalgebra\/ $\C/\m\C$ is isomorphic to\/
$\C/\m\C\ot_k{}_\m\cV$. \par
 \textup{(d)} An\/ $\R$\+cofree graded contramodule\/ $\cQ$ over
an\/ $\R$\+free graded coalgebra\/ $\C$ is freely generated by
a cofree graded\/ $\R$\+comodule\/ $\cU$ (i.~e., is isomorphic to
$\Ctrhom_\R(\C,\cU)$) if and only if the graded contramodule\/
${}_\m\cQ$ over the graded coalgebra\/ $\C/\m\C$ is isomorphic to\/
$\Hom_k(\C/\m\C\;{}_\m\cU)$.
\end{lem}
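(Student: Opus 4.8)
The plan is to treat the four parts in parallel, proving~(a) and~(b) directly and then deducing~(c) and~(d) from them by the comodule-contramodule correspondence. In every part the \emph{only if} direction is immediate: reduction modulo~$\m$ preserves tensor products with, and internal $\Hom$ out of, free $\R$\+contramodules, so it carries $\C\ot^\R\V$ and $\Hom^\R(\C,\U)$ to $\C/\m\C\ot_k\V/\m\V$ and $\Hom_k(\C/\m\C\;\U/\m\U)$ (and likewise the $\R$\+cofree operations under the functor $\cM\mpsto{}_\m\cM$), as recorded in Sections~\ref{r-free-graded-co} and~\ref{r-cofree-graded-co} and in Lemma~\ref{r-free-co-op-reduction}. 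All the content lies in the \emph{if} direction, which I would handle by a lift-and-Nakayama argument in the spirit of the proofs of Lemmas~\ref{r-free-proj-inj-reduction} and~\ref{r-free-co-inj-proj-reduction}.

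For part~(a), suppose given an isomorphism of $\C/\m\C$\+comodules $\phi\:\N/\m\N\simeq\C/\m\C\ot_k V_0$. First I would choose a free graded $\R$\+contramodule $\V$ with $\V/\m\V\simeq V_0$ and lift the cogenerator projection $\N/\m\N\rarrow V_0$ (the map obtained by following $\phi$ with the counit-induced morphism $\C/\m\C\ot_kV_0\rarrow V_0$) to a morphism of graded $\R$\+contramodules $\N\rarrow\V$, using projectivity of the free $\R$\+contramodule~$\N$. Passing through the cofree-cogeneration adjunction $\Hom_\C(\N\;\C\ot^\R\V)\simeq\Hom^\R(\N,\V)$ of Section~\ref{r-free-graded-co}, this produces a morphism of $\R$\+free graded $\C$\+comodules $\tilde f\:\N\rarrow\C\ot^\R\V$. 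The crucial point is that, by naturality of this adjunction with respect to reduction, the morphism $\tilde f/\m$ is the comodule map adjoint to the chosen cogenerator projection; since for a cofree comodule the adjoint of the cogenerator projection is the identity (the counit axiom), one computes $\tilde f/\m=\phi$, an isomorphism. Thus $\tilde f$ is a morphism of free graded $\R$\+contramodules inducing an isomorphism modulo~$\m$, hence an isomorphism of $\R$\+contramodules by the Nakayama argument of Lemma~\ref{r-free-proj-inj-reduction}; as its underlying map is invertible, $\tilde f$ is an isomorphism of comodules and $\N\simeq\C\ot^\R\V$.

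Part~(b) runs identically with the free-generation adjunction $\Hom^\C(\Hom^\R(\C,\U),\Q)\simeq\Hom^\R(\U,\Q)$ of Section~\ref{r-free-graded-co} in place of the cofree one: given $\Q/\m\Q\simeq\Hom_k(\C/\m\C\;U_0)$, I would pick $\U$ free with $\U/\m\U\simeq U_0$, lift the canonical generator inclusion $U_0\rarrow\Q/\m\Q$ to $\U\rarrow\Q$, extend it to a $\C$\+contramodule morphism $\Hom^\R(\C,\U)\rarrow\Q$, check that its reduction is an isomorphism, and conclude by Nakayama. Parts~(c) and~(d) then follow formally from~(a) and~(b) via the equivalence $\Phi_\R=\Psi_\R^{-1}$ of Proposition~\ref{r-cofree-co-r-co-contra} between $\R$\+free and $\R$\+cofree graded $\C$\+comodules (resp.\ $\C$\+contramodules): this equivalence carries $\C\ot^\R\V$ to $\C\ocn_\R\cV$ and $\Hom^\R(\C,\U)$ to $\Ctrhom_\R(\C,\cU)$, and intertwines the reduction $\P\mpsto\P/\m\P$ with $\cM\mpsto{}_\m\cM$, as noted at the end of Section~\ref{r-cofree-graded-co}. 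The main obstacle I anticipate is purely organizational, namely verifying that the cogeneration and generation adjunctions are compatible with reduction modulo~$\m$, so that the lifted morphism really does reduce to the given isomorphism; once this naturality is extracted from the $\Hom$\+reduction isomorphisms of Lemma~\ref{r-free-co-op-reduction}, the rest is the routine lift-and-Nakayama argument.
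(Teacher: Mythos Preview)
Your argument is correct and is essentially the paper's own: in each case one lifts the isomorphism of reductions to a morphism of $\C$\+comodules or $\C$\+contramodules and concludes by Nakayama. The only cosmetic difference is that you build the lift via the explicit (co)free adjunction $\Hom_\C(\N,\C\ot^\R\V)\simeq\Hom^\R(\N,\V)$ (resp.\ $\Hom^\C(\Hom^\R(\C,\U),\Q)\simeq\Hom^\R(\U,\Q)$), whereas the paper phrases the same step as invoking the injectivity/projectivity of the (co)freely (co)generated object together with the $\Hom$\+reduction isomorphisms of Lemma~\ref{r-free-co-op-reduction}; your deduction of (c)--(d) from (a)--(b) via $\Phi_\R=\Psi_\R^{-1}$ is a valid shortcut that the paper leaves implicit.
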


\begin{proof}
 One lifts an isomorphism of the reductions to a morphism of
$\C$\+contra/comod\-ules using the projectivity/injectivity
properties of the (co)freely (co)generated contra/comodules in
the exact categories of $\R$\+(co)free graded
$\C$\+contra/comodules.
 Then it remains to use Lemma~\ref{nakayama-lemma}
or~\ref{comodule-nakayama}.
 See the proofs of Lemmas~\ref{r-free-co-inj-proj-reduction},
\ref{r-free-proj-inj-reduction}, and
Lemma~\ref{tensor-coalgebra-reduction} below for further
details.
\end{proof}

\begin{lem} \label{tensor-coalgebra-reduction}
 Let\/ $\C$ be an\/ $\R$\+free graded coalgebra and\/ $\U$ be
a free graded\/ $\R$\+contramodule.
 Then\/ $\C$ is isomorphic to the $\R$\+free graded coalgebra\/
$\bigoplus_n\U^{\ot n}$ if and only if the graded
$k$\+coalgebra\/ $\C/\m\C$ is isomorphic to\/
$\bigoplus_n(\U/\m\U)^{\ot n}$.
\end{lem}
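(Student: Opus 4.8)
The ``only if'' direction I would dispose of immediately: since the reduction functor $\P\mpsto\P/\m\P$ carries tensor products and infinite direct sums of $\R$\+contramodules to the corresponding operations over~$k$ (Sections~\ref{contramodules-sect}--\ref{r-free-graded}), any coalgebra isomorphism $\C\simeq\bigoplus_n\U^{\ot n}$ reduces to an isomorphism $\C/\m\C\simeq\bigoplus_n(\U/\m\U)^{\ot n}$. The substance of the lemma lies in the converse, and my plan is to lift a given isomorphism of the reductions to an isomorphism upstairs using the universal property of the tensor coalgebra recorded in Lemma~\ref{free-algebra-morphisms}(b), followed by a Nakayama argument.

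Starting from an isomorphism of graded $k$\+coalgebras $\phi\:\C/\m\C\rarrow\bigoplus_n(\U/\m\U)^{\ot n}$, I would first observe that the target tensor coalgebra is conilpotent, with coaugmentation the embedding of $(\U/\m\U)^{\ot0}\simeq k$; hence $\C/\m\C$ is conilpotent too, and its (unique) coaugmentation $\bar w$ corresponds under~$\phi$ to that embedding. Next I would set $\bar\pi\:\C/\m\C\rarrow\U/\m\U$ to be $\phi$ followed by the projection onto the tensor-degree~$1$ component, note that $\bar\pi\circ\bar w=0$, and lift $\bar\pi$ to a homogeneous $\R$\+contramodule morphism $\pi\:\C\rarrow\U$ of degree~$0$ by projectivity of~$\C$ and surjectivity of $\U\rarrow\U/\m\U$ (as in the proof of Lemma~\ref{nakayama-proj-free}). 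Because $\C/\m\C$ is conilpotent and $\bar\pi\circ\bar w=0$, the ``if'' clause of Lemma~\ref{free-algebra-morphisms}(b) then manufactures a morphism of $\R$\+free graded coalgebras $f\:\C\rarrow\bigoplus_n\U^{\ot n}$ whose projection onto $\U^{\ot1}\simeq\U$ is~$\pi$.

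To finish I would identify $f/\m f$ with~$\phi$: both are coalgebra morphisms $\C/\m\C\rarrow\bigoplus_n(\U/\m\U)^{\ot n}$ whose projections to the tensor-degree~$1$ component coincide with $\bar\pi$, so they agree by the uniqueness clause of Lemma~\ref{free-algebra-morphisms}(b). Thus $f/\m f$ is an isomorphism, and since $f$ is a degree-preserving morphism of free graded $\R$\+contramodules, the concluding Nakayama-type assertion from the proof of Lemma~\ref{nakayama-acycl-contract}, applied in each grading component, upgrades this to $f$ being an isomorphism of graded $\R$\+contramodules, hence of $\R$\+free graded coalgebras. I expect the only genuinely delicate point to be the verification of the hypotheses of Lemma~\ref{free-algebra-morphisms}(b)---above all the conilpotence of $\C/\m\C$, which is precisely what the tensor-coalgebra form of the reduction supplies; the lifting of $\bar\pi$ and the final passage from ``isomorphism modulo~$\m$'' to ``isomorphism'' are then routine.
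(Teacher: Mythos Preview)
Your proof is correct and follows essentially the same route as the paper's: lift the projection $\C/\m\C\rarrow\U/\m\U$ to a map $\C\rarrow\U$, invoke Lemma~\ref{free-algebra-morphisms}(b) (using conilpotence of $\C/\m\C$) to extend this to a coalgebra morphism $\C\rarrow\bigoplus_n\U^{\ot n}$, identify its reduction with~$\phi$ via the uniqueness clause of the same lemma, and finish with Nakayama. Your exposition is in fact a bit more explicit about the coaugmentation and conilpotence verifications than the paper's, but the argument is identical.
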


\begin{proof}
 The ``only if'' part is obvious.
 To prove the ``if'', consider the composition $\C\rarrow\C/\m\C\simeq
\bigoplus_n(\U/\m\U)^{\ot n}\rarrow(\U/\m\U)^{\ot 1}\simeq\U/\m\U$
and lift it to a graded $\R$\+contramodule morphism $\C\rarrow\U$.
 Since the isomorphism of graded $k$\+coalgebras $\C/\m\C\simeq
\bigoplus_n(\U/\m\U)^{\ot n}$, as any morphism of conilpotent graded
coalgebras over~$k$, commutes with the coaugmentations, it follows
from Lemma~\ref{free-algebra-morphisms}(b) that the map
$\C\rarrow\U$ gives rise to a morphism of $\R$\+free graded
coalgebras $\C\rarrow\bigoplus_n\U^{\ot n}$.
 By the uniqueness assertion of the same Lemma applied to
the case of graded coalgebras over~$k$, the morphism
$\C\rarrow\bigoplus_n\U^{\ot n}$ reduces to the isomorphism
$\C/\m\C\simeq\bigoplus_n(\U/\m\U)^{\ot n}$ that we started from.
 It remains to apply Lemma~\ref{nakayama-lemma} in order to conclude
that the morphism $\C\rarrow\bigoplus_n\U^{\ot n}$ is an isomorphism.
\end{proof}

 Let $f\:\A\rarrow\B$ be a morphism of wc~\Ainfty\+algebras over $\R$
corresponding to a morphism of $\R$\+free DG\+coalgebras
$\C=\bigoplus_n\A[1]^{\ot n}\rarrow\D=\bigoplus_n\B[1]^{\ot n}$.
 Let $\M$ be an $\R$\+free wc~\Ainfty\+module over $\B$ corresponding
to an $\R$\+free DG\+comodule $\N=\bigoplus_n\B[1]^{\ot n}\ot^\R\M$
over~$\D$.
 Then the structure of wc~\Ainfty\+module over $\A$ on $\M$ obtained
by the restriction of scalars with respect to the morphism~$f$
from the given structure of wc~\Ainfty\+module over $\B$ corresponds,
by the definition, to the $\R$\+free DG\+comodule structure
$E_f(\N)=\C\oc_\D\N$ on the $\R$\+free graded $\C$\+comodule
$\bigoplus_n\A[1]^{\ot n}\ot^\R\M$ (see
Section~\ref{r-free-co-derived}).
 The structure maps~$l_n'$ of the new structure of wc~\Ainfty\+module
over $\A$ on $\M$ are expressed in terms of the structure maps~$l_n$
of the original structure of wc~\Ainfty\+module over $\B$ and
the structure maps~$f_n$ of the wc~\Ainfty\+morphism $f\:\A\rarrow\B$
as certain sums of compositions (with signs) depending linearly
on~$l_n$ and polynomially on~$f_n$ (and even as formal power series
on~$f_0$).
 Clearly, the restriction of scalars with respect to a morphism of
wc~\Ainfty\+algebras is naturally a DG\+functor (since the coextension
of scalars $E_f$ with respect to a morphism of $\R$\+free
DG\+coalgebras is).

 Analogously, let $\P$ be an $\R$\+contramodule wc~\Ainfty\+module
over $\B$ corresponding to an $\R$\+contramodule DG\+contramodule
$\Q=\Hom^\R(\bigoplus_n\B[1]^{\ot n}\;\P)$ over~$\D$.
 Then the structure of wc~\Ainfty\+module over $\A$ on $\P$
obtained by the restriction of scalars with respect to
the morphism~$f\:\A\rarrow\B$ from the given structure of
wc~\Ainfty\+module over $\B$ on $\P$ corresponds, by the definition,
to the $\R$\+contramodule DG\+contramodule structure
$E^f(\Q)=\Cohom_\D(\C,\Q)$ on the $\R$\+contramodule graded
$\C$\+contramodule $\Hom^\R(\bigoplus_n\A[1]^{\ot n}\;\P)$.
 One can easily see that this construction agrees with the previous
one in the case of $\R$\+free wc~\Ainfty\+modules
(cf.\ Proposition~\ref{r-free-co-extension}).
 The restriction-of-scalars functors for $\R$\+comodule and
$\R$\+cofree wc~\Ainfty\+modules are constructed in
the similar way (see Section~\ref{r-cofree-co-derived}).

\begin{rem}
 The assignment of the underlying graded $\R$\+contramodule or
$\R$\+co\-module to an $\R$\+contramodule or $\R$\+comodule
wc~\Ainfty\+module over a wc~\Ainfty\+algebra $\A$ over $\R$
does \emph{not} have good functorial properties.

 From the point of view of graded contramodules or comodules
over the tensor coalgebra $\C=\bigoplus_n\A[1]^{\ot n}$,
to recover, say, a free graded $\R$\+contramodule $\M$ from
the $\R$\+free graded $\C$\+comodule $\N=\C\ot^\R\M$ one has
to use the construction of cotensor product with $\R$ over $\C$,
that is $\M=\R\oc_\C\N$.
 This depends on the structure of right $\C$\+comodule on $\R$,
which is defined in terms of the coaugmentation map
$\R=\A[1]^{\ot 0}\rarrow\bigoplus_n\A[1]^{\ot n}=\C$.
 The problem is, such coaugmentations over $\R$ are \emph{not}
preserved by $\R$\+free DG\+coalgebra morphisms (or even
isomorphisms) $f\:\A\rarrow\B$, because of the presence of the
change-of-connection component $f_0\in\m\B^1$.
 
 Accordingly, the functor of forgetting the wc~\Ainfty\+module
structure is well-defined for wc~\Ainfty\+modules over
a \emph{fixed} wc~\Ainfty\+algebra $\A$ (as, e.~g., in
the setting of Proposition~\ref{nonunital-wcainfty-r-c-co-contra}),
but such functors do \emph{not} form commutative diagrams with
the restriction-of-scalars functors for (iso)morphisms of
wc~\Ainfty\+algebras.
 This is a phenomenon specific to the wc~\Ainfty\ situation; it does
not occur for wcDG\+modules over $\R$\+free wcDG\+algebras or
\Ainfty\+modules over \Ainfty\+algebras over fields.
 The described problem also does not manifest itself for
the forgetful functors acting from the DG\+categories of
wc~\Ainfty\+modules and \emph{strict} morphisms between them
(see Section~\ref{wcainfty-semiderived}).

 The situtation is even worse for the assignment of the underlying
free graded $\R$\+contramodule to a wc~\Ainfty\+algebra over $\R$,
which is \emph{not} a functor on the category of wc~\Ainfty\+algebras 
at all.
 It is a functor, however, on the category of wc~\Ainfty\+algebras
and morphisms~$f$ between them for which $f_0=0$, and it is also
a functor on the category of wc~\Ainfty\+algebra morphisms~$f$
such that $f_i=0$ for $i\ge2$.

 In general, it is only the reduction modulo~$\m$ of the underlying
free graded $\R$\+contramodule of a wc~\Ainfty\+algebra that is
a well-behaved operation (which even produces a complex of
vector spaces).
 Similarly, the functors of reduction modulo~$\m$ of the underlying
free graded $\R$\+contramodules or cofree graded $\R$\+comodules
of $\R$\+(co)free wc~\Ainfty\+modules are well-behaved (and
produce complexes).
\end{rem}

\subsection{Strictly unital wc \Ainfty-algebras}
\label{strictly-unital-wcainfty}
 Let $\A$ be a wc~\Ainfty\+algebra over~$\R$.
 An~element $1\in\A^0$ (or equivalently, the corresponding morphism of
graded $\R$\+contramod\-ules $\R\rarrow\A$) is called a \emph{strict
unit} if the compositions of the two maps $\A\rarrow\A\ot^\R\A$
induced by the unit map with the map $m_2\:\A\ot^\R\A\rarrow\A$ are
equal to the identity endomorphism of $\A$, while the compositions
of the $n$~maps $\A^{\ot n-1}\rarrow\A^{\ot n}$ induced by the unit
map with the map $m_n\:\A^{\ot n}\rarrow\A$ all vanish for $n\ge1$,
\ $n\ne 2$.
 It follows from the condition on~$m_2$ that a strict unit is unique
if it exists.  (See~\cite[Section~2.3.2]{Lef}
or~\cite[Section~7.2]{Pkoszul}.)

 A \emph{strictly unital wc~\Ainfty\+algebra} over $\R$ is
a nonunital wc~\Ainfty\+algebra that has a strict unit.
 A morphism of strictly unital wc~\Ainfty\+algebras $f\:\A\rarrow\B$
is a morphism of nonunital wc~\Ainfty\+algebras for which
$f_1(1_\A)=1_\B$ (i.~e., the unit maps $\R\rarrow\A$ and
$\R\rarrow\B$ form a commutative diagram with~$f_1$) and
the compositions of the $n$~maps $\A^{\ot n-1}\rarrow\A^{\ot n}$
induced by~$1_\A$ with the map $f_n\:\A^{\ot n}\rarrow\B$
vanish for all $n\ge2$.

 Let $\A$ be a strictly unital wc~\Ainfty\+algebra over~$\R$.
 A \emph{strictly unital\/ $\R$\+contramodule left wc~\Ainfty\+module}
$\P$ over $\A$ is a nonunital left wc~\Ainfty\+module such that
the composition of the structure map~$p_1$ with the map $\Hom^\R(\A,\P)
\rarrow\P$ induced by the unit map is the identity endomorphism
of $\P$, while the compositions of the map $p_n\:\P\rarrow
\Hom^\R(\A^{\ot n}\;\P)$ with the $n$~maps $\Hom^\R(\A^{\ot n}\;\P)
\rarrow\Hom^\R(\A^{\ot n-1}\;\P)$ induced by the counit map all
vanish for $n\ge2$.
 In the case of an $\R$\+free left wc~\Ainfty\+module $\M$, one can
rewrite these equations equivalently in terms of the maps~$l_n$,
requiring that the composition of the map $\M\rarrow\A\ot^\R\M$
induced by the unit map with the map $l_1\:\A\ot^\R\M\rarrow\M$
be the identity endomorphism of $\M$, while the compositions of
the $n$~maps $\A^{\ot n-1}\ot^\R\M\rarrow\A^{\ot n}\ot^\R\M$
induced by the unit map with the map $l_n\:\A^{\ot n}\ot^\R\M
\rarrow\M$ vanish for all $n\ge2$.
 \emph{Strictly unital\/ $\R$\+free right wc~\Ainfty\+modules}
over $\A$ are defined similarly.

 The complex of morphisms between strictly unital $\R$\+contramodule
left wc~\Ainfty\+mod\-ules $\P$ and $\Q$ over $\A$ is a complex of
$\R$\+contramodules defined as the subcomplex of the complex of
morphisms between $\P$ and $\Q$ as nonunital wc~\Ainfty\+modules
consisting of all morphisms $f\:\P\rarrow\Q$ such that
the compositions of the map $f^n\:\P\rarrow\Hom^\R(\A^{\ot n}\;\Q)$
with the $n$~maps $\Hom^\R(\A^{\ot n}\;\Q)\rarrow
\Hom^\R(\A^{\ot n-1}\;\Q)$ induced by the counit map all vanish
for $n\ge1$.
 For $\R$\+free left wc~\Ainfty\+modules $\L$ and $\M$, one can
rewrite these equations equivalently in terms of the maps~$f_n$,
requiring that the compositions of the $n$~maps $\A^{\ot n-1}\ot^\R\L
\rarrow\A^{\ot n}\ot^\R\L$ induced by the unit map with
the map $f_n\:\A^{\ot n}\ot^\R\L\rarrow\M$ vanish for all $n\ge1$.
{\hfuzz=3.2pt\par}

 A \emph{strictly unital\/ $\R$\+comodule left wc~\Ainfty\+module}
$\cM$ over $\A$ is a nonunital left wc~\Ainfty\+module such that
the composition of the map $\cM\rarrow\A\ocn_\R\cM$ induced by
the unit map with the structure map~$l_1$ is the identity endomorphism
of $\cM$, while the compositions of the $n$~maps $\A^{\ot n-1}\ocn_\R
\cM\rarrow\A^{\ot n}\ocn_\R\cM$ induced by the unit map with the map
$l_n\:\A^{\ot n}\ocn_\R\cM\rarrow\cM$ all vanish for $n\ge2$.
 In the case of an $\R$\+cofree left wc~\Ainfty\+module $\cP$, one
can rewrite these equations equivalently in terms of the maps~$p_n$,
just as it was done above.
 \emph{Strictly unital\/ $\R$\+comodule right wc~\Ainfty\+modules}
over $\A$ are defined similarly.

 The complex of morphisms between strictly unital $\R$\+comodule
left wc~\Ainfty\+modules $\cL$ and $\cM$ over $\A$ is a complex of
$\R$\+contramodules defined as the subcomplex of the complex
of morphisms between $\cL$ and $\cM$ as nonunital wc~\Ainfty\+modules
consisting of all morphisms $f\:\cL\rarrow\cM$ such that
the compositions of the $n$~maps $\A^{\ot n-1}\ocn_\R\cL\rarrow
\A^{\ot n}\ocn_\R\cL$ induced by the unit map with the map
$f_n\:\A^{\ot n}\ocn_\R\cL\rarrow\cM$ vanish for all $n\ge1$.
 For $\R$\+cofree left wc~\Ainfty\+modules $\cP$ and $\cQ$, one can
rewrite these equations equivalently in terms of the maps~$f^n$,
as it was done above.

 The following theorems show that strictly unital
wc~\Ainfty\+algebras and wc~\Ainfty\+mod\-ules are related to
CDG\+coalgebras and CDG\+contra/comodules in the same way as
nonunital wc~\Ainfty\+algebras and wc~\Ainfty\+modules are,
by the definition, related to DG\+coalgebras and DG\+contra/comodules. 
 The basic idea goes back to the paper~\cite{Pcurv}; our exposition
here follows that in~\cite[Section~7.2]{Pkoszul}, where similar
results were obtained for \Ainfty\+algebras and \Ainfty\+modules.

\begin{thm}  \label{strictly-unital-wcainfty-cdg-coalgebras}
 The category of nonzero strictly unital wc~\Ainfty\+algebras over\/
$\R$ is naturally equivalent to the subcategory of the category of\/
$\R$\+free CDG\+coalgebras formed by the CDG\+algebras\/ $\C$ whose
reductions\/ $\C/\m\C$ are coaugmented CDG\+coalge\-bras with
the underlying graded coalgebra isomorphic to\/ $\bigoplus_n U^{\ot n}$
for some graded $k$\+vector space $U$, and\/ $\R$\+free CDG\+coalgebra
morphisms\/ $\C\rarrow\D$ whose reductions\/ $\C/\m\C\rarrow\D/\m\D$
are morphisms of coaugmented CDG\+coalgebras over~$k$.
\end{thm}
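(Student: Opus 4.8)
The plan is to adapt the argument of \cite[Section~7.2]{Pkoszul} (going back to \cite{Pcurv}), which established the analogous correspondence for \Ainfty\+algebras over a field, to the present $\R$\+contramodule setting. The essential idea is that a strictly unital wc~\Ainfty\+algebra is encoded not by the full tensor coalgebra $\bigoplus_n\A[1]^{\ot n}$ (which carries a DG\+coalgebra structure, $h=0$), but by the \emph{reduced} tensor coalgebra obtained after splitting off the line spanned by the strict unit: the components of the operations $m_n$ that land in the unit direction become the curvature linear function of a genuine CDG\+coalgebra.

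First I would construct the functor from nonzero strictly unital wc~\Ainfty\+algebras to $\R$\+free CDG\+coalgebras. Given such an $\A$ with strict unit $1\:\R\rarrow\A$, the reduction $1/\m\:k\rarrow\A/\m\A$ is nonzero, so by the argument proving Lemma~\ref{unit-split}(a) (choose a homogeneous retraction modulo~$\m$, lift it, and invoke the invertibility-modulo-$\m$ principle of Lemma~\ref{nakayama-acycl-contract}) the unit map is the embedding of a direct summand in $\R\contra^\free$. Choosing a retraction $v\:\A\rarrow\R$ and putting $\bar\A=\ker v$, I obtain a splitting $\A=\R\oplus\bar\A$ of free graded $\R$\+contramodules. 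I would then set $\C=\bigoplus_{n=0}^\infty\bar\A[1]^{\ot n}$ and define its coderivation $d$ and curvature $h\:\C\rarrow\R$ by reading off, via Lemma~\ref{free-algebra-derivations}(b), the components of the maps $m_n$: the $\bar\A$\+components determine the projection $\C\rarrow\bar\A[1]$ and hence~$d$, while the $\R\cdot1$\+components $v\circ m_n$ assemble into~$h$.

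The main step, and the one I expect to be the principal obstacle, is to check that the reduced data $(\C,d,h)$ so defined is an $\R$\+free CDG\+coalgebra, and conversely that the CDG\+coalgebra axioms $d^2(c)=h*c-c*h$ and $h\circ d=0$ are equivalent to the strictly unital wc~\Ainfty\+algebra axioms on the maps~$m_n$. The strict unit identities---that $m_n$ vanishes whenever one argument is the unit for $n\ne2$, and that $m_2(1,-)=\id=m_2(-,1)$---are exactly what makes the terms of the quadratic \Ainfty\+relations involving the unit component reorganize into the curvature contribution $h*c-c*h$, and what forces $h$ to be a closed curvature function, $h\circ d=0$. This is the same sign-and-combinatorics verification as in the field case, and I expect the $\R$\+contramodule setting to introduce no genuinely new difficulty: every operation in sight ($\ot^\R$, the various projections, and the decomposition $\A=\R\oplus\bar\A$) lives inside the exact tensor category of free graded $\R$\+contramodules, and reduction modulo~$\m$ preserves tensor products there. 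Along the way I would observe that the weak curvature condition $m_0\in\m\A^2$ is equivalent to the two coaugmentation conditions modulo~$\m$: the $\bar\A$\+component of $m_0$ is the value of $d$ on the vacuum and its $\R\cdot1$\+component is the value of $h$ on the vacuum, so $m_0\in\m\A^2$ holds precisely when $d/\m d\circ\bar w=0=h/\m h\circ\bar w$, i.e.\ when $\C/\m\C$ is coaugmented.

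Finally I would construct the inverse functor and treat morphisms. Starting from an $\R$\+free CDG\+coalgebra $\C$ whose reduction is coaugmented with underlying graded coalgebra $\bigoplus_nU^{\ot n}$, Lemma~\ref{tensor-coalgebra-reduction} identifies $\C\simeq\bigoplus_n\V^{\ot n}$ for a free graded $\R$\+contramodule~$\V$; putting $\bar\A=\V[-1]$ and $\A=\R\oplus\bar\A$ with the evident unit, the components of $d$ and $h$ recover the maps $m_n$ on $\bar\A$, and the strict unit rules together with $m_2(1,-)=\id$ extend them uniquely to all of $\A$. For morphisms I would use Lemma~\ref{free-algebra-morphisms}(b): a morphism of strictly unital wc~\Ainfty\+algebras $f\:\A\rarrow\B$ corresponds to a CDG\+coalgebra morphism $(f_\C,a)\:\C\rarrow\D$ whose change-of-connection function is $a=v_\B\circ f_0$, and the strict unit conditions on $f$ become exactly the requirement that the reduction $(f_\C/\m f_\C,\,a/\m a)$ be a morphism of coaugmented CDG\+coalgebras over~$k$. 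The remaining points---that the two functors are mutually inverse, that the construction is independent of the choice of~$v$ up to a canonical weakly strict isomorphism as in Section~\ref{bar-cobar-sect}, and that everything is natural---are then routine verifications modeled on \cite[Section~7.2]{Pkoszul}.
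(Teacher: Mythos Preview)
Your approach is essentially the same as the paper's: both adapt \cite[Section~7.2]{Pkoszul} by splitting off the unit and passing to the reduced tensor coalgebra $\bigoplus_n\A_+[1]^{\ot n}$, with the $\R$\+components of the $m_n$ becoming the curvature~$h$. The paper organizes the verification slightly differently, introducing an auxiliary linear function $\theta_\A\:\bigoplus_n\A[1]^{\ot n}\rarrow\R$ (equal to $v$ in tensor degree~$1$ and zero elsewhere) and the twisted coderivation $d'(c)=d(c)+\theta_\A*c-(-1)^{|c|}c*\theta_\A$, then characterizing strict unitality as the condition that $d'$ and $h'(c)=\theta_\A(d(c))+\theta_\A^2(c)$ descend to the quotient by the kernel~$\K_\A$ of the surjection $\bigoplus_n\A[1]^{\ot n}\rarrow\bigoplus_n\A_+[1]^{\ot n}$; but this is a repackaging of the same computation you describe.

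There is one concrete slip in your treatment of morphisms: the change-of-connection function is not $a=v_\B\circ f_0$. The datum $a$ is a linear function of degree~$1$ on the entire reduced coalgebra $\bigoplus_n\A_+[1]^{\ot n}$, and in the paper's formulation it is induced by $\theta_\B\circ f - \theta_\A$; in your language, its component in tensor degree~$n$ is essentially $v_\B\circ f_n$ restricted to $\A_+^{\ot n}$ (with a correction by $v_\A$ in degree~$1$, which vanishes on $\A_+=\ker v_\A$). Your formula captures only the tensor-degree-$0$ piece. This matters because the condition that the reduction be a morphism of coaugmented CDG\+coalgebras constrains only that degree-$0$ piece, but the higher components of~$a$ are needed to make $(f_\C,a)$ a CDG\+coalgebra morphism at all.
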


 In other words, the category of strictly unital wc~\Ainfty\+algebras
over~$\R$ is equivalent to the full subcategory in the category
of $\R$\+free CDG\+coalgebras with conilpotent reductions
$\R\coalg_\cdg^\cnlp$ consisting of the CDG\+coalgebras $\C$
whose reductions $\C/\m\C$ have their underlying conilpotent graded
$k$\+coalgebras cofreely cogenerated by graded $k$\+vector spaces.

\begin{proof}[Proof of
Theorem~\textup{\ref{strictly-unital-wcainfty-cdg-coalgebras}}]
 The construction is based on the following \Ainfty\ generalization
of Lemma~\ref{unit-split}(a).

\begin{lem}  \label{wcainfty-unit-split}
 If\/ $\A$ is a nonzero strictly unital wc~\Ainfty\+algebra over\/
$\R$, then the unit map\/ $\R\rarrow\A$ is the embedding of a direct
summand in the category of free graded\/ $\R$\+contramodules.
\end{lem}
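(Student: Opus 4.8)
The plan is to mimic the proof of Lemma~\ref{unit-split}(a), replacing the use of the associative multiplication with the strict unitality axiom for the operation~$m_2$. First I would pass to the reduction modulo~$\m$. Since $\A$ is a free graded $\R$\+contramodule, the functor of reduction modulo~$\m$ takes the tensor powers $\A^{\ot^\R\,n}$ to the tensor powers $(\A/\m\A)^{\ot n}$; and because the curvature component satisfies $m_0\in\m\A^2$, its reduction vanishes, so the reduced operations $\bar m_n=m_n/\m m_n$ make $\A/\m\A$ into a (strictly unital, uncurved) \Ainfty\+algebra over the field~$k$. The reduction of the unit map $\R\rarrow\A$ is the strict unit map $k\rarrow\A/\m\A$ of this \Ainfty\+algebra.

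The key observation is that the strict unit cannot reduce to zero unless $\A$ itself vanishes. Indeed, the strict unitality condition for~$m_2$ asserts that the composition $\A\rarrow\A\ot^\R\A\rarrow\A$ of the map induced by the unit with $m_2$ is the identity endomorphism of~$\A$. Reducing modulo~$\m$, we find that the composition $\A/\m\A\rarrow\A/\m\A\ot_k\A/\m\A\rarrow\A/\m\A$ of the map induced by $\bar 1$ with $\bar m_2$ is the identity map of $\A/\m\A$. If $\bar 1=0$, this forces $\A/\m\A=0$, and then $\A=0$ by Lemma~\ref{nakayama-lemma}, contrary to our assumption. Hence $\bar 1\ne0$, so the homogeneous (degree-zero) map $k\rarrow\A/\m\A$ is injective, and, being a monomorphism of graded $k$\+vector spaces, it admits a homogeneous $k$\+linear retraction $\bar v\:\A/\m\A\rarrow k$ with $\bar v\circ\bar 1=\id_k$.

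From here the argument would proceed exactly as in Lemma~\ref{unit-split}(a). Using the projectivity (freeness) of the graded $\R$\+contramodule $\A$ and the surjectivity of $\R\rarrow k$, I would lift $\bar v$ to a homogeneous morphism of graded $\R$\+contramodules $v\:\A\rarrow\R$. Then the composition $\R\rarrow\A\rarrow\R$ of the unit map with $v$ is an endomorphism of the free $\R$\+contramodule $\R$ whose reduction modulo~$\m$ is the identity map of~$k$; by the Nakayama argument used in the proof of Lemma~\ref{nakayama-acycl-contract} (a morphism of free $\R$\+contramodules inducing an isomorphism modulo~$\m$ is itself an isomorphism), this composition is invertible. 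Consequently the unit map $\R\rarrow\A$ is a split monomorphism, i.e., the embedding of a direct summand in the category of free graded $\R$\+contramodules.

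The only genuinely new point compared with Lemma~\ref{unit-split}(a) is the extraction, from the \Ainfty\ strict unitality axioms, of the implication ``if $\bar 1=0$ then $\A=0$'' --- the role played by the plain unit axiom in the associative case. I do not expect any serious obstacle here: the higher operations $m_n$ for $n\ne2$ and the curvature $m_0$ play no part in the argument, which rests entirely on the reduced $m_2$\+axiom together with Nakayama's lemma. The only technical care needed is to confirm that reduction modulo~$\m$ commutes with the relevant tensor products and with the formation of the structure maps, which it does because $\A$ is $\R$\+free.
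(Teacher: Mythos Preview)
Your proof is correct and follows essentially the same approach as the paper. The paper's proof is more terse: it observes that the reduction $\A/\m\A$ is a strictly unital \Ainfty\+algebra over~$k$ whose unit is a unit for the (nonassociative) multiplication~$m_2$, deduces that $\bar 1=0$ would force $\A/\m\A=0$ and hence $\A=0$, and then simply says ``the rest of the argument is the same as in the proof of Lemma~\ref{unit-split}(a)''---which is exactly what you have spelled out in detail.
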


\begin{proof}
 Reducing modulo~$\m$, we obtain a strictly unital
\Ainfty\+algebra $\A/\m\A$ over the field~$k$.
 In particular, the reduction $k\rarrow\A/\m\A$ of the unit map
$\R\rarrow\A$ is a unit element of the (nonassociative) algebra
structure on $\A/\m\A$ given by the operation~$m_2$.
 Hence if the map $k\rarrow\A/\m\A$ is zero, then $\A/\m\A=0$
and $\A=0$.
 The rest of the argument is the same as in the proof of
Lemma~\ref{unit-split}(a).
\end{proof}

 Let $\A$ be a free graded $\R$\+contramodule and $e_\A\:\R\rarrow\A$
be a graded $\R$\+contramodule morphism with an $\R$\+free cokernel
(i.~e., an embedding of a direct summand in the category of free
graded $\R$\+contramodules); denote the cokernel by $\A_+=\A/\R$.
 Then there is a surjective morphism of $\R$\+free graded coalgebras
$\bigoplus_n\A[1]^{\ot n}\rarrow\bigoplus_n\A_+[1]^{\ot n}$.
 Denote by $\K_\A$ the kernel of this morphism; then $\K_\A$ is
the direct sum of the kernels of the morphisms $\A[1]^{\ot n}
\rarrow\A_+[1]^{\ot n}$ taken in the category of free graded
$\R$\+contramodules.
 Denote by $\varkappa_\A\:\K_\A\rarrow\R$ the $\R$\+contramodule
morphism of the total degree~$1$ vanishing on all the components of
tensor degree $n\ne1$ and defined on the component of tensor
degree~$1$ by the condition that the composition
$\R\rarrow\K_\A\rarrow\R$ of the emdedding $\R\rarrow\K_\A$
induced by the map $\R\rarrow\A$ with the map~$\varkappa_\A$ must be
the identity map.
 Let $\theta_\A\:\bigoplus_n\A[1]^{\ot n}\rarrow\R$ be any
$\R$\+contramodule morphism of the (total) degree~$1$ extending
the map~$\varkappa_\A$.

\begin{lem}  \label{d-prime-h-prime-coalgebra}
\textup{(a)} Let\/ $\A$ be a wc~\Ainfty\+algebra with the wc~\Ainfty
structure $d\:\bigoplus_n\A[1]^{\ot n}\allowbreak
\rarrow\bigoplus_n\A[1]^{\ot n}$.
 Then an embedding of a direct summand in the category of graded\/
$\R$\+contramodules $e_\A\:\R\rarrow\A$ is a strict unit of
the wc~\Ainfty\+structure on\/ $\A$ if and only if the following
two conditions hold:
\begin{enumerate}
\renewcommand{\theenumi}{\roman{enumi}}
\item the odd coderivation  of degree~$1$ on the tensor coalgebra\/
$\bigoplus_n\A[1]^{\ot n}$ given by the formula
$d'(c)=d(c)+\theta_\A*c-(-1)^{|c|}c*\theta_\A$ 
(the notation of Section~\textup{\ref{r-free-co-derived}})
preserves the\/ $\R$\+subcontramodule
$\K_\A\subset\bigoplus_n \A[1]^{\ot n}$, and
\item the\/ $\R$\+contramodule map $h'\:\bigoplus_n\A[1]^{\ot n}
\rarrow\R$ of degree~$2$ given by the formula
$h'(c)=\theta_\A(d(c))+\theta_\A^2(c)$ vanishes in the restriction
to~$\K_\A$.
\end{enumerate} \par
\textup{(b)} Let\/ $\A$ and\/ $\B$ be strictly unital wc~\Ainfty
algebras over\/ $\R$ with the units\/ $e_\A\:\R\rarrow\A$ and\/
$e_\B\:\R\rarrow\B$, and let $f\:\A\rarrow\B$ be a morphism of
nonunital wc~\Ainfty\+algebras.
 Then~$f$ is a morphism of strictly unital wc~\Ainfty\+algebras if
and only if \emergencystretch=0em
\begin{enumerate}
\renewcommand{\theenumi}{\roman{enumi}}
\item the morphism of\/ $\R$\+free DG\+coalgebras
$f\:\bigoplus_n\A[1]^{\ot n}\rarrow\bigoplus_n\B[1]^{\ot n}$
takes $\K_\A$ into~$\K_\B$, and
\item the composition of the map $f|_{\K_\A}\:\K_\A\rarrow\K_\B$
with the linear function~$\varkappa_\B$ on $\K_\B$ is equal to
the linear function~$\varkappa_\A$ on~$\K_\A$.
\end{enumerate}
\end{lem}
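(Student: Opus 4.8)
The plan is to recognize both conditions~(i) and~(ii) as the single requirement that the change-of-connection twist of the $\R$\+free DG\+coalgebra $\bigoplus_n\A[1]^{\ot n}$ by the linear function $\theta_\A$ descend along the quotient coalgebra $\bigoplus_n\A[1]^{\ot n}\rarrow\bigoplus_n\A_+[1]^{\ot n}$. Indeed, by the change-of-connection formalism for $\R$\+free CDG\+coalgebras (Section~\ref{r-free-co-derived}, following~\cite[Section~4.1]{Pkoszul}), the pair $(\id,\theta_\A)$ carries the $\R$\+free DG\+coalgebra $(\C,d,0)$, where $\C=\bigoplus_n\A[1]^{\ot n}$, to the $\R$\+free CDG\+coalgebra $(\C,d',h')$ with $d'$ and $h'$ given by the very formulas in the statement; in particular $(d',h')$ automatically satisfies the CDG\+coalgebra axioms, so no axiom-checking is needed. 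Now $\K_\A\subset\C$ is a graded coideal, the kernel of the coalgebra surjection $\C\rarrow\bigoplus_n\A_+[1]^{\ot n}$, so condition~(i) says that $d'$ preserves $\K_\A$, equivalently descends to a coderivation of the quotient coalgebra, while condition~(ii) says that $h'$ vanishes on $\K_\A$, equivalently descends to a linear function on the quotient. Thus the content of part~(a) is the identification of strict unitality of $e_\A$ with the descent of $(d',h')$ to $\bigoplus_n\A_+[1]^{\ot n}$; this is the $\R$\+level incarnation of the construction underlying Theorem~\ref{strictly-unital-wcainfty-cdg-coalgebras} and generalizes the field case of~\cite[Section~7.2]{Pkoszul}.

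The substance of the proof is then a dictionary between the structure maps and the descent conditions, which I would set up as follows. By Lemma~\ref{wcainfty-unit-split} the unit $e_\A$ splits $\A[1]=\R[1]\oplus\A_+[1]$ as free graded $\R$\+contramodules, and since $\ot^\R$ distributes over direct sums of free contramodules, this induces a decomposition of $\C$ into a direct sum of summands indexed by words in the two letters $\R[1]$ and $\A_+[1]$, with $\K_\A$ equal to the sum of those summands containing at least one letter $\R[1]$. Because a coderivation of a tensor coalgebra is determined by its corestriction to $\A[1]^{\ot1}\simeq\A[1]$ (Lemma~\ref{free-algebra-derivations}(b)), condition~(i) is equivalent to the vanishing modulo $\K_\A$ of the corestriction of $d'$ evaluated on $\K_\A$; I would compute this corestriction explicitly in terms of the $m_n$, the unit map, its dual projection (which is $\theta_\A$ in tensor degree~$1$), and the convolution terms $\theta_\A*c-(-1)^{|c|}c*\theta_\A$. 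The upshot is that the corestriction of $d'$ on the summand of $\K_\A$ with exactly one letter $\R[1]$ reproduces, up to the contribution of the $\theta_\A$\+twist, the maps obtained by inserting the unit into a single argument of $m_n$; the twist terms are arranged so that the resulting component vanishes modulo $\K_\A$ precisely when $m_2(1,{-})=\id=m_2({-},1)$ and $m_n$ annihilates every unit insertion for $n\ne2$. Likewise condition~(ii) unwinds, via $h'(c)=\theta_\A(d(c))+\theta_\A^2(c)$, into the same normalization and vanishing relations.

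With the dictionary in hand, both directions of part~(a) follow by reading it two ways: strict unitality of $e_\A$ yields exactly the identity and vanishing relations on the unit-insertion components, which is~(i) and~(ii), and conversely. For part~(b) I would argue in parallel: a morphism of nonunital wc~\Ainfty\+algebras is a morphism of the untwisted DG\+coalgebras $\bigoplus_n\A[1]^{\ot n}\rarrow\bigoplus_n\B[1]^{\ot n}$, determined by its corestriction to $\B[1]$ (Lemma~\ref{free-algebra-morphisms}(b)); condition~(i) that $f$ carry $\K_\A$ into $\K_\B$ and condition~(ii) that $\varkappa_\B\circ f|_{\K_\A}=\varkappa_\A$ translate, through the same word-decomposition bookkeeping, into $f_1\circ e_\A=e_\B$ together with the vanishing of every $f_n$, $n\ge2$, on insertions of the unit. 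One then observes that such an $f$ descends to a morphism of the quotient CDG\+coalgebras, whose change-of-connection component records $f_0$; this is what makes the assignment of part~(a) functorial and realizes the morphism part of Theorem~\ref{strictly-unital-wcainfty-cdg-coalgebras}.

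The main obstacle I anticipate is purely the combinatorial and sign bookkeeping of the corestriction components, complicated here relative to the uncurved case by the curvature insertion $m_0\in\m\A^2$, which raises the tensor degree and therefore contributes to $d'$ on $\K_\A$ and to $h'$; I would track these contributions carefully against the conventions of~\cite[Section~7.1]{Pkoszul}. A secondary point to dispatch is the independence of~(i) and~(ii) from the choice of extension $\theta_\A$ of $\varkappa_\A$: any two extensions differ by a linear function supported on $\bigoplus_n\A_+[1]^{\ot n}$, i.e.\ vanishing on $\K_\A$, whose associated change-of-connection is already defined on the quotient and hence preserves $\K_\A$ and affects neither descent condition. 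Since all the identities involved are formal consequences of the coalgebra structure and the direct-sum splitting, both available verbatim over $\R$ once Lemma~\ref{wcainfty-unit-split} provides the splitting in $\R\contra^\free$, no analytic input beyond that lemma is required, and the argument parallels the field case step for step.
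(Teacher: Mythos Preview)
Your approach matches the paper's: both recognize conditions~(i) and~(ii) as descent of the CDG\+coalgebra structure $(d',h')$ along the quotient $\C\rarrow\bigoplus_n\A_+[1]^{\ot n}$, and both translate this via the corestriction to $\A[1]$ (Lemma~\ref{free-algebra-derivations}(b)) into the strict unit equations. The paper's argument is terse but proceeds by the same mechanism, including your remark on independence of the choice of~$\theta_\A$.

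There is one point where your account is imprecise and the paper is sharper. You write that condition~(i) holds ``precisely when $m_2(1,{-})=\id=m_2({-},1)$ and $m_n$ annihilates every unit insertion for $n\ne2$'', and that condition~(ii) unwinds ``into the same normalization and vanishing relations''. This overstates what each condition gives individually. The paper makes the split explicit: condition~(i), i.e., the vanishing of the composition $\K_\A\rarrow\C\xrightarrow{d'}\C\rarrow\A[1]\rarrow\A_+[1]$, is equivalent to the strict unit equations holding \emph{modulo $e_\A(\R)$}. Once~(i) holds, the relevant values land in $e_\A(\R)\simeq\R$; choosing $\theta_\A$ supported in tensor degree~$1$ (equivalently, a splitting $\A\simeq\A_+\oplus\R$), condition~(ii) then says these values are the prescribed ones in the $\R$\+component. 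So~(i) and~(ii) are complementary halves of the equations with respect to the decomposition $\A=\A_+\oplus\R$, not two routes to the same equations. The identical split governs part~(b): condition~(i) says the maps $f_n(\dotsc,1_\A,\dotsc)$ land in $e_\B(\R)$, and condition~(ii) says they take the prescribed values there ($1_\B$ for $n=1$, zero for $n\ge2$). With this clarification your plan goes through.
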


\begin{proof}
 Part~(a): Clearly, the condition~(i) does not depend on the choice
of a linear function~$\theta_\A$ (for a given~$e_\A$);
the condition~(ii) does not depend on the choice of~$\theta_\A$
assuming that the condition~(i) is fulfilled.
 The condition~(i) is equivalent to the vanishing of the composition
of the differential~$d'$ with the embedding $\K_\A\rarrow
\bigoplus_n\A[1]^{\ot n}$ and the composition of the projections
$\bigoplus_n\A[1]^{\ot n}\rarrow\A[1]^{\ot 1}\simeq\A[1]
\rarrow\A_+[1]$.

 The equations of the strict unit prescribe the values of certain
maps landing in~$\A$.
 The above vanishing condition is equivalent to these equations
being true modulo~$e_\A(\R)$.
 Furthermore, assume that the linear function~$\theta_\A$ is chosen
in such a way that it vanishes on the components of the tensor
degree different from~$1$.
 Then the choice of~$\theta_\A$ is equivalent to the choice of
a direct sum decomposition $\A\simeq\A_+\oplus\R$.
 The condition~(ii) is equivalent to the equations on the strict
unit being true in the projection to the component~$\R$.

 Part~(b): The condition~(i) is equivalent to the vanishing of
the composition $\K_\A\rarrow\bigoplus_n\A[1]^{\ot n}\rarrow
\bigoplus_n\B[1]^{\ot n}\rarrow\B[1]^{\ot 1}\simeq\B[1]$.
 The equations of a morphism of strictly unital wc~\Ainfty\+algebras
prescribe the values of certain maps landing in~$\B$.
 The above vanishing condition is equivalent to these equations
being true modulo~$e_\B(\R)$.
 Assuming that this condition holds, the maps we are interested in
actually land in $e_\B(\R)\subset\B$.
 The condition~(ii) means that they take the prescribed values there.
\end{proof}

 Let $\A$ be a strictly unital wc~\Ainfty\+algebra.
 Pick a homogeneous retraction $v\:\A\rarrow\R$ of the unit map
(i.~e., a graded $\R$\+contramodule morphism left inverse to~$e_\A$).
 Define the $\R$\+contramodule morphism $\theta_\A\:
\bigoplus_n\A[1]^{\ot n}\rarrow\R$ of degree~$1$ by the rule that
$\theta_\A=v$ on the components of the tensor degree~$1$ and 
$0$~on the components of all other tensor degrees.
 Then the $\R$\+contramodule morphism~$\theta_\A$ extends
the $\R$\+contramodule morphism $\varkappa_\A\:\K_\A\rarrow\R$.
 The odd coderivation $d'\:\bigoplus_n\A[1]^{\ot n}
\rarrow\bigoplus_n\A[1]^{\ot n}$ corresponding to~$\theta_\A$ takes
$\K_\A$ to $\K_\B$ and therefore induces an odd coderivation
$d\:\bigoplus_n\A_+[1]^{\ot n}\rarrow\A_+[1]^{\ot n}$ on the tensor
coalgebra $\bigoplus_n\A_+[1]^{\ot n}$.
 Similarly, the morphism $h'\:\bigoplus_n\A[1]^{\ot n}\rarrow\R$
corresponding to~$\theta_\A$ annihilates $\K_\A$ and therefore
induces a morphism $h\:\bigoplus_n\A_+[1]\rarrow\R$.

 The \emph{bar-construction} $\Br_v(\A) =
(\bigoplus_n\A_+[1]^{\ot n}\;d\;h)$ of a strictly unital
wc~\Ainfty\+al\-gebra $\A$ is the $\R$\+free CDG\+coalgebra
corresponding to $\A$ under the desired equivalences of categories.
 The CDG\+coalgebra $\Br_v(\A)/\m\Br_v(\A)$ over~$k$ is coaugmented
(and therefore, conilpotent) because $m_0\in\m\A$.

 Let $f\:\A\rarrow\B$ be a morphism of strictly unital
wc~\Ainfty\+algebras.
 Let $v_\A\:\A\rarrow\R$ and $v_\B\:\B\rarrow\R$ be homogeneous
retractions of the unit maps, and let $\theta_\A$ and $\theta_\B$
be the corresponding linear functions of degree~$1$ on
$\R$\+free graded tensor coalgebras.
 The morphism of tensor coalgebras $f\:\bigoplus_n\A[1]^{\ot n}
\rarrow\bigoplus_n\B[1]^{\ot n}$ takes $\K_\A$ into $\K_\B$, so
it induces a morphism of $\R$\+free graded coalgebras
$f\:\bigoplus_n\A_+[1]\rarrow\bigoplus_n\B_+[1]$.
 The $\R$\+contramodule morphism $\theta_\B\circ f-\theta_\A\:
\bigoplus_n\A[1]^{\ot n}\rarrow\R$ annihilates $\K_\A$, so
it induces an $\R$\+contramodule morphism $a_f\:\bigoplus_n
\A_+[1]\rarrow\R$.

 The pair $(f,a_f)$ is the morphism of $\R$\+free CDG\+coalgebras
$\Br_v(\A)\rarrow\Br_v(\B)$ corresponding to the morphism
of strictly unital wc~\Ainfty\+algebras $f\:\A\rarrow\B$.
 The morphism $(f/\m f,a_f/\m a_f)$ is a morphism of
coaugmented/conilpotent CDG\+coal\-gebras (i.~e.,
$a_f/\m a_f\circ \bar w=0$) because $f_0\in \m\B$ and
$\theta_\A/\m\theta_\A\circ \bar w=0=\theta_\B/\m\theta_\B
\circ\bar w$.
 Alternatively, one can use any linear functions $\theta_\A$
of degree~$1$ extending the linear functions~$\kappa_\A$ and
satisfying the condition $\theta_\A/\m\theta_\A\circ\bar w=0$
in this construction of the functor $\Br_v$.

 To obtain the inverse functor, assign to an $\R$\+free
CDG\+coalgebra $(\D,d_\D,h_\D)$ the $\R$\+free CDG\+algebra
$(\C,d_\C)$ constructed as follows.
 Adjoin to $\D$ a single ``cofree in the conilpotent sense''
cogenerator of degree~$-1$, obtaining an $\R$\+free graded
coalgebra $\C$ endowed with an $\R$\+free graded coalgebra morphism
$\C\rarrow\D$ and an $\R$\+contramodule morphism $\theta\:\C
\rarrow\R$ of degree~$1$ characterized by the following property.
 The map $\C\rarrow\prod_{n=1}^\infty\D^{\ot n}[n-1]$ whose
components are obtained by composing the iterated comultiplication
maps $\C\rarrow\C^{\ot 2n-1}$ with the tensor products of the maps
$\C\rarrow\D$ at the odd positions and the maps~$\theta$ at the even
ones should be equal to the composition of an isomorphism
$\C\simeq\bigoplus_{n=1}^\infty\D^{\ot n}[n-1]$ and the natural
embedding of the infinite direct sum of free graded
$\R$\+contramodules into their infinite product.

 Define the odd coderivation $d'_\C$ on the $\R$\+free graded
coalgebra $\C$ by the conditions that $d'_\C$ must preserve
the kernel of the morphism of $\R$\+free graded coalgebras
$\C\rarrow\D$ and induce the coderivation $d_\D$ on $\D$, and
that the equation $\theta\circ d'_\C=\theta^2+h_\D$ must
hold (where $h_\D$ is considered as a morphism $\C\rarrow\R$).
 Finally, set $d_\C(c)=d'_\C(c)-\theta*c+(-1)^{|c|}c*\theta$
for all $c\in\C$.

 Restricting this procedure to $\R$\+free CDG\+algebras with
conilpotent reductions whose underlying graded coalgebras are
cotensor coalgebras of free graded $\R$\+contra\-modules, we
obtain a functor recovering the $\R$\+free DG\+coalgebra
$\bigoplus_n\A[1]^{\ot n}$ from the $\R$\+free CDG\+coalgebra
$\Br_v(\A)=\bigoplus_n\A_+[1]^{\ot n}$.
\end{proof}

\begin{thm}   \label{strictly-unital-wcainfty-cdg-comodules}
 Let\/ $\A$ be a nonzero strictly unital wc~\Ainfty\+algebra over\/
$\R$ and\/ $\D$ be the corresponding\/ $\R$\+free CDG\+coalgebra.
 Then \par
\textup{(a)} the DG\+category of\/ $\R$\+free left wc~\Ainfty\+modules
over\/ $\A$ is naturally equivalent to the DG\+category of\/
$\R$\+free left CDG\+comodules over\/~$\D$; \par
\textup{(b)} the DG\+category of\/ $\R$\+contramodule left
wc~\Ainfty\+modules over\/ $\A$ is naturally equivalent to
the DG\+category of\/ $\R$\+contramodule left CDG\+contramodules
over\/~$\D$; \par
\textup{(c)} the DG\+category of\/ $\R$\+comodule left
wc~\Ainfty\+modules over\/ $\A$ is naturally equivalent to
the DG\+category of\/ $\R$\+comodule left CDG\+comodules
over\/~$\D$; \par
\textup{(d)} the DG\+category of\/ $\R$\+cofree left
wc~\Ainfty\+modules over\/ $\A$ is naturally equivalent to
the DG\+category of\/ $\R$\+cofree left CDG\+contramodules
over\/~$\D$; \par
\textup{(e)} the above equivalences of DG\+categories form
a commutative diagram with the natural embeddings of
the DG\+categories of strictly unital wc~\Ainfty\+modules into those
of nonunital wc~\Ainfty\+modules over~$\A$, the equivalences of
DG\+categories from Section~\textup{\ref{nonunital-wcainfty}} and
Proposition~\textup{\ref{nonunital-wcainfty-r-c-co-contra}},
and the similar equivalences between DG\+categories of
CDG\+comodules and CDG\+contramodules over\/~$\D$.
\end{thm}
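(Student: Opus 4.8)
The plan is to build the four equivalences by the same mechanism used to prove Theorem~\ref{strictly-unital-wcainfty-cdg-coalgebras}, now carried out for modules, and then to get part~(e) by tracking the comodule-contramodule correspondence functors through the construction. Throughout I write $\C=\bigoplus_n\A[1]^{\ot n}$ for the $\R$\+free DG\+coalgebra underlying the nonunital wc~\Ainfty\+structure on $\A$, so that $\pi\:\C\rarrow\D=\Br_v(\A)=\bigoplus_n\A_+[1]^{\ot n}$ is the surjection of $\R$\+free graded coalgebras with $\R$\+free kernel $\K_\A$ from the proof of Theorem~\ref{strictly-unital-wcainfty-cdg-coalgebras}. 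I would recall from that proof the degree\+$1$ linear function $\theta_\A\:\C\rarrow\R$ and the relation $d'(c)=d_\C(c)+\theta_\A*c-(-1)^{|c|}c*\theta_\A$, which exhibits $(\id,\theta_\A)$ as an isomorphism of $\R$\+free CDG\+coalgebras $(\C,d_\C,0)\rarrow(\C,d',h')$ whose target carries a \emph{strict} CDG\+coalgebra surjection $\pi\:(\C,d',h')\rarrow(\D,d_\D,h_\D)$, because $d'$ preserves $\K_\A$ and $h'$ vanishes on it.

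First I would treat part~(a). By Section~\ref{nonunital-wcainfty} an $\R$\+free left wc~\Ainfty\+module over $\A$ is precisely a cofreely cogenerated $\R$\+free left DG\+comodule $(\C\ot^\R\M,d_N)$ over $(\C,d_\C)$; applying the corestriction-of-scalars DG\+functor $R_{(\id,\theta_\A)}$ of Section~\ref{r-free-co-derived} turns it into a CDG\+comodule over $(\C,d',h')$ with the same underlying graded comodule. The heart of the matter is a module analogue of Lemma~\ref{d-prime-h-prime-coalgebra}: I would show that strict unitality of the wc~\Ainfty\+module (vanishing of the $l_n$ on unit insertions for $n\neq2$, together with $l_2(1\ot{-})=\id$) is equivalent to the twisted coderivation preserving the $\R$\+free subcomodule $\K_\A\ot^\R\M$, so that it descends to a coderivation on $(\C/\K_\A)\ot^\R\M\simeq\D\ot^\R\M$; the curvature equation is then inherited from $\A$ since $h'$ kills $\K_\A$. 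Conversely, the reconstruction of $\C$ from $\D$ by adjoining a cofree cogenerator of degree~$-1$, upgraded through the $\C$\+$\D$\+bicomodule $\C$ and the cotensor product $\oc_\D$, furnishes the inverse functor; verifying that the two functors are mutually inverse \emph{as DG\+functors} reduces to a bookkeeping computation relating the maps $l_n$, their reductions $\bar l_n\:\A_+^{\ot n}\ot^\R\M\rarrow\M$, and the components $f_n$ of morphisms.

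Parts~(b), (c), (d) proceed identically, replacing the cofreely cogenerated comodule $\C\ot^\R\M$ by the freely generated contramodule $\Hom^\R(\C,\P)$, the coinduced comodule $\C\ocn_\R\cM$, and the freely generated cofree contramodule $\Ctrhom_\R(\C,\cP)$ of Section~\ref{nonunital-wcainfty}, and using the contra/corestriction functors of Sections~\ref{r-free-co-derived}, \ref{r-cofree-co-derived}, and~\ref{non-adj-co-derived} with the same descent-along-$\pi$ criterion. Here I would lean on the fact that $\Hom^\R(\C,{-})$, $\Ctrhom_\R$, and $\ocn_\R$ are well behaved on the free and cofree objects in play (cf.\ the cautions of Remark~\ref{contra-operations-not-exact}), which is exactly what makes these categories of graded objects abelian and the relevant constructions exact from the appropriate side. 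Part~(e) is then formal: the functors $\Phi_\R=\Psi_\R^{-1}$, $\Phi_\C=\Psi_\C^{-1}$, and $\Phi_{\R,\C}=\Psi_{\R,\C}^{-1}$ of Proposition~\ref{nonunital-wcainfty-r-c-co-contra} and Sections~\ref{r-free-co-derived}--\ref{non-adj-co-derived} commute on the nose with the change-of-connection and descent operations, since the comparison isomorphisms relating $\Phi$, $\Psi$ to $\Hom_\C$, $\Hom^\C$, $\ocn_\C$, $\Cohom_\C$ (recorded in Sections~\ref{r-cofree-graded-co} and~\ref{non-adj-graded-co}) become isomorphisms of complexes once the differentials are installed; the compatibility with the embeddings of strictly unital into nonunital wc~\Ainfty\+modules is built into the construction, as $R_{(\id,\theta_\A)}$ followed by $\pi$ is what realizes the descent.

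The main obstacle I anticipate is not any single calculation but the correct identification of the underlying graded objects on the two sides. Concretely, the delicate point is to prove that strict unitality is \emph{precisely} the condition that the twisted (co/contra)derivation descends along $\pi$, and that the reconstruction functor returns cofreely cogenerated (not arbitrary) $\C$\+comodules: tensoring $0\rarrow\K_\A\rarrow\C\rarrow\D\rarrow0$ with $\M$ shows that a graded $\D$\+comodule yields a cofree graded $\C$\+comodule exactly when it is itself cofree over $\D$, so the natural image of each functor is the class of cofreely cogenerated CDG\+comodules and CDG\+contramodules over $\D$. Pinning down this matching of cogenerators, and checking it is compatible across all four variances and with the full web of $\Phi$/$\Psi$ functors, is where the real work lies — just as the matching of cogenerators was the crux of Theorem~\ref{strictly-unital-wcainfty-cdg-coalgebras}.
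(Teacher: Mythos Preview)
Your proposal is correct and follows essentially the same approach as the paper: both rest on the module analogue of Lemma~\ref{d-prime-h-prime-coalgebra} (strict unitality of the wc~\Ainfty\+module/morphism is exactly the condition that the $\theta_\A$\+twisted coderivation/morphism on $\C\ot^\R\M$ preserves $\K_\A\ot^\R\M$, and similarly in the other three variances), then construct the forward functor by descent of~$d'$ along~$\pi$ and the inverse as coextension of scalars along $(p,\theta_\A)\:(\C,d)\rarrow(\D,d,h)$, with part~(e) following from the compatibility of these operations with the $\Phi$/$\Psi$ functors (the paper phrases this as a particular case of the underived Propositions~\ref{r-free-co-extension}, \ref{r-cofree-co-extension}, and~\ref{non-adj-co-extension}). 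Your observation that the natural image consists of the cofreely cogenerated CDG\+comodules (resp.\ freely generated CDG\+contramodules) over~$\D$ is exactly what the paper's proof establishes and what is used in the subsequent proof of Theorem~\ref{wcainfty-semi-co-derived}.
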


\begin{proof}
 The proof is based on the following two lemmas.

\begin{lem}
 Let\/ $\A$ be a strictly unital wc~\Ainfty\+algebra over\/~$\R$;
set\/ $\C=\bigoplus_n\A[1]^{\ot n}$ and\/
$\D=\bigoplus_n\A_+[1]^{\ot n}$.
 Let\/ $\K_\A\subset\C$ be the kernel of the natural morphism
of\/ $\R$\+free graded coalgebras\/ $\C\rarrow\D$, let
$\theta_\A\:\C\rarrow\R$ be an\/ $\R$\+contramodule morphism
constructed in the proof of
Theorem~\textup{\ref{strictly-unital-wcainfty-cdg-coalgebras}},
and let $d'\:\C\rarrow\C$ be the corresponding odd coderivation from
Lemma~\textup{\ref{d-prime-h-prime-coalgebra}}.
 Then \par
\textup{(a)} a nonunital\/ $\R$\+free left wc~\Ainfty\+module\/ $\M$
over\/ $\A$ with the wc~\Ainfty\+module structure
$d\:\C\ot^\R\M\rarrow\C\ot^\R\M$ is a strictly unital\/ $\R$\+free
wc~\Ainfty\+module over\/ $\A$ if and only if the odd coderivation
$d'(z)=d(z)+\theta_\A*z$ of degree~$1$ on\/ $\C\ot^\R\M$ compatible
with the odd coderivation~$d'$ on\/ $\C$ preserves the graded\/
$\R$\+subcontramodule\/ $\K_\A\ot^\R\M\subset\C\ot^\R\M$; \par
\textup{(b)} a nonunital\/ $\R$\+contramodule left
wc~\Ainfty\+module\/ $\P$ over $\A$ with the wc~\Ainfty\+mod\-ule
structure $d\:\Hom^\R(\C,\P)\rarrow\Hom^\R(\C,\P)$ is a strictly
unital\/ $\R$\+contramodule wc~\Ainfty\+module over\/ $\A$ if and
only if the odd contraderivation $d'(q)=d(q)+\theta_\A*q$ of degree~$1$
on\/ $\Hom^\R(\C,\P)$ compatible with the odd coderivation~$d'$
on\/ $\C$ preserves the graded\/ $\R$\+subcontramodule
$\Hom^\R(\D,\P)\subset\Hom^\R(\C,\P)$; \par
\textup{(c)} a nonunital\/ $\R$\+comodule left wc~\Ainfty\+module\/
$\cM$ over $\A$ with the wc~\Ainfty\+module structure
$d\:\C\ocn_\R\cM\rarrow\C\ocn_\R\cM$ is a strictly unital\/
$\R$\+comodule wc~\Ainfty\+module over\/ $\A$ if and only if the odd
coderivation $d'(z)=d(z)+\theta_\A*z$ of degree~$1$ on\/
$\C\ocn_\R\cM$ compatible with the odd coderivation~$d'$
on\/ $\C$ preserves the graded\/ $\R$\+subcomodule\/
$\K_\A\ocn_\R\cM\subset\C\ocn_\R\cM$; \par
\textup{(d)} a nonunital\/ $\R$\+cofree left wc~\Ainfty\+module\/ $\cP$
over $\A$ with the wc~\Ainfty\+mod\-ule structure
$d\:\Ctrhom_\R(\C,\cP)\rarrow\Ctrhom_\R(\C,\cP)$ is a strictly
unital\/ $\R$\+cofree wc~\Ainfty\+module over\/ $\A$ if and only if
the odd contraderivation $d'(q)=d(q)+\theta_\A*q$ of degree~$1$ on\/
$\Ctrhom_\R(\C,\cP)$ compatible with the odd coderivation~$d'$
on\/ $\C$ preserves the graded\/ $\R$\+subcomodule
$\Ctrhom_\R(\D,\cP)\subset\Ctrhom_\R(\C,\cP)$. 
\end{lem}

\begin{proof}
 Part~(a): the coderivation~$d'$ takes $\K_\A\ot^\R\M$ into
$\K_\A\ot^\R\M$ if and only if its composition
$\K_\A\ot^\R\M\rarrow\C\ot^\R\M\rarrow\C\ot^\R\M\rarrow\M$ with
the natural embedding and the morphism induced by the counit of $\C$
vanishes.
 Part~(b): the contraderivation~$d'$ takes $\Hom^\R(\D,\P)$ into
$\Hom^\R(\D,\P)$ if and only if its composition
$\P\rarrow\Hom^\R(\C,\P)\rarrow\Hom^\R(\C,\P)\rarrow\Hom^\R(\K_\A,\P)$
with the morphism induced by the counit of $\C$ and the natural
surjection vanishes.
 In both cases, it is straightforward to check that the condition is
equivalent to the definition of a strictly unital wc~\Ainfty\+module.
 Parts~(c) and~(d) are similar.
\end{proof}

\begin{lem}
 Let\/ $\A$ be a strictly unital wc~\Ainfty\+algebra over\/~$\R$;
set\/ $\C=\bigoplus_n\A[1]^{\ot n}$ and\/
$\D=\bigoplus_n\A_+[1]^{\ot n}$, and let\/ $\K_\A\subset\C$ be
the kernel of the natural morphism\/ $\C\rarrow\D$.
 Then \par
\textup{(a)} for strictly unital\/ $\R$\+free left
wc~\Ainfty\+modules\/ $\L$ and\/ $\M$ over\/ $\A$, a (not necessarily
closed) morphism of nonunital\/ $\R$\+free wc~\Ainfty\+modules
$f\:\L\rarrow\M$ is a morphism of strictly unital\/ $\R$\+free
wc~\Ainfty\+modules if and only if  the morphism of\/ $\R$\+free
CDG\+comodules $f\:\C\ot^\R\L\rarrow\C\ot^\R\M$ over\/ $\C$ takes\/
$\K_\A\ot^\R\L$ into\/ $\K_\A\ot^\R\M$; \par
\textup{(b)} for strictly unital\/ $\R$\+contramodule left
wc~\Ainfty\+modules\/ $\P$ and\/ $\Q$ over\/ $\A$, a (not necessarily
closed) morphism of nonunital\/ $\R$\+contramodule wc~\Ainfty\+modules 
$f\:\L\rarrow\M$ is a morphism of strictly unital\/
$\R$\+contramodule wc~\Ainfty\+modules if and only if the morphism
of\/ $\R$\+contramodule CDG\+contramodules $f\:\Hom^\R(\C,\P)
\rarrow\Hom^\R(\C,\Q)$ over\/ $\C$ takes\/ $\Hom^\R(\D,\P)$ into
$\Hom^\R(\D,\Q)$; \par
\textup{(c)} for strictly unital\/ $\R$\+comodule left
wc~\Ainfty\+modules\/ $\cL$ and\/ $\cM$ over\/ $\A$, a (not necessarily
closed) morphism of nonunital\/ $\R$\+comodule wc~\Ainfty\+modules
$f\:\cL\rarrow\cM$ is a morphism of strictly unital\/ $\R$\+comodule
wc~\Ainfty\+modules if and only if the morphism of\/ $\R$\+comodule
CDG\+comodules $f:\C\ocn_\R\cL\rarrow\C\ocn_\R\cM$ over\/ $\C$ takes\/
$\K_\A\ocn_\R\cL$ into\/ $\K_\A\ocn_\R\cM$; \par
\textup{(d)} for strictly unital\/ $\R$\+cofree left
wc~\Ainfty\+modules\/ $\cP$ and\/ $\cQ$ over\/ $\A$, a (not necessarily
closed) morphism of nonunital\/ $\R$\+cofree wc~\Ainfty\+modules
$f\:\cP\rarrow\cQ$ is a morphism of strictly unital\/ $\R$\+cofree
wc~\Ainfty\+modules if and only if the morphism of\/ $\R$\+cofree
CDG\+contramodules $f\:\Ctrhom_\R(\C,\cP)\rarrow\Ctrhom_\R(\C,\cQ)$
over\/ $\C$ takes\/ $\Ctrhom_\R(\D,\cP)$ into\/ $\Ctrhom_\R(\D,\cQ)$.
\end{lem}

\begin{proof}
 Similar to the proofs of the three previous lemmas.
\end{proof}

 Let $\A$ be a strictly unital wc~\Ainfty\+algebra, $v\:\A\rarrow\R$
be a homogeneous retraction of the unit map, and
$\theta_\A\:\C\rarrow\R$ be the corresponding
linear function of degree~$1$.
 Part~(a): let $\M$ be a strictly unital $\R$\+free left
wc~\Ainfty\+module over~$\A$.
 Set $d\:\D\ot^\R\M\rarrow\D\ot^\R\M$ to be the map induced by
the coderivation~$d'$ on $\C\ot^\R\M$.
 Then $(\D\ot^\R\M\;d)$ is the $\R$\+free left CDG\+comodule over
the $\R$\+free CDG\+coalgebra $\D=\Br_v(\A)$ corresponding to
the strictly unital wc~\Ainfty\+module~$\M$.

 Let $f\:\L\rarrow\M$ be a (not necessarily closed) morphism of
strictly unital $\R$\+free left wc~\Ainfty\+modules over~$\A$.
 Then the morphism of $\R$\+free DG\+comodules
$f\:\C\ot^\R\L\rarrow\C\ot^\R\M$ induces a morphism of
$\R$\+free CDG\+comodules $f\:\D\ot^\R\L\rarrow\D\ot^\R\M$.
 This provides the desired DG\+functor in one direction; the inverse
DG\+functor can be constructed as the coextension of scalars with
respect to the natural morphism of $\R$\+free CDG\+coalgebras
$(p,\theta_\A)\:(\C,d)\rarrow(\D,d,h)$, where $p\:\C\rarrow\D$
is the natural surjection.
 The proofs of parts~(b\+d) are similar.
 Part~(e) is a particular case of (the underived versions of)
Propositions~\ref{r-free-co-extension}, \ref{r-cofree-co-extension},
and~\ref{non-adj-co-extension}.
\end{proof}

 A morphism of strictly unital wc~\Ainfty\+algebras $f:\A\rarrow\B$
is called \emph{strict} if $f_n=0$ for all $n\ne1$.
 An \emph{augmented} strictly unital wc~\Ainfty\+algebra $\A$ is
a strictly unital wc~\Ainfty\+algebra endowed with a morphism of
strictly unital wc~\Ainfty\+algebras $\A\rarrow\R$, where $\R$ is
endowed with its only strictly unital wc~\Ainfty\+algebra structure
in which $1\in\R$ is the strict unit.
 An augmented strictly unital wc~\Ainfty\+algebra is \emph{strictly
augmented} if the augmentation morphism is strict.
 A morphism of (strictly) augmented strictly unital
wc~\Ainfty\+algebras is a morphism of strictly unital
wc~\Ainfty\+algebras forming a commutative diagram with
the augmentation morphisms~\cite[Section~7.2]{Pkoszul}.

 The categories of augmented strictly unital wc~\Ainfty\+algebras,
strictly augmented strictly unital wc~\Ainfty\+algebras, and
nonunital wc~\Ainfty\+algebras are naturally equivalent.
 The equivalence of the latter two categories is provided by
the functor of formal adjoining of a strict unit, and
the equivalences of the former two ones can be obtained from
the equivalence between the categories of $\R$\+free DG\+coalgebras
$\C$ with coaugmented reductions and $\R$\+free CDG\+coalgebras
endowed with a CDG\+coalgebra morphisms $\C\rarrow\R$ compatible
with the coaugmentations of the reductions.
 The DG\+category of ($\R$\+contramodule or $\R$\+comodule)
strictly unital wc~\Ainfty\+modules over an augmented strictly
unital wc~\Ainfty\+algebra $\A$ is equivalent to the DG\+category
of nonunital wc~\Ainfty\+modules over the corresponding
nonunital wc~\Ainfty\+algebra.

 Now let $\A$ be a nonzero wcDG\+algebra over~$\R$ and
$v\:\A\rarrow\R$ be a homogeneous retraction of the unit map.
 The construction of the strictly unital wc~\Ainfty\+algebra
structure corresponding to the given wcDG\+algebra structure
on $\A$ can be recovered from the bar-construction of
Section~\ref{bar-cobar-sect}.
 Explicitly, set $m_0=h$, \ $m_1(a)=d(a)$, \ $m_2(a_1\ot a_2)
= a_1a_2$, and $m_n=0$ for $n\ge3$; then the CDG\+coalgebra
structure $\Br_v(\A)$ on the $\R$\+free graded tensor coalgebra
$\bigoplus_n\A_+[1]^{\ot n}$ defined in~\ref{bar-cobar-sect}
coincides with the CDG\+coalgebra structure $\Br_v(\A)$
constructed in Theorem~\ref{strictly-unital-wcainfty-cdg-coalgebras}.
 This defines a natural faithful functor from the category of
wcDG\+algebras to the category of wc~\Ainfty\+algebras over~$\R$.

 Similarly, the construction of the natural strictly unital
wc~\Ainfty\+module structure on any $\R$\+contramodule or
$\R$\+comodule wcDG\+module over $\A$ can be recovered from
the construction of the twisting cochain $\tau=\tau_{\A,v}$ and
the functors $\Hom^\tau(\Br_v(\A)\;{-})$ and
$\Br_v(\A)\ocn^\tau{-}$ from Section~\ref{twisting-cochains-sect}.
 Explicitly, given an $\R$\+contramodule left wcDG\+module $\P$
over $\A$, define the structure of $\R$\+contramodule left
wc~\Ainfty\+module over $\A$ on $\P$ by the rules
$p_0(q)=d(q)$, \ $p_1(q)(a) = (-1)^{|a||q|}aq$, and $p_n=0$
for $n\ge2$.
 Given an $\R$\+comodule left wcDG\+module $\cM$ over $\A$,
define the structure of $\R$\+comodule left wc~\Ainfty\+module
over $\A$ on $\cM$ by the rules $l_0(x)=d(x)$, \ 
$l_1(a\ot x)=ax$, and $l_n=0$ for $n\ge2$.
 Similar constructions apply to $\R$\+free and $\R$\+cofree
left wcDG\+modules, and to $\R$\+free and $\R$\+comodule right
wcDG\+modules.
 These constructions define natural faithful DG\+functors from
the DG\+categories of wcDG\+modules over $\A$ to the categories
of wc~\Ainfty\+modules over~$\A$.

\subsection{Semiderived category of wc \Ainfty-modules}
\label{wcainfty-semiderived}
 Let $\A$ be a strictly unital wc~\Ainfty\+algebra over~$\R$.
 A (not necessarily closed) morphism of strictly unital
$\R$\+contramodule left wc~\Ainfty\+modules $f\:\P\rarrow\Q$
is said to be \emph{strict} if one has $f^n=0$ and $(df)^n=0$
for all $n\ge1$.
 A (not necessarily closed) morphism of strictly unital
$\R$\+comodule left wc~\Ainfty\+modules $f\:\cL\rarrow\cM$
is said to be \emph{strict} if one has $f_n=0$ and $(df)_n=0$
for all $n\ge1$ \cite[Section~7.3]{Pkoszul}.

 The strictness property of morphisms of strictly unital $\R$\+free
or $\R$\+cofree wc~\Ainfty\+modules can be equivalently defined by
the similar conditions imposed on the components $f_n$ and $(df)_n$,
or $f^n$ and $(df)^n$, respectively.
 The categories of strictly unital wc~\Ainfty\+modules and strict
morphisms between them are DG\+subcategories of the DG\+categories
of strictly unital wc~\Ainfty\+modules and their (wc~\Ainfty)
morphisms.
 Both DG\+categories of strictly unital wc~\Ainfty\+modules
(from any of the four classes) with wc~\Ainfty\+morphisms or
strict morphisms between them admit shifts, twists, and
infinite direct sums and products.

 A closed strict morphism of strictly unital ($\R$\+contramodule,
$\R$\+free, $\R$\+comodule, or $\R$\+cofree) wc~\Ainfty\+modules
is called a \emph{strict homotopy equivalence} if it is
a homotopy equivalence in the DG\+category of strictly unital
wc~\Ainfty\+modules (of the respective class) and strict morphisms
between them.
 A short sequence $K\rarrow L\rarrow M$ of strictly unital
($\R$\+contramodule, $\R$\+free, $\R$\+comodule, or $\R$\+cofree)
wc~\Ainfty\+modules and closed strict morphisms between them is said
to be \emph{exact} if $K\rarrow L\rarrow M$ is a short exact sequence
of graded $\R$\+contramodules or $\R$\+comodules.
 The total strictly unital wc~\Ainfty\+module of such an exact
triple is constructed in the obvious way. 

 For any strictly unital wc~\Ainfty\+algebra $\A$, the graded
$k$\+vector space $\A/\m\A$ has a natural structure of
strictly unital \Ainfty\+algebra.
 For any $\R$\+contramodule strictly unital left wc~\Ainfty\+module
$\P$ over $\A$, the graded $k$\+vector space $\P/\m\P$ has a natural
structure of strictly unital left \Ainfty\+module over $\A/\m\A$.
 For any $\R$\+comodule strictly unital left wc~\Ainfty\+module $\cM$
over $\A$, the graded $k$\+vector space ${}_\m\cM$ has a natural
structure of strictly unital left \Ainfty\+module over $\A/\m\A$.
 In particular, there are natural differentials $d=m_0/\m m_0$ on
$\A/\m\A$, \ $d=p_0/\m p_0$ on $\P/\m\P$, and $d=l_0/\m l_0$ on
${}_\m\cM$, making $\A/\m\A$, \ $\P/\m\P$ and ${}_\m\cM$ complexes
of $k$\+vector spaces.

 A strictly unital $\R$\+free wc~\Ainfty\+module $\M$ over $\A$
is said to be \emph{semi-acyclic} if the strictly unital
\Ainfty\+module $\M/\m\M$ over $\A/\m\A$ (i.~e., the complex of
vector spaces $\M/\m\M$ with the differential $d = l_0/\m l_0$)
is acyclic.
 A closed morphism of strictly unital $\R$\+free wc~\Ainfty\+modules
$f\:\L\rarrow\M$ over $\A$ is called a \emph{semi-isomorphism} if
the morphism of strictly unital \Ainfty\+modules $f/\m f\:\L/\m\L
\rarrow\M/\m\M$ over $\A/\m\A$ is a quasi-isomorphism (i.~e.,
the morphism of complexes of $k$\+vector spaces
$f_0/\m f_0\:\L/\m\L\rarrow\M/\m\M$ is a quasi-isomorphism).

 Similarly, a strictly unital $\R$\+cofree wc~\Ainfty\+module
$\cP$ over $\A$ is said to be \emph{semi-acyclic} if the strictly
unital \Ainfty\+module ${}_\m\cP$ over $\A/\m\A$ (i.~e., the complex
of vector spaces ${}_\m\cP$ with the differential $d = p_0/\m p_0$)
is acyclic.
 A closed morphism of strictly unital $\R$\+cofree wc~\Ainfty\+modules
$f\:\cP\rarrow\cQ$ over $\A$ is called a \emph{semi-isomorphism} if
the morphism of strictly unital \Ainfty\+modules ${}_\m f\:{}_\m\cP
\rarrow{}_\m\cQ$ over $\A/\m\A$ is a quasi-isomorphism (i.~e.,
the morphism of complexes of $k$\+vector spaces
${}_\m f^0\:{}_\m\cP\rarrow{}_\m\cQ$ is a quasi-isomorphism).

 The following theorems provide a wealth of equivalent definitions
of the \emph{semiderived category of strictly unital
wc~\Ainfty\+modules} over a strictly unital wc~\Ainfty\+algebra
$\A$ over a pro-Artinian topological local ring~$\R$.

\begin{thm}  \label{r-free-wcainfty-semiderived}
\textup{(a)} The following six definitions of the \emph{semiderived
category $\sD^\si(\A\mod\Rfr)$ of strictly unital $\R$\+free left
wc~\Ainfty\+modules} over\/ $\A$ are equivalent, i.~e., lead to
naturally isomorphic (triangulated) categories:
\begin{enumerate}
\renewcommand{\theenumi}{\Roman{enumi}}
\item the homotopy category of the DG\+category of strictly unital\/
      $\R$\+free left wc~\Ainfty\+modules over\/ $\A$ and
      their (wc~\Ainfty) morphisms;
\item the localization of the category of strictly unital\/ $\R$\+free
      left wc~\Ainfty\+modules over\/ $\A$ and their closed morphisms
      by the class of semi-isomorphisms;
\item the localization of the category of strictly unital\/ $\R$\+free
      left wc~\Ainfty\+modules over\/ $\A$ and their closed morphisms
      by the class of strict homotopy equivalences;
\item the quotient category of the homotopy category of
      the DG\+category of strictly unital\/ $\R$\+free left
      wc~\Ainfty\+modules over\/ $\A$ and strict morphisms between
      them by the thick subcategory of semi-acyclic\/ $\R$\+free
      wc~\Ainfty\+modules; 
\item the localization of the category of strictly unital\/ $\R$\+free
      left wc~\Ainfty\+modules over\/ $\A$ and closed strict morphisms
      between them by the class of strict semi-isomorphisms;
\item the quotient category of the homotopy category of
      the DG\+category of strictly unital\/ $\R$\+free left
      wc~\Ainfty\+modules over $\A$ and strict morphisms between them
      by its minimal thick subcategory containing all the total
      strictly unital wc~\Ainfty\+modules of short exact sequences
      of strictly unital\/ $\R$\+free left wc~\Ainfty\+modules
      over\/ $\A$ with closed strict morphisms between them.
\end{enumerate} \par
\textup{(b)} The similar six definitions of the \emph{semiderived
category $\sD^\si(\A\mod\Rcof)$ of strictly unital $\R$\+cofree left
wc~\Ainfty\+modules} over\/ $\A$ (obtained from the above six
definitions by replacing\/ $\R$\+free wc~\Ainfty\+modules with\/
$\R$\+cofree ones) are equivalent.
\end{thm}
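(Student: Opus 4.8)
The plan is to transport the whole statement across the bar construction and to recognise all six quotients as a single coderived category of CDG\+comodules, whose several descriptions are already available. Fix a homogeneous retraction $v\:\A\rarrow\R$ of the unit map and put $\D=\Br_v(\A)$, the $\R$\+free CDG\+coalgebra attached to the strictly unital wc~\Ainfty\+algebra $\A$ by Theorem~\ref{strictly-unital-wcainfty-cdg-coalgebras}; its reduction $\D/\m\D$ is a coaugmented, hence conilpotent, CDG\+coalgebra over~$k$. By Theorem~\ref{strictly-unital-wcainfty-cdg-comodules}(a), together with the description of wc~\Ainfty\+modules in Section~\ref{nonunital-wcainfty}, the DG\+category of strictly unital $\R$\+free wc~\Ainfty\+modules over $\A$ with wc~\Ainfty\ morphisms is identified with the full DG\+subcategory $\D\comod\Rfr_\inj$ of $\R$\+free CDG\+comodules over $\D$ whose underlying graded comodules are cofree, the strict morphisms being exactly those induced by maps of the cogenerators. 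I will carry out part~(a) in this language; part~(b) is identical after replacing $\D\comod\Rfr_\inj$ by $\D\contra\Rcof_\proj$, the coderived by the contraderived category, Theorem~\ref{strictly-unital-wcainfty-cdg-comodules}(a) by~(d), Theorem~\ref{r-free-co-derived-thm}(d) by Theorem~\ref{r-cofree-co-derived-thm}(c), and Lemma~\ref{contractible-reduction-co}(b) by~(a).

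The decisive lemma is that a semi-acyclic $\R$\+free wc~\Ainfty\+module $\M$ is contractible in $H^0(\D\comod\Rfr_\inj)$. The comodule $\N$ corresponding to $\M$ has cofree underlying graded comodule, and its reduction $\N/\m\N=\D/\m\D\ot_k\M/\m\M$ is the bar comodule of the acyclic strictly unital \Ainfty\+module $\M/\m\M$ over the field~$k$; by the over-field theory of \cite[Sections~6.5 and~7.3]{Pkoszul} such a bar comodule is contractible. Since $\N$ has injective underlying graded comodule, Lemma~\ref{contractible-reduction-co}(b) lifts contractibility from $\N/\m\N$ to $\N$, proving the claim. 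Feeding this into Theorem~\ref{r-free-co-derived-thm}(d), which presents $\sD^\co(\D\comod\Rfr)$ as $H^0(\D\comod\Rfr_\inj)$, identifies Definition~(I) with $\sD^\co(\D\comod\Rfr)$ and shows that semi-acyclic modules already vanish there.

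Definition~(II) then follows formally: full homotopy equivalences are semi-isomorphisms, and a semi-isomorphism has semi-acyclic (hence, by the decisive lemma, contractible) cone, so it is invertible in $H^0$; since $H^0$ is the localization of the category of closed morphisms at homotopy equivalences, inverting the intermediate class of semi-isomorphisms returns $H^0$. On the strict side, Definitions~(IV), (V), (VI) are the three standard descriptions of a derived category of the second kind attached to the exact category of wc~\Ainfty\+modules with strict morphisms and termwise split exact sequences --- quotient by totalisations, localisation at strict semi-isomorphisms, and quotient by the thick subcategory of semi-acyclics --- their coincidence resting on the identification of semi-acyclicity with coacyclicity furnished by Corollary~\ref{r-free-co-acycl-reduction} and the usual manipulations with totalisations taken as infinite direct sums.

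The hard part will be the bridge between the strict definitions~(IV)--(V)--(VI), the strict-homotopy localisation~(III), and the full wc~\Ainfty\+morphism category~(I): one must show that on passing to these quotients the higher components $f_n$ $(n\ge1)$ of a wc~\Ainfty\ morphism are realised as fractions of strict morphisms, so that the strict homotopy category modulo semi-acyclics recovers~(I). The mechanism is the curved bar/cobar resolution: every $\R$\+free wc~\Ainfty\+module admits, inside the strict category, a functorial resolution by cofreely coinduced modules, whose bar comodules are precisely the homotopy-injective objects representing $\sD^\co(\D\comod\Rfr)$, and any wc~\Ainfty\ morphism lifts to a strict morphism of such resolutions uniquely up to strict homotopy. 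The weakly curved, $\R$\+linear setting forces these resolutions to be totalised by infinite products along the diagonals, and the requisite control over the totalisations --- together with every passage between semi-acyclicity, coacyclicity and contractibility --- is supplied by Corollary~\ref{r-free-co-acycl-reduction} and Lemma~\ref{nakayama-acycl-contract}. Once this resolution procedure is checked to be compatible with the bar identification, all six quotients become canonically $\sD^\co(\D\comod\Rfr)$ (respectively $\sD^\ctr(\D\contra\Rcof)$ in part~(b)), and the naturality of the resulting isomorphisms is immediate from the naturality of the bar construction and of the comodule-contramodule correspondence.
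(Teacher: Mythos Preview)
Your opening moves are correct and match the paper: the identification of the wc~\Ainfty\ DG\+category with $\D\comod\Rfr_\inj$, the ``decisive lemma'' that semi-acyclic $\Rightarrow$ contractible in $H^0(\D\comod\Rfr_\inj)$ via Lemma~\ref{contractible-reduction-co}(b), and the consequent identification of~(I) with $\sD^\co(\D\comod\Rfr)$ via Theorem~\ref{r-free-co-derived-thm}(d). Your argument for (I)$=$(II) is also fine, though the paper packages the passage from $Z^0$ to $H^0$ via a general cone-calculus lemma (Lemma~\ref{gm-lemma}), which also dispatches (I)$=$(III) and (IV)$=$(V); you leave (III) essentially untreated.

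The genuine gap is on the strict side. You assert that (IV), (V), (VI) are ``standard descriptions'' whose coincidence follows from Corollary~\ref{r-free-co-acycl-reduction}, but that corollary concerns absolute acyclicity of CDG\+comodules over $\D$, not the thick subcategory generated by totals of short exact sequences \emph{in the strict subcategory}. The short exact sequences in~(VI) are those whose underlying sequences of free graded $\R$\+contramodules split, with \emph{strict} closed morphisms; after bar this is a proper subclass of short exact sequences in $\D\comod\Rfr$, and there is no direct reason why the thick subcategory they generate inside the strict homotopy category should coincide with the class of semi-acyclics. The paper supplies the missing structural step: it identifies the DG\+category of strictly unital wc~\Ainfty\+modules with \emph{strict} morphisms with $\U\mod\Rfr$ for $\U=\Cb_w(\D)$, via $\M\mapsto\D\ot^{\tau_{\D,w}}\M$. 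Under this identification (IV)~$=\sD^\si(\U\mod\Rfr)$ and (VI)~$=\sD^\abs(\U\mod\Rfr)$, and these coincide by Theorem~\ref{r-free-cofibrant} since $\U/\m\U$ is cofibrant; the bridge (I)$\leftrightarrow$(IV) and (I)$\leftrightarrow$(VI) is then Koszul duality, namely Corollaries~\ref{conilpotent-cobar} and~\ref{nonconilpotent-cobar}. Your last paragraph gestures at a bar/cobar resolution but never makes this identification with $\U$\+modules, and without it the ``usual manipulations'' you invoke do not go through. (Incidentally, the totalisations here are by infinite \emph{direct sums}, not products.)
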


\begin{thm}  \label{non-adj-wcainfty-semiderived}
\textup{(a)} The following two definitions of the \emph{semiderived
category $\sD^\si(\A\mod\Rctr)$ of strictly unital $\R$\+contramodule
left wc~\Ainfty\+modules} over\/ $\A$ are equivalent, i.~e., lead to
naturally isomorphic triangulated categories:
\begin{enumerate}
\renewcommand{\theenumi}{\Roman{enumi}}
\item the quotient category of the homotopy category of
the DG\+category of strictly unital\/ $\R$\+contramodule left
wc~\Ainfty\+modules over\/ $\A$ and their (wc~\Ainfty) morphisms
by its minimal triangulated subcategory containing the total
strictly unital wc~\Ainfty\+modules of short exact sequences of
strictly unital\/ $\R$\+contramodule left wc~\Ainfty\+modules over\/
$\A$ with closed strict morphisms between them, and closed with respect
to infinite products;
\item the quotient category of the homotopy category of
the DG\+category of strictly unital\/ $\R$\+contramodule left
wc~\Ainfty\+modules over\/ $\A$ and strict morphisms between them
by its minimal thick subcategory containing all the total
strictly unital wc~\Ainfty\+modules of short exact sequences of
strictly unital\/ $\R$\+contramodule left wc~\Ainfty\+modules over\/
$\A$ with closed strict morphisms between them, and closed with respect
to infinite products.
\end{enumerate} \par
\textup{(b)} The following two definitions of the \emph{semiderived
category $\sD^\si(\A\mod\Rco)$ of strictly unital $\R$\+comodule
left wc~\Ainfty\+modules} over\/ $\A$ are equivalent, i.~e., lead to
naturally isomorphic triangulated categories:
\begin{enumerate}
\renewcommand{\theenumi}{\Roman{enumi}}
\item the quotient category of the homotopy category of
the DG\+category of strictly unital\/ $\R$\+comodule left
wc~\Ainfty\+modules over\/ $\A$ and their (wc~\Ainfty) morphisms
by its minimal triangulated subcategory containing the total
strictly unital wc~\Ainfty\+modules of short exact sequences of
strictly unital\/ $\R$\+comodule left wc~\Ainfty\+modules over\/ $\A$
with closed strict morphisms between them, and closed with respect
to infinite direct sums;
\item the quotient category of the homotopy category of
the DG\+category of strictly unital\/ $\R$\+comodule left
wc~\Ainfty\+modules over\/ $\A$ and strict morphisms between them
by its minimal thick subcategory containing all the total
strictly unital wc~\Ainfty\+modules of short exact sequences of
strictly unital\/ $\R$\+comodule left wc~\Ainfty\+modules over\/ $\A$
with closed strict morphisms between them, and closed with respect
to infinite direct sums.
\end{enumerate}
\end{thm}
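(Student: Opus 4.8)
The plan is to deduce both statements from the bar-construction of the previous section together with the structure theory of the contra- and coderived categories of CDG-contra/comodules developed in Section~\ref{non-adj-co-derived}. I will concentrate on part~(a); part~(b) is entirely parallel, with $\R$-contramodules, infinite products, thick subcategories and the contraderived category replaced throughout by $\R$-comodules, infinite direct sums, and the coderived category. First I would fix a homogeneous retraction $v\:\A\rarrow\R$ of the unit map and set $\D=\Br_v(\A)$; by Theorem~\ref{strictly-unital-wcainfty-cdg-comodules}(b) the DG-category of strictly unital $\R$-contramodule left wc~\Ainfty-modules over $\A$ \emph{with their wc~\Ainfty\ morphisms} is DG-equivalent to the DG-category $\D\contra\Rctr$ of $\R$-contramodule left CDG-contramodules over $\D$. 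Passing to $H^0$, this identifies the ambient homotopy category of definition~(I) with $H^0(\D\contra\Rctr)$. Under this identification a short exact sequence of wc~\Ainfty-modules with closed strict morphisms goes to a short exact sequence of CDG-contramodules (exactness being tested on the underlying graded $\R$-contramodules on both sides, the forgetful functor $\D\contra\Rctr\rarrow\R\contra$ being exact and conservative), and totalizations correspond to totalizations.

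Next I would record the general fact that a triangulated subcategory of a triangulated category with infinite products which is closed under those products is automatically thick: a direct summand of an object in the subcategory is split off by the telescope/idempotent argument via countable products, exactly as in the proof of Lemma~\ref{bounded-above-lem} and in the ``telescope short sequence'' used in the proof of Theorem~\ref{restriction-generates}(a). Consequently the ``minimal triangulated subcategory closed under products'' of definition~(I) and the ``minimal thick subcategory closed under products'' of definition~(II), \emph{generated by the same class of objects}, agree as subcategories; this disposes of the triangulated-versus-thick discrepancy and isolates the single genuine difference between (I) and (II), namely the morphism class (wc~\Ainfty\ versus strict). I would then identify definition~(I) with $\sD^\ctr(\D\contra\Rctr)$: every object of $\D\contra\Rctr$ comes (up to isomorphism) from a wc~\Ainfty-module by Theorem~\ref{strictly-unital-wcainfty-cdg-comodules}(b), and the freely generated left resolutions $\dotsb\rarrow G^+(\F_1)\rarrow G^+(\F_0)\rarrow\P\rarrow0$ built in the proof of Theorem~\ref{non-adj-co-derived-res}(c), together with products and cones, express the contraacyclicity generated by \emph{all} short exact sequences in terms of totalizations of strict exact triples. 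Hence definition~(I) yields precisely the contraderived category.

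It then remains to identify definition~(II) with the same category, and this is where the real work lies. Here one must compare the homotopy category of wc~\Ainfty-modules with \emph{strict} morphisms against the homotopy category with all wc~\Ainfty\ morphisms. Following the field-coefficient arguments of~\cite[Section~7.3]{Pkoszul}, the plan is to show that the localization of the category of strictly unital $\R$-contramodule wc~\Ainfty-modules and closed \emph{strict} morphisms by the strict homotopy equivalences, further quotiented by the product-closed subcategory generated by totalizations of strict exact triples, maps isomorphically onto the quotient of definition~(I). The decisive lemma is that in the quotient category every closed wc~\Ainfty\ morphism is a fraction of closed strict morphisms whose denominators are strict homotopy equivalences; this is proved by the cone and telescope constructions for strict exact triples, using the compatibility of these constructions with the passage to CDG-contramodules recorded in Theorem~\ref{strictly-unital-wcainfty-cdg-comodules}(e).

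The main obstacle I anticipate is precisely this last comparison of the two morphism classes: I must verify that enlarging the morphisms from strict to wc~\Ainfty\ ones is exactly ``absorbed'' upon inverting strict semi-isomorphisms and quotienting by the product-closed subcategory of semiacyclic objects, with neither loss nor gain of morphisms in the localized category. Everything before it is essentially a dictionary translation through Theorems~\ref{strictly-unital-wcainfty-cdg-coalgebras}--\ref{strictly-unital-wcainfty-cdg-comodules} and an application of the already-established Theorem~\ref{non-adj-co-derived-res}(c); it is the two-morphism-class reconciliation that carries the genuinely \Ainfty-theoretic content. Once this is settled for $\R$-contramodules, part~(b) follows verbatim with $\R$-comodules and coacyclic objects, invoking Theorem~\ref{strictly-unital-wcainfty-cdg-comodules}(c), the dual of Lemma~\ref{bounded-above-lem}, and Theorem~\ref{non-adj-co-derived-res}(d).
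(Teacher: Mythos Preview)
Your treatment of definition~(I) is essentially the paper's: the identification with $\sD^\ctr(\D\contra\Rctr)$ via freely generated resolutions is exactly the content of Lemma~\ref{co-induced-co-derived}(a). (One caveat: the DG\+equivalence of Theorem~\ref{strictly-unital-wcainfty-cdg-comodules}(b) lands, as the paper's joint proof spells out, in the full DG\+subcategory of $\D\contra\Rctr$ consisting of CDG\+contramodules whose underlying graded $\D$\+contramodule is \emph{induced} from a graded $\R$\+contramodule. Your resolution argument then promotes this to the whole contraderived category, which is precisely Lemma~\ref{co-induced-co-derived}(a).)

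Where your plan diverges from the paper is definition~(II), and here you are making your own life unnecessarily hard. You propose a direct comparison of the two morphism classes by a fraction/telescope argument and rightly flag it as the ``main obstacle''. The paper sidesteps this entirely. The key observation, recorded in the joint proof just before Lemma~\ref{gm-lemma}, is that the DG\+category of strictly unital $\R$\+contramodule wc~\Ainfty\+modules with \emph{strict} morphisms is \emph{isomorphic} to the DG\+category $\U\mod\Rctr$ of $\R$\+contramodule CDG\+modules over the enveloping wcDG\+algebra $\U=\Cb_w(\D)$, the equivalence being $\P\mapsto\Hom^{\tau_{\D,w}}(\D,\P)$. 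Under this identification the strict short exact sequences correspond to short exact sequences of $\U$\+modules, so definition~(II) is literally $\sD^\ctr(\U\mod\Rctr)$. The equivalence of~(I) and~(II) is then nothing but Corollary~\ref{nonconilpotent-cobar} (the nonconilpotent cobar duality $\sD^\ctr(\D\contra\Rctr)\simeq\sD^\ctr(\U\mod\Rctr)$), already proven in Section~\ref{nonconilpotent-sect}. No new fraction lemma is needed; the morphism-class reconciliation you anticipate is entirely absorbed into the twisting-cochain adjunction that underlies Theorem~\ref{nonconilpotent-general-duality}. The same remark applies verbatim to part~(b), with Lemma~\ref{co-induced-co-derived}(b), $\sD^\co(\U\mod\Rco)$, and the comodule half of Corollary~\ref{nonconilpotent-cobar} in place of their contramodule counterparts.
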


 Let $\A$ be a nonzero strictly unital wc~\Ainfty\+algebra over
$\R$, let $v\:\A\rarrow\R$ be a homogeneous retraction of the unit map,
and let $\C=\Br_v(\A)$ be the corresponding CDG\+coalgebra
structure on the $\R$\+free graded tensor coalgebra
$\bigoplus_n\A_+[1]^{\ot n}$.
 Denote by $w\:\R\rarrow\C$ the natural section
$\R\simeq\A[1]^{\ot 0}\rarrow\bigoplus_n\A[1]^{\ot n}$ of the
counit map, and set $\U=\Cb_w(\C)$.
 The wcDG\+algebra $\U$ is called the \emph{enveloping wcDG\+algebra}
of a strictly unital wc~\Ainfty\+algebra~$\A$.
 The adjunction map $\C\rarrow\Br_v(\Cb_w(\C))$ provides a natural
morphism of strictly unital wc~\Ainfty\+algebras $\A\rarrow\U$.

\begin{thm}   \label{wcainfty-semi-co-derived}
 The following triangulated categories are naturally equivalent:
\par
\textup{(a)} the semiderived category\/ $\sD^\si(\A\mod\Rfr)$ of
strictly unital\/ $\R$\+free left wc~\Ainfty\+modules over\/~$\A$;
{\hbadness=2825\par}
\textup{(b)} the semiderived category\/ $\sD^\si(\A\mod\Rctr)$ of
strictly unital\/ $\R$\+contramodule left wc~\Ainfty\+modules
over\/~$\A$; \par
\textup{(c)} the semiderived category\/ $\sD^\si(\A\mod\Rco)$ of
strictly unital\/ $\R$\+comodule left wc~\Ainfty\+modules over\/~$\A$;
\par
\textup{(d)} the semiderived category\/ $\sD^\si(\A\mod\Rcof)$ of
strictly unital\/ $\R$\+cofree left wc~\Ainfty\+modules over\/~$\A$;
\par
\textup{(e)} the semiderived category\/ $\sD^\si(\U\mod\Rfr)$ of
strictly unital\/ $\R$\+free left wcDG\+mod\-ules over\/~$\U$;
{\hfuzz=2.4pt\par}
\textup{(f)} the semiderived category\/ $\sD^\si(\U\mod\Rctr)$ of
strictly unital\/ $\R$\+contramodule left wcDG\+modules over\/~$\U$;
\par
\textup{(g)} the semiderived category\/ $\sD^\si(\U\mod\Rco)$ of
strictly unital\/ $\R$\+comodule left wcDG\+modules over\/~$\U$; \par
\textup{(h)} the semiderived category\/ $\sD^\si(\U\mod\Rcof)$ of
strictly unital\/ $\R$\+cofree left wcDG\+modules over\/~$\U$; \par
\textup{(i)} the contraderived category\/ $\sD^\si(\U\mod\Rctr)$ of
strictly unital\/ $\R$\+contramodule left wcDG\+modules over\/~$\U$;
\par
\textup{(j)} the coderived category\/ $\sD^\si(\U\mod\Rco)$ of
strictly unital\/ $\R$\+comodule left wcDG\+modules over\/~$\U$; \par
\textup{(k)} the absolute derived category\/ $\sD^\abs(\U\mod\Rfr)$
of strictly unital\/ $\R$\+free left wcDG\+modules over\/~$\U$; \par
\textup{(l)} the absolute derived category\/ $\sD^\abs(\U\mod\Rcof)$
of strictly unital\/ $\R$\+cofree left wcDG\+modules over\/~$\U$; \par
\textup{(m)} the contraderived category\/ $\sD^\ctr(\C\contra\Rfr)$
of\/ $\R$\+free left CDG\+contramodules over\/~$\C$;
{\hfuzz=1.7pt\par}
\textup{(n)} the contraderived category\/ $\sD^\ctr(\C\contra\Rctr)$
of\/ $\R$\+contramodule left CDG\+con\-tramodules over\/~$\C$;
{\emergencystretch=0em\hfuzz=2.5pt\par}
\textup{(o)} the contraderived category\/ $\sD^\ctr(\C\contra\Rcof)$
of\/ $\R$\+cofree left CDG\+contra\-modules over\/~$\C$; \par
\textup{(p)} the coderived category\/ $\sD^\co(\C\comod\Rfr)$
of\/ $\R$\+free left CDG\+comodules over\/~$\C$; \par
\textup{(q)} the coderived category\/ $\sD^\co(\C\comod\Rco)$
of\/ $\R$\+comodule left CDG\+comodules over\/~$\C$; \par
\textup{(r)} the coderived category\/ $\sD^\co(\C\comod\Rcof)$
of\/ $\R$\+cofree left CDG\+comodules over\/~$\C$; \par
\textup{(s)} the absolute derived category\/ $\sD^\abs(\C\contra\Rfr)$
of\/ $\R$\+free left CDG\+contra\-modules over\/~$\C$; \par
\textup{(t)} the absolute derived category\/ $\sD^\abs(\C\contra\Rcof)$
of\/ $\R$\+cofree left CDG\+contra\-modules over\/~$\C$; \par
\textup{(u)} the absolute derived category\/ $\sD^\abs(\C\comod\Rfr)$
of\/ $\R$\+free left CDG\+comodules over\/~$\C$; \par
\textup{(v)} the absolute derived category\/ $\sD^\abs(\C\comod\Rcof)$
of\/ $\R$\+cofree left CDG\+comod\-ules over\/~$\C$.
\end{thm}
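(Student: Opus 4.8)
The plan is to use the bar-construction $\C=\Br_v(\A)$ as the central hub, linking the wc~\Ainfty-module categories over $\A$ (items (a)--(d)), the wcDG-module categories over the enveloping algebra $\U=\Cb_w(\C)$ (items (e)--(l)), and the exotic derived categories of CDG-co/contramodules over $\C$ (items (m)--(v)). By Theorem~\ref{strictly-unital-wcainfty-cdg-coalgebras} the coalgebra $\C$ lies in $\R\coalg_\cdg^\cnlp$: its reduction $\C/\m\C=\Br(\A/\m\A)$ is coaugmented, and its underlying graded coalgebra is the cotensor (cofree conilpotent) coalgebra $\bigoplus_n((\A/\m\A)_+[1])^{\ot n}$, independently of the \Ainfty-structure. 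Two facts will be used throughout. First, the section $w$ reduces to the coaugmentation, so $\U=\Cb_w(\C)$ together with $\tau_{\C,w}$ falls under the conilpotent Koszul duality of Corollary~\ref{conilpotent-cobar} (equivalently Theorem~\ref{acyclic-cochain-duality}). Second, comodules and contramodules over a cotensor coalgebra are co-hereditary (homological dimension at most~$1$, via the length-one injective/projective resolutions $0\to M\to\C/\m\C\ot M\to\C/\m\C\ot V\ot M\to0$); hence by Corollary~\ref{r-free-co-homol-dim} and Proposition~\ref{r-cofree-co-r-co-contra} the exact categories of $\R$-free and $\R$-cofree graded $\C$-comodules and $\C$-contramodules all have finite homological dimension.

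I would first settle the block (m)--(v) over $\C$. The co-contra comparisons of Corollaries~\ref{r-free-derived-co-contra}, \ref{r-cofree-derived-co-contra}, \ref{non-adj-derived-co-contra} give $(m)\simeq(p)$, $(o)\simeq(r)$, $(n)\simeq(q)$; the reduction-to-$\R$-free comparisons of Theorem~\ref{non-adj-co-derived-comp} together with the $\Phi_\R=\Psi_\R^{-1}$ equivalences of Section~\ref{r-cofree-co-derived} give $(m)\simeq(n)\simeq(o)$ and $(p)\simeq(q)\simeq(r)$. By the finite homological dimension just noted, Theorem~\ref{r-free-finite-dim-co-derived} and its $\R$-cofree analogue identify the contra- and coderived categories with the absolute ones, so $(m)=(s)$, $(o)=(t)$, $(p)=(u)$, $(r)=(v)$. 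Thus (m)--(v) are naturally equivalent.

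Next I would bridge the $\U$-block to this. Corollary~\ref{conilpotent-cobar}, i.e.\ Theorem~\ref{acyclic-cochain-duality}(a)--(d), yields $(e)\simeq(p)$, $(f)\simeq(n)$, $(g)\simeq(q)$, $(h)\simeq(o)$. Since $\U/\m\U=\Cb(\C/\m\C)$ is a cobar-construction, hence a cofibrant (free as a graded algebra) DG-algebra over~$k$, Theorem~\ref{r-free-cofibrant} (cf.\ the remark following Corollary~\ref{nonconilpotent-cobar}) gives $(e)=(k)$ and $(h)=(l)$, while Corollary~\ref{non-adj-cofibrant} gives $(f)=(i)$ and $(g)=(j)$, the labels (i),(j) denoting the contra- and coderived categories, which coincide here with the semiderived ones. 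This closes the $\U$-block and ties it to (m)--(v).

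Finally, for (a)--(d) I would use the dictionary of Theorem~\ref{strictly-unital-wcainfty-cdg-comodules}, under which the four DG-categories of strictly unital wc~\Ainfty-modules over $\A$ are DG-equivalent to the CDG-co/contramodules over $\C$ whose underlying graded $\C$-objects are cofreely cogenerated, resp.\ freely generated (hence injective, resp.\ projective, as graded comodules/contramodules). Transporting definition~(I) of Theorem~\ref{r-free-wcainfty-semiderived} through parts (a),(d) of the dictionary identifies $\sD^\si(\A\mod\Rfr)$ with $H^0(\C\comod\Rfr_\inj)=\sD^\co(\C\comod\Rfr)=(p)$ by Theorem~\ref{r-free-co-derived-thm}(d), and $\sD^\si(\A\mod\Rcof)$ with $H^0(\C\contra\Rcof_\proj)=\sD^\ctr(\C\contra\Rcof)=(o)$ by Theorem~\ref{r-cofree-co-derived-thm}(c); the coherence of the whole web (not merely pairwise equivalences) is supplied by the compatibility clauses, parts~(e), of Theorems~\ref{acyclic-cochain-duality} and~\ref{strictly-unital-wcainfty-cdg-comodules} and by Proposition~\ref{non-adj-r-co-contra}. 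The hard part will be the $\R$-contramodule and $\R$-comodule cases (b),(c), where the underlying $\R$-object is arbitrary rather than (co)free: here one must match the product- (resp.\ sum-) closed definitions of Theorem~\ref{non-adj-wcainfty-semiderived} against $\sD^\ctr(\C\contra\Rctr)$ (resp.\ $\sD^\co(\C\comod\Rco)$), checking that the minimal product- (resp.\ sum-) closed triangulated subcategory generated by totalizations of short exact sequences of wc~\Ainfty-modules goes over exactly to the contraacyclic (resp.\ coacyclic) CDG-contramodules (resp.\ comodules), and that the induced objects $\Hom^\R(\C,\P)$ and coinduced objects $\C\ocn_\R\cV$ with arbitrary $\R$-coefficients compute the contra- and coderived categories in the manner of Theorems~\ref{non-adj-co-derived-comp} and~\ref{non-adj-co-derived-res}. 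This will establish $(b)=(n)$ and $(c)=(q)$, completing the chain.
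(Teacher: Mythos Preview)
Your proposal is correct and follows essentially the same route as the paper: establish the $\C$-block (m)--(v) via the co-contra correspondences and finite homological dimension of the cotensor coalgebra, bridge to the $\U$-block via Corollary~\ref{conilpotent-cobar} and the cofibrancy of $\U/\m\U$, and identify (a)--(d) with injective-comodule/projective-contramodule homotopy categories through Theorem~\ref{strictly-unital-wcainfty-cdg-comodules}. The step you flag as ``the hard part''---that induced $\C$-contramodules $\Hom^\R(\C,\P)$ with arbitrary $\R$-contramodule $\P$ (resp.\ coinduced $\C$-comodules $\C\ocn_\R\cV$) already compute the full contraderived (resp.\ coderived) category---is exactly what the paper isolates and proves as a separate lemma (Lemma~\ref{co-induced-co-derived}), using the resolution construction from the proof of Theorem~\ref{co-derived-mod} transported along $\U=\Cb_w(\C)$ together with Lemma~\ref{bounded-above-lem}; your pointer to Theorems~\ref{non-adj-co-derived-comp} and~\ref{non-adj-co-derived-res} is in the right direction but those statements alone do not quite suffice, since they concern $\R$-free versus $\R$-contramodule rather than induced-from-$\R$-contramodule versus arbitrary $\R$-contramodule.
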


\begin{proof}[Proof of Theorems~\ref{r-free-wcainfty-semiderived}\+-%
\ref{wcainfty-semi-co-derived}]
 The equivalence of all the constructions in the three theorems with
the exception of (s\+-v) of Theorem~\ref{wcainfty-semi-co-derived}
holds in the much greater generality of CDG\+comod\-ules and
CDG\+contramodules over an arbitrary $\R$\+free CDG\+coalgebra $\C$
with conilpotent reduction $\C/\m\C$.

 Specifically, the DG\+categories of strictly unital $\R$\+free,
$\R$\+contramodule, $\R$\+comod\-ule, and $\R$\+cofree left
wc~\Ainfty\+modules over $\A$ with their wc~\Ainfty\+morphisms are
described as the DG\+categories of left CDG\+contra/comodules over $\C$
with the underlying graded $\C$\+contra/comodules, respectively,
cofreely cogenerated by free graded $\R$\+contramodules, induced from
graded $\R$\+contramodules, coinduced from graded $\R$\+comodules, and
freely generated by cofree graded $\R$\+comodules.
{\emergencystretch=1em\par}

 Furthermore, let $\C$ be an $\R$\+free CDG\+coalgebra.
 A morphism of $\R$\+free left CDG\+comodules over $\C$ with
the underlying graded $\C$\+comodules cofreely cogenerated by free
graded $\R$\+contramodules $f\:\C\ot^\R\L\rarrow\C\ot^\R\M$ can be
called \emph{strict} if both $f$ and $df$, viewed as morphisms of
graded $\C$\+comodules, can be obtained from morphisms of free
graded $\R$\+contramodules by applying the functor $\C\ot^\R{-}$.
 Using Lemma~\ref{free-co-module-derivations}(a), one can show that
the DG\+category of such $\R$\+free CDG\+comodules $\C\ot^\R\M$ and
strict morphisms between them is isomorphic to the DG\+category
of $\R$\+free left CDG\+modules $\M$ over the $\R$\+free CDG\+algebra
$\U=\Cb_w(\C)$; the equivalence is provided by the DG\+functor
$\M\mpsto\C\ot^{\tau_{\C,w}}\M$.

 Similarly, the DG\+categories of $\R$\+contramodule left
CDG\+contramodules over $\C$ with the underlying graded
$\C$\+contramodules $\Hom^\R(\C,\P)$ induced from graded
$\R$\+contramodules~$\P$, \ $\R$\+comodule left CDG\+comodules over
$\C$ with the underlying graded $\C$\+comodules $\C\ocn_\R\cM$
coinduced from graded $\R$\+comodules~$\cM$, \ $\R$\+cofree left
CDG\+contramodules over $\C$ with the underlying graded
$\C$\+contramodules $\Ctrhom_\R(\C,\cP)$ freely generated by cofree
graded $\R$\+comodules~$\cP$, and strict morphisms between them are
described as the DG\+categories of $\R$\+contramodule, $\R$\+comodule,
and $\R$\+cofree left CDG\+modules over $\U$, respectively.
{\hbadness=1600\par}

 When the CDG\+coalgebra $\C/\m\C$ is coaugmented (e.~g.,
conilpotent), we pick a homogeneous section $w\:\R\rarrow\C$ of
the counit map $\C\rarrow\R$ so that $w/\m w=\bar w$ be
the coaugmentation map; then $\U$ is a wcDG\+algebra over~$\R$.

 So the definition~(IV) of Theorem~\ref{r-free-wcainfty-semiderived}(a)
is that of the semiderived category $\sD^\si(\U\mod\Rfr)$, and
the definition~(VI) is that of the absolute derived category
$\sD^\abs(\U\mod\Rfr)$.
 As mentioned above, the definition~(I) is that of the homotopy
category of CDG\+comodules over $\C$ with the underlying graded
comodules cofreely cogenerated by free graded $\R$\+contramodules.

 Now the equivalence of (I) and~(IV) follows essentially from
Corollary~\ref{conilpotent-cobar}, and the equivalence
of (I) and~(VI) is Corollary~\ref{nonconilpotent-cobar}
(for the latter argument we do not even need the CDG\+coalgebra
$\C/\m\C$ to be conilpotent).
 Alternatively, the equivalence of (IV) and~(VI) is a particular
case of Theorem~\ref{r-free-cofibrant} (cf.\ the remarks after
the proof of Corollary~\ref{nonconilpotent-cobar}).

 In the rest of the proof of
Theorem~~\ref{r-free-wcainfty-semiderived}, we will need to use
the following result
(\cite[Proposition~III.4.2 and Lemma~III.4.3]{GM}).

\begin{lem}  \label{gm-lemma}
 Let\/ $\DG$ be a DG\+category with shifts and cones.
 Then the category obtained from the category of closed morphisms
$Z^0(\DG)$ by inverting formally all morphisms that are homotopy
equivalences in\/ $\DG$ (i.~e., represent isomorphisms in $H^0(\DG)$)
is naturally isomorphic to $H^0(\DG)$.
 More precisely, it suffices to invert in $Z^0(\DG)$ all the morphisms
of the form\/ $s_X=(\id_X,0)\:X\oplus\cone(\id_X)\rarrow X$, where
$X$ are objects of\/ $\DG$, in order to obtain the homotopy
category $H^0(\DG)$.
\end{lem}

\begin{proof}
 Clearly, the morphisms~$s_X$ are isomorphisms in $H^0(\DG)$, so one
only has to check that inverting such morphisms leads to any two
morphisms in $Z^0(\DG)$ that are equal to each other in $H^0(\DG)$
becoming equal in the localized category.
 A closed morphism $X\rarrow Y$ in $\DG$ is homotopic to zero if and
only if it can be factorized as the composition of the canonical
morphism $c_X\:X\rarrow\cone(\id_X)$ and a closed morphism 
$\cone(\id_X)\rarrow Y$.
 Let $i_X$, $j_X\:X\rarrow X\oplus\cone(\id_X)$ be the two closed
morphisms defined by the rules $i_X=(\id_X,0)$ and $j_X=(\id_X,c_X)$.
 Then two closed morphisms $f$, $g\:X\rarrow Y$ are homotopic to
each other if and only if there exists a closed morphism
$q\:X\oplus\cone(\id_X)\rarrow Y$ such that $f=q\circ i_X$ and
$g=q\circ j_X$.
 Now both compositions $s_X\circ i_X$ and $s_X\circ j_X$ are equal
to $\id_X$, so inverting the morphism~$s_X$ makes $i_X$ equal
to~$j_X$ and $f$ equal to~$g$.
\end{proof}

 The classes of semi-isomorphisms and homotopy equivalences in
the DG\+category of strictly unital $\R$\+(co)free
wc~\Ainfty\+modules and wc~\Ainfty\+morphisms coincide.
 The appropriate generalization of this holds for any $\R$\+free
CDG\+coalgebra~$\C$ with conilpotent reduction and follows from
the similar assertion for conilpotent CDG\+coalgebras over~$k$
\cite[proof of Theorem~7.3.1]{Pkoszul} and
Lemma~\ref{contractible-reduction-co}.

 So the equivalence of the definitions (I) and~(II) in
Theorem~\ref{r-free-wcainfty-semiderived}(a) follows from
the first assertion of Lemma~\ref{gm-lemma}, and
the equivalence of (I) and~(III) is clear from the second one.
 The equivalence of (IV) and~(V) also follows from the first
assertion of the lemma, as a strict morphism is a strict
semi-isomorphism if and only if its cone is semi-acyclic.
 The latter two deductions do not even require $\C/\m\C$ to
be conilpotent.
 The proof of Theorem~\ref{r-free-wcainfty-semiderived}(b)
is similar.

 The proof of Theorem~\ref{non-adj-wcainfty-semiderived} is
based on the next lemma.

\begin{lem}  \label{co-induced-co-derived}
 Let\/ $\C$ be an\/ $\R$\+free CDG\+algebra.  Then \par
\textup{(a)} the quotient category of the homotopy category of\/
$\R$\+contramodule left CDG\+contramodules over\/ $\C$ with
the underlying graded\/ $\C$\+contramodules induced from graded\/
$\R$\+contramodules by its minimal triangulated subcategory which
contains the total CDG\+contramodules of the short exact sequences
of CDG\+contramodules over $\C$ whose underlying short exact sequences
of graded\/ $\C$\+contramodules are induced from short exact sequences
of graded\/ $\R$\+contramodules, and is closed under infinite products,
is equivalent to the contraderived category\/
$\sD^\ctr(\C\contra\Rctr)$ of\/ $\R$\+contramodule left
CDG\+contramodules over\/~$\C$; {\hbadness=1150\par}
\textup{(b)} the quotient category of the homotopy category of\/
$\R$\+comodule left CDG\+comod\-ules over\/ $\C$ with the underlying
graded\/ $\C$\+comodules coinduced from graded\/ $\R$\+comod\-ules by
its minimal triangulated subcategory which contains the total
CDG\+comodules of the short exact sequences of CDG\+comodules over $\C$
whose underlying short exact sequences of graded\/ $\C$\+comodules are
induced from short exact sequences of graded\/ $\R$\+comodules, and is
closed under infinite direct sums, is equivalent to the coderived
category\/ $\sD^\co(\C\comod\Rco)$ of\/ $\R$\+comodule left
CDG\+comodules over\/~$\C$.
\end{lem}

\begin{proof}
 This is a much simpler version of~\cite[Theorem~5.5]{Psemi}.
 Part~(a): let us show that the natural functor from the homotopy
category of CDG\+contramodules over $\C$ whose underlying graded
$\C$\+contramodules are freely generated by free graded
$\R$\+contramodules to the quotient category of
the homotopy category of CDG\+contra\-modules with induced underlying
graded $\C$\+contramodules that we are interested in is
an equivalence of categories.
 Then it will remain to take into account the version
Theorem~\ref{non-adj-co-derived-res}(c) with projective graded
$\C$\+contramodules replaced by free ones.
 The semiorthogonality being provided by part~(a) of the same
Theorem, we only have to show that
the natural functor from the homotopy category of CDG\+contramodules
with free underlying graded $\C$\+contramodules to our quotient
category of the homotopy category of the homotopy category of
CDG\+contramodules with induced underlying graded $\C$\+contramodules
is essentially surjective.

 Applying the construction from the beginning of the proof of
Theorem~\ref{co-derived-mod} to $\R$\+contramodule left CDG\+modules
over the $\R$\+free CDG\+algebra $\U=\Cb_w(\C)$, we conclude that 
any $\R$\+contramodule left CDG\+contramodule over $\C$ with 
an induced underlying graded $\C$\+contramodule admits a left 
resolution by $\R$\+free CDG\+contra\-modules with free underlying
graded $\C$\+contramodules and closed morphisms between these
such that the underlying complex of graded $\C$\+contramodules
is induced from a complex of graded $\R$\+contramodules.
 Now it remains to use (the appropriate version of)
Lemma~\ref{bounded-above-lem}.
 The proof of part~(b) is similar.
\end{proof}

 The definition~(II) of Theorem~\ref{non-adj-wcainfty-semiderived}(a)
is that of the contraderived category $\sD^\ctr(\U\mod\Rctr)$,
while according to Lemma~\ref{co-induced-co-derived}(a)
the definition~(I) is that of the contraderived category
$\sD^\ctr(\C\contra\Rctr)$.
 The two categories are equivalent by
Corollary~\ref{nonconilpotent-cobar}.
 This argument does not even depend on the assumption about
$\C/\m\C$ being conilpotent.
 The proof of Theorem~\ref{non-adj-wcainfty-semiderived}(b)
is similar.

\medskip
 For the purposes of the proof of
Theorem~\ref{wcainfty-semi-co-derived}, we will use
the definitions~(I) in Theorems~\ref{r-free-wcainfty-semiderived}
and~\ref{non-adj-wcainfty-semiderived} as setting
the meaning of items~(a\+d).
 Items (e) and~(h) are the constructions~(IV) of
Theorem~\ref{r-free-wcainfty-semiderived}, items (k) and~(l)
are the constructions~(VI) of the same Theorem, and items (i) and~(j)
of Theorem~\ref{wcainfty-semi-co-derived} are the constructions~(II)
of Theorem~\ref{non-adj-wcainfty-semiderived}.

 So the equivalence of~(a), (e), and~(k) is provided by
Theorem~\ref{r-free-wcainfty-semiderived}(a), the equivalence
of~(d), (h), and~(l) is provided by
Theorem~\ref{r-free-wcainfty-semiderived}(b).
 The equivalence of (b) and~(i) is
Theorem~\ref{non-adj-wcainfty-semiderived}(a),
and the equivalence of (c) and~(j) is
Theorem~\ref{non-adj-wcainfty-semiderived}(b).

 The equivalence of (a) and~(p) is essentially
Theorem~\ref{r-free-co-derived-thm}(d),
and the equivalence of (d) and~(o) is
Theorem~\ref{r-cofree-co-derived-thm}(c).
 The equivalence of (b) and~(n) is
Lemma~\ref{co-induced-co-derived}(a), and
the equivalence of (c) and~(q) is
Lemma~\ref{co-induced-co-derived}(b).

 The equivalence of~(e\+-h) is the results of
Section~\ref{r-cofree-semi}, Proposition~\ref{non-adj-r-co-contra},
and the basic results of Section~\ref{wcdg-semiderived}.
 The equivalence of (e) and~(k) is Theorem~\ref{r-free-cofibrant},
and the equivalence of (h) and~(l) is similar.
 The equivalence of (f) and~(i) and the equivalence of
(g) and~(j) are Corollary~\ref{non-adj-cofibrant}.
 Alternatively, the equivalence of (i) and~(k) and
the equivalence of (j) and~(l) are
Corollary~\ref{non-adj-fin-dim-reduct-co-abs}.
 The equivalence of (i) and~(j) is
Corollary~\ref{non-adj-fin-dim-reduct-r-co-contra}, and
the equivalence of (k) and~(l) was established in
Section~\ref{r-cofree-absolute}.

 The equivalence of the respective items in~(e), (f), (g), (h)
and~(p), (n), (q), (o) is Corollary~\ref{conilpotent-cobar}.
 The equivalence of the respective items in~(i), (j), (k), (l)
and~(n), (q), (p), (o) is Corollary~\ref{nonconilpotent-cobar}.

 The equivalence of (m\+-r) is
Corollaries~\ref{r-free-derived-co-contra}
and~\ref{r-cofree-derived-co-contra}, the results of
Sections~\ref{r-free-co-derived} and~\ref{r-cofree-co-derived},
and Corollary~\ref{non-adj-derived-co-contra}.
 The equivalence of (m) and~(s) is
Theorem~\ref{r-free-finite-dim-co-derived} together with
Corollary~\ref{r-free-co-homol-dim}, and so is the equivalence
of (p) and~(u).
 The equivalence of (o) and~(t) and the equivalence of (r) and~(v)
are similar.
\end{proof}

 Let $\A$ be a wcDG\+algebra over $\R$; it can be considered also
as a strictly unital wc~\Ainfty\+algebra, and wcDG\+modules over it
can be viewed also as strictly unital wc~\Ainfty\+modules
(see Section~\ref{strictly-unital-wcainfty}).
 It follows from Corollary~\ref{bar-duality} that the semiderived
category of ($\R$\+free, $\R$\+contramodule, $\R$\+comodule, or
$\R$\+cofree) left wcDG\+modules over~$\A$ is equivalent to
the semiderived category of strictly unital left wc~\Ainfty\+modules
over $\A$ (from the same class), so our notation is consistent.

 When the pro-Artinian topological local ring $\R$ has finite
homological dimension, the semiderived categories of strictly
unital ($\R$\+free, $\R$\+contramodule, $\R$\+comodule, or
$\R$\+cofree) wc~\Ainfty\+modules over $\A$ can be called simply
their \emph{derived categories} (see
Sections~\ref{r-free-semi}, \ref{r-cofree-semi},
and~\ref{wcdg-semiderived} for the discussion).

\begin{cor}
 For any strictly unital wc~\Ainfty\+algebra\/ $\A$ over\/
a pro-Artinian topological local ring\/ $\R$, the semiderived
category\/ $\sD^\si(\A\mod\Rfr)\simeq\sD^\si(\A\mod\Rctr)\simeq
\sD^\si(\A\mod\Rco)\simeq\sD^\si(\A\mod\Rcof)$ has a single
compact generator.
\end{cor}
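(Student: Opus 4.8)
The plan is to deduce this corollary from the already-established case of wcDG\+modules over the enveloping wcDG\+algebra, exactly in the way Corollary~\ref{semiderived-compact-generator-cor} was deduced from Theorem~\ref{semiderived-compact-generator-thm}. First I would dispose of the trivial case $\A=0$ (when all the module categories vanish and the zero object is a compact generator), and then fix a homogeneous retraction $v\:\A\rarrow\R$ of the unit map, form the $\R$\+free CDG\+coalgebra $\C=\Br_v(\A)$, and pass to the enveloping wcDG\+algebra $\U=\Cb_w(\C)$, where $w\:\R\rarrow\C$ is a section of the counit lifting the coaugmentation of $\C/\m\C$. By Theorem~\ref{wcainfty-semi-co-derived}, the four semiderived categories $\sD^\si(\A\mod\Rfr)$, $\sD^\si(\A\mod\Rctr)$, $\sD^\si(\A\mod\Rco)$, $\sD^\si(\A\mod\Rcof)$ of strictly unital wc~\Ainfty\+modules (items~(a\+d)) are all naturally equivalent, and are moreover equivalent to the semiderived categories of left wcDG\+modules over~$\U$ (items~(e\+h)). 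Thus the whole problem reduces to exhibiting a single compact generator in any one of these triangulated categories.

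Next I would invoke the wcDG\+algebra case directly. Since $\U$ is a genuine wcDG\+algebra over~$\R$, Theorem~\ref{semiderived-compact-generator-thm} furnishes the object $\boI R_\kappa(\U/\m\U)\in\sD^\si(\U\mod\Rco)$ as a single compact generator of the $\R$\+comodule semiderived category, where $\kappa\:\R\rarrow k$ is the reduction morphism; and Corollary~\ref{semiderived-compact-generator-cor} propagates this to the $\R$\+free, $\R$\+contramodule, and $\R$\+cofree versions over~$\U$, all of which are naturally equivalent. I would then transport this generator across the chain of equivalences of Theorem~\ref{wcainfty-semi-co-derived} to obtain a single compact generator of $\sD^\si(\A\mod\Rco)$, and hence, via the equivalences with the three remaining wc~\Ainfty\+module categories, of each of $\sD^\si(\A\mod\Rfr)$, $\sD^\si(\A\mod\Rctr)$, and $\sD^\si(\A\mod\Rcof)$.

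The one point demanding genuine care, and the main obstacle, is that compactness and compact generation are defined relative to the infinite-coproduct structure, so I must ensure that the equivalences in play respect infinite direct sums. The cleanest route is to carry the argument through the $\R$\+comodule model, where direct sums are the natural operation and the functors $\boI R_\kappa$ and $\boR E_\kappa$ used in Theorem~\ref{semiderived-compact-generator-thm} are already shown to commute with coproducts. The bar/cobar identifications underlying Theorem~\ref{wcainfty-semi-co-derived}, together with the comodule-contramodule correspondence $\boL\Phi_\R=\boR\Psi_\R^{-1}$ of Proposition~\ref{non-adj-r-co-contra}, are all induced by DG\+functors preserving direct sums, so they carry compact objects to compact objects and generators to generators. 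Once this coproduct-compatibility is recorded, the transport of the generator is automatic and the corollary follows.
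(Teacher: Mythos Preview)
Your proposal is correct and follows essentially the same approach as the paper: reduce to the enveloping wcDG\+algebra $\U=\Cb_w(\Br_v(\A))$ via the equivalences of Theorem~\ref{wcainfty-semi-co-derived}, and then invoke Theorem~\ref{semiderived-compact-generator-thm} (and Corollary~\ref{semiderived-compact-generator-cor}) for~$\U$. The paper's proof is a single sentence to this effect, additionally pointing to Theorem~\ref{coderived-compact-generators-thm} as an alternative route through the coderived category of CDG\+comodules over~$\C$; your more detailed discussion of coproduct-compatibility is reasonable but not strictly needed beyond what is already implicit in those results.
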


\begin{proof}
 Follows from Theorem~\ref{semiderived-compact-generator-thm}
applied to the enveloping wcDG\+algebra~$\U$
(see also Theorem~\ref{coderived-compact-generators-thm}).
\end{proof}

 Let $f\:\A\rarrow\B$ be a morphism of strictly unital
wc~\Ainfty\+algebras over~$\R$.
 Then the constructions of the restriction of scalars
from Section~\ref{nonunital-wcainfty} for nonunital ($\R$\+free,
$\R$\+contramodule, $\R$\+comodule, or $\R$\+cofree)
wc~\Ainfty\+modules with respect to the morphism~$f$
take strictly unital wc~\Ainfty\+modules over $\B$ to
strictly unital wc~\Ainfty\+modules over~$\A$.
 The induced restriction-of-scalars functors on the DG\+categories
of strictly unital wc~\Ainfty\+modules can be also obtained as
the contra/coextension-of-scalars functors related to
the morphism of $\R$\+free CDG\+coalgebras $g\:\C=\Br_v(\A)\rarrow
\Br_v(\B)=\D$ assigned to the morphism $f\:\A\rarrow\B$ by
the construction of
Theorem~\ref{strictly-unital-wcainfty-cdg-coalgebras}.

 Furthermore, the restrictions of these DG\+functors to
the DG\+categories of strictly unital wc~\Ainfty\+modules and
strict morphisms between them can be identified with
the restriction-of-scalars functors with respect to
the morphism of the enveloping wcDG\+algebras
$F\:\U=\Cb_w(\Br_v(\A))\rarrow\Cb_w(\Br_v(\B))=\V$
induced by~$g$.

 The functors $E_g\:\D\comod\Rfr_\inj\rarrow\C\comod\Rfr_\inj$
and $R_F\:\V\mod\Rfr\rarrow\U\mod\Rfr$ induce the same triangulated
functor on the semiderived categories of strictly unital $\R$\+free
left wc~\Ainfty\+modules, which we denote by {\hbadness=1300
$$
 \boI R_f\:\sD^\si(\B\mod\Rfr)\lrarrow\sD^\si(\A\mod\Rfr),
$$
and} similarly for strictly unital $\R$\+free right wc~\Ainfty\+modules.
 The functors $E^g\:\D\contra\Rctr\rarrow\C\contra\Rctr$ and
$R_F\:\V\mod\Rctr\rarrow\U\mod\Rctr$ induce the same triangulated
functor on the semiderived categories of strictly unital
$\R$\+contramodule left wc~\Ainfty\+modules, which we denote by
$$
 \boI R_f\:\sD^\si(\B\mod\Rctr)\lrarrow\sD^\si(\A\mod\Rctr).
$$
 The functors $E_g\:\D\comod\Rco\rarrow\C\comod\Rco$ and
$R_F\:\V\mod\Rco\rarrow\U\mod\Rco$ induce the same triangulated functor
on the semiderived categories of strictly unital $\R$\+comodule
left wc~\Ainfty\+modules, which we denote by {\hbadness=1800
$$
 \boI R_f\:\sD^\si(\B\mod\Rco)\lrarrow\sD^\si(\A\mod\Rco),
$$
and} similarly for strictly unital $\R$\+comodule right
wc~\Ainfty\+modules.
 The functors $E^g\:\D\contra\Rcof_\proj\rarrow\C\contra\Rcof_\proj$
and $R_F\:\V\mod\Rcof\rarrow\U\mod\Rcof$ induce the same triangulated
functor on the semiderived categories of strictly unital $\R$\+cofree
left wc~\Ainfty\+modules, which we denote by
$$
 \boI R_f\:\sD^\si(\B\mod\Rcof)\lrarrow\sD^\si(\A\mod\Rcof).
$$
 The functors $\boI R_f$ are identified by the equivalences of
categories from Theorem~\ref{wcainfty-semi-co-derived}.
 When $\A$ is a wcDG\+algebra over $\R$, the above functors
$\boI R_f$ are the same restriction-of-scalars functors that were
constructed in Sections~\ref{r-free-semi}, \ref{r-cofree-semi},
and~\ref{wcdg-semiderived}.

 The triangulated functors $\boI R_f$ have the left and right adjoint
functors $\boL E_f$ and $\boR E^f$ that can be constructed either
as the functors of co- and contrarestriction of scalars $\boI R_g$ and 
$\boI R^g$ on the level of the co- and contraderived categories of
CDG\+comodules and CDG\+contramodules over $\C$ and $\D$
(see Sections~\ref{r-free-co-derived}, \ref{r-cofree-co-derived},
and~\ref{non-adj-co-derived}), or as the functors of extension and
coextension of scalars $\boL E_F$ and $\boR E^F$ on the level of
the semiderived categories of wcDG\+modules over $\U$ and~$\V$
(cf.\ Proposition~\ref{koszul-restriction-extension}).

 A morphism of (strictly unital) wc~\Ainfty\+algebras $f\:\A\rarrow\B$
is called a \emph{semi-isomorphism} if the morphism of (strictly
unital) \Ainfty\+algebras $f/\m f\:\A/\m\A\rarrow\B/\m\B$ over
the field~$k$ is a quasi-isomorphism (i.~e., the morphism of complexes
of $k$\+vector spaces $f_1/\m f_1$ is a quasi-isomorphism).

\begin{thm}
 The triangulated functors\/ $\boI R_f$ are equivalences of categories
whenever a morphism of strictly unital wc~\Ainfty\+algebras
$f\:\A\rarrow\B$ is a semi-isomorphism.
\end{thm}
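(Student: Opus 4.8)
The plan is to reduce the assertion, through the chain of equivalences of Theorem~\ref{wcainfty-semi-co-derived}, to the already established invariance of semiderived categories of wcDG\+modules under reductions that are quasi-isomorphisms. Recall that the morphism $f\:\A\rarrow\B$ of strictly unital wc~\Ainfty\+algebras gives rise to a morphism $g=\Br_v(f)\:\C\rarrow\D$ of $\R$\+free CDG\+coalgebras with conilpotent reductions, where $\C=\Br_v(\A)$ and $\D=\Br_v(\B)$, and hence to a morphism of the enveloping wcDG\+algebras $F=\Cb_w(g)\:\U\rarrow\V$, where $\U=\Cb_w(\C)$ and $\V=\Cb_w(\D)$. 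Under the identifications of Theorem~\ref{wcainfty-semi-co-derived}, the functor $\boI R_f$ (in each of its four flavors) is carried into the restriction-of-scalars functor $\boI R_F\:\sD^\si(\V\mod\Rfr)\rarrow\sD^\si(\U\mod\Rfr)$ and its $\R$\+contramodule, $\R$\+comodule, and $\R$\+cofree counterparts. Since all these functors are matched with one another by the equivalences of that theorem, it suffices to prove that $\boI R_F$ is an equivalence.

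Now by Corollary~\ref{non-adj-quasi} (equivalently, by Theorems~\ref{r-free-reduction-quasi} and~\ref{r-cofree-reduction-quasi} via the identifications of Section~\ref{wcdg-semiderived}), the functor $\boI R_F$ is an equivalence provided that the reduction $F/\m F\:\U/\m\U\rarrow\V/\m\V$ is a quasi-isomorphism of DG\+algebras over the residue field~$k$. So the entire question comes down to verifying this single quasi-isomorphism.

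For the last step I would observe that, because the curvature elements of $\A$ and $\B$ are divisible by~$\m$, the reductions $\A/\m\A$ and $\B/\m\B$ are honest (uncurved) strictly unital \Ainfty\+algebras over~$k$, and $\bar f=f/\m f$ is a quasi-isomorphism between them by the semi-isomorphism hypothesis. Since the bar and cobar constructions commute with reduction modulo~$\m$, one has $\U/\m\U=\Cb(\Br(\A/\m\A))$ and $\V/\m\V=\Cb(\Br(\B/\m\B))$, so that $F/\m F=\Cb(\Br(\bar f))$. The reductions of the natural adjunction morphisms $\A\rarrow\U$ and $\B\rarrow\V$ are quasi-isomorphisms $\A/\m\A\rarrow\U/\m\U$ and $\B/\m\B\rarrow\V/\m\V$ by the conilpotent (co)bar-duality over a field \cite{Pkoszul,Lef}, and by naturality they form a commutative square with $\bar f$ and $F/\m F$. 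Applying the two-out-of-three property for quasi-isomorphisms to this square, I conclude that $F/\m F$ induces an isomorphism on cohomology, hence is a quasi-isomorphism of DG\+algebras, as required.

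An equally workable alternative would be to stay on the coalgebra side: under Theorem~\ref{wcainfty-semi-co-derived} the functor $\boI R_f$ corresponds to the co- and contrarestriction-of-scalars functors of~$g$, which by Theorem~\ref{r-free-co-restriction}(a) are equivalences simultaneously, and by part~(b) of the same theorem it suffices to check that $g/\m g=\Br(\bar f)$ induces an equivalence of coderived categories over~$k$; this in turn follows because $\Br(\bar f)$ is a filtered quasi-isomorphism for the tensor-degree filtration, whose associated graded map is a tensor power of $\bar f_1$ and hence a quasi-isomorphism over the field~$k$, together with the field-case invariance theorem of \cite{Pkoszul}. The main obstacle in either route is not depth but bookkeeping: one must keep careful track of the web of identifications furnished by Theorem~\ref{wcainfty-semi-co-derived} and confirm that $\boI R_f$ is genuinely matched with $\boI R_F$ (respectively $\boI R_g$) uniformly across all four coefficient variants, and that the reduction-modulo-$\m$ functors commute with the (co)bar constructions in the way claimed.
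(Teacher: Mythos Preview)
Your proposal is correct and follows exactly the two routes the paper sketches: reduction via Corollary~\ref{non-adj-quasi} to the quasi-isomorphism of $F/\m F$, and alternatively the coalgebra-side argument via Theorem~\ref{r-free-co-restriction}(b). One small slip in your alternative route: under the identifications preceding the theorem, $\boI R_f$ corresponds to the co/contra\emph{extension} functors $\boR E_g$ and $\boL E^g$, not the restriction functors; but since $\boI R_g$ and $\boR E_g$ are adjoint, showing one is an equivalence forces the other to be, so your conclusion survives unchanged.
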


\begin{proof}
 Can be deduced easily either from
Theorem~\ref{r-free-co-restriction}(b), or from
Corollary~\ref{non-adj-quasi}
(see~\cite[proof of Theorem~7.3.2]{Pkoszul} for further details).
\end{proof}

 Let $\A$ be a strictly unital wc~\Ainfty\+algebra over $\R$, let
$\C=\Br_v(\A)$ be the corresponding $\R$\+free CDG\+coalgebra, and
let $\U=\Cb_w(\C)$ be the enveloping wcDG\+algebra.
 All the above results about strictly unital left
wc~\Ainfty\+modules over $\A$ apply to strictly unital right
wc~\Ainfty\+modules as well, e.~g., because one can pass to
the opposite $\R$\+free (C)DG\+coalgebras and wcDG\+algebras
(see~\cite[Section~4.7]{Pkoszul} and~\cite[Section~1.2]{PP2}
for the sign rules) and such a passage is compatible with
the functors $\Br_v$ and $\Cb_w$.

 In particular, the semiderived categories of strictly unital
$\R$\+free and $\R$\+comodule right wc~\Ainfty\+modules
$\sD^\si(\modrRfr\A)$ and $\sD^\si(\modrRco\A)$ over $\A$ are
defined and naturally equivalent to the coderived categories
$\sD^\co(\comodrRfr\C)$ and $\sD^\co(\modrRco\C)$ and to
the semiderived categories $\sD^\si(\modrRfr\U)$ and
$\sD^\si(\modrRco\U)$, respectively.

 The functor
$$
 \Tor^\A\:\sD^\si(\modrRfr\A)\times\sD^\si(\A\mod\Rfr)
 \lrarrow H^0(\R\contra^\free)
$$
can be defined either as the functor $\Cotor^\C\:\sD^\co(\comodrRfr\C)
\times\sD^\co(\C\comod\Rfr)\rarrow H^0(\R\contra^\free)$, or as
the functor $\Ctrtor^\C\:\sD^\co(\comodrRfr\C)\times
\sD^\ctr(\C\contra\Rfr)\allowbreak\rarrow H^0(\R\contra^\free)$;
the two points of view are equivalent by
Proposition~\ref{r-free-cotor-ctrtor}(b)
(see~\cite[Section~7.3]{Pkoszul} for further details).
 Alternatively, the functor $\Tor^\A$ can be defined as the functor
$\Tor^\U\:\sD^\si(\modrRfr\U)\times\sD^\si(\U\mod\Rfr)\rarrow
H^0(\R\contra^\free)$.
 The two definitions are equivalent and agree with the definition of
the functor $\Tor^\A$ for a wcDG\+algebra $\A$
by Theorem~\ref{ctrtor-and-tor}(a) and/or~\ref{cotor-and-tor}(a).

 The functor
$$
 \Tor^\A\:\sD^\si(\modrRfr\A)\times\sD^\si(\A\mod\Rco)
 \lrarrow H^0(\R\comod^\cofr)
$$
can be defined either as the functor $\Cotor^\C\:\sD^\co(\comodrRfr\C)
\times\sD^\co(\C\comod\Rco)\rarrow H^0(\R\comod^\cofr)$, or as
the functor $\Ctrtor^\C\:\sD^\co(\C\comod\Rfr)\times
\sD^\co(\C\contra\Rcof)\allowbreak\rarrow H^0(\R\comod^\cofr)$;
the two points of view are equivalent by
Proposition~\ref{r-cofree-cotor-ctrtor}(c).
 Alternatively, the functor $\Tor^\A$ can be defined as the functor
$\Tor^\U\:\sD^\si(\modrRctr\U)\times\sD^\si(\U\mod\Rco)\rarrow
H^0(\R\comod^\cofr)$.
 The two definitions are equivalent and agree with the definition of
the functor $\Tor^\A$ for a wcDG\+algebra $\A$
by Theorem~\ref{ctrtor-and-tor}(b) and/or~\ref{cotor-and-tor}(b).
{\hfuzz=4.1pt\par}

 The functor
$$
 \Tor^\A\:\sD^\si(\modrRco\A)\times\sD^\si(\A\mod\Rctr)
 \lrarrow H^0(\R\comod^\cofr)
$$
can be defined as the functor $\Ctrtor^\C\:\sD^\co(\comodrRco\C)
\times\sD^\ctr(\C\contra\Rctr)\rarrow H^0(\R\comod^\cofr)$, or
alternatively as the functor $\Tor^\U\:\sD^\si(\modrRco\U)\times
\sD^\si(\U\mod\Rctr)\allowbreak\rarrow H^0(\R\comod^\cofr)$.
 The two definitions are equivalent and agree with the definition
of the functor $\Tor^\A$ for a wcDG\+algebra $\A$
by Theorem~\ref{ctrtor-and-tor}(c).
{\hfuzz=14.7pt\par}

 The functor
$$
 \Tor^\A\:\sD^\si(\modrRco\A)\times\sD^\si(\A\mod\Rco)
 \lrarrow H^0(\R\comod^\cofr)
$$
can be defined either as the functor $\Cotor^\C\:\sD^\co(\comodrRco\C)
\times\sD^\co(\C\comod\Rco)\allowbreak\rarrow H^0(\R\comod^\cofr)$, or 
alternatively as the functor $\Tor^\U\:\sD^\si(\modrRco\U)\times
\sD^\si(\U\mod\Rco)\allowbreak\rarrow H^0(\R\comod^\cofr)$
 The two definitions are equivalent and agree with the definition
of the functor $\Tor^\A$ for a wcDG\+algebra $\A$
by Theorem~\ref{cotor-and-tor}(c).

 The above four functors $\Tor^\A$ are transformed into each other
by the equivalences of categories from
Theorem~\ref{wcainfty-semi-co-derived}(a\+d) and the equivalence
of categories $\R\contra^\free\simeq\R\comod^\cofr$ from
Proposition~\ref{r-co-contra}.

 The functor
$$
 \Ext_\A\:\sD^\si(\A\mod\Rctr)^\sop\times\sD^\si(\A\mod\Rctr)
 \lrarrow H^0(\R\contra^\free)
$$
can be defined either as the functor
$\Coext_\C\:\sD^\co(\C\comod\Rfr)^\sop\times\sD^\ctr(\C\contra\Rctr)
\allowbreak\rarrow H^0(\R\contra^\free)$, or as the functor
$\Ext^\C\:\sD^\ctr(\C\contra\Rctr)^\sop\times\sD^\ctr(\C\contra\Rctr)
\allowbreak\rarrow H^0(\R\contra^\free)$, or as the functor
$\Ext_\C\:\sD^\co(\C\comod\Rfr)^\sop\times\sD^\co(\C\comod\Rfr)
\allowbreak\rarrow H^0(\R\contra^\free)$; the three points of view are
equivalent by Proposition~\ref{r-free-cotor-ctrtor}(a)
(see~\cite[Section~7.3]{Pkoszul} for further details).
 Alternatively, the functor $\Ext_\A$ can be defined as the functor
$\Ext_\U\:\sD^\si(\U\mod\Rctr)^\sop\times\sD^\si(\U\mod\Rctr)
\rarrow H^0(\R\contra^\free)$.
 The two definitions are equivalent and agree with the definition
of the functor $\Ext_\A$ for a wcDG\+algebra $\A$ by
Theorem~\ref{contramodule-ext-and-ext}(a), 
\ref{comodule-ext-and-ext}(a), and/or~\ref{coext-and-ext}(a).
{\hfuzz=8.2pt\par}

 The functor
$$
 \Ext_\A\:\sD^\si(\A\mod\Rco)^\sop\times\sD^\si(\A\mod\Rco)
 \lrarrow H^0(\R\contra^\free)
$$
can be defined either as the functor
$\Coext_\C\:\sD^\co(\C\comod\Rco)^\sop\times\sD^\ctr(\C\contra\Rcof)
\allowbreak\rarrow H^0(\R\contra^\free)$, or as the functor
$\Ext^\C\:\sD^\ctr(\C\contra\Rcof)^\sop\times\sD^\ctr(\C\contra\Rcof)
\allowbreak\rarrow H^0(\R\contra^\free)$, or as the functor
$\Ext_\C\:\sD^\co(\C\comod\Rco)^\sop\times\sD^\co(\C\comod\Rco)
\allowbreak\rarrow H^0(\R\contra^\free)$; the three points of view are
equivalent by Proposition~\ref{r-cofree-cotor-ctrtor}(b).
 Alternatively, the functor $\Ext_\A$ can be defined as the functor
$\Ext_\U\:\sD^\si(\U\mod\Rco)^\sop\times\sD^\si(\U\mod\Rco)
\rarrow H^0(\R\contra^\free)$.
 The two definitions are equivalent and agree with the definition
of the functor $\Ext_\A$ for a wcDG\+algebra $\A$ by
Theorem~\ref{contramodule-ext-and-ext}(b), 
\ref{comodule-ext-and-ext}(b), and/or~\ref{coext-and-ext}(b).
{\hfuzz=10.6pt\hbadness=1700\par}

 The functor
$$
 \Ext_\A\:\sD^\si(\A\mod\Rctr)^\sop\times\sD^\si(\A\mod\Rco)
 \lrarrow H^0(\R\comod^\cofr)
$$
can be defined either as the functor
$\Coext_\C\:\sD^\co(\C\comod\Rfr)^\sop\times\sD^\ctr(\C\contra\Rcof)
\allowbreak\rarrow H^0(\R\comod^\cofr)$, or as the functor
$\Ext^\C\:\sD^\ctr(\C\contra\Rfr)^\sop\times\sD^\ctr(\C\contra\Rcof)
\allowbreak\rarrow H^0(\R\comod^\cofr)$, or as the functor
$\Ext_\C\:\sD^\co(\C\comod\Rfr)^\sop\times\sD^\co(\C\comod\Rcof)
\allowbreak\rarrow H^0(\R\comod^\cofr)$; the three points of view are
equivalent by Proposition~\ref{r-cofree-cotor-ctrtor}(a).
 Alternatively, the functor $\Ext_\A$ can be defined as the functor
$\Ext_\U\:\sD^\si(\U\mod\Rctr)^\sop\times\sD^\si(\U\mod\Rco)
\rarrow H^0(\R\comod^\cofr)$.
 The two definitions are equivalent and agree with the definition
of the functor $\Ext_\A$ for a wcDG\+algebra $\A$ by
Theorem~\ref{contramodule-ext-and-ext}(c), 
\ref{comodule-ext-and-ext}(c), and/or~\ref{coext-and-ext}(c).
{\hfuzz=7.6pt\hbadness=1700\par}

 The functor
$$
 \Ext_\A\:\sD^\si(\A\mod\Rco)^\sop\times\sD^\si(\A\mod\Rctr)
 \lrarrow H^0(\R\contra^\free)
$$
can be defined as the functor $\Coext_\C\:\sD^\co(\C\comod\Rco)^\sop
\times\sD^\ctr(\C\contra\Rctr)\rarrow H^0(\R\contra^\free)$, or
alternatively as the functor $\Ext_\U\:\sD^\si(\U\mod\Rco)^\sop
\times\sD^\si(\U\mod\Rctr)\rarrow H^0(\R\contra^\free)$.
 The two definitions are equivalent and agree with the definition
of the functor $\Ext_\A$ for a wcDG\+algebra $\A$ by
Theorem~\ref{coext-and-ext}(d).
{\emergencystretch=2.5em\hbadness=4100\par}

 The above four functors $\Ext_\A$ are transformed into each other
by the equivalences of categories from
Theorem~\ref{wcainfty-semi-co-derived}(a\+d) and the equivalence
of categories $\R\contra^\free\simeq\R\comod^\cofr$ from
Proposition~\ref{r-co-contra}.

%
%
%
%
%

%
%
%
%

\appendix


\Section{Projective Limits of Artinian Modules}  \label{proj-lim-appx}

 The following results are well-known and easy to prove in the case
of projective systems indexed by countable sets.
 They are certainly known to the specialists in the general case
as well.
 We include their proofs in this appendix, because we have not
succeeded in finding a suitable reference.

\subsection{Main lemma} \label{appx-main-lemma-subsect}
 Here is our main technical assertion.

\begin{lem} \label{appx-main-lemma}
 Let $(R_\alpha)$ be a filtered projective system of (noncommutative)
rings and surjective morphisms between them and $(M_\alpha)$ be
a projective system of Artinian $R_\alpha$\+modules.
 Then the first derived functor of projective limit vanishes on
$(M_\alpha)$, that is\/ $\limpr_\alpha^1 M_\alpha=0$.
\end{lem}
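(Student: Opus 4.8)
The plan is to prove the stronger statement that all higher derived limits $\limpr^i_\alpha M_\alpha$ vanish for $i\ge1$, proceeding by transfinite induction on the cardinality $\kappa=\lvert I\rvert$ of the index set and using the descending chain condition in two guises: as the \emph{Mittag--Leffler property} (the images $\phi_{\beta\alpha}(M_\beta)\subseteq M_\alpha$, $\beta\ge\alpha$, stabilize, since they form a downward directed family of submodules of an Artinian module) and as \emph{linear compactness} (a downward directed family of nonempty cosets of submodules of an Artinian module has nonempty intersection). Here $\phi_{\beta\alpha}\:M_\beta\rarrow M_\alpha$ denote the transition maps.

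The base cases are classical: if $I$ has a greatest element the limit is exact, and if $I$ is countable then the Mittag--Leffler property alone gives $\limpr^1=0$ (while $\limpr^i=0$ for $i\ge2$ holds automatically for countable $I$). For the inductive step I would write $I$ as the union of an increasing, continuous chain $(I_\mu)_{\mu<\lambda}$ of \emph{directed} subsets $I_\mu\subseteq I$, each of cardinality strictly less than $\kappa$, indexed by an ordinal $\lambda$; such a decomposition exists by enumerating $I$ and closing initial segments under the operation of adjoining upper bounds of finite subsets. Restricting $(M_\alpha)$ to each $I_\mu$ gives, by the induction hypothesis, systems with $\limpr^i_{I_\mu}M=0$ for all $i\ge1$, while the sub-limits $L_\mu=\limpr_{\alpha\in I_\mu}M_\alpha$ assemble into a projective system indexed by the \emph{chain} $\lambda$.

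The Grothendieck spectral sequence of the composite functor $\limpr_I=\limpr_{\lambda}\circ\limpr_{I_\bullet}$ then has $E_2^{pq}=\limpr^p_\lambda\,\limpr^q_{I_\bullet}M$, and the induction hypothesis collapses it to the edge, yielding $\limpr^i_I M=\limpr^i_\lambda\,(L_\mu)_\mu$ for all $i$. So it remains to prove that the chain system $(L_\mu)_{\mu<\lambda}$ is $\limpr$-acyclic. This is where linear compactness is essential: each $L_\mu$ is an inverse limit of Artinian, hence linearly compact, modules, so is itself linearly compact; the restriction maps $L_{\mu'}\rarrow L_\mu$ for $\mu\le\mu'$ are surjective by the finite-intersection property (a compatible family over $I_\mu$ extends over $I_{\mu'}$ because the relevant fibres are nonempty, filtered families of cosets); and at limit ordinals the map into $L_\mu$ is continuous for the same reason. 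A well-ordered, continuous projective system of linearly compact modules with surjective transition maps has $\limpr^i=0$ for $i\ge1$, which finishes the induction.

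The main obstacle is precisely this last point — controlling the system $(L_\mu)_{\mu<\lambda}$ over an arbitrarily large ordinal, and in particular verifying surjectivity and continuity at \emph{limit} stages. For countable cofinality this is the familiar Mittag--Leffler argument, but in general it is the linear compactness coming from the descending chain condition (the nonvanishing of intersections of filtered families of cosets) that prevents the inverse-limit fibres from collapsing to the empty set; organizing the transfinite bookkeeping around this compactness property, rather than around mere stabilization of images, is the technical heart that separates the general case from the easy countable one.
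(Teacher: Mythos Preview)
Your strategy is genuinely different from the paper's and can be made to work, but the crux you flag as the ``technical heart'' is not actually carried out, and when you unpack it you will find you need essentially the same argument the paper performs directly. The paper does not induct on $|I|$ or invoke any spectral sequence. Instead it (i) reduces to a filtered poset and to surjective transition maps (using the descending chain condition to stabilize images), (ii) embeds $(M_\alpha)$ in an injective system $(L_\alpha)$ with cokernel $(K_\alpha)$, and (iii) lifts a given element of $\limpr K_\alpha$ to $\limpr L_\alpha$ by a single Zorn's-lemma construction: elements of the Zorn poset are partial compatible lifts $(l_\alpha)_{\alpha\in D}$ satisfying a finite-extendability condition, and the extension step at a new index $\gamma$ amounts to showing that a downward-filtered family of affine $R_\gamma$-submodules of a fixed coset of $M_\gamma$ has nonempty intersection --- exactly the linear-compactness fact you invoke, but used once, locally, rather than as an organizing principle.

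Two places in your sketch need real work. First, the Grothendieck spectral sequence $\limpr^p_\lambda\limpr^q_{I_\bullet}\Rightarrow\limpr^{p+q}_I$ is not automatic; you need something like Roos' theorem, or a direct check that restricting injective $I$-systems to each $I_\mu$ gives $\limpr_\lambda$-acyclic objects. Second, and more seriously, the surjectivity of $L_{\mu+1}\to L_\mu$ is the whole difficulty: given a compatible family over $I_\mu$, the fibre at each $\beta\in I_{\mu+1}$ is an intersection of cosets indexed by $\{\alpha\in I_\mu:\alpha\le\beta\}$, a set that is \emph{not} directed in general, so nonemptiness of each fibre already requires a compactness-and-directedness argument (pass through a larger index in $I_{\mu+1}$, using surjectivity of transitions), and then nonemptiness of the inverse limit of fibres requires applying the induction hypothesis to the Artinian kernel system $(N_\beta)$. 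This is all doable, but it is work of the same nature and length as the paper's Zorn argument; the transfinite machinery reorganizes it without shortening it. What you gain is the stronger conclusion $\limpr^i=0$ for all $i\ge1$; what the paper gains is a completely elementary, self-contained proof of the $i=1$ case that avoids spectral sequences and the subtleties of restricting to non-cofinal directed subsets.
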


\begin{proof}
 First of all, it suffices to consider the case when the indices
$\alpha$ form a filtered (directed) poset, rather than a filtered
category~\cite[Proposition~I.8.1.6]{Groth}.
 Furthermore, the derived functors $\limpr^n_\alpha$ computed in
the abelian category of projective systems of $R_\alpha$\+modules
agree with the ones computed in the category of projective
systems of abelian groups (so the assertion of Lemma is unambigous).
 Indeed, given an index $\alpha_0$ and an $R_{\alpha_0}$\+module
$N_0$, consider the projective system $(N_\alpha)$ with 
$N_\alpha=N_0$ for $\alpha\ge\alpha_0$ and $N_\alpha=0$ otherwise.
 Then infinite products of projective systems of this form are
adjusted both to the projective limits of $R_\alpha$\+modules and
abelian groups.

 In order to compute $\limpr_\alpha^1 M_\alpha$, it suffices to
embed $(M_\alpha)$ into an injective projective system of
$R_\alpha$\+modules $(L_\alpha)$; denoting by $(K_\alpha)$
the cokernel of this embedding, we have $\limpr_\alpha^1 M_\alpha
= \coker(\limpr_\alpha L_\alpha\to\limpr_\alpha K_\alpha)$.
 Replacing one of the modules $M_\alpha$ with the image of
the map $M_\beta\rarrow M_\alpha$ into it from some module
$M_\beta$ (and all the modules $M_\gamma$ with $\gamma>\alpha$
with the related full preimage under the map $M_\gamma\to M_\alpha$)
does not affect the projective limits we are interested in.

 Iterating this procedure over a well-ordered set of steps and
passing to the projective limits of projective systems as needed
still does not change $\limpr_\alpha M_\alpha$ 
and $\limpr_\alpha K_\alpha$, because projective limits commute
with projective limits.
 At every component with a given index~$\alpha$ such a projective
limit stabilizes due to the Artinian condition.
 When the possibilities to iterate the procedure nontrivially are
exhausted, we will obtain a projective system $M'_\alpha$ of
Artinian $R_\alpha$\+modules and surjective maps between them
such that $\limpr_\alpha M'_\alpha = \limpr_\alpha M_\alpha$ and
$\limpr_\alpha^1 M'_\alpha = \limpr_\alpha^1 M_\alpha$.
 So we can assume $M_\alpha$ to be a filtered projective system
of Artinian modules and surjective maps between them.

 Let $(k_\alpha)$ be an element of the projective limit
$\limpr_\alpha K_\alpha$; we want to lift it to an element of
$\limpr_\alpha L_\alpha$.
 For this purpose, we will apply Zorn's lemma to the following
poset~$X$.
 Its elements are subsets $D$ in the set of all indices
$\{\alpha\}$ endowed with chosen preimages $l_\alpha\in L_\alpha$
of the elements $k_\alpha\in K_\alpha$ for all $\alpha\in D$.
 The elements $l_\alpha$ must satisfy the following condition.
 For any finite subset $S\subset D$ there should exist
an index~$\beta$ (not necessarily belonging to~$D$) and a preimage
$l'_\beta\in L_\beta$ of the element $k_\beta\in K_\beta$
such that $\beta>\alpha$ for all $\alpha\in S$ and the map
$L_\beta\rarrow L_\alpha$ takes $l'_\beta$ to~$l_\alpha$.

 Clearly, $X$ contains the unions of all its linearly ordered
subsets.
 It remains to check that for any $(D;l_\alpha)\in X$ and
$\gamma\notin D$ one can find a preimage $l_\gamma\in L_\gamma$
of the element~$k_\gamma$ so as to make $(D\cup\{\gamma\};\>
l_\alpha,l_\gamma)$ a new element of~$X$.

 Notice that, given a finite set of indices $S$, preimages
$l_\alpha$ of the elements $k_\alpha$ for all $\alpha\in S$,
and an index $\beta>\alpha$ for all $\alpha\in S$,
the possibility to choose an appropriate element $l'_\beta$
as above does not depent on the choice of~$\beta$.
 Indeed, suppose $\beta'>\beta>\alpha$ for all $\alpha\in S$.
 Then, having an appropriate element $l'_{\beta'}\in L_{\beta'}$,
one can take its image under the map $L_{\beta'}\rarrow L_\beta$,
obtaining an appropriate element in~$L_\beta$.
 Conversely, given an appropriate element $l'_\beta\in L_\beta$,
one can pick any preimage $l''_{\beta'}$ of $k_{\beta'}$ in
$L_{\beta'}$, take its image $l''_\beta$ under the map
$L_{\beta'}\rarrow L_\beta$, consider the difference
$l'_\beta-l''_\beta$ as an element of $M_\beta$, lift it to
$M_{\beta'}$, and add the result to $l''_{\beta'}$, obtaining
the desired element~$l'_{\beta'}$.

 Now let $S\subset D$ be a finite subset, let $\beta$ be an index
greater than all the elements of $S$, and $l'_\beta\in L_\beta$
be an appropriate preimage of the element~$k_\beta$, as above.
 Consider the set $P_S\subset L_\gamma$ of all
the preimages~$l_\gamma$ of the element~$k_\gamma$ for which
the pair $(S\cup\{\gamma\};\>l_\alpha,l_\gamma)$ belongs
to~$X$.
 First of all, let us show that the set $P_S$ is nonempty.
 Indeed, let $\delta$ be any index greater than both $\beta$
and~$\gamma$.
 As we have seen, the element $l'_\beta$ can be lifted to
a preimage $l'_\delta\in L_\delta$ of the element~$k_\delta$
that would be appropriate for $(S;l_\alpha)$.
 Set $l_\gamma$ to be the image of~$l_\delta$ under the map
$L_\delta\rarrow L_\gamma$.

 Furthermore, $P_S$ is an affine $R_\gamma$\+submodule (i.~e.,
an additive coset by a conventional $R_\gamma$\+submodule)
of~$L_\gamma$.
 Indeed, one can easily see that $P_S$ is closed under
those linear combinations with coefficients in~$R_\gamma$
in which the sum of all coefficients is equal to~$1$.
 Finally, if $S'$, $S''\subset D$ are two finite subsets and
$S=S'\cup S''$, then the affine submodule $P_S\subset L_\gamma$
is contained in the intersection $P_{S'}\cap P_{S''}$.
 In addition, all the affine submodules $P_S$ are contained in
the coset of $L_\gamma$ modulo $M_\gamma$ corresponding to
the element $k_\gamma\in K_\gamma$.

 Since we assume the $R_\gamma$\+module $M_\gamma$ to be Artinian,
it follows that the intersection of all $P_S$ is nonempty.

 Therefore, Zorn's lemma provides us with a system of preimages
$l_\alpha\in L_\alpha$ of the elements $k_\alpha$ defined for all
indices~$\alpha$ and forming an element of the set~$X$, i.~e.,
satisfying the compatibility condition formulated above.
 Clearly, it follows that the map $L_\beta\to L_\alpha$ takes
$l_\beta$ to~$l_\alpha$ for all $\beta>\alpha$; so Lemma is
proven.
\end{proof}

\subsection{Pro-Artinian rings}  \label{appx-pro-rings}
 The following two corollaries of the main lemma demonstrate
that a pro-Artinian topological ring is a well-behaved notion.

\begin{cor} \label{appx-pro-rings-cor1}
 Let $(R_\alpha$) be a filtered projective system of (right) Artinian
(noncommutative) rings and surjective morphisms between them, and
let\/ $\R$ be its projective limit.
 Then the projection map\/ $\R\rarrow R_\alpha$ is surjective for
each~$\alpha$.
\end{cor}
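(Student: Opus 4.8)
The plan is to deduce the surjectivity of $\R\rarrow R_\alpha$ from the vanishing of $\limpr^1$ established in Lemma~\ref{appx-main-lemma}, applied to an appropriate projective system of kernels. First I would fix an index $\alpha_0$ and restrict attention to the cofinal filtered subsystem formed by all indices $\beta\ge\alpha_0$; being cofinal in the directed index set, this subsystem has the same projective limit $\R$, and it computes the same projection $\R\rarrow R_{\alpha_0}$. For each $\beta\ge\alpha_0$ the transition morphism $R_\beta\rarrow R_{\alpha_0}$ is surjective by hypothesis, so I set $I_\beta=\ker(R_\beta\rarrow R_{\alpha_0})$ and regard $R_{\alpha_0}$ as a right $R_\beta$\+module via this surjection.

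The key step is to assemble these data into a short exact sequence of projective systems of $R_\beta$\+modules
$$
 0\rarrow(I_\beta)\rarrow(R_\beta)\rarrow(R_{\alpha_0})\rarrow0,
$$
where $(R_{\alpha_0})$ denotes the constant system. The compatibility of the projective system guarantees that each transition map $R_{\beta'}\rarrow R_\beta$ carries $I_{\beta'}$ into $I_\beta$, so $(I_\beta)$ is genuinely a subsystem of $(R_\beta)$. Applying the left exact functor $\limpr$ then yields a long exact sequence
$$
 0\rarrow\limpr_\beta I_\beta\rarrow\limpr_\beta R_\beta\rarrow
 \limpr_\beta R_{\alpha_0}\rarrow\limpr_\beta^1 I_\beta\rarrow\dotsb
$$
Here $\limpr_\beta R_{\alpha_0}=R_{\alpha_0}$, since the projective limit of a constant system over a directed (hence connected) index set is its value, and $\limpr_\beta^1 I_\beta=0$ by Lemma~\ref{appx-main-lemma} applied to the projective system of Artinian $R_\beta$\+modules $(I_\beta)$, the ring system $(R_\beta)_{\beta\ge\alpha_0}$ having surjective transition morphisms. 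Consequently the map $\R=\limpr_\beta R_\beta\rarrow R_{\alpha_0}$ is surjective, which is the assertion.

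The main point requiring care is the verification that all three systems entering the short exact sequence consist of Artinian modules over the respective rings $R_\beta$, so that Lemma~\ref{appx-main-lemma} becomes applicable to the kernels $(I_\beta)$. Concretely, $R_\beta$ is Artinian as a right module over itself because it is right Artinian, $I_\beta$ is Artinian as a submodule of $R_\beta$, and $R_{\alpha_0}$ is Artinian as an $R_\beta$\+module because its $R_\beta$\+submodules coincide with its right ideals (the $R_\beta$\+action factoring through the surjection onto $R_{\alpha_0}$). Once this is in place the conclusion is immediate; I note also that $\limpr^1$ computed in projective systems of modules agrees with the one computed in abelian groups, a compatibility already addressed at the beginning of the proof of Lemma~\ref{appx-main-lemma}.
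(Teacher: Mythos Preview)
Your proof is correct and follows essentially the same approach as the paper: fix an index, form the short exact sequence of projective systems of kernels $\ker(R_\beta\to R_{\alpha_0})\rarrow R_\beta\rarrow R_{\alpha_0}$ indexed by $\beta\ge\alpha_0$, pass to the projective limit, and invoke Lemma~\ref{appx-main-lemma} to kill the $\limpr^1$ term. The paper's proof is a one-sentence sketch of exactly this, while you have carefully spelled out the cofinality, the Artinianness verifications, and the identification of the constant limit---all of which are implicit in the paper's version.
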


\begin{proof}
 Consider the short exact sequence of projective systems
$\ker(R_\beta\to R_\alpha)\rarrow R_\beta\rarrow R_\alpha$
of right $R_\beta$\+modules, indexed by all $\beta>\alpha$
with $\alpha$~fixed, pass to the projective limits, and
apply Lemma~\ref{appx-main-lemma}.
\end{proof}

\begin{cor} \label{appx-pro-rings-cor2}
 Let\/ $\R$ be a complete separated (noncommutative) topological
ring with a base of neighborhoods of zero formed by open ideals
with right Artinian quotient rings.
 Let\/ $\J\subset\R$ be a closed ideal.
 Then the quotient ring\/ $\R/\J$ is complete in the quotient
topology (and has all the other properties assumed above for\/~$\R$).
\end{cor}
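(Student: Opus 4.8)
The plan is to identify the completion of $\R/\J$ with an explicit projective limit and to prove that the natural map from $\R/\J$ into this limit is an isomorphism, with surjectivity supplied by the vanishing of $\limpr^1$ from Lemma~\ref{appx-main-lemma}. First I would describe the quotient topology: since the projection $\R\rarrow\R/\J$ is open, a base of neighborhoods of zero in $\R/\J$ is formed by the ideals $(\J+\I)/\J$, where $\I$ ranges over the open ideals of $\R$. The associated discrete quotient rings are $(\R/\J)/((\J+\I)/\J)\simeq\R/(\J+\I)$, each of which is a quotient of the right Artinian ring $\R/\I$ and hence again right Artinian; this already shows that $\R/\J$ inherits all the structural properties assumed for $\R$. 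Its completion is therefore $\limpr_\I\R/(\J+\I)$, and it remains to prove that the canonical map $\R/\J\rarrow\limpr_\I\R/(\J+\I)$ is bijective.

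Next, for each open ideal $\I\subset\R$, I would consider the short exact sequence of right $\R/\I$\+modules
$$
 0\rarrow(\J+\I)/\I\rarrow\R/\I\rarrow\R/(\J+\I)\rarrow0 .
$$
As $\I$ varies over the open ideals (directed by reverse inclusion), these assemble into a short exact sequence of projective systems whose transition maps $\R/\I'\rarrow\R/\I$ for $\I'\subset\I$ are surjective. The crucial point is that $(\J+\I)/\I$ is a two-sided ideal in the right Artinian ring $\R/\I$, hence an Artinian right $\R/\I$\+module, so Lemma~\ref{appx-main-lemma} applies to the projective system $\bigl((\J+\I)/\I\bigr)_\I$ and gives $\limpr_\I^1(\J+\I)/\I=0$.

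Passing to the projective limit and invoking the long exact sequence of the derived functors $\limpr_\I$, I would obtain the short exact sequence
$$
 0\rarrow\limpr_\I(\J+\I)/\I\rarrow\R\rarrow\limpr_\I\R/(\J+\I)\rarrow0 ,
$$
using completeness of $\R$ to identify $\limpr_\I\R/\I=\R$, and the vanishing of $\limpr_\I^1$ for the surjectivity on the right. The leftmost term injects into $\R$ with image $\bigcap_\I(\J+\I)$, which is precisely the closure $\overline{\J}$; since $\J$ is closed, this equals $\J$. Thus the kernel of $\R\rarrow\limpr_\I\R/(\J+\I)$ is exactly $\J$, and the induced map $\R/\J\rarrow\limpr_\I\R/(\J+\I)$ is an isomorphism, so $\R/\J$ is complete and separated in the quotient topology.

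Since the argument reduces completeness of $\R/\J$ to the vanishing of $\limpr^1$, the only substantive step is the applicability of Lemma~\ref{appx-main-lemma}, which rests on the two verifications that the ideals $(\J+\I)/\I$ are Artinian and that the transition maps are surjective. I expect no genuine obstacle beyond checking these hypotheses and the routine identification $\overline{\J}=\bigcap_\I(\J+\I)$ of the closure of an ideal in terms of the open-ideal topology.
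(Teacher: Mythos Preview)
Your proof is correct and follows essentially the same approach as the paper: the same short exact sequence of projective systems $(\J+\I)/\I\rarrow\R/\I\rarrow\R/(\J+\I)$, the same application of Lemma~\ref{appx-main-lemma}, and the same identification of the limit. You have simply spelled out the details (the quotient topology, the identification $\limpr_\I(\J+\I)/\I=\overline{\J}=\J$, separatedness) that the paper leaves implicit.
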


\begin{proof}
 Consider the short exact sequence of projective systems
$(\J+\I)/\I\rarrow\R/\I\rarrow\R/(\J+\I)$ of right modules
over the Artinian rings $\R/\I$, indexed by all the open
ideals $\I\subset\R$.
 Passing to the projective limits and applying
Lemma~\ref{appx-main-lemma}, we obtain the desired
isomorphism $\R/\J\simeq\limpr_\I\R/(\J+\I)$.
\end{proof}

\subsection{Pro-Artinian modules} \label{appx-pro-modules}
 The following result will be useful for us when dealing with
comodules over pro-Artinian topological rings.

\begin{cor} \label{appx-pro-modules-cor}
 Let $\R$ be (noncommutative) topological ring. 
 Then the functor of projective limit acting from the category
of pro-objects in the abelian category of discrete\/
$\R$\+modules of finite length (or discrete Artinian $\R$\+modules)
to the abelian category of (nontopological)\/ $\R$\+modules
is exact and conservative.
\end{cor}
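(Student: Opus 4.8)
The plan is to reduce everything to honest filtered inverse systems and then feed in the vanishing of $\limpr^1$ supplied by Lemma~\ref{appx-main-lemma}. Write $\mathcal A$ for the abelian category of discrete $\R$\+modules of finite length (resp.\ discrete Artinian $\R$\+modules), so that the functor in question is $\limpr\:\mathrm{Pro}(\mathcal A)\rarrow\R\mod$. First I would record two standing observations. (i)~Every object of $\mathcal A$, forgotten down to a nontopological $\R$\+module, is Artinian: a discrete module of finite length has a finite composition series, and a discrete Artinian module is Artinian over the discrete quotient ring of $\R$ through which its action factors, hence over $\R$ itself. Consequently, applying Lemma~\ref{appx-main-lemma} to the \emph{constant} system of rings $R_\alpha=\R$ (with identity, hence surjective, transition maps), we obtain $\limpr^1_\alpha M_\alpha=0$ for \emph{any} filtered inverse system $(M_\alpha)$ of objects of $\mathcal A$. (ii)~By the standard structure theory of pro-objects in an abelian category (the reduction to directed posets being \cite[Proposition~I.8.1.6]{Groth}), every short exact sequence in $\mathrm{Pro}(\mathcal A)$ is isomorphic to one represented levelwise by a short exact sequence $0\rarrow(M'_\alpha)\rarrow(M_\alpha)\rarrow(M''_\alpha)\rarrow0$ of inverse systems over a common directed poset.

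Exactness is then immediate: $\limpr$ is always left exact, and the obstruction to right exactness of the sequence obtained by applying $\limpr$ levelwise is precisely $\limpr^1_\alpha M'_\alpha$, which vanishes by observation~(i) since each $M'_\alpha\in\mathcal A$ is Artinian.

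For conservativity I would first invoke the general fact that an exact functor between abelian categories is conservative as soon as it reflects zero objects: if $f$ has $\limpr f$ invertible, then $\limpr\ker f=\ker\limpr f=0$ and $\limpr\coker f=\coker\limpr f=0$ by exactness, so reflecting zero objects forces $\ker f=0=\coker f$ and hence $f$ an isomorphism. Thus it remains to show that a pro-object $(M_\alpha)$ with $\limpr_\alpha M_\alpha=0$ is a zero pro-object, i.e.\ that for each index $\alpha_0$ there is $\beta\ge\alpha_0$ with $M_\beta\rarrow M_{\alpha_0}$ the zero map. Here I would pass to the stable images $N_\alpha=\bigcap_{\beta\ge\alpha}\operatorname{im}(M_\beta\rarrow M_\alpha)$; by the Artinian hypothesis each such intersection is attained by a single $\beta$, so $(N_\alpha)$ is a subsystem in $\mathcal A$ with \emph{surjective} transition maps and $\limpr_\alpha N_\alpha=\limpr_\alpha M_\alpha=0$. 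For fixed $\alpha_0$ the kernels $K_\alpha=\ker(N_\alpha\rarrow N_{\alpha_0})$, \ $\alpha\ge\alpha_0$, fit into a levelwise short exact sequence $0\rarrow(K_\alpha)\rarrow(N_\alpha)\rarrow N_{\alpha_0}\rarrow0$ with the constant system $N_{\alpha_0}$ as quotient (surjectivity of the transition maps). Applying $\limpr$ and using $\limpr^1_\alpha K_\alpha=0$ (observation~(i) again, as $K_\alpha$ is Artinian), the projection $\limpr_\alpha N_\alpha\rarrow N_{\alpha_0}$ is surjective; since its source is $0$, we conclude $N_{\alpha_0}=0$, which says exactly that $M_\beta\rarrow M_{\alpha_0}$ vanishes for $\beta$ large.

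The routine points — forgetting discreteness to land in Artinian $\R$\+modules, and the bookkeeping that lets one represent abstract pro-structures by genuine inverse systems over a common directed poset — I expect to pass without trouble. The main obstacle, and the only place where real content enters, is the conservativity half: one must know that a vanishing limit forces the pro-object to be pro-zero, and this is exactly where the stable-image construction together with the surjectivity of $\limpr_\alpha N_\alpha\rarrow N_{\alpha_0}$ (i.e.\ the $\limpr^1$\+vanishing of Lemma~\ref{appx-main-lemma}) is indispensable. Without the Artinian hypothesis the stable images need not be reached in finitely many steps, and this argument would break down.
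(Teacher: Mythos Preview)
Your proof is correct and follows essentially the same route as the paper: both reduce to a common directed poset, apply Lemma~\ref{appx-main-lemma} with the constant ring system $R_\alpha=\R$ for exactness, and for conservativity pass to the stable-image subsystem with surjective transition maps and use surjectivity of $\limpr_\alpha N_\alpha\rarrow N_{\alpha_0}$ (the paper phrases this last step by reference to the proof of Corollary~\ref{appx-pro-rings-cor1}, which is exactly your kernel-sequence argument). One tiny inaccuracy in your observation~(i): a discrete Artinian $\R$\+module need not factor through a single discrete quotient of $\R$ (e.g.\ the Pr\"ufer group over $\boZ_p$), but this is irrelevant since ``Artinian'' refers to the submodule lattice, which is the same over $\R$ topological or abstract.
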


\begin{proof}
 Given a morphism between the pro-objects $M$ and $N$ represented
by filtered projective systems $(M_\alpha)$ and $(N_\beta)$, one
can consider the set of all triples $(\alpha,\beta,f)$,
where $f\:M_\alpha\rarrow N_\beta$ is a morphism of modules
representing the morphism of pro-objects $M\rarrow N_\beta$.
 This is a filtered poset in the natural order.
 Replacing the index sets $\{\alpha\}$ and $\{\beta\}$ with
the poset $\{\.(\alpha,\beta,f)\.\}$, one can assume the pro-objects
$M$ and $N$ to be represented by projective systems $(M_\gamma)$
and $(N_\gamma)$ indexed by the same filtered poset $\{\gamma\}$
and the morphism $M\rarrow N$ to be represented by a morphism of
projective systems $M_\gamma\rarrow N_\gamma$.
 The kernel and cokernel of the morphism of pro-objects
$M\rarrow N$ are then represented by the (terms-wise) kernel
and cokernel of this morphism of projective systems.

 Now the assertion that the projective limit functor is exact
follows straightforwardly from Lemma~\ref{appx-main-lemma}
(it suffices to consider the constant projective system of
rings $R_\gamma=R$).
 To prove conservativity, notice that any pro-object represented
by a projective system $(M_\alpha)$ is also represented by
its maximal projective subsystem $M'_\alpha\subset M_\alpha$
with surjective morphisms $M'_\beta\rarrow M'_\alpha$ (because
$M_\alpha$ are Artinian, so $M'_\alpha$ is the image of
the morphism into $M_\alpha$ from some $M_\delta$).
 Now if $M'_\beta\ne0$ for some~$\beta$, then
$\limpr_{\gamma>\beta} M'_\gamma\rarrow M'_\beta$ is
a surjective morphism of $R$\+modules by the same Lemma,
hence $\limpr_\alpha M_\alpha\ne0$ (cf.\ the proof of
Corollary~\ref{appx-pro-rings-cor1}).
\end{proof}

\subsection{Closed ideals}  \label{appx-ideals}
 The following result demonstrates that intersections of closed
ideals in a pro-Artinian topological ring are well-behaved.
 It will be used in the proof of Lemma~\ref{idempotent-lifting}.

\begin{cor} \label{appx-ideals-cor}
 Let\/ $\R$ be a (right) pro-Artinian topological ring and\/
$\J_\alpha$ be a filtered poset of its closed ideals ordered
by inclusion.
 Then for any open ideal\/ $\I\subset\R$ one has\/
$\I+\bigcap_\alpha\J_\alpha=\bigcap_\alpha(\I+\J_\alpha)$.
 In particular, if the intersection of all\/ $\J_\alpha$
is contained in\/ $\I$, then there exists an index\/~$\alpha$
such that\/ $\J_\alpha$ is contained in\/~$\I$.
\end{cor}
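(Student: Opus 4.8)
The plan is to reduce the claimed identity to the vanishing of a first derived projective limit covered by Lemma~\ref{appx-main-lemma}. The difficulty is that the naive short exact sequence $0\rarrow\J_\alpha\cap\I\rarrow\J_\alpha\rarrow(\J_\alpha+\I)/\I\rarrow0$ involves the modules $\J_\alpha\cap\I$, which are submodules of $\I$ and need not be Artinian, so the main lemma does not apply to them. To get around this I would pass to the discrete Artinian quotients of $\R$ \emph{before} forming the intersection over~$\alpha$. The inclusion $\I+\bigcap_\alpha\J_\alpha\subseteq\bigcap_\alpha(\I+\J_\alpha)$ is immediate, so everything is in the reverse inclusion; equivalently, working inside $\R/\I$, one must show that the image of $\bigcap_\alpha\J_\alpha$ in $\R/\I$ coincides with $M_\I:=\bigcap_\alpha(\J_\alpha+\I)/\I$.

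For each open ideal $\I'\subseteq\I$ set $M_{\I'}=\bigcap_\alpha(\J_\alpha+\I')/\I'\subseteq\R/\I'$. Each $M_{\I'}$ is a submodule of the Artinian module $\R/\I'$, hence Artinian, and the $M_{\I'}$ form a projective system over the filtered poset of open ideals $\I'\subseteq\I$, lying over the projective system of Artinian rings $\R/\I'$. First I would check, using that $\J_\alpha$ is closed (so that $\J_\alpha=\limpr_{\I'}(\J_\alpha+\I')/\I'$ as a subobject of $\R=\limpr_{\I'}\R/\I'$), that $\limpr_{\I'}M_{\I'}=\bigcap_\alpha\J_\alpha$ and that the projection of this limit to the term $M_\I$ is precisely reduction modulo~$\I$.

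The key technical step is to verify that the transition maps of the system $\{M_{\I'}\}$, and in particular each projection $M_{\I'}\rarrow M_\I$, are surjective. Here I would invoke the descending chain condition: since $\{\J_\alpha\}$ is down-directed, so is the family $\{(\J_\alpha+\I')/\I'\}_\alpha$ of submodules of the Artinian module $\R/\I'$, hence it contains a minimal member, and there is a single index $\delta$ with $M_{\I'}=(\J_\delta+\I')/\I'$ and simultaneously $M_\I=(\J_\delta+\I)/\I$; the map $M_{\I'}\rarrow M_\I$ is then the reduction of the image of $\J_\delta$, which is surjective because $\R/\I'\rarrow\R/\I$ is. This stabilization is the heart of the matter, and the main obstacle: although no single $\J_\delta$ need compute $\bigcap_\alpha\J_\alpha$, for each fixed Artinian quotient a single $\J_\delta$ suffices, and the $\I'$\+dependence of the index $\delta$ is exactly what the derived limit will absorb.

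Finally I would form the short exact sequence of projective systems $0\rarrow K_{\I'}\rarrow M_{\I'}\rarrow M_\I\rarrow0$ over $\{\I'\subseteq\I\}$, with $M_\I$ the constant system and $K_{\I'}=\ker(M_{\I'}\rarrow M_\I)$ an Artinian submodule of $M_{\I'}$. Since $\limpr^1_{\I'}K_{\I'}=0$ by Lemma~\ref{appx-main-lemma}, the associated long exact sequence shows that $\limpr_{\I'}M_{\I'}\rarrow M_\I$ is surjective, which is exactly the desired equality $\I+\bigcap_\alpha\J_\alpha=\bigcap_\alpha(\I+\J_\alpha)$. The ``in particular'' clause then follows purely from the descending chain condition already used: if $\bigcap_\alpha\J_\alpha\subseteq\I$ then $\bigcap_\alpha(\J_\alpha+\I)/\I=0$, and by the same minimal-member argument applied in $\R/\I$ some single $(\J_{\alpha_0}+\I)/\I$ already vanishes, that is, $\J_{\alpha_0}\subseteq\I$.
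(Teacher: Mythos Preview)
Your proof is correct and follows essentially the same approach as the paper's: both use the Artinian stabilization of the family $\{(\J_\alpha+\I')/\I'\}_\alpha$ inside the Artinian quotient $\R/\I'$, and both obtain the surjectivity of $\bigcap_\alpha\J_\alpha\rarrow\bigcap_\alpha(\J_\alpha+\I)/\I$ by applying Lemma~\ref{appx-main-lemma} to the kernel of a termwise surjection of projective systems of Artinian modules. The only organizational difference is that the paper keeps the doubly-indexed system $(\J_\alpha+\K)/\K$ over $(\alpha,\K)$ throughout, whereas you first collapse the $\alpha$\+direction via the minimal-member argument to a singly-indexed system $M_{\I'}$ before taking the limit over open ideals~$\I'$.
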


\begin{proof}
 Since the family of ideals $\I+\J_\alpha$ is filtered and
the ring $\R/\I$ is (right) Artinian, this family stabilizes,
that is $\I+\J_\alpha=\I'$ is the same open ideal in~$\R$
for all $\alpha$ large enough.
 Therefore, $\limpr_\alpha(\J_\alpha+\I)/\I = 
(\bigcap_\alpha(\J_\alpha+\I))/\I$.
 Clearly, $\bigcap_\alpha\J_\alpha=\limpr_\alpha\J_\alpha$
and $\J_\alpha=\limpr_\K(\J_\alpha+\K)/\K$, where $\K$ runs
over all the open ideals in~$\R$; hence
$\bigcap_\alpha\J_\alpha=\limpr_{\alpha,\K}(\J_\alpha+\K)/\K$.
 The map $(\J_\alpha+\K)/\K\rarrow(\J_\alpha+\I)/\I$
is well-defined and surjective for all $\K\subset\I$.
 Passing to the projective limit in $\K$ and~$\alpha$, and
using Lemma~\ref{appx-main-lemma}, we conclude that
the map $\bigcap_\alpha\J_\alpha\rarrow
(\bigcap_\alpha(\J_\alpha+\I))/\I$ is surjective, as desired.
\end{proof}

\Section{Contramodules over a Complete Noetherian Ring}
\label{noetherian-local-appx}

 The aim of this appendix is to describe the abelian category of
contramodules over a complete Noetherian local ring $\R$ as
a full subcategory of the category of $\R$\+modules formed by
the $\R$\+modules with some special properties.
 In fact, we obtain a somewhat more general result applicable
to any adic completion of a Noetherian ring.

 This description of $\R$\+contramodules does not appear to be
particularly useful for our purposes (and indeed, it is never used
in the main body of the paper).
 However, it may present an independent interest, and may also
help some readers to acquaint themselves with the notion of
an $\R$\+contramodule.

 We restrict ourselves to commutative rings here.
 A generalization to noncommutative Noetherian rings can be found
in~\cite[Appendix~C]{Pcosh}.

\subsection{Formulation of theorem}
\label{appx-b-thm-formulation}
 Recall that for any topological ring $\R$ (in the sense of
Section~\ref{topological-rings}) there is a natural exact,
conservative forgetful functor $\R\contra\rarrow\R\mod$ from
the abelian category of $\R$\+contramodules to the abelian
category of modules over the ring $\R$ viewed as an abstract
(nontopological) ring.

 The forgetful functor $\R\contra\rarrow\R\mod$ preserves
infinite products, but may not preserve infinite direct sums.
 It is \emph{not} in general a tensor functor with respect to
the natural tensor category structures on $\R\contra$ and
$\R\mod$, though it commutes with the internal $\Hom$
(see Section~\ref{contramodules-sect}).

 Let $R$ be a Noetherian ring and $m\subset R$ be an ideal.
 Consider the $m$\+adic completion $\R=\limpr_n R/m^n$ of $R$
and let $\m=\limpr_n m/m^n\subset\R$ be the extension of~$m$
in~$\R$.
 Endow the ring $\R$ with the topology of the projective limit
(i.~e., the $\m$\+adic topology).

 In particular, given a complete Noetherian local ring $\R$ with
the maximal ideal $\m$, one can always take $R=\R$ and $m=\m$.
 We will be interested in the composition of the forgetful functor
$\R\contra\rarrow\R\mod$ with the functor of restriction of scalars
$\R\mod\rarrow R\mod$.
 By $\Ext_R^*({-},{-})$ we denote the $\Ext$ functor computed in
the abelian category of $R$\+modules.

\begin{thm}  \label{noetherian-r-contramodules-thm}
\textup{(1)} The forgetful functor\/ $\R\contra\rarrow R\mod$ is
fully faithful. \par
\textup{(2)} The image of the functor\/ $\R\contra\rarrow R\mod$
is the full abelian subcategory in $R\mod$ consisting of all
the $R$\+modules $P$ satisfying any of the following equivalent
conditions:
\begin{enumerate}
\renewcommand{\theenumi}{\alph{enumi}}
\item for any multiplicatively closed subset $S\subset R$ having
a nonempty intersection with~$m$, any $R[S^{-1}]$\+module $L$,
and any integer $i\ge0$ one has\/ $\Ext^i_R(L,P)=0$;
\item assuming $m$ is a maximal ideal in $R$, for any
multiplicatively closed subset $S\subset R$ and
any integer $i\ge0$ one has\/ $\Ext^i_R(R[S^{-1}],P)=0$,
with the only exception of $S\cap m=\varnothing$ and $i=0$;
\item for any element $s\in m$ and any $i\ge0$ one has\/
$\Ext^i_R(R[s^{-1}],P)=0$;
\item for any $j=1$,~\dots,~$n$ and any $i=0$ or~$1$ one has\/
$\Ext^i_R(R[x_j^{-1}],P)=0$, where $x_1$,~\dots,~$x_n$ is some
set of generators of the ideal $m\subset R$.
\end{enumerate}
\end{thm}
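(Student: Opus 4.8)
The plan is to build the equivalence $\R\contra\simeq\mathcal C$, where $\mathcal C$ denotes the full subcategory of part~(2), on two elementary facts about the $R$-module $R[s^{-1}]$ for $s\in\m$: it has projective dimension at most~$1$, and it admits the \emph{telescope resolution} $0\rarrow\bigoplus_n R\rarrow\bigoplus_n R\rarrow R[s^{-1}]\rarrow 0$ whose middle map is $1-s\sigma$, with $\sigma$ the shift on basis vectors. Applying $\Hom_R(-,P)$ turns this into the two-term complex $\prod_n P\rarrow\prod_n P$ with differential $1-s\sigma$, so that $\Ext^0_R(R[s^{-1}],P)$ and $\Ext^1_R(R[s^{-1}],P)$ are the kernel and cokernel of $1-s\sigma$, while $\Ext^i_R(R[s^{-1}],P)=0$ for $i\ge 2$ automatically. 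Hence condition~(c) says exactly that $1-s\sigma$ is bijective on $\prod_n P$ for every $s\in\m$. Since the inverse of $1-s\sigma$ is the formal series $\sum_k s^k\sigma^k$, its zeroth component sends a family $(q_k)_k$ to what one wants to call $\sum_k s^k q_k$; thus condition~(c) is precisely the assertion that the one-generator summation operations exist and are unique in $P$. This is the mechanism underlying everything.

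First I would settle the equivalence of (a), (b), (c). The implications (a)$\Rightarrow$(b) for $S\cap\m\neq\varnothing$ and (b)$\Rightarrow$(c) are mere specialization ($S=\{s^n\}$, $L=R[s^{-1}]$, $i=0,1$). For (c)$\Rightarrow$(a), pick $s\in S\cap\m$; then the $R[S^{-1}]$-module $L$ is an $R[s^{-1}]$-module, and because $\Ext^i_R(-,P)$ carries coproducts to products one gets $\Ext^i_R(R[s^{-1}]^{(J)},P)=0$ in all degrees from~(c). A dévissage along a free $R[s^{-1}]$-resolution of $L$ (the syzygies stay $R[s^{-1}]$-modules, and $\mathrm{pd}_R R[s^{-1}]\le 1$ keeps the long exact sequences two-term) then yields $\Ext^i_R(L,P)=0$ for all $i$. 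The only delicate point is the case $S\cap\m=\varnothing$ of~(b) with $\m$ maximal: every element of $S$ then lies outside the maximal ideal, hence is invertible in $\R=\limpr_n R/\m^n$, so once one knows (from the construction below) that $P$ is canonically an $\R$-module, such elements act invertibly on $P$, making $P$ an $R[S^{-1}]$-module, whence $\Ext^i_R(R[S^{-1}],P)\cong\Ext^i_{R[S^{-1}]}(R[S^{-1}],P)=0$ for $i\ge 1$ by flat base change.

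Next I would check that $\mathcal C$ is an abelian subcategory with exact inclusion, closed under kernels, cokernels, extensions, and products; this is immediate from the long exact sequence of $\Ext^*_R(R[s^{-1}],-)$ and $\mathrm{pd}_R R[s^{-1}]\le 1$ (only degrees $0,1$ occur). I would then verify that free contramodules $\R[[X]]=\limpr_n(R/\m^n)[X]$ lie in $\mathcal C$: they are $\m$-adically separated and complete, so $1-s\sigma$ is visibly invertible. Since the forgetful functor $\R\contra\rarrow\R\mod$ is exact and every contramodule is a cokernel of a morphism of free contramodules, the whole image lands in $\mathcal C$. Full faithfulness then follows by showing that any $R$-linear $f\colon\P\rarrow\Q$ between contramodules is automatically a contramodule morphism: reducing to $\P=\R[[X]]$ via a presentation and right exactness, a morphism out of $\R[[X]]$ is determined by its restriction to $X$, and the separatedness half of the hypotheses on $\Q$, namely $\Hom_R(R[s^{-1}],\Q)=0$, forces $f$ to respect every $\m$-convergent sum.

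The main work, and the step I expect to be the real obstacle, is essential surjectivity: producing a genuine contraaction $\pi_P\colon\R[[P]]\rarrow P$ on an arbitrary $P\in\mathcal C$ and checking the monad axioms. For each $s\in\m$ the mechanism above supplies a unique summation $(q_k)_k\mapsto\sum_k s^k q_k$, characterized as the zeroth component of $(1-s\sigma)^{-1}$. Because $R$ is Noetherian, $\m=(s_1,\dots,s_d)$ is finitely generated and $\m^n=\sum_{|\alpha|=n}s^\alpha R$; using Artin--Rees to see the $\m^n$ are closed, an arbitrary $\m$-convergent family $(r_p)$ can be reorganized by $\m$-adic order into a combination of iterated one-generator summations, which defines $\pi_P(\sum_p r_p\,p)$. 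The hard part is purely combinatorial bookkeeping: proving that this value is independent of the reorganization and of the chosen generators, and that $\pi_P$ satisfies contraassociativity, unitality, and distributivity, i.e.\ that the single-variable operations coming from the $\Ext$-vanishing fit together into a module structure over the monad $X\mapsto\R[[X]]$. The uniqueness built into the separatedness condition (via $\Hom_R(R[s^{-1}],P)=0$ and Lemma~\ref{nakayama-lemma}) is exactly what forces these compatibilities, and it simultaneously guarantees that the resulting contramodule structure is the unique one inducing the given $R$-module structure, closing the loop with full faithfulness.
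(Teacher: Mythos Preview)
Your outline is correct and hits all the essential points---the telescope resolution of $R[s^{-1}]$, the recursion $q_i=p_i+sq_{i+1}$ characterizing single-generator summation, closure of $\mathcal C$ under kernels/cokernels/extensions, and the recognition that essential surjectivity is where the real work lies. Your d\'evissage argument for (c)$\Rightarrow$(a) is also fine and is a genuine alternative to what the paper does.

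The paper's route differs from yours mainly in how it organizes the ``combinatorial bookkeeping'' you flag as the obstacle. Rather than trying to reorganize an arbitrary $\m$-convergent family directly into iterated one-generator sums, the paper introduces the auxiliary topological ring $\T=R[[t_1,\dotsc,t_n]]$ (where $x_1,\dotsc,x_n$ generate~$m$) with the continuous surjection $\tau\colon\T\rarrow\R$ sending $t_j\mapsto x_j$. The argument then factors as: (i)~$R[[t]]$-contramodules are exactly the $R[t]$-modules satisfying your recursion property for~$t$, with full faithfulness coming from the uniqueness half; (ii)~for several variables, one only needs to check that the single-variable operations \emph{commute} with each other, which is a single explicit computation, and then multi-index summation is defined as an iterated sum; (iii)~the restriction $\R\contra\rarrow\T\contra$ is fully faithful with image those $\T$-contramodules on which $x_j-t_j$ act by zero (Artin--Rees is used here, as you anticipated). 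This neatly sidesteps the well-definedness issues: the $\T$-contramodule structure is canonical once the one-variable operations commute, and descent to $\R$ is automatic from the quotient description. Your direct approach would work but essentially rediscovers this structure; the $\T$-trick is what makes the bookkeeping tractable.

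Two smaller contrasts: the paper proves ``contramodule $\Rightarrow$ (a)'' directly by observing that $\Ext^i_R(L,\P)$ inherits an $\R$-contramodule structure on which $s\in S\cap m$ acts invertibly, so Nakayama kills it---shorter than your syzygy d\'evissage, and it doesn't need $\mathrm{pd}_R R[s^{-1}]\le 1$. And your reduction of full faithfulness to the free case is slightly backwards: you cannot present $\P$ and reduce, because the issue is whether an arbitrary $R$-linear map respects summation. The paper (and implicitly your own uniqueness argument) handles this by showing the contramodule structure is \emph{uniquely} determined by the $R[t_j]$-module structure, so any $R$-linear map is automatically a contramodule map.
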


 Notice that one always has $\Ext_R^i(R[s^{-1}],P)=0$ for $i\ge 2$,
as the $R$\+module $R[s^{-1}]$ has projective dimension at most~$1$
(see Lemma~\ref{ext-0-1-condition} below).
 The equivalent conditions (a\+d) in part~(2) are modelled after
Jannsen's definition of \emph{weakly $l$\+complete abelian groups}
in~\cite[Definition~4.6]{Jan} or, which is the same,
the definition of \emph{Ext-$p$-complete abelian groups} in
Bousfield--Kan~\cite[Sections~VI.3\+-4]{BK}.
 (These are also closely related to Harrison's \emph{co-torsion
groups}~\cite{Har}.)

 The theorem can be reformulated by saying that contramodules over
the adic completion of a Noetherian ring are the same thing as
the \emph{cohomologically complete modules} of
Porta--Shaul--Yekutieli~\cite{PSY,PSY2,Yek}.
 The case of $\R=k[[\epsilon]]$ was considered
in~\cite[Remark~A.1.1]{Psemi} and the comparison with Jannsen's
definition in the $l$\+adic case ($R=\boZ$ and $\R=\boZ_l$) was
mentioned in~\cite[Remark~A.3]{Psemi}.

 The proof of the above theorem occupies
Sections~\ref{hom-ext-contramodule-subsect}\+-%
\ref{one-variable-subsect}.

\subsection{Hom and Ext into a contramodule}
\label{hom-ext-contramodule-subsect}
 Let us show that the underlying $R$\+module of any $\R$\+contramodule
$\P$ satisfies the conditions~(a\+b).

 Quite generally, let $R$ be a ring, $\R$ be a topological ring,
and $R\rarrow\R$ be a ring homomorphism.
 Let $M$ be an $R$\+module and $\P$ be an $\R$\+contramodule.
 Then the set $\Hom_R(M,\P)$ of all $R$\+module homomorphisms
$M\rarrow\P$ has a natural $\R$\+contramodule structure with
the ``pointwise'' infinite summation operation
$(\sum_\alpha r_\alpha f_\alpha)(x) =
\sum_\alpha r_\alpha f_\alpha(x)$, where $f_\alpha\in\Hom_R(M,\P)$
are $R$\+module homomorphisms and $r_\alpha\in\R$ is any family of
elements converging to zero (cf.\ Section~\ref{hom-operations}).
 The $R$\+module structure obtained by applying the forgetful
functor $\R\contra\rarrow R\mod$ to this $\R$\+contramodule structure
coincides with the natural $R$\+module structure on $\Hom_R(M,\P)$.

 The $\R$\+contramodule structure on $\Hom_R(M,\P)$ is functorial
with respect to $R$\+module morphisms $M'\rarrow M''$ and
$\R$\+contramodule morphisms $\P'\rarrow\P''$.
 It follows that for every $i\ge0$ the functor $M\mpsto\Ext^i_R(M,\P)$
factorizes through the forgetful functor $\R\contra\rarrow R\mod$,
i.~e., for any $R$\+module $M$ the $R$\+module $\Ext^i_R(M,\P)$
has a natural $\R$\+contramodule structure.
 Indeed, the $\Ext$ module in question can be computed in terms of
a left projective resolution of the $R$\+module $M$, which makes it
the cohomology (contra)module of a complex of $\R$\+contramodules.

 On the other hand, if an element $s\in R$ acts invertibly in
an $R$\+module $L$, then its action in the $R$\+module
$\Ext^i_R(L,N)$ is also invertible for any $R$\+module $N$ and
$i\ge0$.
 Now if $\m$~is a topologically nilpotent ideal in $\R$ and
the action of an element $s\in\m$ in the $\R$\+contramodule
$\Q=\Ext^i_R(L,\P)$ is invertible, then $\m\Q=\Q$ and
$\Q=0$ by Lemma~\ref{nakayama-lemma}.
 This proves the property~(a).

 To check~(b), it remains to consider the case when $S\cap m =
\varnothing$.
 Then the elements of $S$ become invertible in $\R$, hence
$\Ext^i_R(M,P)\simeq\Ext^i_{R[S^{-1}]}(M[S^{-1}],P)$ for
any $R$\+mod\-ule $M$ and $\R$\+module~$P$.
 In particular, $\Ext^i_R(R[S^{-1}],P)\simeq
\Ext^i_{R[S^{-1}]}(R[S^{-1}],P)\simeq P$ for $i=0$ and $0$ for
$i>0$.

\subsection{Ring of power series} \label{power-series-and-completion}
 Abusing notation, we will denote the images of the generators $x_j\in R$
of the ideal $m\subset R$ in the complete ring $\R$ also by~$x_j$.
 Consider the ring $\T=R[[t_j]]$ of formal power series in
the variables~$t_j$ with coefficients in the ring $R$, and endow
$\T$ with the standard formal power series topology (i.~e.,
the adic topology for the ideal generated by~$t_j$).

 There exists a unique continuous ring homomorphism
$\tau\:\T\rarrow\R$ equal to the natural map $R\rarrow\R$ in
the restriction to $R$ and taking $t_j$ to~$x_j$.
 Since $x_j$ generate~$\m$, the map $\tau$ is surjective and open,
so the topology on $\R$ is the quotient topology of the topology
on~$\T$.
 In addition, the image under~$\tau$ of the ideal $(t_j)\subset\T$
generated by all the elements $t_j\in\T$ is equal to $\m\subset\R$.

 By~\cite[Theorem~8.12]{Mats}, the ideal $\J=\ker\tau\subset
\T=R[[t_j]]$ is generated (as an abstract ideal in a nontopological
ring) by the elements $x_j-t_j$.
 For completeness, let us give an independent proof of this
claim.
 Clearly, $x_j-t_j\in\ker\tau$.

\begin{lem}
 For any Noetherian ring $R$ and finite set of variables~$t_j$,
any ideal in the ring of formal power series $R[[t_j]]$ is closed
in the $(t_j)$\+adic topology.
\end{lem}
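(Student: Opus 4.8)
The plan is to compute the closure of an arbitrary ideal $I\subset\T=R[[t_j]]$ in the $(t_j)$\+adic topology and to show it equals $I$. Write $\mathfrak{q}=(t_j)\subset\T$ for the ideal generated by the variables, so that the neighborhoods of an element $f$ are the cosets $f+\mathfrak{q}^n$. A point $f$ is adherent to $I$ exactly when every such neighborhood meets $I$, i.e.\ when $f\in I+\mathfrak{q}^n$ for all $n$; hence the closure of $I$ is $\overline{I}=\bigcap_{n\ge0}(I+\mathfrak{q}^n)$, and it suffices to prove $\bigcap_n(I+\mathfrak{q}^n)=I$.

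First I would record that $\T=R[[t_1,\dotsc,t_n]]$ is Noetherian, by applying the Hilbert basis theorem for formal power series rings inductively to the Noetherian ring $R$. Then I would invoke Krull's intersection theorem for the finitely generated $\T$\+module $M=\T/I$ and the ideal $\mathfrak{q}$: the submodule $N=\bigcap_n\mathfrak{q}^nM$ (which is finitely generated, being a submodule of the Noetherian module $M$) satisfies $\mathfrak{q}N=N$, whence the determinant/Nakayama trick produces an element $a\in\mathfrak{q}$ with $(1-a)N=0$.

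The observation that finishes the argument is that $1-a$ is invertible in $\T$. Indeed, any $a\in\mathfrak{q}=(t_j)$ has vanishing constant term, since its image under the projection $\T\rarrow\T/\mathfrak{q}=R$ is zero; thus $1-a$ maps to $1\in R$, and a formal power series over an arbitrary ring is a unit as soon as its constant term is a unit, so $1-a$ is invertible in $\T$. Consequently multiplication by $1-a$ is an automorphism of $M=\T/I$, and $(1-a)N=0$ forces $N=0$. Since every term $I+\mathfrak{q}^n$ contains $I$, we have $N=\bigcap_n\mathfrak{q}^nM=\bigl(\bigcap_n(I+\mathfrak{q}^n)\bigr)/I$, and therefore $\bigcap_n(I+\mathfrak{q}^n)=I$, that is, $\overline{I}=I$, as required.

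There is no serious obstacle here beyond bookkeeping; the only point genuinely requiring care is the verification that $1-a$ is a unit, which is the same as the fact that $\mathfrak{q}$ lies in the Jacobson radical of $\T$ and is precisely what upgrades the conclusion $\mathfrak{q}N=N$ of Krull's theorem to the vanishing $N=0$. Everything else is a direct application of standard commutative algebra, and the argument goes through verbatim for the adic completion of any Noetherian ring, not merely in the complete local case.
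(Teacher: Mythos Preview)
Your proof is correct. The closure computation, the appeal to Krull's intersection theorem for the Noetherian ring $\T$ and the finitely generated module $\T/I$, and the observation that $1-a$ is a unit (equivalently, that $\mathfrak{q}$ lies in the Jacobson radical of $\T$) are all sound, and together they yield $\bigcap_n(I+\mathfrak{q}^n)=I$ exactly as you claim.

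The paper takes a different route. Rather than invoking Krull's theorem, it argues directly by induction on the number of variables: for a single variable $t$, it observes that the standard proof of the Hilbert basis theorem for $R[[t]]$ actually shows an ideal is determined by its images in $R[[t]]/(t^N)$; then it iterates, passing from $R[[t_1,\dotsc,t_n]]$ to $(R[t_1]/(t_1^{N_1}))[[t_2,\dotsc,t_n]]$ and so on. Your argument is cleaner and more conceptual, and in fact it works verbatim for any ideal $\mathfrak{q}$ contained in the Jacobson radical of any Noetherian ring. The paper's argument, on the other hand, is more self-contained in that it avoids quoting Krull's theorem (or the Artin--Rees lemma underlying it) as a black box, instead extracting what is needed from the Hilbert basis argument itself; this is consonant with the appendix's stated aim of supplying proofs rather than references. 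Both approaches ultimately rest on the Noetherianness of $R[[t_j]]$.
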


\begin{proof}
 The assertion of Lemma can be equivalently rephrased by saying that
an ideal in $R[[t_j]]$ is determined by its images in the quotient
rings $R[[t_j]]/(t_j)^N$, where $N$ runs over positive integers.
 In the case of a single variable~$t$, it is clear from
the standard proof of the Hilbert basis theorem for formal power
series (see, e.~g., \cite[Theorem~3.3]{Mats}) that an ideal in
$R[[t]]$ is determined by its images in the rings $R[[t]]/(t^N)$.
 The general case is handled by induction: an ideal in
$R[[t_1,\dotsc,t_n]]$ is determined by its images in
$R[[t_2,\dotsc,t_n]][[t_1]]/(t_1^{N_1})\simeq
(R[t_1]/(t_1^{N_1}))[[t_2,\dotsc,t_n]]$, which in turn are
determined by their images in $(R[[t_1,t_2]]/(t_1^{N_1},t_2^{N_2}))
[[t_3,\dotsc,t_n]]$, etc.
 So finally an ideal in
$R[[t_1,\dotsc,t_n]]$ is determined by its images in
$R[[t_1,\dotsc,t_n]]/(t_j^{N_j})$, where $N_1$,~\dots, $N_n$
are positive integers.
\end{proof}

 Hence it suffices to show that the images of the elements $x_j-t_j$
generate the image of $\J$ in the quotient ring $\T/(t_j)^N$
for each~$N$.
 Since $\tau((t_j)^N)=\m^N$, the ideal $\J+(t_j)^N\subset\T$
is the kernel of the ring homomorphism $\T\rarrow\R/\m^N$
taking $t_j$ to the images of~$x_j$.
 As $\R/\m^N\simeq R/m^N$, it remains to show that the kernel of
the homomorphism $R[[t_j]]/(t_j)^N\rarrow R/m^N$ is generated
by $x_j-t_j$.

 Since the monomial $t_1^{l_1}\dotsm t_n^{l_n}$ is equal to
$x_1^{l_1}\dotsm x_n^{l_n}$ modulo the ideal generated by
$x_j-t_j$, the ideal generated by $x_j-t_j$ in $R[[t_j]]/(t_j)^N$
contains~$m^N$.
 We have reduced to the obvious assertion that
the kernel of the map $(R/m^N)[t_j]/(t_j)^N\rarrow R/m^N$ is
generated by $x_j-t_j$.

\subsection{Contramodules over a quotient ring}
 The following lemma is an almost tautological restatement of
the definitions.

\begin{lem}  \label{restriction-faithful}
\textup{(a)} Let\/ $\U\rarrow\V$ be a continuous homomorphism of
topological rings such that any family $X\rarrow\V$ of elements
of\/ $\V$ indexed by a set $X$ and converging to~$0$ in
the topology of\/ $\V$ can be lifted to a family $X\rarrow\U$
converging to~$0$ in the topology of\/~$\U$.
 Then the functor of restriction of scalars $\V\contra\rarrow
\U\contra$ is fully faithful. \par
\textup{(b)} In the situation of part~\textup{(a)}, let\/ $\J$
be the kernel of the morphism\/ $\U\rarrow\V$.
 Assume further that $f\:Z\rarrow\J$ is a family of elements
converging to~$0$ in the topology of\/ $\U$ such that for any
family of elements $g\: X\rarrow\J$ converging to~$0$ there
exists a family of families of elements $h\:Z\times X\rarrow\J$
converging to~$0$ for every fixed $z\in Z$ such that
$g(x)=\sum_{z\in Z}f(z)h(z,x)$ for any $x\in X$.
 Then the image of the functor of restriction of scalars
$\V\contra\rarrow\U\contra$ consists precisely of those\/
$\U$\+contramodules\/ $\P$ for which $\sum_{z\in Z} f(z)q(z)=0$
in\/ $\P$ for every family of elements $q\:Z\rarrow\P$.
\end{lem}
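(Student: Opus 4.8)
The plan is to prove both parts by carefully unwinding the definition of an $\R$\+contramodule as a module over the monad $X\mpsto\U[[X]]$ or $X\mpsto\V[[X]]$ (Section~\ref{contramodules-sect}), which is precisely where the topology and the convergence-to-zero conditions enter. For part~(a), the hypothesis on $\U\rarrow\V$ says that the induced monad morphism $\U[[X]]\rarrow\V[[X]]$ is surjective for every set $X$, and in fact that the map $\U[[X]]\rarrow\V[[X]]$ on the level of ``zero-convergent formal linear combinations'' is a termwise-surjective morphism of monads. First I would observe that restriction of scalars $\V\contra\rarrow\U\contra$ sends a $\V$\+contramodule $\Q$ with contraaction $\pi_\Q\:\V[[\Q]]\rarrow\Q$ to the same underlying set with the contraaction $\U[[\Q]]\rarrow\V[[\Q]]\rarrow\Q$. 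To see the functor is fully faithful, I would check that a map of sets $\phi\:\Q'\rarrow\Q''$ between $\V$\+contramodules commutes with the $\V$\+contraactions if and only if it commutes with the induced $\U$\+contraactions; the ``only if'' is automatic, and the ``if'' follows because every zero-convergent family in $\V$ lifts to one in $\U$, so the $\V$\+contraaction of $\Q'$ and $\Q''$ is entirely determined by (i.e., factors through, in a way compatible with $\phi$) the $\U$\+contraaction via the chosen lifts, the surjectivity guaranteeing that testing against $\U[[\Q]]$ suffices.

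For part~(b), the task is to identify the essential image. First I would note that if $\P = R^{\mathrm{restr}}(\Q)$ comes from a $\V$\+contramodule $\Q$, then for any zero-convergent family $f\:Z\rarrow\J$ the sum $\sum_{z}f(z)q(z)$ computed in $\P$ equals the image under $\pi_\Q$ of the element of $\V[[\Q]]$ obtained by applying $\U\rarrow\V$ coefficientwise; but $f(z)\in\J=\ker(\U\rarrow\V)$, so all coefficients map to zero and the sum vanishes. This gives the ``only if'' (necessity) direction of the characterization. For the converse, given a $\U$\+contramodule $\P$ with $\sum_z f(z)q(z)=0$ for all zero-convergent $q\:Z\rarrow\P$, I would construct a $\V$\+contraaction $\pi_\Q\:\V[[\P]]\rarrow\P$ by lifting: given a zero-convergent family $\bar r\:X\rarrow\V$, choose a zero-convergent lift $r\:X\rarrow\U$ and set $\pi_\Q(\sum_x\bar r_x x)=\pi_\P(\sum_x r_x x)$. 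The crucial point is well-definedness: two lifts $r,r'$ differ by a zero-convergent family $g = r-r'$ with values in $\J$, and the hypothesis that $g(x)=\sum_{z}f(z)h(z,x)$ with $h$ zero-convergent in $z$ lets me rewrite $\pi_\P(\sum_x g(x)x)$, via contraassociativity (reordering the iterated sum as in the associativity axiom of Section~\ref{contramodules-sect}), as $\sum_z f(z)\big(\sum_x h(z,x)x\big)$ evaluated through $\pi_\P$, which is a sum of the form $\sum_z f(z)q(z)$ and hence vanishes by assumption. Then I would verify that $\pi_\Q$ satisfies the contraassociativity and unitality axioms, so that $\P$ with $\pi_\Q$ is a genuine $\V$\+contramodule restricting to the given $\U$\+contramodule.

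The main obstacle I anticipate is precisely the well-definedness argument in part~(b): one must manipulate doubly-indexed zero-convergent families and invoke the contraassociativity and distributivity identities (the displayed associativity and distributivity equations of Section~\ref{contramodules-sect}) to justify the interchange of the two summations $\sum_x$ and $\sum_z$, while checking at each step that the relevant families still converge to zero in the topology of $\U$ so that all the infinite sums are legitimate. The convergence bookkeeping — ensuring that $h(z,x)$ being zero-convergent in $z$ for each fixed $x$, together with $f(z)$ being zero-convergent, produces a jointly zero-convergent family $f(z)h(z,x)$ to which the associativity axiom applies — is the delicate part; the hypotheses on $f$ and $h$ in the statement are tailored exactly to make this work, so the proof is really a matter of threading them through the contramodule axioms correctly. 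Once well-definedness is secured, verifying the monad-module axioms for $\pi_\Q$ and confirming that restriction recovers the original $\U$\+contraaction are straightforward, as is checking that this $\V$\+contramodule structure is the unique one compatible with the given $\U$\+structure (which also re-proves faithfulness on this subcategory).
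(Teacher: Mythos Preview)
Your proposal is correct and follows essentially the same approach as the paper: surjectivity of $\U[[X]]\rarrow\V[[X]]$ for part~(a), and for part~(b) the observation that a $\U$\+contramodule $\P$ lies in the image precisely when the composite $\J[[\P]]\rarrow\U[[\P]]\rarrow\P$ vanishes, which is your well-definedness check rephrased. The paper is terser---it writes the key identity $\sum_{p}g(p)p=\sum_z f(z)\sum_p h(z,p)p$ in one line without dwelling on the convergence bookkeeping you flag---but the hypotheses on $f$ and $h$ do make the distributivity and associativity axioms of Section~\ref{contramodules-sect} directly applicable (the family $f(z)h(z,p)$ converges to~$0$ jointly because $f(z)\notin\I$ for only finitely many~$z$ and, for each such~$z$, $h(z,p)\notin\I$ for only finitely many~$p$), so the obstacle you anticipate is real but routine.
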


\begin{proof}
 Given two $\V$\+contramodules $\P$ and $\Q$, any morphism of
$\U$\+contramodules $\P\rarrow\Q$ is also a morphism of
$\V$\+contramodules provided that the map $\U[[\P]]\rarrow
\V[[\P]]$ is surjective.
 The condition of part~(a) means that the map $\U[[X]]\rarrow
\V[[X]]$ is surjective for any set~$X$, so the assertion of
part~(a) follows.

 To prove part~(b), notice that the sequence $0\rarrow\J[[X]]
\rarrow\U[[X]]\rarrow\V[[X]]\rarrow0$ is exact for any set $X$
in the assumptions of~(a).
 One has to show that, for a $\U$\+contramodule $\P$, the map
$\J[[\P]]\rarrow\P$ is zero provided that the family of elements
$f(z)$ acts by zero in~$\P$.
 Let $g\:\P\rarrow\J$ be an element of $\J[[\P]]$, i.~e.,
a $\P$\+indexed family of elements of $\J$ converging to~$0$.
 By the assumption of part~(b), we have $g(p)=\sum_{z\in Z}
f(z)h(z,p)$, hence $\sum_{p\in\P}g(p)p =
\sum_{z\in Z} f(z)\sum_{p\in\P}h(z,p)p = 0$.
\end{proof}

 Now we return to our ring homomorphism $\T=R[[t_j]]\rarrow\R$.

\begin{cor}  \label{r-t-restriction}
 The functor of restriction of scalars\/ $\R\contra\rarrow\T\contra$
is fully faithful, and its image consists precisely of those\/
$\T$\+contramodules in (the underlying\/ $\T$\+module structure of)
which the elements $x_j-t_j$ act by zero.
\end{cor}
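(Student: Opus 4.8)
The plan is to deduce Corollary~\ref{r-t-restriction} from Lemma~\ref{restriction-faithful}, applied to the continuous surjection $\tau\:\T=R[[t_j]]\rarrow\R$ of Section~\ref{power-series-and-completion}, whose kernel $\J$ is generated as an abstract ideal by the elements $w_j=x_j-t_j$. In the notation of that lemma I take $\U=\T$ and $\V=\R$, and for part~(b) the finite index set $Z=\{1,\dots,n\}$ with $f(j)=w_j\in\J$. Once the two hypotheses are verified, part~(a) yields full faithfulness, while part~(b) identifies the image with the $\T$\+contramodules $\P$ satisfying $\sum_{j=1}^n w_j q_j=0$ for every family $(q_j)$ in $\P$; since $Z$ is finite, taking $q$ supported at a single index shows this is equivalent to each $w_j=x_j-t_j$ acting by zero, which is the assertion of the corollary. (One inclusion is clear directly: if $\P$ comes from an $\R$\+contramodule, then $t_j$ and $x_j$ both act through $\tau$ as the element $x_j\in\R$, so $w_j$ acts by zero.)

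Verifying hypothesis~(a) --- that every null family in $\R$ lifts to a null family in $\T$ --- is straightforward from the fact that $\tau$ is surjective with $\tau\bigl((t_j)^N\bigr)=\m^N$ for all $N$ (which follows from $\tau((t_j))=\m$, established in Section~\ref{power-series-and-completion}, together with surjectivity of $\tau$). Given $r_x\in\R$ converging to zero, for each $x$ with $r_x\neq0$ let $d(x)$ be the largest integer with $r_x\in\m^{d(x)}$; this is finite since $\R$ is separated. As $r_x\in\m^{d(x)}=\tau((t_j)^{d(x)})$, I choose a preimage $\tilde r_x\in(t_j)^{d(x)}$, setting $\tilde r_x=0$ when $r_x=0$. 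For each $M$ one has $\tilde r_x\notin(t_j)^M$ only if $d(x)<M$, and $\{x: d(x)<M\}=\{x:r_x\notin\m^M\}$ is finite; hence $(\tilde r_x)$ is a null family lifting $(r_x)$.

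The crux is hypothesis~(b): every null family $g\:X\rarrow\J$ must be written as $g(x)=\sum_{j}w_j a_j(x)$ with $x\mapsto a_j(x)$ converging to zero in $\T$ for each fixed $j$. This is a \emph{filtered} strengthening of the plain generation $\J=\sum_j\T w_j$: knowing only that $g(x)\in\J$ lets one solve $g(x)=\sum_j w_j a_j(x)$ with $a_j(x)\in\T$, but one must additionally control the $(t_j)$\+adic orders of the coefficients. I would obtain this from the Artin--Rees lemma \cite{Mats}, applicable because $\T=R[[t_j]]$ is Noetherian: there is a constant $c$ with $(t_j)^N\cap\J\subseteq(t_j)^{N-c}\J=\sum_j w_j(t_j)^{N-c}$ for all $N\ge c$. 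Writing $N(x)$ for the $(t_j)$\+adic order of $g(x)$ (so $N(x)\to\infty$ as $g(x)\to0$, and choosing an arbitrary representation for the finitely many $x$ with $N(x)<c$), this produces $a_j(x)\in(t_j)^{N(x)-c}$, and then $a_j(\cdot)$ converges to zero for each $j$. I expect this filtered passage to be the main obstacle; everything else is bookkeeping.

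One technical point deserves mention. Lemma~\ref{restriction-faithful}(b) is phrased with the auxiliary family $h$ valued in $\J$, whereas the coefficients $a_j(x)$ produced by Artin--Rees lie only in $\T$ (indeed, for $n=1$ over a domain the coefficient is the unique quotient $g/(x_1-t_1)$, which is generally not in $\J$). This causes no difficulty: in the proof of the lemma the values $h(z,p)$ enter only through the convergent sums $\sum_p h(z,p)p\in\P$ and the final identity $\sum_z f(z)\bigl(\sum_p h(z,p)p\bigr)=0$, both of which require nothing beyond $h(z,\cdot)$ being a null family in $\T$ and the hypothesis that the $w_j$ annihilate $\P$. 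Hence it suffices to take $h(j,x)=a_j(x)\in\T$, and the argument goes through, completing the identification of the image and thereby the proof of the corollary.
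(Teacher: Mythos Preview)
Your proof is correct and follows the same route as the paper: apply Lemma~\ref{restriction-faithful} to $\tau\:\T\rarrow\R$, verify hypothesis~(a) directly from $\tau((t_j)^N)=\m^N$, and obtain hypothesis~(b) from the Artin--Rees lemma applied to $\J\subset\T$ and the ideal $(t_j)$. The paper's own proof is terser---it dismisses~(a) by invoking the countable bases of neighborhoods of zero and writes out essentially the same Artin--Rees argument for~(b)---but the substance is identical.

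Your final paragraph is a genuine addition: the hypothesis of Lemma~\ref{restriction-faithful}(b) literally asks for $h$ valued in $\J$, whereas Artin--Rees only yields coefficients in $\T$, and (as you note with the $n=1$ domain example) one cannot in general arrange $h(j,x)\in\J$. The paper applies the lemma without comment, implicitly relying on the fact that its proof only uses $h(z,\cdot)$ as a null family in $\U$ together with the vanishing of $\sum_z f(z)q(z)$; you make this explicit, which is a useful clarification rather than a different argument.
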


\begin{proof}
 The topologies of $\T$ and $\R$ having countable bases of
neighborhoods of zero, the condition~(a) of
Lemma~\ref{restriction-faithful} clearly holds.
 It remains to check the condition~(b) for the finite family of
elements $f(j)=x_j-t_j$.

 By the Artin--Rees lemma~\cite[Theorem~8.5]{Mats} applied to
the $\T$\+submodule $\J\subset\T$ and the ideal $(t_j)\subset\T$,
there exists an integer~$l$ such that
$(t_j)^N\cap\J\subset(t_j)^{N-l}\J$ for all large enough~$N$.
 It follows easily from this observation together with the fact
that $\J$ is the ideal generated by the elements $x_j-t_j$ in $\T$
that any family of elements of $\J$ converging to~$0$ in
the topology of $\T$ can be presented as a linear combination of
$n$~families of elements converging to~$0$ in $\T$ with
the coefficients $x_j-t_j$.
\end{proof}

\subsection{Contramodules over $R[[t]]$}
 The results of this section and the next one do not depend on
the Noetherianness assumption on a ring~$R$.
 Let $R[[t]]$ be the ring of formal power series in one variable~$t$
with coefficients in $R$, endowed with the $t$\+adic topology;
and let $R[t]$ be the ring of polynomials.

\begin{lem} \label{one-variable-power-series}
 The forgetful functor $R[[t]]\contra\rarrow R[t]\mod$ identifies
the category of $R[[t]]$\+contramodules with the full subcategory
of the category of $R[t]$\+modules consisting of all the modules $P$
with the following property.
 For any sequence of elements $p_i\in P$, \ $i\ge0$, there exists
a unique sequence of elements $q_i\in P$ such that
$q_i=p_i+tq_{i+1}$ for all $i\ge0$.
\end{lem}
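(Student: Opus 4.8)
The plan is to exhibit the claimed subcategory as the image of the forgetful functor $R[[t]]\contra\rarrow R[t]\mod$ by isolating a single distinguished family of infinite summation operations. First I would record that the family $(t^i)_{i\ge0}$ converges to zero in $R[[t]]$, so that in any $R[[t]]$-contramodule $P$ and for an arbitrary sequence $p_i\in P$ the element $\sum_{i\ge0}t^i p_i$ is defined by the contraaction. Setting $q_j=\sum_{i\ge0}t^i p_{i+j}$ and splitting off the $i=0$ summand yields $q_j=p_j+t q_{j+1}$, which produces the required sequence. For uniqueness I would argue exactly as in Lemma~\ref{nakayama-lemma}: if $(q_i)$ and $(q_i')$ both solve the recursion, their difference $d_i=q_i-q_i'$ satisfies $d_i=t d_{i+1}$, and forming $e_j=\sum_{i\ge0}t^i d_{i+j}$ and substituting $d_{i+j}=t d_{i+j+1}$ (using the axiom $\sum_\alpha r_\alpha(s_\alpha p_\alpha)=\sum_\alpha(r_\alpha s_\alpha)p_\alpha$) shows that $e_j$ equals its own tail $\sum_{m\ge1}t^m d_{m+j}$, whence $d_j=0$. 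Thus the underlying $R[t]$-module of every contramodule lies in the described subcategory, and moreover the contraaction is already determined by the module structure, since $\pi$ applied to $\sum_{p}r_p p$ is forced to equal the corresponding $q_0$.

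Then I would go the other way: given an $R[t]$-module $P$ with the stated property, I would build a contraaction. Identifying the free contramodule $R[[t]][[X]]$ with the power series module $R[X][[t]]$, one checks that every element of $R[[t]][[P]]$ has a unique expansion $\sum_{n\ge0}t^n c_n$ with $c_n\in R[P]$ a finite $R$-combination (convergence to zero forces each fixed power of $t$ to collect only finitely many terms). Evaluating $c_n$ in $P$ to obtain $\bar c_n\in P$ and invoking the property for the sequence $(\bar c_n)$, I would set $\pi\bigl(\sum_n t^n c_n\bigr)=q_0$, the head of the unique solution of $q_i=\bar c_i+t q_{i+1}$. Uniqueness of the $t$-expansion makes $\pi$ well defined, and a short computation shows that the $R[t]$-module structure recovered from $\pi$ agrees with the original one.

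The main obstacle is verifying the monad-algebra axioms for this $\pi$, in particular contraassociativity $\pi\circ R[[t]][[\pi]]=\pi\circ\rho_P$ (unitality being immediate). Here I expect to avoid a brute-force expansion of the doubly indexed families by using the uniqueness half of the property as a rigidity principle: two elements of $P$ coincide as soon as they are both heads of solutions of one and the same recursion. Concretely, I would show that both composites, evaluated on a general element of $R[[t]][[R[[t]][[P]]]]$, are the $q_0$-term of a single combined recursion obtained by merging the inner and outer $t$-expansions, so that they agree by uniqueness.

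Finally, full faithfulness follows from the same rigidity. The forgetful functor is faithful, and given contramodules $P$, $Q$ and an $R[t]$-linear map $f\:P\rarrow Q$, applying $f$ to $q_i=\bar c_i+t q_{i+1}$ shows that $(f(q_i))$ solves the recursion for $(f(\bar c_i))$ in $Q$; uniqueness in $Q$ then gives $f(\pi_P\xi)=\pi_Q(f_*\xi)$, so $f$ is automatically a contramodule morphism. Combining the two directions identifies $R[[t]]\contra$ with the full subcategory of $R[t]\mod$ cut out by the stated property, as required.
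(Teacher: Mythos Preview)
Your proposal is correct and follows the same overall strategy as the paper: produce the solution $q_j=\sum_{i\ge0}t^ip_{i+j}$ in one direction, run the Nakayama argument for uniqueness, define the contraaction from the recursion in the other direction, and deduce fullness by applying $f$ to the recursion. The one place where the paper does something more concrete than your sketch is the contraassociativity check, and it is worth knowing how the paper organizes it.

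Rather than verify the monad axiom $\pi\circ R[[t]][[\pi]]=\pi\circ\rho_P$ directly on $R[[t]][[R[[t]][[P]]]]$, the paper first interposes an equivalent intermediate structure: an $R$\+module $P$ together with an operation $(p_i)\mapsto\sum_i t^ip_i$ satisfying $R$\+linearity, unitality, and the single associativity identity $\sum_i t^i\sum_j t^jp_{ij}=\sum_n t^n\sum_{i+j=n}p_{ij}$. One checks once and for all (independently of the recursion property) that such a datum is the same as a full $R[[t]]$\+contramodule structure, via $\sum_\alpha g_\alpha(t)p_\alpha:=\sum_i t^i\sum_\alpha g_{\alpha,i}p_\alpha$. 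This reduces your ``merging the inner and outer $t$\+expansions'' to verifying the double-sum identity alone, which the paper does explicitly: take $q_{ij}$ with $q_{ij}=p_{ij}+tq_{i,j+1}$, then $u_n$ with $u_n=q_{n,0}+tu_{n+1}$, and set $v_n=q_{0,n}+\dotsb+q_{n-1,1}+u_n$; a telescoping computation gives $v_n-tv_{n+1}=\sum_{i+j=n}p_{ij}$, so by uniqueness $v_0=\sum_n t^n\sum_{i+j=n}p_{ij}$, while $v_0=u_0=\sum_i t^i q_{i,0}=\sum_i t^i\sum_j t^j p_{ij}$. Your plan would arrive at the same destination, but the intermediate reduction spares you from unpacking the general element of $R[[t]][[R[[t]][[P]]]]$ and makes the ``single combined recursion'' completely explicit; linearity and unitality are then the easy short checks you already indicated.
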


\begin{proof}
 First of all, the category of $R[[t]]$\+contramodules is equivalent
to the category of $R$\+modules $P$ endowed with the operation
assigning to every sequence of elements $p_i\in P$, \ $i\ge0$,
an element $\sum_{i=0}^\infty t^ip_i\in P$ and satisfying
the following equations of linearity, unitality, and associativity:
$$\textstyle
 \sum_{i=0}^\infty t^i(r'p'_i+r''p''_i) =
 r'\sum_{i=0}^\infty t^ip'_i + r''\sum_{i=0}^\infty t^ip''_i
$$
for any $p'_i$, $p''_i\in P$ and $r'$, $r''\in R$;
$$\textstyle
 \sum_{i=0}^\infty t^ip_i = p_0
$$
when $p_1=p_2=\dotsb=0$ in $P$, and
$$\textstyle
 \sum_{i=0}^\infty t^i\sum_{j=0}^\infty t^j p_{ij}=
 \sum_{n=0}^\infty t^n\sum_{i+j=n} p_{ij}
$$
for any $p_{ij}\in P$, \ $i\ge0$, \ $j\ge0$.
 Morphisms of $R[[t]]$\+contramodules correspond to $R$\+module
morphisms $f\:P\rarrow Q$ such that $\sum_{i=0}^\infty
t^i f(p_i) = f(\sum_{i=0}^\infty t^i p_i)$ in $Q$ for any sequence
of elements $p_i\in P$.

 Indeed, the $R[[t]]$\+contramodule infinite summation operations
restricted to the case of the sequence of coefficients~$t^i$
clearly have the above properties.
 Conversely, given an $R$\+module $P$ endowed with the operation
of infinite summation with the coefficients $t^i$ as above,
one defines
$$\textstyle
 \sum_\alpha(\sum_{i=0}^\infty g_{\alpha,i}t^i)p_\alpha=
 \sum_{i=0}^\infty t^i\sum_\alpha g_{\alpha,i}p_\alpha
$$
for any family of elements $g_\alpha=
\sum_{i=0}^\infty g_{\alpha,i}t^i$, \ $g_{\alpha,i}\in R$,
converging to~$0$ in $R[[t]]$ (so $g_{\alpha,i}=0$ for all but
a finite number of indices~$\alpha$, for every fixed~$i$),
and any $p_\alpha\in P$.

 Let us check that such infinite summations with
the coefficients~$g_\alpha$ are associative provided that
the infinite summations with the coefficients~$t^i$ were
associative and linear.
 We have $\sum_\alpha g_\alpha(t)\sum_\beta h_{\alpha\beta}(t)
p_{\alpha\beta} = \sum_{i=0}^\infty t^i\sum_\alpha
g_{\alpha,i}\sum_{j=0}^\infty t^j\sum_\beta h_{\alpha\beta,j}
p_{\alpha\beta} = \sum_{i=0}^\infty t^i\sum_{j=0}^\infty t^j
\sum_\alpha g_{\alpha,i}\sum_\beta h_{\alpha\beta,j}p_{\alpha\beta}
= \sum_{n=0}^\infty t^n \sum_{i+j=n}\sum_{\alpha,\beta}
g_{\alpha,i}h_{\alpha\beta,j}p_{\alpha\beta}=
\sum_{n=0}^\infty t^n\allowbreak \sum_{\alpha,\beta} k_{\alpha\beta,n}
p_{\alpha\beta}$, where $g_\alpha(t)=\sum_{i=0}^\infty 
g_{\alpha,i}t^i$, \ $h_{\alpha\beta}(t)=\sum_{j=0}^\infty
h_{\alpha\beta,j}t^j$, and $k_{\alpha\beta}(t) =
g_{\alpha}(t)h_{\alpha\beta}(t) = \sum_{n=0}^\infty
k_{\alpha\beta,n}(t)t^n$.
 It is clear that any morphism preserving the operations of
infinite summation with the coefficients~$t^i$ preserves also
the operations of infinite summation with
the coeffcients~$g_\alpha$.

 Now assume that the infinite summation operations with
the coefficients~$t^i$ are defined on an $R$\+module~$P$.
 The action of the operator~$t$ on $P$ is then provided by
the obvious rule $tp=\sum_{i=0}^\infty t^ip_i$, where
$p_1=p$ and $p_i=0$ for $i\ne 1$.

 Notice that any sequence of elements $q_i\in P$ such that $q_i =
tq_{i+1}$ for $i\ge0$ is zero.
 Indeed, one has $\sum_{i=0}^\infty t^iq_{i+n} =
\sum_{i=0}^\infty t^i t q_{i+n+1} = \sum_{i=0}^\infty t^{i+1}
q_{i+n+1} = -q_n + \sum_{i=0}^\infty t^iq_{i+n}$, hence $q_n=0$
for all $n\ge0$ (cf.\ the proof of Lemma~\ref{nakayama-lemma}).

 It follows that for any given sequence $p_i\in P$, a sequence
$q_i\in P$ such that $q_i=p_i+tq_{i+1}$ is unique if it exists.
 To prove the existence, set $q_n=\sum_{i=0}^\infty t^ip_{n+i}$.

 Conversely, assuming the existence and uniquence property for
the sequence~$q_i$ such that $q_i=p_i+tq_{i+1}$, set
$\sum_{i=0}^\infty t^ip_i=q_0$.
 Let us check the equations of linearity, unitality, and
associativity.

 Given two sequences $p'_i$ and $p''_i\in P$, and two elements
$r'$, $r''\in R$, let $q'_i$ and $q''_i\in P$ be the sequences
such that $q^{(s)}_i=p^{(s)}_i+tq^{(s)}_{i+1}$.
 Then the sequence $q_i=r'q'_i+r''q''_i$ satisfies the equation
$q_i = (r'p'_i+r''p''_i) + tq_{i+1}$, hence
$\sum_{i=0}^\infty t^i(r'p'_i+r''p''_i)=q_0=r'q'_0+r''q''_0=
r'\sum_{i=0}^\infty t^ip'_i + r''\sum_{i=0}^\infty t^ip''_i$.

 Given a sequence $p_i\in P$ such that $p_i=0$ for $i>0$, set
$q_0=p_0$ and $q_i=0$ for $i>0$.
 Then the equation $q_i=p_i+tq_{i+1}$ holds for all~$i$, so
$\sum_{i=0}^\infty t^ip_i=q_0=p_0$.

 Finally, given a biindexed sequence $p_{ij}\in P$, find
sequences $q_{ij}\in P$ such that $q_{ij}=p_{ij}+tq_{i,j+1}$
for $i$, $j\ge0$.
 Let $u_n\in P$ be a sequence satisfying $u_n=q_{n,0}+tu_{n+1}$;
set $v_n = q_{0,n}+q_{1,n-1}+\dotsb+q_{n-1,1}+u_n =
q_{0,n}+q_{1,n-1}+\dotsb+q_{n-1,1}+q_{n,0}+tu_{n+1}$.
 Then we have $v_n-tv_{n+1} = (q_{0,n}-tq_{0,n+1}) + \dotsb +
(q_{n,0}-tq_{n+1,0}) = p_{0,n}+p_{1,n-1}+\dotsb+p_{n-1,1}+p_{n,0}$.
 Hence $\sum_{i=0}^\infty t^i\sum_{j=0}^\infty t^jp_{ij} =
\sum_{i=0}^\infty t^iq_{i,0}=u_0=v_0=\sum_{n=0}^\infty
t^n\sum_{i+j=n}p_{ij}$.

 We have described the image of the functor $R[[t]]\contra\rarrow
R\mod$ on the level of objects.
 It remains to show that it is surjective on morphisms.
 Indeed, let $\P$ and $\Q$ be $R[[t]]$\+contramodules, and
$f\:\P\rarrow\Q$ be an $R[t]$\+module morphism.
 Given a sequence $p_i\in P$, \ $i\ge0$, find a sequence $u_i\in P$
such that $u_i=p_i+tu_{i+1}$.
 Then $f(u_i)=f(p_i)+t f(u_{i+1})$, hence
$f(\sum_{i=0}^\infty t^ip_i) = f(u_0)=\sum_{i=0}^\infty t^if(p_i)$. 
\end{proof}

\subsection{Contramodules over $R[[t_1,\dotsc,t_n]]$}
 Now we describe the category of contramodules over the ring
of formal power series $R[[t_j]]$ in several variables~$t_j$,
where $j=1$,~\dots,~$n$, with the $(t_j)$\+adic topology
on $R[[t_j]]$.

\begin{lem}  \label{many-variables-power-series}
 The forgetful functor $R[[t_j]]\contra\rarrow R[t_j]\mod$
identifies the category of $R[[t_j]]$\+contramodules with
the full subcategory of the category of $R[t_j]$\+modules
with the following property.
 For any sequence of elements $p_i\in P$, \ $i\ge0$, and
any variable~$t_j$, \ $1\le j\le n$, there exists a unique
sequence of elements $q_i\in P$ such that $q_i =
p_i+t_jq_{i+1}$ for all $i\ge0$. \hbadness=1150
\end{lem}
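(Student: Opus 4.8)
The plan is to reduce the case of several variables to the one-variable case established in Lemma~\ref{one-variable-power-series}, exploiting the factorization of formal power series rings $R[[t_1,\dotsc,t_n]]\simeq R[[t_1,\dotsc,t_{n-1}]][[t_n]]$ together with the general change-of-rings formalism for contramodules over a continuous ring homomorphism (Lemma~\ref{restriction-faithful}). First I would set $S=R[[t_1,\dotsc,t_{n-1}]]$, so that $R[[t_j]]=S[[t_n]]$ as topological rings, both being endowed with the adic topology for the ideal generated by the variables; here one must observe that the $(t_1,\dotsc,t_n)$\+adic topology on $S[[t_n]]$ agrees with the $t_n$\+adic topology relative to the adic topology on the coefficient ring $S$, which is exactly the setting of Lemma~\ref{one-variable-power-series} (with $S$ in place of $R$, the latter lemma not requiring Noetherianness).

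The key steps proceed by induction on~$n$. The base case $n=1$ is Lemma~\ref{one-variable-power-series}. For the induction step, I would apply the one-variable lemma with coefficient ring $S$ to identify $S[[t_n]]\contra$ with the full subcategory of $S[t_n]\mod$ consisting of those modules $P$ for which, for every sequence $p_i\in P$, there is a unique sequence $q_i\in P$ with $q_i=p_i+t_nq_{i+1}$. Next I would invoke the inductive hypothesis, which describes $S\contra=R[[t_1,\dotsc,t_{n-1}]]\contra$ inside $R[t_1,\dotsc,t_{n-1}]\mod$ by the analogous condition for each variable $t_1,\dotsc,t_{n-1}$. The task is then to verify that an $S[[t_n]]$\+contramodule, when its scalars are restricted all the way down to $R[t_1,\dotsc,t_n]=R[t_j]$, is precisely an $R[t_j]$\+module satisfying the stated condition simultaneously for each of the $n$ variables. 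One direction is essentially bookkeeping: an $S[[t_n]]$\+contramodule carries the $t_n$\+summation condition by the one-variable lemma, and its underlying $S$\+contramodule structure (obtained by restriction of scalars) carries the conditions for $t_1,\dotsc,t_{n-1}$ by the inductive hypothesis; one should check that the $S$\+contramodule structure underlying an $S[[t_n]]$\+contramodule is genuinely the expected one, which follows from the compatibility of the monad maps $R[[X]]$ under the continuous inclusion $S\to S[[t_n]]$.

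The more delicate direction is the converse: given an $R[t_j]$\+module $P$ satisfying the summation condition for all $n$ variables, I must reconstruct a genuine $S[[t_n]]$\+contramodule structure, i.e.\ show that the conditions for $t_1,\dotsc,t_{n-1}$ upgrade $P$ to an $S$\+contramodule and that the $t_n$\+condition is then compatible with this $S$\+structure so as to yield an $S[[t_n]]$\+contramodule. The main obstacle I anticipate is precisely this compatibility: one needs the infinite summation operation $\sum_i t_n^i p_i$ furnished by the one-variable lemma to interact correctly with the $S$\+contramodule infinite summations (with coefficients converging to zero in the $(t_1,\dotsc,t_{n-1})$\+adic topology of~$S$), so that the combined operation satisfies the associativity axiom for the full ring $S[[t_n]]$. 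Concretely, a family of elements of $S[[t_n]]$ converging to zero in the $(t_j)$\+adic topology must be decomposed into its powers of~$t_n$, with each coefficient a family converging to zero in~$S$, and one must check that summing first over powers of $t_n$ and then over the $S$\+coefficients (via the two already-established contramodule structures) is well-defined and associative. I expect this to reduce to a manipulation of iterated sums entirely parallel to the associativity computation carried out in the proof of Lemma~\ref{one-variable-power-series}, using the uniqueness clauses to pin down the reconstructed operations; the Noetherian hypothesis plays no role here, only the structural iteration $R[[t_j]]=S[[t_n]]$ and repeated application of Lemma~\ref{restriction-faithful} to guarantee that restriction of scalars is fully faithful with the expected image at each stage.
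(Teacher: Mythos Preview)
Your inductive strategy is sound, and you have correctly identified the heart of the matter: the compatibility (i.e., commutativity) of the one-variable infinite summation operations for the different~$t_j$.  That is exactly the computation the paper carries out.  However, there is an imprecision in the way you set up the induction that you should be aware of.

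You propose to apply Lemma~\ref{one-variable-power-series} ``with coefficient ring~$S$'', where $S=R[[t_1,\dotsc,t_{n-1}]]$.  But that lemma, as stated and proved, is for a \emph{discrete} coefficient ring: it identifies $R[[t]]\contra$ with a full subcategory of $R[t]\mod$.  With~$S$ carrying its adic topology, the natural intermediate category is not $S[t_n]\mod$ (abstract modules) but rather $S$\+contramodules equipped with an $S$\+contramodule endomorphism~$t_n$; the forgetful functor $S[[t_n]]\contra\to S[t_n]\mod$ does land in abstract modules, but its image is \emph{not} cut out by the $t_n$\+summation condition alone---you must also impose the $S$\+contramodule condition.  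You do say this correctly in your final paragraph (``the conditions for $t_1,\dotsc,t_{n-1}$ upgrade $P$ to an $S$\+contramodule and the $t_n$\+condition is then compatible\dots''), so the imprecision is only in the earlier sentence; but note that your remark about Noetherianness is a red herring---the relevant issue is discreteness, not Noetherianness.

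The paper sidesteps the inductive framework entirely.  It shows directly that an $R[[t_j]]$\+contramodule structure on an $R$\+module~$P$ is the same datum as $n$~\emph{commuting} one-variable infinite summation operations $\sum_i t_j^i p_i$ (one for each~$j$), each satisfying the axioms of Lemma~\ref{one-variable-power-series} over the discrete ring~$R$.  The nontrivial content is then the verification that, when the one-variable operations are reconstructed from the $R[t_j]$\+module condition, they automatically commute; this is done by an explicit manipulation with auxiliary double sequences $q_{ij}$, $u_{ij}$, $v_{ij}$, $w_{ij}$.  Your inductive approach would ultimately need the same computation (showing that the $t_n$\+summation is an $S$\+contramodule map amounts to showing it commutes with each $t_k$\+summation for $k<n$), so the two arguments have the same essential content; the paper's organization just avoids ever invoking the one-variable lemma over a topological coefficient ring.
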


\begin{proof}
 For a multiindex $a=(a_1,\dotsc,a_n)$, \ $a_i\ge0$, denote
by $t^a$ the monomial $t_1^{a_1}\dotsm t_n^{a_n}$.
 The category of $R[[t_j]]$\+contramodules is equivalent to
the category of $R$\+modules $P$ endowed with the operation
assigning to every multiindexed sequence of elements $p_a\in P$
an element $\sum_a t^ap_a\in P$ satisfying the equations of
linearity, unitality and associativity similar to those
introduced in the proof of Lemma~\ref{one-variable-power-series}.
 The proof is the same as in the case of one variable~$t$.

 Furthermore, the data of infinite summation operations with
the coefficients~$t^a$ is equivalent to that of infinite summation
operations with the coefficients~$t_j^i$, for every fixed~$j$,
with the commutativity equation $\sum_{i'=0}^\infty t_{j'}^{i'}
\sum_{i''=0}^\infty t_{j''}^{i''}p_{i'i''} = \sum_{i''=0}^\infty
t_{j''}^{i''}\sum_{i'=0}^\infty t_{j'}^{i'}p_{i'i''}$ for every
biindexed sequence $p_{i'i''}\in P$, \ $i'$, $i''\ge0$, and
every two variable numbers $1\le j',\,j''\le n$.
 Indeed, given the infinite summation operations with
the coefficients~$t_j^i$, for every fixed~$j$, on an $R$\+module
$P$, one defines the infinite summation operations with
the coefficients~$t^a$ on $P$ by the rule
$\sum_a t^ap_a=\sum_{a_1=0}^\infty t_1^{a_1}\sum_{a_2=0}^\infty
t_2^{a_2}\dotsb\sum_{a_n=0}^\infty t_n^{a_n}p_{a_1\dotso a_n}$.
 Clearly, any morphism preserving the infinite summation operations
with the coefficients~$t_j^i$ preserves also the infinite summation
operations with the coefficients~$t^a$.
{\hbadness=1150\par}

 Finally, it remains to show that the infinite summation operations
with the coefficients $t_{j'}^{i'}$ and~$t_{j''}^{i''}$ recovered
from an $R[t_j]$\+module structure satisfying our condition commute
with each other.
 For simplicity of notation, denote our variables $t_{j'}$
and~$t_{j''}$ by $s$ and~$t$.
 Given a biindexed sequence $p_{ij}\in P$, \ $i$, $j\ge0$, find
a sequence $q_{ij}$ such that $q_{ij}=p_{ij}+tq_{i,j+1}$ and
a sequence $u_{ij}$ such that $u_{ij}=q_{ij}+su_{i+1,j}$.
 Similarly, find a sequence $v_{ij}$ such that
$v_{ij}=p_{ij}+sv_{i+1,j}$ and a sequence $w_{ij}$ such that
$w_{ij}=v_{ij}+tw_{i,j+1}$ for all $i$, $j\ge0$.
 Set $w_{ij}-sw_{i+1,j}=z_{ij}$.
 Then we have $z_{ij}-tz_{i,j+1}=
(w_{ij}-sw_{i+1,j}) - t(w_{i,j+1}-sw_{i+1,j+1}) =
(w_{ij}-tw_{i,j+1}) - s(w_{i+1,j}-tw_{i+1,j+1}) = 
v_{ij}-sv_{i+1,j} = p_{ij}$.
 It follows that $z_{ij}=q_{ij}$ and therefore $w_{ij}=u_{ij}$
for all $i$, $j\ge0$.
 Now $\sum_{i=0}^\infty s^i\sum_{j=0}^\infty t^j p_{ij} =
\sum_{i=0}^\infty s^i q_{i,0} = u_{0,0} = w_{0,0} =
\sum_{j=0}^\infty t^j v_{0,j} = \sum_{j=0}^\infty t^j
\sum_{i=0}^\infty s^i p_{ij}$.
\end{proof}

\subsection{The one-variable Ext condition}
\label{one-variable-subsect}
 The following lemma compares the conditions~(c\+d) of
Theorem~\ref{noetherian-r-contramodules-thm} with the conditions of
Lemmas~\ref{one-variable-power-series}\+-%
\ref{many-variables-power-series}.

\begin{lem}  \label{ext-0-1-condition}
 Let $R$ be a ring, $s\in R$ be an element, and $P$ be an $R$\+module.
 Then one has\/ $\Ext^1_R(R[s^{-1}],P)=0$ if and only if for any
sequence of elements $p_i\in P$, \ $i\ge0$, there exists a sequence
of elements $q_i\in P$ such that $q_i = p_i+sq_{i+1}$ for all
$i\ge0$.
 One has\/ $\Hom_R(R[s^{-1}],P)=0$ if and only if the sequence of
elements~$q_i$ as above is unique for every (or some particular)
sequence of elements~$p_i$.
\end{lem}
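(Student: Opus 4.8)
The plan is to realize $R[s^{-1}]$ as a filtered colimit and to use the associated two-term free resolution. The localization is the colimit of the direct system $R\xrightarrow{\,s\,}R\xrightarrow{\,s\,}R\xrightarrow{\,s\,}\dotsb$ of multiplications by~$s$, and the standard telescope presentation of such a colimit furnishes a free resolution of length one,
\[
0\rarrow \bigoplus_{n\ge0}R \xrightarrow{\ \phi\ } \bigoplus_{n\ge0}R \rarrow R[s^{-1}]\rarrow 0,
\]
where $e_n$ denotes the $n$\+th standard free generator, the map $\phi$ is given by $e_n\mpsto e_n-se_{n+1}$, and the augmentation sends $e_n\mpsto s^{-n}$. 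First I would write down this resolution explicitly and verify that it is exact: the augmentation is clearly surjective and kills the image of $\phi$; injectivity of $\phi$ follows by a telescoping computation (if $\phi(\sum_n a_ne_n)=0$ then $a_n=sa_{n-1}$ with $a_{-1}=0$, forcing all $a_n=0$); and exactness in the middle is the usual mapping-telescope identity $\ker(\text{aug})=\operatorname{im}\phi$ for a colimit indexed by~$\mathbb N$.

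Next I would apply the contravariant functor $\Hom_R(-,P)$ to this resolution. Using the identification $\Hom_R(\bigoplus_{n\ge0}R,\>P)\simeq\prod_{n\ge0}P$, an element of either term becomes a sequence $(q_i)_{i\ge0}$ with $q_i\in P$, and the induced cochain map $\phi^{*}$ is computed from $\phi(e_n)=e_n-se_{n+1}$ to be $(q_i)\mpsto(q_i-sq_{i+1})$. Since the resolution has length one, there are no higher terms, and one reads off $\Hom_R(R[s^{-1}],P)\simeq\ker\phi^{*}$ and $\Ext^1_R(R[s^{-1}],P)\simeq\coker\phi^{*}$. Surjectivity of $\phi^{*}$ means precisely that for every sequence $(p_i)$ there is a sequence $(q_i)$ with $q_i-sq_{i+1}=p_i$, i.e.\ $q_i=p_i+sq_{i+1}$; hence $\Ext^1_R(R[s^{-1}],P)=0$ is equivalent to the asserted existence property. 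Likewise, triviality of $\ker\phi^{*}$ says that the only sequence with $q_i=sq_{i+1}$ is zero; comparing two solutions of $q_i=p_i+sq_{i+1}$ shows their difference lies in $\ker\phi^{*}$, so uniqueness for any one $(p_i)$ admitting a solution (in particular $p_i=0$) is equivalent to uniqueness for all $(p_i)$, and to $\Hom_R(R[s^{-1}],P)=0$. This handles the parenthetical ``every (or some particular)'' by the evident linearity reduction.

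The only genuine content, and thus the main obstacle, is establishing that the telescope sequence is an honest free resolution of $R[s^{-1}]$: injectivity of $\phi$ and the middle exactness $\ker(\text{aug})=\operatorname{im}\phi$. Both are standard but deserve to be checked, since the rest of the argument is purely formal bookkeeping of the boundary map $\phi^{*}$ and its sign. I expect no difficulty from noncommutativity, as the setting here is the commutative one, and $\Hom$ out of a free module of countable rank splitting as a product of copies of $P$ is routine.
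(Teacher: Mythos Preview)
Your proposal is correct and follows essentially the same approach as the paper: both use the telescope free resolution $0\rarrow\bigoplus_{i\ge0}Rf_i\rarrow\bigoplus_{i\ge0}Re_i\rarrow R[s^{-1}]\rarrow0$ with $f_i\mapsto e_i-se_{i+1}$ and $e_i\mapsto s^{-i}$, apply $\Hom_R(-,P)$, and identify the resulting map on products as $(q_i)\mapsto(q_i-sq_{i+1})$. Your additional remarks on verifying exactness of the resolution and on the ``every (or some particular)'' clause are more explicit than the paper's terse version, but the argument is the same.
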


\begin{proof}
 Consider the free resolution $0\rarrow\bigoplus_{i=0}^\infty
Rf_i\rarrow\bigoplus_{i=0}^\infty Re_i\rarrow R[s^{-1}]\rarrow0$
of the $R$\+module $R[s^{-1}]$ with the maps taking
$f_i$ to $e_i-se_{i+1}$ and $e_i$ to $s^{-i}$.

 Then the map $\Hom_R(\bigoplus_i Re_i\;P)\rarrow\Hom_R(\bigoplus_i
Rf_i\;P)$ computing the desired modules $\Hom_R$ and $\Ext^1_R$
is identified with the map taking a sequence $(q_i)\in P$ to
the sequence $p_i=q_i-sq_{i+1}$.
\end{proof}

 Now we are in the position to finish the proof of
Theorem~\ref{noetherian-r-contramodules-thm}.
 The forgetful functor $\R\contra\rarrow R\mod$ factorizes into
the composition of three functors $\R\contra\rarrow\T\contra
\rarrow R[t_j]\mod\rarrow R\mod$.

 Denote by $\T\contra_0$ and $R[t_j]\mod_0$ the full subcategories
of $\T\contra$ and $R[t_j]\mod$, respectively, consisting of
those (contra)modules where the elements $x_j-t_j$ act by zero.
 Then the functor $\R\contra\rarrow R\mod$ can be also decomposed
as $\R\contra\rarrow\T\contra_0\rarrow R[t_j]\mod_0\rarrow R\mod$.

 The functor $\R\contra\rarrow\T\contra_0$ is an equivalence of
categories by Corollary~\ref{r-t-restriction} and the functor
$R[t_j]\mod_0\rarrow R\mod$ is obviously an equivalence of
categories.
 The functor $\T\contra\rarrow R[t_j]\mod$ is fully faithful by
Lemma~\ref{many-variables-power-series}.
 
 The restriction of the latter functor to the category
$\T\contra_0$ identifies it with the full subcategory of
$R[t_j]\mod_0$ consisting of those modules where the action of
the elements~$t_j$ satisfies the condition of
Lemma~\ref{many-variables-power-series}.
 Finally, this action coincides with the action of the elements~$x_j$,
and the condition on the action of these elements is equivalent to
the condition~(d) by Lemma~\ref{ext-0-1-condition}. \qed

\subsection{The Ext comparison}
 The following theorem complements the assertion of
Theorem~\ref{noetherian-r-contramodules-thm}(1).
 We keep the notation of Section~\ref{appx-b-thm-formulation};
so $m$ is an ideal in a Noetherian commutative ring $R$, and
$\m=\varprojlim_n m/m^n$ is the extension of $m$ in the $m$\+adic
completion $\R=\varprojlim_n R/m^n$ of the ring~$R$.
 Let $\Ext_\R^*({-},{-})$ denote the Ext functor computed in
the abelian category of $\R$\+contramodules.
 
\begin{thm} \label{ext-comparison-thm}
 The map\/ $\Ext_\R^*(\P,\Q)\rarrow\Ext_R^*(\P,\Q)$ induced by
the forgetful functor $\R\contra\rarrow R\mod$ is an isomorphism
for any\/ $\R$\+contramodules\/ $\P$ and\/~$\Q$.
\end{thm}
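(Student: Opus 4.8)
The plan is to exhibit a single class of objects that is simultaneously projective in $\R\contra$ and acyclic for the functor $\Ext_R^*({-},\Q)$, namely the free $\R$\+contramodules $\R[[X]]$, and then play the two resolutions against each other. Concretely, choose a free (hence projective) resolution $\cdots\rarrow\F_1\rarrow\F_0\rarrow\P\rarrow0$ in $\R\contra$. Since the forgetful functor $\R\contra\rarrow R\mod$ is exact (Section~\ref{contramodules-sect}), this sequence stays exact in $R\mod$, and by Theorem~\ref{noetherian-r-contramodules-thm}(1) it is fully faithful, so the complexes $\Hom_\R(\F_\bullet,\Q)$ and $\Hom_R(\F_\bullet,\Q)$ are literally identical. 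The first computes $\Ext_\R^*(\P,\Q)$ because the $\F_i$ are projective in $\R\contra$. Thus everything reduces to the acyclicity claim: for every set $X$ and every $\R$\+contramodule $\Q$ one has $\Ext_R^i(\R[[X]],\Q)=0$ for all $i>0$. Granting this, the standard theorem of acyclic resolutions, applied to the contravariant left exact functor $\Hom_R({-},\Q)$ and the left resolution $\F_\bullet$ by $\Hom_R({-},\Q)$\+acyclic objects, shows that $\Hom_R(\F_\bullet,\Q)$ also computes $\Ext_R^*(\P,\Q)$; one then checks that the induced identification is the comparison map of the statement.

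To prove the acyclicity claim I would use the arithmetic fracture (Čech--completion) mechanism together with two facts already at hand: by Section~\ref{hom-ext-contramodule-subsect} the underlying $R$\+module of $\Q$ satisfies conditions~(a)--(c) of Theorem~\ref{noetherian-r-contramodules-thm}(2), i.e.\ $\Q$ is derived $m$\+complete; and $\R[[X]]=\limpr_n (R/m^n)[X]$ is the $m$\+adic completion of the free module $R[X]=\bigoplus_X R$. Consider first a principal ideal $m=(x)$ (with $x$ a non-zero-divisor and $R\rarrow\R$ separated). The pullback presentation of $R[X]$ yields a short exact sequence of $R$\+modules
\[
 0 \rarrow R[X] \rarrow \R[[X]]\oplus R[X][x^{-1}] \rarrow \R[[X]][x^{-1}] \rarrow 0 .
\]
Both localized terms are modules over $R[x^{-1}]=R[S^{-1}]$ with $S=\{x^n\}$ meeting $m$, hence are annihilated by $\Ext_R^i({-},\Q)$ for all $i\ge0$ by condition~(a) (using that $\Ext$ turns direct sums in the first argument into products). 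The long exact $\Ext_R^*({-},\Q)$\+sequence of the displayed triple then gives $\Ext_R^*(R[X],\Q)\simeq\Ext_R^*(\R[[X]],\Q)$; since $R[X]$ is $R$\+free, the left side is $\prod_X\Q$ in degree $0$ and vanishes above, proving the claim for principal $m$ and recovering $\Hom_R(\R[[X]],\Q)=\prod_X\Q=\Hom_\R(\R[[X]],\Q)$.

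For a general $m=(x_1,\dots,x_n)$ I would reduce to the principal case by induction on $n$, realizing the $m$\+adic completion as the $x_1$\+adic completion of the $(x_2,\dots,x_n)$\+adic completion and applying the one-variable fracture sequence at each stage; equivalently, one can use the stable Koszul (extended \v{C}ech) complex $\bigotimes_{j}\bigl(R\rarrow R[x_j^{-1}]\bigr)$, all of whose terms except $R$ invert some $x_j\in m$ and so, after tensoring with the relevant modules, are killed by $\Ext_R^*({-},\Q)$ by condition~(a). The upshot is the comparison $\Ext_R^*(R[X],\Q)\simeq\Ext_R^*(\R[[X]],\Q)$ valid because $\Q$, being derived $m$\+complete, is right orthogonal to the fiber of the completion map $R[X]\rarrow\R[[X]]$.

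The genuinely delicate point, and where I expect the main difficulty, is the exactness of the fracture sequence for the \emph{infinitely generated} free module $R[X]$, together with the identification of its derived $m$\+adic completion with $\R[[X]]$ placed in homological degree $0$ (the naive map $R[X]\rarrow\R[[X]]$ can fail to be injective when $R\rarrow\R$ is not separated, so in full generality one must work with the derived completion rather than the elementary sequence above). This is precisely where the Noetherian hypothesis is essential: the Artin--Rees lemma controls the comparison of the $x_j$\+adic and $m$\+adic filtrations (cf.\ Section~\ref{power-series-and-completion}), and the requisite $\limpr^1$\+vanishing for the relevant projective systems of Koszul homologies is supplied, after reduction to the appropriate Mittag-Leffler situation, by the main lemma of Appendix~\ref{proj-lim-appx} (Lemma~\ref{appx-main-lemma}). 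Once the acyclicity claim is secured, the reduction of the first paragraph closes the proof.
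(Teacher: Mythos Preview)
Your reduction to $\Ext_R^i(\R[[X]],\Q)=0$ for $i>0$ matches the paper exactly; the divergence is in how that vanishing is established. The paper proceeds via Propositions~\ref{proj-contra-flat} and~\ref{contra-cotorsion}: it first shows that $\R[[X]]$ is a flat $R$\+module with $R/m\ot_R\R[[X]]\simeq R/m[X]$ projective (most slickly from the Matlis-type description $\R[[X]]\simeq\Hom_R(C_\R,\bigoplus_X C_\R)$ with both arguments injective), and then, for any flat $F$ with projective reduction modulo~$m$ and any $\R$\+contramodule $\Q$, reduces $\Ext_R^{>0}(F,\Q)$ to $\Ext_{R/m^n}^{>0}(F/m^nF,\Q/\m^n\Q)=0$ (here $F/m^nF$ is projective over $R/m^n$) and controls the passage to the inverse limit by an elementary $\limpr/\limpr^1$ argument. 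Your derived-completion route is genuinely different and also works: since $\Q$ is derived $m$\+complete (condition~(c), established in Section~\ref{hom-ext-contramodule-subsect}) and the derived $m$\+adic completion of $R[X]$ is $\R[[X]]$ in degree~$0$ (for Noetherian~$R$), the Bousfield-localization property of derived completion yields $\Ext_R^*(\R[[X]],\Q)\simeq\Ext_R^*(R[X],\Q)$ directly. Two small caveats on your unpacking: the explicit fracture square needs your extra hypotheses (non-zero-divisor, separated), and in the stable-Koszul variant one must take $\Hom$ from the \v{C}ech complex into $R[X]$ rather than tensor with it (tensoring yields local cohomology rather than completion)---though your final sentence about orthogonality to the fiber of the completion map is the correct formulation. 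What each buys: your approach is more conceptual but imports Greenlees--May-type input; the paper's is entirely self-contained and produces, as a byproduct, the independently useful fact that free $\R$\+contramodules are flat $R$\+modules.
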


\begin{proof}
 The map $\Hom_\R(\P,\Q)\rarrow\Hom_R(\P,\Q)$ is an isomorphism by
Theorem~\ref{noetherian-r-contramodules-thm}(1).
 Clearly, it suffices to prove that $\Ext_R^i(\P,\Q)=0$ for any
projective $\R$\+contramodule $\P$, any $\R$\+contramodule $\Q$,
and all $i>0$.
 Propositions~\ref{proj-contra-flat}
and~\ref{contra-cotorsion} below imply as much.
\end{proof}

 The next corollary, which is to be compared to
Lemma~\ref{nakayama-reduct-proj}, will follow straightforwardly from
Theorem~\ref{ext-comparison-thm}, Lemma~\ref{contraflat-flat},
and Proposition~\ref{contra-cotorsion}.
 In the case of a complete Noetherian local ring $\R$ with a maximal
ideal~$\m$, such a result was obtained in~\cite[Corollary~4.5]{Yek0},
and in the general case of the adic completion of a Noetherian ring,
in~\cite[Corollary~1.8(1) and Theorem~1.10]{PSY2}.

\begin{cor}
 An\/ $\R$\+contramodule $\P$ is a projective object in\/ $\R\contra$
if and only if either of the following equivalent conditions holds:
\begin{enumerate}
\renewcommand{\theenumi}{\alph{enumi}}
\item $\P/\m^n\P$ is a projective $R/m^n$\+module for every integer
$n\ge1$;
\item $\P/\m^n\P$ is a flat $R/m^n$\+module for every integer $n\ge1$
and\/ $\P/\m\P$ is a projective $R/m$\+module;
\item $\P$ is a flat $R$\+module and\/ $\P/\m\P$ is a projective
$R/m$\+module. \qed 
\end{enumerate}
\end{cor}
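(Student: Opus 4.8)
The plan is to show that the three conditions (a), (b), (c) and projectivity are all equivalent by establishing the cycle of implications: projectivity $\Rightarrow$ (a) $\Rightarrow$ (b) $\Rightarrow$ (c) $\Rightarrow$ projectivity. The two implications among the reductions are formal. For projectivity $\Rightarrow$ (a), recall from Section~\ref{nakayama-sect} that the reduction functor $\P\mpsto\P/\m^n\P$ sends the free contramodule $\R[[X]]$ to the free $R/m^n$\+module $(R/m^n)[X]$; being additive, it therefore carries a direct summand of a free contramodule to a direct summand of a free $R/m^n$\+module, i.e.\ it takes projective $\R$\+contramodules to projective $R/m^n$\+modules. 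The implication (a) $\Rightarrow$ (b) is immediate, since projective modules are flat and the case $n=1$ of~(a) is exactly the statement that $\P/\m\P$ is projective over $R/m$.

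For (b) $\Rightarrow$ (c) I would invoke Lemma~\ref{contraflat-flat}, which I read as the ``complete'' local criterion for flatness: an $\R$\+contramodule $\P$ is flat as an $R$\+module if and only if each reduction $\P/\m^n\P$ is flat over $R/m^n$. Granting this, the flatness of $\P$ over $R$ follows from the first half of~(b), while the projectivity of $\P/\m\P$ over $R/m$ is the common second half, carried over verbatim.

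The substantive step is (c) $\Rightarrow$ projectivity, and here the two deeper inputs enter. First I would construct a free contramodule cover: choosing a family of elements of $\P$ whose images generate $\P/\m\P$ as an $R/m$\+module gives an $\R$\+contramodule morphism $f\:\R[[X]]\rarrow\P$ that is surjective after reduction modulo~$\m$; since $\m$ is topologically nilpotent, Nakayama's Lemma~\ref{nakayama-lemma} applied to $\coker f$ (which satisfies $(\coker f)/\m(\coker f)=0$) forces $f$ itself to be surjective. Writing $\K=\ker f$, again an $\R$\+contramodule, we get a short exact sequence $0\rarrow\K\rarrow\R[[X]]\rarrow\P\rarrow0$ in $\R\contra$. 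Now $\R[[X]]$ is flat over $R$ (a free contramodule, via Lemma~\ref{contraflat-flat}) and $\P$ is flat over $R$ by~(c), so $\K$ is flat over $R$ as well; being a contramodule, $\K$ is then a cotorsion $R$\+module by Proposition~\ref{contra-cotorsion}, whence $\Ext^1_R(\P,\K)=0$. By Theorem~\ref{ext-comparison-thm} this group equals $\Ext^1_\R(\P,\K)$, which therefore also vanishes; hence the sequence splits in $\R\contra$ and $\P$ is a direct summand of the free contramodule $\R[[X]]$, i.e.\ projective.

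I expect the genuine difficulty to reside entirely in the two cited results, Lemma~\ref{contraflat-flat} and Proposition~\ref{contra-cotorsion}: once the flatness criterion and the cotorsion property of contramodules are in hand, the free\+cover\+and\+split argument above is routine. The only point requiring a little care is that $\R$ need not be local, so $\P/\m\P$ is merely projective, not free, over $R/m$; but this is harmless, since the construction of $f$ uses only a generating family of $\P/\m\P$ and the surjectivity of $f$ rests on Nakayama for a topologically nilpotent ideal, which does not presuppose that $\R/\m$ is a field. (Notably, the last step does not visibly use the projectivity of $\P/\m\P$ at all, so that hypothesis is in fact recovered \emph{a posteriori} from flatness alone, consistently with the local case where it is automatic.)
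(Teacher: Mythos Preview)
Your argument is correct and matches the paper's approach exactly: the corollary is stated as following ``straightforwardly from Theorem~\ref{ext-comparison-thm}, Lemma~\ref{contraflat-flat}, and Proposition~\ref{contra-cotorsion}'', and your free-cover-and-split argument is the natural way to unpack this.

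One correction to your closing parenthetical, however: the projectivity of $\P/\m\P$ \emph{is} used in the step (c) $\Rightarrow$ projectivity. Proposition~\ref{contra-cotorsion} does not assert that every $\R$\+contramodule is cotorsion in the ordinary sense (i.e., that $\Ext_R^{>0}$ from an arbitrary flat module vanishes); its hypothesis on the first argument $F$ is that $F$ be flat over $R$ \emph{and} that $R/m\otimes_R F$ be projective over $R/m$. When you invoke it with $F=\P$ and $\Q=\K$, both halves of condition~(c) are needed. (Your side observation that $\K$ is flat over $R$ is correct but plays no role.) So the hypothesis is not redundant in the non-local case, and the three conditions are genuinely equivalent rather than (c) collapsing to flatness alone.
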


\subsection{Free/contraflat contramodules are flat modules}
 The following result can be found
in~\cite[Theorem~3.4]{Yek0} and~\cite[Theorem~1.5]{PSY2}.
 It goes back at least to~\cite[Lemma~1.4 and Theorem]{En}.

\begin{prop} \label{proj-contra-flat}
 The free\/ $\R$\+contramodule\/ $\F=\R[[X]]$ generated by a set $X$
is a flat $R$\+module, and the tensor product $R/m\ot_R\R[[X]]$
is naturally isomorphic to the free $R/m$\+module $R/m[X]$.
\end{prop}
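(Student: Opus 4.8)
The plan is to reduce both assertions to a single homological computation over the tower of quotients $R/m^n$. First I would record the identification $\F=\R[[X]]\cong\limpr_n (R/m^n)[X]$, coming from the facts that the open ideals $\om n=\m^n$ are cofinal among the neighborhoods of zero in the $\m$\+adic topology of $\R$ and that $\R/\m^n\cong R/m^n$: under this identification an element of $\F$ is a compatible system of finitely supported combinations, and the convergence-to-zero condition on the coefficients is exactly finiteness of support modulo each $m^n$. Thus $\F$ is the $m$\+adic completion of the free $R$\+module $R[X]$, its reductions $\F_n:=(R/m^n)[X]$ are free, hence flat, over $R/m^n$, and the transition maps $\F_{n+1}\rarrow\F_n$ are surjective.

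Both statements will fall out of computing $\Tor^R_i(R/I,\F)$ for a finitely generated ideal $I\subset R$ (all ideals being finitely generated, as $R$ is Noetherian). I would pick a resolution $P_\bullet\rarrow R/I$ by finite free $R$\+modules and note that $\F\ot_R P_\bullet\cong\limpr_n\bigl(\F_n\ot_{R/m^n}(P_\bullet/m^nP_\bullet)\bigr)$ as complexes, since finite direct sums commute with the projective limit defining $\F$. As $\F_n$ is flat over $R/m^n$, the level-$n$ homology is $H_i\bigl(\F_n\ot_{R/m^n}(P_\bullet/m^nP_\bullet)\bigr)=\bigoplus_X\Tor^R_i(R/m^n,R/I)$, and since the tower is termwise surjective the Milnor sequence
\[
0\to{\limpr}^1_n\,H_{i+1}\to H_i(\F\ot_R P_\bullet)\to\limpr_n H_i\to 0
\]
is available.

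The decisive input is the Artin--Rees pro\+vanishing: for the finitely generated module $M=R/I$ over the Noetherian ring $R$, the inverse system $\bigl(\Tor^R_i(R/m^n,M)\bigr)_n$ is pro\+zero for every $i\ge1$. Applying $\bigoplus_X$ termwise preserves pro\+zeroness, so $\bigl(\bigoplus_X\Tor^R_i(R/m^n,M)\bigr)_n$ is pro\+zero and hence both $\limpr_n$ and ${\limpr}^1_n$ of it vanish for $i\ge1$. Feeding this into the Milnor sequence kills $H_i(\F\ot_R P_\bullet)=\Tor^R_i(R/I,\F)$ for all $i\ge1$, which is precisely the flatness of $\F$ over $R$. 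For the second claim I take $I=m$ and read off the $i=0$ term: the ${\limpr}^1$ contribution again vanishes, while $H_0\bigl(\F_n\ot_{R/m^n}(P_\bullet/m^nP_\bullet)\bigr)=\bigl(R/m^n\ot_R R/m\bigr)[X]=(R/m)[X]$ with identity transition maps, whence $R/m\ot_R\F=H_0(\F\ot_R P_\bullet)\cong(R/m)[X]$; this isomorphism is the factorization through $\F/m\F$ of the projection $\F\rarrow\F_1$, and so is natural in $X$.

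The main obstacle is the interaction of the infinite coproduct $\bigoplus_X$ with the derived limits $\limpr_n$ and ${\limpr}^1_n$, which do not commute with infinite direct sums in general; the point that rescues the argument is that pro\+zeroness of a tower is visibly preserved under applying $\bigoplus_X$ termwise, so one never has to commute $\bigoplus_X$ past a limit. I would cite the pro\+vanishing of the $\Tor$ tower as the standard consequence of Artin--Rees (alternatively, the flatness of the completion of a free module over a Noetherian ring is available directly from~\cite{En}), and the Milnor sequence for towers of complexes with surjective transition maps as a routine fact.
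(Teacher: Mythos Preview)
Your argument is correct. The identification $\R[[X]]\cong\limpr_n(R/m^n)[X]$, the commutation of the finite free resolution with the projective limit, the Milnor sequence, and the Artin--Rees pro-vanishing of $\Tor^R_i(R/m^n,R/I)$ for $i\ge1$ all check out, and your key observation that $\bigoplus_X$ applied termwise preserves pro-zero towers is exactly what is needed to pass from the singleton case to arbitrary~$X$.

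The paper's route shares the same core ingredient (Artin--Rees applied to the tower of quotients) but packages it differently. For the second assertion the paper argues, for an arbitrary $\R$\+contramodule $\P$, that $R/m\ot_R\P\simeq\P/\m\P$ directly from the contramodule structure: since $\m$ is finitely generated, the image of the contraaction $\m[[\P]]\rarrow\P$ coincides with the ordinary $m\P$. For flatness the paper does not invoke a Milnor sequence but rather shows directly that the functor $M\mpsto\limpr_n M\ot_R\F/\m^n\F$ is exact on finitely generated $R$\+modules, using Artin--Rees in the form of cofinality of the systems $K/(K\cap m^nL)$ and $K/m^nK$, and then identifies this functor with $M\ot_R\F$. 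The paper also offers a second, quite different proof: the isomorphism $\R[[X]]\simeq\Hom_R(C_\R,\bigoplus_X C_\R)$ with $C_\R$ and $\bigoplus_X C_\R$ injective $R$\+modules, from which flatness follows by Lazard--Govorov (this is the approach of~\cite{En} you allude to). Your derived-limit argument is a clean homological reformulation of the paper's first approach; the paper's second approach is shorter but relies on the Matlis-type identification of the injective cogenerator.
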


\begin{proof}
 For any $\R$\+contramodule $\P$, one has $R/m\ot_R\P\simeq
\R/\m\ot_\R\P$, since $\m$ is the extension of~$m$ in $\R$, and
$\R/\m\ot_\R\P\simeq\P/\m\P$, since any family of elements of $\m$
converging to~$0$ in the topology of $\R$ can be represented as
a finite linear combination of families of elements of $\R$
converging to~$0$ in the topology of $\R$ with any given set of
generators of the ideal~$\m$ as the coefficients.
 Clearly, $\R[[X]]/\m\R[[X]]\simeq\R/\m[X]\simeq R/m[X]$, so
the second assertion of Proposition follows.

 The first assertion is a particular case of the next lemma.
 In the case of a complete Noetherian local ring $\R$, one can also
notice that $\R[[X]]\simeq\Hom_R(C_\R\;\bigoplus_X C_\R)$ and both
$C_\R$ and $\bigoplus_X C_\R$ are injective $R$\+modules~\cite{En}.
 Both assertions of Proposition can be obtained from this isomorphism.
\end{proof}

 The following lemma is to be compared to
Lemma~\ref{free-r-contramodules}(a)
and~\cite[Lemma~A.3]{Psemi}.

\begin{lem}  \label{contraflat-flat}
 Let\/ $\P$ be an\/ $\R$\+contramodule.
 Then\/ $\P$ is a flat $R$\+module if and only if\/ $\P/\m^n\P$ is
a flat $R/m^n$\+module for every integer $n\ge1$.
\end{lem}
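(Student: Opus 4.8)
The plan is to prove the two implications separately, the forward one being formal and the reverse one requiring the full strength of the contramodule structure. Throughout I would use that $m^n$ is finitely generated (as $R$ is Noetherian) and that $\m^n$ is open, hence closed, so that $\R/\m^n=R/m^n$ and $\om n=\m^n$. The first thing to record is the identification $\P/\m^n\P\simeq R/m^n\ot_R\P$. This extends the computation in the proof of Proposition~\ref{proj-contra-flat}: the finite generation of $m^n$ shows, exactly as there, that the contramodule submodule $\m^n\P$ (the image of the contraaction restricted to $\m^n[[\P]]$) coincides with the ordinary submodule $m^n\P$, whence $\P/\m^n\P\simeq\R/\m^n\ot_\R\P\simeq R/m^n\ot_R\P$. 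Granting this, the ``only if'' direction is immediate, since flatness is preserved under base change: if $\P$ is $R$-flat then $R/m^n\ot_R\P=\P/\m^n\P$ is flat over $R/m^n$ for every $n\ge1$.

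For the ``if'' direction I would reduce $R$-flatness of $\P$ to the vanishing of $\Tor_1^R(R/I,\P)$ for every ideal $I=(a_1,\dots,a_r)$ (automatically finitely generated). In the equivalent syzygy form, writing $s_1,\dots,s_t$ for generators of the finitely generated module of syzygies $\operatorname{Syz}_R(a)$ and $S\colon\P^t\to\P^r$ for the map with columns $s_l$, I must show $\ker(a\colon\P^r\to\P)=\operatorname{im} S$. I would establish this by a layer-by-layer approximation: given $x$ with $\sum_j a_j x_j=0$, reduce modulo $\m^n$ and use flatness of $\P/\m^n\P$ over $R/m^n$ to solve the relation there. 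A single reduction modulo $\m$ does \emph{not} suffice, because the syzygies of $\bar a$ over $R/m$ are strictly larger than the reductions of $\operatorname{Syz}_R(a)$; this is precisely why flatness of \emph{all} $\P/\m^n\P$ is needed. An Artin--Rees comparison (using $(a_1,\dots,a_r)\cap m^n\subseteq m^{\,n-c_0}(a_1,\dots,a_r)$ for $n\ge c_0$) shows that $\operatorname{Syz}_{R/m^n}(\bar a)$ agrees with the reductions of $\operatorname{Syz}_R(a)$ up to an error in $m^{\,n-c_0}$, so each correction can be expressed through the fixed generators $s_l$ modulo higher powers of $\m$. Assembling the successive corrections produces elements $p_l\in\P$ as convergent contramodule infinite sums, the coefficients being monomials in the generators of $\m$ of growing degree and hence converging to zero by topological nilpotence of~$\m$.

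The hard part will be the exactness of the resulting relation. Because an $\R$-contramodule need not be $\m$-adically separated --- the map $\P\to\limpr_n\P/\m^n\P$ is only surjective, cf.\ the Introduction --- the approximation a priori yields only $x-\sum_l p_l s_l\in\bigcap_n\m^n\P^r$, and the classical complete-local criterion of flatness, which assumes separatedness, does not apply. The way around this is to exploit that the relevant objects are genuine $\R$-contramodules: $\Tor_1^R(R/I,\P)$ is a subquotient of the complex $\dots\to\P^{n_1}\to\P^{n_0}$ of $\R$-contramodules computing it (the differentials being matrices over $R\subset\R$, and $\R\contra$ being abelian), so one may organize the whole construction as a single infinite summation in a ``doubled'' free contramodule and force the identity $x=\sum_l p_l s_l$ to hold exactly in $\P$ by invoking the contraassociativity axiom, precisely as in the proof of Nakayama's Lemma~\ref{nakayama-lemma}. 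This telescoping step --- converting an approximate solution into an exact one by means of the infinite-summation operations rather than a limit in a completion --- is where the hypothesis that $\P$ is a contramodule, and not merely an $\m$-adically complete module, is indispensable, and it is the main obstacle to overcome.
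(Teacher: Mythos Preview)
Your ``only if'' direction and the identification $\P/\m^n\P\simeq R/m^n\ot_R\P$ match the paper. For the ``if'' direction, however, the paper takes a different and cleaner route that sidesteps precisely the obstacle you single out.

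Rather than a direct syzygy computation, the paper proceeds in two steps. First, \emph{assuming} $\P$ is separated (i.e., $\P\simeq\varprojlim_n\P/\m^n\P$), it shows that the functor $M\mapsto\varprojlim_n(M\ot_R\P/\m^n\P)$ on finitely generated $R$-modules is exact: for a short exact sequence $K\to L\to M$ one uses Artin--Rees to pass from $K/(K\cap m^nL)$ to $K/m^nK$ up to cofinality, the flatness of each $\P/\m^n\P$ over $R/m^n$, and Mittag--Leffler for the resulting countable surjective systems. Since this functor agrees with $M\ot_R\P$ on finitely generated free $M$ (by separatedness) and both are right exact, they agree on all finitely generated $M$, so $\P$ is flat. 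Second --- and this is the key trick --- the paper \emph{deduces} separatedness from the hypothesis: setting $\K=\bigcap_n\m^n\P$ and $\P'=\P/\K$, the quotient $\P'$ is separated with $\P'/\m^n\P'\simeq\P/\m^n\P$, hence flat by step one; tensoring $0\to\K\to\P\to\P'\to0$ with $R/m$ stays exact because $\P'$ is flat, so $\K/\m\K=0$ and Nakayama (Lemma~\ref{nakayama-lemma}) gives $\K=0$.

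Thus where you propose to overcome non-separatedness by a telescoping argument forcing an exact identity inside $\P$, the paper instead shows that non-separatedness simply cannot occur under the hypothesis --- a bootstrap through the separated quotient. This dissolves your ``hard part'' rather than confronting it. Your approach may well be completable, but the contraassociativity step you gesture at is not yet a proof, and the paper's route is both shorter and more conceptual.
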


\begin{proof}
 As explained in the above proof, one has $\P/\m^n\P\simeq
R/m^n\ot_R\P$, so the ``only if'' assertion is clear.
 To prove the ``if'', notice that the natural map
$\P\rarrow\varprojlim_n \P/\m^n\P$ is surjective for any
$\R$\+contramodule $\P$, as one can easily show using the infinite
summation operations in $\P$ (see~\cite[Lemma~A.2.3 and
Remark~A.3]{Psemi}).
 Assume first that this map is an isomorphism (we will see below in
this proof that for an $\R$\+contramodule $\P$ satisfying
the equivalent conditions of Lemma it always is).
 
 Consider the functor $M\mpsto \varprojlim_n M\ot_R\P/\m^n\P$ acting
from the category of finitely generated $R$\+modules $M$ to, e.~g.,
the category of abelian groups.
 Let us show that this functor is exact.
 Indeed, for any short exact sequence of finitely generated
$R$\+modules $K\rarrow L\rarrow M$ there are short exact sequences
of $R/m^n$\+modules $K/(K\cap m^nL)\rarrow L/m^nL\rarrow M/m^nM$.
 The tensor multiplication with $\P/\m^n\P$ over $R/m^n$ preserves
exactness of these triples, since $\P/\m^n\P$ is a flat $R/m^n$\+module.
 The passage to the projective limits over~$n$ preserves exactness
of the resulting sequences of tensor products, because these are
countable filtered projective systems of surjective maps.

 On the other hand, by the Artin--Rees lemma the projective
system $K/(K\cap m^nL)\ot_{R/m^n}\P/\m^n\P\simeq
K/(K\cap m^nL)\ot_R\P$ is mutually cofinal with the projective
system $K/m^nK\ot_{R/m^n}\P/\m^n\P\simeq K/m^nK\ot_R\P$.
 Hence the related projective limits coincide, and we have proven
that our functor is exact.
 For finitely generated free $R$\+modules $M$, the natural morphism
$M\ot_R\P\rarrow\varprojlim_n M\ot_R\P/\m^n\P$ is an isomorphism
due to our assumption on~$\P$.
 Both functors being right exact on the category of finitely generated
$R$\+modules $M$, it follows that the morphism is an isomorphism for
all such $M$ and the functor $M\mpsto M\ot_R\P$ is exact.

 Now let $\F$ be an arbitrary $\R$\+contramodule such that
$\F/\m^n\F$ is a flat $R/m^n$\+module for all~$n$.
 Set $\P=\varprojlim_n\F/\m^n\F=\F/\bigcap_n\m^n\F$.
 Clearly, $\P/\m^n\P\simeq\F/\m^n\F$, so we already know that
$\P$ is a flat $R$\+module.
 Consider the short exact sequence of $\R$\+contramodules
$\K\rarrow\F\rarrow\P$, where $\K=\bigcap_n\m^n\F$, and take its
tensor product with $R/m$ over~$R$.
 Since $\P$ is a flat $R$\+module, the sequence will remain exact.
 Hence $\K/\m\K=R/m\ot_R\K=0$ and $\K=0$ by Lemma~\ref{nakayama-lemma}.
\end{proof}

\subsection{All contramodules are relatively cotorsion modules}
 The following proposition demonstrates the connection between our
$\R$\+contramodules and Harrison's \emph{co-torsion abelian
groups}~\cite{Har} or Enochs' \emph{cotorsion $R$\+modules}~\cite{En}
(see~\cite[Section~1.3]{Pcosh} for further details).

\begin{prop} \label{contra-cotorsion}
 For any flat $R$\+module $F$ such that $R/m\ot_R F$ is a projective
$R/m$\+module, and any $\R$\+contramodule\/ $\Q$, the vanishing\/
$\Ext_R^i(F,\Q)=0$ holds for all\/ $i>0$.
\end{prop}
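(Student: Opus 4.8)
The plan is to compute $\Ext^*_R(F,\Q)$ from a free resolution of $F$ that "completes" to a free resolution of an $\R$\+contramodule, and then recognize the latter $\Ext$ as living over the contramodule category, where it vanishes. First I would fix any free resolution $\cdots\rarrow R^{(X_1)}\xrightarrow{d_1}R^{(X_0)}\xrightarrow{d_0}F\rarrow0$ over $R$. Since $F$ is flat, a routine $\Tor$\+dimension shift shows that every syzygy $Z_i=\ker d_{i-1}$ is a flat $R$\+module. Each $d_i$ is a column-finite matrix over $R$ and so extends canonically to a morphism $\bar d_i\:\R[[X_i]]\rarrow\R[[X_{i-1}]]$ of free $\R$\+contramodules; since $\bar d_i$ agrees with $d_i$ on generators, the $\bar d_i$ form a complex $\R[[X_\bullet]]$. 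The elementary but crucial point is a natural isomorphism of complexes $\Hom_R(R^{(X_\bullet)},\Q)\simeq\Hom_\R(\R[[X_\bullet]],\Q)$: degreewise both sides are $\prod_X\Q$ (maps out of a free object are families indexed by generators), the isomorphism being restriction along $R^{(X)}\hookrightarrow\R[[X]]$, which intertwines $-\circ d_i$ with $-\circ\bar d_i$ because $\bar d_i$ restricts to $d_i$. Hence $\Ext^i_R(F,\Q)=H^i\Hom_R(R^{(X_\bullet)},\Q)=H^i\Hom_\R(\R[[X_\bullet]],\Q)$.

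Next I would show $\R[[X_\bullet]]$ is a free $\R$\+contramodule resolution of $\widehat F:=\limpr_n F/m^nF$. Using $\R[[X]]\simeq\limpr_n(R/m^n)^{(X)}$ we have $\R[[X_\bullet]]=\limpr_n\bigl(R/m^n\ot_R R^{(X_\bullet)}\bigr)$. Flatness of $F$ makes each reduced complex $(R/m^n)^{(X_\bullet)}\rarrow F/m^nF$ exact (as $\Tor^R_{>0}(R/m^n,F)=0$), and flatness of the syzygies $Z_i$ makes $\{Z_i/m^nZ_i\}_n$ a Mittag--Leffler system with surjective transition maps; therefore $\limpr_n$ preserves exactness and $\R[[X_\bullet]]\rarrow\widehat F$ is a resolution. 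A right-exactness computation gives $\widehat F/m^n\widehat F\simeq F/m^nF$, so $\widehat F$ is flat over $R$ by Lemma~\ref{contraflat-flat} and $\widehat F/\m\widehat F\simeq F/mF$ is projective over $R/m$. Consequently $H^i\Hom_\R(\R[[X_\bullet]],\Q)=\Ext^i_\R(\widehat F,\Q)$, and everything reduces to proving that $\widehat F$ is a \emph{projective} $\R$\+contramodule.

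This last point is the heart of the matter. I would first observe that $F/m^nF$ is not merely flat but projective over $R/m^n$: it is flat over $R/m^n$ and reduces modulo the nilpotent ideal $m/m^n$ to $F/mF$, which is projective over $R/m$, and here I would invoke (or quickly prove, by lifting a splitting and applying Nakayama for a nilpotent ideal) the standard fact that a flat module projective modulo a nilpotent ideal is projective. Now choose $X_0$ and a surjection $\psi\:\R[[X_0]]\rarrow\widehat F$ whose reduction $\psi/\m\psi\:(R/m)^{(X_0)}\rarrow F/mF$ is a split surjection; then $\psi$ is surjective by Nakayama (Lemma~\ref{nakayama-lemma}) and has flat kernel. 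Writing $\psi=\limpr_n\psi_n$ with $\psi_n\:(R/m^n)^{(X_0)}\rarrow F/m^nF$, each $\psi_n$ splits by $R/m^n$\+projectivity of $F/m^nF$, and the sets of splittings form a torsor under the inverse system $\Hom_{R/m^n}(F/m^nF,\ker\psi_n)$, whose transition maps are surjective (projectivity of $F/m^{n+1}F$ allows lifting of sections). This system is Mittag--Leffler, so $\limpr^1$ vanishes and a compatible family of splittings assembles into a splitting of $\psi$. Thus $\widehat F$ is a direct summand of the free $\R$\+contramodule $\R[[X_0]]$, hence projective, giving $\Ext^i_\R(\widehat F,\Q)=0$ for $i>0$ and finishing the proof.

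I expect the projectivity of $\widehat F$ to be the only real obstacle: its two ingredients — lifting projectivity across the nilpotent ideals $m/m^n$, and the Mittag--Leffler gluing of the level-$n$ splittings — require genuine care, the more so because $R/m$ need not be a field, so (unlike the local pro-Artinian situation of Lemma~\ref{nakayama-reduct-proj}) one cannot arrange $\psi$ to be an isomorphism modulo $\m$ and must instead split off a genuine projective summand. The remaining ingredients — syzygy flatness, the $\Tor$- and Mittag--Leffler-exactness of the reduced complexes, and the $\Hom$-comparison of the two resolutions — are straightforward.
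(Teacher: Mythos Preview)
Your proof is correct and takes a genuinely different route from the paper's. The paper works on the \emph{second} variable: it first reduces to the case $\Q\simeq\varprojlim_n\Q/\m^n\Q$ (by presenting an arbitrary $\Q$ as a quotient of a free contramodule by a subcontramodule, both of which are separated--complete), then invokes the same ``flat plus projective modulo a nilpotent ideal implies projective'' lemma to get $\Ext^{>0}_R(F,\Q/\m^n\Q)=0$, and finally applies a general derived-limit lemma identifying $\Ext^*_R(F,\varprojlim_n M_n)$ with $\varprojlim^*_n\Hom_R(F,M_n)$ under the relevant vanishing hypotheses. You instead complete the \emph{first} variable: you build $\widehat F$, recognize $\Ext^*_R(F,\Q)\simeq\Ext^*_\R(\widehat F,\Q)$ via the elementary $\Hom$-comparison for free resolutions, and prove directly that $\widehat F$ is a projective $\R$\+contramodule by gluing the level-$n$ splittings. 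Your route yields as a byproduct the projectivity of $\widehat F$ in $\R\contra$---a fact the paper records separately as a corollary, deduced \emph{from} the Proposition rather than used to prove it---at the cost of the Mittag--Leffler bookkeeping for the torsor of splittings (which also silently uses surjectivity of $K_{n+1}\to K_n$, immediate from flatness of the syzygy). The paper's route is shorter once the derived-limit lemma is available, but has to handle the reduction step on $\Q$ with some care.
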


\begin{proof}
 Present an $\R$\+contramodule $\Q$ as the quotient contramodule of
a free $\R$\+contramodule $\Q'$ by its $\R$\+subcontramodule~$\Q''$.
 Then one has $\Q'\simeq\varprojlim_n \Q'/\m^n\Q'$ and
$\Q''\simeq\varprojlim_n\Q''/\m^n\Q''$ \cite[Lemma~A.2.3 and
Remark~A.3]{Psemi}.
 This allows one to assume that $\Q\simeq\varprojlim_n\Q/\m^n\Q$
in the assertion of Proposition.

 Since $F$ is a flat $R$\+module, there is a natural isomorphism
$\Ext_R^*(F\;\Q/\m^n\Q)\simeq\Ext_{R/m^n}^*(F/m^n F\;\Q/\m^n\Q)$.
 The following lemma is standard.

\begin{lem}
 Let $A$ be an associative ring, $J\subset A$ be an ideal, and $n\ge1$
be an integer such that $J^n=0$.
 Suppose $P$ is a flat left $A$\+module such that $P/JP$ is
a projective left $A/J$\+module.
 Then the $A$\+module $P$ is projective.
\end{lem}

\begin{proof}
 The proof is similar to that of Lemma~\ref{r-free-proj-inj-reduction}.
 One picks a free $A$\+module $G$ such that $P/JP$ is the image of
an idempotent endomorphism~$e$ of the $A/J$\+module $G/JG$; notices
that the associative ring homomorphism $\Hom_A(G,G)\rarrow
\Hom_{A/J}(G/JG\;\allowbreak G/JG)$ is surjective with the kernel
$I=\Hom_A(G,JG)\subset\Hom_A(G,G)$ for which one has $I^n=0$; lifts
the idempotent element $e\in\Hom_{A/J}(G/JG\;G/JG)$ to an idempotent
element $f\in\Hom_A(G,G)$; considers the projective $A$\+module $fG$;
and lifts its natural morphism onto $fG/J(fG)\simeq e(G/JG)\simeq P/JP$
to an $A$\+module morphism $fG\rarrow P$.
 For the cokernel $C$ of the latter morphism, one has $C/JC=0$, hence
$C=0$.
 Finally, one considers the exact sequence of $A$\+modules
$0\rarrow K\rarrow fG\rarrow P\rarrow 0$, which remains exact after
tensoring with $A/J$, since the $A$\+module $P$ is flat; hence
$K/JK=0$ and $K=0$, so $P\simeq fG$.
\end{proof}

 We have proven that $F/m^nF$ is a projective $R/m^n$\+module.
 It follows that $\Ext_R^i(F\;\Q/\m^n\Q)=0$ for all $i>0$
and all~$n$.
 Besides, the natural map
\begin{multline*}
\Hom_R(F\;\Q/\m^n\Q)\simeq\Hom_R(F/m^nF\;\Q/\m^n\Q) \\
\lrarrow\Hom_R(F/m^nF\;\Q/\m^{n-1}\Q)\simeq\Hom_R(F\;\Q/\m^{n-1}\Q)
\end{multline*}
is surjective.
 Now it remains to apply the following general result.
\end{proof}

\begin{lem}
 Let $A$ be an associative ring, $L$ be a left $A$\+module, and
$M_\alpha$ be a projective system of left $A$\+modules.
 Assume that\/ $\varprojlim^i_\alpha M_\alpha=0$ and\/
$\Ext_A^i(L,M_\alpha)=0$ for all\/ $i>0$ and all\/~$\alpha$.
 Then there is a natural isomorphism\/ $\Ext_A^*(L\;
\varprojlim_\alpha M_\alpha)\simeq
\varprojlim_\alpha^*\Hom_A(L,M_\alpha)$.
 In particular, if $M_\alpha$ form a countable filtered projective
system and the natural maps\/ $\Hom_A(L,M_\alpha)\rarrow
\Hom_A(L,M_\beta)$ are surjective for all\/ $\alpha>\beta$, then
$\Ext_A^i(L\;\varprojlim_\alpha M_\alpha)=0$ for all\/ $i>0$.
\end{lem}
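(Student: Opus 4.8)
The plan is to compute $\Ext_A^*(L,\varprojlim_\alpha M_\alpha)$ by means of a single projective resolution of $L$ together with the two hypercohomology spectral sequences of the derived functor of projective limit. First I would fix a projective resolution $P_\bullet\rarrow L$ and form, for each index~$\alpha$, the complex $C_\alpha^\bullet=\Hom_A(P_\bullet,M_\alpha)$ whose cohomology is $\Ext_A^*(L,M_\alpha)$; by hypothesis this equals $\Hom_A(L,M_\alpha)$ in degree~$0$ and vanishes in positive degrees. Since $\Hom_A(P_n,-)$ preserves arbitrary projective limits, applying $\varprojlim_\alpha$ termwise recovers $\Hom_A(P_\bullet,\varprojlim_\alpha M_\alpha)$, so that the sought Ext groups are the cohomology of the complex $\varprojlim_\alpha C_\alpha^\bullet$.

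The key preliminary step is to show that each projective system $\alpha\mpsto\Hom_A(P_n,M_\alpha)$ is $\varprojlim$\+acyclic, that is $\varprojlim_\alpha^i\Hom_A(P_n,M_\alpha)=0$ for $i>0$. For this I would use that $P_n$ is a direct summand of a free module $A^{(S)}$, whence $\Hom_A(A^{(S)},M_\alpha)\simeq\prod_S M_\alpha$ as projective systems; because products are exact in $A\mod$ and a product of injective resolutions is injective, the functors $\varprojlim^i$ commute with products, so $\varprojlim_\alpha^i\prod_S M_\alpha\simeq\prod_S\varprojlim_\alpha^i M_\alpha=0$ by the hypothesis $\varprojlim_\alpha^i M_\alpha=0$. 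Additivity of $\varprojlim^i$ then transfers this vanishing from $\prod_S M_\alpha$ to its direct summand $\Hom_A(P_n,M_\alpha)$.

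With termwise acyclicity in hand, $C_\alpha^\bullet$ is a complex of $\varprojlim$\+acyclic objects, so its ordinary limit computes its derived limit $\boR\varprojlim_\alpha C_\alpha^\bullet$; this is what one reads off the second hypercohomology spectral sequence, whose $E_1$ term involves the vanishing groups $\varprojlim_\alpha^q C_\alpha^p$ for $q>0$. The first hypercohomology spectral sequence then reads $E_2^{p,q}=\varprojlim_\alpha^p H^q(C_\alpha^\bullet)\Rightarrow\Ext_A^{p+q}(L,\varprojlim_\alpha M_\alpha)$. Since $H^q(C_\alpha^\bullet)=\Ext_A^q(L,M_\alpha)$ vanishes for $q>0$, this sequence collapses onto the row $q=0$, producing the natural isomorphism $\Ext_A^p(L,\varprojlim_\alpha M_\alpha)\simeq\varprojlim_\alpha^p\Hom_A(L,M_\alpha)$.

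Finally, for the ``in particular'' clause I would invoke that over a countable directed index set the functor of projective limit has cohomological dimension at most one, so $\varprojlim^p=0$ for $p\ge2$ automatically, while the surjectivity of the transition maps $\Hom_A(L,M_\alpha)\rarrow\Hom_A(L,M_\beta)$ is precisely the Mittag--Leffler condition and forces $\varprojlim_\alpha^1\Hom_A(L,M_\alpha)=0$; hence all the groups $\varprojlim_\alpha^p\Hom_A(L,M_\alpha)$ with $p>0$ vanish, and the isomorphism just established yields $\Ext_A^i(L,\varprojlim_\alpha M_\alpha)=0$ for $i>0$. I expect the one point requiring genuine care to be the rigorous identification of the ordinary with the derived projective limit of $C_\alpha^\bullet$, namely the termwise $\varprojlim$\+acyclicity and the attendant collapse of the spectral sequences; the remainder of the argument is formal.
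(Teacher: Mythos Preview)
Your proof is correct and follows essentially the same approach as the paper: take a projective resolution $P_\bullet\rarrow L$, observe that the complex of projective systems $\Hom_A(P_\bullet,M_\alpha)$ is a $\varprojlim$\+acyclic resolution of $\Hom_A(L,M_\alpha)$, and conclude that applying $\varprojlim_\alpha$ termwise computes both $\Ext_A^*(L,\varprojlim_\alpha M_\alpha)$ and $\varprojlim_\alpha^*\Hom_A(L,M_\alpha)$. The paper states this directly rather than through the hypercohomology spectral sequence language, but the content is the same; your treatment of the ``in particular'' clause via Mittag--Leffler and the cohomological dimension of countable limits is also what is intended.
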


\begin{proof}
 Let $P_\bu\rarrow L$ be a left projective resolution of
the $A$\+module~$L$.
 Consider the complex of projective systems $\Hom_A(P_\bu, M_\alpha)$.
 The condition $\Ext_A^i(L,M_\alpha)=0$ for all $i>0$ and all~$\alpha$
implies that this complex is a right resolution of the projective
system $\Hom_A(L,M_\alpha)$.
 The condition $\varprojlim^i_\alpha M_\alpha=0$ for all $i>0$
implies $\varprojlim_\alpha^i\Hom_A(P_j,M_\alpha)=0$ for all $i>0$
and all~$j$.
 Now the complex $\varprojlim_\alpha\Hom_A(P_\bu,M_\alpha)\simeq
\Hom_A(P_\bu\;\varprojlim_\alpha M_\alpha)$ computes both
$\Ext_A^*(L\;\varprojlim_\alpha M_\alpha)$ and
$\varprojlim_\alpha^*\Hom_A(L,M_\alpha)$.
\end{proof}

\medskip

\end{document}